\documentclass[12pt]{report}
\usepackage{latexsym,amsfonts,amssymb,amsthm,amsmath}
\usepackage{amscd}
\usepackage{mathrsfs,multicol}
\usepackage[all]{xy}  
\usepackage{enumerate}
\usepackage{stmaryrd}
\usepackage{color}
\usepackage{imakeidx}
\usepackage{tikz}
\usepackage{tikz-cd}
\usepackage{pdflscape}
\usepackage{hyperref, cleveref}
\usepackage[vcentermath,enableskew]{youngtab}
\usepackage{centernot}
\usepackage{multirow}
\usepackage{caption}

\makeindex

\setlength{\topmargin}{0in}
\setlength{\oddsidemargin}{0.52in}
\setlength{\textwidth}{6in}
\setlength{\textheight}{8.4in}

\newtheorem{theorem}{Theorem}[chapter]
\newtheorem{lemma}[theorem]{Lemma}
\newtheorem{proposition}[theorem]{Proposition}
\newtheorem{corollary}[theorem]{Corollary}
\newtheorem{definition}[theorem]{Definition}

\newtheorem{remark}[theorem]{Remark}
\numberwithin{equation}{section}
\numberwithin{theorem}{chapter}

\theoremstyle{definition}

\newtheorem*{theorem*}{Theorem}

\newtheorem*{question*}{Question}
\newtheorem{example}[theorem]{Example}
\newtheorem*{basic properties}{Basic Properties}

\newtheorem*{Important Remark}{Important Remark}
\theoremstyle{remark}
\newtheorem*{claim*}{Claim}

\pagestyle{myheadings}
\newcommand{\dblsp}{\renewcommand{\baselinestretch}{2}\tiny\normalsize}
\newcommand{\sglsp}{\renewcommand{\baselinestretch}{1}\tiny\normalsize}

\newtheoremstyle{named}{}{}{\itshape}{}{\bfseries}{.}{.5em}{\thmnote{#3 }#1}
\theoremstyle{named}
\newtheorem*{namedconjecture}{Conjecture}

\newcommand{\n}{\mathfrak{n}}

\newcommand{\h}{\operatorname{h}}





\newcommand{\RR}{\mathbb{R}}
\newcommand{\NN}{\mathbb{N}}
\newcommand{\ZZ}{\mathbb{Z}}
\newcommand{\QQ}{\mathbb{Q}}
\newcommand{\FF}{\mathbb{F}}
\newcommand{\CC}{\mathbb{C}}
\newcommand{\HH}{\mathbb{H}}


\newcommand{\ord}{\operatorname{ord}}

\newcommand{\ind}{\operatorname{ind}}



\newcommand{\lp}{\left(}
\newcommand{\rp}{\right)}
\newcommand{\Log}{\textnormal{Log}}
\newcommand{\legendre}[2]{\genfrac{(}{)}{}{}{#1}{#2}}



\makeatletter
\renewcommand*\env@matrix[1][\arraystretch]{%
  \edef\arraystretch{#1}%
  \hskip -\arraycolsep
  \let\@ifnextchar\new@ifnextchar
  \array{*\c@MaxMatrixCols c}}
\makeatother

\setcounter{MaxMatrixCols}{50}

\begin{document}

\thispagestyle{empty}

\begin{center}

\vspace*{1in}

{\large \textsc{On the Coefficients of $q$-series and Modular Forms}}
\\[12mm]
William Craig
\\
Charlottesville, Virginia
\\[12mm]
B. Math. Virginia Polytechnic Institute and State University, 2019 \\[16mm]

A Dissertation presented to the Graduate Faculty
\\
of the University of Virginia in Candidacy for the Degree of
\\
Doctor of Philosophy
\\[11mm]
Department of Mathematics
\\[7mm]
University of Virginia
\\

\end{center}

\newpage

\dblsp

\newpage

\tableofcontents \addtocontents{toc}
{\protect\thispagestyle{myheadings}\markright{}}

\newpage

\chapter*{Acknowledgments} \label{acknowledgements}

\thispagestyle{myheadings}

\dblsp
\vspace*{-.65cm}

I am grateful to my advisor, Ken Ono, for wonderful support and instruction, and the rest of my committee, Peter Humphries, Evangelia Gazaki, and Ross Cameron. I would like to thank the University of Virginia and the NSF for financial support throughout my time in graduate school. The work in this thesis could not have been done without the fantastic environment in which I have found myself here. I would, in particular, like to thank my fellow graduate students for stimulating conversations and friendships, especially Alejandro De Las Pe\~nas Castano, Eleanor McSpirit, Badri Pandey, and Hasan Saad. I am indebted to many collaborators with whom I have worked on many chapters of this thesis. In particular, Chapters 5 and 7 are joint work with Anna Pun, Chapter 6 is joint work with Kathrin Bringmann, Joshua Males, and Ken Ono, and Chapter 8 is joint work with Jennifer Balakrishnan, Ken Ono, and Wei-Lun Tsai. Finally, I am indebted to my parents William and Elizabeth, my brothers John and David, and my wife Kara for their emotional support and encouragement. I am especially grateful to Kara for her patience and love throughout the many hours spent on the results presented here.

\sglsp

\chapter*{Abstract} \label{abstract}

\thispagestyle{myheadings}

\dblsp
\vspace*{-.65cm}

This thesis is on partitions and analytic number theory. In particular, I prove results about statistical properties of partitions, partition inequalities, and facts about special values of coefficients of modular forms. The central methods of this paper are the theory of integer weight modular forms and the circle method.

It is natural to study statistical questions about the parts of partitions. Recently, Beckwith and Mertens proved that the parts of partitions are asymptotically equidistributed among residue classes modulo $t$, but that there is a bias towards the residue classes inhabited by lower positive integers. In this thesis, I prove that the same phenomenon holds for partitions into distinct parts, and I prove that the biases between residue classes holds for $n > 8$. In order to prove these results, I derive explicit error terms for asymptotic estimates involving Euler--Maclaurin summation and utilize Wright's circle method to prove asymptotic formulas approximating the relevant counting functions.

Motivated by work of Dergachev and Kirillov, new work by Coll, Mayers and Mayers explores new connections between partitions and Lie theory via the index of seaweed algebras. This index may be viewed as a statistic on pairs of partitions, and in this light Coll, Mayers, and Mayers conjectured that a peculiar kind of generating function identity related to this new index statistic. Seo and Yee made a significant step towards proving this conjecture by reducing the problem to demonstrating the non-negativity of the coefficients of a certain $q$-series. In this thesis, I complete the proof of this conjecture using Wright's circle method and effective Euler--Maclaurin summation.

Hook numbers of partitions arise naturally from the connection between partitions and the irreducible representations of the symmetric group. I prove results concerning the number of $t$-hooks that appear within partitions. In joint work with Pun, I prove formulas that give the number of partitions of $n$ which have an even or odd number of $t$-hooks, and as a consequence we prove that these counting functions obey a strange distributions law. We prove these results using the Rademacher circle method.

In joint work with Bringmann, Males and Ono, I prove further asymptotic formulas about the distributions of $t$-hooks and Betti numbers in residue classes. We prove that the Betti numbers associated to Hilbert schemes on $n$ points, which naturally add up to the number of partitions of $n$, are equidistributed among residue classes modulo $b$, while equidistribution fails when partitions are divided up based on the residue class of the number of $t$-hooks. These results are proved using both Rademacher-style and Wright-style circle methods. We also use facts about 2-core and 3-core generating functions to prove that certain coefficients vanish in the cases of 2-hooks and 3-hooks.

Since DeSalvo and Pak proved that the partition function is log-concave, the Tur\'an inequalities have been a popular topic within partition theory. These inequalities govern whether certain polynomials constructed from a given sequence of numbers are hyperbolic. In joint work with Pun, I prove that the $k$-regular partition functions satisfy all the Tur\'an inequalities. We prove this using Hagis' formula for the $k$-regular partition functions and a very general criterion for proving Tur\'an inequalities proven by Griffin, Ono, Rolen, and Zagier.

The Atkin-Lehner newforms are extremely important examples of modular forms. Their coefficients are multiplicative, and the values at prime powers are dictated by two-term linear recurrence relations coming from Hecke operators. In joint work with Balakrishnan, Ono, and Tsai, I prove a methodology for identifying which coefficients of certain integer weight newform $f(z)$ are allowed to take on a given odd value. In particular, our method proves that under suitable assumptions, $f(z)$ has only finitely many Fourier coefficients equal to a given odd prime, and we give an algorithm which determines the possible locations of these prime values by computing integer points on algebraic curves with large genus.

\sglsp

\chapter{Introduction} \label{C1}
\thispagestyle{myheadings}

\dblsp
\vspace*{-.65cm}

\section{Partitions and modular forms} \label{S1.1}

In this thesis, I present original results pertaining to the coefficients of infinite series connected to partitions and modular forms. Both of these objects are ubiquitous in modern mathematics, with applications to fields as diverse as combinatorics, mathematical physics, number theory, representation theory, and topology. As my results are mainly combinatorial and number theoretic in nature, we shall introduce these objects from this perspective.

A {\it partition} is a non-increasing sequence of positive integers, which we denote by
\begin{align*}
	\lambda = (\lambda_1, \lambda_2, \cdots \lambda_\ell), \ \ \ \lambda_1 \geq \lambda_2 \geq \cdots \geq \lambda_\ell > 0.
\end{align*}
It is standard to denote by $\mathcal P$ the set of all partitions. For any $\lambda \in \mathcal P$, let us call denote by $|\lambda|$ the {\it size} of $\lambda$, which is defined by $|\lambda| = \lambda_1 + \lambda_2 + \cdots + \lambda_\ell$. If $|\lambda| = n$, then we say $\lambda$ is a partition of $n$ and we write $\lambda \vdash n$. We will also call each $\lambda_i$ a {\it part} of the partition.

The oldest question about partitions we know goes back to a letter from Leibniz to J. Bernoulli \cite{Lei}, in which Leibniz asks Bernoulli about the number of ``divulsions" of integers, which we now call partitions. In modern notation, Leibniz's question concerns the {\it partition function}
\begin{align*}
	p(n) := \#\{ \lambda \in \mathcal P : |\lambda| = n \},
\end{align*}
in particular how to evaluate the function. Many interesting questions about partitions have arisen as the theory developed. Among these are the possibility of multiplicative structure, asymptotic growth rates, formulas for partition functions, and rapid methods for computing values of partition functions.

The first published work on partitions goes back to Euler in 1741 \cite{Eul}, in which he answers questions of Naud\'{e} on a variation of Leibniz's question on the calculation of partition functions. Euler answers these questions brilliantly using the (very new at the time) mechanism of {\it generating functions}, which led him to develop many beautiful identities connecting infinite sums and infinite products. The fundamental example is the generating function for $p(n)$, which Euler proves to be 
\begin{align*}
	P(q) := \sum_{n=0}^\infty p(n) q^n = \prod_{n=1}^\infty \dfrac{1}{1 - q^n} =: \lp q; q \rp_\infty^{-1},
\end{align*}
where we have used the standard $q$-Pochhammer notation
\begin{align} \label{q-Pochhammer Symbol}
	\lp a; q \rp_n := \prod_{k=0}^{n-1} \lp 1 - a q^k \rp, \hspace{0.3in} \lp a; q \rp_\infty := \lim_{n \to \infty} \lp a; q \rp_n.
\end{align}
Euler's pioneering work deals with manipulation of formal power series which he connects to partitions, and produces many recurrence relations for partition functions via sum-product identities such as Euler's ``Pentagonal Number Theorem" \cite{And83}. Essentially, Euler answers questions about the calculation of partition functions by using generating function identities to derive recurrence relations for various partition functions. Such recurrence relations then allow for computations far more quickly than explicit enumeration. Through such fundamental results, Euler established the theory of partitions. His methods and results are still absolutely central in modern research.

Another pioneer in partition theory was the great Indian mathematician Srinivasa Ramanujan. Ramanujan's work contains in particular two revolutionary theorems on partitions. One of these are his congruences \cite{Ram19,Ram20,Ram21}, the most basic of which are
\begin{align*}
	p(5n+4) \equiv 0 \pmod{5}, \hspace{0.2in} p(7n+5) \equiv 0 \pmod{7}, \hspace{0.2in} p(11n+6) \equiv 0 \pmod{11},
\end{align*}
which he proved using techniques of manipulating $q$-series manipulation which in principle would have been accessible to Euler or Jacobi. Through these results, Ramanujan initiated the study of divisibility properties of partition numbers, which is a fundamental area of research today and has led to such developments as Dyson's rank function \cite{Dys44} and the Andrews-Garvan crank function \cite{AG88, Gar88} that give combinatorial explanations for why Ramanujan's congruences are true.

Another of Ramanujan's groundbreaking results on partitions came in his famous joint paper with Hardy \cite{HR18} in which they prove an asymptotic formula for $p(n)$ as $n \to \infty$ using the {\it circle method} (see \eqref{Hardy-Ramanujan Asymptotic}). Rademacher then improved their work in 1937 to obtain an exact formula for $p(n)$ \cite{Rad37}. These are fundamental results answering questions of the type Leibniz asked about partitions. The circle method was a major revolution which has seen important applications not just to partition functions, but to many other famous asymptotic problems in number theory. This is because the circle method can be interpreted very generally as a method for computing asymptotic formulae for any sequence of integers based on the asymptotic properties of its generating function. In particular, the method is useful for problems involving the number of ways to represent integers by elements of a given set, like Waring's problem or Goldbach-type problems.

Hardy and Ramanujan's implementation of the circle method is centrally based on relating the generating function of $p(n)$ to {\it modular forms}. Roughly speaking, modular forms are analytic functions $f : \HH \to \CC$, where $\HH = \{ \tau \in \CC: \mathrm{Im}(\tau) > 0 \}$, that transform nicely under the action of M\"{o}bius transformations on $\HH$. That is, for integers $a,b,c,d$ such that $ad - bc = 1$, a modular form should have the property that $f\lp \frac{a\tau+b}{c\tau+d}\rp$ is nicely related to $f(\tau)$ for all $\tau \in \HH$.

The connection between modular forms and partitions comes through the marriage of Euler's generating functions with complex analysis as developed in the mid-nineteenth century. Because modular forms are naturally periodic, under $\tau \mapsto \tau + 1$, any modular form $f(\tau)$ is going to come with a Fourier expansion $\sum_{n \in \ZZ} a_f(n) e^{2\pi i n \tau}$. If we let $q := e^{2\pi i \tau}$, then it turns out that the formal generating function $P(q)$ for partitions is closely connected with one of the fundamental examples of a modular form called {\it Dedekind's eta function}, denoted $\eta(\tau)$. This function satisfies the relation $P(q) = q^{\frac{1}{24}} \eta(\tau)^{-1}$ and has the modular transformation law 
\begin{align} \label{Dedekind eta transformation law}
	\eta\lp - \dfrac{1}{\tau} \rp = \sqrt{-i\tau} \cdot \eta(\tau).
\end{align}
Transformation laws of this shape play a central role in the execution of the circle method. Modular forms come in many different shapes, and have deep connections to the functional equations of $L$-functions, algebraic geometry, and many other areas of mathematics. For example, the modular transformation law of a certain theta function underpins Riemann's proof of the analytic continuation and functional equation of his zeta function \cite{Rie59}, and by extension modular transformation laws are used to derive functional equations for modular $L$-functions. For more details on the theory of modular forms, see Chapter \ref{C2} or standard texts on modular forms such as \cite{Apo90,CS17,DS05b,Ser73}. Andrews' book \cite{And84} is an excellent source for the theory of partitions.

The connections between modular forms and partitions, as well as various generalizations and refinements of the Hardy--Ramanujan circle method, play a central role in this thesis. Section \ref{S1.2} describes my results on generating function identities and partition inequalities proven using the circle method, and Section \ref{S1.3} describes joint papers with Pun and with Bringmann, Males and Ono that use the circle method to study arithmetic statistics of certain invariants attached to partitions. Section \ref{S1.4} describes an application of the circle method to Tur\'{a}n inequalities, and Section \ref{S1.5} describes joint work with Balakrishnan, Ono, and Tsai on the coefficients of certain integral weight modular forms. Chapters \ref{C3} through \ref{C8} then prove the results introduced in Chapter \ref{C1}.

\section{Partition identities and inequalities} \label{S1.2}

\subsection{Parts of partitions into distinct parts}

In their famous paper {\it Asymptotic formulae in combinatory analysis}, Hardy and Ramanujan (among other results) proved the asymptotic formula
\begin{align} \label{Hardy-Ramanujan Asymptotic}
	p(n) \sim \dfrac{1}{4n\sqrt{3}} e^{\pi \sqrt{\frac{2n}{3}}}
\end{align}
as $n \to \infty$. In fact, they are able to prove a complete divergent asymptotic expansion for $p(n)$ \cite{HR18}. As mentioned in Section \ref{S1.1}, one of the key tools in their method is the modular transformation law for Dedekind's eta function given in \eqref{Dedekind eta transformation law}. The main thrust of the proof is that the modular transformation law for $\eta(\tau)$ yields a similar transformation law for $P(q)$, which then gives good asymptotic estimates for the size of $P(q)$ near complex roots of unity. Through a remarkable series of calculations, Hardy and Ramanujan are able to translate this asymptotic information about $P(q)$ into asymptotic information about $p(n)$. There are many other important works, including Meinardus \cite{Mei54} and Wright \cite{Wri71}, which demonstrate a variety of methods of computing asymptotic expansions for partition functions. In particular, the method of Wright will be central to the Sections \ref{S1.2} and \ref{S1.3}.

In Chapter \ref{C3}, I present results about the total number of parts among partitions into distinct parts residing in given congruences classes. As is standard, we let $\ell(\lambda)$ be the number of parts possessed by the partition $\lambda$. The number of parts contained in partitions is one of the most well-studied combinatorial aspects of these objects. For example, famous work of Erd\H{o}s and Lehner \cite{EL41} shows that for large $n$, almost all partitions of $n$ contain
\begin{align*}
	\lp 1 + o(1) \rp \dfrac{\sqrt{6n}}{2\pi} \log(n)
\end{align*}
parts. Such results have been extended in various directions. One such instance is a recent result of Griffin, Ono, Rolen and Tsai \cite{GORT22} that counts expected number of parts that are multiples of a given integer.

Dartyge and Sarkozy have studied a related problem in \cite{DS05a}, in which they prove a result which indicates that the parts of partitions might favor certain congruence classes. More specifically, for positive integers $0 < r \leq t$, define the function
\begin{align*}
	T_{r,t}(\lambda) := \# \{ \lambda_i \in \lambda : \lambda_i \equiv r \pmod{t} \}.
\end{align*}
Dartyge and Sarkozy \cite[Theorem 1.1]{DS05a} prove that for $n \gg 0$ and $0 < r < s \leq t$, a positive proportion of partitions satisfy the inequality
\begin{align*}
	T_{r,t}(\lambda) - T_{s,t}(\lambda) > \dfrac{(r+s) \sqrt{n}}{50rs}.
\end{align*}
Philosophically, such a result makes sense; smaller positive integers may be repeated more times within partitions of a fixed size. We may note however that the expected number of parts of a random partition is on the order $\sqrt{n} \log(n)$, which outstrips the Dartyge--Sarkozy lower limit. At least at face value, this suggests that it could still be true that parts of partitions are equidistributed among all residue classes.

Beckwith and Mertens \cite{BM15, BM17} answer these questions. Letting $0 < r \leq t$ and $n \geq 0$ be integers, Beckwith and Mertens define\footnote{Beckwith and Mertens use the notation $\widehat T_{r,t}(n)$ for this function.}
\begin{align*}
	T_{r,t}(n) := \sum_{\lambda \vdash n} T_{r,t}(\lambda),
\end{align*}
which counts the total number of parts congruent to $r$ modulo $t$ among all partitions of $n$. In their second paper studying this function, Beckwith and Mertens prove the following theorem.

\begin{theorem}[{\cite[Theorem 1.2]{BM17}}] \label{Beckwith--Mertens}
	Let $0 < r < s \leq t$ and $n \geq 0$ be integers. Then as $n \to \infty$, we have
	\begin{align*}
		T_{r,t}(n) = e^{\pi \sqrt{\frac{2n}{3}}} \left[ \log(n) - \log\lp \dfrac{\pi^2}{6} \rp - 2 \lp \psi\lp \dfrac{r}{t} \rp + \log(t) \rp + O\lp n^{- \frac 12} \log(n) \rp \right].
	\end{align*}
	In particular, we have $T_{r,t}(n) \sim T_{s,t}(n)$ and $T_{r,t}(n) \geq T_{s,t}(n)$ as $n \to \infty$.
\end{theorem}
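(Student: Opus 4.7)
The first step is to express $T_{r,t}(n)$ through a suitable generating function. Introducing a marking variable $z$ that tracks the number of parts congruent to $r$ modulo $t$, one has
\begin{align*}
\sum_{\lambda \in \mathcal P} z^{T_{r,t}(\lambda)} q^{|\lambda|} = \prod_{\substack{n \geq 1 \\ n \not\equiv r \pmod{t}}} \frac{1}{1-q^n} \cdot \prod_{\substack{n \geq 1 \\ n \equiv r \pmod{t}}} \frac{1}{1-zq^n}.
\end{align*}
Differentiating in $z$ and evaluating at $z=1$ yields the product decomposition
\begin{align*}
F_{r,t}(q) := \sum_{n \geq 0} T_{r,t}(n) q^n = P(q) \cdot L_{r,t}(q), \qquad L_{r,t}(q) := \sum_{m \geq 0} \frac{q^{tm+r}}{1-q^{tm+r}},
\end{align*}
which cleanly separates the modular factor $P(q)$ from a combinatorial marker $L_{r,t}(q)$ that encodes the residue-class data. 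The plan is then to understand the cuspidal behavior of each factor and to extract $T_{r,t}(n)$ via Wright's circle method.

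For the modular piece, writing $q = e^{-\tau}$ with $\tau = \sigma + i\theta$ and $\sigma \to 0^+$, the transformation law \eqref{Dedekind eta transformation law} produces
\begin{align*}
P(e^{-\tau}) = \sqrt{\tfrac{\tau}{2\pi}} \exp\!\lp \frac{\pi^2}{6\tau} - \frac{\tau}{24} \rp \bigl( 1 + O(e^{-c/\sigma}) \bigr)
\end{align*}
uniformly in a sector $|\theta| \leq \sigma$. For $L_{r,t}$, the rewriting $\frac{q^{tm+r}}{1-q^{tm+r}} = (e^{\tau(tm+r)}-1)^{-1}$ together with Mellin inversion yields
\begin{align*}
L_{r,t}(e^{-\tau}) = \frac{1}{2\pi i} \int_{(c)} \Gamma(s) \zeta(s) \zeta\!\lp s, \tfrac{r}{t} \rp (t\tau)^{-s}\, ds,
\end{align*}
with $\zeta(s,a)$ the Hurwitz zeta function. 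Shifting the contour past the double pole at $s=1$, whose residue is computed from the Laurent expansions $\zeta(s) = (s-1)^{-1} + \gamma + \cdots$ and $\zeta(s,r/t) = (s-1)^{-1} - \psi(r/t) + \cdots$, one obtains the key estimate
\begin{align*}
L_{r,t}(e^{-\tau}) = -\frac{1}{t\tau}\lp \log \tau + \psi\!\lp \tfrac{r}{t} \rp + \log t \rp + O(1),
\end{align*}
uniformly in a sector containing the positive real axis; an equivalent effective Euler--Maclaurin estimate can be derived directly from the series. The appearance of $\psi(r/t)$ here is the source of the biased equidistribution phenomenon.

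With these expansions in hand, apply Cauchy's integral formula on the circle $|q| = e^{-\sigma_n}$, where $\sigma_n := \pi/\sqrt{6n}$ is the saddle point of $\log P(e^{-\tau}) + n\tau$, and split the contour into a major arc $|\theta| \leq \sigma_n$ and its complementary minor arc. On the major arc, substitute the two expansions and perform the standard Gaussian saddle-point analysis; since $L_{r,t}$ is subdominant to $P$ near the saddle, the saddle location does not shift and the leading contribution is $T_{r,t}(n) \sim p(n) \cdot L_{r,t}(e^{-\sigma_n})$, with the shape of the bracketed factor in the theorem arising from the algebraic identity
\begin{align*}
-\log \sigma_n - \psi\!\lp \tfrac{r}{t} \rp - \log t = \tfrac{1}{2} \Bigl[ \log n - \log\!\tfrac{\pi^2}{6} - 2\psi\!\lp \tfrac{r}{t} \rp - 2\log t \Bigr].
\end{align*}
On the minor arcs, one uses the standard exponential-decay bound for $P(q)$ away from the cusp $q=1$ (derived from the $\eta$-transformation law at each cusp) together with the crude estimate $|L_{r,t}(q)| \ll |1-q|^{-1} \log |1-q|^{-1}$ to see that the minor-arc contribution is negligible.

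The principal obstacle is the uniform error control needed to match the claimed $O(n^{-1/2}\log n)$ error term. Knowing $L_{r,t}(e^{-\sigma_n})$ at the saddle alone is not enough: the $O(1)$ error in the Mellin/Euler--Maclaurin expansion for $L_{r,t}$ must be uniform across the entire major arc $|\theta| \leq \sigma_n$, and its logarithmic dependence on $\tau$ must be expanded to enough depth in $\theta/\sigma_n$ during the saddle-point integration to prevent spurious contributions of size $e^{\pi\sqrt{2n/3}}\cdot n^{-1/2}$ from entering the main term. These uniformities fit into the standard template for effective Wright's method, but each must be verified with care; once they are in place, assembling the major- and minor-arc estimates yields the stated asymptotic.
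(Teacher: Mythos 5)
Worth noting at the outset: the paper does not prove this theorem --- it is cited verbatim from Beckwith--Mertens \cite{BM17} as motivation. The natural comparison is therefore with the proof of the analogous result for distinct parts in Chapter \ref{C3} (Theorem \ref{C3 Ineffective Asymptotic}), and with the actual argument in \cite{BM17}; your proposal matches both in structure: factor $\sum T_{r,t}(n)q^n = P(q)\,L_{r,t}(q)$, obtain cuspidal asymptotics of each factor near $q=1$, and feed them into Wright's circle method with a major/minor arc split centered at the saddle $\sigma_n=\pi/\sqrt{6n}$. The one genuine deviation is that you extract the expansion of $L_{r,t}(e^{-\tau})$ by Mellin inversion and a contour shift past the double pole of $\Gamma(s)\zeta(s)\zeta(s,r/t)(t\tau)^{-s}$ at $s=1$, whereas the paper (following Zagier and \cite{BM17}) uses Euler--Maclaurin summation applied to $B(z)=\tfrac{e^{-z}}{z(1-e^{-z})}$ via Proposition \ref{C3 Euler-Maclaurin Sufficient Decay}. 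These are two standard and equivalent routes to the same asymptotic series --- the $\log\tau$ in your residue and the $\Log(Az)$ in Proposition \ref{C3 Euler-Maclaurin Sufficient Decay} are the same object --- so this is a presentational rather than substantive difference. The Mellin route is arguably cleaner for the purely asymptotic statement here; the Euler--Maclaurin route is what the paper prefers because Chapters \ref{C3} and \ref{C4} need completely explicit error constants, which that formulation delivers more directly.

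Two small points. Your residue computation at $s=1$ is correct: the $\gamma$ from $\zeta(s)$ cancels against $\Gamma'(1)=-\gamma$, leaving $-\tfrac{1}{t\tau}\bigl(\log\tau+\psi(r/t)+\log t\bigr)$, and the algebraic identity converting $-\log\sigma_n$ into $\tfrac12\log n - \tfrac12\log(\pi^2/6)$ is exactly right. Second, your calculation produces an explicit prefactor of order $n^{-1/2}$ in front of the bracket, namely $\tfrac{\sqrt 2}{8\pi t\sqrt n}\,e^{\pi\sqrt{2n/3}}[\,\cdots\,]$; the theorem as transcribed in this thesis omits that prefactor, which appears to be a typographical slip in the citation (dimensional bookkeeping against $p(n)\sim\tfrac{1}{4n\sqrt3}e^{\pi\sqrt{2n/3}}$ and the Erd\H{o}s--Lehner growth rate confirms that a power of $n$ must be present). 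You are right that the real work hiding in the $O(n^{-1/2}\log n)$ error is the uniformity of the $L_{r,t}$ expansion across the major arc and the expansion of $\sqrt{\tau}$ and $\log\tau$ in $\theta/\sigma_n$ to second order inside the Gaussian integral; these are exactly the estimates that Section 3 of Chapter \ref{C3} carries out explicitly in the distinct-parts case.
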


The asymptotic above agrees with the heuristics suggested by comparing the results of Erd\"{o}s--Lehner with those of Dartyge--Sarkozy, that the parts should be both equidistributed asymptotically and exhibit a strict inequality for $n \gg 0$.

There are two natural follow-up questions concerning this result -- does this phenomenon hold for other families of partitions, and how large must $n$ be before $T_{r,t}(n) \geq T_{s,t}(n)$ begins to hold? In Chapter \ref{C3}, we shall address both of these questions in the context of partitions into distinct parts. We say a partition $\lambda \in \mathcal P$ has {\it distinct parts} if no two $\lambda_i \in \lambda$ are equal, and we let $\mathcal D$ be the set of partitions into distinct parts. In analogy with Beckwith and Mertens, we define for integers $0 < r \leq t$ and $n \geq 0$ the function
\begin{align*}
	D_{r,t}(n) := \sum_{\lambda \in \mathcal D} D_{r,t}(\lambda) := \sum_{\lambda \in \mathcal D} \# \{ \lambda_i \in \lambda : \lambda_i \equiv r \pmod{t} \}.
\end{align*}
As in the case of $T_{r,t}(n)$, we prove an asymptotic formula for $D_{r,t}(n)$.

\begin{theorem} \label{C3 Ineffective Asymptotic}
	As $n \to \infty$, we have
	\begin{align*}
		D_{r,t}(n) = \dfrac{3^{\frac 14} e^{\pi \sqrt{\frac{n}{3}}}}{2\pi t n^{\frac 14}} \left( \log(2) + \lp \dfrac{\sqrt{3} \log(2)}{8\pi} - \dfrac{\pi}{4\sqrt{3}} \lp r - \dfrac{t}{2} \rp \rp n^{- \frac 12} + O\lp n^{-1} \rp \right).
	\end{align*}
\end{theorem}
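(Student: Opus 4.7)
The plan is to apply Wright's variant of the circle method. The first step is to derive a useful generating function. Introducing a formal parameter $z$ that tracks parts congruent to $r$ modulo $t$, one has
\begin{align*}
\sum_{\lambda \in \mathcal D} z^{D_{r,t}(\lambda)} q^{|\lambda|} = (-q;q)_\infty \prod_{n \equiv r \,(t)} \frac{1+zq^n}{1+q^n},
\end{align*}
and differentiating with respect to $z$ at $z=1$ yields the generating function identity
\begin{align*}
\mathcal G(q) := \sum_{n \geq 0} D_{r,t}(n)q^n = (-q;q)_\infty \cdot S_{r,t}(q), \qquad S_{r,t}(q) := \sum_{n \equiv r \,(t)} \frac{q^n}{1+q^n}.
\end{align*}

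The next step is to obtain an asymptotic expansion for $\mathcal G(e^{-s})$ as $s \to 0^+$ on the ``major arc'' near $q = 1$. For the first factor, combining $(-q;q)_\infty = (q^2;q^2)_\infty/(q;q)_\infty$ with the modular transformation \eqref{Dedekind eta transformation law} of $\eta$ (applied once at $s$ and once at $2s$) yields
\begin{align*}
(-e^{-s};e^{-s})_\infty = \frac{e^{s/24}}{\sqrt 2}\exp\!\left(\frac{\pi^2}{12 s}\right)\left(1 + O(e^{-c/s})\right).
\end{align*}
For the second factor, I will apply effective Euler--Maclaurin summation to $f(x) = (1+e^{s(xt+r)})^{-1}$. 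Computing the principal integral as $(st)^{-1}\log(1+e^{-sr})$, expanding via the identity $\log(1+e^{-sr}) = \log 2 - sr/2 + \log\cosh(sr/2)$, and tracking the boundary term $f(0)/2$ together with the Bernoulli correction $-\tfrac{1}{12}f'(0) = st/48 + O(s^3)$, gives
\begin{align*}
S_{r,t}(e^{-s}) = \frac{\log 2}{st} + \frac{t-2r}{4t} + O(s).
\end{align*}
Multiplying these two expansions yields $\mathcal G(e^{-s}) = e^{\pi^2/(12s)}(A_0 s^{-1} + A_1 + O(s))$ with explicit constants $A_0$ and $A_1$ that depend on $r$ and $t$.

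Third, on the minor arc (i.e., for $q = e^{-s}$ with $s$ bounded away from positive real values), one must verify that $\mathcal G(q)$ is exponentially smaller than on the major arc. The factor $(-q;q)_\infty$ admits the standard control via the eta transformation applied at other cusps, while $S_{r,t}$ is dominated by the geometric series $\sum |q|^n$ and grows at most polynomially in $1/|s|$. Together these show that the minor-arc contribution to the Cauchy integral for $D_{r,t}(n)$ is absorbed into the announced error term.

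Finally, I complete the proof by a saddle-point analysis of the integral $D_{r,t}(n) = (2\pi i)^{-1}\oint \mathcal G(q)q^{-n-1}\,dq$. With $B := \pi^2/12$, the saddle of $B/s + ns$ lies at $s_0 = \sqrt{B/n} = \pi/(2\sqrt{3n})$, and the value $2\sqrt{Bn} = \pi\sqrt{n/3}$ yields the claimed exponential. Rescaling $s = s_0(1+iy/(Bn)^{1/4})$ and expanding the integrand to second order in $(Bn)^{-1/4}$, the Gaussian integrals $\int y^{2k}e^{-y^2}\,dy$ produce an asymptotic expansion of $D_{r,t}(n)$ in descending powers of $n^{-1/2}$; the $n^{-1/4}$ prefactor arises from the saddle width times $s_0 \cdot A_0$, matching the constant $3^{1/4}\log 2/(2\pi t)$, and the $n^{-1/2}$ correction combines (i) the $A_1$ contribution from the zeroth-order Gaussian and (ii) the cubic and quartic corrections to the saddle exponent hitting the $A_0/s$ pole. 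The main technical obstacle will be the careful coordinated bookkeeping of these several contributions at order $n^{-1/2}$, since they include both the subleading Euler--Maclaurin term from $S_{r,t}$ and the subleading Gaussian correction from the saddle, and both must be tracked simultaneously to extract precisely the $\sqrt{3}\log 2/(8\pi)$ and $-\pi(r-t/2)/(4\sqrt{3})$ parts of the stated correction.
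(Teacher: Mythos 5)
Your proposal is correct and follows the same overall strategy as the paper: factor the generating function as $\mathcal{D}_{r,t}(q)=(-q;q)_\infty L_{r,t}(q)$, obtain major-arc asymptotics for each factor separately, bound the minor arc, and conclude via a circle-method / saddle-point argument. The one genuine difference is how you handle the eta-quotient factor $(-q;q)_\infty$ on the major arc: you use the exact modularity of $\eta$, writing $(-q;q)_\infty = (q^2;q^2)_\infty/(q;q)_\infty$ and applying the transformation law twice, which gives the expansion $\frac{1}{\sqrt 2}e^{s/24}e^{\pi^2/(12s)}$ with exponentially small error. The paper instead derives the same expansion via its Euler--Maclaurin machinery (Lemma~\ref{Xi Major Arc} and Corollary~\ref{Xi Major Arc Corollary}), yielding only a polynomial error term. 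For the purely asymptotic Theorem~\ref{C3 Ineffective Asymptotic} your route is equally valid (and gives a better error term), but the paper's choice is deliberate: the Euler--Maclaurin route feeds directly into the effective version (Theorem~\ref{C3 Effective Asymptotic} and Corollary~\ref{C3 Effective Inequality}) and is uniform with the treatment of $L_{r,t}$ and with Chapter~\ref{C4}. You also run an explicit saddle-point analysis rather than citing the packaged Proposition~\ref{Wright Circle Method}; that proposition \emph{is} the saddle-point argument, so this is presentational. Your concluding caution about bookkeeping at order $n^{-1/2}$ is apt: there are three commingled contributions at that order --- the $A_1$ constant from $S_{r,t}$, the Gaussian/saddle-width correction acting on the $A_0/s$ pole, and the $e^{s/24}$ prefactor of $(-q;q)_\infty$ (equivalently, the shift $n\mapsto n+1/24$ in the Bessel argument) --- and each must be carried through if one wants to match the stated coefficient of $n^{-1/2}$ exactly.
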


\begin{example}
	We consider the case $t = 3$ to illustrate the accuracy of the approximation of $D_{r,3}(n)$ in Theorem \ref{C3 Ineffective Asymptotic}. Let $\widehat{D}_{r,t}(n)$ denote the main term of $D_{r,t}(n)$ from Theorem \ref{C3 Ineffective Asymptotic}. Additionally, let $Q_r(n) := \frac{D_{r,3}(n)}{ \widehat{D}_{r,3}(n)}$. The following table illustrates the convergence of $Q_r(n)$ to 1 as $n \to \infty$.
	
	\begingroup
	\renewcommand{\arraystretch}{0.5}
	\begin{center}
		\begin{tabular}{|c|c|c|c|c|c|} \hline
			$n$ & 10 & 100 & 1000 & 10000 \\ \hline
			$Q_1(n)$ & 1.159706 & 1.002613 & 1.001068 & 1.000365 \\ \hline
			$Q_2(n)$ & 0.904238 & 1.003913 & 1.001204 & 1.000378 \\ \hline
			$Q_3(n)$ & 1.167157 & 1.008440 & 1.001641 & 1.000422 \\ \hline
		\end{tabular}
	
		\vspace{0.05in}
		
		{\small Table 1: Numerics for Theorem \ref{C3 Ineffective Asymptotic}.}
	\end{center}
	\endgroup
\end{example}

Theorem \ref{C3 Ineffective Asymptotic} immediately implies $D_{r,t}(n) \sim D_{s,t}(n)$ and $D_{r,t}(n) \geq D_{s,t}(n)$ as $n \to \infty$; this is because the main term of $D_{r,t}(n)$ does not depend on $r$ and the secondary term depends monotonically on $r$. To make the inequality explicit, we improve Theorem \ref{C3 Ineffective Asymptotic} by making the error terms completely explicit. The following results contain our explicit asymptotics and the explicit bias which follows from it.

\begin{theorem} \label{C3 Effective Asymptotic}
	For any integer $t \geq 2$ and all integers $n > \frac{400 t^2}{3}$, we have
	\begin{align*}
		\bigg| D_{r,t}(n) - \dfrac{\log(2)}{t} V_0(n) + \dfrac{1}{2} B_1\lp \dfrac rt \rp V_1(n) - \frac{t}{8} B_2\lp \dfrac rt \rp V_2(n) &+ \dfrac{t^3}{192} B_4\lp \dfrac rt \rp V_4(n) \bigg| \\ &\leq \mathrm{Err}_t(n),
	\end{align*}
	where $B_n(x)$ are the Bernoulli polynomials defined in \eqref{Bernoulli Polynomial Definition}, $\mathrm{Err}_t(n)$ is defined in \eqref{E_t Definition}, and $V_s(n)$ is defined in \eqref{V Definition}.
\end{theorem}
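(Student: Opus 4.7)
The proof strategy is Wright's circle method applied to the generating function
\begin{align*}
	\mathcal{D}_{r,t}(q) := \sum_{n \geq 0} D_{r,t}(n) q^n = (-q;q)_\infty \sum_{\substack{m \geq 1 \\ m \equiv r \pmod t}} \dfrac{q^m}{1+q^m},
\end{align*}
together with an effective form of Euler--Maclaurin summation. Starting from Cauchy's integral formula on the circle $|q| = e^{-s}$, I would choose the saddle parameter $s = \pi/\sqrt{3n}$, which balances $e^{ns}$ against the dominant growth rate of $(-q;q)_\infty$ near $q = 1$ and produces the factor $e^{\pi\sqrt{n/3}}$ visible in the ineffective asymptotic. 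The contour is then split into a major arc $|\arg(q)| \leq \delta$ (for an explicit $\delta$ calibrated to $s$) and a minor arc, each of which must be estimated with explicit constants.

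On the major arc, I would apply effective Euler--Maclaurin simultaneously to $\log(-q;q)_\infty = \sum_{m \geq 1} \log(1+q^m)$ and to the arithmetic-progression sum. Writing $\sum_{m \equiv r \!\! \pmod t} q^m/(1+q^m) = \sum_{k \geq 0} f(tk+r)$ with $f(x) = e^{-sx}/(1+e^{-sx})$, effective Euler--Maclaurin at the fractional starting point $r/t$ produces an expansion whose coefficients involve exactly $B_j(r/t)$ via the standard generalized form of the formula. Truncating this expansion at $j = 4$ explains the appearance of $B_1, B_2, B_4$ in the statement (the $B_3$ term vanishes by parity after symmetrizing, and the constant term $B_0 = 1$ gives the $\log(2)/t \cdot V_0(n)$ main contribution after integrating $\int f$). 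Each truncated term, when inserted into the Cauchy integral, collapses to a Hankel-type contour integral which is precisely $V_s(n)$; the remainder in Euler--Maclaurin and the error from deforming to the Hankel contour are both controlled explicitly.

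On the minor arc $\delta < |\arg(q)| \leq \pi$, I would use the modular transformation of the Dedekind eta function (equivalently, the transformation law for $(-q;q)_\infty = \eta(2\tau)/\eta(\tau)$) to bound $|\mathcal{D}_{r,t}(q)|$ by $|\mathcal{D}_{r,t}(e^{-s})|$ times a factor of the form $\exp(-C/\sqrt{n})$ with an explicit $C$, which combined with the trivial bound on the remaining sum produces an error strictly smaller than the last retained term $\frac{t^3}{192} B_4(r/t) V_4(n)$ once $n$ exceeds an explicit threshold.

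The main obstacle is purely bookkeeping: tracking completely explicit constants through every step, especially in the Euler--Maclaurin remainder (which forces the truncation at the $B_4$ term and produces the bulk of $\mathrm{Err}_t(n)$), the deformation from the truncated major arc to the Hankel contour defining $V_s(n)$, and the minor arc estimate whose effective constant determines the threshold $n > 400 t^2/3$. Because each of these produces a multiplicative constant depending on $t$, care is required at the minor arc step to ensure the exponential gain dominates the $t$-dependence uniformly for all $r \in \{1, \ldots, t\}$.
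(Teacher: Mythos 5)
Your overall plan is sound and tracks the paper's: Cauchy's formula on $|q|=e^{-\eta}$, a split into major and minor arcs, effective Euler--Maclaurin applied to both $\log\xi(q)$ and the arithmetic-progression sum $L_{r,t}(q)$ with the shift $a=r/t$ producing the Bernoulli polynomials $B_j(r/t)$, and the absent $B_3$-term. On that last point your explanation is slightly off: it is not that $B_3(r/t)$ itself vanishes, but that the Taylor coefficient $c_2=e_2/2!$ of $E(z)=e^{-z}/(1+e^{-z})$ vanishes, since $e_2=\frac{1-2^3}{3}B_3=0$ because $B_3=0$; the factor $B_3(r/t)$ is therefore multiplied by zero.

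Two details, however, need repair. First, the saddle parameter should be $\eta=\pi/\sqrt{12n}$, not $\pi/\sqrt{3n}$: balancing $\pi^2/(12\eta)$ against $n\eta$ gives $\eta^2=\pi^2/(12n)$ and produces the exponent $\pi\sqrt{n/3}$. Second, and more substantively, the quantities $V_s(n)$ in the theorem are not Hankel-contour integrals — by \eqref{V Definition} each $V_s(n)$ is defined only over the truncated vertical segment $D_0=\{z:\mathrm{Re}(z)=\eta,\ |\mathrm{Im}(z)|\le 10\eta\}$ (the major arc itself). The theorem therefore involves no deformation to an infinite contour: one simply decomposes
\begin{align*}
D_{r,t}(n)-\alpha_0 V_0(n)-\alpha_{1,r} V_1(n)-\alpha_{2,r} V_2(n)-\alpha_{4,r} V_4(n)=E_1+E_2+E_3,
\end{align*}
where $E_1$ is the minor-arc integral, $E_2$ is the major-arc error from replacing $\xi(q)$ by $\exp(\pi^2/(12z)-\tfrac{\log 2}{2}+\tfrac{z}{24})$, and $E_3$ is the major-arc error from the polynomial approximation of $L_{r,t}(q)$. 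The Bessel functions and the Hankel-deformation error only enter afterward, in the proof of Corollary \ref{C3 Effective Inequality}, via Lemma \ref{Bessel Estimates}.

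On the minor arc, two further corrections. The savings must be of order $\exp(-c\sqrt{n})$, not $\exp(-c/\sqrt{n})$; the latter tends to $1$ and would give no gain. The paper achieves this elementarily (Lemmas \ref{Xi Minor Arc Bound} and \ref{L Minor Arc Bound}): bound $|\log\xi(q)|$ above by $\log P(|q|)$ after subtracting the dominant $m=1$ term, then compare $|1-q|$ to $1-|q|$ on the minor arc, giving $|\xi(e^{-z})|<\exp(41/(50\eta))$, whose exponent is strictly below the major-arc exponent $\pi^2/(12\eta)$. Your eta-modularity route would also yield a crude bound, but Wright's method does not require it and it would make the explicit-constant bookkeeping harder. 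Finally, the threshold $n>400t^2/3$ is equivalent to $\eta<\pi/(40t)$ and is needed so that $|tz|<\pi$ on the major arc — that is, so the power-series expansion of $E(tz)$ used in the effective Euler--Maclaurin lemma converges; it is a major-arc constraint, not a minor-arc one.
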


\begin{corollary} \label{C3 Effective Inequality}
	For positive integers $1 \leq r < s \leq t$ we have $D_{r,t}(n) \geq D_{s,t}(n)$ for sufficiently large $n$. In particular, for $2 \leq t \leq 10$ this inequality holds for all $n > 8$.
\end{corollary}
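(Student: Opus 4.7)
The first sentence of the corollary follows immediately from Theorem \ref{C3 Ineffective Asymptotic}. The leading main term of $D_{r,t}(n)$ is independent of $r$, while the next term contains the factor $-\frac{\pi}{4\sqrt 3}(r - \frac{t}{2})$, which is strictly decreasing in $r$. Subtracting the two asymptotics therefore gives
\begin{align*}
D_{r,t}(n) - D_{s,t}(n) \sim \frac{s-r}{8 \cdot 3^{1/4}\, t}\cdot \frac{e^{\pi\sqrt{n/3}}}{n^{3/4}},
\end{align*}
a strictly positive quantity, so the inequality holds for all $n$ sufficiently large.

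For the explicit claim when $2 \leq t \leq 10$ and $n > 8$, the plan is to combine Theorem \ref{C3 Effective Asymptotic} with a direct check of small cases. Applying the triangle inequality to the bounds there for $D_{r,t}(n)$ and for $D_{s,t}(n)$, the $\tfrac{\log 2}{t} V_0(n)$ term (being independent of $r$) cancels, leaving
\begin{align*}
D_{r,t}(n) - D_{s,t}(n) \geq \frac{s-r}{2t}\, V_1(n) + \frac{t}{8}\,\Delta_2\, V_2(n) - \frac{t^3}{192}\,\Delta_4\, V_4(n) - 2\,\mathrm{Err}_t(n),
\end{align*}
where $\Delta_j := B_j(r/t) - B_j(s/t)$. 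Since the quantities $V_j(n)$ decay in $j$ (with $V_j \sim V_0 \cdot n^{-j/2}$), the $V_1$ term dominates and is strictly positive because $r < s$. The task is then to pin down an explicit threshold $N_t \geq \frac{400 t^2}{3}$ beyond which $\frac{s-r}{2t} V_1(n)$ majorises the sum of the $V_2, V_4$ contributions and $2\,\mathrm{Err}_t(n)$, uniformly over all admissible pairs $1 \leq r < s \leq t$ with $t \leq 10$. Since each $\Delta_j$ is a concrete rational number bounded in absolute value by a constant depending only on $t$, this reduces to a finite family of explicit one-variable inequalities in $n$.

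For the remaining window $8 < n \leq N_t$, I would verify the inequality by direct computation: for each fixed $t \leq 10$, the values $D_{r,t}(n)$ are extracted as coefficients of the truncated generating function $\bigl(\sum_{k \equiv r \pmod t} q^k/(1+q^k)\bigr)\prod_{m \geq 1}(1+q^m)$ modulo $q^{N_t+1}$, and the finitely many inequalities $D_{r,t}(n) \geq D_{s,t}(n)$ are checked mechanically. The main obstacle is calibrating $\mathrm{Err}_t(n)$ tightly enough that $N_t$ remains computationally tractable; the hypothesis $n > 400 t^2/3$ alone already pushes $N_{10}$ into the thousands, so some care in estimating the $V_1$ margin against $2\,\mathrm{Err}_t(n)$, possibly exploiting the symmetry $B_{2k}(1 - x) = B_{2k}(x)$ of the Bernoulli polynomials to sharpen the $\Delta_2, \Delta_4$ terms and a reduction to consecutive pairs $s = r+1$ by telescoping, is likely to be essential.
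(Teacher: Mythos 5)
Your proposal matches the paper's approach closely. The paper also (i) appeals to Theorem \ref{C3 Ineffective Asymptotic} for the qualitative statement, (ii) reduces to consecutive pairs $s = r+1$ by telescoping, (iii) applies Theorem \ref{C3 Effective Asymptotic} to both terms and cancels $V_0$, leaving an inequality of the form $\alpha_{1,r}^* V_1(n) + \alpha_{2,r}^* V_2(n) + \alpha_{4,r}^* V_4(n) \geq 2\,\mathrm{Err}_t(n)$, (iv) bounds the Bernoulli differences uniformly over $2 \leq t \leq 10$ (giving $\alpha_{1,r}^* = \tfrac{1}{2t}$, $\alpha_{2,r}^* \geq -\tfrac{3}{16}$, $\alpha_{4,r}^* \geq -\tfrac{233}{48}$), (v) converts the $V_s(n)$ to $I$-Bessel functions via Lemma \ref{Bessel Estimates} to make the comparison $V_1 \gg V_2, V_4, \mathrm{Err}_t$ explicit, and (vi) computes a threshold $N_t$ beyond which the reduced inequality \eqref{Reduced Inequality} holds, then checks $n \leq N_t$ by machine. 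The one piece your sketch leaves vague — how to render the $V_j$ comparison explicit — is exactly the Bessel-function lemma, and your estimate that $N_{10}$ is ``in the thousands'' is an underestimate: the paper's Table 2 gives $N_t$ in the range $1.08\times 10^5$ to $1.48\times 10^5$, so the finite-check window is larger than you anticipated, though still computationally feasible.
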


\begin{remark}
	We make the following remarks regarding Theorem \ref{C3 Ineffective Asymptotic} and Corollary \ref{C3 Effective Inequality}.
	\begin{enumerate}
		\item Numerics suggest that the only tuples $(r,s,n)$ which can furnish counterexamples to $D_{r,t}(n) \geq D_{s,t}(n)$ are $(1,2,2), (2,3,4), (2,4,4), (3,4,7)$, and $(4,5,8)$. Each of these holds for sufficiently large $t$. For instance, the partitions of $8$ into distinct parts are
		\begin{align*}
			8, 7+1, 6+2, 5+3, 5+2+1, 4+3+1.
		\end{align*}
		Observe that 5 appears as a part twice and 4 only appears as a part once; this implies that $D_{5,t}(8) > D_{4,t}(n)$ for all $t \geq 5$. The other counterexamples listed above are similar in nature.
		\item Similar results are possible to derive for other restricted partition functions. In particular, Jackson and Otgonbayar \cite{JOa,JOb} have studied the analogous results for $k$-regular partitions and $k$-indivisible partitions. They prove that $k$-regular partitions have an exactly analogous bias phenomenon, whereas $k$-indivisible partitions have a more complicated bias which is not in general monotonic in $r$.
	\end{enumerate}
\end{remark}

The proofs of these results occurs in four steps. We first produce generating functions for $D_{r,t}(n)$ using standard techniques which we review in Chapter \ref{C2}. We then use a technique derived from Euler--Maclaurin summation to estimate this generating function near $q=1$. We then use a variation of the circle method due to Wright to translate these estimates into estimates for the coefficients $D_{r,t}(n)$, which we finally translate into effective inequalities through elementary computations and computer calculations.

\subsection{Seaweed algebras and the index statistic}

Partition theory arises in many surprising ways throughout mathematics. One of the most surprising might be the connections with Lie theory. For example, Macdonald \cite{Mac72} unified many disparate theorems about power of Dedekind's eta function under a Lie theoretic framework. Other applications in Lie theory have arisen through the work of Dergachev and Kirillov \cite{DK00} on calculating the index of parabolic subalgebras of $\mathrm{GL}(n)$. In Chapter \ref{C4}, we will answer a conjecture of Coll, Mayers, and Mayers connected to the work of Dergachev and Kirillov.

We first describe the construction of seaweed algebras by Dergachev and Kirillov. Let $\{ e_j \}_{1 \leq j \leq n}$ be the standard basis of $k^n$ for some field $k$. Given two partitions $\{ a_j \}_{1 \leq j \leq m}$, $\{ b_j \}_{1 \leq j \leq \ell}$ of $n$, Dergachev and Kirillov \cite{DK00} defined seaweed algebras as Lie subalgebras of $\text{Mat}(n)$ which preserve the vector spaces $\text{span}\lp e_1, e_2, \dots, e_{a_1 + \dots + a_j} \rp$ for $1 \leq j \leq m$ and $\text{span}\lp e_{b_1 + \dots + b_j + 1}, \dots, e_n \rp$ for $1 \leq j \leq \ell$.

\begin{example}[Partitions of 8]
	Let $\lambda = \lp 3, 3, 2 \rp$ and $\mu = \lp 4, 3, 1 \rp$. The seaweed algebra associated to the pair $\lp \lambda, \mu \rp$ is the set of all $8 \times 8$ matrices $X$ of the form below:
	\begin{align*}
		X = \begin{pmatrix} * & * & * & 0 & 0 & 0 & 0 & 0 \\ * & *& * & 0 & 0 & 0 & 0 & 0 \\ * & * & * & 0 & 0 & 0 & 0 & 0 \\ * & * & * & * & * & * & 0 & 0 \\ 0 & 0 & 0 & 0 & * & * & * & 0 \\ 0 & 0 & 0 & 0 & * & * & 0 & 0 \\ 0 & 0 & 0 & 0 & * & * & * & * \\ 0 & 0 & 0 & 0 & 0 & 0 & 0 & * \\ \end{pmatrix}
	\end{align*}
	Each part $\lambda_i$ of $\lambda$ is used to construct a $\lambda_i \times \lambda_i$ triangle of $*$'s in upper triangular section of the matrix, and similarly for $\mu$ in the lower triangular section.
\end{example}

In \cite[Theorem 5.1]{DK00}, Dergachev and Kirillov obtain an exact formula for the index of seaweed algebras which is calculated from a certain graph constructed from $\lambda, \mu$. We denote by $\ind_\mu(\lambda)$ the index of the seaweed algebra constructed from the pair $\lp \lambda, \mu \rp$. Coll, Mayers and Mayers in \cite{CMM20} initiate the study of $\ind$ as a partition-theoretic object, proving for example a connection between the special case of $\ind_\mu(\lambda)$ with $\mu = (1, 1, \cdots, 1)$ to the well-studied 2-colored partition function \cite[Theorem 11]{CMM20}.

Coll, Mayers and Mayers also studied the $q$-series
\begin{align*}
	G(q) := \prod_{n=1}^\infty \dfrac{1}{1 + \lp -1 \rp^n q^{2n-1}} =: \lp q, -q^3; q^4 \rp_\infty^{-1},
\end{align*}
using the standard abbreviation $\lp a, b; q \rp_\infty := \lp a; q \rp_\infty \cdot \lp b; q \rp_\infty$. Note that because of the factor $\lp -q^3; q^4 \rp_\infty^{-1}$, it is not clear whether $G(q)$ has non-negative coefficients. We consider the restricted index statistic $\ind_{(n)}(\lambda)$, which we henceforth denote by $\ind(\lambda)$. Coll, Mayers, and Mayers define $e(n)$ (resp. $o(n)$) as the number of partitions of $n$ into odd parts whose index is even (resp. odd). In this setting, they make the following interesting conjecture \cite[Conjecture 20]{CMM20} connecting the index statistic to $G(q)$.

\begin{namedconjecture}[Coll--Mayers--Mayers] \label{C4 CMM Conjecture}
	The following are true:
	
	\noindent \textnormal{(1)} All the coefficients of $G(q)$ are non-negative.
	
	\noindent \textnormal{(2)} We have $G(q) = \sum\limits_{n \geq 0} \left| e(n) - o(n) \right| q^n$.
\end{namedconjecture}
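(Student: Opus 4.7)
The plan is to establish part (1) of the conjecture, namely the non-negativity of every coefficient of $G(q)$, by proving a sharp asymptotic formula for these coefficients with effective error terms; part (2) then follows from the reduction due to Seo and Yee cited in the abstract. Writing $c(n) := [q^n] G(q)$, my aim is to show that $c(n) \sim C \cdot n^{-\alpha} e^{\beta \sqrt{n}}$ for explicit positive constants $C, \alpha, \beta$, and then to bootstrap this into the statement $c(n) > 0$ for all $n$ larger than an explicit threshold $N_0$; the remaining coefficients will be checked by direct series expansion on a computer.

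The natural framework here is Wright's variant of the circle method, since $G(q)$ is not a modular form and the full Rademacher expansion is not available. Wright's method requires two inputs: (a) a precise asymptotic expansion of $G(q)$ in a shrinking neighborhood of the dominant singularity at $q = 1$, and (b) sufficiently strong bounds on $|G(q)|$ on the complementary minor arcs. For (a) I will set $q = e^{-z}$ with $z \to 0^+$ along a suitable ray and split the logarithm as
\begin{align*}
    \log G(q) = -\sum_{m \geq 0} \log(1 - q^{4m+1}) \; - \; \sum_{m \geq 0} \log(1 + q^{4m+3}),
\end{align*}
then apply an effective Euler--Maclaurin summation formula, of the type invoked in the proof of Theorem \ref{C3 Effective Asymptotic}, to each sum. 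The leading contribution will be of the form $\lambda / z$ for an explicit positive $\lambda$, with subleading polynomial and logarithmic corrections whose coefficients can be written down exactly.

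With this expansion in hand, Wright's saddle-point choice of radius $|q| = e^{-\kappa / \sqrt{n}}$ and the localization of Cauchy's integral formula to a short arc around $q = 1$ produce a main term of the Hardy--Ramanujan shape $C \cdot n^{-\alpha} e^{\beta \sqrt{n}}$ together with an error controlled by the minor-arc bound. The delicate point is (b): the factor $1 + (-1)^n q^{2n-1}$ vanishes at various roots of unity (whenever $n$ is even and $q^{2n-1} = -1$), so $G(q)$ has secondary singularities that must be shown not to rival the one at $q = 1$. I expect that an Ingham/Meinardus-style estimate, exploiting the fact that the parts contributing to $G(q)$ are spread across the residue classes $1, 3 \pmod 4$, will yield the required strict dominance of the $q = 1$ contribution.

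Finally, to convert ``asymptotically positive'' into ``positive for every $n$'', every error estimate in the analysis above must be made completely explicit, with tracked constants, so that an honest threshold $N_0$ emerges beyond which the main term provably dominates. This effectivization, and in particular the quantitative minor-arc bound, is the step I expect to be the principal obstacle; once $N_0$ is pinned down, the finitely many values $c(0), c(1), \dots, c(N_0)$ will be verified non-negative by direct computation, after which part (2) of the conjecture follows immediately from the Seo--Yee reduction.
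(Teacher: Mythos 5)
Your overall framework---Wright's circle method with effective Euler--Maclaurin estimates, reducing part (2) to part (1) via Seo--Yee, and then a finite computer check below an explicit threshold---is exactly the strategy the paper follows. However, there is a genuine gap in how you propose to handle the singularity of $G(q)$ at $q = -1$, and it is fatal to the plan as written.

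You treat $q = -1$ (and other roots of unity) as ``secondary singularities'' to be absorbed into the minor-arc estimate via an Ingham/Meinardus-style bound. This cannot work. The factor $(q;q^4)_\infty^{-1}$ has its essential singularity at $q = 1$, and $(-q^3;q^4)_\infty^{-1}$ has one at $q = -1$, and these two singularities have the \emph{same} exponential strength: writing $q = e^{-z}$ near $q=1$ and $q = -e^{-z}$ near $q=-1$, one finds in both cases $\log G = \frac{\pi^2}{48z} + O(\log(1/z))$ as $z \to 0$. Consequently $|G(q)|$ grows like $\exp\bigl(\frac{\pi^2}{48}\cdot\frac{1}{\mathrm{Re}(z)}\bigr)$ near \emph{both} $q=1$ and $q=-1$, and no bound of the form $|G(q)| \ll \exp\bigl((\tfrac{\pi^2}{48}-\kappa)\tfrac{1}{\mathrm{Re}(z)}\bigr)$ with $\kappa>0$ can hold on any region that includes a neighborhood of $q=-1$. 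Your proposed minor arc necessarily contains such a neighborhood, so the estimate you would need is false. The ``spread across residue classes $1,3 \pmod 4$'' observation is in fact what \emph{produces} the near-cancellation, rather than what rules it out: the two Pochhammer factors nearly cancel each other's poles in the sense that they split the dominant singularity between $q=1$ and $q=-1$.

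The repair is to introduce a \emph{second} major arc around $q = -1$, analyzed by the change of variables $q \mapsto -q$ which moves that arc to a neighborhood of $q=1$ for $G(-q)$. The resulting asymptotic then has two Bessel-function terms, $n^{-3/8} I_{-3/4}\bigl(\tfrac{\pi}{2}\sqrt{n/3}\bigr)$ from $q=1$ and $(-1)^n\,n^{-5/8}I_{-5/4}\bigl(\tfrac{\pi}{2}\sqrt{n/3}\bigr)$ from $q=-1$; both carry the identical exponential factor $e^{\frac{\pi}{2}\sqrt{n/3}}$, and the $q=1$ term wins only by the polynomial factor $n^{-3/8}$ versus $n^{-5/8}$. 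It is precisely this second, nearly-as-large contribution that must be isolated and bounded explicitly in order to produce the effective threshold $N_0$ you need; it cannot be swept into the minor-arc error. Once you add this second major arc, the remainder of your outline---effective Euler--Maclaurin on each Pochhammer factor, an explicit minor-arc bound on the region $15x \le |y| \le \pi - 15x$ bounded away from both $\pm 1$, tracked constants, and a finite verification---goes through as you describe.
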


Previous papers by Seo, Yee, and Chern have made progress towards the conjecture, but a complete proof was not known. Seo and Yee \cite[Theorem 1]{SY20} made a significant conceptual step, proving using generating function methods that it would be enough to prove the first part of the conjecture; that is, if we define
\begin{align*}
	G(q) =: \sum_{n=0}^\infty a(n) q^n,
\end{align*}
then (2) would follow from (1) in the Coll--Mayers--Mayers Conjecture. Chern \cite{Che19} used a version of the circle method to prove an upper limit on the last counterexample to the conjecture, but the constants involved were too large to be calculated on a personal computer, thus the conjecture remained open. We complete the proof of the conjecture, using a different version of the circle method to prove explicit asymptotic formulas for $a(n)$. Our results are as follows:

\begin{theorem} \label{C4 a(n) Asymptotics}
	As $n \to \infty$, we have
	\begin{align*}
		a(n) \sim \dfrac{\Gamma\lp \frac 14 \rp \pi^{\frac 14}}{2^{\frac 94} 3^{\frac 38} n^{\frac 38}} I_{-\frac 34} \lp \dfrac{\pi}{2} \sqrt{\dfrac{n}{3}} \rp + (-1)^n \dfrac{\Gamma\lp \frac 34 \rp \pi^{\frac 34}}{2^{\frac{11}{4}} 3^{\frac 58} n^{\frac 58}} I_{-\frac 54}\lp \dfrac{\pi}{2} \sqrt{\dfrac{n}{3}} \rp,
	\end{align*}
	and for $n > 4800$ the difference between these has absolute value at most $E(n)$ as defined in \eqref{Error Definition}.
\end{theorem}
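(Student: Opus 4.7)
The plan is to prove Theorem \ref{C4 a(n) Asymptotics} via Wright's variant of the circle method, powered by an effective Euler--Maclaurin expansion of $\log G(q)$. Starting from Cauchy's formula
\begin{equation*}
a(n) \;=\; \frac{1}{2\pi i}\oint_{|q|=e^{-c/\sqrt{n}}} \frac{G(q)}{q^{n+1}}\,dq
\end{equation*}
on a circle of optimized radius, the two factors $(q;q^4)_\infty^{-1}$ and $(-q^3;q^4)_\infty^{-1}$ are singular at $q=1$ and $q=-1$ respectively, and these are the only dominant singularities on the unit circle. I would therefore split the contour into two major arcs of length $\sim n^{-1/2}$, one around each of $\pm 1$, together with a complementary minor arc.

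The next step is to obtain effective asymptotic expansions of $G(q)$ near $q=\pm 1$. Writing $q = e^{-z}$ near $1$ (respectively $q = -e^{-z}$ near $-1$) and expanding
\begin{equation*}
\log G(q) \;=\; \sum_{m,k\geq 1} \frac{q^{(4m-3)k}}{k} \;+\; \sum_{m,k\geq 1}\frac{(-1)^k q^{(4m-1)k}}{k},
\end{equation*}
I would sum the inner $m$-series as $q^{ck}/(1-q^{4k})$ and apply the effective Euler--Maclaurin technology developed in Chapter \ref{C3}. The dilogarithm evaluation of the remaining $k$-sums yields a common dominant singularity of the form $G(q) \sim C_{\pm}\,z^{\alpha_{\pm}}\exp\!\bigl(\pi^2/(48z)\bigr)$ as $z\to 0^{+}$; the cancellation between $\sum k^{-2}$ and $\sum (-1)^k k^{-2}$ at the two factors is what fixes the constant $\pi^2/48$. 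The algebraic prefactor exponents $\alpha_+$ and $\alpha_-$ will differ at the two singularities and will match the Bessel indices $-\tfrac34$ and $-\tfrac54$ in the theorem, while the saddle-point $z_{\ast} = \pi/\sqrt{48 n}$ produces the Bessel argument $\tfrac{\pi}{2}\sqrt{n/3}$.

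With these expansions in hand, the major-arc integrals succumb to Wright's saddle-point procedure: substituting the truncated expansion into Cauchy's integral and rescaling $z \mapsto z/\sqrt{n}$ identifies each major arc (up to an effectively controlled tail) with the Hankel-contour representation
\begin{equation*}
I_\nu(x) \;=\; \frac{1}{2\pi i}\int_{\mathcal{H}} t^{-\nu-1}\exp\!\bigl(t + \tfrac{x^2}{4t}\bigr)\,dt.
\end{equation*}
The constants $\Gamma(\tfrac14)$ and $\Gamma(\tfrac34)$ appearing in the theorem then emerge from combining the Euler--Maclaurin prefactors $C_{\pm}$ with the Hankel normalization, while the factor $(-1)^n$ on the second term is simply $q^{-n-1}$ evaluated at $q=-1$.

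The main obstacle is making every estimate fully effective so that the resulting bound holds for all $n > 4800$. This requires three explicit ingredients: (i) a fully explicit Euler--Maclaurin remainder, controlled by derivative bounds on $\log(1 \mp e^{-xz})$; (ii) a uniform minor-arc bound of the shape $|G(q)| \leq \exp\!\bigl((\pi^2/48 - \delta)/|z|\bigr)$ with an explicit $\delta>0$, obtained by comparing $|1 \pm q^m|^{-1}$ with $(1-|q|^m)^{-1}$ once $q$ is bounded away from both $\pm 1$; and (iii) explicit control on the tail error incurred in passing from the truncated saddle integral to the full Hankel contour. After optimizing the radius parameter $c$ and collecting all constants into $E(n)$, the threshold $n > 4800$ is verified by direct numerical check, which combined with explicit computation of $a(n)$ for $n \leq 4800$ completes the proof of the Coll--Mayers--Mayers conjecture.
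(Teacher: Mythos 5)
Your proposal matches the paper's strategy: two major arcs around $q=\pm1$ where $G(q)\sim C_\pm z^{\mp 1/4}e^{\pi^2/(48z)}$ (with the $\pi^2/48$ arising from cancellation between the two Pochhammer factors), an effective Euler--Maclaurin expansion of $\log G$ with explicit remainders, a minor-arc bound of the form $|G(e^{-z})|\leq\exp(c/\mathrm{Re}(z))$ with $c<\pi^2/48$, and Hankel-contour identification of the major-arc integrals with $I_{-3/4}$ and $I_{-5/4}$ (plus explicit tail bounds for the truncated contour). This is essentially the same approach the paper takes.
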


\begin{theorem} \label{C4 Conjecutre Proof}
	Conjecture \ref{C4 CMM Conjecture} is true. That is, we have
	\begin{align*}
		G(q) = \lp q, -q^3; q^4 \rp_\infty^{-1} = \sum_{n \geq 0} \left|e(n) - o(n)\right| q^n.
	\end{align*}
\end{theorem}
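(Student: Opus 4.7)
By the reduction of Seo and Yee \cite{SY20}, part (2) of Conjecture \ref{C4 CMM Conjecture} follows from part (1), so it suffices to prove that $a(n) \geq 0$ for every $n \geq 0$. My plan is to combine the effective asymptotic formula from Theorem \ref{C4 a(n) Asymptotics} with a finite computer verification, closing the gap left open by Chern's earlier work in which the threshold was too large to be checked on a personal computer.

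First I would isolate the sign behavior in Theorem \ref{C4 a(n) Asymptotics}. Write the main term as $M_1(n) + (-1)^n M_2(n)$, where
\begin{align*}
M_1(n) := \dfrac{\Gamma\lp \tfrac 14 \rp \pi^{\frac 14}}{2^{\frac 94} 3^{\frac 38} n^{\frac 38}} I_{-\frac 34}\lp \tfrac{\pi}{2} \sqrt{\tfrac{n}{3}} \rp, \qquad M_2(n) := \dfrac{\Gamma\lp \tfrac 34 \rp \pi^{\frac 34}}{2^{\frac{11}{4}} 3^{\frac 58} n^{\frac 58}} I_{-\frac 54}\lp \tfrac{\pi}{2} \sqrt{\tfrac{n}{3}} \rp.
\end{align*}
For $n$ large, both $M_1(n)$ and $M_2(n)$ are positive, with common exponential growth $e^{\frac{\pi}{2}\sqrt{n/3}}$, but $M_2(n)/M_1(n) = O(n^{-1/4})$. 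Thus the worst case for positivity occurs when $n$ is odd and $M_2(n)$ subtracts. I would therefore seek an explicit threshold $n_0$ such that for every $n > n_0$,
\begin{align*}
M_1(n) - M_2(n) > E(n),
\end{align*}
where $E(n)$ is the explicit error appearing in Theorem \ref{C4 a(n) Asymptotics} via \eqref{Error Definition}. This step requires effective lower bounds on $I_{-3/4}$ and effective upper bounds on $I_{-5/4}$ for arguments $x = \frac{\pi}{2}\sqrt{n/3}$, which I would obtain from standard integral representations together with the uniform asymptotic $I_\nu(x) = \frac{e^x}{\sqrt{2\pi x}}(1 + O(1/x))$ made quantitative.

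Having secured $a(n) > 0$ for all $n > n_0$, I would treat the remaining range $n \leq n_0$ directly. Since Theorem \ref{C4 a(n) Asymptotics} holds for all $n > 4800$, the threshold $n_0$ can be chosen of that order, well within the reach of a straightforward symbolic computation. Expanding $G(q) = (q;q^4)_\infty^{-1}(-q^3;q^4)_\infty^{-1}$ as a power series in a computer algebra system up to $q^{n_0}$ and inspecting the list of coefficients completes the verification.

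The main obstacle is the quantitative balancing in the middle step: the error $E(n)$ and the secondary term $M_2(n)$ both decay only by polynomial factors relative to $M_1(n)$, so I must track the Bessel function constants carefully enough that the crossover $n_0$ comes out small enough to be computationally feasible. Once this is done, concatenating the asymptotic regime and the finite computation yields $a(n) \geq 0$ for all $n$, which via Seo--Yee establishes Theorem \ref{C4 Conjecutre Proof}.
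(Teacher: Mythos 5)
Your proposal matches the paper's proof step for step: reduce via Seo--Yee to showing $a(n) \geq 0$, split the main term as $M_1(n) \pm M_2(n)$, bound $|M_2(n)|$ and the explicit error $E(n)$ against $M_1(n)$ using quantitative Bessel estimates (the paper uses Olver's bound to obtain $I_{-3/4}(t) > \tfrac{99}{100}\cdot\tfrac{e^t}{\sqrt{2\pi t}}$), and finish by computer verification on the remaining finite range. One caveat on the quantitative balancing you flag: the crossover threshold works out to $350000$ rather than ``of the order'' of $4800$, but that is still easily checked by machine, so the strategy goes through as you describe.
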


\begin{remark}
	We make several remarks about Theorems \ref{C4 a(n) Asymptotics} and \ref{C4 Conjecutre Proof}.
	\begin{enumerate}
		\item Theorem \ref{C4 a(n) Asymptotics} implies Chern's result (i.e. Theorem 1.2 of \cite{Che19}).
		\item Chern proves $a(n) \geq 0$ for $n > 2.4 \times 10^{14}$ using his explicit error terms. Our explicit error terms prove $a(n) \geq 0$ for $n \geq 3.5 \times 10^5$, which reduces the problem to a feasible computation on the author's personal computer.
		\item In combination with \cite[Theorem 1]{SY20} of Seo--Yee, our result also proves that the sign of $e(n) - o(n)$ is periodic.
	\end{enumerate}
\end{remark}

The proof of Theorems \ref{C4 a(n) Asymptotics} and \ref{C4 Conjecutre Proof} rely on an explicit application of Wright's circle method. As $G(q)$ is not any kind of modular object, we will require the explicit Euler--Maclaurin asymptotic techniques that are developed in Chapter \ref{C3}. Because the two factors $\lp q; q^4 \rp_\infty^{-1}$ and $\lp -q^3; q^4 \rp_\infty^{-1}$ have poles which nearly cancel each other, we have to add an additional layer to the calculations. In particular, we must include in the so-called ``major arc" not just behavior as $q \to 1$ but also $q \to -1$. Although this does not rely on a traditional usage of Wright's circle method, it remains in the same spirit.

\section{Arithmetic statistics of partitions} \label{S1.3}

\subsection{Distribution of $t$-hooks modulo 2}

In Sections \ref{S1.2} and \ref{S1.3}, we have discussed results derived from Wright's circle method, which in a sense is tailed to generating functions which are not suitably modular. If the generating functions are modular, then by the work of Rademacher on $p(n)$ \cite{Rad37} we can improve on these results and use the circle method\footnote{These exact formulas can also be derived using the method of Poincar\'{e} series, see for example \cite{CS17}.} to derive exact formulas. In particular, Rademacher proved that for $n \geq 1$, we have
\begin{align} \label{Rademacher Exact}
	p(n) = \dfrac{2\pi}{\lp 24n - 1 \rp^{\frac 34}} \sum_{k=1}^\infty \dfrac{K_k(n)}{k} I_{\frac 32}\lp \dfrac{\pi \sqrt{24n - 1}}{6k} \rp,
\end{align}
where $I_{\frac 32}$ is the classical $I$-Bessel function of index $\frac 32$ and $K_k(n)$ is a certain ``Kloosterman sum" defined by
\begin{align} \label{p(n) Kloosterman Sum}
	K_k(n) := \sum_{\substack{0 \leq h < k \\ (h,k) = 1}} e^{\pi i s(h,k) - 2\pi i n \frac{h}{k}}, \hspace{0.2in} s(h,k) := \sum_{r=1}^{k-1} \dfrac{r}{k} \lp \dfrac{hr}{k} - \left\lfloor \dfrac{hr}{k} \right\rfloor - \dfrac 12 \rp.
\end{align}
These results are extended in a very general setting by Zuckerman \cite{Zuc39}.

In Chapter \ref{C5}, we prove an analogous exact formula connected to hook numbers of partitions. To define hook numbers, it is most natural to refer to the {\it Young diagram} of a partition, which for $\lambda = \lp \lambda_1, \dots, \lambda_\ell \rp$ is a diagram of left-justified cells with $\lambda_i$ cells in row $i$. In these diagrams, we fill each cell $(i,j)$ with a {\it hook number} $h_{i,j}(\lambda)$, which is defined as the number of cells lying below or to the right of $(i,j)$ in the Young diagram of $\lambda$. We let $\mathcal H(\lambda)$ denote the multiset of hook numbers of $\lambda$, and $\mathcal H_t(\lambda)$ the multiset of hook numbers of $\lambda$ that are multiples of $t$, which we call {\it $t$-hooks}.
\begin{example}
	Consider the partition $\lambda = (5, 4, 1)$, with hook diagram
	$$\young(75431,5321,1).$$
	Then $\mathcal H(\lambda) = \{ 1, 1, 1, 2, 3, 3, 4, 5, 5, 7 \}$, $\mathcal{H}_2(\lambda) = \{ 2, 4 \}$ and $\mathcal{H}_5(\lambda) = \{ 5, 5 \}$.
\end{example}

Hook numbers play a central role in the representation theory of the symmetric group. It is well known that the partitions of $n$ index the irreducible representations of $S_n$ \cite{JK84}. This is not merely a bijection, but these representations can be constructed from the partitions, and properties of the hook numbers in the corresponding Young diagrams encode properties of the representations. For example, the famous Frame-Thrall-Robinson formula says that if $\rho_\lambda$ is the $S_n$-representation associated to $\lambda$ we have $\dim \rho_\lambda = \frac{n!}{\prod_{h \in \mathcal H(\lambda)} h}$.
In combinatorics, hook numbers show up in the Nekrasov-Okounkov hook length formula \cite{NO06}, which says that for any complex number $z$, we have
\begin{align} \label{Nekrasov-Okounkov}
	\sum_{\lambda \in \mathcal P} x^{|\lambda|} \prod_{h \in \mathcal H(\lambda)} \lp 1 - \dfrac{z}{h^2} \rp = \prod_{n=1}^\infty \lp 1 - q^n \rp^{z-1}.
\end{align}
This formula connects the study of hook numbers to modular forms via Dedekind's eta function, as \eqref{Nekrasov-Okounkov} connects hook numbers to powers of the eta function.

For integers $t \geq 2$ and any partition $\lambda$, we wish to study the size of the $t$-hook multisets $\mathcal H_t(\lambda)$, in particular their parity. We define
\begin{align*}
	p_t^e(n) &:= \# \{ \lambda \vdash n : \# \mathcal H_t(\lambda) \equiv 0 \pmod{2} \}, \\ p_t^o(n) &:= \# \{ \lambda \vdash n : \# \mathcal H_t(\lambda) \equiv 1 \pmod{2} \}.
\end{align*}
We wish to study the distribution of the parity of $\# \mathcal H_t(\lambda)$. Since $p_t^e(n) + p_t^o(n) = p(n)$, we wish to study $\delta_t^{e/o}(n) = \frac{p_t^{e/o}(n)}{p(n)}$. Consider the following tables which give values of these functions.

\begingroup
\renewcommand{\arraystretch}{0.75}

\begin{table}
	\begin{center}
	\begin{tabular}{|c|c|c|c|c|c|} \hline
		$t$ & $\delta_t^e(100)$ & $\delta_t^e(1000)$ & $\delta_t^e(10000)$ & $\cdots$ & $\infty$ \\ \hline
		2 & 0.56611246  &0.50027931  & 0.50000000  &$\cdots$ & $\frac{1}{2}$ \\ \hline
		4 & 0.47067843  &0.50002869  & 0.50000000  &$\cdots$ & $\frac{1}{2}$ \\ \hline
		6 & 0.52465920  &0.50007471  & 0.50000000  &$\cdots$ & $\frac{1}{2}$ \\ \hline
		8 & 0.49484348  &0.49999135  & 0.50000000  &$\cdots$ & $\frac{1}{2}$ \\ \hline
	\end{tabular}
	\caption{Data for $\delta_t^e(n)$, even $t$}
	\end{center}

	\begin{center}
	\begin{tabular}{|c|c|c|c|c|c|c|} \hline
		$t$ & $\delta^e_t(100)$ & $\delta^e_t(500)$ & $\delta^e_t(1000)$ & $\delta_t^e(1500)$  &$\cdots$ & $\infty$ \\ \hline
		3 &0.7137967695 & 0.7502983017 &0.7499480195 & 0.7500039425 & $\cdots$ & $\frac{3}{4}$ \\ \hline
		5 &0.6374948698 & 0.6252149479 &0.6250102246 & 0.6250009877 &  $\cdots$ & $\frac{5}{8}$ \\ \hline
		7 &0.5468769228 & 0.5624965413 &0.5625165550 & 0.5624989487 &  $\cdots$ & $\frac{9}{16}$ \\ \hline
		9 &0.5375271584 & 0.5313027269 &0.5312496766 & 0.5312499631 &  $\cdots$ & $\frac{17}{32}$ \\ \hline
	\end{tabular}
	\caption{Data for $\delta_t^e(n)$, $t$ odd and $n$ even.}
	\end{center}

	\begin{center}
	\begin{tabular}{|c|c|c|c|c|c|c|} \hline
		$t$ & $\delta^e_t(101)$ & $\delta^e_t(501)$ & $\delta^e_t(1001)$ & $\delta_t^e(1501)$ &  $\cdots$ & $\infty$ \\ \hline
		3 &0.2376157284 & 0.2494431573 & 0.2499820335 & 0.2500060167 &  $\cdots$ & $\frac{1}{4}$ \\ \hline
		5 &0.3755477486 & 0.3750000806 & 0.3750000001 & 0.3750000000 &  $\cdots$ & $\frac{3}{8}$ \\ \hline
		7 &0.4396942088 & 0.4374987794 & 0.4374959329 & 0.4375000006 &  $\cdots$ & $\frac{7}{16}$ \\ \hline
		9 &0.4787668076 & 0.4688094755 & 0.4687535414 & 0.4687510507 &  $\cdots$ & $\frac{15}{32}$ \\ \hline
	\end{tabular}
	\caption{Data for $\delta_t^e(n)$, $t$ odd and $n$ odd.}
	\end{center}
\end{table}

\endgroup

Numerically, this initial speculation receives support for small values of $t$ like $t = 2, 4, 6$, and $8$. However, numerical evidence below for the cases $t = 3 ,5 ,7$ and $9$ appears to refute this naive guess. In fact, these tables suggest the existence of multiple limiting values.

In Chapter \ref{C5}, we prove the following theorems that explain this data. In particular, we see what the correct limiting values of $\delta^{e/o}_t(n)$ are.

\begin{theorem} \label{C5 Even and Odd T Behavior}
	Assuming the notation above, the following are true.
	
	1) If $t>1$ is an even integer, then $$\lim_{n \to \infty} \delta_t^e(n) = \lim_{n \to \infty} \delta_t^o(n) = \dfrac{1}{2}.$$
	
	2) If $t>1$ is an odd integer, then we have $$\lim_{n \to \infty} \delta_t^e(n) = \begin{cases} \dfrac{1}{2} + \dfrac{1}{2^{(t+1)/2}} & \text{if } 2 \mid n, \\ \dfrac{1}{2} - \dfrac{1}{2^{(t+1)/2}} & \text{if } 2 \nmid n, \end{cases} \hspace{0.2in} \text{and} \hspace{0.2in} \lim_{n \to \infty} \delta_t^o(n) = \begin{cases}  \dfrac{1}{2} - \dfrac{1}{2^{(t+1)/2}} & \text{if } 2 \mid n, \\ \dfrac{1}{2} + \dfrac{1}{2^{(t+1)/2}} & \text{if } 2 \nmid n. \end{cases}$$
\end{theorem}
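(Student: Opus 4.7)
My plan is to derive an explicit eta-quotient formula for $F_t(q) := \sum_{n\geq 0}(p_t^e(n)-p_t^o(n))q^n$ and compare its coefficients to $p(n)$ using a Rademacher-style circle method in the spirit of \eqref{Rademacher Exact}. The starting point is the classical $t$-core/$t$-quotient bijection: every $\lambda\in\mathcal{P}$ corresponds uniquely to a pair consisting of a $t$-core $\kappa$ and a $t$-tuple of partitions $(\mu^{(0)},\dots,\mu^{(t-1)})$ satisfying $|\lambda|=|\kappa|+t\sum_i|\mu^{(i)}|$ and, crucially, $\#\mathcal{H}_t(\lambda)=\sum_i|\mu^{(i)}|$. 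Writing $C_t(q)=(q^t;q^t)_\infty^t/(q;q)_\infty$ for the $t$-core generating function and using the Euler-type identity $\prod_n(1-(-1)^nx^n)^{-1}=(x;x)_\infty(x^4;x^4)_\infty/(x^2;x^2)_\infty^3$, the bijection yields
\begin{align*}
F_t(q) \;=\; \frac{(q^t;q^t)_\infty^{2t}\,(q^{4t};q^{4t})_\infty^{t}}{(q;q)_\infty\,(q^{2t};q^{2t})_\infty^{3t}},
\end{align*}
which via $(q^m;q^m)_\infty = q^{-m/24}\eta(m\tau)$ is a weight $-\tfrac12$ eta quotient on a congruence subgroup of level dividing $4t$.

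With $F_t(q)$ in this form, Rademacher's circle method produces a convergent exact formula for its coefficients, with terms indexed by Farey fractions $h/k$; each term is a Kloosterman-type sum times an $I$-Bessel function whose argument is controlled by the order of $F_t$ at the corresponding cusp. A saddle-point calculation at $q=e^{-\epsilon}\to 1$, using the standard estimate $(q^m;q^m)_\infty\sim\sqrt{2\pi/(m\epsilon)}\,e^{-\pi^2/(6m\epsilon)}$, gives, for \emph{every} $t\geq 2$, an exponent $\pi^2/(24\epsilon)$ and hence coefficient growth of order $e^{\pi\sqrt{n/6}}$---exponentially smaller than $p(n)\sim e^{\pi\sqrt{2n/3}}$. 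For $t$ even, the analogous computation at $q=-e^{-\epsilon}$ shows that this cusp is non-singular (since $q^t$, $q^{2t}$, $q^{4t}$ all tend unambiguously to $+1$), and cusps with $k\geq 3$ yield still smaller contributions; hence $p_t^e(n)-p_t^o(n)=o(p(n))$, and part (1) follows from the identity $\delta_t^e(n)=\tfrac12+(p_t^e(n)-p_t^o(n))/(2p(n))$.

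For $t$ odd the analysis at $q=-e^{-\epsilon}$ changes qualitatively, because $q^t=-e^{-t\epsilon}$ while $q^{2t}$ and $q^{4t}$ still approach $+1$. Expanding the four factors of $F_t$ accordingly, and applying the above eta-quotient asymptotic together with $\prod_n(1-(-1)^nu^n) = (u^2;u^2)_\infty^3/((u;u)_\infty(u^4;u^4)_\infty)$ to the problematic factor, should give $F_t(-e^{-\epsilon})\sim 2^{-(t-1)/2}P(e^{-\epsilon})$, so that the $k=2$ Rademacher contribution to $F_t(n)$ reproduces the $k=1$ Rademacher term of $p(n)$ scaled by the constant $2^{-(t-1)/2}$, together with the Kloosterman phase $e^{2\pi i n/2}=(-1)^n$ attached to the Farey fraction $1/2$. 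This yields $p_t^e(n)-p_t^o(n)\sim(-1)^n p(n)/2^{(t-1)/2}$, and the stated limits for $\delta_t^{e/o}(n)$ follow directly. The main obstacle will be the final calculation: rigorously extracting the constant $2^{-(t-1)/2}$ from the eta-quotient transformation law at the cusp $h/k=1/2$, while simultaneously verifying that all cusps with $k\geq 3$ contribute at a strictly smaller exponential rate. The latter reduces to a case-by-case comparison of the orders of each $\eta$ factor evaluated at the relevant roots of unity, but must be handled uniformly in $t$.
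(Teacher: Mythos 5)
The proposal follows essentially the same route as the paper: produce the eta-quotient generating function for $A_t(n)=p_t^e(n)-p_t^o(n)$ (your formula agrees with the paper's \eqref{A_t(n) Generating Function}, which is Han's Corollary 5.2; the $t$-core/$t$-quotient bijection you invoke is a valid alternative derivation), then apply a Rademacher-type circle method and extract the dominant cusp. Your treatment of the $t$ odd case is correct, including the constant $2^{-(t-1)/2}$ from $F_t(-e^{-\epsilon})\sim 2^{(1-t)/2}P(e^{-\epsilon})$ and the Kloosterman phase $(-1)^n$ attached to $k=2$; one still needs to check that the $k=2$ Kloosterman sum is nonzero (Lemma \ref{Kloosterman sum nonzero} in the paper), but this is a finite verification.

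There is, however, a genuine gap in the $t$ even case. You claim that the cusp $q\to -1$ is non-singular and that ``cusps with $k\geq 3$ yield still smaller contributions''; the second claim is false. Writing $t=2^s\ell$ with $\ell$ odd and $s\geq 1$, the paper's Corollary \ref{Dominating term of A_t} shows the dominant contribution to $A_t(n)$ arises at $k=2^{s+1}\geq 4$, where the $I$-Bessel argument is $\frac{\pi}{6\cdot 2^{s+1}}\sqrt{(1+3\cdot 4^s)(24n-1)}$. Already for $t=2$ this equals $\frac{\pi\sqrt{13}}{24}\sqrt{24n-1}$, strictly larger than the $k=1$ argument $\frac{\pi}{12}\sqrt{24n-1}$ (and the $k=2$ cusp is non-singular as you note). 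The conclusion $A_t(n)=o(p(n))$ still holds because $\frac{\sqrt{1+3\cdot 4^s}}{2^{s+1}}<1$ for all $s\geq 1$, but to obtain it one must maximize the Bessel argument coefficient over all admissible $(h,k)$, not merely compare $k=1$ and $k=2$. The paper does this by splitting into the three residue classes of $k_0:=k/\gcd(k,t)$ modulo $4$, working out the cusp width in each case, and then optimizing; without that global optimization your argument for part (1) does not close.
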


We also study the sign pattern of $p^e_t(n) - p^o_t(n)$, for $n \rightarrow \infty$, which determines when $p_t^e(n) > p_t^o(n)$ and $p_t^o(n) > p_t^e(n)$.

\begin{theorem}\label{C5 Distribution property}
	For $t>1$ a fixed positive integer, write $t = 2^s\ell$ for integers $s, \ell \geq 0$ such that $\ell$ odd. Then for sufficiently large $n$, the sign of $p^e_t(n) - p^o_t(n)$ is periodic with period $2^{s+1}$. In particular, when $t$ is odd the sign of $p^e_t(n) -p^o_t(n)$ is alternating for sufficiently large $n$.
\end{theorem}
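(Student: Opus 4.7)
The plan is to exploit the $t$-core/$t$-quotient decomposition of partitions to produce a closed-form generating function for $p^e_t(n) - p^o_t(n)$, and then apply a Rademacher-style circle method to isolate the dominant asymptotic contribution, whose periodicity in $n$ reproduces the claim.

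First I would derive the generating function. The bijection between a partition $\lambda$ and its $t$-core $\mu$ together with its $t$-quotient $(\lambda^{(0)}, \ldots, \lambda^{(t-1)})$ satisfies $|\lambda| = |\mu| + t \sum_i |\lambda^{(i)}|$ and $\# \mathcal H_t(\lambda) = \sum_i |\lambda^{(i)}|$. Inserting the sign $(-1)^{\# \mathcal H_t(\lambda)}$ and summing yields
\[
F_t(q) := \sum_{n \geq 0} \lp p^e_t(n) - p^o_t(n) \rp q^n = C_t(q) \cdot \prod_{n \geq 1} \lp 1 - (-1)^n q^{tn} \rp^{-t},
\]
where $C_t(q) = (q^t;q^t)_\infty^t (q;q)_\infty^{-1}$ is the standard $t$-core generating function.

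Next I would identify the dominant singularities of $F_t$. A factor $(1-(-1)^n q^{tn})^{-1}$ is singular exactly when $(-1)^n q^{tn} = 1$. Writing $t = 2^s \ell$ with $\ell$ odd, the roots of unity at which \emph{every} factor $n$ simultaneously contributes a pole are precisely the solutions of $q^t = -1$, and the smallest denominator (in lowest terms) attained by such a root of unity is exactly $2^{s+1}$: namely $q = e^{2\pi i h/2^{s+1}}$ with $h$ odd. These are the strongest cusps for $F_t$, and the periodicity of the theorem is already visible in this denominator.

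Then I would apply the circle method. Rewriting $F_t$ as an eta-quotient in $q$ and $-q$ exhibits it as a weakly holomorphic modular form of suitable weight on a congruence subgroup. Zuckerman's generalization \cite{Zuc39} of Rademacher's exact formula \eqref{Rademacher Exact} then produces an absolutely convergent expansion
\[
p^e_t(n) - p^o_t(n) = \sum_{k \geq 1} A_k(n) \cdot B_k(n),
\]
where $B_k(n)$ is an $I$-Bessel value whose growth rate is governed by the order of pole of $F_t$ at the cusps of denominator $k$, and $A_k(n)$ is a Kloosterman-type sum carrying the factors $e^{-2 \pi i n h / k}$. By the singularity analysis of the previous step, the maximal exponential growth rate in $n$ is achieved only for $k = 2^{s+1}$, and all other cusps contribute exponentially smaller terms.

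Finally I would extract the periodicity. The dominant contribution is a sum over odd residues $h$ modulo $2^{s+1}$ of a universal Bessel factor multiplied by $e^{-2\pi i n h/2^{s+1}}$; this is visibly a function of $n \bmod 2^{s+1}$ times a positive, monotonically growing magnitude. Verifying (by a direct computation of the leading Fourier coefficient, in the same spirit as the proof of Theorem \ref{C5 Even and Odd T Behavior}) that this periodic function is nonzero on every residue class of $n$ modulo $2^{s+1}$ completes the proof that $\operatorname{sgn}(p^e_t(n) - p^o_t(n))$ is eventually periodic with period exactly $2^{s+1}$. For $t$ odd we have $s = 0$, so only the single cusp $q = -1$ contributes dominantly, producing the $(-1)^n$ multiplier and hence strict alternation.

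The main obstacle is the precise control of subdominant cusps: one must show that the contributions coming from $k \nmid 2^{s+1}$ produce strictly smaller exponential growth in $n$, so that the identified periodic main term really does govern the sign for all sufficiently large $n$. A secondary technical point is confirming nonvanishing of the dominant periodic function on each residue class modulo $2^{s+1}$, to rule out that the true period is some proper divisor; this should be a finite check once the leading Fourier coefficient has been evaluated explicitly.
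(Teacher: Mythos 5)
Your plan is correct and follows the same strategy as the paper: express $p^e_t(n) - p^o_t(n)$ as the coefficient of a weakly holomorphic $\eta$-quotient, apply a Rademacher/Zuckerman circle method to get an exact formula as a sum of $I$-Bessel functions weighted by Kloosterman-type sums, identify the dominant cusp as $k = 2^{s+1}$, and read off the periodicity from the surviving exponential $e^{-\pi i n h/2^s}$, together with a nonvanishing argument for the Kloosterman sum.

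One genuine difference is the derivation of the generating function: you derive it from the $t$-core/$t$-quotient bijection, tracking $\# \mathcal H_t(\lambda) = \sum_i |\lambda^{(i)}|$, while the paper specializes Han's extension of the Nekrasov--Okounkov formula (Theorem~\ref{Han}) at $y=-1$, $z=0$. A short computation shows your $C_t(q)\prod_n (1-(-1)^n q^{tn})^{-t}$ collapses to $\frac{(q^t;q^t)_\infty^{2t}(q^{4t};q^{4t})_\infty^t}{(q^{2t};q^{2t})_\infty^{3t}(q;q)_\infty}$, which is exactly the $G_t$ in~\eqref{A_t(n) Generating Function}; your route is arguably more self-contained and intrinsic to the hook-length statistic.

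Where the proposal is thin is precisely the step you flag as ``the main obstacle,'' and your heuristic does not yet constitute an argument. The observation that every factor of $\prod_n(1-(-1)^nq^{tn})^{-t}$ has a pole simultaneously at the cusps $q^t = -1$ ignores the competition from the $(q;q)_\infty^{-1}$ factor in $C_t(q)$, whose singularity is \emph{strongest} at $q=1$ (all factors $1-q^n$ vanish there), so there is a real trade-off to be quantified. The paper resolves this by proving the modular transformation law for $G_t$ (Lemma~\ref{Transformation Laws} and display~\eqref{Transformation Law for G_t}), which separates into three cases according to $k_0 := k/\gcd(k,t) \pmod 4$, and then comparing the resulting growth rates $\frac{\pi}{12k}$, $\frac{\pi}{6k}\sqrt{1+3\gcd(k,t)^2}$, $\frac{\pi}{6k}$ across all cusps in Corollary~\ref{Dominating term of A_t}; only this comparison establishes that $k=2^{s+1}$ wins. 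Also note that it is not only $k \nmid 2^{s+1}$ that must be dominated but \emph{all} $k \neq 2^{s+1}$ (in particular the proper divisors $k \mid 2^{s+1}$, and $k=1$). Finally, the nonvanishing of the dominant Kloosterman sum (``a finite check'' in your words) is a genuine lemma in the paper, proved via the structure of solutions to $x^2 \equiv -24n+1 \pmod{24 \cdot 2^s}$ (Lemma~\ref{Kloosterman sum nonzero}), and is what guarantees that the sign is actually controlled by the periodic factor for large $n$.
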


These results are proven using the Rademacher circle method. In particular, we use \eqref{Nekrasov-Okounkov} to show that the generating function for $A_t(n) := p_t^e(n) - e_t^o(n)$ is a modular form. We then follow the arguments of Rademacher to prove an exact formula for $A_t(n)$, and we then study this exact formula to determine the main terms that yield Theorem \ref{C5 Even and Odd T Behavior}, and the Kloosterman sums involved yield the sign patterns in Theorem \ref{C5 Distribution property}. 

\subsection{Distributions of $t$-hooks and Betti numbers}

In analogy with the previous section, we might consider functions of the form
\begin{align*}
	p_t(a,b;n) := \# \{ \lambda \vdash n : \#\mathcal H_t(\lambda) \equiv a \pmod{b} \},
\end{align*}
which specialize to the functions $p_t^e(n)$ and $p_t^o(n)$ when $b = 2$. Although the arguments are much more involved, we can still derive an exact formula for these coefficients. In Chapter \ref{C6}, we use the circle method to produce the following asymptotics for $p_t(a,b;n)$.

\begin{theorem}\label{C6 t-hook Asymptotic}
	If $t>1$, $b$ is an odd prime, and $0\leq a<b ,$  then as $n\rightarrow \infty$ we have
	$$
	p_t(a,b;n)\sim \frac{c_t(a,b;n)}{4\sqrt{3}n}\cdot  e^{\pi \sqrt{\frac{2n}{3}}},
	$$
	where $c_t(a,b;n)$ are certain rational numbers defined in Chapter \ref{C6}, \eqref{c_t def}.
\end{theorem}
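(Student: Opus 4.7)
The proof proceeds by combining the classical $t$-core/$t$-quotient decomposition with character orthogonality and a Wright-style circle method. The $t$-core/$t$-quotient bijection yields the generating function identity
\begin{align*}
\sum_{\lambda \in \mathcal P} z^{\#\mathcal H_t(\lambda)} q^{|\lambda|} \;=\; C_t(q) \prod_{n \ge 1}(1-zq^{tn})^{-t},
\end{align*}
where $C_t(q) = \prod_{n \ge 1}(1-q^{tn})^t/(1-q^n)$ is the $t$-core generating function; here one uses that under the bijection $|\lambda|$ equals the $t$-core size plus $t$ times the total size of the $t$-quotient, while $\#\mathcal H_t(\lambda)$ equals the total size of the $t$-quotient. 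Applying orthogonality of additive characters modulo the odd prime $b$ with $\zeta_b := e^{2\pi i /b}$, one obtains
\begin{align*}
p_t(a,b;n) \;=\; \frac{1}{b}\sum_{k=0}^{b-1} \zeta_b^{-ak}\,[q^n]F_k(q), \qquad F_k(q) := C_t(q)\prod_{n \ge 1}(1-\zeta_b^k q^{tn})^{-t}.
\end{align*}
The $k=0$ term is $p(n)/b$, which by Hardy--Ramanujan contributes $\tfrac{1}{4\sqrt{3}bn}\,e^{\pi\sqrt{2n/3}}$ to the asymptotic.

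For $k \neq 0$, I would extract $[q^n]F_k(q)$ via Wright's circle method. The modular factor $C_t(q) = q^{(1-t^2)/24}\eta(t\tau)^t/\eta(\tau)$ is an eta quotient whose transformation laws provide explicit asymptotics at every cusp of $\Gamma_0(t)$. The remaining factor $\prod(1-\zeta_b^k q^{tn})^{-t}$ is non-modular, but its behavior at each cusp $q = e^{2\pi i h/\ell}$ can be obtained by effective Euler--Maclaurin summation in the style of Chapter \ref{C3}, applied to $\log$ of the product; writing $q = e^{2\pi i h/\ell - s}$, this gives an expansion of the form $F_k(q) \sim A_{k,h,\ell}\,s^{\alpha_{k,h,\ell}}\,e^{\beta_{k,h,\ell}/s}$ as $s \to 0^+$. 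I would then identify the cusp(s) at which $\mathrm{Re}(\beta_{k,h,\ell})$ is maximal, apply Wright's saddle-point estimate there with the optimal radius $s_n \sim \pi/\sqrt{6n}$, and bound the integrand uniformly on the minor arcs.

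Summing the major-arc contributions over all $k$, weighted by the character phases $\zeta_b^{-ak}$, assembles into the main term $c_t(a,b;n)\,e^{\pi\sqrt{2n/3}}/(4\sqrt{3}n)$. The rational constant $c_t(a,b;n)$ arises from Kloosterman-type exponential sums coming from the phases $e^{-2\pi i nh/\ell}\cdot \zeta_b^{-ak}$ at each dominant cusp $h/\ell$, and thus depends on $n$ through its residue modulo $b$ and related moduli. The naive cusp at $q=1$ alone would contribute only $1/b$ to $c_t(a,b;n)$, giving equidistribution; the genuine dependence on $a$ and $n$ reflects additional dominant cusps for $F_k$ with $k\ne 0$.

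The principal obstacle will be the cusp analysis of $F_k(q)$ for $k \ne 0$: because $F_k$ is not an eta quotient, its transformation behavior must be built by hand, combining the modular transformation of $C_t(q)$ with careful Euler--Maclaurin estimates for the non-modular product $\prod(1-\zeta_b^k q^{tn})^{-t}$. One must enumerate exactly which cusps attain the maximal growth $\pi^2/6$ in the exponent — in particular verifying that when $\mathrm{Re}(\mathrm{Li}_2(\zeta_b^k)) = \pi^2 B_2(k/b) < \pi^2/6$, the cusp $q=1$ is \emph{not} dominant for $F_k$ — and obtain uniform error terms sharp enough to control the minor arc. The special identities for the $2$-core and $3$-core generating functions alluded to in the chapter overview are expected to play the decisive role in locating these dominant cusps in the low-$t$ cases, while the effective Euler--Maclaurin estimates of Chapter \ref{C3} are expected to supply the uniform error control required for the Wright bound on the minor arc.
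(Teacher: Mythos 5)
Your proposed generating function has a substantive error that changes the character of the whole argument. From the $t$-core/$t$-quotient bijection (with $|\lambda| = |\mathrm{core}| + t\cdot|\mathrm{quotient}|$ and $\#\mathcal H_t(\lambda) = |\mathrm{quotient}|$, as you state), the two-variable generating function is
\begin{align*}
\sum_{\lambda \in \mathcal P} z^{\#\mathcal H_t(\lambda)} q^{|\lambda|} \;=\; C_t(q)\, P(zq^t)^t \;=\; C_t(q)\prod_{n\geq 1}\bigl(1-(zq^t)^n\bigr)^{-t} \;=\; C_t(q)\prod_{n\geq 1}\bigl(1-z^n q^{tn}\bigr)^{-t},
\end{align*}
not $C_t(q)\prod_{n\geq 1}(1-z q^{tn})^{-t}$. (You can check this against the expansion $H_2(\xi;q) = 1 + q + 2\xi q^2 + (1+2\xi)q^3 + 5\xi^2 q^4 + \cdots$ given in the chapter: at $q^4$ your formula yields $3\xi^2 + 2\xi$, not $5\xi^2$.) This is not cosmetic. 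The correct product is $P(\zeta_b^r q^t)^t$, a rescaled eta quotient, so the entire twisted generating function $H_t(\zeta_b^r;q) = \eta$-quotient is a genuine weakly holomorphic modular form on a congruence subgroup. Your version, with $z$ to the first power, is the $F_1$-type factor $\prod_n(1-\zeta_b^k q^{tn})^{-t}$, which is not modular — and it is precisely this misidentification that leads you to budget effort for Euler--Maclaurin estimates at every cusp and to invoke $\mathrm{Li}_2(\zeta_b^k)$.

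Because $H_t(\zeta_b^r;q)$ is modular, the paper does not use Wright's method here at all: it applies Zuckerman's exact formula (Theorem~\ref{Thm: Zuckerman}), which gives a convergent Rademacher-type expansion in $I$-Bessel functions indexed by cusps, and then simply identifies the dominant term. The dominant Farey denominator turns out to be $k=b$ (not $k=1$), and the Kloosterman sums at those cusps evaluate by Gauss-sum arguments to produce the explicit rational $c_t(a,b;n)$ depending on $n\bmod b$. Your outline correctly anticipates this qualitative picture — subdominance of $q=1$ for $k\neq 0$, Kloosterman phases, $n\bmod b$ dependence — but your proposed machinery (Wright at a non-unit cusp, with hand-built transformation laws for a non-modular factor) is fighting a problem that the correct generating function dissolves. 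One secondary misattribution: the $2$-core/$3$-core identities you expect to be ``decisive'' for locating dominant cusps are in fact used only for the vanishing Theorem~\ref{C6 Vanishing}; the asymptotic theorem is uniform in $t>1$ and does not depend on them. The Euler--Maclaurin machinery of Chapter~\ref{C3} is used in Chapter~\ref{C6} for the Betti-number theorem (where the relevant factors genuinely are $F_1$- and $F_3$-type and non-modular), not for the $t$-hook asymptotics.
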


As a corollary, we obtain the following limiting distributions.

\begin{corollary}\label{C6 t-hook Distribution}
	Assuming the hypotheses in Theorem \ref{C6 t-hook Asymptotic}, if $0\leq a_1<b$ and $0\leq a_2 <b,$  then 
	$$
	\lim_{n\rightarrow \infty}\frac{p_t(a_1, b; b n+a_2)}{p(b n+a_2)}=c_t(a_1,b;a_2).
	$$
\end{corollary}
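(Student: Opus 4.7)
The plan is to derive Corollary \ref{C6 t-hook Distribution} as an essentially formal consequence: divide the asymptotic of Theorem \ref{C6 t-hook Asymptotic} by the classical Hardy--Ramanujan asymptotic \eqref{Hardy-Ramanujan Asymptotic}. The only nontrivial ingredient beyond these two inputs is the observation that the rational numbers $c_t(a_1,b;n)$ depend on $n$ only through its residue class modulo $b$; this is what allows the limit along the arithmetic progression $\{bn+a_2\}_{n}$ to exist and equal a fixed constant.

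To justify the residue-class dependence, I would unpack the definition \eqref{c_t def}. In the circle-method derivation of Theorem \ref{C6 t-hook Asymptotic}, one detects the congruence $\#\mathcal{H}_t(\lambda) \equiv a_1 \pmod{b}$ by means of the standard orthogonality relation
\[
\frac{1}{b}\sum_{j=0}^{b-1} \zeta_b^{\,j(m-a_1)} = \mathbf{1}_{m \equiv a_1 \, (\mathrm{mod}\, b)},
\]
so that $p_t(a_1,b;n)$ decomposes as a weighted average, over $b$-th roots of unity $\zeta_b^{j}$, of coefficient-extraction integrals for the twisted generating functions. After executing Wright's circle method on each twist and extracting the dominant major-arc contribution near $q=1$, the Cauchy integral at level $n$ contributes a factor $\zeta_b^{-jn}$, which depends on $n$ only through $n \bmod b$. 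Summing over $j$ and collecting terms produces $c_t(a_1,b;n)$ as a function of $(a_1, b, n \bmod b)$ alone.

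Granting this, the corollary is immediate. Substituting $n \mapsto bn+a_2$ into Theorem \ref{C6 t-hook Asymptotic} yields
\[
p_t(a_1,b;bn+a_2) \sim \frac{c_t(a_1,b;bn+a_2)}{4\sqrt{3}\,(bn+a_2)}\, e^{\pi\sqrt{2(bn+a_2)/3}},
\]
while \eqref{Hardy-Ramanujan Asymptotic} with the same substitution gives
\[
p(bn+a_2) \sim \frac{1}{4\sqrt{3}\,(bn+a_2)}\, e^{\pi\sqrt{2(bn+a_2)/3}}.
\]
Dividing cancels both the polynomial prefactor $(4\sqrt{3}(bn+a_2))^{-1}$ and the exponential factor, leaving
\[
\frac{p_t(a_1,b;bn+a_2)}{p(bn+a_2)} \sim c_t(a_1,b;bn+a_2).
\]
Since $bn+a_2 \equiv a_2 \pmod{b}$, the residue-class dependence gives $c_t(a_1,b;bn+a_2) = c_t(a_1,b;a_2)$, and the desired limit follows.

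The only potential obstacle is the residue-class statement about $c_t$. If \eqref{c_t def} already presents $c_t(a_1,b;n)$ as a finite sum indexed by $j \in \{0,\ldots,b-1\}$ with weights of the form $\zeta_b^{-jn}$, there is literally nothing to check; otherwise, a brief inspection of the definition suffices. Everything else in the argument is a routine manipulation of the two asymptotics, with no error-term analysis required since the corollary is a purely leading-order statement.
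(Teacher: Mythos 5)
Your proof is correct and takes the same approach as the paper: divide the asymptotic of Theorem \ref{C6 t-hook Asymptotic} by the Hardy--Ramanujan asymptotic and observe that $c_t(a,b;n)$ depends on $n$ only through $n \bmod b$, which is immediate from \eqref{c_t def} since the only $n$-dependence is through $\mathbb{I}(a,b,t,n)$ and a Legendre symbol $\bigl(\frac{\cdots - n}{b}\bigr)$, both of which are functions of $n \bmod b$. Your circle-method digression about $\zeta_b^{-jn}$ weights is a fine heuristic but, as you yourself note, unnecessary once one simply inspects the displayed formula.
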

In particular, if $b | t$ we have $p_t(a_1, b; n) \sim p_t(a_2, b; n)$ as $n \to \infty$ for any $a_1, a_2$. If $b \centernot | t$, then equidistribution fails. Examples of the results are given in Chapter \ref{C6}.

The cases where $t\in \{2, 3\}$ are particularly striking. In addition to many instances of non-uniform distribution, there are situations where certain counts are actually identically zero.

\begin{theorem}\label{C6 Vanishing}
	The following are true.
	
	\begin{enumerate}
		\item[\normalfont(1)] If $\ell$ is an odd prime and $0\leq a_1, a_2<\ell$ satisfy
		$(\frac{-16a_1+8a_2+1}{\ell})=-1,$ then for every non-negative integer $n$ we have
		$$
		p_2(a_1,\ell;\ell n+a_2)=0.
		$$
		
		\item[{\normalfont(2)}] If $\ell\equiv 2\pmod 3$ is prime and $0\leq a_1, a_2<\ell^2$ have the property that
		$\ord_{\ell}(-9a_1+3a_2+1)=1$, then
		for every non-negative integer $n$ we have
		$$
		p_3\left(a_1,\ell^2;\ell^2 n+a_2\right)=0.
		$$
	\end{enumerate}
\end{theorem}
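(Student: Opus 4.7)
The plan is to reduce both vanishing statements to a statement about $t$-cores via the classical $t$-core/$t$-quotient bijection. Every partition $\lambda$ corresponds uniquely to a pair $(\mu, (\nu_0, \ldots, \nu_{t-1}))$ consisting of a $t$-core $\mu$ and a $t$-tuple of partitions $(\nu_0, \ldots, \nu_{t-1})$, with $|\lambda| = |\mu| + t \sum_i |\nu_i|$ and $\#\mathcal{H}_t(\lambda) = \sum_i |\nu_i|$. Writing $c_t(m)$ for the number of $t$-cores of $m$ and $P^{(t)}(k)$ for the number of $t$-tuples of partitions of total size $k$ (which is strictly positive for every $k \geq 0$, with generating function $(q;q)_\infty^{-t}$), this yields the convolution
\[
p_t(a, b; n) = \sum_{\substack{k \geq 0 \\ k \equiv a \pmod{b}}} c_t(n - tk)\, P^{(t)}(k).
\]
Hence both parts of the theorem reduce to showing $c_t(n - tk) = 0$ for every $k$ in the relevant residue class.

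For part~(1), I would use the classical identity $\sum_{m \geq 0} c_2(m) q^m = \sum_{j \geq 0} q^{j(j+1)/2}$, which shows that $c_2(m) \neq 0$ precisely when $8m + 1$ is a perfect square. Substituting $n = \ell n' + a_2$ and $k = \ell k' + a_1$, a direct computation gives
\[
8(n - 2k) + 1 \equiv -16 a_1 + 8 a_2 + 1 \pmod{\ell}.
\]
The hypothesis $\legendre{-16 a_1 + 8 a_2 + 1}{\ell} = -1$ forces $8(n - 2k) + 1$ to be a non-residue modulo $\ell$, hence never a square; so $c_2(n - 2k) = 0$ for all permissible $k$.

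For part~(2), I would invoke the character-sum formula
\[
c_3(m) = \sum_{d \mid 3m + 1} \chi_{-3}(d),
\]
where $\chi_{-3}$ is the non-trivial Dirichlet character modulo $3$. This follows from the classical identity $\sum_m c_3(m) q^m = \eta(3\tau)^3/\eta(\tau)$, rewritten as a weight-one theta series attached to the norm form $x^2 + xy + y^2$ on $\mathbb{Z}[\omega]$. Setting $n = \ell^2 n' + a_2$ and $k = \ell^2 k' + a_1$ yields
\[
3(n - 3k) + 1 \equiv -9 a_1 + 3 a_2 + 1 \pmod{\ell^2},
\]
so by hypothesis $\ord_\ell(3(n - 3k) + 1) = 1$. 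Writing $3(n - 3k) + 1 = \ell N$ with $\gcd(\ell, N) = 1$ and using multiplicativity of the divisor sum together with $\chi_{-3}(\ell) = -1$ (since $\ell \equiv 2 \pmod 3$), one obtains
\[
c_3(n - 3k) = \bigl(1 + \chi_{-3}(\ell)\bigr) \sum_{d \mid N} \chi_{-3}(d) = 0.
\]

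The main obstacle is the character-sum formula for $c_3(m)$: this is the classical ingredient that converts the $\eta$-quotient $\eta(3\tau)^3/\eta(\tau)$ into a theta series via Jacobi-type identities (or, equivalently, via the theory of CM forms on $\mathbb{Q}(\sqrt{-3})$). Once this identification and the $t$-core/$t$-quotient bijection are in hand, both vanishing results reduce to elementary congruence bookkeeping modulo $\ell$ and $\ell^2$, respectively.
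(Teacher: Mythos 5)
Your proof is correct and ultimately rests on the same two ingredients as the paper's: the convolution of $c_t$ against a $t$-fold partition count, followed by the representation-theoretic/quadratic facts that $c_2(m)\neq 0$ iff $8m+1$ is a perfect square and $c_3(m)=\sum_{d\mid 3m+1}\legendre{d}{3}$. The only difference is how you obtain the convolution: you invoke the $t$-core/$t$-quotient bijection directly, writing $p_t(a,b;n)=\sum_{k\equiv a\ (b)}c_t(n-tk)P^{(t)}(k)$, whereas the paper reaches the identical formula by applying roots-of-unity orthogonality in both $\xi$ and $q$ to Han's factorization $H_t(\xi;q)=F_2(\xi;q^t)^{-t}\cdot\prod(1-q^{tn})^t/(1-q^n)$, together with a linear change of variables in the exponents $r_1,r_2$. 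These are two encodings of the same combinatorics (Han's product identity \emph{is} the generating-function form of the core/quotient bijection), so your route is a slightly more conceptual shortcut; the residue computations $8(n-2k)+1\equiv -16a_1+8a_2+1\pmod\ell$ and $\ord_\ell(3(n-3k)+1)=\ord_\ell(-9a_1+3a_2+1)=1$, and the final vanishing via non-residue/character annihilation, match the paper exactly.
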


For example, Theorem~\ref{C6 Vanishing} (1) implies that
\begin{align*}
	p_2(0,3; 3n+2)=p_2(1, 3; 3n+1)=p_2(2,3;3n)=0
\end{align*}
and Theorem~\ref{C6 Vanishing} (2) implies that
\begin{align*}
	p_3(0,4;4n+3)=p_3(1,4;4n+2)=p_3(2,4;4n+1)=p_3(3,4;4n)=0.
\end{align*}

This result is proved not with the circle method, but with $q$-series identities related to the paucity of 2-cores and 3-core partitions of $n$, which is discussed in Chapter \ref{C2}. By the work of Granville and Ono \cite{GO96}, there are $t$-core partitions of $n$ for every $t \geq 4, n \geq 1$, and this explains why Theorem \ref{C6 Vanishing} only applies to the cases $t=2$ and $t=3$.

In Chapter \ref{C6}, we prove results on the Betti numbers of Hilbert schemes in algebraic geometry. We denote by $b_j(X)$ the $j$th {\it Betti numbers} of the scheme $X$, which is the dimension of its $j$th homology group, i.e. $b_j(X) = \dim \lp H_j(X,\QQ) \rp$. These numbers are generated by the usual {\it Poincar\'{e} polynomial} $P(X;T) := \sum_j b_j(X) T^j$. Work of G\"{o}ttsche \cite{Got94, Got02} and of Buryak and Feigin \cite{BF13, BFN15} establishes generating functions for these Poincar\'{e} polynomials for certain Hilbert schemes $\lp \CC^2 \rp^{[n]}$ and $\lp \lp \CC^2 \rp^{[n]} \rp^{T_{\alpha,\beta}}$, whose definitions we defer until Chapter \ref{C6}. The relevant generating functions are expressible as products of $q$-Pochhammer symbols in the relevant variables, which are closely related to (but not equal to) modular forms. Because of the infinite product representations of these generating functions, the Euler--Maclaurin asymptotic method can be used to give asymptotic estimates for the generating functions near roots of unity, which again allows applications of the circle method.

The application we consider involve the modular sums of Betti numbers
\begin{equation*}
	B\lp a,b; X \rp := \sum_{j\equiv a\pmod b} b_j\lp X \rp =
	\sum_{j\equiv a\pmod b} \dim \lp H_j\lp X,\QQ \rp \rp
\end{equation*}
where $X$ represents the Hilbert schemes we consider. Now, equidistribution in the most literal sense fails, since the odd index Betti numbers for these schemes identically vanish. However, we can prove that this is the only obstruction for equidistribution modulo $b$ for these modular Betti sums. In particular, we define the constant
\begin{equation} \label{C6 d(a,b) Definition}
	d(a,b):=\begin{cases} \frac{1}{b} \ \ \ \ \ &{\text {\rm if $b$ is odd,}}\\
		\frac{2}{b} \ \ \ \ \ &{\text {\rm if $a$ and $b$ are even,}}\\
		0 \ \ \ \ \ &{\text {\rm if $a$ is odd and $b$ is even.}}
	\end{cases}
\end{equation}

\begin{theorem}\label{C6 Betti Asymptotic}
	Assuming the notation above, the following are true.
	\begin{enumerate}[leftmargin=*]
		\item[\rm (1)] As $n\rightarrow \infty$, we have
		$$
		B\left(a,b; \left(\CC^2\right)^{[n]}\right)\sim \frac{d(a,b)}{4\sqrt{3}n}\cdot  e^{\pi \sqrt{\frac{2n}{3}}}.
		$$
		
		\item[\rm (2)] If $\alpha, \beta\in \NN$ are relatively prime, then as $n\rightarrow \infty$ we have 
		$$
		B\left(a,b; \left(\left(\CC^2\right)^{[n]}\right)^{T_{\alpha,\beta}}\right)\sim \frac{d(a,b)}{4\sqrt{3}n} \cdot e^{\pi \sqrt{\frac{2n}{3}}}.
		$$
	\end{enumerate}
\end{theorem}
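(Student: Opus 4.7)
The plan is to combine roots-of-unity filtering on Poincar\'e polynomials with Wright's circle method applied to the resulting twisted product generating functions. Writing $\zeta_b = e^{2\pi i/b}$, character orthogonality gives
\[
B(a,b;X) \;=\; \frac{1}{b}\sum_{k=0}^{b-1}\zeta_b^{-ak}\,P(X;\zeta_b^k),
\]
so I would substitute $t=\zeta_b^k$ into G\"ottsche's formula
\[
\sum_{n\geq 0}P\bigl((\CC^2)^{[n]};t\bigr)q^n \;=\; \prod_{n\geq 1}\frac{1}{1-t^{2n-2}q^n}
\]
for part (1), and into the analogous Buryak--Feigin product formula for $((\CC^2)^{[n]})^{T_{\alpha,\beta}}$ for part (2). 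For each character $k$, the problem reduces to extracting the $n$th coefficient of an infinite product of $q$-Pochhammer type.

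Next, I would split the characters into two groups according to whether $\omega_k:=\zeta_b^{2k}$ equals $1$. When $\omega_k=1$, every factor $1-\zeta_b^{2(n-1)k}q^n$ becomes $1-q^n$, so the twisted product collapses to $P(q)=\prod(1-q^n)^{-1}$, whose $n$th coefficient is $p(n)\sim\tfrac{1}{4\sqrt{3}\,n}e^{\pi\sqrt{2n/3}}$ by Hardy--Ramanujan. The set $\{k:\omega_k=1\}$ is $\{0\}$ when $b$ is odd and $\{0,b/2\}$ when $b$ is even, so summing $\tfrac{1}{b}\zeta_b^{-ak}$ over this set produces exactly the constant of \eqref{C6 d(a,b) Definition}: the value $\tfrac{1}{b}$ for odd $b$, and $\tfrac{1}{b}(1+(-1)^a)$ (which equals $\tfrac{2}{b}$ or $0$) for even $b$.

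For the remaining characters, $\omega_k$ has order $d\geq 2$, and the twisted product should contribute only lower-order terms. My strategy is to group the factors by $n\pmod d$ and apply the effective Euler--Maclaurin estimates developed in Chapter \ref{C3} to show that, with $q=e^{-z}$,
\[
\log\prod_{n\geq 1}\bigl(1-\omega_k^{n-1}e^{-nz}\bigr)^{-1}\;\sim\;\frac{1}{dz}\sum_{s=0}^{d-1}\mathrm{Li}_2(\omega_k^{s})\;=\;\frac{\pi^2}{6d^2z}\qquad(z\to 0^+),
\]
using the classical identity $\sum_{\zeta^d=1}\mathrm{Li}_2(\zeta)=\tfrac{\pi^2}{6d}$. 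Since $\tfrac{\pi^2}{6d^2}<\tfrac{\pi^2}{6}$, Wright's saddle-point analysis then bounds the corresponding coefficients by $\exp\!\bigl(\pi\sqrt{2n/3}/d\bigr)$, which is exponentially smaller than $p(n)$ and is absorbed into the error; standard minor-arc bounds away from $q=1$ are handled in the usual Wright fashion.

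The main obstacle I anticipate is part (2). The Buryak--Feigin product is indexed by lattice points in a region depending on $\alpha,\beta$, so verifying which $k$ cause the twisted torus-fixed product to collapse to the full partition generating function requires tracking this combinatorics carefully; one must check that the coprimality hypothesis $\gcd(\alpha,\beta)=1$ makes the resulting exponent lattice non-degenerate so that the same trivial/non-trivial character dichotomy applies and yields the identical leading constant $d(a,b)$. Once that is in place, the same dilogarithm estimate controls the sub-dominance of the non-trivial characters and closes out the proof.
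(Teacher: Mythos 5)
Your overall strategy matches the paper's: both use the roots-of-unity filter from Corollary \ref{HilbertModularGeneratingFunctions}, substitute into the G\"ottsche and Buryak--Feigin products, split the characters according to whether $\zeta_b^{2k}=1$ (extracting exactly the constant $d(a,b)$ from the trivial characters), and control the nontrivial characters via Euler--Maclaurin estimates feeding into Wright's circle method (Proposition \ref{WrightCircleMethod2}). Your dilogarithm computation for the twisted $F_3$ product is correct and is equivalent to what Theorem \ref{Theorem1}~(3) records: if $\xi$ has order $d$ then $\log(1/F_3(\xi;e^{-z}))\sim \pi^2/(6d^2z)$, and since $d\ge 2$ this is exponentially small compared to $P(e^{-z})$, which is exactly how the paper disposes of those terms.

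The place where your sketch goes astray is in diagnosing the difficulty of part~(2). You worry about ``lattice points in a region depending on $\alpha,\beta$'' and whether the coprimality makes an ``exponent lattice non-degenerate.'' But the Buryak--Feigin generating function, in the form the paper uses (Theorem \ref{QuasiHilbertGenFcn}), is already a clean product
$G_{\alpha,\beta}(T;q)=F_1(T^2;q^{\alpha+\beta})^{-1}\prod_{m}\tfrac{1-q^{(\alpha+\beta)m}}{1-q^m}$;
coprimality is only the hypothesis under which Buryak and Feigin establish that identity, and plays no further role in the asymptotic analysis. Substituting $T=\zeta_b^k$, the dichotomy $T^2=1$ versus $T^2\ne 1$ is immediate and yields the same $d(a,b)$ as part~(1) with no combinatorial bookkeeping.

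The genuine gap is elsewhere, and your sketch glosses over it. For the nontrivial characters in part~(2) the leading exponential rate is governed by a single value $\mathrm{Li}_2(\zeta_b^{2r})=\zeta_b^{2r}\Phi(\zeta_b^{2r},2,1)$ coming from Theorem \ref{Theorem1}~(1), and unlike the averaged sum $\sum_{s}\mathrm{Li}_2(\omega_k^s)$ in part~(1), this is generically a \emph{complex} number with nonzero imaginary part. Writing $z=x+iy$ and $\mathrm{Li}_2(\zeta_b^{2r})=u+iv$, the real part of the exponent on the major arc is $(ux+vy)/|z|^2$, and the term $vy$ can partially cancel the gap $\pi^2/6-u>0$ if $y$ is comparable to $x$. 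This is why the paper has to impose the explicit restriction on the cone aperture $M$ in \eqref{eqn: definition of M}, coming from the Fourier evaluation $\sum_n \cos(n\theta)/n^2=\pi^2/6-\theta(2\pi-\theta)/4$. Saying ``the same dilogarithm estimate controls the sub-dominance'' skips this: you must verify the real-part inequality on a cone of bounded (and $b$-dependent) aperture, or Wright's hypothesis~(1) is not verified for the twisted pieces.
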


Since the sum over all Betti numbers of these schemes is equal to $p(n)$, to study the distribution modulo $b$ of the modular Betti sums, one considers the ratios
\begin{equation*}
	\delta(a,b;n):=
	\frac{B\left(a,b; \left(\CC^2\right)^{[n]}\right)}{p(n)}\  \ \ \ {\text {\rm and}}\ \ \ \
	\delta_{\alpha,\beta}(a,b;n):=\frac{B\left(a,b; \left(\left(\CC^2\right)^{[n]}\right)^{T_{\alpha,\beta}}\right)}{p(n)}.
\end{equation*}
As a consequence of Theorem~\ref{C6 Betti Asymptotic}, we obtain distributions for these proportions.

\begin{corollary}\label{C6 Betti Distribution}
	If $0\leq a<b$, then the following are true. 
	\begin{enumerate}
		\item[\rm (1)] We have that
		$$
		\lim_{n\rightarrow \infty} \delta(a,b;n)=d(a,b).
		$$
		
		\item[\rm (2)] If $\alpha, \beta\in \NN$ are relatively prime, then we have
		$$
		\lim_{n\rightarrow \infty} 
		\delta_{\alpha,\beta}(a,b;n)=d(a,b).
		$$
	\end{enumerate}
\end{corollary}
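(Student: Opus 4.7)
The plan is to derive this corollary essentially by direct division, using Theorem \ref{C6 Betti Asymptotic} in the numerator and the Hardy--Ramanujan asymptotic \eqref{Hardy-Ramanujan Asymptotic} in the denominator. Since both statements in the corollary concern ratios of the form (Betti sum)/$p(n)$, and the two asymptotic expressions share the same main exponential growth $e^{\pi\sqrt{2n/3}}$ together with the same polynomial prefactor $\frac{1}{4\sqrt{3}n}$, the ratio is forced to tend to the constant $d(a,b)$ in the limit.

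More concretely, for part (1), I would start from the asymptotic
\begin{align*}
	B\!\left(a,b;\left(\CC^{2}\right)^{[n]}\right) \sim \frac{d(a,b)}{4\sqrt{3}\,n}\, e^{\pi\sqrt{\frac{2n}{3}}}
\end{align*}
given by Theorem \ref{C6 Betti Asymptotic}(1), and combine it with \eqref{Hardy-Ramanujan Asymptotic}, which gives
\begin{align*}
	p(n) \sim \frac{1}{4\sqrt{3}\,n}\, e^{\pi\sqrt{\frac{2n}{3}}}.
\end{align*}
Forming the quotient, the factors $\frac{1}{4\sqrt{3}\,n}$ and $e^{\pi\sqrt{2n/3}}$ cancel exactly in the limit, leaving
\begin{align*}
	\lim_{n\to\infty}\delta(a,b;n) \;=\; \lim_{n\to\infty}\frac{B\!\left(a,b;(\CC^{2})^{[n]}\right)}{p(n)} \;=\; d(a,b),
\end{align*}
which is (1). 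Part (2) follows by the identical argument, using Theorem \ref{C6 Betti Asymptotic}(2) in place of (1), since the asymptotic for $B\!\left(a,b;\left((\CC^{2})^{[n]}\right)^{T_{\alpha,\beta}}\right)$ has the same shape with the same constant $d(a,b)$.

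There is no real obstacle here: the corollary is an immediate consequence of Theorem \ref{C6 Betti Asymptotic} together with the classical Hardy--Ramanujan asymptotic. The only point worth double-checking is that the asymptotic equivalences may be divided termwise — this is valid because $p(n) \to \infty$ with a known positive leading term, so neither denominator vanishes and the ratio of asymptotic equivalents is well-defined in the limit. All the real work sits in Theorem \ref{C6 Betti Asymptotic}, whose proof uses the circle method together with Euler--Maclaurin estimates of the relevant $q$-Pochhammer product generating functions for the Poincar\'{e} polynomials of the Hilbert schemes $\left(\CC^{2}\right)^{[n]}$ and $\left(\left(\CC^{2}\right)^{[n]}\right)^{T_{\alpha,\beta}}$ evaluated at appropriate roots of unity isolating the residue class $a \pmod{b}$.
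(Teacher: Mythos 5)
Your argument is correct and is exactly the one the paper intends: Corollary \ref{C6 Betti Distribution} follows by dividing the asymptotics of Theorem \ref{C6 Betti Asymptotic} by the Hardy--Ramanujan asymptotic \eqref{Hardy-Ramanujan Asymptotic}, cancelling the common factor $\frac{1}{4\sqrt{3}n}e^{\pi\sqrt{2n/3}}$. The paper's proof simply states that the corollary ``follows immediately from Theorem \ref{C6 Betti Asymptotic} and the definition of $d(a,b)$''; your write-up makes that division explicit but is otherwise the same reasoning.
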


\section{Applications to Tur\'{a}n inequalities} \label{S1.4}

The study of the Tur\'{a}n inequalities begins first with the study of hyperbolic polynomials. Recall that a real polynomial is called {\it hyperbolic} if all of its roots are real. For the simplest nontrivial case, i.e. quadratic polynomials $ax^2 + bx + c$, the hyperbolicity is determined by the discriminant inequality $b^2 - 4ac \geq 0$. This simple observation leads naturally to the question of how to determine the hyperbolicity of higher degree polynomials on the basis of their coefficients. This is the purpose of the higher-order Tur\'{a}n inequalities, whose precise definition we defer until Chapter \ref{C7}.

Recently, there has been great interest in proving Tur\'{a}n inequalities for polynomials of number-theoretic interest. Given a sequence of real numbers $\{ \alpha(n) \}_{n \geq 0}$, the {\it Jensen polynomial of degree $d$ and shift $n$} associated to the sequence is the polynomial
\begin{align*}
	J_\alpha^{d,n}(X) := \sum_{k=0}^d \binom{d}{k} \alpha(n+k) X^k.
\end{align*}
Jensen polynomials have a close relationship to the Riemann hypothesis, as Poly\'{a} \cite{Pol27} has shown that the Riemann hypothesis is equivalent to the hyperbolicity of all the Jensen polynomials associated to the Taylor coefficients of the Riemann xi-function. This approach to the Riemann hypothesis has recently been taken up in \cite{GORZ19}.

We now consider the Tur\'{a}n inequalities for other number-theoretic sequences. We call the sequence $\{ \alpha(n) \}_{n \geq 0}$ {\it log-concave} if $\alpha(n)^2 - \alpha(n-1) \alpha(n+1) \geq 0$ for all $n \geq 1$, or that $\alpha$ is log-concave for $n \in \NN$ if this inequality is satisfied for that particular value of $n$. Nicolas \cite{Nic78} and DeSalvo and Pak \cite{DP15} have shown that the partition function $p(n)$ is log-concave for $p(n) \geq 25$, which in turn proves that the Jensen polynomials $J_p^{2,n}(X)$ are hyperbolic for $n \geq 25$. In analogy with this case, the sequence $\{ \alpha(n) \}_{n \geq 0}$ satisfies the Tur\'{a}n inequalities of order $d$ if and only if $J_\alpha^{d,n}(X)$ is hyperbolic for all $n \geq 1$. The case of $d = 3$ for the partition function was proven by Chen, Jia and Wang in \cite{CJW19}, and they further conjectured that there were constants $N(d)$ such that the Jensen polynomials $J_p^{d,n}(X)$ would be hyperbolic for all $n \geq N(d)$. 

This conjecture was proven in a very general setting by Griffin, Ono, Rolen, and Zagier \cite{GORZ19}. They proved that if a sequence $\alpha(n)$ satisfies certain very general asymptotic properties, then certain renormalizations of the Jensen polynomials $J_\alpha^{d,n}(X)$ converge uniformly to certain {\it Hermite polynomials} $H_d(X)$ for fixed $d$ as $n \to \infty$. Since this family of polynomials is known to have only simple real roots, this proves the hyperbolicity for sufficiently large $n$, provided certain asymptotic formulas hold for the sequence $\alpha(n)$. The Hardy-Ramanujan asymptotic formula turns out to be sufficient for $p(n)$, which completes the proof.

In Chapter \ref{C7}, we investigate this in the case of the so-called $k$-regular partitions $p_k(n)$, which counts the number of partitions none of whose parts are divisible by $k$ (or none of whose parts occur with $k$ or more multiplicities). Hagis \cite{Hag71} has derived an exact formula for this function using the circle method, and using this formula we prove the following:

\begin{theorem} \label{C7 Main Theorem}
	If $k \geq 2$ and $d \geq 1$, then $$\lim\limits_{n \to \infty} \widehat{J}^{d,n}_{p_k}(X) = H_d(X),$$ uniformly for $X$ on compact subsets of $\mathbb{R}$, where $\widehat{J}^{d,n}_{p_k}(X)$ are renormalized Jensen polynomials for $p_k(n)$ as defined in (\ref{J-Hat Def}).
\end{theorem}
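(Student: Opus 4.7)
The plan is to apply the general hyperbolicity criterion of Griffin, Ono, Rolen, and Zagier \cite{GORZ19}, which states that if a sequence $\alpha(n)$ of positive real numbers admits an asymptotic expansion of a prescribed shape, namely
\begin{align*}
\log \alpha(n+j) = A(n) + B(n) j + \sum_{i \geq 2} g_i(n) j^i,
\end{align*}
where $A(n), B(n)$ grow appropriately and $g_i(n) \to 0$ at a controlled rate with a distinguished leading coefficient, then the appropriately renormalized Jensen polynomials $\widehat{J}^{d,n}_\alpha(X)$ converge uniformly on compact sets to the Hermite polynomial $H_d(X)$. Thus the entire task reduces to producing such an expansion for $\log p_k(n)$, with explicit control on error terms. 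Since $p_k$ is not modular in an obvious way, the needed input comes from Hagis's exact formula \cite{Hag71}, which expresses $p_k(n)$ as a convergent Kloosterman-weighted sum of $I$-Bessel values analogous to \eqref{Rademacher Exact}.

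First, I would isolate the dominant term of Hagis's formula (the $k=1$ Farey fraction term, involving $I_1$ of an argument proportional to $\sqrt{n}$) and bound the remaining tail by a standard argument on Kloosterman sums and Bessel asymptotics, showing that it is exponentially smaller than the main term. This yields an asymptotic of the form $p_k(n) \sim c_k n^{-\alpha_k} e^{\gamma_k \sqrt{n}}$ for explicit positive constants $c_k, \alpha_k, \gamma_k$ depending on $k$, together with a full divergent asymptotic expansion obtained from the standard asymptotic series of $I_\nu$ at infinity. Taking logarithms then yields a complete asymptotic expansion of $\log p_k(n)$ in descending half-powers of $n$, with leading term $\gamma_k \sqrt{n}$.

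Second, I would Taylor expand $\log p_k(n+j)$ around $n$ using this asymptotic expansion, treating $j \in \{0, 1, \dots, d\}$ as bounded. This produces a polynomial-in-$j$ approximation whose coefficients $g_i(n)$ decay in $n$ at precisely the rates required by the hypotheses of Theorem 3 of \cite{GORZ19}; the crucial leading behaviors are governed by derivatives of $\gamma_k \sqrt{n}$, giving $g_2(n) \sim -\gamma_k / (8 n^{3/2})$ and in general $g_i(n) \sim c_i n^{-(2i-1)/2}$ with $c_i \neq 0$ for $i=2$. These are exactly the non-degeneracy conditions needed for the GORZ machine to conclude that the properly normalized Jensen polynomials converge to $H_d(X)$.

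The main obstacle, and the main technical content, is verifying that Hagis's formula does deliver all of the derivative bounds needed by the GORZ criterion with the correct uniformity in $n$ (not merely a pointwise asymptotic for $p_k(n)$ itself, but for $\log p_k(n+j)$ viewed as a function of $j$, with remainders depending on $d$). This requires differentiating the asymptotic expansion term by term and controlling both the finite-order Taylor remainder and the exponentially small Kloosterman tail simultaneously. Once these estimates are in hand, the definition of $\widehat{J}^{d,n}_{p_k}(X)$ in equation~(\ref{J-Hat Def}) is engineered precisely to cancel the contributions of $A(n)$ and $B(n)$, leaving a polynomial in $X$ whose coefficients converge to those of $H_d(X)$, and the uniform convergence on compact subsets of $\mathbb{R}$ follows automatically.
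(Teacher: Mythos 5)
Your proposal is correct and takes essentially the same route as the paper: apply the criterion of Griffin, Ono, Rolen, and Zagier (Theorem \ref{GORZ_1 Thm 3}) after extracting from Hagis's formula an asymptotic expansion for $p_k(n)$ to all orders in $n^{-1/2}$, then Taylor expanding $\log\left(p_k(n+j)/p_k(n)\right)$ in $j$ and checking that the coefficients match the required $A_k(n)j - \delta_k(n)^2 j^2 + o(\delta_k(n)^d)$ shape. The only cosmetic difference is that the paper invokes Hagis's own asymptotic corollary \eqref{Hagis Asymptotic} directly rather than re-deriving the dominant-term isolation from the exact formula, which cuts out the step you describe as the ``main technical content.''
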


\begin{corollary} \label{C7 Corollary}
	For $k \geq 2$, $d \geq 1$, then $J^{d,n}_{p_k}(X)$ is hyperbolic for $n \gg 0$.
\end{corollary}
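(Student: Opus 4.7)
The plan is to deduce Corollary \ref{C7 Corollary} directly from Theorem \ref{C7 Main Theorem} by combining three standard ingredients: (i) the Hermite polynomials $H_d(X)$ have $d$ distinct real roots, (ii) roots of a monic polynomial depend continuously on its coefficients, and (iii) the renormalization $J^{d,n}_{p_k} \mapsto \widehat{J}^{d,n}_{p_k}$ referenced in (\ref{J-Hat Def}) is an affine change of variable (together with a positive scalar), and therefore preserves hyperbolicity.

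First I would record that the classical Hermite polynomial $H_d(X)$ has exactly $d$ simple real roots $\xi_1 < \xi_2 < \cdots < \xi_d$; this is standard from the theory of orthogonal polynomials. Choose a compact interval $[-M, M] \subset \RR$ containing all the $\xi_j$ in its interior, together with small disjoint closed intervals $I_j \ni \xi_j$ on which $H_d$ changes sign. Since $\widehat{J}^{d,n}_{p_k}(X) \to H_d(X)$ uniformly on $[-M,M]$ by Theorem \ref{C7 Main Theorem}, for all $n$ sufficiently large the polynomial $\widehat{J}^{d,n}_{p_k}$ also changes sign on each $I_j$ and therefore has at least one real root in each $I_j$, giving $d$ real roots in total. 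Because $\widehat{J}^{d,n}_{p_k}$ has degree $d$, this accounts for all its roots, so $\widehat{J}^{d,n}_{p_k}$ is hyperbolic for such $n$.

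Finally I would transfer the hyperbolicity back to $J^{d,n}_{p_k}$. By the definition in (\ref{J-Hat Def}), the renormalized polynomial is of the form $\widehat{J}^{d,n}_{p_k}(X) = c_n \cdot J^{d,n}_{p_k}(a_n X + b_n)$ with $a_n > 0$ and $c_n \neq 0$ real constants. An affine substitution $X \mapsto a_n X + b_n$ maps real roots bijectively to real roots, and multiplication by a nonzero real scalar does not change the roots at all. Hence $J^{d,n}_{p_k}(X)$ has the same number of real roots (counted with multiplicity) as $\widehat{J}^{d,n}_{p_k}(X)$, and is therefore hyperbolic for $n \gg 0$.

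The only substantive obstacle is verifying that the renormalization is indeed an orientation-preserving affine rescaling so that hyperbolicity transfers cleanly; this is a direct unpacking of the definition in (\ref{J-Hat Def}), following precisely the framework of Griffin--Ono--Rolen--Zagier \cite{GORZ19}. Everything else is a routine application of uniform convergence on compact sets together with the known root structure of the Hermite polynomials.
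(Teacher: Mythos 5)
Your proposal is correct and follows essentially the same route as the paper, which simply invokes the ``Furthermore'' clause of Theorem~\ref{GORZ_1 Thm 3} (the Griffin--Ono--Rolen--Zagier criterion) once Theorem~\ref{C7 Main Theorem} is established. What you have written is a correct, explicit unpacking of that clause: uniform convergence to $H_d$, which has $d$ simple real roots, forces $\widehat{J}^{d,n}_{p_k}$ to have $d$ real roots for $n \gg 0$, and since the renormalization in \eqref{J-Hat Def} is an invertible real affine substitution composed with a nonzero real scaling, hyperbolicity transfers back to $J^{d,n}_{p_k}$.
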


\begin{remark}
	By Corollary \ref{C7 Corollary}, there exists a minimal natural number $N_k(d)$ such that $J^{d,n}_{p_k}(X)$ is hyperbolic for all $n \geq N_k(d)$. These numbers are not the focus of this paper, however a brief discussion is worthwhile, as these numbers dictate the effectiveness of the main theorem. The following table provides conjectural values of $N_k(d)$ for small $k$ and $d$.
	
	\begin{center}
		\begin{tabular}{|c|c|c|c|c|} \hline
			$d$ & $N_2(d)$ & $N_3(d)$ & $N_4(d)$ & $N_5(d)$ \\ \hline
			$2$ & $32$ & $57$ & $16$ & $41$ \\ \hline
			$3$ & $120$ & $184$ & $63$ & $136$ \\ \hline
			$4$ & $266$ & $390$ & $137$ & $294$ \\ \hline
		\end{tabular}
	\end{center}
\end{remark}

These results will be proved in Chapter \ref{C7}. The basic idea of the proof is that any sequence with a suitably modular generating function necessarily has an asymptotic formula of a shape similar to that of the Hardy-Ramanujan formula for $p(n)$. It happens that $p_k(n)$ has such a formula, as its generating function is a modular form, and we may conclude on this basis after verifying the conditions of \cite[Theorem 3]{GORZ19} that the Jensen polynomials $J_{p_k}^{d,n}(X)$ are eventually hyperbolic for any fixed $d$ as $n \to \infty$.

\section{Variants of Lehmer's conjecture} \label{S1.5}

One of the most important examples of a modular form is furnished by {\it Ramanujan's Delta function} $\Delta(z)$, which is defined by
\begin{align*}
	\Delta(z) := \eta^{24}(z) = q \prod_{n=1}^\infty \lp 1 - q^n \rp^{24} =: \sum_{n=1}^\infty \tau(n) q^n,
\end{align*}
where we now write $q = e^{2\pi i z}$ to avoid abusing notation. The coefficients $\tau(n)$ are referred to as {\it Ramanujan's $\tau$-function}, so-called because of Ramanujan's study of this function in ``On certain arithmetical functions" \cite{Ram16}. $\Delta(z)$ stands out, for example, as the unique normalized cusp form of level 1 and weight 12 (see Chapter \ref{C2} for definitions). Ramanujan's study of this function has led to many important developments, both in his proven results and in his conjectures. Ramanujan was able to prove many congruences for $\tau(n)$ \cite{Ram16}, which Serre later viewed as evidence of a much larger theory of Galois representations \cite{Ser68}. Ramanujan also conjectured the multiplicativity of this function and that its values at prime powers form a recursive sequence; this was proven first by Mordell \cite{Mor17} and foreshadowed the theory of Hecke operators. Ramanujan's conjectured bounds for $\tau$ at prime values were a corollary of Deligne's celebrated proof of the Weil Conjectures \cite{Del74, Del80}.

Much is known about $\tau(n)$, and yet some basic questions remain unanswered. For example, Lehmer's conjecture\footnote{The author is not aware of any written record of Lehmer conjecturing an answer to this problem, but we will follow convention and refer to this problem as Lehmer's conjecture.} asks whether there are any positive integers $n$ such that $\tau(n) = 0$. Lehmer himself proved \cite{Leh47} that if $\tau(n) = 0$ for any positive integers $n$, then there must be some prime $p$ such that $\tau(p) = 0$. Serre was able to show using the Chebotarev Density Theorem that the set of such primes, if there are any, has density zero within the primes \cite{Ser81}. This result was improved upon several times; it is now known due to work of Thorner and Zaman \cite{TZ18} that
\begin{align*}
	\#\{ p \leq X \ \text{prime} : \tau(p) = 0 \} \ll \pi(X) \cdot \dfrac{\lp \log\log(X) \rp^2}{\log(X)}.
\end{align*}
One can observe by multiplicativity that if $\tau(p) = 0$ for even one prime p, then $\tau(n) = 0$ for a positive proportion of integers $n$, and it is now known due to Hu, Iyer, and Shashkov \cite{HIS} that the density of of $n$ for which $\tau(n) = 0$ is at most $1.15 \times 10^{-12}$. In yet another direction, Calegari and Sardani \cite{CS21} have shown that at most finitely many non-CM newforms with fixed tame $p$ level $N$ have vanishing $p$th Fourier coefficient.

We consider a generalization of this question, asking for all solutions to the equation $\tau(n) = \alpha$ for any odd $\alpha$. Murty, Murty and Shorey \cite{MMS87} proved that $\tau(n) = \alpha$ for at most finitely many values of $\alpha$; however, their method involves enormous bounds coming from Baker's theory of linear forms in logarithms, and so in practice it is not very useful for explicitly solving the equation. In fact, this approach has only been used to show that the only solution to $\tau(n) = \pm 1$ is $\tau(1) = 1$. For $\alpha=\pm \ell$, where $\ell$ is almost any odd prime,
it is widely believed that there are  no solutions.  However, there are counterexamples, such as
Lehmer's prime value example \cite{Leh65}
\begin{equation}\label{LehmerPrime}
	\tau(251^2)=80561663527802406257321747.
\end{equation}
\noindent 
Lygeros and Rozier \cite{LR13} have subsequently discovered further prime values.

We study the same problem with a different method that not only proves that $\tau(n) = \alpha$ has finitely many solutions, but also theoretically locates where those solutions are allowed to occur. This method works not only for Ramanujan's tau-function, but also for any Atkin-Lehner newform \cite{AL70} with ``trivial mod 2 Galois representation." We will not directly use Galois representations, but the idea of having a trivial mod 2 Galois representation is exemplified by the congruence
\begin{align*}
	\Delta(z) = q \prod_{n=1}^\infty \lp 1 - q^n \rp^{24} \equiv q \prod_{n=1}^\infty \lp 1 - q^{8n} \rp^3 \equiv \sum_{n=1}^\infty q^{(2n+1)^2} \pmod{2},
\end{align*}
proven using the Jacobi triple product identity. This shows that the odd values of $\tau(n)$ are supported on odd squares, which is what the reader should have in mind when thinking of ``trivial mod 2 Galois representations." In this thesis, we prove a variety of theorems which are aimed at resolving equations of the form $\tau(n) = \alpha$ for $\alpha \in \ZZ$ odd, as well as generalizations of this question to other {\it newforms}, of which $\Delta(z)$ is the first example.

As the full results of this work are quite technical and have a large number of distinct cases, we outline here the results we obtain for just the function $\tau(n)$, but the proofs of Chapter \ref{C8} will be framed in terms of the fully general case.

Let $\ell$ be an odd prime and $m \geq 1$. We wish to resolve equations of the form $\tau(n) = \pm \ell^m$ for $m \geq 1$. Our first major result, which proves that this equation has only finitely many solutions, comes in two steps. The first uses the theory of Lucas sequences to force such solutions to occur for $n = p^{d-1}$ for $p$ an odd prime and only finitely many possible values of $d$. The second stage uses the recurrence relations for Lucas sequences to use any solution $\tau(p^{d-1}) = \pm \ell^m$ to construct an integer point on an algebraic curve of large genus. Such curves only have finitely many integer points by Siegel's theorem, which will complete the proof of finiteness. Our first result for $\tau(n)$ may be stated as follows.

\begin{theorem}
	Let $\ell \in \ZZ^+$ be an odd prime, and let $n > 1$ be such that $\left| \tau(n) \right| = \ell^m$ for an integer $m \geq 1$. Then $n = p^{d-1}$ for some odd prime $p$ and positive integer $d$ satisfying $d | \ell\lp \ell^2 - 1 \rp$. Furthermore, there are at most finitely many solutions $(n,m)$ such that $\left| \tau(n) \right| = \ell^m$.
\end{theorem}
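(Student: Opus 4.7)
The plan is to exploit the multiplicativity of $\tau$ together with the Lucas-sequence structure of the local factors $\tau(p^k)$, first pinning the equation $|\tau(n)| = \ell^m$ to a prime-power shape $n = p^{d-1}$ with $d$ confined to divisors of $\ell(\ell^2-1)$, and then invoking Siegel's theorem on the finitely many resulting algebraic curves to bound the number of solutions. I would organize the proof in two stages matching the outline that precedes the theorem.

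In the first stage I would combine parity, multiplicativity, and Lucas-sequence theory to pin down the shape of $n$ and the size of $d$. The mod $2$ congruence
\begin{equation*}
\Delta(z) = q \prod_{n \geq 1}(1-q^n)^{24} \equiv \sum_{k \geq 0} q^{(2k+1)^2} \pmod 2
\end{equation*}
forces $n$ to be an odd square in order that $\tau(n)$ be odd, so every prime divisor of $n$ is odd; combined with the Murty--Murty--Shorey result that $\tau(m) = \pm 1$ forces $m = 1$, multiplicativity shows each nontrivial prime-power factor $p^{d-1} \| n$ must satisfy $\tau(p^{d-1}) = \pm \ell^{m'}$ with $m' \geq 1$. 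The Hecke recursion $\tau(p^{k+1}) = \tau(p)\tau(p^k) - p^{11}\tau(p^{k-1})$ identifies $(\tau(p^{k-1}))_{k \geq 1}$ with the Lucas sequence $U_k(P,Q)$ of the first kind at parameters $P = \tau(p)$, $Q = p^{11}$, discriminant $D = P^2 - 4Q$. Standard divisibility theory gives $\rho(\ell) \mid d$ and $\rho(\ell) \mid \ell - \left(\tfrac{D}{\ell}\right)$, so $\rho(\ell) \mid \ell^2 - 1$, and the Bilu--Hanrot--Voutier primitive prime divisor theorem forces $\ell$ itself to be a primitive divisor of $U_d$ once $d$ exceeds a small explicit threshold, pinning $d = \rho(\ell)$; the remaining low-$d$ regime is handled by a lifting-the-exponent lemma writing $d = \rho(\ell) \cdot \ell^j$ and ruling out $j \geq 2$. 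Altogether $d \mid \ell(\ell^2-1)$, and in the prime-power case this is exactly the structural claim $n = p^{d-1}$.

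In the second stage I would convert each residual equation $U_d(\tau(p), p^{11}) = \pm \ell^m$ into a Diophantine problem on an algebraic curve. For each fixed $d \mid \ell(\ell^2-1)$, $U_d(X,Y)$ is a fixed integer polynomial, so the equation defines an affine curve $C_{d,m}: U_d(X, Y^{11}) = \pm \ell^m$ in variables $(X,Y)$, on which we seek integer points $(\tau(p), p)$. Deligne's bound $|\tau(p)| \leq 2p^{11/2}$ constrains $m$ polynomially in $p$, so only finitely many $m$ need be considered per $d$. Using the factorization of $U_d$ through cyclotomic-like factors,
\begin{equation*}
U_d(P, Q) = \prod_{k \mid d,\ k > 1} \Phi_k(\alpha, \beta),
\end{equation*}
where $\alpha, \beta$ are the roots of $X^2 - PX + Q$, one argues that $C_{d,m}$ has a component of positive genus as soon as $d$ is not too small; Siegel's theorem on integer points then yields only finitely many $(\tau(p),p)$ on each such component, and assembling over the finitely many $(d,m)$ pairs produces the finiteness statement. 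The hard part will be this geometric step --- uniformly verifying positivity of genus across all admissible $d \mid \ell(\ell^2-1)$ and by-hand dispatching of the small-$d$ and Bilu--Hanrot--Voutier exceptional cases --- whereas the Lucas-sequence reduction of the first stage, though technical, follows a well-trodden path.
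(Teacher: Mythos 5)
Your overall architecture matches the paper: multiplicativity and the mod $2$ congruence reduce to prime powers, the Hecke recursion identifies $(\tau(p^{d-1}))_d$ with a Lucas sequence, primitive prime divisors force $d$ to equal the rank of apparition $m_\ell$ (so $d\mid\ell(\ell^2-1)$ by Proposition~\ref{PropB}), and Siegel on curves of positive genus gives finiteness. But there is a concrete gap in your first stage. You propose to "handle the remaining low-$d$ regime by a lifting-the-exponent lemma writing $d = \rho(\ell)\cdot\ell^j$ and ruling out $j\geq 2$." This does two things wrong. First, lifting the exponent only controls the $\ell$-adic valuation of $u_n$; it says nothing about whether $u_n$ has prime divisors \emph{other than} $\ell$, which is exactly what must be excluded when $|u_d|=\ell^m$ and the Bilu--Hanrot--Voutier bound $d>30$ is not available. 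The paper instead invokes the complete BHV/Abouzaid classification of \emph{defective} Lucas numbers (Tables~\ref{table1}--\ref{table2} and Lemma~\ref{Part1DefectivePrimality}) and checks by hand that no defective $u_d$ of the relevant shape equals $\pm\ell^m$; that classification is what collapses the low-$d$ cases. Second, even if one knew $d=\rho(\ell)\cdot\ell^j$ with $j\leq 1$, the conclusion $d\mid\ell(\ell^2-1)$ would \emph{fail} when $\ell\mid D$, since then $\rho(\ell)=\ell$ and $d=\ell^2\nmid\ell(\ell^2-1)$. What is actually proven is $j=0$: the primitive prime divisor of $u_d$ (guaranteed off the defective tables) is forced to be $\ell$, which means $\ell$ does not divide any earlier $u_k$, so $d=\rho(\ell)$ on the nose. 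You also omit the parity constraint that $d$ must be odd (because $\tau(p^b)$ is odd precisely when $b$ is even), which the paper uses in conjunction with the divisibility $u_{d_1}\mid u_d$ (Proposition~\ref{PropA}) and the defective-case tables to conclude the stronger fact that $d$ is an odd prime.

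Two further differences, one legitimate and one misdirected. Appealing to Murty--Murty--Shorey for $\tau(m)=\pm 1\Rightarrow m=1$ is a legitimate shortcut; the paper instead proves this internally from the Lucas-sequence framework (Proposition~\ref{One} together with Lemma~\ref{Part1DefectivePrimality}(1)), which keeps the whole argument within BHV theory and extends uniformly to general newforms with trivial mod $2$ residual representation. In your second stage, the appeal to Deligne's bound to "constrain $m$ polynomially in $p$" is not what makes the argument close: since $p$ is unbounded, Deligne's bound alone does not cap $m$. The paper's finiteness comes from Lemma~\ref{DiophantineCriterion}, which turns $\tau(p^{d-1})=\pm\ell^m$ into an integer point $(p^{2k-1},\tau(p)^2)$ on the explicit Thue equation $F_{d-1}(X,Y)=\pm\ell^m$ (or the hyperelliptic curves $Y^2 = X^{2k-1}+\alpha$, $Y^2 = 5X^{2(2k-1)}+4\alpha$ for $d=3,5$), and then uses Siegel (and in practice a Thue--Mahler type finiteness when $m$ is allowed to vary) over the finitely many admissible odd primes $d\mid\ell(\ell^2-1)$.
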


A more general phenomenon is that $\tau(n)$ accumulates prime divisors as $n$ accumulates not merely distinct prime divisors because of its multiplicativity, but also generally will accumulate more prime divisors as $n$ accumulates more of some fixed prime divisor. This result is framed in terms of the classical functions $\Omega(n)$ and $\omega(n)$, which count the prime divisors of $n$ with and without multiplicity, respectively. Our second major result (which does not rely on any algebraic geometry) is as follows.

\begin{theorem}
	Let $n > 1$ be an integer. Then we have
	\begin{align*}
		\Omega\lp \tau(n) \rp \geq \sum_{p | n} \lp \sigma_0\lp \textnormal{ord}_p(n) + 1 \rp - 1 \rp \geq \omega(n),
	\end{align*}
	where $\sigma_0(n)$ counts the number of positive divisors of $n$.
\end{theorem}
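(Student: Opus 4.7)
My plan is to reduce the problem to prime powers via multiplicativity, interpret the resulting prime-power statement in terms of Lucas sequences, and extract the required bound from their cyclotomic factorization. Since $\tau$ is multiplicative, writing $n = \prod p^{a_p}$ with $a_p = \ord_p(n)$ gives $\Omega(\tau(n)) = \sum_{p\mid n} \Omega(\tau(p^{a_p}))$, so it suffices to prove the prime-power bound $\Omega(\tau(p^a)) \geq \sigma_0(a+1) - 1$ for each prime $p$ and integer $a \geq 1$. The case $\tau(n) = 0$ is vacuous under the convention $\Omega(0) = \infty$.

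The Hecke recurrence $\tau(p^{k+1}) = \tau(p)\tau(p^k) - p^{11}\tau(p^{k-1})$ with $\tau(p^0) = 1$ identifies $\tau(p^k) = U_{k+1}(\tau(p), p^{11})$, where $U_n(P,Q)$ is the standard Lucas sequence defined by $U_0 = 0$, $U_1 = 1$, and $U_{n+1} = PU_n - QU_{n-1}$. Let $\alpha, \beta$ be the roots of $x^2 - \tau(p)x + p^{11} = 0$; Deligne's bound ensures that they are complex conjugates with $|\alpha| = |\beta| = p^{11/2}$. The cyclotomic factorization of $x^n - y^n$ then yields
$$U_{a+1} \;=\; \prod_{\substack{d\,\mid\,(a+1) \\ d > 1}} V_d, \qquad V_d \;:=\; \prod_{\substack{1 \leq k \leq d \\ \gcd(k,d) = 1}} (\alpha - \zeta_d^k \beta),$$
expressing $U_{a+1}$ as a product of exactly $\sigma_0(a+1) - 1$ rational integers $V_d$ (each is a symmetric function of $\alpha, \beta$, hence integral).

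The crux is to show that each factor $V_d$ with $d > 1$ satisfies $|V_d| \geq 2$, since then each contributes at least one prime to $\Omega(U_{a+1})$. Non-vanishing $V_d \neq 0$ follows from the non-CM property of $\Delta$, which prevents $\alpha/\beta$ from being a root of unity. To rule out $|V_d| = 1$, I would invoke the Bilu--Hanrot--Voutier primitive divisor theorem for Lucas sequences, which guarantees for $d > 30$ that $V_d$ admits a prime divisor not dividing any $V_{d'}$ with $d' \mid d$ and $d' < d$, forcing $|V_d| \geq 2$. The finitely many potential exceptions $d \leq 30$ from that theorem are handled directly, exploiting the explicit formula $|V_d| = (2p^{11/2})^{\varphi(d)} \prod_{\gcd(k,d)=1} |\sin(\theta_p - \pi k/d)|$ with $\theta_p := \arg\alpha$, together with the paper's earlier input that $\tau(n) = \pm 1$ has only the trivial solution $n = 1$, which settles the base case $d = 2$ where $V_2 = \tau(p)$.

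Summing $\Omega(V_d) \geq 1$ over the $\sigma_0(a+1) - 1$ cyclotomic factors yields the prime-power bound, which assembled across $p \mid n$ gives the first inequality of the theorem. The second inequality $\sum_{p \mid n}(\sigma_0(\ord_p(n)+1) - 1) \geq \omega(n)$ is immediate since $\ord_p(n) \geq 1$ forces $\sigma_0(\ord_p(n) + 1) \geq 2$, so each summand is at least $1$ and there are $\omega(n)$ summands. The main obstacle is verifying $|V_d| \geq 2$ within the Bilu--Hanrot--Voutier exceptional range for Lucas sequences of the specific shape $U_n(\tau(p), p^{11})$: one must combine Deligne's explicit bound on $|\tau(p)|$ with a careful case analysis to exclude the degenerate possibility $V_d = \pm 1$, which is where the prior ``no solutions'' input for $\tau(n) = \pm 1$ becomes essential.
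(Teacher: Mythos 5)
Your plan tracks the paper's own argument closely in its skeleton: reduce via multiplicativity to prime powers, identify $\tau(p^a)$ with the Lucas number $u_{a+1}(\alpha_p,\beta_p)$ via Theorem~\ref{Newforms}~(2)--(3), and use Bilu--Hanrot--Voutier to guarantee each divisor $d>1$ of $a+1$ contributes a prime. The main divergence is in how that contribution is extracted. You factor $u_{a+1}=\prod_{d\mid(a+1),\,d>1}V_d$ into homogenized cyclotomic values and count one prime \emph{with multiplicity} per factor, bounding $\Omega$ directly. The paper instead cites Proposition~\ref{PropA} ($u_d\mid u_n$ for $d\mid n$) and collects one \emph{distinct} primitive prime from each $u_d$, which actually proves the stronger statement $\omega\lp\tau(p^a)\rp\geq \sigma_0(a+1)-1$. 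Both close the argument. Where the paper is leaner is the exceptional range $d\leq 30$: rather than the explicit trigonometric estimate you sketch, the paper appeals to the classification of defective Lucas numbers in Tables~\ref{table1} and \ref{table2}, and the Remark after Theorem~\ref{PrimeDivisorsGeneral} observes that no defective pair $(A,B)$ there has $B=p^{11}$ (an eleventh power), so $\Delta$ escapes all exceptional cases automatically.

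Two small corrections. First, attributing $V_d\neq 0$ to the non-CM property of $\Delta$ is not quite right: non-CM does not rule out $\tau(p)=0$ (that is Lehmer's open problem). The correct elementary reason is that Deligne forces $|\alpha|=|\beta|=p^{11/2}$, so $\alpha/\beta$ lies on the unit circle, and if $\alpha/\beta$ were a root of unity then $\tau(p)=2p^{11/2}\cos(\pi k/d)$ would have to be a rational integer; by Niven's theorem this requires $\cos(\pi k/d)\in\{0,\pm\tfrac12,\pm 1\}$, and every nonzero option forces $p^{11/2}\in\QQ$, which is absurd. Thus $V_d=0$ can occur only when $\tau(p)=0$, which your $\Omega(0)=\infty$ convention already handles. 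Second, invoking the $\tau(n)\neq\pm 1$ result for the $d=2$ base case is more than you need: since the mod $2$ Galois representation attached to $\Delta$ is trivial, $\tau(p)$ is even for every odd prime $p$ (and $\tau(2)=-24$), so $|V_2|=|\tau(p)|\geq 2$ whenever it is nonzero.
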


Proceeding in another direction, we know from previous discussion that a solution to $\tau(n) = \pm \ell^m$ can be used to produce an integral point on one of finitely many explicitly determined algebraic curves. Using this procedure, we can find all integral points on these curves and then determine by reversing this process whether that point induces a solution $\tau(n) = \pm \ell^m$. Using various techniques in effective algebraic geometry, we obtain the following theorem.

\begin{theorem}
	We have for all $n > 1$ that
	\begin{align*}
		\tau(n) \not \in \{ \pm 3, \pm 5, \pm 7, \pm 13, \pm 17, -19, \pm 23, \pm 37, \pm 691 \}.
	\end{align*}
\end{theorem}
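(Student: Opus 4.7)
The plan is to apply the finiteness theorem stated just above to each prime $\ell$ in the set $\{3, 5, 7, 13, 17, 19, 23, 37, 691\}$: every solution of $\tau(n) = \pm \ell$ with $n > 1$ must take the form $n = p^{d-1}$ for an odd prime $p$ and an integer $d \mid \ell(\ell^2 - 1)$, which pins $d$ down to a finite explicit list for each $\ell$. The strategy is then to eliminate each pair $(\ell, d)$ by translating the equation $\tau(p^{d-1}) = \pm \ell$ into a Diophantine problem on an explicit algebraic curve, computing its integral points, and checking that none of them yields a genuine pair $(p, \tau(p))$ with $p$ prime.

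The first reduction comes from the mod $2$ congruence $\Delta(z) \equiv \sum_{n \geq 0} q^{(2n+1)^2} \pmod 2$ noted in the introduction, which forces $\tau(m)$ to be even unless $m$ is an odd square. Since each target $\pm \ell$ in the list is odd, we conclude that $d - 1$ must be an even positive integer, which immediately discards the case $d = 2$ (otherwise problematic, since $\tau(p) = \pm \ell$ is a single scalar constraint not amenable to a curve argument) and restricts attention to odd divisors $d \geq 3$ of $\ell(\ell^2 - 1)$.

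For each remaining $d$, I iterate the Hecke recurrence $\tau(p^{k+1}) = \tau(p)\tau(p^k) - p^{11}\tau(p^{k-1})$ to express $\tau(p^{d-1})$ as a polynomial $F_d(x, y) \in \ZZ[x, y]$ in the variables $x = p$ and $y = \tau(p)$; up to normalisation, $F_d$ is a Chebyshev-like polynomial of degree $d - 1$ in $y$ with weighted parameter $x^{11}$. The equation $\tau(p^{d-1}) = \pm \ell$ then becomes the affine curve $C_{d, \pm \ell} \colon F_d(x, y) = \pm \ell$, whose geometric genus grows roughly linearly in $d$. For the small values of $d$ (typically $d = 3, 5, 7$), the curves $C_{d, \pm \ell}$ have low genus, and their integral points can be enumerated either by reduction to a Mordell or Thue equation, or ruled out by congruence conditions modulo a handful of small auxiliary primes. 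For each integer point $(x_0, y_0)$ with $x_0$ an odd prime, one consults the tabulated value $\tau(x_0)$ and verifies $\tau(x_0) \neq y_0$, discarding the candidate.

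For the higher odd divisors $d$ of $\ell(\ell^2 - 1)$, the curve $C_{d, \pm \ell}$ has genus too large for elementary techniques, and one brings in effective integral-point machinery: Chabauty--Coleman (applicable when the Mordell--Weil rank of the Jacobian is strictly less than its dimension), elliptic Chabauty after a suitable descent, and the Mordell--Weil sieve backed by local obstructions. The main obstacle will be the effective computation in the largest cases, most notably $\ell = 691$, where $\ell(\ell^2 - 1) = 690 \cdot 691 \cdot 692$ has many odd divisors and the corresponding Jacobians may carry both large dimension and nontrivial rank. In practice, for the very large $d$ one expects that local constraints modulo a few carefully chosen auxiliary primes $q$ --- reflecting the requirement that $F_d(x, y) \equiv \pm \ell \pmod q$ be solvable with $x$ prime --- will eliminate entire subfamilies without any Jacobian point computation, leaving only a manageable residue of genuinely low-rank curves for explicit Chabauty-style treatment. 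The asymmetric appearance of $-19$ without $+19$ in the statement signals that at least one of these curves genuinely possesses an integral point corresponding to a known value $\tau(p^{d-1}) = 19$, so the endgame must carefully distinguish such exceptional points from those yielding values in the forbidden list.
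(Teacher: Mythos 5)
Your overall skeleton is right: reduce via the mod $2$ congruence to $n = p^{d-1}$ with $p$ an odd prime, invoke the finiteness theorem to bound $d$ by a finite list of odd primes dividing $\ell(\ell^2-1)$, translate $\tau(p^{d-1}) = \pm\ell$ into an integer point on an explicit algebraic curve, and resolve each curve. But there are two substantive gaps.

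First, you miss what is arguably the key arithmetic input for the cases $\ell \in \{3, 5, 7, 691\}$: the Ramanujan congruences for $\tau(n)$ modulo $9$, $5$, $7$, and $691$. Without them, the list of admissible $d$ is merely ``odd primes dividing $\ell(\ell^2-1)$'', which for $\ell = 691$ includes $173$ among others. The paper uses the congruence $\tau(n) \equiv \sigma_{11}(n) \pmod{691}$ to compute $m_{691}(p) := \min\{j : 691 \mid \tau(p^j)\}$ explicitly, showing that only $d \in \{3, 5, 23, 691\}$ can occur; similar computations with the congruences mod $9$, $5$, $7$ pin down the (much shorter) exponent sets for $\ell = 3, 5, 7$. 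Your proposal, which hopes that ``local constraints modulo a few carefully chosen auxiliary primes $q$ \ldots will eliminate entire subfamilies without any Jacobian point computation'' is in spirit right, but these are not arbitrary auxiliary primes: they are exactly the primes for which $\tau$ satisfies classical Ramanujan-type congruences, and without naming those congruences the argument does not close. For the remaining $\ell \in \{13, 17, 19, 23, 37\}$ there is no such congruence, and one genuinely must check every admissible $d$; the paper does this via the tables of integral points on $Y^2 = X^{11} \pm \ell$, $Y^2 = 5X^{22} \pm 4\ell$, and the Thue equations $F_{d-1}(X,Y) = \pm\ell$ with $(X,Y) = (p^{11}, \tau(p)^2)$ for $d \geq 7$.

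Second, your explanation of why $-19$ appears without $+19$ is wrong. You speculate that one of the curves ``genuinely possesses an integral point corresponding to a known value $\tau(p^{d-1}) = 19$''. In fact $19$ is absent for a purely computational reason: ruling out $\tau(p^4) = 19$ requires classifying integral points on $Y^2 = 5X^{22} + 76$, and the paper's footnote notes explicitly that they were unable to compute integral points on any $H^{+}_{11,\ell}$. The asymmetry reflects a limitation of the method's reach, not the presence of an actual exceptional $\tau$-value.

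One small notational slip: when you translate $\tau(p^{d-1}) = \pm\ell$ into a curve, you take coordinates $(x, y) = (p, \tau(p))$ uniformly. The paper parametrizes differently by $d$: for $d = 3$ it uses $(p, \tau(p))$ on $Y^2 = X^{11} \pm\ell$, for $d = 5$ it uses $(p, 2\tau(p)^2 - 3p^{11})$ on $Y^2 = 5X^{22} \pm 4\ell$, and for $d \geq 7$ it uses $(p^{11}, \tau(p)^2)$ on the Thue form $F_{d-1}(X,Y) = \pm\ell$. This matters because the latter parametrizations dramatically reduce the degree and genus compared to the naive curve in $(p, \tau(p))$, making the Thue and Chabauty computations feasible.
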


\begin{remark}
	We add the following comments to this theorem.
	\begin{enumerate}
		\item If we assume the generalized Riemann hypothesis, additional values can be ruled out.
		\item The case $\pm 691$ requires additional input from the special congruence satisfied by $\tau(n)$ modulo 691.
	\end{enumerate}
\end{remark}

The last major result cannot be phrased in terms of $\tau(n)$ alone, because it requires that we allow the weight of the newform to vary. Previous results, when framed in their fully general context, show that for a given newform $f$ of weight $k$ with integer Fourier coefficients $a_f(n)$, there are only finitely many solutions to equations of the form $a_f(n) = \pm \ell^m$ for odd primes $\ell$ and $m \geq 1$. This last theorem moves the weight $k$ instead of the coefficient $n$.

\begin{theorem}
	Let $\ell$ be an odd prime and $m \geq 1$ an integer. Then there exists an effectively computable constants $M^{\pm}(\ell,m) = O_\ell(m)$ such that $\pm \ell^m$ is not a coefficient of any such newform $f$ of weight $2k > M^{\pm}(\ell, m)$ with integer coefficients, trivial mod 2 Galois representation, and even level coprime to $\ell$.
\end{theorem}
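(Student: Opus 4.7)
The plan is to reduce the hypothetical equation $a_f(n)=\pm\ell^m$ to a finite family of superelliptic Diophantine equations in which the weight parameter $2k$ appears as an exponent, and then appeal to effective results on such equations to bound $2k$ linearly in $m$. First, I would invoke the trivial mod $2$ Galois representation exactly as used earlier in Chapter \ref{C8} for $\Delta(z)$: the odd Fourier coefficients of $f$ are supported on odd squares, so $a_f(n)=\pm\ell^m$ with $m\geq 1$ forces $n$ to be an odd square. Multiplicativity combined with the fact that $\ell^m$ is a prime power then reduces the problem to the case $a_f(p^{d-1})=\pm\ell^{m'}$ for some odd $d\geq 3$, some $1\leq m'\leq m$, and some odd prime $p$ coprime to the (even) level.

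Next, the Hecke recursion at $p$ gives $a_f(p^{d-1})=U_d(a_f(p),\,p^{2k-1})$, where $U_d(P,Q)$ is the Lucas sequence defined by $U_0=0$, $U_1=1$, $U_d=PU_{d-1}-QU_{d-2}$. Deligne's bound $|a_f(p)|<2p^{(2k-1)/2}$ ensures $P^{2}-4Q<0$, so the sequence is non-degenerate. Applying the Bilu--Hanrot--Voutier theorem on primitive divisors, for $d>30$ the term $U_d$ admits a primitive prime divisor $q$ satisfying $d\mid q-\left(\tfrac{P^{2}-4Q}{q}\right)$. Since the only prime dividing $U_d=\pm\ell^{m'}$ is $\ell$, this forces $q=\ell$ and $d\leq \ell+1$, so after checking the small cases $d\leq 30$ separately one has $d$ ranging over an explicit finite set of size $O_\ell(1)$.

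For each such fixed admissible odd $d$, writing $x=p$, $y=a_f(p)$, and $n=2k-1$ turns the equation $U_d(y,x^{n})=\pm\ell^{m'}$ into a superelliptic Diophantine equation in the integers $(x,y)$ with free exponent $n$. For $d=3$ this is the Lebesgue--Nagell equation $y^{2}-x^{n}=\pm\ell^{m'}$, and for larger odd $d$ one obtains explicit higher-degree analogues by expanding the Lucas polynomial in $P$ and $Q$. Effective results on such equations due to Bugeaud, Mignotte, Shorey, and Tijdeman, proved via Baker's theory of linear forms in logarithms, yield $n\leq C_{\ell,d}\log(\ell^{m'})\leq C_{\ell,d}\,m\log\ell$. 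Taking the maximum over the $O_\ell(1)$ admissible values of $d$ produces the explicit, effectively computable bound $M^{\pm}(\ell,m)=O_\ell(m)$.

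The main obstacle is in the final step: the Baker-type constants $C_{\ell,d}$ for Lebesgue--Nagell type equations are classically enormous, so some care is required to cite the sharpest available effective bounds and to confirm that the Lucas sequences arising here avoid the short explicit list of Bilu--Hanrot--Voutier exceptional pairs. A secondary subtlety is to verify that the support condition coming from the trivial mod $2$ Galois representation always produces prime factors $p$ coprime to the level, so that the Hecke recursion and its Lucas-sequence factorization are legitimate for every relevant prime $p$.
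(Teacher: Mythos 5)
Your proposal is correct and follows essentially the same line of argument as the paper: reduce via multiplicativity and the trivial mod~2 Galois representation to $a_f(p^{d-1})=\pm\ell^{m'}$, use the Lucas-sequence structure from the Hecke recursion together with Bilu--Hanrot--Voutier to confine $d$ to an explicit finite set of odd primes, and then apply Baker's theory of linear forms in logarithms to the resulting exponential Diophantine equations to bound $2k-1$ linearly in $m$. The paper's execution is slightly more specific where yours is schematic: it records the constraint as $d\mid\ell(\ell^2-1)$ (whereas you cite the slightly weaker $d\le\ell+1$), handles $d=3,5$ through Lebesgue--Ramanujan--Nagell equations with the weight appearing as a free exponent (Theorem~\ref{Explicit35}), and for $d\ge7$ switches to the Thue-equation formulation $F_{d-1}(p^{2k-1},a_f(p)^2)=\pm\ell^m$ with the weight absorbed into the variable, invoking Tzanakis--de Weger rather than Bugeaud--Mignotte--Shorey--Tijdeman---but both routes rest on the same Baker-type effective bounds and yield the same $O_\ell(m)$ conclusion.
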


Many results have followed this work, particularly discussing generalizations of the first three results. For examples, see \cite{AH,AH22,BOT22,BGPS22,DJ21,HM21,LL21}. Most notably, in \cite{BGPS22} it is shown that $\tau(n) \not = \pm \ell^m$ for any odd primes $3 \leq \ell < 100$ and any positive integer $m$.

These results are proved using the theory of newforms, the theory of Lucas sequences, and effective algebraic geometry. In particular, the theory of newforms gives a connection between $\tau(n)$ and Lucas sequences, where a theorem of Bilu, Hanrot, and Voutier \cite{BHV01} on primitive prime divisors gives a method of determining the exact locations where solutions $\tau(p^m) = \ell^m$ are allowed to occur. Once these locations are determined, the recurrence relations for Lucas sequences are used to show that any solution $\tau(n) = \ell^m$ corresponds to some point on a finite family of algebraic curves of large genus. We then compute all points on these curves using a variety of methods.

\sglsp

\chapter{Background} \label{C2}
\thispagestyle{myheadings}

\dblsp
\vspace*{-.65cm}

\section{Partitions: Combinatorial Aspects} \label{S2.1}

\subsection{Single-variable generating functions} \label{S2.1.1}

As mentioned in Chapter \ref{C1}, the first fundamental contribution to the theory of partitions is undoubtedly due to Euler, who introduced the tool of generating functions (as defined by Abraham de Moivre) to the theory. Given a sequence $\{ a_n \}_{n \geq 0}$, the {\it generating function} associated to that sequence is the formal power series
\begin{align*}
	A(x) := \sum_{n \geq 0} a_n x^n.
\end{align*}
In partition theory, it is customary to use $q$ as the formal variable, although many older works use $x$. We also make regular use of the $q$-Pochhammer symbol, as defined in \eqref{q-Pochhammer Symbol}, by
\begin{align*}
	\lp a; q \rp_\infty := \prod_{n=0}^\infty \lp 1 - a q^n \rp.
\end{align*}
We will generally use $q$ for this variable. To see how the method works, we consider the most important theorem of this type coming from Euler's method. We note here that, as usual in partition theory, we let $p(0) = 1$ (i.e. the empty set denotes the only partition of 0).

\begin{theorem}
	Let $p(n)$ be the partition function. Then we have as formal power series the identity
	\begin{align*}
		P(q) := \sum_{n \geq 0} p(n) q^n = \prod_{n=1}^\infty \dfrac{1}{\lp 1 - q^n \rp} = \lp q;q \rp_\infty^{-1}.
	\end{align*}
	Furthermore, the function $P(q)$ is analytic for values $q \in \CC$ such that $|q| < 1$.
\end{theorem}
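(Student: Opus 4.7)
The plan is to prove the generating function identity by a formal expansion argument and then handle analyticity via standard convergence criteria for infinite products. First I would expand each factor as a geometric series in the formal power series ring $\ZZ\ps{q}$:
\begin{align*}
	\dfrac{1}{1 - q^n} = \sum_{k_n \geq 0} q^{n k_n}.
\end{align*}
Multiplying these out (justified formally because each fixed power of $q$ only receives contributions from finitely many factors, since the $n$th factor contributes only powers $q^{m}$ with $m \geq n$ or $m = 0$), the coefficient of $q^N$ in $\prod_{n=1}^\infty (1-q^n)^{-1}$ counts the number of sequences $(k_1, k_2, \ldots)$ of non-negative integers with all but finitely many $k_n$ equal to zero such that $\sum_n n \cdot k_n = N$. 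I would then observe that this is precisely a bijective encoding of partitions of $N$: the tuple $(k_1, k_2, \ldots)$ corresponds to the partition in which the part $n$ appears with multiplicity $k_n$. Hence the coefficient of $q^N$ is $p(N)$, which establishes the identity as formal power series.

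For analyticity, I would argue that $P(q)$ is an absolutely convergent infinite product on any disk $|q| \leq r < 1$. The standard criterion says $\prod_{n=1}^\infty (1 + a_n(q))$ converges absolutely and uniformly on a set provided $\sum_n |a_n(q)|$ converges absolutely and uniformly there. Applying this to the logarithm (or equivalently to $\sum_n |q^n/(1-q^n)|$), one gets uniform convergence on $|q| \leq r$ since $\sum_n r^n/(1-r)$ converges. Therefore $\lp q;q \rp_\infty$ is a non-vanishing holomorphic function on $|q| < 1$ (non-vanishing because no factor $1 - q^n$ vanishes for $|q| < 1$), and consequently $P(q) = \lp q;q \rp_\infty^{-1}$ is holomorphic on the open unit disk. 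Finally, because both $P(q)$ (defined as the formal power series $\sum p(n) q^n$) and the infinite product agree as formal power series and the product converges in a neighborhood of $0$, the series $\sum p(n) q^n$ in fact converges on $|q| < 1$ with the claimed sum.

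The main obstacle, if one could call it that, is really only a matter of care: justifying that the formal manipulation of an infinite product of power series is legitimate. This is handled by the finiteness observation noted above (only factors with $n \leq N$ can contribute to the coefficient of $q^N$), so there is no actual analytic issue in the formal identity step. The only genuine analytic content lies in the convergence of $\prod (1-q^n)^{-1}$ on compact subdisks, which follows from the comparison $\sum |q|^n < \infty$ for $|q| < 1$.
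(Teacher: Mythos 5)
Your proof is correct and takes essentially the same approach as the paper: expand each factor as a geometric series, identify the coefficient of $q^N$ with tuples of multiplicities $(k_1, k_2, \ldots)$, and observe this is a bijective encoding of partitions. You supply more detail on the formal-power-series justification (the finiteness observation) and the analyticity via absolute convergence of the product, which the paper treats more briefly, but the underlying argument is the same.
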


\begin{proof}
	The main tool is the well-known geometric series identity
	\begin{align*}
		\dfrac{1}{1 - q^n} = 1 + q^n + q^{2n} + q^{3n} + \cdots = \sum_{k \geq 0} q^{kn}.
	\end{align*}
	If this identity is taken as one of formal power series it is simply true; if we interpret each side as functions of a complex variable, we have to assume $|q| < 1$. Now, we may expand
	\begin{align*}
		\prod_{n=1}^\infty \dfrac{1}{1 - q^n} = \prod_{n=1}^\infty \sum_{k = 0}^\infty q^{kn} = \sum_{k_1, k_2, \dots \geq 0} q^{k_1 + 2 k_2 + 3 k_3 + \cdots}.
	\end{align*}
	The $k_i$ should be understood as corresponding to the $k$ in the middle expression, and the constants $1, 2, 3, \dots$ correspond to the range of values of $n$. Now, given a partition $\lambda \in \mathcal P$, we may write
	$|\lambda| = m_1(\lambda) + 2 m_2(\lambda) + 3 m_3(\lambda) + \dots$, where $m_i(\lambda)$ denotes the number of times $i$ is repeated in $\lambda$. Since the family of values $\{ m_i(\lambda) \}$ uniquely determines the underlying partition, we have
	\begin{align*}
		\sum_{k_1, k_2, \dots \geq 0} q^{k_1 + 2 k_2 + 3 k_3 + \cdots} = \sum_{\lambda \in \mathcal P} q^{|\lambda|} = \sum_{n \geq 0} p(n) q^n.
	\end{align*}
	This completes the proof.
\end{proof}

Once this technique is understood, the proofs can be made much shorter. Really, the essence of the proof is that
\begin{align*}
	\prod_{n=1}^\infty \dfrac{1}{1 - q^n} = \prod_{n=1}^\infty \lp 1 + q^n + q^{n+n} + q^{n+n+n} + \cdots \rp = \sum_{n \geq 0} p(n) q^n,
\end{align*}
where the first equality is by geometric series and the second comes from interpreting the term selected from each geometric series denote a multiplicity of a part. Euler used this kind of thinking to great effect, and ever since his time this method has been indispensable in partition theory, as we will see throughout the remainder of the chapter and the thesis.

We will use as a further example of this method a famous theorem of Euler and a generalization of it which we shall require later.

\begin{definition}
	Let $k \geq 2$ be an integers. A partition into parts which are repeated at most $k-1$ times is called a {\it $k$-distinct partitions}, and a partition in which no part is a multiple of $k$ is called a {\it $k$-regular partition}.
\end{definition}

The most basic examples of this are the cases $k=2$. We call a 2-distinct partition simply a {\it distinct partition}, as it is by definition a partition all of whose parts are distinct. We call a 2-regular partition an odd partition, as it is a partition into all of whose parts are odd. One of the foundational theorems of Euler which demonstrates the power of generating functions is the following:

\begin{theorem}[Euler, Glaischer]
	For all $n \geq 0$, the number of odd partitions of $n$ is equal to the number of distinct partitions of $n$. More generally, the number of $k$-distinct partitions of $n$ equals the number of $k$-regular partitions of $n$.
\end{theorem}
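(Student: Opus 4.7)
The plan is to prove both claims simultaneously by computing the generating functions for $k$-distinct and $k$-regular partitions in closed form and observing they coincide. The special case $k=2$ (Euler's theorem on odd vs.\ distinct partitions) will fall out immediately.

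First I would set up the generating function for $k$-distinct partitions. By the same bookkeeping principle used in the proof that $P(q) = (q;q)_\infty^{-1}$, each positive integer $n$ is allowed to appear as a part with multiplicity $0, 1, \ldots, k-1$, contributing the factor $1 + q^n + q^{2n} + \cdots + q^{(k-1)n}$. Summing a finite geometric series, this factor equals $(1 - q^{kn})/(1 - q^n)$, so the generating function for the number $d_k(n)$ of $k$-distinct partitions of $n$ is
\begin{align*}
    \sum_{n \geq 0} d_k(n) q^n = \prod_{n=1}^\infty \frac{1 - q^{kn}}{1 - q^n} = \frac{(q^k;q^k)_\infty}{(q;q)_\infty}.
\end{align*}

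Next I would compute the generating function for $k$-regular partitions. Here each positive integer $n$ with $k \nmid n$ is allowed to appear with any nonnegative multiplicity, while multiples of $k$ are forbidden. Arguing as in the derivation of $P(q)$ but restricting the product, the generating function for the number $r_k(n)$ of $k$-regular partitions of $n$ is
\begin{align*}
    \sum_{n \geq 0} r_k(n) q^n = \prod_{\substack{n \geq 1 \\ k \,\nmid\, n}} \frac{1}{1 - q^n}.
\end{align*}
To see this equals the previous product, I would split $\prod_{n \geq 1}(1-q^n)^{-1}$ according to whether $k \mid n$, giving
\begin{align*}
    \prod_{n=1}^\infty \frac{1}{1 - q^n} = \prod_{\substack{n \geq 1 \\ k \,\nmid\, n}} \frac{1}{1 - q^n} \cdot \prod_{m=1}^\infty \frac{1}{1 - q^{km}},
\end{align*}
so that $\prod_{k \nmid n}(1-q^n)^{-1} = (q;q)_\infty^{-1} \cdot (q^k;q^k)_\infty$, matching the $k$-distinct generating function exactly.

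Comparing coefficients of $q^n$ in the two power series identities yields $d_k(n) = r_k(n)$ for every $n \geq 0$, which is the general statement; the case $k=2$ recovers Euler's classical identity between partitions into distinct parts and partitions into odd parts. There is no real obstacle here: the only step that requires any care is the legitimacy of separating the infinite product along the divisibility of $n$ by $k$, which is justified by absolute convergence for $|q| < 1$ (or purely formally in $\mathbb{Z}\llbracket q \rrbracket$, since each coefficient of $q^n$ only involves finitely many factors). The argument is entirely formal and mirrors the template already established by the $P(q) = (q;q)_\infty^{-1}$ derivation.
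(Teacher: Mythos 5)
Your proposal is correct and takes essentially the same approach as the paper: both arguments derive the generating functions for $k$-distinct and $k$-regular partitions, then match them via the identity $\prod_{k \nmid n}(1-q^n)^{-1} = (q^k;q^k)_\infty/(q;q)_\infty = \prod_n (1 + q^n + \cdots + q^{(k-1)n})$. The only difference is presentational (you split the product by divisibility while the paper telescopes $(1-q^{kn})/(1-q^n)$), but these are the same manipulation.
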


\begin{proof}
	Euler's case, i.e. $k=2$, follows from the algebraic identity
	\begin{align*}
		\prod_{n=1}^\infty \dfrac{1}{\lp 1 - q^{2n-1} \rp} = \prod_{n=1}^\infty \dfrac{\lp 1 - q^{2n} \rp}{\lp 1 - q^n \rp} = \prod_{n=1}^\infty \lp 1 + q^n \rp,
	\end{align*}
	along with the fact that by thinking with Euler's methodology for generating functions the left side counts partitions into odd parts and the right side counts partitions into distinct parts. The more general case, often referred to as Glaischer's Theorem, has a similar style of proof which flows from the equation
	\begin{align*}
		\prod_{\substack{n \geq 1 \\ k \centernot | n}} \dfrac{1}{1 - q^n} = \prod_{n=1}^\infty \dfrac{\lp 1 - q^{kn} \rp}{\lp 1 - q^n \rp} = \prod_{n=1}^\infty \lp 1 + q^n + q^{2n} + \cdots + q^{(k-1)n} \rp.
	\end{align*}
	This completes the proof.
\end{proof}

During this proof, we obtained the generating function identity
\begin{align} \label{k-regular generating function}
	\sum_{n \geq 0} p_k(n) q^n = \dfrac{\lp q^k; q^k \rp_\infty}{\lp q; q \rp_\infty},
\end{align}
where $p_k(n)$ denotes the number of $k$-regular (or $k$-distinct) partitions of $n$. Results such as these are the prime examples of Euler's methodology, but we shall see later that the method Euler developed is even more general than this.

\subsection{Partition statistics and two-variable generating functions} \label{S2.1.2}

As early as the original works of Euler on partition theory, a central thread in the theory concerns studying intrinsic combinatorial properties exhibited by partitions. The most basic of these is the {\it number of parts}. For a partition $\lambda = (\lambda_1, \lambda_2, \dots, \lambda_r)$ with each $\lambda_i \geq 1$, we may define a number-of-parts function $\ell(\lambda) = r$. We may view $\ell$ as a function $\ell : \mathcal P \to \ZZ$. The theory of {\it partition statistics} may be roughly defined as the study of functions on the set $\mathcal P$ into some natural space, like $\ZZ$, that track some combinatorial feature of interest. Some famous examples of partition statistics that may be expressed as maps $\mathcal P \to \ZZ$ include size, number of parts, the rank \cite{Dys44}, and the crank \cite{AG88}.

Many partition statistics, or more broadly maps between sets of partitions, are most naturally expressed in terms of a sort of geometric method of representing partitions. The standard way of thinking geometrically about a partition is the {\it Ferrers diagram}, in which a partition $\lambda = (\lambda_1, \lambda_2, \dots, \lambda_r)$ is represented as a left-aligned collection of boxes in which the $i$th row as $\lambda_i$ boxes. These diagrams were seen in Section \ref{S1.3}. We may also view the {\it hook numbers} of Section \ref{S1.3} as a variation on partition statistics, more specifically as a function that takes a partition $\lambda$ to a multiset of size $|\lambda|$, or alternatively to an element of $\ZZ^n$.

One of the central types of results which enter into the theory of partition statistics are so-called two-variable generating functions; these typically track the size of a partition in one variable and the partition statistic in a second variable. We shall go back to Euler's very first paper in partition theory to see how this works in the case of parts of a partition. As mentioned in Chapter \ref{C1}, this paper of Euler was dedicated to resolving several counting questions of Naud\'{e} \cite{Eul}. We shall consider one of these to demonstrate a more general formulation of the methodology of Section \ref{S2.1.1}. One of Naud\'{e}'s questions is the following:

\begin{question*}
	How many partitions of 50 are there into seven distinct parts?
\end{question*}

To frame this algebraically, let $d(m,n)$ denote the number of ways to partition $n$ into $m$ distinct parts. Naud\'{e}'s question is to evaluate $d(7,50)$. The answer, as proven by Euler, is $d(7,50) = 522$, which is too large to be reasonably calculated by enumerating all 522 examples. Euler's method relied on generating functions, but not precisely the type discussed in Section \ref{S2.1.1}. Euler instead constructs a two-variable generating function
\begin{align*}
	\sum_{m,n \geq 0} d(m,n) z^m q^n
\end{align*}
which simultaneously keeps track of both the size of partitions and the number of parts in the partitions. The basic idea here is that the exponent of $z$ should keep track of individual parts while ignoring their size, while the exponent of $q$ should play a role just like in Section \ref{S2.1.1}. Using the modern $q$-Pochhammer symbol, Euler's observation was that
\begin{align*}
	\sum_{m,n \geq 0} d(m,n) z^m q^n = \lp zq; q \rp_\infty = \prod_{n=1}^\infty \lp 1 + z q^n \rp.
\end{align*}
Euler did not expand this infinite product by hand in order to calculate $d(7,50)$. Instead, he leveraged algebra. Using the immediate observation
\begin{align*}
	\sum_{m,n \geq 0} d(m,n) z^m q^n = \lp 1 + zq \rp \prod_{n=2}^\infty \lp 1 + z q^n \rp = \lp 1 + zq \rp \prod_{n=1}^\infty \lp 1 + (zq) q^n \rp,
\end{align*}
Euler derived a functional equation from this generating function that leads to the recurrence relation $d(m,n) = d(m,n-m) + d(m-1, n-m)$. This gave him a much quicker method for calculating values of the functions $d(m,n)$, and it isn't too difficult even by hand to show that $d(7,50) = 522$ using this method.

\begin{remark}
	It should be noted that partition recurrences played a central role in partition theory, and in particular for computing large values of partition functions. In fact, one of the fastest ways to compute a table of values for $p(n)$ is to use Euler's pentagonal number theorem
	\begin{align*}
		\lp q; q \rp_\infty = 1 + \sum_{n=1}^\infty (-1)^n \lp q^{\frac{n(3n+1)}{2}} + q^{\frac{n(3n-1)}{2}} \rp
	\end{align*}
	to prove the recurrence relation
	\begin{align*}
		p(n) = \sum_{k \in \ZZ \backslash \{ 0 \}} (-1)^{k+1} p\lp n - \dfrac{k(3k+1)}{2} \rp.
	\end{align*}
	This remains to this day a very efficient method for computing tables of values of $p(n)$.
\end{remark}

We now move to a more general setting, whereby we wish to combine the area of partition statistics with the area of generating functions. To this aim, let $s : \mathcal P \to \ZZ$ be a partition statistic, and let $p_s(m,n)$ be the number of partitions $\lambda$ of $n$ such that $s(\lambda) = m$. It is generally desirable to compute generating functions of the form
\begin{align*}
	\sum_{\lambda \in \mathcal P} z^{s(\lambda)} q^{|\lambda|} = \sum_{\substack{n \geq 0 \\ m \in \ZZ}} p_s(m,n) z^m q^n.
\end{align*}
One could also replace the family $\mathcal P$ with some other family of partitions, say the collection of partitions into distinct parts.

There are many problems about partition statistics that may be addressed in a natural way from the framework of two-variable generating functions. One of the most immediate would be to calculated the limiting behavior of the average of the partition statistic $s$, or equivalently to study asymptotics for $s(n) := \sum_{\lambda \vdash n} s(\lambda)$. The framework of two-variable generating functions makes this fairly straightforward. If we let $S(q) = \sum_{n \geq 0} s(n) q^n$, then it is immediate from calculus that
\begin{align*}
	S(q) = \dfrac{\partial}{\partial z} \bigg|_{z=1} \sum_{\lambda \in \mathcal P} z^{s(\lambda)} q^{|\lambda|} = \dfrac{\partial}{\partial z} \bigg|_{z=1} \sum_{\substack{n \geq 0 \\ m \in \ZZ}} p_s(m,n) z^m q^n.
\end{align*}
This can be used, for instance, to count the number of parts that appear amongst all partitions of $n$, as the following proposition demonstrates.

\begin{proposition}
	For $\lambda \in \mathcal P$, let $\ell(\lambda)$ denote the number of parts of $\lambda$, $p(m,n)$ the number of partitions of $n$ into exactly $m$ parts, and $L(n) = \sum_{\lambda \vdash n} \ell(\lambda)$. Then we have the generating function identities
	\begin{align*}
		\sum_{m,n \geq 0} p(m,n) z^m q^n = \lp zq; q \rp_\infty^{-1}
	\end{align*}
	and
	\begin{align*}
		\sum_{n \geq 0} L(n) q^n = \lp q; q \rp_\infty^{-1} \sum_{m \geq 1} \dfrac{q^m}{1 - q^m}.
	\end{align*}
\end{proposition}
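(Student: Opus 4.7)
The plan is to prove both identities by following the Euler-style methodology illustrated in Section \ref{S2.1.1}, then leveraging the derivative trick observed just before the proposition statement to reduce the second identity to the first.

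For the first identity, I would start from the geometric series expansion $(1-zq^n)^{-1} = \sum_{k \geq 0} z^k q^{kn}$ (valid as a formal power series in $q$ with coefficients in $\ZZ\ps{z}$), and expand
\[
	\lp zq; q \rp_\infty^{-1} = \prod_{n=1}^\infty \dfrac{1}{1 - zq^n} = \prod_{n=1}^\infty \sum_{k_n \geq 0} z^{k_n} q^{n k_n} = \sum_{(k_1, k_2, \dots)} z^{k_1 + k_2 + \cdots} q^{k_1 + 2 k_2 + 3 k_3 + \cdots},
\]
where the outer sum runs over finitely-supported sequences of non-negative integers. As in the Euler argument for $P(q)$, each such sequence $(k_1, k_2, \dots)$ corresponds bijectively to a partition $\lambda$ whose multiplicity of $i$ equals $k_i$, i.e.\ $m_i(\lambda) = k_i$. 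Then $\sum_i i\, k_i = |\lambda|$ and $\sum_i k_i = \ell(\lambda)$, so collecting by the pair of exponents shows that the coefficient of $z^m q^n$ equals $p(m,n)$, which is exactly the first identity.

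For the second identity, I would use the remark made in the paragraph preceding the proposition: if $s(\lambda) = \ell(\lambda)$, then $p_s(m,n) = p(m,n)$, and therefore
\[
	\sum_{n \geq 0} L(n) q^n = \dfrac{\partial}{\partial z} \bigg|_{z=1} \sum_{m,n \geq 0} p(m,n) z^m q^n = \dfrac{\partial}{\partial z} \bigg|_{z=1} \lp zq; q \rp_\infty^{-1}
\]
by the first identity. To evaluate the right-hand side, set $F(z,q) := \lp zq;q \rp_\infty^{-1}$ and apply logarithmic differentiation: $\log F(z,q) = -\sum_{n \geq 1} \log\lp 1 - z q^n \rp$, so
\[
	\dfrac{1}{F(z,q)} \dfrac{\partial F}{\partial z}(z,q) = \sum_{n \geq 1} \dfrac{q^n}{1 - z q^n}.
\]
Specializing at $z=1$ and using $F(1,q) = \lp q;q \rp_\infty^{-1}$ yields
\[
	\dfrac{\partial F}{\partial z}(z,q) \bigg|_{z=1} = \lp q;q \rp_\infty^{-1} \sum_{n \geq 1} \dfrac{q^n}{1 - q^n},
\]
which is the claimed formula after relabeling $n \mapsto m$.

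Very little is genuinely difficult here: the only mild subtlety is justifying the interchange of differentiation with the infinite product/sum and the legitimacy of the formal expansion. The cleanest way to handle this is to interpret everything as formal power series in $q$ over $\ZZ\ps{z}$ (or equivalently as convergent series for $|q|<1$ and $|z|<|q|^{-1}$), so that for each $n$ the coefficient of $q^n$ is determined by a finite computation and all manipulations reduce to polynomial identities. The combinatorial heart of the argument is just the bijection between multiplicity sequences and partitions, exactly as in Euler's original method.
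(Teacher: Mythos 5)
Your proof is correct and takes essentially the same approach as the paper: the first identity is the Euler-style multiplicity-sequence bijection (the paper simply cites its Naud\'{e} discussion for this), and the second identity is obtained by differentiating at $z=1$. The only cosmetic difference is that you compute $\partial_z(zq;q)_\infty^{-1}\big|_{z=1}$ via logarithmic differentiation, whereas the paper applies the product rule directly and then factors out $(q;q)_\infty^{-1}$; both are the same calculation.
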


\begin{proof}
	The proof of the first generating function follows along the same lines as Euler's solution to Naud\'{e}'s problem. The second follows by taking derivatives. From definitions it is clear that $L(n) = \sum_{m \geq 0} m p(m,n)$, so we have
	\begin{align*}
		\sum_{n \geq 0} L(n) q^n = \dfrac{\partial}{\partial z} \bigg|_{z=1} \prod_{n=1}^\infty \dfrac{1}{\lp 1 - zq^n \rp} &= \sum_{m \geq 1} \dfrac{q^m}{\lp 1 - q^m \rp^2} \ \prod_{n \not = m} \dfrac{1}{\lp 1 - q^n \rp} \\ &= \lp q; q \rp_\infty^{-1} \sum_{m \geq 1} \dfrac{q^m}{1 - q^m}.
	\end{align*}
	This completes the proof.
\end{proof}

We will see generating functions much like this one in Chapter \ref{C3}, with some modifications. The proofs there will be more in the classical spirit of Euler, with no derivatives present, but the same proofs can be done with derivatives.

Another natural question which we can study using two-variable generating function is the question of the distribution of $s(\lambda)$ among residue classes. More precisely, let us define
\begin{align*}
	p_s(a,b;n) = \#\{ \lambda \vdash n : s(\lambda) \equiv a \pmod{b} \}.
\end{align*}
It is clear that
\begin{align} \label{Mod B Distribution Equation}
	p_s(0,b;n) + p_s(1,b;n) + \cdots + p_s(b-1,b;n) = p(n),
\end{align}
and it is therefore natural to consider the distribution of values of $p_s(a,b;n)$ as $n \to \infty$ as $a$ varies. Two natural questions arise in this context. One, are there arithmetic progressions $\ell n + r$ for which
\begin{align} \label{Crank-Type Equation}
	p_s(0,b; \ell n + r) = p_s(1,b; \ell n + r) = \cdots = p_s(b-1,b; \ell n + r)
\end{align}
for all $n$? If so, then by \eqref{Mod B Distribution Equation} this induces a congruence
\begin{align*}
	p(\ell n + r) \equiv 0 \pmod{b}.
\end{align*}
There is a way to establish this kind of congruence using the two-variable generating function for $p_s(m,n)$. This idea is used for example in Garvan's famous paper on vector cranks \cite[Lemma 2.2]{Gar88}, which established simultaneously all three of Ramanujan's congruences for $p(n)$ and led to the Andrews-Garvan crank \cite{AG88}. These papers have given birth to an entire field of crank statistics that give combinatorial witness to partition-theoretic congruences for a variety of congruence functions. One such paper by Bringmann, Gomez, Rolen and Tripp \cite{BGRT22}, which explores cranks for colored partition functions via the theory of theta blocks, lays out this principle in a very general form. In particular, Lemma 2.1 of \cite{BGRT22} implies that equalities like 
\eqref{Crank-Type Equation} are equivalent to the divisibility of the Laurent polynomial $\sum_{m \in \ZZ} p_s(m,n) z^m$ by cyclotomic polynomials. Equivalently, this is equivalent to such polynomials vanishing at $z = \zeta_b := e^{\frac{2\pi i}{b}}$ for suitable choices of $n$. This principle stands at the heart of much of the modern developments on crank functions, and thus on partition congruences.

The principle that lies behind this application is, however, actually much more broad than this. The principle extends far beyond partition theory to other areas of number theory. In fact, the principle at play here is really an orthogonality relation. In the setting of roots of unity, the orthogonality relation says that if $\zeta_b := e^{\frac{2\pi i}{b}}$ for $b \geq 2$, then
\begin{align} \label{Orthogonality}
	\dfrac{1}{b} \sum_{k=0}^{b-1} \zeta_b^{km} =
	\begin{cases}
		1 & m \equiv 0 \pmod{b}, \\ 0 & \text{otherwise}.
	\end{cases}
\end{align}
One of the major takeaways from the proof of famous results like Dirichlet's theorem on primes in arithmetic progressions is that orthogonality relations for $b$th roots of unity (or Dirichlet characters modulo $b$) are the correct device for creating indicator functions for residue classes modulo $b$. In a partition-theoretic context, what this means is that the generating functions for $p_s(a,b;n)$ are obtainable via orthogonality from the two-variable generating function for $p_s(m,n)$.

\begin{proposition} \label{OrthogonalityProp}
	Let $S(z,q) := \sum\limits_{\lambda \in \mathcal P} z^{s(\lambda)} q^{|\lambda|}$ and let $\zeta_b := e^{\frac{2\pi i}{b}}$ for any integer $b \geq 2$. Then we have for $0 \leq a < b$ that
	\begin{align*}
		\sum_{n \geq 0} p_s(a,b;n) q^n = \dfrac{1}{b} \sum_{k=0}^{b-1} \zeta_b^{-ak} S(\zeta_b^k; q).
	\end{align*}
\end{proposition}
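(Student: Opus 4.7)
The plan is to prove this by directly substituting the definition of $S(z,q)$ into the right-hand side, interchanging the order of summation, and invoking the orthogonality relation \eqref{Orthogonality} to pick out exactly those partitions whose statistic lies in the desired residue class.

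First, I would write out the right-hand side by substituting $S(\zeta_b^k; q) = \sum_{\lambda \in \mathcal P} \zeta_b^{k \cdot s(\lambda)} q^{|\lambda|}$, giving
\begin{align*}
    \frac{1}{b} \sum_{k=0}^{b-1} \zeta_b^{-ak} S(\zeta_b^k; q)
    = \frac{1}{b} \sum_{k=0}^{b-1} \zeta_b^{-ak} \sum_{\lambda \in \mathcal P} \zeta_b^{k \cdot s(\lambda)} q^{|\lambda|}.
\end{align*}
Since the sum over $k$ is finite, I can freely interchange the order of summation to obtain
\begin{align*}
    \sum_{\lambda \in \mathcal P} q^{|\lambda|} \left( \frac{1}{b} \sum_{k=0}^{b-1} \zeta_b^{k(s(\lambda) - a)} \right).
\end{align*}

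Next I would apply the orthogonality relation \eqref{Orthogonality} with $m = s(\lambda) - a$: the inner parenthetical sum equals $1$ if $s(\lambda) \equiv a \pmod{b}$ and $0$ otherwise. Thus the outer sum collapses to a sum over precisely those partitions $\lambda$ with $s(\lambda) \equiv a \pmod b$:
\begin{align*}
    \sum_{\substack{\lambda \in \mathcal P \\ s(\lambda) \equiv a \,(\mathrm{mod}\, b)}} q^{|\lambda|}.
\end{align*}
Finally, grouping this sum by the value $n = |\lambda|$ and using the definition $p_s(a,b;n) = \#\{\lambda \vdash n : s(\lambda) \equiv a \pmod b\}$ yields $\sum_{n \geq 0} p_s(a,b;n) q^n$, which matches the left-hand side.

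There is essentially no obstacle here; the argument is a textbook application of roots-of-unity orthogonality, with the only minor technical point being the justification for swapping the two summations. Since the sum over $k$ is finite, this interchange is valid at the level of formal power series in $q$ with no convergence concerns, so the identity holds formally (and analytically on $|q| < 1$, where $S(z,q)$ converges for each fixed $z$ on the unit circle).
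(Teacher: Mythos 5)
Your proof is correct and follows the same route as the paper's: substitute the definition of $S$, interchange the finite sum over $k$ with the sum over partitions, and apply the shifted orthogonality relation $\frac{1}{b}\sum_{k=0}^{b-1}\zeta_b^{(m-a)k} = 1$ if $m\equiv a\pmod b$ and $0$ otherwise. The only cosmetic difference is that you spell out the interchange-of-summation justification, which the paper treats as implicit.
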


\begin{proof}
	From \eqref{Orthogonality}, it may be easily deduced that
	\begin{align*}
		\dfrac{1}{b} \sum_{k=0}^{b-1} \zeta_b^{(m-a)k} =
		\begin{cases}
			1 & m \equiv a \pmod{b}, \\ 0 & \text{otherwise}.
		\end{cases}
	\end{align*}
	On this basis, we have
	\begin{align*}
		\dfrac{1}{b} \sum_{k=0}^{b-1} \zeta_b^{-ak} S(\zeta_b^k; q) = \dfrac{1}{b} \sum_{\lambda \in \mathcal P} q^{|\lambda|} \cdot \sum_{k=0}^{b-1} \zeta_b^{\lp s(\lambda) - a \rp k} = \sum_{n \geq 0} p_s(a,b;n) q^n,
	\end{align*}
	which completes the proof.
\end{proof}

This proposition plays a central role in Chapters \ref{C5} and \ref{C6}.

\section{Modular Forms}

Modular forms are holomorphic (or sometimes meromorphic) functions that satisfy certain nice transformation laws when acted on by the modular group $\mathrm{SL}_2(\ZZ)$ or one of its subgroups. Modular forms are very important objects in many areas of mathematical study. Historically, modular transformation laws played central roles in Riemann's functional equation for the zeta function \cite{Rie59} and in the Hardy-Ramanujan-Rademacher circle method \cite{HR18,Rad37}. In more modern times, modular forms are crucial in the famous proof of Fermat's last theorem and play a central role in the work on sphere packings \cite{CKMRV17, Via17} which earned a Fields medal in 2022 for Maryna Viazovska. The subject of this section is to give suitable definitions for modular forms and to give some important examples and properties exhibited by spaces of modular forms.

\subsection{$\mathrm{SL}_2(\ZZ)$ and congruence subgroups}

The modular group $\mathrm{SL}_2(\ZZ)$ is the group of all two by two integer matrices whose determinant is 1. The modular group acts in a natural way on the upper half plane $\HH$, but this action is best studied at first from a geometric perspective. In particular, this action is inherited from the larger group $\mathrm{SL}_2(\RR)$ by M\"{o}bius transformations; that is, a matrix $\gamma = \lp \begin{smallmatrix} a & b \\ c & d \end{smallmatrix} \rp \in \mathrm{SL}_2(\RR)$ acts on points $z \in \HH$ by $\gamma z = \frac{az+b}{cz+d}$. This action is isometric with respect to the hyperbolic metric $ds^2 = y^{-2} \lp dx^2 + dy^2 \rp$.

The action of $\mathrm{SL}_2(\ZZ)$ on $\HH$ by M\"{o}bius transformations is discontinuous, i.e. the orbits have no limit points. Thus, we can define a quotient space $\mathcal F := \HH/\mathrm{SL}_2(\ZZ)$, which we call the {\it fundamental domain} of this action, which is a surface containing exactly one representative from each orbit. In fact, this surface is a Riemann surface that has genus zero when compactified, and the points on this surface parameterize complex elliptic curves via the modular $j$-function.

The first examples of modular forms will be functions which have nice transformation laws when acted on by elements of $\mathrm{SL}_2(\ZZ)$. However, in many contexts we are required to restrict ourselves to certain subgroups of $\mathrm{SL}_2(\ZZ)$ called {\it congruence subgroups}. A congruence subgroup of level $N$ is a subgroup $\Gamma \leq \mathrm{SL}_2(\ZZ)$ that contains the subgroup
\begin{align*}
	\Gamma(N) := \left\{ \begin{pmatrix} a & b \\ c & d \end{pmatrix} \in \mathrm{SL}_2(\ZZ) : \begin{pmatrix} a & b \\ c & d \end{pmatrix} \equiv \begin{pmatrix} 1 & 0 \\ 0 & 1 \end{pmatrix} \pmod{N} \right\}.
\end{align*}
The subgroups $\Gamma(N)$ are called the {\it principal congruence subgroups}. These are the kernels of the reduction maps $\mathrm{SL}_2(\ZZ) \to \mathrm{SL}_2(\ZZ/N\ZZ)$, and are thus normal subgroups of finite index in $\mathrm{SL}_2(\ZZ)$. The two examples with which we will be most concerned are
\begin{align*}
	\Gamma_0(N) := \left\{ \begin{pmatrix} a & b \\ c & d \end{pmatrix} \in \mathrm{SL}_2(\ZZ) : \begin{pmatrix} a & b \\ c & d \end{pmatrix} \equiv \begin{pmatrix} * & * \\ 0 & * \end{pmatrix} \pmod{N} \right\}
\end{align*}
and
\begin{align*}
	\Gamma_1(N) := \left\{ \begin{pmatrix} a & b \\ c & d \end{pmatrix} \in \mathrm{SL}_2(\ZZ) : \begin{pmatrix} a & b \\ c & d \end{pmatrix} \equiv \begin{pmatrix} 1 & * \\ 0 & 1 \end{pmatrix} \pmod{N} \right\},
\end{align*}
where in each case the $*$ signifies that any residue class is allowed. Since these each contain $\Gamma(N)$, they also have finite index in $\mathrm{SL}_2(\ZZ)$ and their indexes can be computed without too much difficulty. As with the modular group, each congruence subgroup $\Gamma$ acts on $\HH$ by M\"{o}bius transformations and induces a quotient space which is a Riemann surface. These surfaces are compactified by adding the ``cusps". One way of writing this is that the compactification of the Riemann surface $\HH/\Gamma$ is the space $\lp \HH \cup \QQ \cup \{ i \infty \} \rp/\Gamma$, where the action of $\gamma = \lp \begin{smallmatrix} a & b \\ c & d \end{smallmatrix} \rp$ on $i\infty$ is taken to be $\frac{a}{c}$. Because these groups have finite index, the number of equivalence classes of cusps (which by abuse of notation we refer to as cusps) is finite.

\subsection{Definition and examples}

We may now define a modular form. Throughout this section, we let $N \geq 1$ be an integer, $k$ any integer, and $\chi$ a Dirichlet character modulo $N$. The most basic case of modular forms, which we consider first, arise from the case $N = 1$ and $\chi = 1$. To define a modular form, we define an operator called the {weight $k$ slash action}, which is a group action of $\mathrm{SL}_2(\RR)$ on functions $f : \HH \to \CC$ and is defined for $\gamma = \lp \begin{smallmatrix} a & b \\ c & d \end{smallmatrix} \rp$ by
\begin{align*}
	\lp f|_k \gamma \rp (z) = \lp \det \gamma \rp^{\frac k2} \lp cz + d \rp^{-k} f(\gamma z).
\end{align*}

\begin{definition}
	A holomorphic function $f : \HH \to \CC$ is called a {\it modular form} of weight $k$ on $\mathrm{SL}_2(\ZZ)$ if 
	\begin{enumerate}
		\item $f$ is invariant under the action of $|_k$; that is, if $\lp f|_k \gamma \rp = f$ for all $\gamma \in \mathrm{SL}_2(\ZZ)$.
		\item $f$ has a Fourier expansion near infinity of the form
		\begin{align*}
			f(q) = \sum_{n=0}^\infty a_n q^n
		\end{align*}
		in the variable $q = e^{2\pi i z}$.
	\end{enumerate}
\end{definition}

This definition may be immediately generalized or refined in a variety of ways. One such family are the {\it weakly holomorphic modular forms}, which satisfy (1) and have finite principal parts in their Fourier expansions; i.e. $f$ may be meromorphic at the cusps of $\HH$. A similar refinement are the {\it cusp forms}, which satisfy (1) but have $a_0 = 0$. Many modular forms, particular in the theory of Borcherds products, may be meromorphic functions on $\HH$. There is a theory of half-integral weight modular forms as well, which include functions like $\eta(\tau)$, which requires an altered version of (1).

Another kind of generalization involves generalizing to congruence subgroups and allowing a twist by Dirichlet characters. Here, we say that a holomorphic function $f$ on $\HH$ is a {\it modular form on a congruence subgroup $\Gamma$ of weight $k$ and Nebentypus $\chi$} if it has a transformation law
\begin{align*}
	f\lp \gamma z \rp = \chi(d) \lp cz + d \rp^k f(z)
\end{align*}
for any $\gamma = \lp \begin{smallmatrix} a & b \\ c & d \end{smallmatrix} \rp$ and that have Fourier expansions near every cusp of $\Gamma$ analogous to those for modular forms on $\mathrm{SL}_2(\ZZ)$, but in a different uniformizing variable $q$ that takes on the value 0 at the cusp. We denote the space holomorphic modular forms on $\Gamma$ of weight $k$ and Nebentypus $\chi$ by $M_k(\Gamma, \chi)$. If there is no Nebentypus, we write $M_k(\Gamma)$, and if we wish instead to consider a space of cusp forms we write $S_k(\Gamma, \chi)$ or $S_k(\Gamma)$, respectively.

To give some understanding of the theory, it is useful to give examples. Some we have already seen in Chapter \ref{C1} include Dedekind's eta function $\eta(\tau)$, which is a modular form of weight $\frac 12$ and ``multiplier system" on $\mathrm{SL}_2(\ZZ)$ (multiplier system is a generalization of Nebentypus which we will not need). For the sake of brevity, we shall now focus on summarizing the theory of integer weight modular forms on $\mathrm{SL}_2(\ZZ)$.

\subsection{Modular forms of integer weight on $\mathrm{SL}_2(\ZZ)$}

This section gives an overview of the classification of modular forms on all of $\mathrm{SL}_2(\ZZ)$ with integer weight. The fundamental examples in this scenario are the {\it Eisenstein series of weight $k$}, defined for $k \geq 4$ by
\begin{align*}
	E_k(z) = \dfrac{1}{2\zeta(k)} \sum_{c,d \in \ZZ \backslash \{ (0,0) \}} \dfrac{1}{\lp cz + d \rp^k}.
\end{align*}
Note that this sum vanishes identically if $k$ is odd and does not converge absolutely unless $k > 2$; we therefore restrict ourselves to even integers $k \geq 4$. Because of the properties of the weight $k$ slash action, to prove that $E_k$ is modular of weight $k$ we need only prove this for a set of generators for $\mathrm{SL}_2(\ZZ)$, which is furnished by the matrices $T = \lp \begin{smallmatrix} 1 & 1 \\ 0 & 1 \end{smallmatrix} \rp$ and $S = \lp \begin{smallmatrix} 0 & -1 \\ 1 & 0 \end{smallmatrix} \rp$. To say $E_k$ is invariant under the action of $|_k T$ is to say that $E_k(z + 1) = E_k(z)$, i.e. that $E_k(z)$ is periodic, and invariance under $|_k S$ means that $E_k\lp - \frac 1z \rp = z^k E_k(z)$. The later is proven
\begin{align*}
	E_k\lp \dfrac{-1}{z} \rp = \sum_{c,d} \dfrac{1}{\lp - \frac{c}{z} + d \rp^k} = \sum_{c,d} \dfrac{z^k}{\lp dz - c \rp^k} = z^k E_k(z),
\end{align*}
since the change $cz+d \mapsto dz-c$ merely permuted the order of summands in an absolutely convergent series. Periodicity also follows by showing $z \mapsto z+1$ merely reorders summands.

The Fourier series is calculated as follows. By using the logarithmic derivative of the product expansion of $\sin(\pi z)$ and identities for sine and cosine in terms of complex exponentials, we have
\begin{align*}
	\pi i + \dfrac{2\pi i}{e^{2\pi i z} - 1} = \pi \cot\lp \pi z \rp = \dfrac{1}{z} + \sum_{n \geq 1} \lp \dfrac{1}{z + n} + \dfrac{1}{z - n} \rp.
\end{align*}
Now, if we interpret the ratio on the right hand side in terms of geometric series and differentiate $k$ times with respect to $z$ we obtain
\begin{align*}
	\sum_{n \in \ZZ} \dfrac{1}{\lp z + n \rp^k} = \dfrac{\lp 2\pi i \rp^k}{(k-1)!} \sum_{n=1} n^{k-1} e^{2\pi i n z}.
\end{align*}
Replacing $z$ with $mz$ and $q = e^{2\pi i z}$ and summing over $m$ while ignoring the pair $(m,n) = (0,0)$, 
\begin{align*}
	\sum_{\substack{n \in \ZZ \\ m \geq 1}} \dfrac{1}{\lp mz + n \rp^k} = \dfrac{\lp 2\pi i \rp^k}{(k-1)!} \sum_{m=1}^\infty \sum_{n \in \ZZ} n^{k-1} q^{mn} = \dfrac{\lp 2\pi i \rp^k}{(k-1)!} \sum_{n \geq 1} \sigma_{k-1}(n) q^n,
\end{align*}
where $\sigma_k(n) := \sum\limits_{d|n} d^k$ is the standard divisor sum function. When the sums for $n \in \ZZ$, $m \leq -1$ and $m = 0, n \not = 0$ are accounted for, we obtain
\begin{align*}
	2\zeta(k) E_k(z) = 2 \zeta(k) + 2 \dfrac{\lp 2\pi i \rp^k}{(k-1)!} \sum_{n \geq 1} \sigma_{k-1}(n) q^n = 2\zeta(k) \lp 1 + \dfrac{2k}{B_k} \sum_{n \geq 1} \sigma_{k-1}(n) q^n \rp,
\end{align*}
and thus
\begin{align*}
	E_k(z) = 1 + \dfrac{2k}{B_k} \sum_{n \geq 1} \sigma_{k-1}(n) q^n.
\end{align*}

After defining these examples of modular forms, one would like to compute the spaces $M_k := M_k\lp \mathrm{SL}_2(\ZZ) \rp$ and $S_k := S_k\lp \mathrm{SL}_2(\ZZ) \rp$. The key principle which aids in this process is the so-called {\it valence formula}, which is a zero-counting principle for modular forms. The idea that for a modular form $f(z)$ of weight $k$, one may study its zeros and poles via the argument principle. Let 
\begin{align*}
	\mathcal F = \{ z \in \HH : - \frac 12 \leq \mathrm{Re}(z) \leq \frac 12, |z| \geq 1 \} \cup \lp \{ i \infty \} \cup \QQ \rp / \sim 
\end{align*}
be the fundamental domain of $f(z)$ with the cusp at $i\infty$ adjoined. The quotient is calculated by identifying points which are in the same $\mathrm{SL}_2(\ZZ)$-orbit, which for example identifies the lines $\mathrm{Re}(z) = \pm \frac 12$ and all the cusps $\{ i\infty \} \cup \QQ$. In its natural topology this space is compact, and so by analysis $f(z)$ has only finitely many zeros and poles in this region. By leveraging the modular transformation laws in various ways and using standard contour integration techniques, one may show that any modular form $f(z)$ of weight $k$ which is holomorphic on $\mathcal F$ satisfies the relation
\begin{align*}
	v_{i\infty}(f) + \dfrac{v_i(f)}{2} + \dfrac{v_\rho(f)}{3} + \sum_{z \not \equiv i, \rho} v_x(f) = \dfrac{k}{12},
\end{align*}
where $\rho = e^{\frac{2\pi i}{3}}$, $v_x(f)$ denotes the order of vanishing of $f$ at $x$, and the sum takes places over point in $\mathcal F$ not equivalent to $i$ or $\rho$ modulo the action of $\mathrm{SL}_2(\ZZ)$. This formula can be leveraged to calculate the dimensions of spaces of modular forms. We see how this plays out in the following theorem, which completely classifies the spaces $M_k$ and $S_k$ for integral values of $k$. For this theorem, we need the Delta function $\Delta(z)$, which is defined by
\begin{align*}
	\Delta(z) = \dfrac{E_6^2 - E_4^3}{1728}.
\end{align*}

\begin{theorem}
	Let $k \in \ZZ$. Then the following are true:
	\begin{enumerate}
		\item[(1)] If $k < 0$ or $k$ is odd, then $M_k = S_k = \emptyset$.
		\item[(2)] $M_0 = \CC$, $S_0 = \{ 0 \}$ and $M_2 = S_2 = \{ 0 \}$.
		\item[(3)] If $k = 4, 6, 8, 10$, then $M_k = \CC E_k$.
		\item[(4)] If $k \geq 4$ is even, multiplication by $\Delta$ induces an isomorphism $M_{k-12} \to S_k$.
	\end{enumerate}
\end{theorem}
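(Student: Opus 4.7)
The central engine for the entire theorem is the valence formula stated just before the theorem, so the plan is to leverage it systematically. For parts (1) and (2), I would first dispose of the odd weight case by plugging $\gamma = -I \in \mathrm{SL}_2(\ZZ)$ into the transformation law: since $-I$ fixes every $z \in \HH$, one gets $f(z) = (-1)^k f(z)$, which forces $f \equiv 0$ when $k$ is odd. For $k < 0$, the left-hand side of the valence formula is a sum of non-negative rational numbers while $k/12 < 0$, so no nonzero holomorphic modular form can exist. For $k = 0$, a nonzero $f \in M_0$ must be everywhere nonvanishing on the compactification, and then $f - f(i\infty)$ lies in $M_0$ and vanishes at infinity, forcing $f$ to be constant. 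For $k = 2$, the valence formula demands the expression $v_{i\infty}(f) + v_i(f)/2 + v_\rho(f)/3 + \sum v_x(f)$ to equal $1/6$; a case-check on non-negative integers shows this is impossible, so $M_2 = \{0\}$.

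For part (3), I would apply the valence formula for each of $k = 4, 6, 8, 10$ and list every non-negative integer solution to $v_{i\infty} + v_i/2 + v_\rho/3 + \sum v_x = k/12$. In each of these four cases the solution is unique (respectively $v_\rho = 1$; $v_i = 1$; $v_\rho = 2$; $v_i = v_\rho = 1$), and in particular the order of vanishing at $i\infty$ is zero. Hence any two elements of $M_k$ differ by a scalar multiple once normalized by their value at infinity, giving $\dim M_k \leq 1$. Since the Eisenstein series $E_k$ has constant term $1$ in its $q$-expansion (computed in the discussion preceding the theorem), $E_k \neq 0$, and so $M_k = \CC E_k$.

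For part (4), I would first establish the two key properties of $\Delta = (E_6^2 - E_4^3)/1728$: it lies in $S_{12}$, and $v_{i\infty}(\Delta) = 1$ with $\Delta$ nonvanishing on $\HH$. Modularity of weight $12$ is automatic since $M_k$ is closed under products of the appropriate weights. The cusp condition follows by expanding $E_4$ and $E_6$ and checking the constant terms cancel, and one also checks the $q^1$ coefficient is nonzero (so $\Delta \not\equiv 0$). Then the valence formula applied to $\Delta$ at weight $12$ gives $v_{i\infty}(\Delta) + v_i(\Delta)/2 + v_\rho(\Delta)/3 + \sum v_x(\Delta) = 1$; combined with $v_{i\infty}(\Delta) \geq 1$, this forces $v_{i\infty}(\Delta) = 1$ and all other orders to be zero. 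Hence multiplication by $\Delta$ gives an injective map $M_{k-12} \to S_k$; conversely, given $f \in S_k$, the quotient $f/\Delta$ is holomorphic on $\HH$ (as $\Delta$ is nonvanishing there), holomorphic at $i\infty$ (as $v_{i\infty}(f) \geq 1 = v_{i\infty}(\Delta)$), and transforms with weight $k - 12$, giving surjectivity.

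The main obstacle, and the only place where actual computation is unavoidable, is pinning down the zero structure of $\Delta$: one must verify in the Fourier expansion that the constant terms of $E_6^2$ and $E_4^3$ agree so that $\Delta$ is a cusp form, and that the $q^1$ coefficient is nonzero so that $\Delta$ is not identically zero. Once these facts are in hand, the valence formula forces the rest of the zero structure automatically, and the rigidity it provides makes parts (1)--(4) fall into place essentially as corollaries.
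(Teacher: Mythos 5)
Your proposal is correct and rests on the same central engine as the paper's proof, namely the valence formula; the differences are stylistic rather than substantive. Two small variations are worth noting. For the odd-weight case in part (1) you use the $-I$ trick ($f = (-1)^k f$), whereas the paper disposes of this uniformly by observing that $a + b/2 + c/3 + \sum d_x$ can never equal $k/12$ with odd $k$ and non-negative integer orders; both are standard and your route is arguably a touch cleaner. For part (3) you argue directly from the uniqueness of the valence solution that $v_{i\infty} = 0$, so a suitable scalar combination of two forms vanishes at $i\infty$ and is therefore zero; the paper instead invokes the decomposition $M_k = S_k \oplus \CC E_k$ and then shows $S_k = \{0\}$ for $k < 12$ since any nonzero cusp form would force $k/12 \geq 1$. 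For part (4) the paper deduces $\Delta(i), \Delta(\rho) \neq 0$ from the zero structure of $E_4$ and $E_6$ (themselves read off from the valence formula) before concluding; you skip that intermediate step and go straight from $\Delta \in S_{12}$, $\Delta \neq 0$, and the valence formula to the zero structure of $\Delta$, which is slightly more economical. All of these are equivalent applications of the same rigidity principle, and your argument is complete.
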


\begin{proof}
	Suppose $f \in M_k$ for some $k \in \ZZ$. Then by the valence formula, there must be a solution to the system
	\begin{align*}
		a + \dfrac{b}{2} + \dfrac{c}{3} + \sum_{z \not \equiv i, \rho} d_k = \dfrac{k}{12},
	\end{align*}
	where $a = v_{i \infty}(f), b = v_i(f), c = v_\rho(f)$, and $d_x = v_x(f)$. It is straightforward to derive (1) from the fact that $a,b,c,d_x \geq 0$ must be integers.
	
	We proceed with some observations about Eisenstein series. Since $E_4 \in M_4$, the only possible solution to this system is $a = b = d_x = 0$ and $c = 1$, and therefore $E_4(\rho) = 0$ is the only zero of $E_4$ modulo the group action. Similarly, $E_6(i) = 0$ is the only zero of $E_6$ modulo the group action. From definitions we also have $\Delta(i), \Delta(\rho) \not = 0$, and so $\Delta(i\infty) = 0$ follows from $\Delta \in S_{12}$.
	
	Now, if $f \in M_{k - 12}$, then it is clear that $f \Delta \in M_k$, and in fact by checking Fourier expansions that $f \Delta \in S_k$. Furthermore, if $f,g \in M_{k - 12}$ such that $f \Delta = g \Delta$, then noting the fact just proven that $\Delta \not = 0$ on $\mathcal F \backslash \{ i \infty \}$ readily implies $f = g$, so we obtain an injective map $M_{k - 12} \to S_k$. Similarly, given $f \in S_k$ we may show that $f/\Delta \in M_{k - 12}$ using the same non-vanishing assumption, so we have an isomorphism.
	
	It remains only to compute the dimensions of the spaces $M_4, M_6, M_8$, and $M_{10}$. It is clear from computing Fourier expansions that $M_k = S_k \oplus \CC E_k$, and the valence formula implies that $\Delta$ is the lowest weight cusp form, so $S_4 = S_6 = S_8 = S_{10} = \emptyset$, and so each of $M_4, M_6, M_8, M_{10}$ is one-dimensional, which completes the proof.
\end{proof}

We observe that from this theorem, one may quickly derive that the isomorphism of graded algebras
\begin{align*}
	\bigoplus_{k \in \ZZ} M_k \cong \CC[E_4, E_6].
\end{align*}
by comparing dimensions.

\subsection{Hecke operators, eigenforms and Atkin-Lehner newforms}

We have shown in the previous section that $M_k$ and $S_k$ are finite-dimensional complex vector spaces. More generally, the Riemann-Roch theorem has been used to compute the dimensions of the spaces $M_k(\Gamma_0(N), \chi)$ and $S_k(\Gamma_0(N), \chi)$ for integers $N \geq 1$ and Dirichlet characters $\chi$ modulo $N$ \cite{CO77}. After learning how to compute the dimension of spaces of modular forms, one of the next natural directions to seek out is a canonical basis of some kind that has nice properties. By fairly straightforward arguments, the spaces $M_k(\Gamma_0(N), \chi)$ decomposes as
\begin{align*}
	M_k(\Gamma_0(N), \chi) \cong S_k(\Gamma_0(N), \chi) \oplus \mathcal E_k(\Gamma_0(N), \chi),
\end{align*}
where $\mathcal E_k(\Gamma_0(N), \chi)$ is the so-called space of Eisenstein series. As it turns out, the full space of Eisenstein series is relatively easy to explicate in terms of the cusps of the Riemann surface $\HH / \Gamma_0(N)$, and so the problem of computing a basis there is not as interesting for our purposes. We are concerned now with the problem of computing an interesting basis for $S_k(\Gamma_0(N), \chi)$.

Before discussing generalities, we will discuss some of the history which motivated the discovery of the theory of newforms. The first nontrivial space of cusp forms with level one is the space $S_{12}(\Gamma_0(1))$, which is generated by
\begin{align*}
	\Delta(z) = q \prod_{n=1}^\infty \lp 1 - q^n \rp^{24} =: \sum_{n \geq 1} \tau(n) q^n,
\end{align*}
we call the coefficients $\tau(n)$ {\it Ramanujan's $\tau$-function}. In \cite{Ram16}, Ramanujan endeavors to understand the basic properties of this function. In particular, he conjectures that $\tau(n)$ is a multiplicative function of $n$, so that $\tau(mn) = \tau(m) \tau(n)$ for $m,n$ coprime, and that for any prime $p$ and $m \geq 1$ we have a recurrence relation
\begin{align*}
	\tau(p^{m+1}) = \tau(p) \tau(p^n) - p^{11} \tau(p^{n-1}).
\end{align*}
These conjectures were proven by Mordell \cite{Mor17}, but Hecke later demonstrated that Ramanujan's observation runs much deeper. What Hecke discovered is that $\Delta(z)$ is merely the first example of an entire theory of {\it eigenforms}. Hecke's major discovery was the family of {\it Hecke operators} $T(n)$. To define these, we fix for the rest of the section a space $M_k(\Gamma_0(N), \chi)$. The $n$th Hecke operator $T(n)$ on this space acts on $f(z) = \sum_{n \geq 1} a(n) q^n \in M_k(\Gamma_0(N), \chi)$ by
\begin{align*}
	T(n) f(z) = \sum_{n \geq 1} b(n) q^n, \ \ \ b(n) = \sum_{d | \gcd(n,N)} \chi(d) d^{k-1} a\lp \dfrac{mn}{d^2} \rp.
\end{align*}
These operators are constructed by summing over cosets of the action of determinant $n$ matrices on $\Gamma_0(N)$. Hecke showed that these operators have many nice properties. For instance, we have $T(mn) = T(m) T(n) = T(n) T(m)$ for all $m,n$ coprime, and for primes $p$ and $m \geq 1$ we have a recurrence relation
\begin{align*}
	T(p^{m+1}) = T(p) T(p^m) - p^{k-1} T(p^{m-1}).
\end{align*}
These operators are also Hermitian with respect to the {\it Petersson inner product}, which we will not need here. As it natural in linear algebra, we consider eigenvectors of the Hecke operators. In particular, say $f \in M_k(\Gamma_0(N), \chi)$ is an {\it eigenform} if it is an eigenvector of every Hecke operator. If we choose a normalized eigenform $f(z) = \sum_{n \geq 1} a_f(n) q^n \in S_k(\Gamma_0(N), \chi)$, then we can show using the formulas above that $a_f(n)$ is the eigenvalue of $f$ when hit by the operator $T(n)$. In this setting, because of the relations satisfied by the Hecke operators, any eigenform automatically has multiplicative coefficients and the values $a_f(p^m)$ satisfy recurrence relations.

Hecke's theory is able to demonstrate on its own that $M_k$ always has a basis of eigenforms. Issues arise in more general cases, because only those Hecke operators $T(n)$ with $\gcd(n,N) = 1$ behave nicely at first glance; the kinds of arguments used by Hecke are able to demonstrate only that $M_k(\Gamma_0(N), \chi)$ has a basis of functions that are eigenvalues of all Hecke operators $T(n)$ of this special type.

To resolve this deficiency, Atkin and Lehner developed the theory of {\it newforms} in \cite{AL70} (see also \cite{CS17}). They first develop a theory of Hecke operators for the spaces $S_k(\Gamma_0(N), \chi)$, and show that a basis can be found for all these spaces consisting of eigenforms of all the Hecke operators. Within this framework, they isolate within $S_k(\Gamma_0(N), \chi)$ into two spaces, one of which is generated by eigenforms that arise from spaces $S_k(\Gamma_0(N/d),\chi)$, which they call the space of {\it oldforms}, and the space of {\it newforms} which is orthogonal to it with respect to the Petersson inner product, and that the spaces generated by the newforms and oldforms give $S_k(\Gamma_0(N), \chi)$ are a direct sum. Finally, the space of oldforms may be generated by elements which are newforms with respect to other levels.

The theory of newforms is rich and, as newforms form a basis of all cusp forms, is a central tool for studying the vector spaces $S_k(\Gamma_0(N), \chi)$. We will only require the theory of newforms in Chapter \ref{C8}, we will defer the statement of relevant results until that time (see Theorem \ref{Newforms}).

\section{Partitions: Analytic Aspects} \label{S2.3}

\subsection{The circle method}

In the introduction, we have mentioned results of Hardy, Ramanujan, and Rademacher about the size of $p(n)$. The monumental breakthrough of Hardy and Ramanujan in \cite{HR18} was, to repeat \eqref{Hardy-Ramanujan Asymptotic}, that
\begin{align*}
	p(n) \sim \dfrac{1}{4n\sqrt{3}} e^{\pi \sqrt{\frac{2n}{3}}}
\end{align*}
as $n \to \infty$. We now wish to give a rough outline of the style of thought which leads to this result.

We now give a rough outline of the ideas of Hardy, Ramanujan, and Rademacher; for a more detailed account, see Apostol's excellent account in \cite{Apo90}.

The starting point of the argument is Euler's generating function
\begin{align*}
	P(q) := \sum_{n \geq 0} p(n) q^n = \prod_{n=1}^\infty \dfrac{1}{\lp 1 - q^n \rp}.
\end{align*}
We now view $P(q)$ as a complex analytic function in the variable $q$. The representation of $P(q)$ as an infinite product converges absolutely for $|q| < 1$. One can also see that at each root of unity $\zeta$, an infinite number of terms in this infinite product have a pole at $\zeta$; thus $P(q)$ has essential singularities at each root of unity. Standard complex analysis therefore shows that the region $|q| < 1$ is the largest on which $P(q)$ may be considered.

By Cauchy's theorem, we may represent $p(n)$ by the contour integral
\begin{align*}
	p(n) = \dfrac{1}{2\pi i} \int_C \dfrac{P(q)}{q^{n+1}} dq,
\end{align*}
where $C$ is any circle, oriented counterclockwise, centered at $q = 0$ and having radius $0 < r < 1$. This is a ``formula" for $p(n)$, but is not of any use until one has some kind of idea how to evaluate it.

The insight of Hardy and Ramanujan, very briefly summarized, is that the size of $P(q)$ on $C$ can be deduced from the essential singularities at each $q = \zeta_k^h := e^{\frac{2\pi i h}{k}}$ by means of modular transformation laws. More specifically, we break up the circle $C$ into arcs $C_{h,k}$ centered at $\zeta_k^h$, where $\frac hk$ runs through the set of rational numbers $0 \leq \frac hk < 1$ in reduced form with denominator bounded by some integer $N \geq 1$. We then rewrite the ``Cauchy formula" for $p(n)$ as a finite sum,
\begin{align*}
	p(n) = \sum_{\substack{0 \leq h < k \leq N \\ \gcd(h,k) = 1}} \dfrac{1}{2\pi i} \int_{C_{h,k}} \dfrac{P(q)}{q^{n+1}} dq.
\end{align*}

The idea at this stage is to understand the size of $P(q)$ as $q \to \zeta_k^h$. This is achieved via the modular transformation law of Dedekind's eta function \eqref{Dedekind eta transformation law}, from which it may be deduced that if $x = \zeta_k^h \exp\lp - \frac{2\pi z}{k^2} \rp$ and $x' = \zeta_k^H \exp\lp - \frac{2\pi}{z} \rp$, where $hH \equiv -1 \pmod{k}$, then we have the identity
\begin{align*}
	P(x) = e^{\pi i s(h,k)} \sqrt{\dfrac{z}{k}} \exp\lp \dfrac{\pi}{12z} - \dfrac{\pi z}{12 k^2} \rp P(x').
\end{align*}
Here $s(h,k)$ is defined as in \eqref{p(n) Kloosterman Sum}. Now, in order to obtain an asymptotic result, we allow the radius $r$ of the circle $C$ to vary with $n$; more specifically, as $n \to \infty$ we let $r \to 1$; this way the arcs $C_{h,k}$ are approaching the essential singularity at $q = \zeta_k^h$. It is fairly straightforward to see that as $r \to 1$, we must have $z \to 0$ and so $P(x') \to 1$ very rapidly. Thus, as $q \to \zeta_k^h$ we have
\begin{align*}
	P\lp \exp\lp \dfrac{2\pi i h}{k} - \dfrac{2\pi z}{k^2} \rp \rp \sim e^{\pi i s(h,k)} \sqrt{\dfrac{z}{k}} \exp\lp \dfrac{\pi}{12z} - \dfrac{\pi z}{12k^2} \rp =: P_{h,k}(z).
\end{align*}
The idea is now that as $n \to \infty$, $P_{h,k}(z)$ is a very good approximation of $P(q)$ as $q \to \zeta_k^h$, and so (by tracking details very carefully) we must have a formula like
\begin{align*}
	p(n) \approx \sum_{h,k} \dfrac{i}{k^2} e^{- \frac{2\pi i n h}{k}} \int_{z_1(h,k)}^{z_2(h,k)} P_{h,k}(z) e^{\frac{2\pi n z}{k^2}} dz,
\end{align*}
where $z_1(h,k)$ and $z_2(h,k)$ should be viewed as the endpoints of the arcs $C_{h,k}$ under suitable changes of variables.

Hardy and Ramanujan are able to obtain from the above considerations an asymptotic series for $p(n)$ by letting $n \to \infty$ in a suitable manner and keeping track of error terms. Rademacher is able to use a much stronger approximation of error terms to force the resulting asymptotic series to actually converge; thus obtaining \eqref{Rademacher Exact}. The theme which should be kept in mind, which lies at the heart of the circle method in any formulation, is that it is the growth rate of $P(q)$ nearby its singularities at roots of unity (as determined in this case by a modular transformation law) that allow the estimation of $p(n)$.

\subsection{Wright's variation}

Before we proceed, we should mention the idea behind a variation on this line of thinking, which is due to Wright \cite{Wri71}. This variation has the downside that it is incapable of producing exact formulas, but the upside that modular transformation laws are not required.

The heart of any version of the circle method is necessarily reliant upon asymptotic information of generating functions as $|q| \to 1$. In the circle method as executed by Hardy-Ramanujan or Wright, each root of unity $\zeta_k^h$ produces one term in an asymptotic series expansion for the coefficients of the generating function (in one case this asymptotic series diverges, in the other it converges). The idea behind Wright's method is that in very general settings, the singularity associated to one particular root of unity will exhibit a growth rate which rapidly outstrips all other roots of unity. For the generating function $P(q)$, and in fact for most generating functions in partition theory, this is the case of $q \to 1$. Intuitively, this is because $q = 1$ occurs as a pole more often in the product expansion of $P(q)$ than any other pole. In scenarios like these, Wright is able to deduce that if we prove which pole is the ``dominant pole" and if we are able to compute an asymptotic expansion of $P(q)$ nearby this dominant pole, then that would suffice to recover the Hardy-Ramanujan asymptotic formula for $p(n)$.

We state here one formulation of Wright's circle method, which will be restated and proved in Chapter \ref{C6}. The reader should take time to consider how the hypotheses of this result are really statements about a ``dominant pole" nearby $q = 1$.

\begin{theorem} \label{WrightCircleMethod}
	Suppose that $F(q)$ is analytic for $q = e^{-z}$ where $z=x+iy \in \CC$ satisfies $x > 0$ and $|y| < \pi$, and suppose that $F(q)$ has an expansion $F(q) = \sum_{n=0}^\infty c(n) q^n$ near 1. Let $c,N,M>0$ be fixed constants. Consider the following hypotheses:
	
	\begin{enumerate}[leftmargin=*]
		\item[\rm(1)] As $z\to 0$ in the bounded cone $|y|\le Mx$ (major arc), we have
		\begin{align*}
			F(e^{-z}) = z^{B} e^{\frac{A}{z}} \left( \sum_{j=0}^{N-1} \alpha_j z^j + O_\delta\left(|z|^N\right) \right),
		\end{align*}
		where $\alpha_s \in \CC$, $A\in \RR^+$, and $B \in \RR$. 
		
		\item[\rm(2)] As $z\to0$ in the bounded cone $Mx\le|y| < \pi$ (minor arc), we have 
		\begin{align*}
			\lvert	F(e^{-z}) \rvert \ll_\delta e^{\frac{1}{\mathrm{Re}(z)}(A - \kappa)}.
		\end{align*}
		for some $\kappa\in \RR^+$.
	\end{enumerate}
	If  {\rm(1)} and {\rm(2)} hold, then as $n \to \infty$ we have for any $N\in \RR^+$ 
	\begin{align*}
		c(n) = n^{\frac{1}{4}(- 2B -3)}e^{2\sqrt{An}} \lp \sum\limits_{r=0}^{N-1} p_r n^{-\frac{r}{2}} + O\left(n^{-\frac N2}\right) \rp,
	\end{align*}
	where $p_r := \sum\limits_{j=0}^r \alpha_j c_{j,r-j}$ and $c_{j,r} := \dfrac{(-\frac{1}{4\sqrt{A}})^r \sqrt{A}^{j + B + \frac 12}}{2\sqrt{\pi}} \dfrac{\Gamma(j + B + \frac 32 + r)}{r! \Gamma(j + B + \frac 32 - r)}$. 
\end{theorem}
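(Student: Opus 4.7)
The plan is to start from Cauchy's integral formula and evaluate by the saddle-point method. With the substitution $q = e^{-z}$ and the contour $|q| = e^{-r}$ where $r = \sqrt{A/n}$ (the saddle point of $A/z + nz$), one has
\[
c(n) = \frac{1}{2\pi} \int_{-\pi}^{\pi} F(e^{-r-iy}) e^{n(r+iy)}\, dy.
\]
Following Wright's philosophy, I would split this into the \emph{major arc} $|y| \leq Mr$ and the \emph{minor arc} $Mr < |y| \leq \pi$, which align exactly with the two cones in the hypotheses. The asymptotic behavior of $F$ on the major arc will supply the main term, while the minor arc will be absorbed into the error.

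For the minor arc, hypothesis (2) gives $|F(e^{-r-iy})| \ll e^{(A-\kappa)/r}$ uniformly, so the minor arc contribution is bounded by a constant times
\[
e^{nr + (A-\kappa)/r} = e^{2\sqrt{An}}\, e^{-\kappa \sqrt{n/A}},
\]
which is exponentially smaller than the expected main term of size $n^{(-2B-3)/4}e^{2\sqrt{An}}$ and therefore fits into the error. On the major arc, I substitute the expansion from hypothesis (1) with $z = r + iy$, so that $F(e^{-z}) e^{nz}$ becomes $z^B e^{A/z + nz}\bigl(\sum_{j<N} \alpha_j z^j + O(|z|^N)\bigr)$. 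Since the integrand decays rapidly away from the saddle once $|y|$ leaves a neighborhood of $0$, extending the $y$-integration from $[-Mr, Mr]$ to the full vertical line $\mathrm{Re}(z) = r$ introduces only an exponentially small error.

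The resulting integrals are Hankel-type integrals, which I would evaluate using the classical Bessel identity
\[
\frac{1}{2\pi i} \int_{r - i\infty}^{r + i\infty} z^{-\nu - 1} e^{A/z + nz}\, dz = \left(\frac{n}{A}\right)^{\nu/2} I_\nu\!\left(2\sqrt{An}\right),
\]
applied with $\nu = -(j + B + 1)$. Plugging in the large-argument expansion
\[
I_\nu(x) \sim \frac{e^x}{\sqrt{2\pi x}} \sum_{r \geq 0} (-1)^r \frac{\Gamma(\nu + \tfrac{1}{2} + r)}{r!\, \Gamma(\nu + \tfrac{1}{2} - r)}\,(2x)^{-r}
\]
at $x = 2\sqrt{An}$ and combining with the $\alpha_j z^j$ factor, each pair $(j, r)$ contributes a term of order $n^{(-2B-3)/4 - (j+r)/2} e^{2\sqrt{An}}$. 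Reindexing by the total power $r \mapsto j + r$ and identifying the explicit gamma-ratio coefficients yields precisely the numbers $c_{j, r-j}$ in the statement, and the sum $p_r = \sum_{j=0}^r \alpha_j c_{j, r-j}$ appears as the coefficient of $n^{-r/2}$.

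The main technical obstacle is the careful bookkeeping of error terms. One must confirm that: (i) the $O(|z|^N)$ remainder from hypothesis (1), integrated against $e^{A/z + nz}$ along the major arc, contributes at the claimed order $n^{-N/2}$ (which follows because a factor $|z|^N$ at the saddle $|z| \asymp n^{-1/2}$ produces exactly this size); (ii) the truncation of the Bessel expansion at order $N$ is compatible with the truncation of hypothesis (1); and (iii) the extension-of-contour step and the minor-arc bound both beat the error $n^{-N/2}$ for every fixed $N$. Once these are controlled, the stated asymptotic formula follows by collecting terms according to the power of $n^{-1/2}$.
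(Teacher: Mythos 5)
Your proposal is correct and follows essentially the same route as the paper's proof of this proposition (there, Proposition \ref{WrightCircleMethod2}): Cauchy's integral on the circle $|q|=e^{-\sqrt{A/n}}$, the major/minor-arc split matched to the two hypotheses, the minor-arc contribution absorbed as $O(e^{(2-\kappa)\sqrt{An}})$, the $O(|z|^N)$ remainder on the major arc integrated to size $n^{-N/2}$ times the main term, and the major-arc integrals recognized as Bessel-type integrals. The only difference is that you carry out the Hankel-contour/Bessel-asymptotic computation explicitly, whereas the paper delegates this final evaluation of the integrals $A_j(n)$ to Lemma 3.7 of Ngo--Rhoades \cite{NR17}.
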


For details of how this approach works, one may consult Wright's work \cite{Wri71} or a modern formulation in \cite{NR17}. One can also see the inner workings of the proof in Chapters \ref{C3}, \ref{C4}, or \ref{C6} where we implement various versions of this method.

\sglsp

\chapter{Biases for Parts of Partitions} \label{C3}
\thispagestyle{myheadings}

\dblsp
\vspace*{-.65cm}

\section{Bernoulli and Euler Polynomials}

In this section, we recall the famous {\it Bernoulli polynomials} $B_n(x)$ and {\it Euler polynomials} $E_n(x)$ and several of their properties we will need later. The generating functions for these polynomials are given in \cite[(24.2.3)]{DLMF} by
\begin{align} \label{Bernoulli Polynomial Definition}
	\sum_{n \geq 0} B_n(x) \dfrac{t^n}{n!} := \dfrac{t e^{xt}}{e^t - 1}
\end{align}
and
\begin{align*}
	\sum_{n \geq 0} E_n(x) \dfrac{t^n}{n!} := \dfrac{2 e^{xt}}{e^t + 1}.
\end{align*}
The {\it Bernoulli numbers} $B_n$ are defined by $B_n := B_n(0)$. We require a classical bound of Lehmer \cite{Leh40} regarding the size of Bernoulli polynomials on $0 \leq x \leq 1$ (and thus also a bound for Bernoulli numbers) which says for $n \geq 2$ that
\begin{align} \label{Bernoulli Inequality}
	\left| B_n(x) \right| \leq \dfrac{2 \zeta(n) n!}{(2\pi)^{n}},
\end{align}
where $\zeta(s) := \sum_{n \geq 1} n^{-s}$ is the {\it Riemann zeta function}. We recall the fact that $B_{2n+1} = 0$ for $n > 0$ (see \cite[(24.2.2)]{DLMF}). We also require the identity
\begin{align} \label{E_n(0) Equation}
	E_n(x) = \dfrac{2}{n+1} \left[ B_{n+1}(x) - 2^{n+1} B_{n+1}\lp \frac{x}{2} \rp \right],
\end{align}
which is \cite[(24.4.22)]{DLMF}.

\section{Generating functions}

This section is dedicated to defining the generating function for $D_{r,t}(n)$ and an important factorization of this generating function. Define
\begin{align*}
	\mathcal{D}_{r,t}(q) := \sum_{n \geq 0} D_{r,t}(n) q^n.
\end{align*}
We also use the standard {\it $q$-Pochhammer symbol} $(a;q)_\infty$, which is defined by $$(a;q)_\infty := \prod_{n \geq 1} \lp 1 - a q^{n-1} \rp$$ for $|q| < 1$. Recall that $(-q;q)_\infty$ is the generating function for the number of partitions of $n$ into distinct parts, as each term $(1 + q^m)$ appearing in the product dictates whether a given partition has a part of size $m$. By a slight modification of this argument, we obtain $\mathcal{D}_{r,t}(q)$.

\begin{lemma} \label{Generating Function}
	We have the generating function identity
	$$\mathcal{D}_{r,t}(q) = (-q;q)_\infty \sum_{k \geq 0} \dfrac{q^{kt + r}}{1 + q^{kt + r}}.$$
\end{lemma}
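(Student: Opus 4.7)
The plan is to interpret $D_{r,t}(n)$ as a count of marked pairs and then read off the generating function factor by factor. Specifically, by definition of $D_{r,t}(\lambda)$, the quantity $D_{r,t}(n)$ equals the number of pairs $(\lambda, m)$ where $\lambda \in \mathcal D$ with $|\lambda| = n$ and $m$ is a distinguished part of $\lambda$ satisfying $m \equiv r \pmod{t}$. This reformulation makes the construction of $\mathcal D_{r,t}(q)$ almost mechanical: one chooses the marked part first and the rest of the partition second.

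To carry this out, I would fix the marked part to be $m = kt + r$ for some $k \geq 0$, contributing a factor of $q^{kt+r}$. The remainder $\lambda \setminus \{m\}$ is then an arbitrary partition into distinct parts subject only to the restriction that no part equals $m$. Since $(-q;q)_\infty = \prod_{n \geq 1}(1+q^n)$ generates all partitions into distinct parts, dropping the single binomial factor $(1+q^m)$ yields the generating function
\begin{align*}
\prod_{\substack{n \geq 1 \\ n \neq m}} (1 + q^n) = \frac{(-q;q)_\infty}{1 + q^m}
\end{align*}
for partitions into distinct parts avoiding $m$. Summing the product $q^{kt+r}\cdot (-q;q)_\infty/(1+q^{kt+r})$ over $k \geq 0$ produces the claimed identity.

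There is no real obstacle here, only bookkeeping to confirm that the decomposition $(\lambda, m) \mapsto (m, \lambda \setminus \{m\})$ is a bijection onto pairs $(m, \mu)$ with $m \equiv r \pmod{t}$ and $\mu \in \mathcal D$ satisfying $m \notin \mu$; this is immediate because every partition into distinct parts can be uniquely reconstructed by inserting the missing part $m$. As an alternative, I could introduce the two-variable generating function
\begin{align*}
F(z,q) = \sum_{\lambda \in \mathcal D} z^{D_{r,t}(\lambda)} q^{|\lambda|} = \prod_{k \geq 0} (1 + z q^{kt+r}) \prod_{\substack{n \geq 1 \\ n \not\equiv r \hspace{-1pt}\pmod{t}}} (1 + q^n),
\end{align*}
and then recover $\mathcal D_{r,t}(q) = \partial_z F(z,q)\big|_{z=1}$ by logarithmic differentiation, exactly as illustrated in the parts-counting proposition of Chapter~\ref{C2}. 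Both approaches produce the stated formula by essentially the same calculation.
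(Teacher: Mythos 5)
Your proof is correct and takes essentially the same approach as the paper: both identify $q^m(-q;q)_\infty/(1+q^m)$ as the generating function for partitions into distinct parts containing the part $m$, and sum over $m \equiv r \pmod{t}$. Your version simply spells out the marked-part bijection a bit more explicitly, and the alternate two-variable-derivative route you sketch is also valid but unnecessary.
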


\begin{proof}
	By modifying Euler's generating function $\lp -q;q \rp_\infty$ for partitions into distinct parts, we see that $\frac{q^m}{1 + q^m} (-q;q)_\infty$ is the generating function for partitions into distinct parts which include $m$ as a part. Furthermore, since all parts are distinct, this is also the generating function for $D_{r,t}(n)$. Therefore, summing over $m$ equivalent to $r$ modulo $t$ yields
	$$\mathcal{D}_{r,t}(q) = \sum_{\substack{m \geq 0 \\ m \equiv r \pmod{t}}} \dfrac{q^m (-q;q)_\infty}{1 + q^m} = (-q;q)_\infty \sum_{k \geq 0} \dfrac{q^{kt + r}}{1 + q^{kt + r}}.$$
	This completes the proof.
\end{proof}

Next we require a brief lemma regarding a natural decomposition of this generating function, which will be useful for computing asymptotics. Define the functions $\xi(q) := (-q;q)_\infty$ and $L_{r,t}(q) := \sum_{k \geq 0} \frac{q^{kt+r}}{1 + q^{kt+r}}$, so that $\mathcal{D}_{r,t}(q) = \xi(q) L_{r,t}(q)$. Additionally, define $B(z) := \frac{e^{-z}}{z\lp 1 - e^{-z} \rp}$ and $E(z) := \frac{e^{-z}}{1 + e^{-z}}$. This notation is assumed throughout the remainder of the paper. The importance of the functions $B(z)$ and $E(z)$ comes from the following series expansions connecting them to $\mathcal{D}_{r,t}(q)$, which we record now for convenience. Throughout the remainder of the paper, we let $\Log(z)$ denote the principal branch of the logarithm.

\begin{lemma} \label{Xi and L z-Expansions}
	Let $\xi(q)$ and $L_{r,t}(q)$ be defined as above. Then, for $q = e^{-z}$ with $\mathrm{Re}(z) > 0$, we have
	$$\Log\lp \xi\lp e^{-z} \rp \rp = z \lp \sum_{m \geq 0} B\lp \lp m + \frac 12 \rp 2z \rp - \sum_{m \geq 0} B\lp \lp m+1 \rp 2z \rp \rp$$
	and
	$$L_{r,t}\lp e^{-z} \rp = \sum_{k \geq 0} E\lp \lp k + \frac{r}{t} \rp tz \rp.$$
\end{lemma}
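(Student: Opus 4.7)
The plan is to prove both identities by direct manipulation of the defining infinite product and sum, with an interchange of summation justified by absolute convergence for $\mathrm{Re}(z)>0$.

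For the second identity, the argument is essentially a change of variables. Starting from the definition $L_{r,t}(q)=\sum_{k\ge 0}\frac{q^{kt+r}}{1+q^{kt+r}}$, substitute $q=e^{-z}$ and rewrite the exponent as $(kt+r)z=(k+\frac{r}{t})tz$. Comparison with $E(w)=\frac{e^{-w}}{1+e^{-w}}$ gives the claimed identity term by term, with no convergence issues for $\mathrm{Re}(z)>0$.

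For the first identity, the strategy is to take logarithms of the Euler product and expand. Writing $\xi(e^{-z})=\prod_{n\ge 1}(1+e^{-nz})$ and taking the principal branch, I obtain
\begin{equation*}
\Log\xi(e^{-z})=\sum_{n\ge 1}\Log(1+e^{-nz})=\sum_{n\ge 1}\sum_{k\ge 1}\frac{(-1)^{k-1}}{k}e^{-knz}.
\end{equation*}
For $\mathrm{Re}(z)>0$ the double series converges absolutely, so I may interchange the summations and sum the geometric series in $n$ to obtain
\begin{equation*}
\Log\xi(e^{-z})=\sum_{k\ge 1}\frac{(-1)^{k-1}}{k}\cdot\frac{1}{e^{kz}-1}.
\end{equation*}
Splitting by parity of $k$ (writing $k=2m+1$ for $m\ge 0$ in the odd case and $k=2m$ for $m\ge 1$ in the even case), this becomes
\begin{equation*}
\sum_{m\ge 0}\frac{1}{(2m+1)(e^{(2m+1)z}-1)}-\sum_{m\ge 1}\frac{1}{2m(e^{2mz}-1)}.
\end{equation*}

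Finally, I recognize each summand using the identity $B(x)=\frac{e^{-x}}{x(1-e^{-x})}=\frac{1}{x(e^x-1)}$. With $x=(m+\tfrac12)\cdot 2z=(2m+1)z$ I get $zB((m+\tfrac12)2z)=\frac{1}{(2m+1)(e^{(2m+1)z}-1)}$, and with $x=(m+1)\cdot 2z=(2m+2)z$ I get $zB((m+1)2z)=\frac{1}{(2m+2)(e^{(2m+2)z}-1)}$. Reindexing the second sum by $n=m+1$ matches the second series above, and collecting the common factor $z$ yields exactly the formula in the lemma. The only nontrivial point is the swap of summations, which is routine from the absolute convergence $\sum_{n,k}\frac{|e^{-knz}|}{k}=\sum_{k\ge 1}\frac{1}{k(e^{k\mathrm{Re}(z)}-1)}<\infty$; no cancellation or hidden issue appears, so this is the whole proof.
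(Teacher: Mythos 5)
Your proof is correct and takes essentially the same route as the paper's: expand $\Log\xi(e^{-z})$ via the Taylor series of $\Log(1+w)$, interchange the absolutely convergent double sum, and split by parity to recognize the $B$-terms (the paper compresses these steps, writing $\Log\xi(q) = -z\sum_{m\ge 1}\frac{(-1)^m q^m}{mz(1-q^m)}$ and then immediately identifying $B$). Your version simply makes the parity split and the use of $B(x)=\frac{1}{x(e^x-1)}$ explicit, and the second identity is the same immediate substitution as in the paper.
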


\begin{proof}
	Expanding $\Log \lp \xi(q) \rp$ as a Taylor series, we have
	\begin{align*}
		\Log \lp \xi(q) \rp = \sum_{n \geq 1} \Log\lp 1 + q^n \rp = - z \sum_{m \geq 1} \dfrac{(-1)^m q^m}{mz \lp 1 - q^m \rp}.
	\end{align*}
	For $q = e^{-z}$, it follows from the definition of $B(z)$ that
	\begin{align*}
		\Log \lp \xi\lp e^{-z} \rp \rp = z \lp \sum_{m \geq 0} B\lp \lp m + \frac 12 \rp 2z \rp - \sum_{m \geq 0} B\lp \lp m+1 \rp 2z \rp \rp.
	\end{align*}
	This proves the first part of the lemma. The second is an analogous calculation with $E(z)$ in place of $B(z)$, i.e.
	\begin{align*}
		L_{r,t}\lp e^{-z} \rp = \sum_{k \geq 0} \dfrac{e^{-(kt + r)z}}{1 + e^{-(kt+r)z}} = \sum_{k \geq 0} E\lp \lp k + \frac{r}{t} \rp tz \rp.
	\end{align*}
	This completes the proof.
\end{proof}

We also record the Taylor expansions of $B(z)$ and $E(z)$ for later use. From the fact that $\frac{z}{e^z \pm 1} = \frac{z e^{-z}}{1 \pm e^{-z}}$ the generating function for the Bernoulli numbers $B_n$ is given by $B(z) = \frac{1}{z^2} - \frac{1}{2z} + \sum_{n \geq 0} \frac{B_{n+2}}{(n+2)!} z^n$, and similarly $E(z) = \sum_{n \geq 0} \frac{e_n}{n!} z^n$, where $e_n := \frac{E_n(0)}{2}$. We note for later that by \eqref{E_n(0) Equation}, we have
\begin{align} \label{e_n Evaluation}
	e_n = \dfrac{1 - 2^{n+1}}{n+1} B_{n+1}.
\end{align}

\section{Euler--Maclaurin summation}

This section recalls a not too widely known but very flexible method for computing asymptotic expansions of infinite sums coming from classical Euler--Maclaurin summation. This method has seen a large increase in usage over the last several years. This thesis alone uses the method in Chapters \ref{C3}, \ref{C4}, and \ref{C6} in various forms. Outside of this thesis, good references for its usage are \cite{BM17, BJM21a, BJM21b, JOa, JOb}. This formula is particularly useful for computing the asymptotic growth of products of $q$-Pochhammer symbols that don't have nice modular transformation laws, which to a significant extent explains its newfound prominence. Zagier \cite{Zag06} gives an excellent exposition of this method. Since this work, the method has been refined and generalized in a variety of ways. Because of the existence of many variations that fall under a unified theme, we provide here a unified treatment which covers many results under one umbrella. We begin this section by recalling the classical Euler--Maclaurin summation formula and we show how this formula is used to produce asymptotic formulas. We close the section with versions of these asymptotic formulas whose error terms are computed explicitly.

\subsection{Asymptotic Euler--Maclaurin summation} \label{C3 Euler-Maclaurin Section}

Recall the classical Euler--Maclaurin summation formula, which says that for integrable functions $f(z)$ on the interval $[a,b]$, we have for any integer $N \geq 1$ the formula
\begin{align*}
	\sum_{j = a+1}^b f(j) - \int_a^b f(x) dx &= \sum_{m=1}^N \dfrac{B_m}{m!} \lp f^{(m-1)}(b) - f^{(m-1)}(a) \rp \\ &+ (-1)^{N+1} \int_a^b f^{(N)}(x) \dfrac{\widehat{B}_N(x)}{N!} dx,
\end{align*}
where the {\it modified Bernoulli polynomials} $\widehat{B}_N(x)$ are given by $\widehat{B}_N(x) := B_N\lp x - \lfloor x \rfloor \rp$, where $\lfloor x \rfloor$ denotes the greatest integer less than or equal to $x$.

A natural extension of this question concerns infinite sums of the form $\sum_{n=1}^\infty f(nz)$ for complex-valued $z$. In \cite{Zag06}, Zagier gives a wonderful exposition of various methods by which one might naively try to extract such expansions from asymptotic expansions of $f(z)$. Here and throughout this thesis we use asymptotic expansion in its strong sense; that is, we say $f(z) \sim \sum_{n=0}^\infty c_n z^n$ if $f(z) - \sum_{n=0}^{N-1} c_n z^n = O(z^N)$ for all $N \geq 1$. Note that not all asymptotic expansions converge. Zagier shows in \cite[Proposition 3]{Zag06} how to correcty derive such expansions. As we wish to provide some additional details that will help with some later proofs, we defer for now the statement of the formulas\footnote{Proposition \ref{C3 Euler-Maclaurin Rapid Decay} is in fact a modest generalization of what Zagier formally states in \cite{Zag06}.}.

We now fix notation which will be used freely for the remainder of the thesis. For $\delta > 0$, we define $D_\delta := \{ z \in \CC : \left| \mathrm{arg}(z) \right| < \frac{\pi}{2} - \delta \}$. Note that if we set $z = \eta + iy$ for $\eta > 0$, then $z \in D_\delta$ if and only if $0 < |y| < M\eta$ for some constant $M > 0$ which depends on $\delta$. The {\it modified Bernoulli polynomial} $\widehat{B}_N(x)$ is the periodic function defined by $\widehat{B}_N(x) := B_N\lp x - \lfloor x \rfloor \rp$, where $\lfloor x \rfloor$ is the greatest integer less than or equal to $x$. We also use the {\it Hurwitz zeta function} $\zeta(s,x) := \sum_{n \geq 0} \frac{1}{(n + x)^{s}}$ and the {\it Euler--Mascheroni constant} $\gamma$. We furthermore set
\begin{align*}
	I_f := \int_0^\infty f(x) dx
\end{align*}
for any function $f$ for which this integral converges. The asymptotic formulas we derive require two types of decay conditions of $f(x)$ at infinity, which we call {\it sufficient decay} and {\it rapid decay}. The first holds if $f(x) = O\lp x^{-N} \rp$ as $x \to \infty$ for some $N > 1$, and the later holds if this true for every $N > 1$. We may now state as a consequence of the classical Euler--Maclaurin formula the following lemma, which is a slightly rewritten form of identities appearing in \cite[Proposition 2.1]{BM17}, which itself is based on the aforementioned work of Zagier \cite{Zag06}.

\begin{lemma} \label{C3 Euler-Maclaurin Exact}
	Suppose that $f(z)$ is $C^\infty$ for $z$ in $D_\delta$ for some $\delta > 0$ such that $f(z)$ and all its derivatives have sufficient decay as $z \to \infty$ in $D_\delta$. Then for any real number $0 < a \leq 1$ and any positive integer $N$, we have
	\begin{align*}
		\sum_{m \geq 0} f\lp (m+a)z \rp = \dfrac{1}{z} \int_{az}^\infty f(x) dx &+ \sum_{n=0}^{N-1} \dfrac{(-1)^n B_{n+1}}{(n+1)!} f^{(n)}(az) z^n \\ &- (-z)^N \int_0^\infty f^{(N)}\lp (x+a) z \rp \dfrac{\widehat{B}_N(x)}{N!} dx,
	\end{align*}
	where $f^{(N)}\lp (x+a)z \rp$ is taken to be a derivative with respect to $x$.
\end{lemma}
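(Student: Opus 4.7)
My strategy is to deduce the identity directly from the classical Euler--Maclaurin formula recalled at the start of the section, applied to the substituted function $g(x) := f((x+a)z)$ on a finite interval $[0,M]$, and then pass to the limit $M \to \infty$, using the sufficient decay of $f$ and of all its derivatives in the cone $D_\delta$ to control every limiting step.

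\textbf{Key steps.} First I would record the chain-rule identity $g^{(n)}(x) = z^n f^{(n)}((x+a)z)$ together with the substitution $u = (x+a)z$, which turns $\int_0^M g(x)\,dx$ into $\frac{1}{z}\int_{az}^{(M+a)z} f(u)\,du$, interpreted as a contour integral along the ray from $az$ in the direction of $z$. Next, I would apply the classical Euler--Maclaurin identity to $g$ on $[0,M]$ and add $g(0) = f(az)$ to both sides to obtain the full partial sum $\sum_{j=0}^M g(j)$. I would then take $M \to \infty$: the boundary terms $z^{m-1} f^{(m-1)}((M+a)z)$ vanish along every ray in $D_\delta$ by the decay hypothesis on the derivatives of $f$; the truncated integral $\frac{1}{z}\int_{az}^{(M+a)z} f(u)\,du$ converges to $\frac{1}{z}\int_{az}^\infty f(u)\,du$, which is itself path-independent by the holomorphy and decay of $f$ together with Cauchy's theorem; and the remainder $z^N \int_0^M f^{(N)}((x+a)z)\,\widehat{B}_N(x)/N!\,dx$ converges to its analogue on $[0,\infty)$, since $\widehat{B}_N$ is uniformly bounded and $f^{(N)}$ has sufficient decay. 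Finally I would reindex the Bernoulli sum by setting $n = m-1$, absorb $g(0)=f(az)$ into the $n=0$ summand to produce the prescribed coefficient, and use the identity $(-1)^{N+1}z^N = -(-z)^N$ to rewrite the remainder in the stated form. The factor $(-1)^n$ accompanying $B_{n+1}$ for $n \geq 1$ is harmless because $B_{n+1} = 0$ whenever $n$ is even and positive, so the only surviving higher-order terms have $n$ odd, where the sign works out automatically.

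\textbf{Main obstacle.} The technical point requiring the most care is applying the classical Euler--Maclaurin identity with a complex parameter $z \in D_\delta$, rather than $z$ real, and tracking the resulting contour integrals. The cleanest way to set this up is to apply Euler--Maclaurin to $\operatorname{Re} g$ and $\operatorname{Im} g$ separately on $[0,M]$ and recombine by linearity; alternatively, one establishes the identity for real $z > 0$ and then extends it to the cone by the identity theorem, since both sides are holomorphic in $z \in D_\delta$. In either case the necessary convergence estimates for the boundary terms, the improper integrals, and the remainder all reduce to the sufficient decay of each $f^{(n)}$ along every ray in $D_\delta$, combined with the uniform boundedness of $\widehat{B}_N$ on $[0,\infty)$.
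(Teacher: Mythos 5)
Your proposal takes a genuinely different route from the paper's. The paper's proof is a one-line citation of {[}BM17, Proposition 2.1{]} followed by a change of variables in the remainder integral; you instead give a self-contained derivation from the classical Euler--Maclaurin formula recalled at the start of the section, applied to $g(x) := f((x+a)z)$ on $[0,M]$ and then sending $M \to \infty$. Your plan correctly identifies exactly where the decay hypotheses are used (to kill the boundary terms $g^{(m-1)}(M) = z^{m-1}f^{(m-1)}((M+a)z)$, and to force convergence of the improper integral and the remainder), correctly handles the complex parameter $z$ (either by real/imaginary splitting or by analytic continuation off the real ray), and the rewriting $(-1)^{N+1}z^{N} = -(-z)^{N}$ for the remainder is accurate. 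The trade-off is that the paper's proof is shorter by outsourcing the details, while yours is more transparent.

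The one step you should not leave at the level of an assertion is the absorption of $g(0)=f(az)$ into the $n=0$ summand. After adding $g(0)$ and reindexing $n=m-1$, the coefficient of $f(az)$ coming from the classical formula is $1 - \tfrac{B_1}{1!} = 1-B_1$, whereas the lemma claims it should be $\tfrac{(-1)^0 B_1}{1!} = B_1$; these agree only if $B_1 = \tfrac12$. The preliminaries of the chapter set $B_1 := B_1(0) = -\tfrac12$, and with that value the absorption would produce $\tfrac32 f(az)$, not the prescribed $-\tfrac12 f(az)$. This traces back to a sign convention in the classical Euler--Maclaurin display: with $\sum_{j=a+1}^{b}$ on the left, the $m=1$ boundary term must be $+\tfrac12(f(b)-f(a))$ (as one sees directly by integrating $f$ against $\widehat B_1$ by parts on each $[n,n+1]$), so that display is using $B_1 = +\tfrac12$, in tension with the stated definition $B_n := B_n(0)$. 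Your derivation does close, but only once a single consistent convention is fixed throughout; you should either state that convention explicitly or write out the $n=0$ computation rather than asserting it "produces the prescribed coefficient." Your observation that $(-1)^n$ is immaterial for all $n\ge 1$ (because $B_{n+1}=0$ for positive even $n$) is correct and disposes of every other index, so this $n=0$ term is the only place where a sign must actually be checked.
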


\begin{proof}
	The proof of \cite[Proposition 2.1]{BM17} implies with a slight change of variable in the last term that
	\begin{align*}
		\sum_{m \geq 0} f\lp (m+a)z \rp = \dfrac{1}{z} \int_{az}^\infty f(x) dx &+ \sum_{n=0}^{N-1} \dfrac{(-1)^n B_{n+1}}{(n+1)!} f^{(n)}(az) z^n \\ &- (-1)^N \int_0^\infty \dfrac{d^N}{dx^N} \left[ f\lp (x+a)z \rp \right] \dfrac{\widehat{B}_N(x)}{N!} dx.
	\end{align*}
	This is equivalent to the stated formula, as evaluating the inner derivatives brings into view the factor $z^N$ in the last term.
\end{proof}

We now state the asymptotic formula of Bringmann, Jennings-Shaffer and Mahlburg, which is a generalization of \cite[Proposition 2.1]{BM17} and \cite[Proposition 3]{Zag06}.

\begin{proposition}[{\cite[Theorem 1.2]{BJM21a}}] \label{C3 Euler-Maclaurin Rapid Decay}
	Suppose $0 \leq \delta < \frac{\pi}{2}$ and that $f : \CC \to \CC$ is holomorphic on a domain containing $D_\delta$, in particular containing the origin. Assume that $f(z)$ and all its derivatives have sufficient decay as $z \to \infty$ in $D_\delta$. Then for $a \in \RR$ and $N \geq 1$ an integer, we have
	$$\sum_{m \geq 0} f\lp (m + a)z \rp \sim \dfrac{I_f}{z} - \sum_{n \geq 0} c_n \dfrac{B_{n+1}(a)}{n+1} z^n$$
	uniformly as $z \to 0$ in $D_\delta$.
\end{proposition}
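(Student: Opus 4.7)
The Taylor coefficients $c_n := f^{(n)}(0)/n!$ are well-defined by the holomorphicity hypothesis. The plan is to apply the exact identity of Lemma \ref{C3 Euler-Maclaurin Exact}, substitute the Taylor expansions of $f$ and its derivatives around $0$, and reorganize the resulting double series into the claimed form via the defining formula for Bernoulli polynomials.

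Fix $N \geq 1$ and apply Lemma \ref{C3 Euler-Maclaurin Exact} with parameter $N+1$. Writing $\int_{az}^\infty f(x)\,dx = I_f - \int_0^{az}f(x)\,dx$ produces
\begin{align*}
	\sum_{m \geq 0} f\lp (m+a)z \rp = \dfrac{I_f}{z} - \dfrac{1}{z}\int_0^{az} f(x)\,dx + \sum_{n=0}^{N}\dfrac{(-1)^n B_{n+1}}{(n+1)!}f^{(n)}(az)z^n + E_N(z),
\end{align*}
where $E_N(z)$ is the remainder integral from the lemma. Next, expand $f(x) = \sum_{k \geq 0}c_k x^k$ and $f^{(n)}(x) = \sum_{k \geq 0}\frac{(n+k)!}{k!}c_{n+k}x^k$ by Taylor's theorem on a fixed disk inside the holomorphicity domain, with uniform remainder $O(x^{N+1})$ after truncation at degree $N$. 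Termwise integration gives $\frac{1}{z}\int_0^{az}f(x)\,dx = \sum_{j=0}^{N}\frac{c_j a^{j+1}}{j+1}z^j + O(z^{N+1})$. Substituting the Taylor series of $f^{(n)}(az)$ into the middle sum and grouping by total power of $z$, the combined coefficient of $z^j$ for $0 \leq j \leq N$ (modulo further $O(z^{N+1})$ Taylor-truncation errors) is
\begin{align*}
	c_j \lp -\dfrac{a^{j+1}}{j+1} + \sum_{n=0}^{j}\dfrac{(-1)^n B_{n+1}}{(n+1)!}\dfrac{j!}{(j-n)!}\,a^{j-n} \rp.
\end{align*}

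The crux is showing this parenthetical equals $-B_{j+1}(a)/(j+1)$. The Bernoulli numbers $B_n$ in Lemma \ref{C3 Euler-Maclaurin Exact} are normalized (as may be verified by evaluating the lemma on $f(x)=x$ over $[0,n]$) so that $B_1 = +1/2$, whereas the Bernoulli polynomials in \eqref{Bernoulli Polynomial Definition} satisfy $B_1(0) = -1/2$; the two conventions are related by $B_n = (-1)^n B_n(0)$, so $(-1)^n B_{n+1} = -B_{n+1}(0)$. Multiplying the parenthetical by $j+1$, re-indexing $\ell = n+1$, and absorbing $-a^{j+1}$ as the $\ell = 0$ summand (using $B_0=1$) transforms it into $-\sum_{\ell=0}^{j+1}\binom{j+1}{\ell}B_\ell(0)\,a^{j+1-\ell} = -B_{j+1}(a)$, where the last equality is the binomial expansion of the Bernoulli polynomial implied by \eqref{Bernoulli Polynomial Definition}.

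It remains to bound $E_N(z)$. The substitution $u = (x+a)z$, valid along rays in $D_\delta$ by holomorphicity of $f^{(N+1)}$, converts $E_N(z)$ into $(-1)^N z^N \int_{az}^{\infty} f^{(N+1)}(u)\,\widehat{B}_{N+1}(u/z - a)/(N+1)!\,du$. Uniform boundedness of $\widehat{B}_{N+1}$ on $\RR$ and sufficient decay of $f^{(N+1)}$ in $D_\delta$ make the integral converge uniformly as $z \to 0$ in $D_\delta$, whence $E_N(z) = O(z^N)$. Combined with the $O(z^{N+1})$ Taylor errors, this establishes the claimed asymptotic truncated at any $N \geq 1$ with error $O(z^N)$. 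The main obstacle is the Bernoulli identity in the third paragraph; the only genuinely non-routine step is reconciling the two competing $B_1$-conventions in a way that turns the alternating sign $(-1)^n$ into the Bernoulli polynomial's binomial expansion.
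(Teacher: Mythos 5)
Your proof is correct, and it is worth noting that the paper does not actually supply a proof of this proposition — it is cited directly from \cite[Theorem 1.2]{BJM21a}. Your argument is therefore an independent derivation. Structurally it parallels what the paper does for the \emph{effective} version, Proposition~\ref{C3 Euler-Maclaurin Rapid Decay Effective}: both start from Lemma~\ref{C3 Euler-Maclaurin Exact}, expand $\frac{1}{z}\int_0^{az}f$ and $f^{(n)}(az)$ as power series in $z$, and regroup by powers of $z$. The key difference is that the paper's proof of the effective version simply asserts that ``the lower-order terms in the above identity necessarily cancel'' by invoking the ineffective Proposition~\ref{C3 Euler-Maclaurin Rapid Decay} — which would be circular if one were trying to \emph{prove} that proposition. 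You close this gap by verifying the cancellation algebraically via the binomial expansion $B_{j+1}(a) = \sum_{\ell=0}^{j+1}\binom{j+1}{\ell}B_\ell(0)a^{j+1-\ell}$, which is exactly the missing ingredient, and your identity checks out.

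Your observation about the Bernoulli-number convention is sharp and, in fact, flags a genuine internal inconsistency in the paper: the paper explicitly sets $B_n := B_n(0)$ (so $B_1 = -\tfrac12$), but one can verify — for example, by taking $f(z) = e^{-z}$, $a=1$ in Lemma~\ref{C3 Euler-Maclaurin Exact} and comparing the constant term of both sides against $\sum_{m\ge 0}e^{-(m+1)z} = \tfrac{1}{z} - \tfrac12 + O(z)$ — that the lemma as written only holds if $B_1 = +\tfrac12$, i.e.\ if its $B_n$ means $(-1)^nB_n(0) = B_n(1)$. So your reconciliation $(-1)^n B_{n+1} = -B_{n+1}(0)$ is not just a convention choice but the correction that makes the lemma true, and it is essential for your identity to produce $-B_{j+1}(a)/(j+1)$. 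One small inaccuracy: the parenthetical suggestion to ``verify by evaluating the lemma on $f(x)=x$ over $[0,n]$'' does not apply to Lemma~\ref{C3 Euler-Maclaurin Exact} as stated (that lemma requires $f$ with sufficient decay and takes no interval as input); it applies rather to the classical Euler--Maclaurin identity displayed just above it in Section~\ref{C3 Euler-Maclaurin Section}, or one can use the $e^{-z}$ check instead. Finally, you silently inherit the restriction $0 < a \leq 1$ from Lemma~\ref{C3 Euler-Maclaurin Exact}, whereas the proposition is stated for $a \in \RR$; this is a small mismatch already present in the paper's statement and does not affect the applications, but a remark on it would tighten the exposition.
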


The following proposition is a refinement of Proposition \ref{C3 Euler-Maclaurin Rapid Decay} where the function $f(z)$ is allowed to have a pole at the origin. In other words, this extends the conclusion of Proposition \ref{C3 Euler-Maclaurin Rapid Decay} to functions $f(z)$ with principal parts $P_f(z)$ with the added property that $f(z) - P_f(z)$ has sufficient decay at infinity.

\begin{proposition}[{\cite[Lemma 2.2]{BCMO22}}] \label{C3 Euler-Maclaurin Sufficient Decay}
	Let $0 < a \leq 1$ and $A \in \RR^+$, and assume that $f(z) \sim \sum_{n=n_0}^{\infty} c_n z^n$ $(n_0\in\ZZ)$ as $z \rightarrow 0$ in $D_\delta$. Furthermore, assume that $f$ and all of its derivatives are of sufficient decay in $D_\delta$. Then we have that
	\begin{align*}
		\sum_{n=0}^\infty f((n+a)z)\sim \sum_{n=n_0}^{-2} c_{n} \zeta(-n,a)z^{n}+ \frac{I_{f,A}^*}{z} &- \frac{c_{-1}}{z} \left( \Log \left(Az \right) +\psi(a)+\gamma \right) \\ &-\sum_{n=0}^\infty c_n \frac{B_{n+1}(a)}{n+1} z^n,
	\end{align*}
	as $z \rightarrow 0$ uniformly in $D_\delta$, where 
	\begin{align*}
		I_{f,A}^*:=\int_{0}^{\infty} \left(f(u)-\sum_{n=n_0}^{-2}c_{n}u^n-\frac{c_{-1}e^{-Au}}{u}\right)du.
	\end{align*}
\end{proposition}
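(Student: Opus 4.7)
The plan is to split $f$ into a part $P$ that carries the singularity at the origin but is summable in closed form, and a remainder $\tilde g$ that satisfies the hypotheses of Proposition \ref{C3 Euler-Maclaurin Rapid Decay}. Concretely, I would set
\[
    P(z) := \sum_{n=n_0}^{-2} c_n z^n + \frac{c_{-1} e^{-Az}}{z}, \qquad \tilde g(z) := f(z) - P(z),
\]
so that $\tilde g$ is holomorphic on a neighborhood of $D_\delta$ -- the Laurent expansion $\frac{c_{-1} e^{-Az}}{z} = \frac{c_{-1}}{z} + \sum_{n \ge 0} \frac{c_{-1}(-A)^{n+1}}{(n+1)!}\, z^n$ cancels the $c_{-1}/z$ term of $f$ -- and has Taylor coefficients $\tilde c_n = c_n - \frac{c_{-1}(-A)^{n+1}}{(n+1)!}$ at $0$. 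The exponential factor is inserted precisely so that $P$, and hence $\tilde g$, together with all its derivatives decays sufficiently in $D_\delta$. Applying Proposition \ref{C3 Euler-Maclaurin Rapid Decay} to $\tilde g$ then yields
\[
    \sum_{m=0}^\infty \tilde g((m+a)z) \sim \frac{I_{\tilde g}}{z} - \sum_{n=0}^\infty \tilde c_n \, \frac{B_{n+1}(a)}{n+1}\, z^n,
\]
and by construction $I_{\tilde g} = I^*_{f,A}$.

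For the principal part, the monomial pieces sum exactly: for $n_0 \leq n \leq -2$ the series $\sum_{m \geq 0}(m+a)^n$ converges to $\zeta(-n,a)$. What remains is to analyze
\[
    h(s) := \sum_{m=0}^\infty \frac{e^{-(m+a)s}}{m+a}, \qquad \text{so that} \qquad \sum_{m \geq 0} \frac{c_{-1} e^{-A(m+a)z}}{(m+a)z} = \frac{c_{-1}}{z}\, h(Az),
\]
uniformly as $s \to 0$ in $D_\delta$. I would derive this expansion from the differential relation $h'(s) = -e^{-as}/(1-e^{-s})$, whose Laurent expansion at $0$ is
\[
    -\frac{1}{s} - \sum_{n \geq 0} \frac{(-1)^{n+1} B_{n+1}(a)}{(n+1)!}\, s^n
\]
by the generating function $\sum_{n \geq 0} B_n(a) v^n/n! = v e^{av}/(e^v - 1)$ together with $B_n(1-a) = (-1)^n B_n(a)$. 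Integrating termwise along rays in $D_\delta$ gives $h(s) = -\Log(s) + C - \sum_{m \geq 1}\frac{(-1)^m B_m(a)}{m\cdot m!}\, s^m$, and the constant $C$ is pinned down by taking $s \to 0^+$ along the real axis: the classical integral representation $\psi(a) = \int_0^\infty \bigl(e^{-v}/v - e^{-av}/(1-e^{-v})\bigr)\,dv$, together with Frullani's identity $\int_0^\infty (e^{-v} - e^{-av})/v\, dv = \log a$ and the expansion $E_1(as) = -\gamma - \log(as) + O(s)$, forces $C = -\psi(a) - \gamma$.

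Substituting $s = Az$ and combining the contributions from $\sum_m \tilde g((m+a)z)$, from the Hurwitz-zeta piece, and from $(c_{-1}/z)\, h(Az)$, the statement reduces to the algebraic identity
\[
    -\tilde c_n - \frac{c_{-1}(-1)^{n+1} A^{n+1}}{(n+1)!} = -c_n,
\]
which collapses the $\tilde c_n$-coefficients produced by Proposition \ref{C3 Euler-Maclaurin Rapid Decay} together with the polynomial tail of $h(Az)$ into the claimed $-\sum_{n \geq 0} c_n B_{n+1}(a) z^n/(n+1)$. I expect the main technical obstacle to be establishing the asymptotic expansion of $h(s)$ uniformly throughout $D_\delta$ rather than only for $s \to 0^+$ on the real axis; the formal manipulations via $h'$ are a clean one-variable computation, but justifying the required uniformity with remainder estimates in a complex sector -- and tracking the principal branch of $\Log$ across that sector -- is where the real care is needed.
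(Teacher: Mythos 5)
Your proof is correct and follows essentially the same route as the paper: the decomposition $f = \tilde g + c_{-1}e^{-Az}/z + \sum_{n\le -2}c_n z^n$, application of the rapid-decay Euler--Maclaurin proposition to $\tilde g$ (whose $I_{\tilde g}$ equals $I_{f,A}^*$), exact summation of the monomial pieces into Hurwitz zeta values, and the algebraic cancellation $-\tilde c_n - c_{-1}(-A)^{n+1}/(n+1)! = -c_n$ that merges the correction coefficients with the polynomial tail of the $c_{-1}/z$ piece. The one place you diverge is that the paper simply cites \cite[eq.\ (5.10)]{BJM21a} for the identity $\sum_{m\ge 0}e^{-(m+a)s}/(m+a) + \sum_{n\ge 1}B_n(a)(-s)^n/(n\cdot n!) = -\Log(s)-\psi(a)-\gamma$, whereas you re-derive it via the Laurent expansion of $h'(s) = -e^{-as}/(1-e^{-s})$ together with the Gauss integral for $\psi(a)$ and the asymptotics of $E_1$; this is a correct and more self-contained alternative, and your closing remark about needing uniformity in a complex sector (not just $s\to 0^+$ on $\RR$) and branch tracking is exactly the extra care that the cited lemma in \cite{BJM21a} is responsible for.
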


\begin{remark}
	The proof of this lemma comes from \cite{BCMO22} and is joint work with Bringmann, Males, and Ono. The results following this proposition are due to the author and are taken from \cite{Cra22}.
\end{remark}

\begin{proof}
	Let $h$ be any holomorphic function on a domain containing $D_\theta$, so that in particular $h$ is holomorphic at the origin, such that $h$ and all of its derivatives have sufficient decay, and $h(z) \sim \sum_{n=0}^{\infty} b_n z^n$ as $z \rightarrow 0$ in $D_\theta$. Then we have for $a\in\RR$
	\begin{align}\label{Eqn: EM holomorphic}
		\sum_{n=0}^\infty h((n+a)z)\sim\frac{I_h}{z}-\sum_{n=0}^\infty b_n \frac{ B_{n+1}(a)}{n+1}z^n,
	\end{align}
	as $z \rightarrow 0$ in $D_\theta$. For the given $A$, write
	\begin{align}\label{Eqn: f as g}
		f(z) = g(z) + \frac{c_{-1}e^{-Az}}{z} + \sum_{n=n_0}^{-2} c_nz^n,
	\end{align}
	which means that
	\begin{align*}
		g(z) = f(z) - \frac{c_{-1}e^{-Az}}{z} - \sum_{n=n_0}^{-2} c_nz^n.
	\end{align*}
	The final term in \eqref{Eqn: f as g} yields the first term in the right-hand side of the lemma. Since $g$ has no pole, \eqref{Eqn: EM holomorphic} gives that
	\begin{align*}
		\sum_{n=0}^\infty g((n+a)z)\sim\frac{I_g}{z}- \sum_{n=0}^\infty c_n(g) \frac{ B_{n+1}(a)}{n+1}z^n,
	\end{align*}
	where $c_n(g)$ are the coefficients of $g$. Note that $I_g = I_{f,A}^*$. We compute that
	\begin{align*}
		- \sum_{n=0}^\infty c_n(g) \frac{ B_{n+1}(a)}{n+1}z^n = -  \sum_{n=0}^\infty \left(c_n - \frac{(-A)^{n+1} c_{-1}}{(n+1)!}\right) \frac{ B_{n+1}(a)}{n+1}z^n.
	\end{align*}
	Combining the contribution from the second term with the contribution from the second term from \eqref{Eqn: f as g}, we obtain
	\begin{align*}
		\frac{c_{-1}}{z} \left( \sum_{n =0}^{\infty} \frac{e^{-A(n+a)z}}{n+a} + \sum_{n=1}^{\infty} \frac{B_n(a)}{n \cdot n!} (-Az)^n \right).
	\end{align*}
	Using \cite[equation (5.10)]{BJM21a}, the term in the parenthesis equals $-(\Log(Az)+\psi(a)+\gamma)$. Combining the contributions yields the statement of the proposition.
\end{proof}

The results of this section are sufficient for Chapter \ref{C6}, whereas Chapters \ref{C3} and \ref{C4} require explicit versions of these results.

\subsection{Effective Euler--Maclaurin summation}

This section is dedicated to reproving the results of the previous section with explicitly computable error terms. This is achieved by simply keeping track of the higher degree terms that were dropped in the proof of Propositions \ref{C3 Euler-Maclaurin Rapid Decay} and \ref{C3 Euler-Maclaurin Sufficient Decay}. These two propositions essentially follow from ``erasing" higher-order terms in Lemma \ref{C3 Euler-Maclaurin Exact}. Therefore, making the error terms in these results effective is essentially a matter of bookkeeping. These effective error terms become the central tool for implementing an effective version of Wright's circle method, which is central to Chapters \ref{C3} and \ref{C4}.

\begin{proposition} \label{C3 Euler-Maclaurin Rapid Decay Effective}
	Let $f(z)$ be $C^\infty$ in $D_\delta$ with power series expansion $f(z) = \sum_{n \geq 0} c_n z^n$ that converges absolutely in the region $0 \leq |z| < R$ for some positive constant $R$, and let $f(z)$ and all its derivatives have sufficient decay as $z \to \infty$ in $D_\delta$. Then for any real number $0 < a \leq 1$ and any integer $N > 0$,
	\begin{align*}
		\bigg| \sum_{m \geq 0} f\lp (m+a)z \rp &- \dfrac{I_f}{z} + \sum_{n = 0}^{N-1} c_n \dfrac{B_{n+1}(a)}{n+1} z^n \bigg| \\ &\leq \dfrac{M_{N+1} J_{f,N+1}(z)}{(N+1)!} |z|^N + \sum_{k \geq N} |c_k| \lp 1 + \dfrac{k!}{10 (k-N)!} \rp |z|^k,
	\end{align*}
	where $M_{N+1} := \max\limits_{0 \leq x \leq 1} \left| B_{N+1}(x) \right|$ and
	\begin{align*}
		J_{f,N+1}(z) := \int_0^\infty \left| f^{(N+1)}\lp w \rp \right| |dw|,
	\end{align*}
	where the path of integration proceeds along the line through the origin and $z$.
\end{proposition}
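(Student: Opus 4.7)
The plan is to start from the exact identity in Lemma~\ref{C3 Euler-Maclaurin Exact}, applied with $N+1$ in place of $N$, and then carefully bound each truncation error rather than let them be absorbed into an asymptotic. Writing $\int_{az}^\infty f(x)\,dx = I_f - \int_0^{az} f(x)\,dx$, the formula becomes
\begin{align*}
\sum_{m\geq 0} f((m+a)z) - \frac{I_f}{z} = -\frac{1}{z}\int_0^{az} f(x)\,dx + \sum_{n=0}^{N} \frac{(-1)^n B_{n+1}}{(n+1)!} f^{(n)}(az)\, z^n + R(z),
\end{align*}
where $R(z) := -(-z)^{N+1}\int_0^\infty f^{(N+1)}((x+a)z)\,\widehat{B}_{N+1}(x)/(N+1)!\,dx$. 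First I would handle $R(z)$: the uniform bound $|\widehat{B}_{N+1}(x)| \leq M_{N+1}$ together with the change of variables $w=(x+a)z$ (so $|dw|=|z|\,dx$ along the ray from the origin through $z$) gives
\begin{align*}
|R(z)| \leq \frac{M_{N+1}|z|^{N+1}}{(N+1)!}\int_0^\infty \bigl|f^{(N+1)}((x+a)z)\bigr|\,dx \leq \frac{M_{N+1} J_{f,N+1}(z)}{(N+1)!}|z|^N,
\end{align*}
since the ray from $az$ is a subset of the path defining $J_{f,N+1}(z)$. This yields precisely the first summand of the target bound.

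Next I would expand the two explicit pieces above as convergent power series in $z$ (valid for $|z|<R$; otherwise the stated bound is trivial), so that the coefficient of $z^k$ equals $c_k\Phi_{N,k}(a)$, where
\begin{align*}
\Phi_{N,k}(a) := -\frac{a^{k+1}}{k+1} + \sum_{n=0}^{\min(N,k)} \frac{(-1)^n B_{n+1}}{(n+1)!}\frac{k!}{(k-n)!} a^{k-n}.
\end{align*}
For $0 \leq k \leq N-1$ the inner sum is complete (runs up to $n=k$), so after reindexing by $j=n+1$ the classical identity $B_{k+1}(a) = \sum_{j=0}^{k+1}\binom{k+1}{j} B_j a^{k+1-j}$ (with the sign convention used in Lemma~\ref{C3 Euler-Maclaurin Exact}) yields $\Phi_{N,k}(a) = -B_{k+1}(a)/(k+1)$. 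These terms then cancel exactly against the added sum $-\sum_{n=0}^{N-1} c_n B_{n+1}(a)z^n/(n+1)$, leaving only the tail $\sum_{k\geq N} c_k\Phi_{N,k}(a)\,z^k$ to control.

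To estimate the tail in absolute value, I would group the $n=0$ summand $c_k B_1 a^k$ of the inner sum with the integral contribution $-c_k a^{k+1}/(k+1)$, obtaining $c_k a^k\bigl(B_1 - a/(k+1)\bigr)$; using $|a|\leq 1$ and $|B_1|+1/(k+1) \leq 1$ for $k\geq N\geq 1$, this quantity is at most $|c_k|$ in modulus and supplies the $1$ inside the parentheses of the stated bound. For the remaining $n\geq 1$ terms, the factorial inequality $k!/(k-n)!\leq k!/(k-N)!$ factors out $k!/(k-N)!$, and the leftover absolute constant is $\sum_{m\geq 2}|B_m|/m!$. Applying Lehmer's inequality~\eqref{Bernoulli Inequality} and $\zeta(m)\leq \zeta(2) = \pi^2/6$ bounds this by the geometric sum $\frac{\pi^2}{3}\sum_{m\geq 2}(2\pi)^{-m} = \frac{1}{12}\cdot\frac{2\pi}{2\pi-1} < \frac{1}{10}$, producing the factor $k!/(10(k-N)!)$. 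The main obstacle is exactly this last quantitative step: getting a constant as small as $1/10$ forces the $B_1$ contribution to be bundled with the integral rather than with the higher-derivative tail, and Lehmer's inequality must be applied tightly enough to beat the $1/10$ threshold; the earlier Bernoulli-polynomial bookkeeping, while routine, also has to line up coefficient-by-coefficient for this grouping to succeed.
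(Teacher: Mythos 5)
Your proof follows the same overall strategy as the paper's: apply Lemma~\ref{C3 Euler-Maclaurin Exact} at order $N+1$, bound the remainder integral via the uniform bound $|\widehat{B}_{N+1}|\leq M_{N+1}$ together with the change of variables $w=(x+a)z$, and then bound the coefficient of each $z^k$ with $k\geq N$ by grouping the $n=0$ and integral contributions (yielding the $1$) and applying Lehmer's inequality to the higher Bernoulli numbers (yielding the $\tfrac{1}{10}$).

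The one genuine divergence is how you dispose of the $z^k$ coefficients with $k<N$. You propose to verify the cancellation directly via the identity $B_{k+1}(a)=\sum_{j=0}^{k+1}\binom{k+1}{j}B_j a^{k+1-j}$, reindexing by $j=n+1$ to show $\Phi_{N,k}(a)=-B_{k+1}(a)/(k+1)$. The paper instead invokes the already-established asymptotic in Proposition~\ref{C3 Euler-Maclaurin Rapid Decay}: since $S_N(z)=O(z^N)$ and the remainder integral contributes only $O(z^N)$, the remaining $z^k$ coefficients for $k<N$ must vanish, with no algebra needed. Your route is more self-contained but more fragile: the verification requires the Euler--Maclaurin boundary sum to carry the coefficient $-B_{n+1}$ rather than the $(-1)^nB_{n+1}$ printed in Lemma~\ref{C3 Euler-Maclaurin Exact} (these differ only at $n=0$, where $B_1=-\tfrac12$, but that difference is exactly what breaks the identity). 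You flag this as a ``sign convention'' issue but do not resolve it; as written, $\Phi_{N,k}(a)=-B_{k+1}(a)/(k+1)$ fails at $k=0$ if the $(-1)^n$ is taken literally. The paper's meta-argument sidesteps this pitfall entirely, and is also what makes the final estimate insensitive to the sign since only $|B_{n+1}|$ enters. A secondary, harmless difference: you bound the leftover constant by $\sum_{m\geq 2}|B_m|/m!$ over all $m$, while the paper exploits $B_{\text{odd}\geq 3}=0$ to sum only over odd indices of $n$; your coarser series still evaluates to $\tfrac{\pi}{6(2\pi-1)}\approx 0.099<\tfrac{1}{10}$, so the bound survives, just barely.
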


\begin{proof}
	From Proposition \ref{C3 Euler-Maclaurin Rapid Decay}, we already know that
	\begin{align*}
		S_N(z) := \sum_{m \geq 0} f\lp (m+a)z \rp - \dfrac{I_f}{z} + \sum_{n=0}^{N-1} c_n \dfrac{B_{n+1}(a)}{n+1} z^n = O_N\lp z^N \rp.
	\end{align*}
	It suffices to make this upper bound effective. We use the shorthand $$J_{N+1,a}(z) := \int_{az}^\infty f^{(N+1)}\lp w \rp \dfrac{\widehat{B}_{N+1}\lp \frac wz - a \rp}{(N+1)!} dw,$$
	which is the integral from last term of Lemma \ref{C3 Euler-Maclaurin Exact} with a substitution $w = \lp x+a \rp z$. By Lemma \ref{C3 Euler-Maclaurin Exact}, we may write
	\begin{align*}
		S_N(z) = \dfrac{-1}{z} \int_0^{az} f(x) dx + \sum_{n=0}^{N} \dfrac{(-1)^n B_{n+1}}{(n+1)!} f^{(n)}(az) z^n &+ \sum_{n=0}^{N} c_n \dfrac{B_{n+1}(a)}{n+1} z^n \\ &- (-z)^N J_{N+1,a}(z).
	\end{align*}
	Because $0 < a \leq 1$ and $0 < |z| < R$, we have $|az| < R$ and so we may expand $f(x)$ and its derivatives as power series for $0 \leq x \leq |az|$. Using these power series representations and the absolute convergence of $\int_0^{az} f(x) dx$, we have
	\begin{align*}
		S_N(z) = - \sum_{k \geq 0} \dfrac{c_k}{k+1} a^{k+1} z^k &+ \sum_{n=0}^{N} \dfrac{(-1)^n B_{n+1}}{(n+1)!} \sum_{k \geq 0} \dfrac{(k+n)!}{k!} c_{n+k} a^k z^{n+k} \\ &+ \sum_{n=0}^{N} c_n \dfrac{B_{n+1}(a)}{n+1} z^n - (-z)^N J_{N+1,a}(z).
	\end{align*}
	It is already known, for instance by Proposition \ref{C3 Euler-Maclaurin Rapid Decay}, that $S_N(z) = O(z^N)$, so the lower-order terms in the above identity necessarily cancel. Thus, we have
	\begin{align*}
		S_N(z) = - \sum_{k \geq N} \dfrac{c_k}{k+1} a^{k+1} z^k + \sum_{n=0}^{N} \dfrac{(-1)^n B_{n+1}}{(n+1)!} & \sum_{k \geq N-n} c_{n+k} \dfrac{(n+k)!}{k!} a^k z^{n+k} \\ &- (-z)^N J_{N+1,a}(z).
	\end{align*}
	By taking $k \mapsto k-n$ in the second term and rearranging, we obtain
	\begin{align*}
		S_N(z) &= \sum_{k \geq N} c_k \left[- \dfrac{a^{k+1}}{k+1}  + \sum_{n=0}^N \dfrac{1}{n+1}\left[ (-1)^n B_{n+1} \binom{k}{n} a^{k-n} \right] \right] z^k - \lp -z \rp^N J_{N+1,a}(z).
	\end{align*}
	
	We now bound the remaining terms. The integral $J_{N+1,a}(z)$ is bounded trivially by
	\begin{align*}
		\left| J_{N+1,a}(z) \right| \leq \dfrac{M_{N+1}}{(N+1)!} J_{f,N+1}(z) = O_N(1)
	\end{align*}
	since $f(z)$ is bounded near zero and has sufficient decay as $z \to \infty$ in $D_\delta$.
	
	We also have, using Lehmer's bound \eqref{Bernoulli Inequality} and elementary estimates that for $k \geq N$,
	\begin{align*}
		\left| - \dfrac{a^{k+1}}{k+1} + \sum_{n=0}^N \dfrac{1}{n+1} \left[ (-1)^n B_{n+1} \binom{k}{n} a^{k-n} \right] \right| &\leq \dfrac{a^{k+1}}{k+1} + \dfrac{a^k}{2} + a^k \sum_{\substack{n=1 \\ n \text{ odd}}}^N \dfrac{2 \zeta(n+1) n!}{a^n (2\pi)^{n+1}} \binom{k}{n} \\ &< \dfrac{1}{k+1} + \dfrac{1}{2} + \dfrac{\pi}{6} \sum_{\substack{n=1 \\ n \text{ odd}}}^N \dfrac{k!}{(2 \pi)^n (k-n)!}.
	\end{align*}
	Since $1 \leq n \leq N \leq k$, $\frac{k!}{(k-n)!} < \frac{k!}{(k-N)!}$, and $\frac{\pi}{6} \sum_{n \geq 0} \frac{1}{(2\pi)^{2n+1}} < \frac{1}{10}$,
	\begin{align*}
		\left| - \dfrac{a^{k+1}}{k+1} + \sum_{n=0}^N \dfrac{1}{n+1} \left[ (-1)^n B_{n+1} \binom{k}{n} a^{k-n} \right] \right| < 1 + \dfrac{k!}{10 (k-N)!}.
	\end{align*}
	Thus,
	\begin{align*}
		\bigg| \sum_{k \geq N} c_k \bigg[ - \dfrac{a^{k+1}}{k+1} + &\sum_{n=0}^N \dfrac{1}{n+1} \left[ (-1)^n B_{n+1} \binom{k}{n} a^{k-n} \right] \bigg] z^k \bigg| \\ &\leq \sum_{k \geq N} |c_k| \lp 1 + \dfrac{k!}{10 (k-N)!} \rp |z|^k.
	\end{align*}
	Combining all bounds completes the proof.
\end{proof}

The proposition above shows how Euler--Maclaurin summation can be used to derive effective asymptotics for certain infinite series involving a function $f(z)$ with rapid decay at infinity. In analogy with Proposition \ref{C3 Euler-Maclaurin Sufficient Decay}, we now show how to derive explicit bounds for the case of sufficient decay at infinity.

\begin{proposition} \label{C3 Euler-Maclaurin Sufficient Effective}
	Let $f(z)$ be $C^\infty$ in $D_\delta$ with Laurent series $f(z) = \sum_{n = n_0}^\infty c_n z^n$ that converges absolutely in the region $0 < |z| < R$ for some positive constant $R$. Suppose $f(z)$ and all its derivatives have sufficient decay as $z \to \infty$ in $D_\delta$. Then for any real numbers $0 < a \leq 1$, $A > 0$ and any integer $N > 0$, we have
	\begin{align*}
		\bigg| \sum_{m \geq 0} &f\lp (m + a)z \rp - \sum_{n = n_0}^{-2} c_n \zeta(-n,a) z^n - \dfrac{I_{f,A}^*}{z} + \dfrac{c_{-1}}{z} \lp \Log\lp Az \rp + \gamma + \psi\lp a \rp \rp \\ &+ \sum_{n \geq 0} c_n^* \dfrac{B_{n+1}(a)}{n+1} z^n \bigg| \leq \dfrac{M_{N+1} J_{g,N+1}(az)}{(N+1)!} |z|^N + \sum_{k \geq N} |b_k| \lp 1 + \dfrac{k!}{10 (k-N)!} \rp |z|^k,
	\end{align*}
	where $g(z) := f(z) - \frac{c_{-1} e^{-Az}}{z} - \sum_{n = n_0}^{-2} c_n z^n$, $b_n := c_n - \frac{(-A)^{n+1} c_{-1}}{(n+1)!}$, $M_N$ and $J_{g,N}$ are defined as in Proposition \ref{C3 Euler-Maclaurin Rapid Decay Effective}, and
	\begin{align*}
		c_n^* := \begin{cases} c_n & \text{if } n \leq N-1, \\ \dfrac{(-A)^{n+1} c_{-1}}{(n+1)!} & \text{if } n \geq N. \end{cases}
	\end{align*}
\end{proposition}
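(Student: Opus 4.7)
The plan is to imitate the proof of Proposition \ref{C3 Euler-Maclaurin Sufficient Decay}, but apply the effective bound from Proposition \ref{C3 Euler-Maclaurin Rapid Decay Effective} at the key step, and then bookkeep the leftover series carefully. First, I would set
\begin{align*}
	g(z) := f(z) - \dfrac{c_{-1} e^{-Az}}{z} - \sum_{n=n_0}^{-2} c_n z^n,
\end{align*}
so that $g(z)$ is holomorphic at the origin, shares the decay and smoothness hypotheses of $f$ on $D_\delta$, has Taylor coefficients $b_n = c_n - \frac{(-A)^{n+1} c_{-1}}{(n+1)!}$ at the origin, and satisfies $I_g = I_{f,A}^*$. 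Summing $f((m+a)z)$ over $m \geq 0$ then splits into three pieces: the sum of $g((m+a)z)$, the sum of $c_n((m+a)z)^n$ for $n_0 \leq n \leq -2$, and the sum of $\frac{c_{-1} e^{-A(m+a)z}}{(m+a)z}$.

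Next I would evaluate the two easy pieces exactly. The sum $\sum_{m \geq 0} c_n ((m+a)z)^n = c_n \zeta(-n,a) z^n$ for $n \leq -2$ accounts for the Hurwitz zeta contribution. For the exponential piece, I would invoke the identity (already used in the proof of Proposition \ref{C3 Euler-Maclaurin Sufficient Decay}, cf.\ \cite[(5.10)]{BJM21a})
\begin{align*}
	\sum_{m \geq 0} \dfrac{e^{-A(m+a)z}}{m+a} + \sum_{n \geq 1} \dfrac{B_n(a)}{n \cdot n!} (-Az)^n = -\lp \Log(Az) + \gamma + \psi(a) \rp,
\end{align*}
which is an \emph{exact} identity, so it contributes no error. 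Rewriting the residual power series on the right as $-\sum_{n \geq 0} \frac{c_{-1}(-A)^{n+1}}{(n+1)!} \cdot \frac{B_{n+1}(a)}{n+1} z^n$ shows that, when combined with the main term coming from $g$, the resulting coefficient of $\frac{B_{n+1}(a)}{n+1} z^n$ in the full expansion of $f$ is exactly $c_n$ for $n \geq 0$, while any truncation at $n = N-1$ leaves behind precisely the residual coefficients $\frac{c_{-1}(-A)^{n+1}}{(n+1)!}$ for $n \geq N$; this is exactly the piecewise definition of $c_n^*$.

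The only step producing an error term is the application of Proposition \ref{C3 Euler-Maclaurin Rapid Decay Effective} to $g$. Since $g$ is holomorphic at the origin with Taylor coefficients $b_n$, this proposition yields the approximation $\frac{I_g}{z} - \sum_{n=0}^{N-1} b_n \frac{B_{n+1}(a)}{n+1} z^n$ with explicit error
\begin{align*}
	\dfrac{M_{N+1} J_{g,N+1}(az)}{(N+1)!} |z|^N + \sum_{k \geq N} |b_k| \lp 1 + \dfrac{k!}{10(k-N)!} \rp |z|^k,
\end{align*}
which is precisely the bound appearing in the proposition (the choice $az$ versus $z$ in the argument of $J_{g,N+1}$ is immaterial, since integration runs along the ray through the origin). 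Adding the three contributions and comparing against the truncation $-\sum_{n=0}^{N-1} c_n \frac{B_{n+1}(a)}{n+1} z^n - \sum_{n \geq N} \frac{c_{-1}(-A)^{n+1}}{(n+1)!} \frac{B_{n+1}(a)}{n+1} z^n$ (i.e.\ the $c_n^*$ expression) yields the stated bound.

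The main obstacle is purely a bookkeeping one: one must verify that the residual tail $-\sum_{n \geq N} \frac{c_{-1}(-A)^{n+1}}{(n+1)!} \frac{B_{n+1}(a)}{n+1} z^n$ coming from the \emph{untruncated} exponential identity combines correctly with the truncated Taylor expansion of $g$ to produce the hybrid coefficient $c_n^*$. Once one commits to leaving this tail inside the approximation (rather than bounding it), the decomposition is forced, and the rest is a direct application of the effective rapid-decay proposition to $g$.
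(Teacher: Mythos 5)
Your proposal is correct and follows essentially the same route as the paper's proof: the same decomposition $f = g + \frac{c_{-1}e^{-Az}}{z} + \sum_{n_0}^{-2} c_n z^n$, the same application of Proposition \ref{C3 Euler-Maclaurin Rapid Decay Effective} to $g$, the same use of the exact identity from \cite[Eq.\ (5.10)]{BJM21a} for the exponential piece, and the same bookkeeping that the residual tail of that identity combined with the truncation of $\sum b_n \frac{B_{n+1}(a)}{n+1}z^n$ produces the hybrid coefficients $c_n^*$. The only cosmetic difference is that the paper introduces a truncated auxiliary function $H_{a,N}$ and relates it to the full $H_a$, whereas you absorb the untruncated tail directly into the combination; the substance is identical.
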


\begin{proof}
	Since
	$$f(z) = g(z) + \dfrac{c_{-1} e^{-Az}}{z} + \sum_{n = n_0}^{-2} c_n z^n,$$
	then $g(z)$ is holomorphic at $z=0$ and has sufficient decay at infinity. Because $f(z)$ has a Laurent series converging for $0 < |z| < R$, it follows that $g(z)$ has a Taylor series $g(z) = \sum_{n = 0}^\infty b_n z^n$ which converges for $|z| < R$. Also note that $I_g = I^*_{f,A}$ by definition. Therefore, Proposition \ref{C3 Euler-Maclaurin Rapid Decay Effective} implies for $N > 0$ that
	\begin{align*}
		\bigg| \sum_{m \geq 0} g\lp (m+a)z \rp - \dfrac{I_{f,A}^*}{z} &+ \sum_{n = 0}^{N-1} b_n \dfrac{B_{n+1}(a)}{n+1} z^n \bigg| \\ &\leq \dfrac{M_{N+1} J_{g,N+1}(z)}{(N+1)!} |z|^N + \sum_{k \geq N} |b_k| \lp 1 + \dfrac{k!}{10 (k-N)!} \rp |z|^k
	\end{align*}
	for $z \in D_\delta$ with $0 < |z| < R$. From the definition of $g(z)$ this becomes
	\begin{align*}
		\Bigg| \sum_{m \geq 0} \bigg[ f\lp (m+a)z \rp &- \dfrac{c_{-1} e^{-A(m+a)z}}{(m+a)z} \bigg] - \sum_{n = n_0}^{-2} c_n \zeta(-n,a) z^n - \dfrac{I_{f,A}^*}{z} + \sum_{n = 0}^{N-1} b_n \dfrac{B_{n+1}(a)}{n+1} z^n \Bigg| \\ &\leq\dfrac{M_{N+1} J_{g,N+1}(z)}{(N+1)!} |z|^N + \sum_{k \geq N} |b_k| \lp 1 + \dfrac{k!}{10 (k-N)!} \rp |z|^k.
	\end{align*}
	By the definition of $b_n$ we have
	\begin{align*}
		\sum_{n = 0}^{N-1} b_n \dfrac{B_{n+1}(a)}{n+1} z^n = \sum_{n = 0}^{N-1} c_n \dfrac{B_{n+1}(a)}{n+1} z^n - \sum_{n = 0}^{N-1} \dfrac{(-A)^{n+1} c_{-1}}{(n+1)!} \dfrac{B_{n+1}(a)}{n+1} z^n,
	\end{align*}
	and if we adopt the notation
	\begin{align*}
		\dfrac{c_{-1}}{z} H_{a,N}(z) := \dfrac{c_{-1}}{z} \lp \sum_{m \geq 0} \dfrac{e^{-A(m+a)z}}{m+a} + \sum_{n=0}^{N-1} \dfrac{B_{n+1}(a)}{(n+1) (n+1)!} (-Az)^{n+1} \rp,
	\end{align*}
	it follows that
	\begin{align*}
		\bigg| \sum_{m \geq 0} f\lp (m+a)z \rp &- \sum_{n = n_0}^{-2} c_n \zeta(-n,a) z^n - \dfrac{c_{-1}}{z} H_{a,N}(Az) - \dfrac{I_{f,A}^*}{z} + \sum_{n = 0}^{N-1} c_n \dfrac{B_{n+1}(a)}{n+1} z^n \bigg| \\ &\leq \dfrac{M_{N+1} J_{g,N+1}(z)}{(N+1)!} |z|^N + \sum_{k \geq N} |b_k| \lp 1 + \dfrac{k!}{10 (k-N)!} \rp |z|^k.
	\end{align*}
	By \cite[Equation 5.10]{BJM21a}, it is known that
	\begin{align*}
		H_a(z) := \sum_{n \geq 0} \dfrac{e^{-(m+a)z}}{m+a} + \sum_{n \geq 0} \dfrac{B_{n+1}(a)}{(n+1) (n+1)!} (-z)^{n+1}
	\end{align*}
	satisfies $H_a(Az) = - \Log(Az) - \gamma - \psi(a)$ for any $A > 0$. Since $$H_{a,N}(Az) = H_a(Az) - \sum_{n \geq N} \frac{B_{n+1}(a)}{(n+1) (n+1)!} (-Az)^{n+1},$$
	this completes the proof.
\end{proof}

\section{Statement of Wright's Circle Method}

In this section, we recall a result of Bringmann, Ono, Males, and the author from \cite{BCMO22}, which is a variation of the circle method going back to Wright \cite{Wri71}. Wright's circle method gives asymptotics for the coefficients of $q$-series $F(q)$ having a nice factorization and suitable analytic properties. Given a circle $\mathcal{C}$ centered at the origin with radius less than 1, we define its {\it major arc} as that region of $\mathcal C$ where $F(q)$ is largest. In our applications, this is given by $\mathcal{C}_1 := \mathcal{C} \cap D_\delta$ for $\delta > 0$. The {\it minor arc} of $\mathcal{C}$ is then defined by $\mathcal{C}_2 := \mathcal{C} \backslash \mathcal{C}_1$. In the circle method, the integral taken over $\mathcal{C}_1$ gives the main term for the coefficients of $F(q)$ and the integral over $\mathcal{C}_2$ is merely an error term.

Here, we recall the version of Wright's circle method which we will use in the proof of Theorem \ref{C3 Ineffective Asymptotic}.

\begin{proposition}[{\cite[Proposition 4.4]{BCMO22}}] \label{Wright Circle Method}
	Suppose that $F(q)$ is analytic for $q = e^{-z}$ where $z=x+iy \in \CC$ satisfies $x > 0$ and $|y| < \pi$, and suppose that $F(q)$ has an expansion $F(q) = \sum_{n=0}^\infty c(n) q^n$ near 1. Let $N,M>0$ be fixed constants. Consider the following hypotheses:
	
	\begin{enumerate}
		\item[\rm(1)] As $z\to 0$ in the bounded cone $|y|\le Mx$ (major arc), we have
		\begin{align*}
			F(e^{-z}) = C z^{B} e^{\frac{A}{z}} \left( \sum_{j=0}^{N-1} \alpha_j z^j + O_\delta\left(|z|^N\right) \right),
		\end{align*}
		where $\alpha_s \in \CC$, $A,C \in \RR^+$, and $B \in \RR$. 
		
		\item[\rm(2)] As $z\to0$ in the bounded cone $Mx\le|y| < \pi$ (minor arc), we have 
		\begin{align*}
			\lvert	F(e^{-z}) \rvert \ll_\delta e^{\frac{1}{\mathrm{Re}(z)}(A - \kappa)},
		\end{align*}
		for some $\kappa\in \RR^+$.
	\end{enumerate}
	If  {\rm(1)} and {\rm(2)} hold, then as $n \to \infty$ we have for any $N\in \RR^+$ 
	\begin{align*}
		c(n) = C n^{\frac{1}{4}(- 2B -3)}e^{2\sqrt{An}} \lp \sum\limits_{r=0}^{N-1} p_r n^{-\frac{r}{2}} + O\left(n^{-\frac N2}\right) \rp,
	\end{align*}
	where $p_r := \sum\limits_{j=0}^r \alpha_j c_{j,r-j}$ and $c_{j,r} := \dfrac{(-\frac{1}{4\sqrt{A}})^r \sqrt{A}^{j + B + \frac 12}}{2\sqrt{\pi}} \dfrac{\Gamma(j + B + \frac 32 + r)}{r! \Gamma(j + B + \frac 32 - r)}$. 
\end{proposition}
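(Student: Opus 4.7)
The plan is to apply the standard circle method machinery: represent $c(n)$ by Cauchy's theorem as a contour integral around a circle of carefully chosen radius, split into major and minor arcs, and evaluate each piece.

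First I would write
\begin{align*}
c(n) = \frac{1}{2\pi i} \oint_{\mathcal C} \frac{F(q)}{q^{n+1}}\,dq
\end{align*}
and make the substitution $q = e^{-z}$ with $z = \eta + iy$, so $dq = -e^{-z}\,dz$. Choosing the radius of $\mathcal C$ via the saddle point of $\frac{A}{z} + nz$ on the positive real axis, I would set $\eta = \sqrt{A/n}$; the saddle at $z = \sqrt{A/n}$ balances the two competing exponentials and is the standard Wright-style choice. With $M$ the cone constant from hypothesis (1), the major arc $\mathcal C_1$ consists of $z$ with $|y| \le M\eta$ and the minor arc $\mathcal C_2$ is the complement.

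For the minor arc, hypothesis (2) gives $|F(e^{-z})| \ll e^{(A-\kappa)/\eta}$, while $|e^{nz}| = e^{n\eta}$. With $\eta = \sqrt{A/n}$ the exponential part of the integrand on $\mathcal C_2$ is bounded by $\exp(2\sqrt{An} - \kappa\sqrt{n/A})$, which is smaller than the expected main term $\exp(2\sqrt{An})$ by an exponentially decaying factor in $\sqrt n$; thus this contribution is absorbed into the final $O(n^{-N/2})$ error. The main term comes from the major arc. Substituting the expansion from hypothesis (1), the integral becomes a finite sum
\begin{align*}
\frac{C}{2\pi i} \sum_{j=0}^{N-1} \alpha_j \int_{\mathcal C_1} z^{B+j}\, e^{A/z + nz}\,dz \;+\; \text{error}.
\end{align*}
Extending each integral from $\mathcal C_1$ to a full Hankel-type contour introduces only exponentially small errors by the same minor-arc estimate, so each integral becomes a classical Bessel integral of the form $\int z^{B+j} e^{A/z + nz}\,dz$, which after scaling $z = \sqrt{A/n}\,u$ equals $(A/n)^{(B+j+1)/2} \cdot 2\pi i\, I_{-B-j-1}(2\sqrt{An})$ for the $I$-Bessel function.

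The final step is to insert the known asymptotic expansion
\begin{align*}
I_\nu(x) \sim \frac{e^x}{\sqrt{2\pi x}} \sum_{r \ge 0} \frac{(-1)^r}{r!\,(8x)^r}\,\frac{\Gamma(\nu + r + \tfrac12)}{\Gamma(\nu - r + \tfrac12)}
\end{align*}
with $\nu = -B - j - 1$ and $x = 2\sqrt{An}$, and collect like powers of $n^{-1/2}$. Matching the constants with the definition $c_{j,r} = \frac{(-1/(4\sqrt A))^r \sqrt A^{\,j+B+1/2}}{2\sqrt\pi}\cdot\frac{\Gamma(j+B+\tfrac32 + r)}{r!\,\Gamma(j+B+\tfrac32 - r)}$ and setting $p_r = \sum_{j=0}^{r} \alpha_j c_{j,r-j}$ yields the stated series. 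The $O(|z|^N)$ error term in hypothesis (1), integrated over the major arc, produces the remainder $O(n^{-N/2})$ after scaling. The main technical obstacle is controlling the error from truncating the Bessel asymptotic and from extending $\mathcal C_1$ to a Hankel contour uniformly in $j \le N-1$; both are handled by routine but careful estimates showing the discarded pieces decay faster than $n^{-N/2} \cdot e^{2\sqrt{An}}$.
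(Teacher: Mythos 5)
Your proposal follows essentially the same route as the paper's own proof of the equivalent Proposition \ref{WrightCircleMethod2}: Cauchy's formula at radius $e^{-\sqrt{A/n}}$, a major/minor arc split in which hypothesis (2) renders the minor arc exponentially negligible and hypothesis (1) feeds the major arc, and reduction of each major-arc model integral $\int z^{B+j}e^{A/z+nz}\,dz$ to a Hankel-contour $I$-Bessel integral whose asymptotic expansion is then truncated and matched — the paper just defers that last Bessel evaluation to Lemma 3.7 of Ngo--Rhoades \cite{NR17} rather than carrying it out. Two small imprecisions worth flagging, neither of which breaks the argument: the uniform expansion you quote should have $(2x)^r$ rather than $(8x)^r$ in the denominator (equivalently, $a_r(\nu) = \tfrac{1}{r!\,2^r}\,\Gamma(\nu+r+\tfrac12)/\Gamma(\nu-r+\tfrac12)$, which is exactly what is needed to reproduce the $\left(-\tfrac{1}{4\sqrt{A}}\right)^r$ in the stated $c_{j,r}$), and extending $\mathcal C_1$ to a Hankel contour for the model integrals requires a direct decay estimate on $z^{B+j}e^{A/z+nz}$ itself along the extension — it is not literally a reuse of the minor-arc hypothesis, which speaks only about $F(e^{-z})$ — though it is the same kind of routine bound.
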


\begin{remark}
	The constant $C$ in Proposition \ref{Wright Circle Method} does not appear in the statement of Wright's circle method proved in Proposition \ref{WrightCircleMethod} proved in Chapter \ref{C6}, but is trivially equivalent to this result by factoring out $C$ from each $\alpha_i$.
\end{remark}

We defer the proof of this proposition until Chapter \ref{C6}. However, the line of attack is exemplified by the proof of Theorem \ref{C3 Effective Asymptotic}.

\section{Estimates with Bessel functions}

We now consider certain estimates with Bessel functions which we will require when effectively implementing Wright's circle method. Recall that the {\it modified Bessel function} $I_\nu(z)$ is defined for any $\nu \in \CC$ by
\begin{align*}
	I_\nu(x) := \lp \dfrac x2 \rp^\nu  \dfrac{1}{2\pi i} \int_{\mathcal D} t^{-\nu-1} \exp\lp \dfrac{x^2}{4t} + t \rp dt,
\end{align*}
where $\mathcal D$ is any contour running from $-\infty$ below the negative real axis, counterclockwise around 0, and back to $-\infty$ above the negative real axis. We shall choose $\mathcal D = \mathcal D_- \cup \mathcal D_0 \cup \mathcal D_+$, each of which depend on a particular choice of $z = \eta + iy$ with $\eta = \frac{\pi}{\sqrt{12n}}$ for $n > 0$. These components of $\mathcal D$ are given by
\begin{align*}
	\mathcal D_{\pm} := \{ u + iv \in \CC : u \leq \eta, v = \pm 10 \eta \}, \\
	\mathcal D_0 := \{ u + iv \in \CC : u = \eta, |v| \leq 10 \eta \}.
\end{align*}
Note that this dependence on $z$ does not change the value of the integral, since one can shift the paths of integration. We shall compare the size of $I_\nu(z)$ to its main term. In particular, define
\begin{align*}
	\widehat{I}_\nu(n) := \lp \dfrac{\pi^2}{12n} \rp^{\frac{\nu}{2}} \dfrac{1}{2\pi i} \int_{\mathcal D_0} t^{-\nu-1} \exp\lp \dfrac{\pi^2}{12t} + \lp n + \dfrac{1}{24} \rp t \rp dt.
\end{align*}
The following lemma shows how $\widehat{I}_\nu(n)$ approximates $I_\nu(z)$ for certain values of $z$.

\begin{lemma} \label{Bessel Estimates}
	Let $n \geq 1$ be an integer and $\nu \leq -1$. Then
	\begin{align*}
		\bigg| I_\nu\lp \pi \sqrt{\dfrac{1}{3}\lp n + \dfrac{1}{24} \rp} \rp &- \widehat{I}_\nu(n) \bigg| \\ &< 2 \lp \dfrac{2\pi^2}{24n+1} \rp^{\frac{\nu}{2}} \exp\lp \dfrac{3\pi}{4} \sqrt{\dfrac{n}{3}} \rp \int_0^\infty \lp 10 + u \rp^{-\nu - 1} e^{-\lp n + \frac{1}{24} \rp u} du.
	\end{align*}
\end{lemma}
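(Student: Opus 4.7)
The plan is to control $I_\nu(\pi\sqrt{(n+1/24)/3}) - \widehat{I}_\nu(n)$ directly from the Hankel integral representation of $I_\nu$ by isolating the contributions coming from $\mathcal D_0$ and $\mathcal D_\pm$ separately. The setup is engineered so that the $\mathcal D_0$-piece reproduces $\widehat I_\nu(n)$, reducing the problem to an $ML$-bound on the lateral contours.

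First I would bring $I_\nu$ into a form compatible with the definition of $\widehat I_\nu(n)$. Starting from the Hankel representation with $x = \pi\sqrt{(n+1/24)/3}$ and substituting $t = (n+\tfrac{1}{24})s$, the exponent becomes $\pi^2/(12s) + (n+\tfrac{1}{24})s$ while the prefactor collapses to $(\pi^2/(12(n+\tfrac{1}{24})))^{\nu/2} = (2\pi^2/(24n+1))^{\nu/2}$, which is exactly the constant appearing in the claimed bound. Splitting $\mathcal D = \mathcal D_0 \cup \mathcal D_+ \cup \mathcal D_-$, the $\mathcal D_0$ contribution matches $\widehat I_\nu(n)$, so the error is
\begin{align*}
\lp \dfrac{2\pi^2}{24n+1} \rp^{\nu/2} \cdot \dfrac{1}{2\pi i}\int_{\mathcal D_+ \cup \mathcal D_-} s^{-\nu-1} e^{\pi^2/(12s) + (n+1/24)s}\, ds.
\end{align*}

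The substantive work is to estimate the integral along $\mathcal D_+$; the integral along $\mathcal D_-$ is its complex conjugate by $s \mapsto \bar{s}$, and adding the two produces the leading constant $2$. I would parameterize $\mathcal D_+$ as $s = \eta - u + 10i\eta$ with $u \in [0,\infty)$, so $\Re(s) = \eta - u$ and $|s|^2 = (\eta-u)^2 + 100\eta^2$. Two pointwise estimates drive the argument. The algebraic identity
\begin{align*}
(10+u)^2 - |s|^2 = (20 + 2\eta)u + 100 - 101\eta^2
\end{align*}
is non-negative for every $u \geq 0$ whenever $\eta \leq 10/\sqrt{101}$; since $\eta = \pi/\sqrt{12n} \leq \pi/\sqrt{12} < 10/\sqrt{101}$ for $n \geq 1$, this gives $|s| \leq 10+u$ and hence $|s|^{-\nu-1} \leq (10+u)^{-\nu-1}$ for $\nu \leq -1$. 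For the exponential, maximizing $\Re(\pi^2/(12s)) = \pi^2(\eta-u)/(12|s|^2)$ over $u$ on $\mathcal D_+$ shows the maximum occurs at $u=0$ and is bounded by $\pi^2/(1212\eta)$, so
\begin{align*}
\left| \exp\lp \dfrac{\pi^2}{12s} + (n+\tfrac{1}{24}) s \rp \right| \leq \exp\lp (n+\tfrac{1}{24})\eta + \dfrac{\pi^2}{1212\eta}\rp \cdot e^{-(n+1/24)u}.
\end{align*}

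Combining these pointwise bounds, integrating over $\mathcal D_+$, doubling for $\mathcal D_-$, and absorbing $1/(2\pi)$ from $\frac{1}{2\pi i}$ yields the claimed inequality with room to spare. The main obstacle is the numerical check that $(n+\tfrac{1}{24})\eta + \pi^2/(1212\eta) \leq 3\pi\sqrt{n/3}/4$ uniformly for every $n \geq 1$; after substituting $\eta = \pi/\sqrt{12n}$, both sides are explicit constants times $\pi\sqrt{n/12}$, and the inequality reduces to $25/24 + 1/101 \leq 3/2$, which holds comfortably, though the small-$n$ boundary cases should be verified directly to confirm there is no loss in the uniform bound.
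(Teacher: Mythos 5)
Your proposal follows the same route as the paper: change variables $t = (n+\tfrac{1}{24})s$ to pull the prefactor out of the Hankel integral, identify the $\mathcal D_0$ piece with $\widehat I_\nu(n)$, and then $ML$-bound the lateral contours $\mathcal D_\pm$ pointwise in $u$. The two places where your execution differs are both to your credit. First, for the modulus you verify $|s| \leq 10 + u$ exactly via the identity $(10+u)^2 - |s|^2 = (20+2\eta)u + 100 - 101\eta^2 \geq 0$, whereas the paper uses the triangle inequality to get $|t| \leq 11\eta + u$ and then notes $11\eta < 10$; these are equivalent in effect but yours is cleaner. Second, and more substantively, for the exponential you compute the \emph{tight} bound $\mathrm{Re}\bigl(\pi^2/(12s)\bigr) \leq \pi^2/(1212\eta)$ (the maximum really is at $u=0$, since the derivative of $(\eta-u)/|s|^2$ is negative on $[0,11\eta)$ and the function is negative thereafter). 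This matters: the paper instead uses the very loose bound $\mathrm{Re}(\pi^2/(12t)) \leq \tfrac{\pi}{4}\sqrt{n/3}$, but that bound together with $(n+\tfrac{1}{24})\eta = \tfrac{\pi}{2}\sqrt{n/3}\,(1 + \tfrac{1}{24n})$ exceeds $\tfrac{3\pi}{4}\sqrt{n/3}$; the paper's intermediate display writes $nt$ rather than $(n+\tfrac{1}{24})t$ in the exponent, and only then does $\tfrac{\pi}{4}\sqrt{n/3} + n\eta = \tfrac{3\pi}{4}\sqrt{n/3}$ balance exactly. With the exponent the lemma actually requires, your tighter estimate $\pi^2/(1212\eta) = \tfrac{1}{101}\cdot\tfrac{\pi}{2}\sqrt{n/3}$ supplies the room needed, and your reduction to $\tfrac{25}{24} + \tfrac{1}{101} \leq \tfrac{3}{2}$ (for $n\geq 1$) is correct. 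In short: same decomposition and same lemma, but you avoid a small arithmetic slip in the paper's version of the exponential bound, so your argument is the more defensible one.
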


\begin{proof}
	By a change of variables $t \mapsto \lp n + \frac{1}{24} \rp t$ and shifting of the path of integration back to $\mathcal D$, we see that
	\begin{align*}
		I_\nu\lp \pi \sqrt{\dfrac{1}{3}\lp n + \dfrac{1}{24} \rp} \rp = \lp \dfrac{\pi^2}{12\lp n + \frac{1}{24} \rp} \rp^{\frac{\nu}{2}} \dfrac{1}{2\pi i} \int_{\mathcal D} t^{-\nu-1} \exp\lp \dfrac{\pi^2}{12t} + \lp n + \dfrac{1}{24} \rp t \rp dt.
	\end{align*}
	Thus, we have
	\begin{align*}
		I_\nu\lp \pi \sqrt{\dfrac{1}{3}\lp n + \dfrac{1}{24} \rp} \rp &- \widehat{I}_\nu(n) \\ &= \lp \dfrac{2\pi^2}{24n+1} \rp^{\frac{\nu}{2}} \dfrac{1}{2\pi i} \int_{\mathcal D_+ \cup \mathcal D_-} t^{-\nu-1} \exp\lp \dfrac{\pi^2}{12t} + \lp n + \dfrac{1}{24} \rp t \rp dt.
	\end{align*}
	For $t \in \mathcal D_-$, we may set $t = \lp \eta - u \rp - 10 \eta i$. Since we have $\mathrm{Re}\lp \frac{\pi^2}{12t} \rp \leq \frac{\pi}{4} \sqrt{\frac{n}{3}}$ for all $u \geq 0$ and $|t| \leq |\eta - u| + |10 \eta i| < 11\eta + u = \frac{11\pi}{\sqrt{12n}} + u$, we have
	\begin{align*}
		\left| t^{-\nu - 1} \exp\lp \dfrac{\pi^2}{12t} + nt \rp \right| &\leq |t|^{-\nu - 1} \exp\lp \dfrac{\pi}{4} \sqrt{\dfrac{n}{3}} + \lp n + \dfrac{1}{24} \rp \lp \eta - u \rp \rp \\ &\leq \lp \dfrac{11 \pi}{\sqrt{12n}} + u \rp^{-\nu - 1} \exp\lp \dfrac{3\pi}{4} \sqrt{\dfrac{n}{3}} - \lp n + \dfrac{1}{24} \rp u \rp,
	\end{align*}
	where the last inequality uses $-\nu - 1 \geq 0$. The same bound holds for $\mathcal D_+$. Since $\frac{11 \pi}{\sqrt{12n}} < 10$, we conclude that
	\begin{align*}
		\bigg| I_\nu\lp \pi \sqrt{\dfrac{1}{3}\lp n + \dfrac{1}{24} \rp} \rp &- \widehat{I}_\nu(n) \bigg| \\ &< 2 \lp \dfrac{2\pi^2}{24n+1} \rp^{\frac{\nu}{2}} \exp\lp \dfrac{3\pi}{4} \sqrt{\dfrac{n}{3}} \rp \int_0^\infty \lp 10 + u \rp^{-\nu - 1} e^{-\lp n + \frac{1}{24} \rp u} du.
	\end{align*}
	This completes the proof.
\end{proof}

\section{Effective asymptotics}

In this section, we prove effective bounds for the functions $L_{r,t}(q)$ and $\xi(q)$ on both the major and minor arcs. The first subsection covers major arc bounds, and the second covers minor arc bounds.

\subsection{Major arc effective bounds}

In this subsection, we compute effective bounds on the functions $L_{r,t}(q)$ and $\xi(q)$ on the major arc. We also note that in the region $0 \leq |y| < 10\eta$, the hypothesis $\eta < \frac{\pi}{40t}$ always implies $|z| < \frac{\sqrt{101} \pi}{80} < \frac{2}{5}$.

\begin{lemma} \label{L Major Arc}
	Let $t \geq 2$ and $0 < r \leq t$ be integers and $z = \eta + iy$ a complex number satisfying $0 \leq |y| < 10\eta$ and $\eta < \frac{\pi}{40t}$. Then we have
	\begin{align*}
		\left| L_{r,t}\lp e^{-z} \rp - \dfrac{\log(2)}{tz} + \dfrac{1}{2} B_1\lp \dfrac{r}{t} \rp - \dfrac{t}{8} B_2\lp\frac{r}{t}\rp z + \dfrac{t^3}{192} B_4\lp \frac rt \rp z^3 \right| < \dfrac{1}{20} t^5 |z|^5.
	\end{align*}
\end{lemma}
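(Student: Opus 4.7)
The plan is to obtain this estimate as a direct application of the effective Euler--Maclaurin formula in Proposition~\ref{C3 Euler-Maclaurin Rapid Decay Effective} to the representation
\begin{align*}
L_{r,t}(e^{-z}) = \sum_{k \geq 0} E\!\left(\left(k + \tfrac{r}{t}\right) tz\right)
\end{align*}
from Lemma~\ref{Xi and L z-Expansions}, where $E(w) = e^{-w}/(1+e^{-w})$ is entire on $\mathrm{Re}(w)>0$ with Taylor coefficients $c_n = e_n/n!$ and $e_n = (1-2^{n+1})B_{n+1}/(n+1)$. First I apply Proposition~\ref{C3 Euler-Maclaurin Rapid Decay Effective} with $f = E$, shift parameter $a = r/t$, variable $tz$ in place of $z$, and truncation $N=5$. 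The integral is $I_E = \int_0^\infty e^{-w}/(1+e^{-w})\,dw = \log 2$, producing the $\log(2)/(tz)$ term. The correction terms $c_n B_{n+1}(r/t)(tz)^n/(n+1)$ for $0 \le n \le 4$ are then evaluated using $e_0 = 1/2$, $e_1 = -1/4$, $e_3 = 1/8$, together with the vanishing $e_2 = e_4 = 0$ that follows from $B_3 = B_5 = 0$; this produces exactly the three non-trivial main terms $\tfrac12 B_1(r/t)$, $-\tfrac{t}{8}B_2(r/t)\,z$, and $\tfrac{t^3}{192}B_4(r/t)\,z^3$.

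The remainder to control is then
\begin{align*}
\frac{M_6 J_{E,6}(tz)}{6!}\,(t|z|)^5 \; + \; \sum_{k\ge 5}\frac{|e_k|}{k!}\!\left(1+\frac{k!}{10\,(k-5)!}\right)(t|z|)^k.
\end{align*}
The hypotheses $\eta < \pi/(40t)$ and $|y| < 10\eta$ give $t|z| < \sqrt{101}\,\pi/40 < 1$, so the tail sum is geometric-type and can be compared with an absolutely bounded series in $t|z|$, yielding a bound of the form $C_1\,(t|z|)^5$ for an explicit $C_1$ (using Lehmer's inequality \eqref{Bernoulli Inequality} to majorize $|e_k|/k!$). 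For the leading factor, I use $M_6 = \max_{[0,1]}|B_6(x)| = 1/42$.

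To bound $J_{E,6}(tz) = \int_0^\infty |E^{(6)}(w)|\,|dw|$ on the ray through $tz$, I use the expansion $E(w) = \sum_{k\ge 1}(-1)^{k-1}e^{-kw}$, which gives $E^{(6)}(w) = \sum_{k\ge 1}(-1)^{k-1}k^6 e^{-kw}$. On the ray I write $w = s e^{i\theta}$ with $|\theta| < \arctan(10) < \pi/2$, so $\mathrm{Re}(w) = s\cos\theta \ge s/\sqrt{101}$. For $s \ge 1$ I extract the leading term and use Abel summation on the alternating tail to obtain an exponentially decaying majorant of the form $|E^{(6)}(w)| \ll e^{-s/\sqrt{101}}$; for $0\le s\le 1$, $E^{(6)}$ is continuous and uniformly bounded on the closed sector. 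Integrating these two pieces produces an explicit absolute constant $C_2$ with $J_{E,6}(tz) \le C_2$.

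The main obstacle will be chasing the arithmetic so that the final aggregated constant $M_6 C_2/6! + C_1$ does not exceed $1/20$; this is pure bookkeeping, but it is the only non-automatic step, and the constant $\pi/(40t)$ in the hypothesis has clearly been engineered to make it go through. Once the two pieces are combined and multiplied by $t^5$ from $(t|z|)^5 \le t^5 |z|^5$, the claimed inequality follows.
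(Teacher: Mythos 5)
Your overall plan matches the paper's: apply Proposition~\ref{C3 Euler-Maclaurin Rapid Decay Effective} to $E$ with shift $a = r/t$, argument $tz$, and truncation $N = 5$; use $I_E = \log 2$, the values of $e_n$, and $e_2 = e_4 = 0$ (from the vanishing of odd Bernoulli numbers) to read off the four main terms; then control the two remainder pieces. Your identification of the main-term arithmetic is exactly right and agrees with the paper.

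The ``pure bookkeeping'' you defer at the end is, however, where the real work and the real difficulties are, and the confident claim that the constant $\pi/(40t)$ ``has clearly been engineered to make it go through'' is not well founded. First, your plan for $J_{E,6}(tz)$ is shaky. You propose writing $E^{(6)}(w)=\sum_{k\ge 1}(-1)^{k-1}k^6 e^{-kw}$ and extracting an exponential majorant $\ll e^{-s/\sqrt{101}}$ by ``Abel summation on the alternating tail'' for $s\ge 1$. But $E$, and hence $E^{(6)}$, has poles at $w=\pm i\pi$, and a ray at angle $\theta$ close to $\arctan 10$ passes within distance $\pi\cos\theta\approx\pi/\sqrt{101}\approx 0.31$ of $i\pi$, at radius $s\approx\pi\sin\theta\approx 3.1$ --- squarely inside your ``$s\ge 1$'' regime. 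Near that point $|1+e^{-w}|$ is small and $|E^{(6)}(w)|$ is correspondingly large; a termwise bound $\sum_k k^6e^{-ks/\sqrt{101}}$ there is of order $10^8$, so the claimed uniform exponential majorant does not hold without exploiting the alternating structure much more delicately than ``Abel summation'' suggests. (The paper instead uses the closed rational form of $E^{(6)}$ and the lower bound $|e^w+1|\ge e^{\mathrm{Re}(w)}-1$, and even then the numerics are delicate.) Second, even granting a decent bound on $J_{E,6}$, the estimates in this scheme do not aggregate below $1/20$: the $k=5$ term of the tail series alone contributes $|c_5|(1+5!/10)=\tfrac{1}{480}\cdot 13\approx 0.027$, and the paper's leading-term bound is $\tfrac{M_6 J_{E,6}}{720}\approx\tfrac{755}{30240}\approx 0.025$, whose sum already exceeds $1/20=0.05$. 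In fact the paper's own proof of this lemma terminates with $<\tfrac{7}{25}\,t^5|z|^5$, not the stated $\tfrac{1}{20}\,t^5|z|^5$. So to close the argument you would need either substantially sharper constants than either you or the paper currently produce, or a weaker final bound.
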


\begin{proof}
	The proof relies on an application of Proposition \ref{C3 Euler-Maclaurin Rapid Decay Effective} to $E(z) = \sum_{n=0}^\infty \frac{e_n}{n!} z^n$, whose radius of convergence is $\pi$. We note $M_6 = \frac{1}{42}$. Thus, applying Proposition \ref{C3 Euler-Maclaurin Rapid Decay Effective} to $E(z) = \sum_{k \geq 0} \frac{e_k}{k!} z^k$ with $a = \frac rt$, we obtain
	\begin{align*}
		\bigg| \sum_{k \geq 0} E\lp \lp k + \frac{r}{t} \rp z \rp - \dfrac{I_E}{z} &+ \dfrac{1}{2} B_1\lp \dfrac rt \rp - \dfrac{1}{8} B_2\lp \dfrac rt \rp z + \dfrac{1}{192} B_4\lp \dfrac rt \rp z^3 \bigg| \\ &\leq \dfrac{J_{E,6}(z)}{30240} |z|^5 + \sum_{k \geq 5} |e_k| \lp 1 + \dfrac{k!}{10 (k-5)!} \rp |z|^k.
	\end{align*}
	We also have $I_E = \int_0^\infty \frac{dx}{e^x + 1} = \log(2)$ and therefore by Lemma \ref{Xi and L z-Expansions} we have
	\begin{align*}
		\bigg| L_{r,t}\lp e^{-z} \rp - \dfrac{\log(2)}{tz} &+ \dfrac{1}{2} B_1\lp \dfrac{r}{t} \rp - \dfrac{t}{8} B_2\lp\frac{r}{t}\rp z + \dfrac{t^3}{192} B_4\lp \frac rt \rp z^3 \bigg| \\ &\leq \dfrac{J_{E,6}(z)}{30240} |tz|^5 + |tz|^5 \sum_{k \geq 5} |e_k| \lp 1 + \dfrac{k!}{10 (k-5)!} \rp |tz|^{k-5},
	\end{align*}
	which is valid for all for all $|z| < \frac{\pi}{t}$, hence in particular when $\eta < \frac{\pi}{40t}$ and $0 \leq |y| < 10\eta$. We now proceed to estimate each piece on the right-hand side.
	
	Let $\alpha = \frac{\pi}{2} \frac{z}{|z|}$. Then we bound $J_{E,6}(z)$ by the decomposition
	\begin{align*}
		J_{E,6}(z) = \int_0^\alpha \left| E^{(6)}(w) \right|dw + \int_\alpha^\infty \left| E^{(6)}(w) \right|dw.
	\end{align*}
	The function $E^{(6)}(z)$ is given by
	\begin{align*}
		E^{(6)}(z) = \dfrac{e^z\lp e^z - 1 \rp \lp e^{4z} - 56 e^{3z} + 246 e^{2z} - 56 e^z + 1 \rp}{\lp e^z + 1 \rp^7}.
	\end{align*}
	By the triangle inequality, we have
	\begin{align*}
		\left| E^{(6)}(z) \right| \leq \dfrac{e^\eta \lp e^\eta + 1 \rp \lp e^{4\eta} + 56 e^{3\eta} + 246 e^{2\eta} + 56 e^\eta + 1 \rp}{\lp e^\eta - 1 \rp^7}.
	\end{align*}
	These bounds entail that for $u = \mathrm{Re}\lp w \rp$ and the major arc $0 \leq |\mathrm{Im}(w)| < 10 u$, we have
	\begin{align*}
		\int_{\alpha}^\infty \left| E^{(6)}(w) \right| dw \leq \sqrt{101} \int_{\pi/2}^\infty \dfrac{e^u \lp e^u + 1 \rp \lp e^{4u} + 56 e^{3u} + 246 e^{2u} + 56 e^u + 1 \rp}{\lp e^u - 1 \rp^7} |du| < 81.
	\end{align*}
	The power series representation of $E^{(6)}(w)$ is valid in the region from $0$ to $\alpha$. Combining the estimates $|w| < \frac{\pi}{2}$, \eqref{Bernoulli Inequality}, \eqref{e_n Evaluation}, the vanishing of $B_{2n+1}$ for $n \geq 1$, and the fact that $\zeta(n)$ is decreasing for $n > 1$, we have
	\begin{align*}
		\left| E^{(6)}(w) \right| \leq \sum_{k=6}^\infty \dfrac{2^{k+1} \left|B_{k+1}\right| \pi^{k-6}}{(k-6)! 2^{k-6}} \leq \dfrac{\zeta(8) 2^7}{\pi^7} \sum_{k \geq 3} \dfrac{(2k+2)!}{2^{2k+1} (2k-5)!} < 429.
	\end{align*}
	Therefore, we find that
	\begin{align*}
		J_{E,6}(z) < \dfrac{429\pi}{2} + 81 < 755.
	\end{align*}
	We may also show using \eqref{Bernoulli Inequality} and \eqref{e_n Evaluation} that $\left| \frac{e_n}{n!} \right| \leq \frac{\pi}{3} \cdot \lp \frac{1}{\pi} \rp^n$, and therefore since $|z| < \frac{\sqrt{101}}{40t}$ we have
	\begin{align*}
		\sum_{k \geq 5} |e_k| \lp 1 + \dfrac{k!}{10 (k-5)!} \rp |tz|^{k-5} < \dfrac{1}{3\pi^4} \sum_{k \geq 5} \lp 1 + \dfrac{k!}{10 (k-5)!} \rp \lp \dfrac{\sqrt{101}}{40} \rp^{k-5} < \dfrac{1}{4}.
	\end{align*}
	Thus,
	\begin{align*}
		\bigg| L_{r,t}\lp e^{-z} \rp - \dfrac{\log(2)}{tz} + \dfrac{1}{2} B_1\lp \dfrac{r}{t} \rp - \dfrac{t}{8} B_2\lp\frac{r}{t}\rp z &+ \dfrac{t^3}{192} B_4\lp \frac rt \rp z^3 \bigg| \\ &\leq \dfrac{755}{30240} |tz|^5 + \dfrac{|tz|^5}{4} < \dfrac{7}{25} t^5 |z|^5.
	\end{align*}
	This completes the proof.
\end{proof}

\begin{corollary} \label{L Major Arc Corollary}
	Let $0 < r \leq t$ be integers and $z = \eta + iy$ a complex number satisfying $0 \leq |y| < 10\eta$ and $\eta < \frac{\pi}{40t}$. Then
	\begin{align*}
		\left| L_{r,t}\lp e^{-z} \rp \right| < \dfrac{14}{|tz|}.
	\end{align*}
\end{corollary}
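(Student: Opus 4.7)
The plan is to derive the corollary directly from Lemma \ref{L Major Arc} by triangle inequality, estimating each term in the explicit polynomial approximation using Lehmer's bound \eqref{Bernoulli Inequality} and the size constraints on $z$.

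First, by the triangle inequality applied to Lemma \ref{L Major Arc}, I obtain
\begin{align*}
    \left| L_{r,t}\lp e^{-z} \rp \right| \leq \frac{\log(2)}{|tz|} + \frac{1}{2}\left|B_1\lp\tfrac{r}{t}\rp\right| + \frac{t|z|}{8}\left|B_2\lp\tfrac{r}{t}\rp\right| + \frac{t^3|z|^3}{192}\left|B_4\lp\tfrac{r}{t}\rp\right| + \frac{t^5|z|^5}{20}.
\end{align*}
Multiplying through by $|tz|$, the claim $|L_{r,t}(e^{-z})| < 14/|tz|$ reduces to showing
\begin{align*}
    \log(2) + \frac{|tz|}{2}\left|B_1\lp\tfrac{r}{t}\rp\right| + \frac{(t|z|)^2}{8}\left|B_2\lp\tfrac{r}{t}\rp\right| + \frac{(t|z|)^4}{192}\left|B_4\lp\tfrac{r}{t}\rp\right| + \frac{(t|z|)^6}{20} < 14.
\end{align*}

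Next, I bound the Bernoulli polynomial evaluations. Since $0 < r/t \leq 1$, directly $|B_1(r/t)| \leq 1/2$, and by \eqref{Bernoulli Inequality} together with $\zeta(2) = \pi^2/6$ and $\zeta(4) = \pi^4/90$ I get $|B_2(r/t)| \leq 1/6$ and $|B_4(r/t)| \leq 1/30$. For the range of $z$, the hypotheses $\eta < \pi/(40t)$ and $|y| < 10\eta$ give $|z|^2 = \eta^2 + y^2 < 101 \eta^2$, hence $t|z| < \sqrt{101}\,\pi/40 < 0.79$.

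Finally, substituting these bounds, each of the last four terms is a small positive quantity: $\log(2) < 0.7$, the linear term is bounded by $0.79/4 < 0.2$, the quadratic term by $(0.79)^2/48 < 0.02$, and the higher-order terms are even smaller. Their sum is well below $14$, which yields the desired inequality. I do not foresee any real obstacle here; the only thing to verify carefully is the estimate $t|z| < \sqrt{101}\,\pi/40$, which follows directly from the two arc hypotheses.
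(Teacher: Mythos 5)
Your proof is correct and follows essentially the same approach as the paper: apply the triangle inequality to Lemma \ref{L Major Arc}, bound the Bernoulli polynomial values by Lehmer's bound \eqref{Bernoulli Inequality} (and the trivial bound for $B_1$), and use the arc constraints to deduce $t|z| < \sqrt{101}\pi/40 < 4/5$, after which the sum of the four remaining terms is easily seen to be far below $14$.
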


\begin{proof}
	By the triangle inequality and Lemma \ref{L Major Arc}, we have
	\begin{align*}
		\left| L_{r,t}\lp e^{-z} \rp \right| < \dfrac{\log(2)}{t|z|} + \left| \dfrac 12 B_1\lp \dfrac rt \rp \right| + \left| \dfrac t8 B_2\lp \dfrac rt \rp z \right| + \left| \dfrac{t^3}{192} B_4\lp \dfrac rt \rp z^3 \right| + \dfrac{7}{25} |tz|^5.
	\end{align*}
	The fact that $\eta < \frac{\pi}{40t}$ entails $|z| < \frac{\sqrt{101} \pi}{40t} < \frac{4}{5t}$. Using the trivial bound on $B_1\lp \frac rt \rp$, Lehmer's bound \eqref{Bernoulli Inequality} and $|tz| < \frac{4}{5}$, we obtain
	\begin{align*}
		\left| L_{r,t}\lp e^{-z} \rp \right| < \dfrac{\log(2) + \frac{1}{4}|tz| + \frac{5}{96}|tz|^2 + \frac{1}{1344} |tz|^4 + \frac{7}{25} |tz|^6}{|tz|} < \dfrac{14}{|tz|},
	\end{align*}
	which completes the proof.
\end{proof}

\begin{lemma} \label{Xi Major Arc}
	For any integer $t \geq 2$ and any complex number $z = \eta + iy$ with $0 \leq |y| < 10\eta$ and $\eta < \frac{\pi}{40t}$, we have
	\begin{align*}
		\bigg| \Log \lp \xi\lp e^{-z} \rp \rp - \dfrac{\pi^2}{12 z} + \dfrac{\log(2)}{2} - \dfrac{z}{24} \bigg| < 471 |z|^8.
	\end{align*}
\end{lemma}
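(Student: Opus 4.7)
The plan is to emulate the proof of Lemma \ref{L Major Arc}, but now applied to $B(w) = e^{-w}/(w(1-e^{-w}))$, whose Laurent expansion
\[
B(w) = \frac{1}{w^2} - \frac{1}{2w} + \sum_{n \geq 0} \frac{B_{n+2}}{(n+2)!}\, w^n
\]
converges for $0 < |w| < 2\pi$ and which has rapid decay (along with all its derivatives) in $D_\delta$. By Lemma \ref{Xi and L z-Expansions},
\[
\Log\lp \xi\lp e^{-z} \rp \rp = z \lp \sum_{m \geq 0} B\lp (m+\tfrac{1}{2})(2z) \rp - \sum_{m \geq 0} B\lp (m+1)(2z) \rp \rp,
\]
so it suffices to apply Proposition \ref{C3 Euler-Maclaurin Sufficient Effective} to each inner sum, with the proposition's variable specialized to $2z$, with $a \in \{1/2,\,1\}$, with $A = 1$, and with $N = 7$, and then to subtract. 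The choice $N=7$ yields an intrinsic error of order $|z|^7$, which after multiplication by $z$ produces the desired $|z|^8$; it is also large enough that every polynomial main term with index $1 \le n \le 6$ cancels in the difference.

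The main-term computation relies on three identities. First, the $c_{-2} = 1$ contribution gives $\zeta(2,\tfrac12) - \zeta(2,1) = \tfrac{\pi^2}{2} - \tfrac{\pi^2}{6} = \tfrac{\pi^2}{3}$, which with the factor $(2z)^{-2}$ yields $\pi^2/(12z^2)$. Second, the $I^*_{B,A}/(2z)$ terms cancel between the two values of $a$, and the common $\Log(2Az) + \gamma$ parts cancel inside the log term, leaving $-c_{-1}(2z)^{-1}\bigl(\psi(\tfrac12) - \psi(1)\bigr) = (2z)^{-1} \cdot \tfrac{1}{2} \cdot (-2\log 2) = -\log(2)/(2z)$. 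Third, for $1 \le n \le 6$ the factor $c_n\bigl(B_{n+1}(\tfrac12) - B_{n+1}(1)\bigr)$ vanishes identically: when $n$ is odd one has $c_n = B_{n+2}/(n+2)! = 0$, and when $n$ is even the duplication formula $B_m(\tfrac12) = (2^{1-m}-1)B_m$ together with $B_{2k+1} = 0$ for $k \ge 1$ forces $B_{n+1}(\tfrac12) - B_{n+1}(1) = 0$. Only $n = 0$ contributes a nonzero polynomial term, namely $-c_0\bigl(B_1(\tfrac12) - B_1(1)\bigr) = -(1/12)(-1/2) = 1/24$. Multiplying the sum by $z$ yields exactly the main term $\pi^2/(12z) - \log(2)/2 + z/24$ claimed by the lemma.

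It remains to show that the error can be bounded by $471\,|z|^8$. For each $a \in \{1/2, 1\}$, Proposition \ref{C3 Euler-Maclaurin Sufficient Effective} gives the bound
\[
\frac{M_8\, J_{g,8}(2az)}{8!}\, |2z|^7 + \sum_{k \ge 7} |b_k| \lp 1 + \frac{k!}{10(k-7)!} \rp |2z|^k,
\]
where $g(w) = B(w) - w^{-2} + \tfrac{1}{2}e^{-w}/w$ is holomorphic on $|w| < 2\pi$ with rapid decay, and the $b_k$ are its Taylor coefficients. The hypotheses $\eta < \pi/(40t)$ and $|y| < 10\eta$ force $|2z| < \sqrt{101}\,\pi/(20t) < 4/5$, which is safely inside the disk of convergence. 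The main obstacle is the explicit numerical estimation of $J_{g,8}$: mimicking the proof of Lemma \ref{L Major Arc}, I would split the ray of integration at a convenient radius (e.g.\ $|w| = \pi/2$), using the truncated Taylor series of $g^{(8)}$ together with Lehmer's bound \eqref{Bernoulli Inequality} on the inner segment, and using the closed-form expression for $B^{(8)}(w)$ together with trivial exponential decay on the outer segment. The Taylor tail is bounded via $|B_{n+2}/(n+2)!| \le 2\zeta(n+2)/(2\pi)^{n+2}$ and a geometric sum in $|2z|$. Doubling the resulting estimate (once for each value of $a$) and multiplying by $|z|$ produces a bound of the form $C|z|^8$; reducing $C$ below the stated constant $471$ is routine numerical bookkeeping, analogous to the final step in the proof of Lemma \ref{L Major Arc}.
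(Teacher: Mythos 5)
Your proposal follows the paper's argument very closely: the same reduction via Lemma \ref{Xi and L z-Expansions}, the same application of Proposition \ref{C3 Euler-Maclaurin Sufficient Effective} with $N=7$, $A=1$, and the same $\zeta(2,\tfrac12)$ and digamma identities. There is, however, a genuine gap in the main-term computation.

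You assert that ``Only $n = 0$ contributes a nonzero polynomial term,'' but this is false for $n \ge 7$. In Proposition \ref{C3 Euler-Maclaurin Sufficient Effective} the polynomial sum runs over \emph{all} $n\geq 0$ with the modified coefficients $c_n^* = (-A)^{n+1} c_{-1}/(n+1)!$ for $n\geq N=7$. For odd $n\geq 7$ this is $\pm\tfrac{1}{2(n+1)!}\neq 0$ (unlike $c_n = B_{n+2}/(n+2)!$, which does vanish for odd $n$), and $B_{n+1}(1) - B_{n+1}\bigl(\tfrac12\bigr) = (2-2^{-n})B_{n+1}\neq 0$ since $n+1$ is even. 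So the approximation appearing inside the absolute value in the proposition is not $\tfrac{\pi^2}{12z}-\tfrac{\log 2}{2}+\tfrac{z}{24}$ but that quantity plus a nonvanishing tail
\[
-\sum_{n\geq 7}\frac{(-1)^{n+1}\bigl(B_{n+1}(1)-B_{n+1}\bigl(\tfrac12\bigr)\bigr)}{(n+1)\,(n+1)!}\,2^{n-1} z^{n+1},
\]
which must be moved into the error and bounded separately. The paper does exactly this, showing via Lehmer's bound \eqref{Bernoulli Inequality} that the tail is $<|z|^8$ on the major arc; this is the source of the final ``$+1$'' that turns the intermediate $470|z|^8$ into the stated $471|z|^8$. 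Without this step your estimate does not account for these terms, and the asserted identity for the main term is incorrect. Once you insert this tail bound, the rest of your outline (splitting $J_{g,8}$ at $|w|=\tfrac{3\pi}{2}$, Lehmer on the inner piece, exponential decay on the outer piece, doubling for the two values of $a$, multiplying by $|z|$) matches the paper and closes the argument.
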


\begin{proof}
	By Lemma \ref{Xi and L z-Expansions}, we have
	\begin{align*}
		\Log \lp \xi\lp e^{-z} \rp \rp = z \sum_{m \geq 0} \left[ B\lp \lp m + \frac 12 \rp 2z \rp - B\lp \lp m+1 \rp 2z \rp \right],
	\end{align*}
	where $B(z) = \frac{e^{-z}}{z\lp 1 - e^{-z} \rp}$. We apply Proposition \ref{C3 Euler-Maclaurin Sufficient Effective} with $N = 7$ and $A=1$. Noting that $M_8 = \frac{1}{30}$, $c_{-2} = 1$, and $c_{-1} = - \frac 12$, we have
	\begin{align*}
		\bigg| \sum_{m \geq 0} B\lp (m+a)z \rp - \dfrac{\zeta(2,a)}{z^2} - \dfrac{I_{B,1}^*}{z} &- \dfrac{1}{2z} \lp \Log\lp z \rp + \gamma + \psi\lp a \rp \rp - \sum_{n = 0}^\infty c_n^* \dfrac{B_{n+1}(a)}{n+1} z^n \bigg| \\ &\leq \dfrac{J_{g,8}(z)}{1209600} |z|^7 + \sum_{k \geq 7} |b_k| \lp 1 + \dfrac{k!}{10 (k-7)!} \rp |z|^k,
	\end{align*}
	where $b_k = \frac{B_{k+2}}{(k+2)!} + \frac{(-1)^{k+1}}{2(k+1)!}$ and $g(z) = \frac{e^{-z}}{z\lp 1 - e^{-z} \rp} - \frac{1}{z^2} + \frac{e^{-z}}{2z}$. Note that like $B(z)$, the power series representation of $g(z)$ has radius of convergence $2\pi$. We now reduce the bounds on the right-hand side of the above. Setting $\alpha = \frac{3\pi}{2} \frac{z}{|z|}$, we decompose $J_{g,8}(z)$ as
	\begin{align*}
		J_{g,8}(z) = \int_0^\alpha \left| g^{(8)}(w) \right| |dw| + \int_\alpha^\infty \left| g^{(8)}(w) \right| |dw|,
	\end{align*}
	where the paths proceed radially as originally defined. We first bound $g^{(6)}(w)$ on the interval near zero. Invoking \eqref{Bernoulli Inequality}, we can see that
	\begin{align*}
		\left| b_k \right| \leq \frac{1}{12} \cdot \lp \frac{1}{2\pi} \rp^{k} + \frac{1}{2(k+1)!}.
	\end{align*}
	for all $k$, so for $|w| < \frac{3\pi}{2}$ we have
	\begin{align*}
		\left| g^{(8)}(w) \right| \leq \sum_{k \geq 0} \dfrac{(k+8)!}{k!} \lp \frac{1}{12} \cdot \lp \frac{1}{2\pi} \rp^{k+8} + \frac{1}{2(k+9)!} \rp \lp \dfrac{3\pi}{2} \rp^k < 367.
	\end{align*}
	Thus, we have that
	\begin{align*}
		\int_0^\alpha \left| g^{(8)}(w) \right| |dw| < 367\dfrac{3\pi}{2} < 1730.
	\end{align*}
	Now, $g^{(8)}(w)$ may be written in the form
	\begin{align*}
		g^{(8)}(w) = \sum_{j=1}^9 \dfrac{p_j(w)}{\lp e^w - 1 \rp^{9-j} w^j}
	\end{align*}
	for certain polynomials $p_j(w)$ of degree $j-1$ with non-negative coefficients. For $w$ on the major arc, we have $u = \mathrm{Re}(w) \leq |w| \leq \sqrt{101}u$, and therefore by the triangle inequality we have
	\begin{align*}
		\left| g^{(8)}(w) \right| \leq \sum_{j=1}^9 \dfrac{p_j\lp \sqrt{101} u \rp}{\lp e^u - 1 \rp^{9-j} u^j}.
	\end{align*}
	Integrating with the aid of a computer, we have
	\begin{align*}
		\int_\alpha^\infty \left| g^{(8)}(w) \right| |dw| \leq \sqrt{101} \int_{\frac{3\pi}{2}}^\infty  \sum_{j=1}^9 \dfrac{p_j\lp \sqrt{101} u \rp}{\lp e^u - 1 \rp^{9-j} u^j} du < 2206410.
	\end{align*}
	Therefore, we find that
	\begin{align*}
		J_{g,8}(z) < 1730 + 2206410 = 2208140.
	\end{align*}
	By the previous bound on $\left| b_k \right|$ as well as the fact that $|z| < \frac{2}{5}$ on the major arc, we have that
	\begin{align*}
		\sum_{k \geq 7} \left|b_k\right| &\lp 1 + \dfrac{k!}{10 (k-7)!} \rp |z|^{k-7} \\ &< \sum_{k \geq 7} \lp \frac{1}{12} \cdot \lp \frac{1}{2\pi} \rp^k + \frac{1}{2(k+1)!} \rp \lp 1 + \dfrac{k!}{10 (k-7)!} \rp \lp \dfrac{2}{5} \rp^{k-7} < \dfrac{1}{100}.
	\end{align*}
	Therefore, by letting $z \mapsto 2z$ and applying the bounds just derived, we obtain
	\begin{align*}
		\bigg| \sum_{m \geq 0} B\lp (m+a)2z \rp - \dfrac{\zeta(2,a)}{4z^2} - \dfrac{I_{B,1}^*}{2z} - \dfrac{1}{4z} \lp \Log\lp 2z \rp + \gamma + \psi\lp a \rp \rp &- \sum_{n = 0}^\infty c_n^* \dfrac{B_{n+1}(a)}{n+1} 2^n z^n \bigg| \\ &< 235 |z|^7.
	\end{align*}
	By the expansion from Lemma \ref{Xi and L z-Expansions}, we may conclude immediately that
	\begin{align*}
		\bigg| \Log \lp \xi\lp e^{-z} \rp \rp + \dfrac{\zeta(2,1) - \zeta\lp 2, \frac 12 \rp}{4 z} + \dfrac{ \psi(1) - \psi\lp \frac 12 \rp}{4} &- \sum_{n = 0}^\infty c_n^* \dfrac{B_{n+1}(1) - B_{n+1}\lp \frac 12 \rp}{n+1} 2^n z^{n+1} \bigg| \\ &< 470 |z|^8.
	\end{align*}
	
	We now proceed to simplify terms in the bounds above. By the definition of $c_n^*$ along with $c_{-1} = -\frac 12$, we may calculate
	\begin{align*}
		\sum_{n = 0}^\infty c_n^* \dfrac{B_{n+1}(1) - B_{n+1}\lp \frac 12 \rp}{n+1} 2^n z^{n+1} = \dfrac{z}{24} - \sum_{n \geq 7} \dfrac{(-1)^{n+1}\lp B_{n+1}(1) - B_{n+1}\lp \frac 12 \rp \rp}{(n+1) (n+1)!} 2^{n-1} z^{n+1}.
	\end{align*}
	Now, because of the identity $\zeta\lp s, \frac 12 \rp = \lp 2^s - 1 \rp \zeta(2)$, we have $\zeta\lp 2, 1 \rp - \zeta\lp 2, \frac 12 \rp = - \frac{\pi^2}{3}$. Furthermore, by \cite[(5.4)]{DLMF} we have 
	$\psi(1) = -\gamma$ and $- \psi\lp \frac 12 \rp = -2 \log(2) - \gamma$. Therefore, using the triangle inequality in the form $|x| \leq |x-y| + |y|$ and $|z| < \frac{\pi}{2}$, we may obtain
	\begin{align*}
		\bigg| \Log \lp \xi\lp e^{-z} \rp \rp - \dfrac{\pi^2}{12 z} &+ \dfrac{ \log(2)}{2} - \dfrac{z}{24} \bigg| \\ &< 470 |z|^8 + \left| \sum_{n \geq 7} \dfrac{(-1)^{n+1}\lp B_{n+1}(1) - B_{n+1}\lp \frac 12 \rp \rp}{(n+1) (n+1)!} 2^{n-1} z^{n-7} \right| \cdot |z|^8.
	\end{align*}
	Lehmer's bound \eqref{Bernoulli Inequality} along with the straightforward inequality $\zeta(n+1) \leq \zeta(2) = \frac{\pi^2}{6}$ for $n \geq 1$ implies that
	\begin{align*}
		\dfrac{\left| B_{n+1}(1) - B_{n+1}\lp \frac 12 \rp \right|}{(n+1)!} \leq \dfrac{4 \zeta(n+1)}{(2\pi)^{n+1}} \leq \dfrac{\pi}{3 \lp 2\pi \rp^n}
	\end{align*}
	for $n \geq 1$. Therefore using the fact that $|z| < \frac{\pi}{2}$ on the major arc with $\eta < \frac{\pi}{40t}$, we have
	\begin{align*}
		\sum_{n \geq 7} \dfrac{\left| B_{n+1}(1) - B_{n+1}\lp \frac 12 \rp \right|}{(n+1) (n+1)!} 2^{n-1} |z|^{n-7} \leq \dfrac{1}{6\pi^6} \sum_{n \geq 7} \dfrac{1}{(n+1) 2^{n-7}} < 1.
	\end{align*}
	Putting together all evaluations, we conclude that
	\begin{align*}
		\bigg| \Log \lp \xi\lp e^{-z} \rp \rp - \dfrac{\pi^2}{12 z} + \dfrac{\log(2)}{2} - \dfrac{z}{24} \bigg| < 471 |z|^8.
	\end{align*}
	This completes the proof.
\end{proof}

\begin{corollary} \label{Xi Major Arc Corollary}
	For any integer $t \geq 2$ and any complex number $z = \eta + iy$ satisfying $0 \leq |y| < 10\eta$ and $\eta < \frac{\pi}{40t}$, we have
	\begin{align*}
		\left| \xi\lp e^{-z} \rp - \exp\lp \dfrac{\pi^2}{12z} - \dfrac{\log(2)}{2} + \dfrac{z}{24} \rp \right| < \dfrac{630 |z|^8}{\sqrt{2}} \exp\lp \dfrac{\pi^2}{12|z|} \rp.
	\end{align*}
\end{corollary}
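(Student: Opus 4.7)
The plan is to derive the corollary directly from Lemma \ref{Xi Major Arc} by exponentiating the logarithmic estimate and carefully bounding the resulting exponential. Let me set $L(z) := \dfrac{\pi^2}{12z} - \dfrac{\log(2)}{2} + \dfrac{z}{24}$ and $\epsilon(z) := \Log\lp \xi(e^{-z}) \rp - L(z)$, so that Lemma \ref{Xi Major Arc} gives $|\epsilon(z)| < 471 |z|^8$ throughout the major arc. Writing $\xi(e^{-z}) = e^{L(z)} e^{\epsilon(z)}$, the quantity we wish to bound is $|e^{L(z)}| \cdot |e^{\epsilon(z)} - 1|$.

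First, I would invoke the standard elementary estimate $|e^w - 1| \leq |w| e^{|w|}$, valid for every $w \in \CC$ by termwise comparison of the series, to get $|e^{\epsilon(z)} - 1| \leq 471 |z|^8 \cdot e^{|\epsilon(z)|}$. Since the constraints $\eta < \frac{\pi}{40t}$ and $|y| < 10\eta$ (noted in the excerpt) force $|z| < \frac{\sqrt{101}\pi}{80} < \frac{2}{5}$, the quantity $|\epsilon(z)|$ is quite small, so $e^{|\epsilon(z)|}$ contributes only a small multiplicative constant close to $1$.

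Next, I would bound $|e^{L(z)}| = \exp(\Re L(z))$. The key observation is that for $z = \eta + iy$ with $\eta > 0$,
\begin{equation*}
\Re\lp \dfrac{\pi^2}{12z} \rp = \dfrac{\pi^2 \eta}{12|z|^2} \leq \dfrac{\pi^2}{12|z|},
\end{equation*}
using $\eta \leq |z|$. Combining with $\Re(-\log(2)/2) = -\log(2)/2$ and $\Re(z/24) = \eta/24 \leq |z|/24$ yields
\begin{equation*}
|e^{L(z)}| \leq \dfrac{1}{\sqrt{2}} \, e^{|z|/24} \exp\lp \dfrac{\pi^2}{12|z|} \rp.
\end{equation*}
Again by the bound $|z| < 2/5$, the factor $e^{|z|/24}$ is at most a small explicit constant close to $1$.

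Finally, I would collect the multiplicative constants $471 \cdot e^{|\epsilon(z)|} \cdot e^{|z|/24}$ and verify they sum to at most $630$ throughout the specified region. The main obstacle is purely the bookkeeping in this last step: the pre-factor $471$ from Lemma \ref{Xi Major Arc} leaves only a very narrow margin against the target constant $630$, so the bound $|z| < \frac{\sqrt{101}\pi}{80}$ and $|\epsilon(z)| < 471 |z|^8 < 471 \cdot (2/5)^8$ must be used cleanly to ensure $e^{|\epsilon(z)|} \cdot e^{|z|/24} \leq 630/471$. Once this numerical check is complete, the desired inequality follows immediately.
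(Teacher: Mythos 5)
Your overall plan mirrors the paper's exactly: factorize $\xi\lp e^{-z}\rp = e^{L(z)}e^{\epsilon(z)}$ and bound $|e^{L(z)}|\cdot|e^{\epsilon(z)}-1|$, using Lemma \ref{Xi Major Arc} for $|\epsilon(z)|$ and taking real parts to control $|e^{L(z)}|$. The difference is only in the elementary inequality for the first factor ($|e^w-1|\le |w|e^{|w|}$ versus the paper's $e^x-1<\tfrac43 x$ for $0<x<0.55$), and this particular choice is actually slightly tighter than the paper's. The essential issue lies elsewhere.

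The numerical check you defer to the final step does not go through as written. You bound $\Re\lp z/24\rp = \eta/24$ by $|z|/24$, but that throws away exactly the slack needed. Tracking the maxima over the allowed region $|z| < \frac{\sqrt{101}\pi}{80} \approx 0.3947$: we have $471|z|^8 < 0.277$, so $e^{|\epsilon(z)|} < e^{0.277} \approx 1.3192$, and $e^{|z|/24} < e^{0.0165} \approx 1.0166$, whence $e^{|\epsilon(z)|}\cdot e^{|z|/24} \approx 1.341$. But $630/471 \approx 1.3376$, so the target is missed (and using your proposed cruder bound $|z| < 2/5$ makes things strictly worse). The fix is precisely what the paper does: since $t\ge 2$ forces $\eta < \frac{\pi}{40t} \le \frac{\pi}{80}$, bound $\Re\lp z/24\rp = \eta/24 < \frac{\pi}{1920}$, giving $e^{\eta/24} < \frac{501}{500}$. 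With that replacement, $471\,e^{0.277}\cdot\frac{501}{500} \approx 622.6 < 630$ and the argument closes cleanly. So the gap is genuine but local: you must exploit $\Re(z) = \eta$ and the $t$-dependent bound on $\eta$ rather than estimate by $|z|$.
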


\begin{proof}
	Suppose $f(z), g(z), e(z)$ are any three functions that satisfy
	\begin{align*}
		\left| \Log \lp f(z) \rp - \Log \lp g(z) \rp \right| \leq e(z)
	\end{align*}
	for $|z| < \frac{\pi}{40t}$. Note that we may factorize
	\begin{align*}
		\left| f(z) - g(z) \right| = \left| \exp\lp \Log \lp f(z) \rp - \Log \lp g(z) \rp \rp - 1 \right| \cdot \left| g(z) \right|.
	\end{align*}
	Applying this factorization with $f(z) := \xi\lp e^{-z} \rp$ and $g(z) := \exp\lp \frac{\pi^2}{12 z} - \frac{\log(2)}{2} + \frac{z}{24} \rp$ will give the result. Using Lemma \ref{Xi Major Arc} and Taylor series, we have
	\begin{align*}
		\left| \exp\lp \Log \lp \xi\lp e^{-z} \rp \rp - \dfrac{\pi^2}{12 z} + \dfrac{\log(2)}{2} - \dfrac{z}{24} \rp - 1 \right| < \sum_{n \geq 1} \dfrac{1}{n!} \lp 471 |z|^8 \rp^n = \exp\lp 471 |z|^8 \rp - 1.
	\end{align*}
	For $|z| < \frac{\sqrt{101} \pi}{80}$, we have $471 |z|^8 < 0.28$, and since $e^x - 1 < \frac{4}{3} x$ for $0 < x < 0.55$, we have
	\begin{align*}
		\left| \exp\lp \Log \lp \xi\lp e^{-z} \rp \rp - \dfrac{\pi^2}{12 z} + \dfrac{\log(2)}{2} - \dfrac{z}{24} \rp - 1 \right| < 628 |z|^8.
	\end{align*}
	Using $\eta \leq |z|$ and $\eta < \frac{\pi}{80}$, we may conclude that
	\begin{align*}
		\left| \exp\lp \dfrac{\pi^2}{12 z} - \dfrac{\log(2)}{2} + \dfrac{z}{24} \rp \right| \leq \exp\lp \dfrac{\pi^2 \eta}{12|z|^2} + \dfrac{\eta}{24} \rp < \dfrac{501}{500\sqrt{2}} \exp\lp \dfrac{\pi}{12|z|}\rp.
	\end{align*}
	Combining the given bounds completes the proof.
\end{proof}

\subsection{Minor arc effective bounds}

We now calculate effective bounds on both $\xi(q)$ and $L_{r,t}(q)$ for the minor arc $10\eta \leq |y| < \pi$, subject to the additional constraint $\eta < \frac{\pi}{40t} \leq \frac{\pi}{80}$.

\begin{lemma} \label{Xi Minor Arc Bound}
	Let $t \geq 2$ be an integer. Assume $z = \eta + iy$ satisfies $10 \eta \leq |y| < \pi$ and $0 < \eta < \frac{\pi}{40t}$. Then we have
	\begin{align*}
		\left| \xi\lp e^{-z} \rp \right| < \exp\lp \dfrac{41}{50\eta} \rp.
	\end{align*}
\end{lemma}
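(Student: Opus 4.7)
The plan is to exploit oscillatory cancellation in the infinite product for $\xi(e^{-z}) = \prod_{n \geq 1}(1+e^{-nz})$ to beat the trivial pointwise bound $|\xi(e^{-z})| \leq \xi(e^{-\eta})$; this is necessary because $\xi(e^{-\eta}) \sim \exp(\pi^2/(12\eta))$ and $\pi^2/12 > 41/50$, so the trivial bound alone is too weak. Taking logarithms of absolute values and factoring $|1+e^{-nz}|^2 = (1+e^{-n\eta})^2 - 2 e^{-n\eta}(1-\cos(ny))$, I would first write
\begin{align*}
\log|\xi(e^{-z})| = \log \xi(e^{-\eta}) + \frac{1}{2}\sum_{n \geq 1}\log\lp 1 - \frac{2e^{-n\eta}(1-\cos(ny))}{(1+e^{-n\eta})^2} \rp,
\end{align*}
and then apply $\log(1-x) \leq -x$ together with $(1+e^{-n\eta})^2 \leq 4$ to bound the correction above by $-\frac{1}{4}\sum_{n \geq 1} e^{-n\eta}(1-\cos(ny))$.

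Next, I would evaluate this sum in closed form. Splitting the cosine via geometric series and performing a short algebraic simplification gives
\begin{align*}
\sum_{n \geq 1} e^{-n\eta}(1-\cos(ny)) = \frac{1}{e^\eta - 1} - \operatorname{Re}\frac{1}{e^z - 1} = \frac{e^\eta(1+e^\eta)(1-\cos y)}{(e^\eta-1)|e^z-1|^2}.
\end{align*}
To bound this from below on the minor arc I would invoke the elementary inequalities $|e^z - 1| \leq |z| e^\eta$ (from integrating $e^w$ along the segment from $0$ to $z$), $(1+e^\eta)/(e^\eta-1) = \coth(\eta/2) \geq 2/\eta$, and $1-\cos y \geq 2 y^2/\pi^2$ on $|y| \leq \pi$, together with the minor-arc hypothesis $|y| \geq 10\eta$ which forces $y^2/|z|^2 \geq 100/101$. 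Combining these yields
\begin{align*}
\sum_{n \geq 1}e^{-n\eta}(1-\cos(ny)) \geq \frac{400}{101 \pi^2 \eta\, e^\eta}.
\end{align*}

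Finally, I would invoke Lemma \ref{Xi Major Arc} at the real point $\eta$, which lies in the major arc since $|y| = 0 \leq 10\eta$, to get
\begin{align*}
\log \xi(e^{-\eta}) \leq \frac{\pi^2}{12\eta} - \frac{\log 2}{2} + \frac{\eta}{24} + 471\, \eta^8,
\end{align*}
and assemble everything into
\begin{align*}
\log|\xi(e^{-z})| \leq \lp \frac{\pi^2}{12} - \frac{100}{101 \pi^2 e^\eta} \rp \frac{1}{\eta} - \frac{\log 2}{2} + \frac{\eta}{24} + 471\, \eta^8.
\end{align*}
The claim $\log|\xi(e^{-z})| < 41/(50\eta)$ then follows by a routine numerical check: for $\eta < \pi/80$ the coefficient of $1/\eta$ is at most $0.727 < 0.82 = 41/50$, and the remaining terms are readily absorbed using $-\log(2)/2 < 0$ and $\eta$ small.

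The main obstacle will be the arithmetic precision: since $41/50$ lies only $\approx 0.0025$ below $\pi^2/12$, any slack in the lower bound on the penalty term matters. Fortunately the hypothesis $|y| \geq 10\eta$ is strong enough to make $y^2/|z|^2$ close to $1$, and the resulting penalty of size $\approx 0.097/\eta$ exceeds the required $(\pi^2/12 - 41/50)/\eta \approx 0.0025/\eta$ by roughly a factor of $40$, leaving ample room for the constant-order corrections.
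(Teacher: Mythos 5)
Your proof is correct and takes a genuinely different route from the paper's. The paper works with the complex logarithm $\Log\xi(q) = -\sum_{m\geq 1}\frac{(-1)^m q^m}{m(1-q^m)}$, bounds it crudely by $\Log P(|q|) \leq \pi^2/(6\eta)$ by taking absolute values termwise, and then extracts the needed savings from the single term $m=1$ alone, using the fact that $|1-q|$ is much larger than $1-|q|$ on the minor arc (they show $|1-q| > \sqrt{95}\,\eta$). Your approach instead works with $\log|\xi(e^{-z})|$ directly, writes it as the real-axis value $\log\xi(e^{-\eta})$ plus a correction gathered from every factor $|1+e^{-nz}|$, sums the correction to the closed form $\frac{e^\eta(1+e^\eta)(1-\cos y)}{(e^\eta-1)|e^z-1|^2}$, and bounds it from below using the geometric inequalities you list. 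You then invoke Lemma \ref{Xi Major Arc} at the real point $\eta$ to control $\log\xi(e^{-\eta})$, whereas the paper's proof is self-contained and does not use the major-arc asymptotic. Both arguments overshoot the stated target: the paper obtains $|\Log\xi(q)| < 3/(4\eta) = 0.75/\eta$, while yours gives roughly $0.726/\eta$ for the real part, each comfortably below $41/(50\eta) = 0.82/\eta$. Your decomposition is arguably more natural for this minor-arc estimate (it exploits oscillatory cancellation across all factors, not just one), at the cost of depending on the preceding lemma; the paper's is more elementary but spends its savings budget less efficiently, relying on a rather large single-term discount. All the intermediate inequalities you cite ($|e^z-1| \leq |z|e^\eta$, $\coth(\eta/2) \geq 2/\eta$, $1-\cos y \geq 2y^2/\pi^2$, $y^2/|z|^2 \geq 100/101$, and the validity of $\log(1-x) \leq -x$ since the argument lies in $[0,1)$) check out.
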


\begin{proof}
	Let $q = e^{-z}$. Recall that
	\begin{align*}
		\Log \lp \xi(q) \rp = - \sum_{m \geq 1} \dfrac{(-1)^m q^m}{m\lp 1 - q^m \rp}.
	\end{align*}
	By taking absolute values and splitting off the $m=1$ term and noting that $\log P\lp |q| \rp = \sum_{m \geq 1} \frac{|q|^m}{m \lp 1 - |q|^m \rp}$, we have
	\begin{align*}
		\left| \Log \lp \xi(q) \rp \right| \leq \Log \lp P\lp |q| \rp \rp - |q| \lp \dfrac{1}{1 - |q|} - \dfrac{1}{|1 - q|} \rp,
	\end{align*}
	where $P(q) = (q;q)_\infty^{-1}$. To bound $\Log \lp P\lp |q| \rp \rp$, we recall that $|q| = e^{-\eta}$ and use the series expansion
	\begin{align*}
		\Log \lp P\lp |q| \rp \rp = \sum_{m \geq 1} \dfrac{|q|^m}{m\lp 1 - |q|^m \rp} = \sum_{m \geq 1} \dfrac{e^{-mx}}{m\lp 1 - e^{-mx} \rp}.
	\end{align*}
	From the fact that $\frac{e^{-x}}{1 - e^{-x}} < \frac{1}{x}$ for all $x > 0$, we may therefore deduce that
	\begin{align} \label{Log-P Bound}
		\Log \lp P\lp |q| \rp \rp < \sum_{m \geq 1} \dfrac{1}{m^2 \eta} = \dfrac{\pi^2}{6\eta}.
	\end{align}
	Now, we have $\left| 1 - q \right|^2 = 1 - 2 \cos(y) e^{-\eta} + e^{-2\eta}$. In the region $10\eta \leq |y| < \pi$, we have by the fact that $\cos(x)$ is decreasing for $0 < x < \pi$ that $\left| 1 - q \right|^2 \geq 1 - 2 \cos(10\eta) e^{-\eta} + e^{-2\eta}$. It can be checked in an elementary manner that $1 - 2 \cos(10\eta) e^{-\eta} + e^{-2\eta} > 95 \eta^2$, and so we have $\left| 1 - q \right| > \sqrt{95} \eta$. By using the bound $1 - |q| = 1 - e^{-\eta} \leq \eta$, we have for $10\eta \leq |y| < \pi$ and $\eta < \frac{\pi}{80}$ that
	\begin{align*}
		\dfrac{1}{|1 - q|} - \dfrac{1}{1 - |q|} < \lp \dfrac{1}{\sqrt{95}} - 1 \rp \dfrac{1}{\eta}.
	\end{align*}
	Therefore, using $|q| \leq 1$ we have
	\begin{align*}
		\left|\Log\lp \xi\lp q \rp \rp \right| \leq \lp \dfrac{\pi^2}{6} + \dfrac{1}{\sqrt{95}} - 1 \rp \dfrac{1}{\eta} < \dfrac{3}{4\eta}.
	\end{align*}
	Exponentiating completes the proof.
\end{proof}

\begin{lemma} \label{L Minor Arc Bound}
	Let $t \geq 2$ and $0 < r \leq t$ be integers. Assume $z = \eta + iy$ is a complex number satisfying $\eta > 0$. Then we have
	\begin{align*}
		\left| L_{r,t}\lp e^{-z} \rp \right| < \dfrac{1}{\eta^2}.
	\end{align*}
\end{lemma}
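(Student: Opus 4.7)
The plan is to reduce bounding $|L_{r,t}(e^{-z})|$ to a single-variable real inequality in $\eta$, and then to verify that inequality by elementary analysis. First I would expand each summand of $L_{r,t}(q)$ via the geometric identity $\frac{w}{1+w} = \sum_{j\geq 1}(-1)^{j-1}w^j$, which is valid since $\eta>0$ forces $|e^{-(kt+r)z}|<1$. Interchanging orders of summation by absolute convergence and summing the resulting inner geometric series in $k$ produces the Lambert-type representation $L_{r,t}(e^{-z}) = \sum_{j\geq 1}(-1)^{j-1}\frac{e^{-jrz}}{1 - e^{-jtz}}$. The triangle inequality combined with $|1 - e^{-jtz}| \geq 1 - e^{-jt\eta}$ then yields the real majorization
\begin{align*}
\left|L_{r,t}(e^{-z})\right| \leq \sum_{j\geq 1}\dfrac{e^{-jr\eta}}{1 - e^{-jt\eta}}.
\end{align*}

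Next, I would invoke the elementary inequality $1 - e^{-x} \geq \frac{x}{1+x}$ for $x > 0$ (verified by noting that $(1-e^{-x})(1+x)-x$ vanishes at $0$ and has non-negative derivative $xe^{-x}$) to replace $\frac{1}{1 - e^{-jt\eta}}$ by $1 + \frac{1}{jt\eta}$. Substituting and evaluating the resulting geometric and logarithmic series gives the closed-form estimate
\begin{align*}
\left|L_{r,t}(e^{-z})\right| \leq \dfrac{1}{e^{r\eta} - 1} - \dfrac{\log(1 - e^{-r\eta})}{t\eta}.
\end{align*}
Both summands on the right are decreasing in $r$ and the second is also decreasing in $t$, so taking the worst case $r=1$, $t=2$ reduces the desired inequality $|L_{r,t}(e^{-z})| < \eta^{-2}$ to showing
\begin{align*}
h(\eta) := \dfrac{\eta^2}{e^\eta - 1} - \dfrac{\eta}{2}\log(1 - e^{-\eta}) < 1 \quad \textrm{for all } \eta > 0.
\end{align*}

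The main obstacle is verifying this one-variable inequality, which I would handle by bounding the two summands independently. The substitution $u = 1 - e^{-\eta}$ rewrites $-\frac{\eta}{2}\log(1-e^{-\eta})$ as $\frac{1}{2}\log(u)\log(1-u)$, a function symmetric under $u \leftrightarrow 1-u$ that vanishes at the endpoints; standard calculus (checking that $-u\log u + (1-u)\log(1-u) > 0$ on $(0,1/2)$) shows its unique global maximum over $(0,1)$ equals $\frac{(\log 2)^2}{2}$, attained at $u = 1/2$. For the first summand, differentiating $\frac{x^2}{e^x - 1}$ identifies the critical point $x^*$ as the solution of $e^{x^*}(2 - x^*) = 2$; evaluating the left-hand side at $x = 1.55$ gives $e^{1.55}(0.45) > 2$, and since this left-hand side is decreasing for $x > 1$ we obtain $x^* > 1.55$. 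The maximum value equals $x^*(2 - x^*)$, and monotonicity of $x(2-x)$ on $x > 1$ gives $x^*(2 - x^*) < 1.55 \cdot 0.45 < 0.7$. Summing the two bounds yields $h(\eta) < \frac{(\log 2)^2}{2} + 0.7 < 1$, completing the proof.
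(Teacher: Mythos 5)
Your proof is correct, but it takes a genuinely different and more elaborate route than the paper's. The paper's argument is very short: it bounds each term of $L_{r,t}(q)$ by $\frac{e^{-(kt+r)\eta}}{1-e^{-(kt+r)\eta}}$, extends the sum over all positive integer exponents $m$ (since $\{kt+r : k\geq 0\}$ is a subset of $\ZZ_{\geq 1}$), recognizes $\sum_{m\geq 1}\frac{e^{-m\eta}}{1-e^{-m\eta}}=\sum_{m\geq 1}\sigma_0(m)e^{-m\eta}$, applies $\sigma_0(m)\leq m$, evaluates $\sum_{m\geq 1} m\,e^{-m\eta}=\frac{e^\eta}{(e^\eta-1)^2}$, and finishes with the elementary inequality $\eta\, e^{\eta/2}<e^\eta-1$. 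Your approach instead keeps the precise index structure via a Lambert-type rewrite $L_{r,t}(e^{-z})=\sum_{j\geq 1}(-1)^{j-1}\frac{e^{-jrz}}{1-e^{-jtz}}$, uses the inequality $1-e^{-x}\geq\frac{x}{1+x}$ to split into a geometric and a logarithmic series, reduces to the worst case $(r,t)=(1,2)$ by monotonicity, and verifies a one-variable inequality $h(\eta)<1$ by separately maximizing $\frac{x^2}{e^x-1}$ (max $\approx 0.65$ at the root of $e^x(2-x)=2$) and $\tfrac12\log(u)\log(1-u)$ (max $\frac{(\log 2)^2}{2}\approx 0.24$ at $u=1/2$). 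Both arguments are complete; yours incidentally yields a stronger bound with $h(\eta)\to 0$ as $\eta\to 0$, whereas the paper's bound $\frac{e^\eta}{(e^\eta-1)^2}$ is asymptotic to $\frac{1}{\eta^2}$. The cost is that your route needs two separate calculus maximization arguments where the paper needs a single two-line inequality, so for the purposes of this lemma (which only needs the crude $\frac{1}{\eta^2}$) the paper's approach is considerably more economical.
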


\begin{proof}
	Let $q = e^{-z}$ and let $\sigma_0(n) = \sum_{d|n} 1$ be the standard divisor counting function. Then we have that
	\begin{align*}
		\left| L_{r,t}(q) \right| \leq \sum_{m \geq 1} \dfrac{e^{-m\eta}}{1 - e^{-m\eta}} = \sum_{m \geq 1} \sigma_0(m) e^{-m\eta} \leq \sum_{m \geq 1} m e^{-m\eta} = \dfrac{e^\eta}{\lp e^\eta - 1 \rp^2} < \dfrac{1}{\eta^2}.
	\end{align*}
	This completes the proof.
\end{proof}

\section{Proof of Theorem \ref{C3 Ineffective Asymptotic}}

By Lemmas \ref{L Major Arc} and \ref{Xi Major Arc} in the following section, we have the asymptotics 
\begin{align*}
	L_{r,t}\lp e^{-z} \rp = \frac{\log(2)}{tz} - \frac{1}{2}\lp \frac{r}{t} - \frac{1}{2} \rp + O(z), \ \ \ \ \xi\lp e^{-z} \rp = \frac{1}{\sqrt{2}} e^{\frac{\pi^2}{12z} + O(z)}
\end{align*}
on the major arc. These imply asymptotics for $\mathcal D_{r,t}(q) = L_{r,t}(q) \xi(q)$, which is the generating function for $D_{r,t}(n)$ by Lemma \ref{Generating Function}, that satisfies (1) in Proposition \ref{Wright Circle Method}. Lemmas \ref{Xi Minor Arc Bound} and \ref{L Minor Arc Bound} imply condition (2), and so we may apply Proposition \ref{Wright Circle Method}, which yields the claimed asymptotic formula.

\section{Proof of Theorem \ref{C3 Effective Asymptotic}}

In this section, we complete the proof of Theorem \ref{C3 Effective Asymptotic} by following the proof of \cite[Proposition 1.8]{NR17} (which is a version of Wright's circle method slightly different from Proposition \ref{Wright Circle Method}) and making the bounds in each step effective. Let $\mathcal{C}$ be the circle in the complex plane with center 0 and radius $e^{-\eta}$, where $\eta = \frac{\pi}{\sqrt{12n}}$. By Cauchy's formula and Lemma \ref{Generating Function}, we have
\begin{align*}
	D_{r,t}(n) = \dfrac{1}{2\pi i} \int_{\mathcal C} \dfrac{\mathcal{D}_{r,t}(q)}{q^{n+1}} dq = \dfrac{1}{2\pi i} \int_{\mathcal C} \dfrac{L_{r,t}(q) \xi(q)}{q^{n+1}} dq.
\end{align*}
Throughout, we fix $q = e^{-z}$ with $z = \eta + iy$, so that $|q| = e^{-\eta}$. We will estimate $D_{r,t}(n)$ by decomposing this integral into convenient pieces. Choose $\delta > 0$ so that for the major arc $\mathcal{C}_1$, $z = \eta + iy \in \mathcal{C}_1$ satisfies $0 < |y| < 10\eta$. We shall also assume that $\eta < \frac{\pi}{40t}$, which is equivalent to the bound $n > \frac{400t^2}{3}$.

Let $\mathcal{C}_2 := \mathcal{C} \backslash \mathcal{C}_1$ denote the minor arc. Define for $s \geq 0$ the integrals
\begin{align} \label{V Definition}
	V_s(n) &:= \dfrac{1}{2\pi i} \int_{\mathcal{C}_1} \dfrac{z^{s-1}}{q^{n+1}} \exp\lp \dfrac{\pi^2}{12z} - \dfrac{\log(2)}{2} + \dfrac{z}{24} \rp dq \notag \\ &= \dfrac{1}{2\pi \sqrt{2} i} \int_{D_0} z^{s-1} \exp\lp \dfrac{\pi^2}{12z} + \lp n + \dfrac{1}{24} \rp z \rp dz.
\end{align}
We use the integrals $V_s(n)$ to estimate $D_{r,t}(n)$. In particular, we have the decomposition
\begin{align*}
	D_{r,t}(n) - \alpha_0 V_0(n) - \alpha_{1,r} V_1(n) - \alpha_{2,r} V_2(n) - \alpha_{4,r} V_4(n) = E_1 + E_2 + E_3,
\end{align*}
where $\alpha_0 = \frac{\log(2)}{t}$, $\alpha_{1,r} = - \frac{1}{2} B_1\lp \frac rt \rp$, $\alpha_{2,r} = \frac{t}{8} B_2\lp \frac rt \rp$, $\alpha_{4,r} = \frac{t^3}{192} B_4\lp \frac rt \rp$, and
\begin{align*}
	E_1 &:= \dfrac{1}{2\pi i} \int_{\mathcal{C}_2} \dfrac{L_{r,t}(q) \xi(q)}{q^{n+1}} dq, \\
	E_2 &:= \dfrac{1}{2\pi i} \int_{\mathcal{C}_1} \dfrac{L_{r,t}(q) \lp \xi(q) - \exp \lp \dfrac{\pi^2}{12z} - \dfrac{\log(2)}{2} + \dfrac{z}{24} \rp \rp}{q^{n+1}} dq, \\
	E_3 &:= \dfrac{1}{2\pi i} \int_{\mathcal{C}_1} \dfrac{\lp L_{r,t}(q) - \alpha_0 z^{-1} - \alpha_1 - \alpha_2 z  - \alpha_4 z^3 \rp \exp \lp \dfrac{\pi^2}{12z} - \dfrac{\log(2)}{2} + \dfrac{z}{24} \rp}{q^{n+1}} dq.
\end{align*}
Although $\alpha_{i,r}$ for $i > 0$ depends on $r$, we suppress this dependence when $r$ is understood from context. Because $|z|^2 \geq \eta^2$ and $\eta = \frac{\pi}{\sqrt{12n}}$, we have
\begin{align*}
	\left| \exp\lp \dfrac{\pi^2}{12z} + nz \rp \right| = \exp\lp \dfrac{\pi^2 \eta}{12|z|^2} + n \eta \rp \leq \exp\lp \pi \sqrt{\dfrac{n}{3}} \rp.
\end{align*}
Furthermore, we note that
\begin{align*}
	\left| \int_{\mathcal{C}_1} q^{-1} dq \right| = \left| \left[ \Log\lp e^{-z} \rp \right]_{z = \eta - 10 \eta i}^{z = \eta + 10 \eta i} \right| \leq 20 \eta.
\end{align*}
and
\begin{align*}
	\left| \int_{\mathcal{C}_2} q^{-1} dq \right| \leq \mathrm{len}(\mathcal{C}_2) \cdot \max\limits_{z \in \mathcal{C}_2} |z| \leq 4.2 \pi^2.
\end{align*}
We also note that on the major arc $0 < |y| < 10\eta < \pi$ we have $\eta \leq |z| < \sqrt{101} \eta$. Since $\eta < \frac{\pi}{80}$, we also have $|z| < \frac{\sqrt{101}\pi}{80} < \frac{2}{5}$. These inequalities will be used freely in what follows.

To bound $E_3$, we recall that Lemma \ref{L Major Arc} says that for $\eta < \frac{\pi}{40t}$ on the major arc, we have
\begin{align*}
	\left| L_{r,t}(q) - \alpha_0 z^{-1} - \alpha_1 - \alpha_2 z - \alpha_4 z^3 \right| < \dfrac{1}{20} t^5 |z|^5,
\end{align*}
and we therefore have using these equations and numerical estimates that
\begin{align*}
	\left| E_3 \right| &\leq \dfrac{10 \eta}{\pi} \left| L_{r,t}(q) - \alpha_0 z^{-1} - \alpha_1 - \alpha_2 z - \alpha_4 z^3 \right| \left| \exp\lp \dfrac{\pi^2}{12z} - \dfrac{\log(2)}{2} + nz + \dfrac{z}{24} \rp \right| \\ &< \dfrac{14381 t^5}{n^3} \exp\lp \pi \sqrt{\dfrac{n}{3}} \rp.
\end{align*}
To bound $E_2$, we apply Corollary \ref{Xi Major Arc Corollary}, which we recall says
\begin{align*}
	\left| \xi\lp e^{-z} \rp - \exp\lp \dfrac{\pi^2}{12z} - \dfrac{\log(2)}{2} + \dfrac{z}{24} \rp \right| < \dfrac{630|z|^8}{\sqrt{2}} \exp\lp \dfrac{\pi^2}{12|z|} \rp.
\end{align*}
Therefore, we have that
\begin{align*}
	\left| E_2 \right| &\leq \dfrac{10 \eta}{\pi}  \left| L_{r,t}(q) \right| \left| \xi(q) - \exp\lp \dfrac{\pi^2}{12z} - \dfrac{\log(2)}{2} + \dfrac{z}{24} \rp \right| \left| \exp\lp nz \rp \right| \\ &< \dfrac{945285959087}{t n^4} \exp\lp \pi \sqrt{\dfrac{n}{3}} \rp.
\end{align*}
Finally, using Lemmas \ref{Xi Minor Arc Bound} and \ref{L Minor Arc Bound} we have
\begin{align*}
	\left| E_1 \right| \leq \dfrac{4.2 \pi^2}{2\pi} \left| L_{r,t}(q) \right| \left| \xi(q) \right| \left| \exp\lp nz \rp \right| < 9 n \exp\lp \lp \dfrac{3 \sqrt{3}}{2\pi} + \dfrac{\pi}{\sqrt{12}} \rp \sqrt{n} \rp.
\end{align*}
We have therefore shown that
\begin{align*}
	\left| D_{r,t}(n) - \alpha_0 V_0(n) - \alpha_1 V_1(n) - \alpha_2 V_2(n) - \alpha_4 V_4(n) \right| \leq \mathrm{Err}_t(n)
\end{align*}
where
\begin{align} \label{E_t Definition}
	\mathrm{Err}_t(n) := \dfrac{14381 t^5}{n^3} \exp\lp \pi \sqrt{\dfrac{n}{3}} \rp &+ \dfrac{945285959087}{t n^4} \exp\lp \pi \sqrt{\dfrac{n}{3}} \rp \notag \\ &+ 9 n \exp\lp \lp \dfrac{3 \sqrt{3}}{2\pi} + \dfrac{\pi}{\sqrt{12}} \rp \sqrt{n} \rp.
\end{align}
This completes the proof of Theorem \ref{C3 Effective Asymptotic}.

\section{Proof of Corollary \ref{C3 Effective Inequality}}

We now wish to resolve the inequality $D_{r,t}(n) \geq D_{s,t}(n)$ for integers $n \geq 0$ and $0 < r < s \leq t$. We define for convenience $\alpha_{j,r}^* := \alpha_{j,r} - \alpha_{j,r+1}$ and
\begin{align*}
	M_{r,t}(n) := \alpha_0 V_0(n) + \alpha_{1,r} V_1(n) + \alpha_{2,r} V_2(n) + \alpha_{4,r} V_4(n).
\end{align*}
Note that since $D_{r,t}(n) - D_{s,t}(n) = \sum_{j=r}^{s-1} D_{j,t}(n) - D_{j+1,t}(n)$, it suffices to prove $D_{r,t}(n) \geq D_{r+1,t}(n)$ for all $n > 8$ and $0 < r < t$. We therefore focus on this inequality.

By Theorem \ref{C3 Effective Asymptotic} applied to both terms in $D_{r,t}(n) - D_{r+1,t}(n)$, in order to show $D_{r,t}(n) - D_{r+1,t}(n) \geq 0$ it suffices to show
\begin{align*}
	M_{r,t}(n) - M_{r+1,t}(n) \geq 2 \mathrm{Err}_t(n).
\end{align*}
Collecting together like terms and simplifying, this is equivalent to
\begin{align*} 
	\alpha_{1,r}^* V_1(n) + \alpha_{2,r}^* V_2(n) + \alpha_{4,r}^* V_4(n) \geq 2 \mathrm{Err}_t(n).
\end{align*}
We also wish to bound the terms $\alpha_{j,r}^*$ for $j = 1,2,4$. Since $B_1(x) = x - \frac 12$, $B_2(x) = x^2 - x + \frac 16$, and $B_4(x) = x^4 - 2x^3 + x^2 - \frac{1}{30}$, and $1 \leq r < t$ (since $r+1 \leq t$), we have $\alpha_{1,r}^* = \frac{1}{2t}$, $\alpha_{2,r}^* = \frac{t-2r-1}{8t^2} \geq - \frac{3}{16}$, and $\alpha_{4,r}^* \geq - \frac{233}{48}$ for $2 \leq t \leq 10$. Therefore, it would suffice to prove that
\begin{align*}
	\dfrac{V_1(n)}{2t} \geq \dfrac{3}{16} V_2(n) + \dfrac{233}{48} V_4(n) + 2 \mathrm{Err}_t(n).
\end{align*}

Now, note that by the definition of $\widehat{I}_s(n)$ used in Lemma \ref{Bessel Estimates}, we have
\begin{align*}
	V_s(n) = \dfrac{1}{\sqrt{2}} \lp \dfrac{24n+1}{2\pi^2} \rp^{- \frac s2} \widehat{I}_{-s}(n),
\end{align*}
and therefore by Lemma \ref{Bessel Estimates} we may conclude that for $s \geq 1$,
\begin{align*}
	\bigg| V_s(n) - \dfrac{1}{\sqrt{2}} \lp \dfrac{24n+1}{2\pi^2} \rp^{- \frac s2} &I_{-s}\lp \pi \sqrt{\dfrac{1}{3}\lp n + \dfrac{1}{24} \rp} \rp \bigg| \\ &\leq \sqrt{2} \exp\lp \dfrac{3\pi}{4} \sqrt{\dfrac{n}{3}} \rp \int_0^\infty \lp 10 + u \rp^{s - 1} e^{-\lp n + \frac{1}{24} \rp u} du.
\end{align*}
Now by a substitution $u \mapsto \frac{1}{n + \frac{1}{24}}u$, we have
\begin{align*}
	\sqrt{2} \int_0^\infty \lp 10 + u \rp^{-\nu - 1} e^{-\lp n + \frac{1}{24} \rp u} du = \dfrac{24\sqrt{2}}{24n+1} \int_0^\infty \lp 10 + \dfrac{24u}{24n+1} \rp^{s - 1} e^{-u} du
\end{align*}
For $\beta_1 = 1$, $\beta_2 = 11$, and $\beta_4 = 1349$, we may conclude that each of $s = 1, 2, 4$ satisfies
\begin{align*}
	\left| V_s(n) - \dfrac{1}{\sqrt{2}} \lp \dfrac{24n+1}{2\pi^2} \rp^{- \frac s2} I_{-s}\lp \pi \sqrt{\dfrac{1}{3}\lp n + \dfrac{1}{24} \rp} \rp \right| < \dfrac{24 \beta_s \sqrt{2}}{24n+1} \exp\lp \dfrac{3\pi}{4} \sqrt{\dfrac{n}{3}} \rp.
\end{align*}
Therefore, if we set $n' := n + \frac{1}{24}$ for convenience, to prove the desired inequality it would suffice to show that
\begin{align} \label{Reduced Inequality}
	\dfrac{\pi}{4t\sqrt{6n'}} &I_{-1}\lp \pi \sqrt{\dfrac{n'}{3}} \rp \geq \dfrac{\pi^2}{64 n' \sqrt{2}} I_{-2}\lp \pi \sqrt{\dfrac{n'}{3}} \rp + \dfrac{233\pi^4}{6912 \sqrt{2} (n')^2} I_{-4}\lp \pi \sqrt{\dfrac{n'}{3}} \rp \notag \\ &+ \lp \dfrac{1}{t \sqrt{2}} + \dfrac{33 \sqrt{2}}{16} + \dfrac{314317 \sqrt{2}}{48} \rp \dfrac{1}{n'} \exp\lp \dfrac{3\pi}{4} \sqrt{\dfrac{n}{3}} \rp + 2 \mathrm{Err}_t(n).
\end{align}

In summary, we have shown that in order to show that $D_{r,t}(n) \geq D_{s,t}(n)$ for all $0 < r < s \leq t$ for a fixed value of $n$, it suffices to consider the case $s = r+1$ for each $r$, and all of these cases follow from the inequality \eqref{Reduced Inequality} is true. In the process of deriving \eqref{Reduced Inequality}, we have also assumed $n > \frac{400t^2}{3}$. Therefore, we define the integer $N_t(n)$ as the smallest positive integer satisfying $N_t(n) > \frac{400t^2}{3}$ and so that \eqref{Reduced Inequality} is true for all $n > N_t(n)$, from which it follows that $D_{r,t}(n) \geq D_{s,t}(n)$ for all $n > N_t(n)$. The table below gives values of $N_t(n)$, which are computed with the aid of a computer.

It therefore only remains to check the possible values of $D_{r,t}(n) - D_{r+1,t}(n)$ for $n \leq N_t(n)$ by computer and determine all possible counterexamples which arise from these cases. All such counterexamples satisfy $n \leq 8$ for $2 \leq t \leq 10$, which completes the proof.

\begin{center}
	\begin{tabular}{|c|c|c|c|c|c|} \hline
		$t$ & 2 & 3 & 4 & 5 & 6 \\ \hline
		$N_t(n)$ & 108077 & 112183 & 115240 & 117804 & 120247 \\ \hline
	\end{tabular}
	
	\vspace{0.1in}
	
	\begin{tabular}{|c|c|c|c|c|} \hline
		$t$ & 7 & 8 & 9 & 10 \\ \hline
		$N_t(n)$ & 122995 & 126772 & 133268 & 147752 \\ \hline
	\end{tabular}
	
	\vspace{0.05in}
	
	{\small Table 2: Numerics for Corollary \ref{C3 Effective Inequality}.}
\end{center}

\sglsp

\chapter{The Coll--Mayers--Mayers Conjecture} \label{C4}
\thispagestyle{myheadings}

\dblsp
\vspace*{-.65cm}

\section{The work of Seo and Yee}

The work in this section is not due to the author, but to Seo and Yee in \cite{SY20}. However, since this is a crucial ingredient in the proof of Conjecture \ref{C4 CMM Conjecture}, it is important to understand the ideas which went into this proof. Thus, we shall summarize the main line of argument used by Seo and Yee.

We recall the $q$-series which is the object of this chapter, which is
\begin{align*}
	G(q) := \lp q, -q^3; q^4 \rp_\infty^{-1} =: \sum_{n=0}^\infty a(n) q^n.
\end{align*}

Recall that $o(n)$, $e(n)$ denote the number of partitions into odd parts having odd/even index, respectively. Seo and Yee \cite[Theorem 1]{SY20} prove the following generating function identity.
\begin{theorem} \label{Seo-Yee}
	We have
	\begin{align*}
		G(q) = \sum_{n=0}^\infty \lp -1 \rp^{\lceil \frac n2 \rceil} \lp o(n) - e(n) \rp q^n.
	\end{align*}
\end{theorem}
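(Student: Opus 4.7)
The plan is to derive the identity by refining Euler's odd-parts generating function with a second variable $z$ tracking the index statistic, extracting $o(n) - e(n)$ via the specialization $z = -1$, and then matching the resulting $q$-series against $G(q)$ up to the periodic sign $(-1)^{\lceil n/2 \rceil}$.

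First, I would analyze the Dergachev--Kirillov graph-theoretic formula for the restricted index $\mathrm{ind}(\lambda) = \mathrm{ind}_{(n)}(\lambda)$ in the case where $\lambda \vdash n$ has only odd parts. The goal is to express $\mathrm{ind}(\lambda)$ in closed form in terms of the multiplicities $m_1(\lambda), m_3(\lambda), m_5(\lambda), \ldots$ of the odd parts, likely as a sum of local contributions attached to each distinct part size. Such a decomposition would yield a clean two-variable generating function
\begin{align*}
    F(z, q) := \sum_{\substack{\lambda \in \mathcal P \\ \lambda \text{ odd}}} z^{\mathrm{ind}(\lambda)} q^{|\lambda|}
\end{align*}
as an explicit infinite product in $z$ and $q$.

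Next, I would specialize at $z = -1$, which by the definitions of $e(n)$ and $o(n)$ gives
\begin{align*}
    F(-1, q) = \sum_{n \geq 0} (e(n) - o(n)) q^n,
\end{align*}
and the corresponding product should collapse to a $q$-Pochhammer expression that is close to, but not identical to, $-G(q)$. The remaining task is to reconcile this expression with the sign $(-1)^{\lceil n/2 \rceil}$. I would use the elementary generating-function identity
\begin{align*}
    \sum_{n \geq 0} (-1)^{\lceil n/2 \rceil} q^n = \frac{1 - q}{1 + q^2}
\end{align*}
to recognize the sign pattern as a period-four twist, which translates into a substitution involving 8th roots of unity acting on $F(-1, q)$. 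The twisted series should then simplify to $(q, -q^3; q^4)_\infty^{-1} = G(q)$ via standard Pochhammer identities such as $(a; q)_\infty (-a; q)_\infty = (a^2; q^2)_\infty$ and the splitting $(q; q^2)_\infty = (q; q^4)_\infty (q^3; q^4)_\infty$.

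The main obstacle is the first step: obtaining a sufficiently clean combinatorial description of $\mathrm{ind}(\lambda)$ on odd partitions to produce a closed-form $F(z, q)$ as an infinite product. Once this is done, the remaining manipulations are $q$-Pochhammer bookkeeping together with a careful substitution to implement the sign twist. An alternative path would be to construct a sign-reversing involution on odd partitions whose fixed points are enumerated, with appropriate signs, by the coefficients of $G(q)$; this route would bypass a direct combinatorial analysis of $\mathrm{ind}$, but seems harder to discover without the explicit index formula in hand as a guide.
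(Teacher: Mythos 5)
Your proposal identifies the correct arena (a two-variable refinement of Euler's generating function and a root-of-unity specialization) and you correctly flag the obstacle; however, the obstacle is in fact fatal as stated, and the paper's route (following Seo--Yee) sidesteps it in a way your plan does not anticipate. You propose to construct $F(z,q) = \sum_{\lambda} z^{\ind(\lambda)} q^{|\lambda|}$ as an explicit infinite product, which would require $\ind(\lambda)$ to decompose additively over the parts of $\lambda$. It does not. The Dergachev--Kirillov formula gives $\ind = 2C + P - 1$ where $C$ is the number of cycles and $P$ the number of paths in the meander, and the cycle count $C$ is a genuinely global quantity with no clean local decomposition; there is no known Euler-type product for $\sum_\lambda z^{\ind(\lambda)} q^{|\lambda|}$.

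The paper avoids this entirely: since $\ind \equiv P + 1 \pmod 2$, one never needs the index, only its parity, and the path count satisfies the simple identity $P = \big(\mathrm{op}(\lambda) + \mathrm{op}(\mu)\big)/2$ where $\mathrm{op}$ is the number of odd parts. When $\lambda$ has only odd parts and $\mu = (n)$, this makes the parity of $\ind(\lambda)$ a periodic function of the \emph{number of parts} of $\lambda$ (together with the parity of $n$). One therefore uses the standard number-of-parts generating function $F(z,q) = \sum_{k,n} f(k,n) z^k q^n = (zq, zq^3; q^4)_\infty^{-1}$, which is trivially a product, and evaluates at $z = i$ with $q \mapsto -iq$ (the 8th-root twist you anticipated, because the index parity depends on $\mathrm{op}(\lambda) \bmod 4$, not merely $\bmod 2$, ruling out your intended $z = -1$ specialization). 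That single substitution delivers both the sign pattern $(-1)^{\lceil n/2 \rceil}$ and the product $G(q)$ in one step. So the missing lemma in your proposal is precisely $\ind(\lambda) \equiv \tfrac{\mathrm{op}(\lambda) + \mathrm{op}(\mu)}{2} + 1 \pmod{2}$; once you have it, the generating function you need is the obvious one, not a new one you would have to invent.
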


\subsection{Meanders}

As the construction of the index statistic goes back to the work of Dergachev and Kirillov in \cite{DK00}, we first must state their theorem which computes this index. To do so, we must associate to each pair of partitions $\lambda, \mu \vdash n$ a certain graph, which we call $G = G(\lambda, \mu)$. The construction of this graph is as follows:

\begin{itemize}
	\item Start with an empty graph on $n$ vertices.
	\item For $\lambda = \lp \lambda_1, \dots, \lambda_r \rp$, label these vertices $v_{1, 1}, v_{1, 2}, \dots, v_{1,\lambda_1}, v_{2,1}, \dots, v_{2,\lambda_2}, \dots, v_{r, \lambda_r}$.
	\item For each $1 \leq i \leq r$ and $1 \leq j \leq \lfloor \lambda_i/2 \rfloor$, draw a {\it top edge} between $v_{i,j}$ $v_{i,\lambda_i+1-j}$.
	\item Do the same process for $\mu = \lp \mu_1, \dots, \mu_s \rp$, and call the newly constructed edges {\it bottom edges}.
\end{itemize}

The graph $G$ is called the {\it meander} associated to the pair $\lambda, \mu$. An example of this construction is given below, which is the meander associated to the pair of partitions of 8 given by $\lambda = (3,3,2)$ and $\mu = (4,3,1)$.

\begin{center}
\begin{tikzpicture}
	\node[shape=circle,draw=black] (1) at (0,0) {};
	\node[shape=circle,draw=black] (2) at (1.5,0) {};
	\node[shape=circle,draw=black] (3) at (3,0) {};
	\node[shape=circle,draw=black] (4) at (4.5,0) {};
	\node[shape=circle,draw=black] (5) at (6,0) {};
	\node[shape=circle,draw=black] (6) at (7.5,0) {};
	\node[shape=circle,draw=black] (7) at (9,0) {};
	\node[shape=circle,draw=black] (8) at (10.5,0) {};
	
	\path [-] (1) edge[bend left=60] (3);
	\path [-] (4) edge[bend left=60] (6);
	\path [-] (7) edge[bend left=60] (8);
	\path [-] (1) edge[bend right=60] (4);
	\path [-] (2) edge[bend right=60] (3);
	\path [-] (5) edge[bend right=60] (7);
\end{tikzpicture}
\end{center}

Dergachev and Kirillov \cite{DK00} construct certain Lie algebras $\mathfrak g$ which depend on the pair $\lambda, \mu \vdash n$ which we defined in the introduction as {\it seaweed algebras}. They prove that
\begin{align*}
	\ind(\mathfrak g) = 2C + P - 1,
\end{align*}
where $C$ and $P$ are defined as the number of cycles and paths occurring in $G$, respectively. In this context, an isolated vertex is counted as a path. For example, for the meander above, we have $C = 0$ and $P = 2$, and so the index is 1. Note that because the degree of each vertex in $G$ is at most two, every connected component of $G$ is either a path of a cycle.

In this way, Coll, Mayers and Mayers consider the index as a partition-theoretic statistic by defining the index of the pair $\lp \lambda, \mu \rp$, which we might denote by $\ind_\mu(\lambda)$, as the index of the associated seaweed algebra; this index can be computed directly from the meander without recourse to Lie theory. One of the main ideas in this proof is that the number of paths in the meander can be given combinatorial meaning, as we shall soon see.

\subsection{Proof of Theorem \ref{Seo-Yee}}

We now wish to give some indication of the method of Seo and Yee. First, we make the observation that
\begin{align*}
	\ind_\mu(\lambda) \equiv P + 1 \pmod{2}.
\end{align*}
Thus, to know the parity of the index it would suffice to count the paths in the meander $G$ associated to the pair. Because the edges of $G$ are constructed by a pairing of vertices, one can see that the number of vertices in $G$ that are not touched by any top edge are in bijective correspondence with the odd parts of $\lambda$; similarly those not touched by a bottom edge are in bijection with odd parts of $\mu$. Since each path in $G$ can be identified with its endpoints (the same point counted twice in the case of the empty path), we see that
\begin{align*}
	P = \dfrac{\text{op}(\lambda) + \text{op}(\mu)}{2}
\end{align*}
for any pair of partitions $\lambda, \mu \vdash n$, where $\text{op}(\lambda)$ denotes the number of odd parts in $\lambda$. Note that this is always an integer, since we must have $\text{op}(\lambda) \equiv \text{op}(\mu) \equiv n \pmod{2}$. This is the key observation which lies behind the method of Seo and Yee.

Letting $\mathcal O$ denote the set of partitions into odd parts. Seo and Yee then compute the difference $o(n) - e(n)$ in terms of the relevant counts
\begin{align*}
	d_j(n) := \# \{ \lambda \in \mathcal O, \lambda \vdash n : \text{op}(\lambda) \equiv j \pmod{4} \}.
\end{align*}
In particular, $o(n) = d_0(n)$ if $n$ is even and $d_2(n)$ if $n$ is odd, and $e(n) = d_3(n)$ if $n$ is even and $d_1(n)$ if $n$ is odd. They are then able to prove the result via standard generating calculations based on the Euler-style generating function
\begin{align*}
	F(z,q) := \sum_{k,n \geq 0} f(k,n) z^k q^n = \lp zq, zq^3; q^4 \rp_\infty^{-1},
\end{align*}
where
\begin{align*}
	f(k,n) := \# \{ \lambda \in \mathcal O : |\lambda| = n, \text{op}(\lambda) = k \}
\end{align*}
In the spirit of Proposition \ref{OrthogonalityProp}, Seo and Yee then calculate, noting that $f(k,n) = 0$ if $k \not \equiv n \pmod{2}$, that
\begin{align*}
	F(i, -iq) &= \sum_{k,n \geq 0} (-1)^n f(k,n) i^{k+n} q^n \\ &= \sum_{2 | k, n} f(k,n) (-1)^{\frac k2} (-1)^{\frac n2} q^n + \sum_{2 \centernot | k,n} (-1)^{\frac{k+1}{2}} (-1)^{\frac{n+1}{2}} f(k,n) q^n \\ &= \sum_{2 | k, n} \lp -1 \rp^{\frac n2} \lp o(n) - e(n) \rp q^n + \sum_{2 \centernot | k,n} (-1)^{\frac{n+1}{2}} \lp o(n) - e(n) \rp q^n \\ &= \sum_{n \geq 0} \lp -1 \rp^{\lceil \frac n2 \rceil} \lp o(n) - e(n) \rp q^n.
\end{align*}
This completes the proof of Theorem \ref{Seo-Yee}, which in turn means that Conjecture \ref{C4 CMM Conjecture} will follow if one can show $a(n) \geq 0$.

\section{Notation and an application of Euler--Maclaurin summation}

This section sets up notation which will be used for the rest of the chapter and states a result which follows from the Euler--Maclaurin asymptotic method, more specifically Proposition \ref{C3 Euler-Maclaurin Sufficient Effective}.

Define the function
\begin{align*}
	B_{r,t}(z) := \dfrac{e^{-\frac rt z}}{z\lp 1 - e^{-z} \rp} = \sum_{n \geq -2} \dfrac{B_{n+2}\lp 1 - \frac rt \rp}{(n+2)!} z^n,
\end{align*}
where $0 < r \leq t$ are integers and $B_n(x)$ are the Bernoulli polynomials defined in \eqref{Bernoulli Polynomial Definition}. Due to Lehmer's bound \eqref{Bernoulli Inequality}, this Laurent expansion is absolutely convergent in the punctured disk $0 < |z| < 2\pi$. This absolute convergence is important for producing effective estimates of certain infinite sums related to $G(q)$, which will be seen in Lemma \ref{B_rt Bound}.

Because $B_{r,t}(z)$ has sufficient decay and has a Laurent series converging in the region $0 < |z| < 2\pi$, Proposition \ref{C3 Euler-Maclaurin Sufficient Effective} can be applied to $B_{r,t}(z)$ for $0 < |z| < 2\pi$. To state this application, we first introduce convenient notation. Let
\begin{align*}
	\beta_{r,t} := \log\left(\Gamma\left(\frac rt\right) \right) - \frac 12\log(2\pi), \hspace{0.5in} g_{r,t}(z) := B_{r,t}(z) - \frac{1}{z^2} - \frac{\lp \frac 12 - \frac rt \rp e^{-\frac rt z}}{z},
\end{align*}
and introduce the functions $F_a^{r,t}(z)$, $E_a^{r,t}(z)$ defined by
\begin{align} \label{B_rt Approx Def}
	F_a^{r,t}(z) := \dfrac{\zeta(2,a)}{z^2} + \dfrac{\beta_{r,t}}{z} - \dfrac{1}{z} \lp \frac 12 - \frac rt \rp \lp \Log(z) + \gamma + \psi(a) \rp + \sum_{n = 0}^\infty c_n^* \dfrac{B_{n+1}(a)}{n+1} z^n
\end{align}
and
\begin{align} \label{B_rt Error Def}
	E^{r,t}(z) := \dfrac{J_{g_{r,t},4}(z)}{720} |z|^3 + \sum_{k \geq 3} \left| \dfrac{B_{k+2}\lp 1 - \frac rt \rp}{(k+2)!} - \dfrac{\lp -r \rp^{k+1} \lp \frac 12 - \frac rt \rp}{t^{k+1} (k+1)!} \right| \lp 1 + \dfrac{k!}{10(k-3)!} \rp |z|^k,
\end{align}
where we define the coefficients $c_n^*$ as in Proposition \ref{C3 Euler-Maclaurin Sufficient Effective} by
\begin{align*}
	c_n^* := \begin{cases} \dfrac{B_{n+1}\lp 1 - \frac rt \rp}{(n+2)!} & \text{ if } n \leq 2, \\ \dfrac{(-r)^{n+1} \lp \frac 12 - \frac rt \rp}{t^{n+1} (n+1)!} & \text{ otherwise}. \end{cases}
\end{align*}
We now state our application of Proposition \ref{C3 Euler-Maclaurin Sufficient Effective} to $B_{r,t}(z)$.

\begin{lemma} \label{B_rt Bound}
	Let $0 < r \leq t$ be integers and $\delta > 0$ a constant. Then for any real number $0 < a \leq 1$ and $z \in D_\delta$ with $0 < |z| < 2\pi$ , we have
	\begin{align*}
		\bigg| \sum_{m \geq 0} B_{r,t}\lp (m+a)z \rp - F_a^{r,t}(z) \bigg| \leq E^{r,t}(z).
	\end{align*}
\end{lemma}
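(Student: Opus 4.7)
The plan is a direct application of Proposition \ref{C3 Euler-Maclaurin Sufficient Effective} to $f(z) = B_{r,t}(z)$ with the specific parameter choices $A = r/t$ and $N = 3$. The key preparatory facts are: first, rewriting $B_{r,t}(z) = e^{(1 - r/t)z}/(z(e^z - 1))$ and invoking the Bernoulli polynomial generating function \eqref{Bernoulli Polynomial Definition} at $x = 1 - r/t$ delivers the Laurent expansion of $B_{r,t}$ stated at the start of the section, with convergence on $0 < |z| < 2\pi$ (the nearest zeros of $e^z - 1$ beyond the origin being at $\pm 2\pi i$). In particular $c_{-2} = B_0(1 - r/t) = 1$ and $c_{-1} = B_1(1 - r/t) = 1/2 - r/t$. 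Second, the factor $e^{-(r/t)z}$ forces $B_{r,t}$ and all its derivatives to have sufficient decay as $z \to \infty$ in $D_\delta$, so the hypotheses of Proposition \ref{C3 Euler-Maclaurin Sufficient Effective} are met.

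The mechanical matching is then straightforward. Choosing $A = r/t$ is forced: with this choice the $c_{-1} e^{-Au}/u$ subtraction inside $I_{f,A}^*$ carries the same exponential rate as $B_{r,t}$ itself, and the integrand of $I_{f,A}^*$ becomes exactly $g_{r,t}(u)$. With $N = 3$, we have $M_{N+1} = M_4 = \max_{[0,1]}|B_4(x)| = 1/30$, so the first piece of the error from the proposition becomes $M_4 J_{g_{r,t},4}(z)/4! \cdot |z|^3 = J_{g_{r,t},4}(z)/720 \cdot |z|^3$, matching \eqref{B_rt Error Def}. Expanding $b_k = c_k - (-A)^{k+1} c_{-1}/(k+1)!$ with these values of $c_k$, $c_{-1}$, $A$ yields exactly the $k$th summand of the polynomial tail of $E^{r,t}(z)$, and the coefficients $c_n^*$ of \eqref{B_rt Approx Def} agree with those produced by the proposition.

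The only substantive step, and the one I expect to be the main obstacle, is the identification of the constant on the $1/z$ coefficient. The proposition outputs the contribution
\begin{align*}
\dfrac{I_{B_{r,t}, r/t}^*}{z} - \dfrac{c_{-1}}{z} \lp \Log\lp \tfrac{r}{t} z \rp + \gamma + \psi(a) \rp,
\end{align*}
and splitting $\Log((r/t)z) = \Log z + \Log(r/t)$ reduces the lemma to the identity
\begin{align*}
I_{B_{r,t}, r/t}^* = \log\Gamma\lp \tfrac{r}{t} \rp + \lp \tfrac{1}{2} - \tfrac{r}{t} \rp \log\lp \tfrac{r}{t} \rp - \tfrac{1}{2}\log(2\pi).
\end{align*}
The plan for proving this identity is to use the decomposition $1/(1 - e^{-u}) = 1 + 1/(e^u - 1)$ to rewrite the integrand of $I_{B_{r,t}, r/t}^*$ as the sum of
\begin{align*}
\lp \dfrac{1}{e^u - 1} - \dfrac{1}{u} + \dfrac{1}{2} \rp \dfrac{e^{-(r/t)u}}{u} \qquad \text{and} \qquad \dfrac{d}{du}\left[\dfrac{1 - e^{-(r/t)u}}{u}\right].
\end{align*}
The first integrates to $\log\Gamma(r/t) - ((r/t) - 1/2)\log(r/t) + r/t - (1/2)\log(2\pi)$ by Binet's second formula for $\log\Gamma$, while the second telescopes to $[((1 - e^{-(r/t)u})/u)]_0^\infty = -r/t$; summing gives the required identity. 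Once this identity is in place, the claimed bound follows by direct term-by-term comparison with Proposition \ref{C3 Euler-Maclaurin Sufficient Effective}, completing the proof.
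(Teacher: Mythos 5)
Your proposal is correct and follows essentially the same route as the paper: apply Proposition~\ref{C3 Euler-Maclaurin Sufficient Effective} with $A = r/t$ and $N = 3$, match error and coefficient terms mechanically, and reduce everything to the identity $I_{B_{r,t},r/t}^* = \log\Gamma(r/t) + (1/2 - r/t)\log(r/t) - \tfrac12\log(2\pi)$. The only difference is that the paper evaluates this integral by first substituting $z \mapsto (t/r)z$ and then invoking Lemma~\ref{lem:int} (= Lemma~2.3 of \cite{BCMO22}), whereas you re-derive the identity in place via the $1/(1-e^{-u}) = 1 + 1/(e^u - 1)$ split, Binet's formula, and the observation that the non-Binet remainder is the exact derivative of $(1 - e^{-(r/t)u})/u$ (which telescopes to $-r/t$); this is the same computation as the cited lemma's proof, just without the change-of-variables round trip, and your perfect-derivative packaging is arguably cleaner. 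One small slip: the formula you invoke is \emph{Binet's first} integral formula (12.31 of \cite{WW96}), not the second; the second involves an $\arctan$ kernel and is not what is being used here.
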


\begin{proof}
	$B_{r,t}(z)$ satisfies the criteria of Proposition \ref{C3 Euler-Maclaurin Sufficient Effective}, and therefore for any $A > 0$ and $N = 3$ we have
	\begin{align*}
		\bigg| \sum_{m \geq 0} B_{r,t}\lp (m+a)z \rp - \dfrac{\zeta(2,a)}{z^2} - \dfrac{I_{B_{r,t},A}^*}{z} &+ \dfrac{c_{-1}}{z}\lp \Log(Az) + \gamma + \psi(a) \rp - \sum_{n = 0}^\infty c_n^* \dfrac{B_{n+1}(a)}{n+1} z^n \bigg| \\ &\leq \dfrac{M_4 J_{g_{r,t},4}}{24} |z|^3 + \sum_{k \geq 3} |b_k| \lp 1 + \dfrac{k!}{10(k-3)!} \rp |z|^k,
	\end{align*}
	where $b_k = \frac{B_{k+2}\lp 1 - \frac rt \rp}{(k+2)!} - \frac{\lp -r \rp^{k+1} \lp \frac 12 - \frac rt \rp}{t^{k+1} (k+1)!}$. To simplify the integral
	\begin{align*}
		I_{B_{r,t},A}^* = \int_{0}^\infty \left(\dfrac{e^{-\frac rt z}}{z\lp 1 - e^{-z} \rp} - \dfrac{1}{z^2} + \lp \dfrac{r}{t} - \dfrac{1}{2} \rp \frac{e^{-Az}}{z} \right) dz,
	\end{align*}
	we use the substitutions $z \mapsto \frac{t}{r} z$ and $A = \frac{r}{t}$, which gives
	\begin{align*}
		I_{B_{r,t},\frac{r}{t}}^* = \int_{0}^\infty \left(\dfrac{e^{-z}}{z \lp 1 - e^{- \frac{t}{r} z} \rp} - \dfrac{1}{\frac{t}{r} z^2} + \lp \dfrac{r}{t} - \dfrac{1}{2} \rp \frac{e^{-z}}{z} \right) dz.
	\end{align*}
	\cite[Lemma 2.3]{BCMO22} states that for any real number $N > 0$,
	\begin{align*}
		\int_0^\infty\bigg(\frac{e^{-x}}{x\left(1-e^{Nx}\right)} &- \frac{1}{Nx^2}+\left(\frac 1N-\frac 12\right)\frac{e^{-x}}{x} \bigg)dx
		\\ &= \log\left(\Gamma\left(\frac 1N\right) \right) +\left(\frac 12-\frac 1N\right) \log\left(\frac 1N\right)-\frac 12\log(2\pi),
	\end{align*}
	and so the case $N = \frac tr$ implies
	\begin{align*}
		I_{B_{r,t},\frac rt}^* = \log\left(\Gamma\left(\frac rt\right) \right) +\left(\frac 12-\frac rt\right) \log\left(\frac rt\right)-\frac 12\log(2\pi) = \beta_{r,t} + \left(\frac 12-\frac rt\right) \log\left(\frac rt\right).
	\end{align*}
	A short calculation therefore shows
	\begin{align*}
		\bigg| \sum_{m \geq 0} B_{r,t}\lp (m+a)z \rp &- \dfrac{\zeta(2,a)}{z^2} - \dfrac{I^*_{B_{r,t},\frac rt}}{z} - \dfrac{1}{z}\lp \dfrac{1}{2} - \dfrac{r}{t} \rp \lp \Log\lp\frac{r}{t} z\rp + \gamma + \psi(a) \rp \\ &- \sum_{n = 0}^\infty c_n^* \dfrac{B_{n+1}(a)}{n+1} z^n \bigg| \leq \dfrac{J_{g_{r,t},4}}{720} |z|^3 + \sum_{k \geq 3} |b_k| \lp 1 + \dfrac{k!}{10(k-3)!} \rp |z|^k.
	\end{align*}
	By the definitions \eqref{B_rt Approx Def} and \eqref{B_rt Error Def}, this completes the proof.
\end{proof}

\section{Asymptotic estimates}

The proof of Theorem \ref{C4 Conjecutre Proof} uses a variation of Wright's circle method. As with any variation of the circle method, there are various stages where estimates must be made. This section collects together the most important estimates, which are subdivided into three groups. The first two are dedicated to proving bounds on $G(q)$ on the {\it major arc} and {\it minor arc}, which play central roles in Wright's circle method and are defined in the first part. The last part considers elementary bounds on the functions $F_a^{r,t}(z)$ and $E_a^{r,t}(z)$ which make later computations more straightforward.

\subsection{Effective Major Arc Bounds}

Before we proceed, we define the terms {\it major arc} and {\it minor arc}. When using Wright's circle method, one must define the {\it major arc}, which is the region of some circle $C$ with fixed radius $|q|$, where $q = e^{-z}$ lies near a dominant pole of the generating function. In most examples, the dominant pole lies near $q=1$ and only one major arc is required. In our case, however, we will require two major arcs, which lie near $q = \pm 1$. The major arc near $q=1$ will consist of those $q = e^{-z}$ for which $z = x + iy$ satisfies $0 \leq |y| < 15x$, and the corresponding constraint near $q = -1$ is $\pi - 15x < |y| \leq \pi$. We will in practice use a change of coordinates $q \mapsto -q$ to translate the $q = -1$ major arc into the $q=1$ major arc of the function $G(-q)$, which gives back the restriction $0 \leq |y| < 15x$. The minor arc will consist of the complement of the two major arcs, that is, it consists of all $q = e^{-z}$ with $15x \leq |y| \leq \pi - 15x$. We begin now by deriving important bounds that hold on major arcs.

\begin{proposition} \label{Major Arc Bound}
	Let $q = e^{-z}$, $z = x + iy$ satisfy $x > 0$ and $0 \leq |y| < 15x$.
	
	\noindent \textnormal{(1)} We have for $0 < x < \frac{2}{5t}$ that
	\begin{align*}
		\left| \Log\lp\lp q^r; q^t \rp_\infty^{-1}\rp - tz F_1^{r,t}(tz) \right| \leq |tz| E^{r,t}(tz).
	\end{align*}
	
	\noindent \textnormal{(2)} We have for $0 < x < \frac{1}{5t}$ that
	\begin{align*}
		\left| \Log\lp\lp -q^r; q^t \rp_\infty^{-1}\rp - tz F_1^{r,t}(2tz) + tz F_{1/2}^{r,t}(2tz) \right| \leq 2 |tz| E^{r,t}(2tz).
	\end{align*}
\end{proposition}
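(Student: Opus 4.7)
My plan is to prove both parts by writing $\Log\bigl((\pm q^r;q^t)_\infty^{-1}\bigr)$ as a Lambert-type series involving the building block $B_{r,t}$ defined in the chapter, and then apply Lemma \ref{B_rt Bound}. The condition $0\le|y|<15x$ with $x>0$ places $z$ inside a bounded cone, so for a suitable $\delta>0$ depending only on the slope $15$ one has $z\in D_\delta$, and the hypothesis $x<2/(5t)$ (resp. $x<1/(5t)$) is just strong enough to guarantee $|tz|<2\pi$ (resp. $|2tz|<2\pi$), which is what is needed to apply Lemma \ref{B_rt Bound}.

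For part (1), I would expand the logarithm as
\begin{align*}
\Log\bigl((q^r;q^t)_\infty^{-1}\bigr)
=-\sum_{k\ge 0}\log(1-q^{r+kt})
=\sum_{m\ge 1}\frac{q^{mr}}{m(1-q^{mt})},
\end{align*}
and then, recognizing that $tz\cdot B_{r,t}(mtz)=q^{mr}/(m(1-q^{mt}))$ from the very definition $B_{r,t}(w)=e^{-rw/t}/(w(1-e^{-w}))$, rewrite this as
$tz\sum_{m\ge 0} B_{r,t}((m+1)tz)$. The desired bound is then immediate from Lemma \ref{B_rt Bound} applied with parameter $a=1$ after the substitution $z\mapsto tz$.

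For part (2), the key manipulation is the identity $-\log(1+x)=\log(1-x)-\log(1-x^2)$, which lets me split
\begin{align*}
\Log\bigl((-q^r;q^t)_\infty^{-1}\bigr)
&=\sum_{k\ge 0}\bigl[\log(1-q^{r+kt})-\log(1-q^{2(r+kt)})\bigr]\\
&=-tz\sum_{m\ge 0} B_{r,t}\bigl((m+1)tz\bigr)+2tz\sum_{m\ge 0} B_{r,t}\bigl((m+1)(2tz)\bigr).
\end{align*}
To align both sums at the common argument $2tz$ (so that each can be handled by one invocation of Lemma \ref{B_rt Bound}), I would split the first sum according to the parity of $m+1$:
\begin{align*}
\sum_{m\ge 0} B_{r,t}\bigl((m+1)tz\bigr)
=\sum_{k\ge 0}B_{r,t}\bigl((k+\tfrac12)(2tz)\bigr)+\sum_{k\ge 0}B_{r,t}\bigl((k+1)(2tz)\bigr).
\end{align*}
Substituting this back and collecting terms, the $(k+1)$-sums partially cancel to leave
\begin{align*}
\Log\bigl((-q^r;q^t)_\infty^{-1}\bigr)
=tz\sum_{k\ge 0}B_{r,t}\bigl((k+1)(2tz)\bigr)-tz\sum_{k\ge 0}B_{r,t}\bigl((k+\tfrac12)(2tz)\bigr).
\end{align*}
Two applications of Lemma \ref{B_rt Bound} with $z\mapsto 2tz$ (once at $a=1$, once at $a=\tfrac12$) each contribute an error $|tz|E^{r,t}(2tz)$, and the triangle inequality gives the claimed bound $2|tz|E^{r,t}(2tz)$.

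There is no serious obstacle; the only care needed is the algebraic step in part (2) where one must split the second sum by parity to match the argument $2tz$ that naturally appears in the $\log(1-x^2)$ piece. The remaining verifications are elementary: $|z|\le\sqrt{226}\,x$ on the cone $|y|<15x$ gives $|tz|\le\sqrt{226}\,tx$, which is bounded by $2\sqrt{226}/5<2\pi$ under the hypothesis of part (1), and similarly $|2tz|<2\sqrt{226}/5<2\pi$ under the sharper hypothesis of part (2), so Lemma \ref{B_rt Bound} applies in each case.
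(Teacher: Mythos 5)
Your proposal is correct and takes essentially the same route as the paper. The paper obtains part (2) by writing $\Log\lp(-q^r;q^t)_\infty^{-1}\rp = tz\sum_{m\ge 1}(-1)^m B_{r,t}(mtz)$ and splitting by parity of $m$, whereas you first split the Pochhammer via $\log(1+x)=\log(1-x^2)-\log(1-x)$ and then split one resulting sum by parity; both manipulations land on the identical decomposition $tz\sum_{k\ge 0}B_{r,t}((k+1)2tz)-tz\sum_{k\ge 0}B_{r,t}((k+\tfrac12)2tz)$, after which Lemma~\ref{B_rt Bound} is applied exactly as you describe.
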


\begin{proof}
	By expanding logarithms into Taylor series, we obtain
	\begin{align} \label{Log Product Expansion}
		\Log\lp\lp \varepsilon q^r; q^t \rp_\infty^{-1}\rp = - \sum_{n \geq 0} \Log\lp 1 - \varepsilon q^{tn+r} \rp = \sum_{m \geq 1} \dfrac{\varepsilon^m q^{rm}}{m\lp 1 - q^{tm} \rp}.
	\end{align}
	Setting $q = e^{-z}$ and multiplying the above expression by $\frac{tz}{tz}$, we obtain
	\begin{align*}
		\Log\lp\lp \varepsilon q^r; q^t \rp_\infty^{-1}\rp = tz \sum_{m \geq 1} \varepsilon^m \dfrac{e^{-rmz}}{tmz\lp 1 - e^{-tmz} \rp} = tz \sum_{m \geq 1} \varepsilon^m B_{r,t}(tmz).
	\end{align*}
	Now, for $0 < x < \frac{2}{5}$ we have since $y^2 < 225x^2$ that $|z| = \sqrt{x^2 + y^2} < \frac{2\sqrt{226}}{5} < 2\pi$. Therefore, the Laurent expansion for $B_{r,t}(tz)$ is convergent for $0 < x < \frac{2}{5t}$, and likewise for $B_{r,t}(2tz)$ if $0 < x < \frac{1}{5t}$. If we set $\varepsilon = 1$, (1) follows directly from Lemma \ref{B_rt Bound}. If $\varepsilon = -1$, by applying Lemma \ref{B_rt Bound} to each summand of
	\begin{align*}
		\Log\lp\lp -q^r; q^t \rp_\infty^{-1}\rp = tz \sum_{m \geq 0} B_{r,t}\lp (m+1) 2tz \rp - tz \sum_{m \geq 0} B_{r,t} \lp \lp m + \frac 12 \rp 2tz \rp,
	\end{align*}
	(2) follows as well.
\end{proof}

\subsection{Effective Minor Arc Bounds}

We now estimate $G(q)$ on the minor arc $15x \leq |y| \leq \pi - 15x$ when $x$ is small. In order to do this, we first prove several helpful results so that the proof of the main bound will be more readable.

\begin{lemma} \label{Alpha Estimations}
	Let $m \geq 1$ be an integer, and $q = e^{-z}$, $z = x + iy$ with $0 < x < \frac{\pi}{480}$ and $15x \leq |y| < \frac{\pi}{2m}$. Then there exists a constant $\alpha_m > 0$ such that
	\begin{align*}
		\dfrac{|q|^m}{m\left| 1 + \lp -1 \rp^{m+1} q^{2m} \right|} - \dfrac{|q|^m}{m\lp 1 - |q|^{2m} \rp} < \dfrac{e^{- \frac{m\pi}{480}}}{2m^2 x} \lp \dfrac{2m}{\alpha_m} - 1 \rp.
	\end{align*}
	Furthermore, in the cases $1 \leq m \leq 3$ we may choose $\alpha_1 = 29$, $\alpha_2 = 55$, and $\alpha_3 = 77$.
\end{lemma}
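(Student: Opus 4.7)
The strategy reduces to establishing a lower bound of the form $|1 + (-1)^{m+1} q^{2m}| \geq \alpha_m x$ for a constant $\alpha_m > 2m$, after which the claim follows from elementary estimation together with the monotonicity of the right-hand factor $\frac{2m}{\alpha_m}-1$ in $\alpha_m$. Writing $q = e^{-x-iy}$, I would begin by expanding
\begin{align*}
|1 + (-1)^{m+1} q^{2m}|^2 = 1 + 2(-1)^{m+1} e^{-2mx}\cos(2my) + e^{-4mx} = (1 - e^{-2mx})^2 + 4 e^{-2mx}\tau_m^2,
\end{align*}
where $\tau_m = \cos(my)$ if $m$ is odd and $\tau_m = \sin(my)$ if $m$ is even. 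The hypothesis $|y| < \pi/(2m)$ ensures $|my| < \pi/2$, so $\tau_m$ can be controlled via the monotonicity of $\sin$ and $\cos$ on $[0,\pi/2]$. For even $m$, the constraint $|y| \geq 15x$ directly yields $|\sin(my)| \geq \sin(15mx)$. For odd $m$, one uses $\cos(my) \geq \sin\bigl(\pi/(2m) - |y|\bigr)$ to extract a lower bound from the gap between $|y|$ and $\pi/(2m)$. Applying the Taylor bound $\sin(t) \geq t - t^3/6$ and using $x < \pi/480$ to control higher-order terms produces the desired estimate $|1 + (-1)^{m+1} q^{2m}| \geq \alpha_m x$.

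Granted this bound, and using the elementary inequality $1 - e^{-2mx} \leq 2mx$, I would derive
\begin{align*}
\frac{|q|^m}{m|1 + (-1)^{m+1} q^{2m}|} \leq \frac{e^{-mx}}{m\alpha_m x}, \qquad \frac{|q|^m}{m(1 - |q|^{2m})} \geq \frac{e^{-mx}}{2m^2 x}.
\end{align*}
Subtracting and factoring out $\frac{e^{-mx}}{2m^2 x}$ then gives
\begin{align*}
\frac{|q|^m}{m|1 + (-1)^{m+1} q^{2m}|} - \frac{|q|^m}{m(1 - |q|^{2m})} \leq \frac{e^{-mx}}{2m^2 x}\left(\frac{2m}{\alpha_m} - 1\right).
\end{align*}
Since $\alpha_m > 2m$, the factor $\frac{2m}{\alpha_m} - 1$ is negative; combined with the strict inequality $e^{-mx} > e^{-m\pi/480}$ (valid because $x < \pi/480$), replacing $e^{-mx}$ by $e^{-m\pi/480}$ makes the right-hand side less negative, yielding the claim.

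For the explicit cases $m \in \{1,2,3\}$, I would verify $\alpha_m$ numerically using the Taylor expansions. For $m = 2$, one checks directly that $|1 - q^4| \geq 2e^{-2x}\sin(30x) \geq 55x$ via $\sin(30x) \geq 30x(1 - (30x)^2/6)$ and $x < \pi/480$. For $m = 1, 3$, the computation $|1 + q^{2m}|^2 \geq (1-e^{-2mx})^2 + 4e^{-2mx}\sin^2(15x)$ together with $\sqrt{(2m)^2 + (30m)^2} > \alpha_m$ produces the stated constants $\alpha_1 = 29$ and $\alpha_3 = 77$ with room to spare.

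The main obstacle is the odd-$m$ case. The naive inequality $|1 + q^{2m}| \geq 1 - |q|^{2m}$, which is enough for even $m$, yields only $\alpha_m \approx 2m$ for odd $m$; to produce $\alpha_m$ significantly larger than $2m$, one must carefully exploit the distance between $|y|$ and the pole direction $\pi/(2m)$, combining the two parts of the decomposition $(1 - e^{-2mx})^2 + 4 e^{-2mx}\cos^2(my)$ rather than discarding either term.
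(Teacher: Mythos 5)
Your decomposition $|1 + (-1)^{m+1}q^{2m}|^2 = (1-e^{-2mx})^2 + 4e^{-2mx}\tau_m^2$ with $\tau_m = \cos(my)$ for odd $m$, $\sin(my)$ for even $m$, is the right way to organize the estimate, and the even-$m$ case does close as you describe: $15mx \leq m|y| < \pi/2$ gives $|\sin(m|y|)| \geq \sin(15mx)$, leading to the lower bound $1 - 2\cos(30mx)e^{-2mx} + e^{-4mx} = 904m^2x^2 + O(x^3)$. But the odd-$m$ case, which you flag as the main obstacle, is not a harder technicality you leave to the reader — it is a genuine hole, and your own sketch already contains the reason: the bound $\cos(m|y|) \geq \sin\bigl(\pi/(2m)-|y|\bigr)$ (really $\cos(m|y|) = \sin(m(\pi/(2m)-|y|))$) is controlled by the gap $\pi/(2m) - |y|$, and the hypotheses provide no lower bound on that gap. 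As $|y| \to \pi/(2m)^{-}$ (allowed by $|y|<\pi/(2m)$), $\cos(m|y|) \to 0$, so $|1+q^{2m}| \to 1 - e^{-2mx} < 2mx$, and no $\alpha_m > 2m$ can work uniformly. Concretely, at $m=1$, $x = 0.006$, $y = \pi/2 - 10^{-6}$ (all hypotheses hold), one finds $|1+q^2| \approx 0.012 \ll 29x \approx 0.17$; the lemma's displayed difference is $\approx -10^{-6}$ while the right-hand side with $\alpha_1 = 29$ is $\approx -77$, so the stated strict inequality fails.

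This is in fact a defect in the lemma as written, and the paper's own proof shares it: the step ``$15x \leq |y| < \pi/(2m)$ implies $\cos(2my) \geq -\cos(30mx)$'' needs $2m|y| \leq \pi - 30mx$, i.e.\ $|y| \leq \pi/(2m) - 15x$, which is strictly stronger than the stated hypothesis. (There is also a compensating sign slip in the paper's expansion, $|1+(-1)^{m+1}q^{2m}|^2 = 1 \mathbf{-} 2(-1)^{m+1}\cos(2my)e^{-2mx}+e^{-4mx}$, where the middle sign should be $+$; with the slip the gap looks like it is in the even case, with the correct sign it is in the odd case, but it is there either way.) The repair is to strengthen the hypothesis to $|y| \leq \pi/(2m)-15x$; then for odd $m$ one has $\cos(m|y|) = \sin(\pi/2 - m|y|) \geq \sin(15mx)$, both parities give the same lower bound, and your Taylor estimate delivers the constants $\alpha_m$ with the claimed values. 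The downstream case analysis in Proposition~\ref{Minor Arc Bounds} then needs the three subinterval endpoints $\pi/2,\pi/4,\pi/6$ trimmed by a margin of $15x$ before the secondary splitting bound is invoked.
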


\begin{proof}
	Since $15x \leq |y| < \frac{\pi}{2m}$, we have $\cos\lp 2my \rp \geq - \cos\lp 30mx \rp$, and so
	\begin{align*}
		\left| 1 + \lp -1 \rp^{m+1} q^{2m} \right|^2 &= 1 - 2 \lp -1 \rp^{m+1} \cos\lp 2my \rp e^{-2mx} + e^{-4mx} \\ &\geq 1 - 2 \cos\lp 30mx \rp e^{-2mx} + e^{-4mx}.
	\end{align*}
	From the Taylor expansion
	\begin{align*}
		1 - 2 \lp -1 \rp^{m+1} \cos\lp 30mx \rp e^{-2mx} + e^{-4mx} = 904 m^2 x^2 - 1808 m^3 x^3 + \cdots,
	\end{align*}
	it is apparent that $1 - 2 \lp -1 \rp^{m+1} \cos\lp 30mx \rp e^{-2mx} + e^{-4mx} > \alpha_m^2 x^2$ for some $\alpha_m > 0$ and $0 < x < \frac{\pi}{480}$. This shows that $\left| 1 + \lp -1 \rp^{m+1} q^{2m} \right| > \alpha_m x$ for all $0 < x < \frac{\pi}{480}$, and so
	\begin{align*}
		\dfrac{|q|^m}{m\left| 1 + \lp -1 \rp^{m+1} q^{2m} \right|} - \dfrac{|q|^m}{m\lp 1 - |q|^{2m} \rp} < \dfrac{|q|^m}{m \alpha_m x} - \dfrac{|q|^m}{m\lp 1 - |q|^{2m} \rp}.
	\end{align*}
	By the inequalities $1 - |q|^{2m} = 1 - e^{-2mx} > 2mx$ and $|q|^m > e^{- \frac{m\pi}{480}}$ for $0 < x < \frac{\pi}{480}$, we arrive at the desired bound.
	
	We now evaluate $\alpha_1, \alpha_2, \alpha_3$. Let $f_m(x) := 1 - 2 \lp -1 \rp^{m+1} \cos\lp 30mx \rp e^{-2mx} + e^{-4mx}$, and consider the auxiliary function $g_m(x) := f_m(x) - \alpha_m^2 x^2$. Note that $g_m(0) = g_m^\prime(0) = 0$ since both $f_m(x)$ and $x^2$ have a double zero at $x=0$. In order to prove that $f_m(x) > \alpha_m^2 x^2$ for $0 < x < \frac{\pi}{480}$, it will therefore suffice to prove that $g_m^{\prime\prime}(0) > 0$, i.e. that $f_m^{\prime\prime}(x) > 2 \alpha_m^2$, for $0 < x < \frac{\pi}{480}$. Now,
	\begin{align*}
		f^{\prime\prime}_m(x) = 16 m^2 e^{-4 m x} \lp 1 + 112 e^{2 m x} \cos\lp 30 m x \rp - 15 e^{2 m x} \sin\lp 30 m x \rp \rp,
	\end{align*}
	and so the $\alpha_m$ we choose must satisfy
	\begin{align*}
		\alpha_m^2 < 8 m^2 e^{-4 m x} \lp 1 + 112 e^{2 m x} \cos\lp 30 m x \rp - 15 e^{2 m x} \sin\lp 30 m x \rp \rp
	\end{align*}
	for all $0 < x < \frac{\pi}{480}$. For each $1 \leq m \leq 3$, $f^{\prime\prime}_m(x)$ is decreasing on the interval $0 < x < \frac{\pi}{480}$, and so it suffices to choose $\alpha_m$ that satisfy
	\begin{align*}
		\alpha_m^2 < 8 m^2 e^{- \frac{m}{120}} \lp 1 + 112 e^{\frac{m}{240}} \cos\lp \frac{m}{16} \rp - 15 e^{\frac{m}{240}} \sin\lp \frac{m}{16} \rp \rp.
	\end{align*}
	For each of the values $1 \leq m \leq 3$, the values $\alpha_1 = 29$, $\alpha_2 = 55$, and $\alpha_3 = 77$ solve the required inequality.
\end{proof}

\begin{lemma} \label{Beta Estimates}
	Let $q = e^{-z}$, $z = x + iy$ with $\frac{3\pi}{4} \leq |y| \leq \pi - 15x$ and $0 < x < \frac{\pi}{480}$. Then we have
	\begin{align*}
		- \dfrac{e^{-2x}}{2\lp 1 - e^{-4x} \rp} + \dfrac{\cos\lp 2y \rp \lp e^{-2x} - e^{-6x} \rp}{2\left| 1 - q^4 \right|^2} < - \dfrac{1}{10x}. 
	\end{align*}
\end{lemma}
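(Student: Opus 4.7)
The plan is to combine the two summands of the left-hand side into a single negative expression, use a monotonicity argument in $|y|$ to reduce to the extreme case $|y| = \pi - 15x$, and then verify the resulting one-variable inequality in $x$.

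First, I would expand $|1-q^4|^2 = (1+e^{-4x})^2 - 4e^{-4x}\cos^2(2y)$ from $(1-q^4)\overline{(1-q^4)}$, and verify the algebraic identity
$$|1-q^4|^2 - \cos(2y)(1-e^{-4x})^2 = \bigl(1-\cos(2y)\bigr)\bigl[(1+e^{-4x})^2 + 4e^{-4x}\cos(2y)\bigr]$$
using $(1-e^{-4x})^2 = (1+e^{-4x})^2 - 4e^{-4x}$. Since $e^{-2x}-e^{-6x} = e^{-2x}(1-e^{-4x})$, factoring $\tfrac{e^{-2x}}{2}$ out of both terms of the LHS of the lemma would yield the compact combined form
$$\text{LHS} \;=\; -\frac{e^{-2x}\bigl(1-\cos(2y)\bigr)\bigl[(1+e^{-4x})^2 + 4e^{-4x}\cos(2y)\bigr]}{2(1-e^{-4x})\bigl[(1+e^{-4x})^2 - 4e^{-4x}\cos^2(2y)\bigr]}.$$

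Second, I would reduce to the worst case in $y$. Set $a := e^{-4x}$, $c := \cos(2y)$, and
$$h(c) \;:=\; \frac{(1-c)\bigl[(1+a)^2 + 4ac\bigr]}{(1+a)^2 - 4ac^2}.$$
A direct quotient-rule computation gives
$$h'(c) \;=\; -\frac{(1-a)^2\bigl[(1+a)^2 + 4ac^2\bigr]}{\bigl[(1+a)^2 - 4ac^2\bigr]^2} \;\leq\; 0,$$
so $h$ is non-increasing on $[0,1]$. For $|y| \in [\tfrac{3\pi}{4}, \pi - 15x]$, we have $2|y| \in [\tfrac{3\pi}{2}, 2\pi - 30x]$, and since $\cos$ is increasing on $[\tfrac{3\pi}{2}, 2\pi]$, $c$ ranges over $[0, \cos(30x)]$. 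Because the LHS equals $-\tfrac{e^{-2x}}{2(1-a)} h(c)$, it is maximized (i.e.\ least negative) at $c = \cos(30x)$, corresponding to $|y| = \pi - 15x$. Hence it suffices to verify the inequality at this single endpoint.

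At $|y| = \pi - 15x$ we have $\cos(2y) = \cos(30x)$ and $\cos(4y) = \cos(60x)$, so $\text{LHS} < -\tfrac{1}{10x}$ becomes the one-variable inequality
$$\frac{5xe^{-2x}\bigl(1-\cos(30x)\bigr)\bigl[(1+e^{-4x})^2 + 4e^{-4x}\cos(30x)\bigr]}{(1-e^{-4x})\bigl[1 - 2e^{-4x}\cos(60x) + e^{-8x}\bigr]} \;>\; 1.$$
Leading-order Taylor expansion at $x=0$ uses $1-\cos(30x) \sim 450x^2$, $(1+e^{-4x})^2 + 4e^{-4x}\cos(30x) \to 8$, $1-e^{-4x} \sim 4x$, and $1 - 2e^{-4x}\cos(60x) + e^{-8x} \sim 3616 x^2$, giving a limit of $\tfrac{5 \cdot 450 \cdot 8}{4 \cdot 3616} = \tfrac{18000}{14464} \approx 1.244$, which exceeds $1$ by a comfortable margin.

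The main obstacle will be verifying this one-variable inequality uniformly over $x \in (0, \tfrac{\pi}{480})$. I anticipate handling this via Taylor's theorem with explicit remainder bounds: expand each factor to sufficient order, control the Bernoulli-type coefficients via Lehmer's bound \eqref{Bernoulli Inequality}, and check that the aggregate error remains well inside the $0.244$ safety margin at $x=0$. Alternatively, one can verify monotonicity in $x$ of the reduced left-hand side by analyzing its logarithmic derivative and then reduce to a numerical evaluation at $x = \tfrac{\pi}{480}$. Either route parallels the bookkeeping already employed in Lemmas \ref{Xi Minor Arc Bound} and \ref{Alpha Estimations}.
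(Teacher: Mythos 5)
Your reduction is correct and in fact tighter than the paper's, but the proof stops short of the finish line: the one-variable inequality over $x \in (0,\tfrac{\pi}{480})$ is not actually verified.

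Your algebraic combination is right. Using $a = e^{-4x}$, $c = \cos(2y)$, one has $|1-q^4|^2 = 1 - 2a\cos(4y) + a^2 = (1+a)^2 - 4ac^2$ (since $\cos(4y) = 2c^2 - 1$), and the identity $-[(1+a)^2 - 4ac^2] + c(1-a)^2 = -(1-c)\bigl[(1+a)^2 + 4ac\bigr]$ gives exactly the compact form you wrote. Your derivative computation $h'(c) = -\tfrac{(1-a)^2[(1+a)^2+4ac^2]}{[(1+a)^2-4ac^2]^2}$ checks out, so reducing to $c = \cos(30x)$ (i.e.\ $|y| = \pi - 15x$) is legitimate. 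The paper instead bounds the numerator's $\cos(2y)$ by $1$ and the denominator's $\cos(4y)$ by $\cos(60x)$ separately; these two bounds are not consistent with a single value of $y$, but both are valid upper bounds, so the paper's $F(x)$ dominates yours. In that sense your monotonicity argument is cleaner and sharper, and your leading-order constant $\tfrac{18000}{14464} \approx 1.244$ is the same one that governs the paper's reduction.

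The gap is the last step. You compute the limit of the one-variable ratio as $x \to 0^+$ and observe it exceeds $1$, then state that you anticipate handling the uniform verification by either explicit-remainder Taylor bounds or a monotonicity argument — but you carry out neither. Pointing to Lemmas \ref{Xi Minor Arc Bound} and \ref{Alpha Estimations} as analogues does not substitute for the estimate itself. The paper, for its part, closes the argument concretely: it clears denominators in $F(x) < -\tfrac{1}{10x}$ to obtain a polynomial-type inequality in $e^{-4x}$, $\cos(60x)$, and $x$, and then dispatches it with a term-by-term comparison. That is the piece your proposal leaves as a plan rather than a proof, and it is precisely the part where the quantitative content of the lemma lives (the interval endpoint $\tfrac{\pi}{480}$ and the constant $\tfrac{1}{10}$ both matter). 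To complete your route, you need to either (a) bound each Taylor remainder explicitly and show the aggregate error stays below $0.244$, or (b) prove the ratio $5x e^{-2x}(1-\cos(30x))\bigl[(1+e^{-4x})^2+4e^{-4x}\cos(30x)\bigr]/\bigl[(1-e^{-4x})(1-2e^{-4x}\cos(60x)+e^{-8x})\bigr]$ is monotone on the interval and evaluate it at $x = \tfrac{\pi}{480}$; either would suffice, but one of them has to actually be done.
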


\begin{proof}
	We have $3\pi \leq |4y| \leq 4\pi - 60x$, and since $\cos\lp y \rp$ is increasing in the region $3\pi \leq y \leq 4y$ we have $\cos\lp 4y \rp \leq \cos\lp 4\pi - 60x \rp = \cos\lp 60 x \rp$. Therefore, we have
	\begin{align*}
		\left| 1 - q^4 \right|^2 = 1 - 2 \cos\lp 4y \rp e^{-4x} + e^{-8x} \geq 1 - 2 \cos\lp 60x \rp e^{-4x} + e^{-8x}
	\end{align*}
	and thus
	\begin{align*}
		- \dfrac{e^{-2x}}{2\lp 1 - e^{-4x} \rp} &+ \dfrac{\cos\lp 2y \rp \lp e^{-2x} - e^{-6x} \rp}{2\left| 1 - q^4 \right|^2} \\ &\leq - \dfrac{e^{-2x}}{2\lp 1 - e^{-4x} \rp} + \dfrac{e^{-2x} - e^{-6x}}{2\lp 1 - 2 \cos\lp 60x \rp e^{-4x} + e^{-8x} \rp} =: F(x).
	\end{align*}
	Fix any $A > 0$. The inequality $F(x) < - \frac{A}{x}$ is equivalent to
	\begin{align*}
		2 x e^{-6x}\lp 1 - \cos\lp 60x \rp \rp > 2A\lp 1 - e^{-4x} \rp \lp 1 - 2 \cos\lp 60x \rp e^{-4x} + e^{-8x} \rp.
	\end{align*}
	If we set $A = \frac{1}{10}$ and rearrange, this is equivalent to showing that
	\begin{align*}
		2x e^{-6x} + \dfrac{2}{5} e^{-4x} &\cos\lp 60x \rp + \dfrac{1}{5} e^{-4x} + e^{-12x} \\ &> \dfrac{1}{5} + 2x e^{-6x} \cos\lp 60x \rp + \dfrac{2}{5} e^{-8x} \cos\lp 60x \rp + \dfrac{1}{5} e^{-8x}.
	\end{align*}
	By a term-by-term comparison, it suffices to show that $e^{-12x} > \frac{1}{5}$ for $0 < x < \frac{\pi}{480}$, which is true.
\end{proof}

\begin{lemma} \label{PI^2/12 Lemma}
	Let $q = e^{-z}$, $z = x + iy$ with $x>0$ and $15x \leq |y| \leq \pi - 15x$. Then
	\begin{align*}
		\Log\lp \lp |q|; |q|^2 \rp_\infty^{-1} \rp < \dfrac{\pi^2}{12x}.
	\end{align*}
\end{lemma}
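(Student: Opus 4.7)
The key observation is that the quantity on the left depends only on $x = \mathrm{Re}(z)$ and not on $y$, because $|q| = e^{-x}$ is real; so the hypothesis $15x \leq |y| \leq \pi - 15x$ is actually irrelevant, and I expect the stronger statement that the inequality holds for all $x > 0$. This means the minor-arc hypothesis was included only for uniformity with the surrounding lemmas and no delicate argument about $y$ is needed.

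My plan is to expand the logarithm into a double Taylor series, interchange summation, and then bound each term using $\sinh(u) > u$. Specifically, start from
\begin{align*}
	\Log\lp \lp |q|; |q|^2 \rp_\infty^{-1} \rp = - \sum_{n \geq 0} \Log\lp 1 - |q|^{2n+1} \rp = \sum_{n \geq 0} \sum_{m \geq 1} \dfrac{|q|^{m(2n+1)}}{m},
\end{align*}
which is valid since $|q| = e^{-x} < 1$. Interchanging the order of summation, the inner sum over $n$ is geometric, yielding
\begin{align*}
	\Log\lp \lp |q|; |q|^2 \rp_\infty^{-1} \rp = \sum_{m \geq 1} \dfrac{|q|^m}{m\lp 1 - |q|^{2m} \rp} = \sum_{m \geq 1} \dfrac{1}{2m \sinh(mx)}.
\end{align*}

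The main (and essentially only) estimate needed is the elementary inequality $\sinh(u) > u$ for every $u > 0$, which follows immediately from the Taylor expansion $\sinh(u) = u + u^3/6 + \cdots$. Applying it with $u = mx$ gives
\begin{align*}
	\sum_{m \geq 1} \dfrac{1}{2m \sinh(mx)} < \sum_{m \geq 1} \dfrac{1}{2m \cdot mx} = \dfrac{1}{2x} \sum_{m \geq 1} \dfrac{1}{m^2} = \dfrac{\zeta(2)}{2x} = \dfrac{\pi^2}{12x},
\end{align*}
and the inequality is strict because $\sinh(mx) > mx$ strictly for each $m \geq 1$. Since there is no real obstacle in the argument, I anticipate no difficulty; the only bookkeeping point worth flagging is justifying the swap of summations, which is immediate by absolute convergence for $|q| < 1$.
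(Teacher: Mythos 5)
Your proof is correct and is essentially the same as the paper's: both reduce to the same series $\sum_{m\geq 1}\frac{e^{-mx}}{m(1-e^{-2mx})}$ and the same elementary bound, which the paper states as $2x < e^x - e^{-x}$ and you phrase equivalently as $\sinh(mx) > mx$. Your remark that the constraint on $y$ is vacuous here is correct (the paper also uses only $x > 0$).
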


\begin{proof}
	We have by expanding series that
	\begin{align*}
		\Log\lp \lp |q|; |q|^2 \rp_\infty^{-1} \rp = \sum_{m \geq 1} \dfrac{e^{-mx}}{m\lp 1 - e^{-2mx} \rp}.
	\end{align*}
	We have $\frac{e^{-x}}{1 - e^{-2x}} < \frac{1}{2x}$; this inequality is equivalent to showing that $2x < e^x - e^{-x}$, which can be proven for all $x > 0$ using elementary calculus. We therefore have by substitutions that
	\begin{align*}
		\dfrac{e^{-mx}}{m\lp 1 - e^{-2mx} \rp} < \dfrac{1}{2m^2 x},
	\end{align*}
	and the result follows by summing over $m$.
\end{proof}

\begin{lemma} \label{Elementary Bounds}
	For $1 \leq m \leq 3$ and $0 < x < \frac{\pi}{480}$, we have
	\begin{align*}
		\dfrac{e^{-mx}}{m\lp 1 - e^{-2mx}\rp} > \dfrac{499}{1000 m^2 x}.
	\end{align*}
\end{lemma}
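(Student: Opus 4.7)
The plan is to reduce the inequality to a single universal bound on $\sinh(u)/u$. Multiplying numerator and denominator by $e^{mx}$ gives the identity
\begin{align*}
\frac{e^{-mx}}{m\lp 1 - e^{-2mx}\rp} = \frac{1}{m\lp e^{mx} - e^{-mx}\rp} = \frac{1}{2m\sinh(mx)}.
\end{align*}
So the desired inequality is equivalent to $\frac{1}{2m\sinh(mx)} > \frac{499}{1000 m^2 x}$, i.e.\ to $\frac{\sinh(mx)}{mx} < \frac{500}{499}$.

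Setting $u := mx$, the hypothesis $1\le m\le 3$ and $0 < x < \frac{\pi}{480}$ forces $0 < u < \frac{3\pi}{480} = \frac{\pi}{160}$, so it suffices to prove the one-variable bound $\sinh(u)/u < \frac{500}{499}$ on the interval $0 < u < \frac{\pi}{160}$. Since the Taylor expansion $\sinh(u)/u = \sum_{k\ge 0} u^{2k}/(2k+1)!$ has only non-negative coefficients, the function $u\mapsto \sinh(u)/u$ is increasing on $(0,\infty)$, so it is enough to verify the inequality at the single endpoint $u = \frac{\pi}{160}$.

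The final step is a direct numerical check: using the Taylor series one estimates
\begin{align*}
\frac{\sinh(\pi/160)}{\pi/160} \;<\; 1 + \frac{(\pi/160)^2}{6} + \frac{(\pi/160)^4}{60} \;<\; 1.0001 \;<\; \frac{500}{499},
\end{align*}
where the last inequality follows from $500/499 > 1.002$. This completes the reduction and the verification. There is no serious obstacle here; the only thing to be careful about is the monotonicity argument, but it is immediate from the non-negativity of the Taylor coefficients of $\sinh(u)/u$.
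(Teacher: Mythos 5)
Your proof is correct. It takes a route that is parallel to, but slightly different from, the paper's. The paper rewrites the claim as $mx > A(e^{mx}-e^{-mx})$ with $A=\tfrac{499}{1000}$, notes both sides vanish at $x=0$, and then compares derivatives, reducing to the condition $A(e^{mx}+e^{-mx})<1$, i.e.\ $\cosh(mx)<\tfrac{500}{499}$; this is checked at the endpoint $x=\tfrac{\pi}{480}$, $m=3$. You instead identify the left side as $\tfrac{1}{2m\sinh(mx)}$ and reduce directly to the one-variable inequality $\sinh(u)/u<\tfrac{500}{499}$ for $u=mx\in(0,\tfrac{\pi}{160})$, using the monotonicity of $\sinh(u)/u$ (clear from its nonnegative Taylor coefficients) and a short numerical check. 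Your reduction is marginally tidier and actually requires a weaker endpoint condition, since $\sinh(u)/u<\cosh(u)$ for $u>0$; the paper's derivative-comparison version happens to prove the stronger bound $\cosh(u)<\tfrac{500}{499}$ along the way. Both are valid, and the numerical verification in your final display is sound: $1+\tfrac{u^2}{6}+\tfrac{u^4}{60}$ genuinely dominates the Taylor tail of $\sinh(u)/u$ for $u$ this small, and the resulting value is comfortably below $\tfrac{500}{499}\approx1.002$.
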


\begin{proof}
	For any $A>0$, the inequality $\frac{e^{-mx}}{1 - e^{-2mx}} > \frac{A}{mx}$ reduces to $mx > A\lp e^{mx} - e^{-mx} \rp$. The left and right-hand sides have equal values at $x=0$, and so by taking derivatives it would suffice to show that $A\lp e^{mx} + e^{-mx} \rp < 1$ for $0 < x < \frac{\pi}{480}$. The left-hand side is now an increasing function of $x$, and so it suffices to check that the inequality is true for $A = \frac{499}{1000}$, $1 \leq m \leq 3$ and $x = \frac{\pi}{480}$, which holds.
\end{proof}

We may now prove the main minor arc bound on $G(q)$.

\begin{proposition} \label{Minor Arc Bounds}
	Let $q = e^{-z}$ for $z = x + iy$ satisfying $0 < x < \frac{\pi}{480}$ and $15x \leq |y| \leq \pi - 15x$. Then we have
	\begin{align*}
		\left| G\lp q \rp \right| < \exp\lp \dfrac{1}{5 x} \rp.
	\end{align*}
\end{proposition}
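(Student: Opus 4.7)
The strategy mirrors the proof of Lemma~\ref{Xi Minor Arc Bound}, but is more intricate because $G(q)$ has \emph{two} dominant essential singularities, at $q = 1$ and at $q = -1$. I will first carry out a logarithmic expansion. Applying \eqref{Log Product Expansion} to each of the two factors of $G(q)$ and combining via the factorization $1 - q^{4m} = (1-q^{2m})(1+q^{2m})$ together with the identity $q^m + (-1)^m q^{3m} = q^m\bigl(1+(-1)^m q^{2m}\bigr)$, one obtains
\begin{align*}
	\log G(q) = \sum_{m \geq 1} \frac{q^m}{m\bigl(1 + (-1)^{m+1} q^{2m}\bigr)}.
\end{align*}
Taking real parts and bounding term by term produces the baseline estimate
\begin{align*}
	\log |G(q)| \leq \sum_{m \geq 1} \frac{|q|^m}{m\,\bigl|1 + (-1)^{m+1} q^{2m}\bigr|}.
\end{align*}

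For the tail $m \geq 4$, the reverse triangle inequality $|1 + (-1)^{m+1} q^{2m}| \geq 1 - |q|^{2m}$ together with the elementary bound $\frac{e^{-mx}}{1-e^{-2mx}} < \frac{1}{2mx}$ (as used in the proof of Lemma~\ref{PI^2/12 Lemma}) gives a contribution of at most $\frac{1}{2x}\bigl(\tfrac{\pi^2}{6} - 1 - \tfrac{1}{4} - \tfrac{1}{9}\bigr) < \tfrac{0.143}{x}$. For the small terms $m = 1, 2, 3$ I will partition the minor arc into two subregions. On $R_1 := \{15x \leq |y| \leq \pi/6\}$, the hypotheses of Lemma~\ref{Alpha Estimations} are satisfied for all three values $m = 1, 2, 3$, so each of these summands is bounded by the corresponding real-line quantity minus an explicit negative correction of size $\frac{e^{-m\pi/480}}{2m^2 x}\bigl(\tfrac{2m}{\alpha_m} - 1\bigr)$ with $\alpha_1 = 29$, $\alpha_2 = 55$, $\alpha_3 = 77$. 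Combined with Lemma~\ref{Elementary Bounds} to bound the real-line pieces from below, these savings push the total estimate comfortably below $1/(5x)$ on $R_1$. On the complementary subregion $R_2 := \{\pi/6 < |y| \leq \pi - 15x\}$, Lemma~\ref{Alpha Estimations} no longer applies for $m = 2, 3$; here the plan is to use Lemma~\ref{Beta Estimates} to control the $m = 2$ summand (which provides the dominant savings near $q = -1$), together with elementary trigonometric bounds on the $m = 1$ and $m = 3$ summands.

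The hard part will be the analysis on $R_2$. The absolute-value bound from the baseline estimate is not tight enough across all of $R_2$, because at $|y| = \pi/2$ every denominator $|1 + (-1)^{m+1} q^{2m}|$ attains its minimum simultaneously, driving the naive bound up to approximately $\pi^2/(12x)$, which exceeds $1/(5x)$. To obtain strictly sub-baseline growth throughout $R_2$, one must retain the real-part structure of the summands: the numerators involve $\cos(my)$, which changes sign across $R_2$, producing term-by-term cancellation. Lemma~\ref{Beta Estimates} captures exactly this effect for the $m = 2$ summand near $q = -1$, yielding a savings of at least $\frac{1}{10x}$. Extending this reasoning to the remaining small-$m$ summands in $R_2$ and verifying the final numerical inequality will constitute the main technical burden of the proof.
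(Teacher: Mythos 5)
Your plan is broadly in the right spirit but has a genuine gap: it establishes the bound only on $R_1 = \{15x \leq |y| \leq \pi/6\}$, which corresponds to exactly one of the six subintervals used in the paper's proof, and defers the remaining (much larger) range $R_2 = \{\pi/6 < |y| \leq \pi - 15x\}$ as ``the main technical burden.'' The difficulty is that the sketch you give for $R_2$ cannot be executed as stated. Lemma~\ref{Beta Estimates} has hypothesis $\tfrac{3\pi}{4} \leq |y| \leq \pi - 15x$, so it only governs a thin piece of $R_2$; in the rest of $R_2$ (i.e.\ $\pi/6 < |y| < 3\pi/4$) you have no savings mechanism for the $m=2$ summand at all. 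Moreover, even in the piece where Lemma~\ref{Beta Estimates} does apply, its $\tfrac{1}{10x}$ savings alone is far from enough --- you need roughly $\tfrac{\pi^2}{12} - \tfrac{1}{5} \approx 0.62$ units of savings per $\tfrac1x$, not $0.1$. The paper closes this gap by splitting $R_2$ into four further subintervals $[\pi/6,\pi/4)$, $[\pi/4,\pi/2)$, $[\pi/2,3\pi/4)$, $[3\pi/4,5\pi/6)$, $[5\pi/6,\pi-15x]$ and, in each one, dropping those small-$m$ summands whose $\cos(my)$ factor is non-positive (retaining their subtracted baselines, lower-bounded by Lemma~\ref{Elementary Bounds}), using Lemma~\ref{Alpha Estimations} with $\ell = 1$ or $\ell = 2$ when it applies, and in the range $[3\pi/4, 5\pi/6)$ supplementing with a monotonicity argument because there $\cos(2y) > 0$ and the $m=2$ term is an obstruction rather than a savings. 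Your proposal does not account for any of this case structure, so it is not ``extending the reasoning'' that remains; it is essentially the whole proof.

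Two smaller points. First, on $R_1$ you cite Lemma~\ref{Elementary Bounds} ``to bound the real-line pieces from below,'' but a lower bound on $\frac{e^{-mx}}{m(1-e^{-2mx})}$ is the wrong direction when this quantity appears positively in your upper estimate; what you actually need there is the upper bound $\frac{e^{-mx}}{m(1-e^{-2mx})} < \frac{1}{2m^2 x}$ from the proof of Lemma~\ref{PI^2/12 Lemma}. (Lemma~\ref{Elementary Bounds} is used by the paper only when a $-\frac{e^{-mx}}{m(1-e^{-2mx})}$ term is retained as a savings, which does not happen in the $\ell = k = 3$ regime governing $R_1$.) Second, the constant you actually obtain on $R_1$ is about $0.195/x$, so ``comfortably below'' $0.2/x$ overstates the margin; it works, but only just.
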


\begin{proof}
	By taking exponentials, it suffices to prove that $\mathrm{Re}\lp \Log \lp G(q) \rp \rp < \frac{1}{5x}$. As in the proof of Proposition \ref{Major Arc Bound}, we may use Taylor expansions to show
	\begin{align*}
		\Log\lp G(q) \rp = \Log\lp\lp q; q^4 \rp_\infty^{-1}\rp + \Log\lp\lp -q^3; q^4 \rp_\infty^{-1}\rp = \sum_{m \geq 1} \dfrac{q^m}{m\lp 1 + \lp -1 \rp^{m+1} q^{2m} \rp}.
	\end{align*}
	By taking real parts, we have
	\begin{align} \label{Exact Real Part}
		\mathrm{Re}\lp \Log\lp G(q) \rp \rp = \sum_{m \geq 1} \dfrac{\cos\lp my \rp \lp |q|^m + \lp -1 \rp^{m+1} |q|^{3m} \rp}{m \left| 1 + \lp -1 \rp^{m+1} q^{2m} \right|^2}.
	\end{align}
	Note that since cosine is even, we may assume without loss of generality that $y > 0$. This proof uses the idea of ``splitting off terms" in this series expansion. In particular, we make use of the string of inequalities
	\begin{align} \label{Real-Aboslute Value Inequality}
		\mathrm{Re}\lp \dfrac{q^m}{m\lp 1 + \lp -1 \rp^{m+1} q^{2m} \rp} \rp \leq \dfrac{|q|^m}{m \left| 1 + \lp -1 \rp^{m+1} q^{2m} \right|} \leq \dfrac{|q|^m}{m \lp 1 - |q|^{2m} \rp},
	\end{align}
	in order to bound \eqref{Exact Real Part}. A priori, one may show immediately using Lemma \ref{PI^2/12 Lemma} and \eqref{Real-Aboslute Value Inequality} that $\mathrm{Re}\lp \Log\lp G(q) \rp \rp < \frac{\pi^2}{12x}$, which is insufficient for our purposes.  The idea of splitting terms off is to use \eqref{Real-Aboslute Value Inequality} more carefully to keep track of some of the error introduced in this process, eventually pushing the a priori bound of $\frac{\pi^2}{12x}$ below the required $\frac{1}{5x}$. More specifically, by applying \eqref{Real-Aboslute Value Inequality} we have for any integer $k \geq 0$ (with $k=0$ denoting an empty sum) a corresponding ``splitting bound"
	\begin{align*}
		\mathrm{Re}\lp \Log\lp G(q) \rp \rp &\leq \sum_{m \geq 1} \dfrac{|q|^m}{m\lp 1 - |q|^{2m} \rp} \\ &+ \sum_{m=1}^k \lp \dfrac{\cos\lp my \rp \lp |q|^m + \lp -1 \rp^{m+1} |q|^{3m} \rp}{m \left| 1 + \lp -1 \rp^{m+1} q^{2m} \right|^2} - \dfrac{|q|^m}{m\lp 1 - |q|^{2m} \rp} \rp.
	\end{align*}
	The infinite sum is a sort of main term which we must reduce below $\frac{1}{5x}$ by means of the finite sum. By Lemma \ref{PI^2/12 Lemma} we have
	\begin{align*}
		\Log\lp \lp |q|; |q|^2 \rp_\infty^{-1} \rp = \sum_{m \geq 1} \frac{|q|^m}{m\lp 1 - |q|^{2m} \rp} < \dfrac{\pi^2}{12x},
	\end{align*}
	and therefore
	\begin{align} \label{Main Splitting Bound}
		\mathrm{Re}\lp \Log\lp G(q) \rp \rp < \dfrac{\pi^2}{12x} &+ \sum_{m=1}^k \lp \dfrac{\cos\lp my \rp \lp e^{-mx} + \lp -1 \rp^{m+1} e^{-3mx} \rp}{m \left| 1 + \lp -1 \rp^{m+1} q^{2m} \right|^2} - \dfrac{e^{-mx}}{m\lp 1 - e^{-2mx} \rp} \rp.
	\end{align}
	Note that if the conditions of Lemma \ref{Alpha Estimations} are satisfied, then comparison between the first and second terms in \eqref{Real-Aboslute Value Inequality} implies that for any $k \geq \ell \geq 0$ we have
	\begin{align} \label{Secondary Splitting Bound}
		\mathrm{Re}\lp \Log\lp G(q) \rp \rp &< \dfrac{\pi^2}{12x} + \sum_{m=1}^\ell \dfrac{e^{- \frac{m\pi}{480}}}{2m^2 x} \lp \dfrac{2m}{\alpha_m} - 1 \rp \notag \\ &+ \sum_{m=\ell+1}^k \lp \dfrac{\cos\lp my \rp \lp e^{-mx} + \lp -1 \rp^{m+1} e^{-3mx} \rp}{m \left| 1 + \lp -1 \rp^{m+1} q^{2m} \right|^2} - \dfrac{e^{-mx}}{m\lp 1 - e^{-2mx} \rp} \rp.
	\end{align}
	
	Our objective now is to prove that the right-hand side of either \eqref{Main Splitting Bound} or \eqref{Secondary Splitting Bound} is bounded above by $\frac{1}{5x}$ for all $0 < x < \frac{\pi}{480}$ and all $15x \leq y \leq \pi - 15x$. This will not be done all at once, but in stages. In the first stage of the proof, we will split the interval $\frac{\pi}{2} \leq y \leq \pi - 15x$ into several subintervals. On each subinterval, some version of \eqref{Main Splitting Bound} will be sufficient to prove the desired inequality. After this is completed, we will be able to apply the $\ell = 1$ case of \eqref{Secondary Splitting Bound}. We will use this case to prove the result in the range $\frac{\pi}{4} \leq y < \frac{\pi}{2}$. We then use the case $\ell = 2$ of \eqref{Secondary Splitting Bound} to cover the range $\frac{\pi}{6} \leq y < \frac{\pi}{4}$, and finally we will use the case $\ell = 3$ of \eqref{Secondary Splitting Bound} to cover the range $15x \leq y < \frac{\pi}{6}$. All of these cases together prove the desired result in the full range $15x \leq y \leq \pi - 15x$. We begin now with the application of \eqref{Main Splitting Bound} to the interval $\frac{\pi}{2} \leq y \leq \pi - 15x$. \\
	
	Suppose $\frac{5\pi}{6} \leq y \leq \pi - 15x$. Because $\cos\lp y \rp, \cos\lp 3y \rp \leq 0$ in this range, we have using the $k = 3$ case of \eqref{Main Splitting Bound} that
	\begin{align*}
		\mathrm{Re}\lp \Log\lp G(q) \rp \rp < \dfrac{\pi^2}{12x} - \dfrac{e^{-x}}{1 - e^{-2x}} &+ \dfrac{\cos\lp 2y \rp \lp e^{-2x} - e^{-6x} \rp}{2\lp 1 - \cos\lp 4y \rp e^{-4x} + e^{-8x} \rp} \\ &- \dfrac{e^{-2x}}{2\lp 1 - e^{-4x} \rp} - \dfrac{e^{-3x}}{3\lp 1 - e^{-6x} \rp}.
	\end{align*}
	By Lemmas \ref{Beta Estimates} and \ref{Elementary Bounds}, we therefore have
	\begin{align*}
		\mathrm{Re}\lp \Log\lp G(q) \rp \rp &< \dfrac{\pi^2}{12x} - \dfrac{1}{10x} - \dfrac{e^{-x}}{1 - e^{-2x}} - \dfrac{e^{-3x}}{3\lp 1 - e^{-6x} \rp} \\ &< \lp \dfrac{\pi^2}{12} - \dfrac{1}{10} - \dfrac{499}{1000} \lp 1 + \dfrac{1}{9} \rp \rp \dfrac{1}{x},
	\end{align*}
	for all $0 < x < \frac{\pi}{480}$, which establishes $\mathrm{Re}\lp \Log\lp G(q) \rp \rp < \frac{1}{5x}$ in this region.
	
	We now consider the region $\frac{3\pi}{4} \leq y < \frac{5\pi}{6}$. In this region we have $\cos\lp y \rp \leq 0$, and so by the $k = 2$ variant of \eqref{Main Splitting Bound} we have
	\begin{align*}
		\mathrm{Re}\lp \Log\lp G(q) \rp \rp < \dfrac{\pi^2}{12x} - \dfrac{e^{-x}}{1 - e^{-2x}} + \dfrac{\cos\lp 2y \rp \lp e^{-2x} - e^{-6x} \rp}{2\lp 1 - 2 \cos\lp 4y \rp e^{-4x} + e^{-8x} \rp} - \dfrac{e^{-2x}}{2\lp 1 - e^{-4x} \rp}.
	\end{align*}
	By considering partial derivatives of the numerator and denominator separately, we can see that in the region $\frac{3\pi}{4} \leq y < \frac{5\pi}{6}$ the fraction $\frac{\cos\lp 2y \rp \lp e^{-2x} - e^{-6x} \rp}{2\lp 1 - \cos\lp 4y \rp e^{-4x} + e^{-8x} \rp}$ is an increasing function of $y$, and therefore we have in this region by applying Lemma \ref{Elementary Bounds} that
	\begin{align*}
		\mathrm{Re}\lp \Log\lp G(q) \rp \rp &< \dfrac{\pi^2}{12x} - \dfrac{e^{-x}}{1 - e^{-2x}} - \dfrac{e^{-2x}}{2\lp 1 - e^{-4x} \rp} + \dfrac{e^{-2x} - e^{-6x}}{4\lp 1 - e^{-4x} + e^{-8x} \rp} \\ &< \lp \dfrac{\pi^2}{12} - \dfrac{499}{1000} \lp 1 + \dfrac{1}{4} \rp \rp \dfrac{1}{x} + \dfrac{e^{-2x} - e^{-6x}}{4\lp 1 - e^{-4x} + e^{-8x} \rp}.
	\end{align*}
	It is clear that the term $\frac{e^{-2x} - e^{-6x}}{4\lp 1 - e^{-4x} + e^{-8x} \rp}$ is extremely small in $0 < x < \frac{\pi}{480}$. In particular, it is straightforward to show that this quantity is less than $\frac{7}{1000}$ for $0 < x < \frac{\pi}{480}$. It follows that
	\begin{align*}
		\lp \dfrac{\pi^2}{12} - \dfrac{499}{1000} \lp 1 + \dfrac{1}{4} \rp \rp \dfrac{1}{x} &+ \dfrac{e^{-2x} - e^{-6x}}{4\lp 1 - e^{-4x} + e^{-8x} \rp} \\ &< \lp \dfrac{\pi^2}{12} - \dfrac{499}{1000} \lp 1 + \dfrac{1}{4} \rp \rp \dfrac{1}{x} + \dfrac{7}{1000} < \dfrac{1}{5x}
	\end{align*}
	for $0 < x < \frac{\pi}{480}$, and therefore $\mathrm{Re}\lp \Log\lp G(q) \rp \rp < \frac{1}{5x}$ for $0 < x < \frac{\pi}{480}$ and $\frac{3\pi}{4} \leq y < \frac{5\pi}{6}$.
	
	Consider now the range $\frac{\pi}{2} \leq y < \frac{3\pi}{4}$. Here, we have $\cos\lp y \rp, \cos\lp 2y \rp \leq 0$ and therefore by the $k = 2$ case of \eqref{Main Splitting Bound} and Lemma \ref{Elementary Bounds} we obtain
	\begin{align*}
		\mathrm{Re}\lp \Log\lp G(q) \rp \rp < \dfrac{\pi^2}{12x} - \dfrac{e^{-x}}{1 - e^{-2x}} - \dfrac{e^{-2x}}{2\lp 1 - e^{-4x} \rp} < \lp \dfrac{\pi^2}{12} - \dfrac{49}{100}\lp 1 + \dfrac{1}{4} \rp \rp \dfrac{1}{x}
	\end{align*}
	As in the previous case, this establishes $\mathrm{Re}\lp \Log\lp G(q) \rp \rp < \frac{1}{5x}$ for all $0 < x < \frac{\pi}{480}$ and, by taking together all previous cases as well as this one, all $\frac{\pi}{2} \leq y < \pi$.
	
	Note that we are reduced to the region $15x \leq y < \frac{\pi}{2}$, and so we may invoke the case $\ell = 1$ of \eqref{Secondary Splitting Bound}. Consider the range $\frac{\pi}{4} \leq y < \frac{\pi}{2}$. By the $\ell=1$, $k=3$ case of \eqref{Secondary Splitting Bound} along with Lemma \ref{Elementary Bounds} and the fact that $\cos\lp 2y \rp, \cos\lp 3y \rp \leq 0$ in this region, we obtain
	\begin{align*}
		\mathrm{Re}\lp \Log\lp G(q) \rp \rp &< \dfrac{\pi^2}{12x} - \dfrac{27 e^{-\frac{\pi}{480}}}{58x} - \dfrac{e^{-2x}}{2\lp 1 - e^{-4x} \rp} - \dfrac{e^{-3x}}{3\lp 1 - e^{-6x} \rp} \\ &< \lp \dfrac{\pi^2}{12} - \dfrac{27 e^{-\frac{\pi}{480}}}{58} - \dfrac{499}{1000}\lp \dfrac{1}{4} + \dfrac 19 \rp \rp \dfrac{1}{x},
	\end{align*}
	which is less than $\frac{1}{5x}$, so the desired result is proven in the region $\frac{\pi}{4} \leq y < \frac{\pi}{2}$.
	
	We now consider the range $\frac{\pi}{6} \leq y < \frac{\pi}{4}$, within which the $\ell = 2$ case of \eqref{Secondary Splitting Bound} applies by Lemma \ref{Alpha Estimations}. By \eqref{Secondary Splitting Bound} with $\ell = 2$ and $k = 3$, we have
	\begin{align*}
		\mathrm{Re}\lp \Log\lp G(q) \rp \rp < \dfrac{\pi^2}{12x} &- \dfrac{27 e^{-\frac{\pi}{480}}}{58x} - \dfrac{51 e^{- \frac{\pi}{240}}}{440x} + \dfrac{\cos\lp 3y \rp \lp e^{-3x} + e^{-9x} \rp}{3 \left| 1 + q^6 \right|^2} - \dfrac{e^{-3x}}{3\lp 1 - e^{-6x} \rp}.
	\end{align*}
	Since in this range we have $\cos\lp 3y \rp \leq 0$, we have
	\begin{align*}
		\mathrm{Re}\lp \Log\lp G(q) \rp \rp < \dfrac{\pi^2}{12x} &- \dfrac{27 e^{-\frac{\pi}{480}}}{58x} - \dfrac{51 e^{- \frac{\pi}{240}}}{440x} - \dfrac{e^{-3x}}{3\lp 1 - e^{-6x} \rp},
	\end{align*}
	which is as in earlier cases yields the desired result for $0 < x < \frac{\pi}{480}$ by Lemma \ref{Elementary Bounds}.
	
	Finally, consider the interval $0 < 15x \leq y < \frac{\pi}{6}$. We may use case $\ell = k = 3$ of \eqref{Secondary Splitting Bound}, which implies
	\begin{align*}
		\mathrm{Re}\lp \Log\lp G(q) \rp \rp < \dfrac{\pi^2}{12x} - \dfrac{21 e^{-\frac{\pi}{480}}}{58x} - \dfrac{51 e^{-\frac{\pi}{240}}}{440x} - \dfrac{71 e^{-\frac{\pi}{160}}}{1386x}.
	\end{align*}
	for $0 < x < \frac{\pi}{480}$. The right-hand side above is always less than $\frac{1}{5x}$, and this completes the proof in the region $15x \leq y < \frac{\pi}{6}$. This completes the proof of the proposition.
\end{proof}

\subsection{Bounds on $F_a^{r,t}(z)$ and $E^{r,t}(z)$}

We will need the following effective estimates of the functions $F_a^{r,t}(z)$ and $E^{r,t}(z)$ which appear in Lemma \ref{B_rt Bound}.

\begin{lemma} \label{E-Bounds}
	Let $0 < a \leq 1$ be a real number and $z = x + iy$ any complex number satisfying $|z| < 1$ and $0 \leq |y| < 15x$. Then we have
	\begin{align*}
		E^{1,4}(z) < 28 |z|^3
	\end{align*}
	and
	\begin{align*}
		E^{3,4}(z) < 56 |z|^3.
	\end{align*}
\end{lemma}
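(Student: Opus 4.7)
The plan is to bound $E^{r,t}(z)/|z|^3$ by a constant, exploiting the hypothesis $|z|<1$ to factor $|z|^3$ out of every term. Writing
\begin{align*}
\frac{E^{r,t}(z)}{|z|^3} \;\le\; \frac{J_{g_{r,t},4}(z)}{720} \;+\; \sum_{k\ge 3}\left|\frac{B_{k+2}\!\left(1-\tfrac rt\right)}{(k+2)!}-\frac{(-r)^{k+1}\!\left(\tfrac12-\tfrac rt\right)}{t^{k+1}(k+1)!}\right|\!\left(1+\frac{k!}{10(k-3)!}\right)|z|^{k-3},
\end{align*}
I split the estimate into an analytic piece (the integral) and an algebraic tail (the series), each of which I bound separately for $(r,t)\in\{(1,4),(3,4)\}$.

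First I would handle the series. Using Lehmer's bound \eqref{Bernoulli Inequality} on $B_{k+2}(1-r/t)$ and the trivial bound $r/t\le 1$, each coefficient decays like $2\zeta(k+2)/(2\pi)^{k+2}+\tfrac{1}{2(k+1)!}$. Since $|z|<1$, the polynomial factor $1+k!/(10(k-3)!)$ is dominated by the exponential decay, and summing a geometric-type tail in closed form (as was done at the analogous step in the proof of Lemma \ref{Xi Major Arc}) produces an explicit numerical constant depending only on $(r,t)$. I expect this series to contribute only a few units to the final bound.

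Next I estimate $J_{g_{r,t},4}(z)=\int_0^\infty |g_{r,t}^{(4)}(w)|\,|dw|$ along the ray from $0$ to $z$, extended to infinity. I split this ray at a suitable point $\alpha=\rho\, z/|z|$ (e.g.\ $\rho=\tfrac{3\pi}{2}$, as in the proof of Lemma \ref{Xi Major Arc}). For $|w|<\rho$ I use the Taylor expansion of $g_{r,t}(w)$, whose principal parts at $0$ have been subtracted, so $g_{r,t}$ is holomorphic at the origin with coefficients controlled by Lehmer's bound; differentiating four times and bounding term by term yields a uniform bound on $|g_{r,t}^{(4)}(w)|$ for $|w|\le\rho$. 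For $|w|\ge\rho$ I write $g_{r,t}^{(4)}(w)$ as a sum of rational functions in $e^w$ and $w$ times polynomials; along the ray $0\le|\operatorname{Im}w|<15\operatorname{Re}w$ we have $\operatorname{Re}w\le|w|\le\sqrt{226}\,\operatorname{Re}w$, which reduces the tail to an integral of the form $\int_\rho^\infty P(u)/(e^u-1)^j\,du$ that can be evaluated numerically.

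The key obstacle will be making the constants come out to precisely $28$ and $56$, which requires careful bookkeeping and a computer-assisted evaluation of the explicit integrals on the outer piece; this is exactly the pattern followed in the Chapter 3 estimates and should transfer with minor modifications. Summing the two contributions, dividing by $|z|^3$, and verifying numerically for each of the two cases $(r,t)=(1,4)$ and $(3,4)$ completes the proof, with the factor-of-two jump between the two constants reflecting the asymmetric placement of $r/t$ relative to $1/2$ in the shift of the Bernoulli polynomials.
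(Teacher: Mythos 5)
Your proposal follows essentially the same route as the paper: split $J_{g_{r,t},4}(z)$ at $\alpha=\tfrac{3\pi}{2}z/|z|$, bound the inner piece via the Taylor series of $g_{r,t}^{(4)}$ with Lehmer's inequality, bound the outer piece using the major-arc relation $\operatorname{Re}w\le|w|\le\sqrt{226}\operatorname{Re}w$ together with a numerical integral, and control the series tail with $|z|<1$. One small quibble: the factor-of-two jump from $28$ to $56$ is not driven by any asymmetry of $r/t$ about $\tfrac12$ (the shifts $\tfrac14$ and $\tfrac34$ are symmetric) but rather by the $(-r)^{n+1}$ growth in the coefficients — $3^{n}$ for $r=3$ versus $1$ for $r=1$ — which roughly doubles both the inner and outer integral bounds.
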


\begin{proof}
	Recall that
	\begin{align*}
		E^{r,t}(z) =  \dfrac{J_{g_{r,t},4}(z)}{720} |z|^3 + \sum_{k \geq 3} \left| \dfrac{B_{k+2}\lp 1 - \frac rt \rp}{(k+2)!} - \dfrac{\lp -r \rp^{k+1} \lp \frac 12 - \frac rt \rp}{t^{k+1} (k+1)!} \right| \lp 1 + \dfrac{k}{10(k-3)!} \rp |z|^k,
	\end{align*}
	where
	\begin{align*}
		c_n^* = \begin{cases} \dfrac{B_{n+1}\lp 1 - \frac rt \rp}{(n+2)!} & \text{ if } n \leq 2, \\ \dfrac{(-r)^{n+1} \lp \frac 12 - \frac rt \rp}{t^{n+1} (n+1)!} & \text{ otherwise}. \end{cases}
	\end{align*}
	We first consider the two integrals $J_{g_{1,4},4}(z)$ and $J_{g_{3,4},4}(z)$, which we recall are taken over a path of integration going through the origin and $z$. We bound these integrals by splitting them each into upper and lower parts, taking advantage of the decay properties of $g_{r,t}(x)$ in the upper parts and power series expansions in the lower parts. For both cases $r = 1,3$, we have
	\begin{align*}
		g^{(4)}_{r,t}(w) = \dfrac{e^{-\frac{rw}{4}}}{1024\lp e^w - 1 \rp^5 w^6} \lp \sum_{j=0}^5 e^{jw} p_{r,t,j}(w) + \sum_{j=0}^5 \tilde{c}_j e^{\frac{4j+r}{4} w} \rp
	\end{align*}
	for certain constants $\tilde{c}_j$ and degree 5 polynomials $p_{r,t,j}(w)$. For $\alpha = \frac{3\pi}{2} \frac{z}{|z|}$, applying the triangle inequality and the major arc bounds $\mathrm{Re}(w) \leq |w| \leq \sqrt{226}\mathrm{Re}(w)$ imply upper bounds on $\left| g^{(4)}_{r,4}(w) \right|$ that depend only on $u = \mathrm{Re}(w)$. Using these upper bounds, we can conclude that
	\begin{align*}
		\int_\alpha^\infty \left| g^{(4)}_{1,4}(w) \right| |dw| < 19900, \ \ \ \text{ and } \ \ \ \int_\alpha^\infty \left| g^{(4)}_{3,4}(w) \right| |dw| < 39900.
	\end{align*}
	To bound the remainder of the integrals $J_{g_{r,4},4}(z)$, we use the power series representations of $g_{r,4}^{(4)}(w)$, namely
	\begin{align*}
		g_{1,4}^{(4)}(w) = \sum_{n \geq 0} \dfrac{(n+4)!}{n!} \lp \dfrac{B_{n+6}\lp \frac 34 \rp}{(n+6)!} + \dfrac{(-1)^n}{4^{n+6} (n+5)!} \rp w^n
	\end{align*}
	and
	\begin{align*}
		g_{3,4}^{(4)}(w) = \sum_{n \geq 0} \dfrac{(n+4)!}{n!} \lp \dfrac{B_{n+6}\lp \frac 14 \rp}{(n+6)!} + \dfrac{(-1)^{n+1} 3^{n+5}}{4^{n+6}} \rp w^n.
	\end{align*}
	By applying \eqref{Bernoulli Inequality}, $\zeta(n+6) \leq \frac{\pi^6}{945}$, $|w| < \frac{3\pi}{2}$ and other elementary estimates, we have
	\begin{align*}
		\left| g^{(4)}_{1,4}(w) \right| \leq \sum_{n \geq 0} \dfrac{(n+4)!}{n!} \lp \dfrac{2\pi^6}{945 (2\pi)^{n+6}} + \dfrac{1}{4^{n+6} (n+5)!} \rp \lp \dfrac{3\pi}{2} \rp^n < 1
	\end{align*}
	and
	\begin{align*}
		\left| g^{(4)}_{3,4}(w) \right| \leq \sum_{n \geq 0} \dfrac{(n+4)!}{n!} \lp \dfrac{2\pi^6}{945 (2\pi)^{n+6}} + \dfrac{3^{n+5}}{4^{n+6} (n+5)!} \rp \lp \dfrac{3\pi}{2} \rp^n < 2.
	\end{align*}
	Therefore, for $\alpha = \frac{3\pi}{2} \frac{z}{|z|}$, we have
	\begin{align*}
		J_{g_{1,4},4}(z) = \int_0^\alpha \left| g^{(4)}_{1,4}(w) \right| |dw| + \int_\alpha^\infty \left| g^{(4)}_{1,4}(w) \right| |dw| < 20000
	\end{align*}
	and
	\begin{align*}
		J_{g_{3,4},4}(z) = \int_0^\alpha \left| g^{(4)}_{3,4}(w) \right| |dw| + \int_\alpha^\infty \left| g^{(4)}_{3,4}(w) \right| |dw| < 40000.
	\end{align*}
	We now bound the other summand of $E^{r,4}(z)$ in the cases $r = 1,3$. Using \eqref{Bernoulli Inequality}, along with $\zeta(n) \leq \frac{\pi^2}{6}$ for $n \geq 2$ and $|z| < 1$, we have
	\begin{align*}
		\sum_{k \geq 3} \left| \dfrac{B_{k+2}\lp 1 - \frac 14 \rp}{(k+2)!} - \dfrac{(-1)^{n+1}}{4^{k+2} (k+1)!} \right| \lp 1 + \dfrac{k!}{10(k-3)!} \rp |z|^k < \dfrac{|z|^3}{1000}
	\end{align*}
	and
	\begin{align*}
		\sum_{k \geq 3} \left| \dfrac{B_{k+2}\lp 1 - \frac 34 \rp}{(k+2)!} - \dfrac{(-3)^{k+1}}{4^{k+2} (k+1)!} \right| \lp 1 + \dfrac{k!}{10(k-3)!} \rp |z|^k < \dfrac{|z|^3}{100}.
	\end{align*}
	Therefore, we find that
	\begin{align*}
		\left| E^{1,4}(z) \right| < \dfrac{20000}{720} |z|^3 + \dfrac{1}{1000} |z|^3 < 28 |z|^3
	\end{align*}
	and
	\begin{align*}
		\left| E^{3,4}(z) \right| < \dfrac{40000}{720} |z|^3 + \dfrac{1}{100} |z|^3 < 56 |z|^3,
	\end{align*}
	which completes the proof.
\end{proof}

We now estimate a certain combination of the functions $F_a^{r,t}(z)$ in a similar manner. Define the functions $G_1^*(q), G_2^*(q)$ respectively by
\begin{align*}
	G_1^*(q) := \exp\lp \dfrac{\pi^2}{48z} - \dfrac{1}{4} \Log\lp z \rp + \beta_{1,4} - \dfrac{\log(2)}{4} - \dfrac{z}{24} \rp
\end{align*}
and
\begin{align*}
	G_2^*(-q) := \exp\lp \dfrac{\pi^2}{48z} + \dfrac{1}{4} \Log\lp z \rp + \beta_{3,4} + \dfrac{\log(2)}{4} - \dfrac{z}{24} \rp.
\end{align*}
These will be useful in estimating $G(q)$ along the two major arcs in the circle method.

\begin{lemma} \label{F-Bounds}
	Let $q = e^{-z}$, $z = x + iy$ satisfy $0 < x < \frac{\pi}{480}$ and $0 \leq |y| < 15x$.
	\begin{enumerate}
		\item We have
		\begin{align*}
			\left| 4z F_1^{1,4}(4z) + 4z F_1^{3,4}(8z) - 4z F_{1/2}^{3,4}(8z) - \Log\lp G_1^*(q) \rp \right| \leq \dfrac{|z|^4}{2}.
		\end{align*}
		\item We have
		\begin{align*}
			\left| 4z F_1^{3,4}(4z) + 4z F_1^{1,4}(8z) - 4z F_{1/2}^{1,4}(8z) - \Log\lp G_2^*(-q) \rp \right| \leq \dfrac{|z|^4}{2}.
		\end{align*}
	\end{enumerate}
\end{lemma}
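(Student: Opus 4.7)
The plan is to verify each inequality by expanding both sides as Laurent series in $z$ at the origin, matching the pole term, the $\Log z$ term, the constant, and the linear coefficient $-z/24$, and then bounding the tail via Lehmer's bound \eqref{Bernoulli Inequality}. Since parts (1) and (2) are structurally identical (they are interchanged by swapping the roles of $r=1$ and $r=3$, which flips the signs of $(1/4)\Log z$ and $(\log 2)/4$ coming from the factor $\tfrac12-\tfrac rt$ in the definition of $F_a^{r,t}$), I focus on (1); (2) follows by the same bookkeeping.

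First I would expand each of $4zF_1^{1,4}(4z)$, $4zF_1^{3,4}(8z)$, and $4zF_{1/2}^{3,4}(8z)$ using the definition of $F_a^{r,t}(z)$ and the standard evaluations $\zeta(2,1)=\pi^2/6$, $\zeta(2,1/2)=\pi^2/2$, $\psi(1)=-\gamma$, $\psi(1/2)=-\gamma-2\log 2$. The combination $F_1^{3,4}(w)-F_{1/2}^{3,4}(w)$ simplifies especially cleanly: the $\gamma$ and $\Log w$ contributions cancel, the pole becomes $-\pi^2/(3w^2)$, and the $\psi$-term leaves only $(\log 2)/(2w)$. After multiplying through by $4z$ and combining, the pole terms collapse to $\pi^2/(24z)-\pi^2/(48z)=\pi^2/(48z)$, the $\Log(4z)$ and $\log 2$ contributions combine to $-(1/4)\Log z-(\log 2)/4$, and $\beta_{1,4}$ appears directly. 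All of this matches $\Log G_1^*(q)$ except for the $-z/24$ term.

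The heart of the argument is to verify that the residual power series
\begin{align*}
P(z)\;:=\;4z\sum_{n\geq 0}c_n^{*(1,4)}\frac{B_{n+1}(1)}{n+1}(4z)^n+4z\sum_{n\geq 0}c_n^{*(3,4)}\frac{B_{n+1}(1)-B_{n+1}(1/2)}{n+1}(8z)^n
\end{align*}
equals $\pm z/24+O(|z|^4)$ (sign chosen to cancel against $-z/24$ in $\Log G_1^*$). For $n=0$, I would use $c_0^{*(r,4)}$ together with $B_1(1)=\tfrac12$, $B_1(1/2)=0$, and the values $B_2(3/4)=B_2(1/4)=-1/48$ to verify the coefficient of $z$; the equality $B_2(3/4)=B_2(1/4)$ (reflecting $B_n(1-x)=(-1)^nB_n(x)$) is what makes the $r=1$ and $r=3$ contributions combine cleanly. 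For $n=1,2$, I would substitute the explicit values of $B_3(1/4), B_3(3/4), B_4(1/4), B_4(3/4)$ together with $B_2(1/2)=-1/12$, $B_3(1/2)=0$, $B_4(1/2)=7/240$ and verify that the $z^2$ and $z^3$ coefficients vanish, again using the symmetry $B_n(3/4)=(-1)^nB_n(1/4)$ and the central-value identity $B_{n+1}(1)-B_{n+1}(1/2)=(1-2^{-n})B_{n+1}$.

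For the tail ($n\geq 3$), the alternate formula $c_n^*=(-r)^{n+1}(\tfrac12-\tfrac rt)/\bigl(t^{n+1}(n+1)!\bigr)$ provides factorial decay, and Lehmer's bound gives $|B_{n+1}(a)|/(n+1)!\leq 2\zeta(n+1)/(2\pi)^{n+1}$. On the major arc we have $|z|<\sqrt{226}\,\pi/480<1/10$, so the tail is dominated by a geometric series in $|z|/\pi$ multiplied by $|z|^4$, and a term-by-term estimate confines its sum below $|z|^4/2$. The main technical obstacle is the bookkeeping through order $z^3$: tracking the powers of $4$ and $8$ arising from the substitutions $w=4z$ and $w=8z$, and checking that the rational combinations of Bernoulli values telescope to zero. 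Once this verification is in place, Part (2) follows verbatim with the substitution $r\leftrightarrow t-r$, which induces the sign flips in $\Log G_2^*(-q)$ relative to $\Log G_1^*(q)$.
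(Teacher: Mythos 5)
Your proposal follows essentially the same route as the paper: expand $F_1(z):=4z\bigl(F_1^{1,4}(4z)+F_1^{3,4}(8z)-F_{1/2}^{3,4}(8z)\bigr)$ as a Laurent series, verify that the pole, $\Log z$, constant, and linear terms reproduce $\Log G_1^*(q)$ and that the $z^2,z^3$ coefficients vanish, and bound the $O(z^4)$ tail with Lehmer's estimate \eqref{Bernoulli Inequality} — which is exactly the paper's decomposition $F_1(z)-\Log G_1^*(q)=\sum_{n\geq 3}\alpha_{n+1}z^{n+1}$ with the explicit $\alpha_{n+1}$ bounded term-by-term. Two small slips in your sketch that don't affect the outcome but should be fixed if you write it up: the central-value identity is $B_{n+1}(1)-B_{n+1}(1/2)=(2-2^{-n})B_{n+1}$, not $(1-2^{-n})B_{n+1}$, and the dominant geometric ratio controlling the tail is $3|z|/\pi$ (coming from the factor $6^n$ that appears in $\alpha_{n+1}$ once the $c_n^*$ and the powers of $4$ and $8$ from the substitutions are combined), not $|z|/\pi$.
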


\begin{proof}
	Define the functions
	\begin{align*}
		F_1(z) := 4z \lp F_1^{1,4}(4z) + F_1^{3,4}(8z) - F_{1/2}^{3,4}(8z) \rp
	\end{align*}
	and
	\begin{align*}
		F_2(z) := 4z\lp F_1^{3,4}(4z) + F_1^{1,4}(8z) - F_{1/2}^{1,4}(8z) \rp.
	\end{align*}
	By expanding each of the terms $F_a^{r,t}(z)$, we have series expansions
	\begin{align*}
		F_1(z) - \Log\lp G_1^*(q) \rp = \sum_{n \geq 3} \alpha_{n+1} z^{n+1}
	\end{align*}
	and
	\begin{align*}
		F_2(z) - \Log\lp G_2^*(-q) \rp = \sum_{n \geq 3} \alpha^\prime_{n+1} z^{n+1}
	\end{align*}
	where
	\begin{align*}
		\alpha_{n+1} = (-1)^{n+1} \dfrac{B_{n+1}(1) + 3 \cdot 6^n \lp B_{n+1}\lp \frac 12 \rp - B_{n+1}(1) \rp}{4 (n+1) \cdot (n+1)!}
	\end{align*}
	and
	\begin{align*}
		\alpha_{n+1}^\prime = \lp -1 \rp^{n+1} \dfrac{- 3^{n+1} B_{n+1}(1) + 2^n\lp B_{n+1}(1) - B_{n+1}\lp \frac 12 \rp \rp}{4 (n+1) (n+1)!}.
	\end{align*}
	Now, by \eqref{Bernoulli Inequality}, we have for $n \geq 2$ that $M_n = \max\limits_{0 \leq x \leq 1} \left| B_n(x) \right| \leq \frac{2 \zeta(n) n!}{\lp 2\pi \rp^n}$,  we have for $n \geq 1$ the bounds
	\begin{align*}
		\left| \alpha_{n+1} \right| &\leq \dfrac{\left| B_{n+1}(1) \right| + 3 \cdot 6^n \lp \left| B_{n+1}(1) \right| + \left| B_{n+1}\lp \frac 12 \rp \right| \rp}{4 (n+1) \cdot (n+1)!} \\ &\leq \dfrac{(1 + 6^{n+1}) M_{n+1}}{4 (n+1) \cdot (n+1)!} < \dfrac{\pi^2}{12(n+1)} \lp \dfrac{1}{\lp 2\pi \rp^{n+1}} + \lp \dfrac{3}{\pi} \rp^{n+1} \rp,
	\end{align*}
	and likewise
	\begin{align*}
		\left| \alpha^\prime_{n+1} \right| \leq \dfrac{\lp 3^{n+1} + 2^{n+1} \rp M_{n+1}}{4 (n+1) \cdot (n+1)!} < \dfrac{\pi^2}{12(n+1)} \lp \dfrac{3}{\pi} \rp^{n+1}.
	\end{align*}
	Therefore, noting that on the major arc $0 \leq |y| < 15x$ with $0 < x < \frac{\pi}{480}$ we have $|z| < \frac{\sqrt{226} \pi}{480}$, we have
	\begin{align*}
		\left| F_1(z) - \Log\lp G_1^*(q) \rp \right| \leq \sum_{n \geq 3} \left| \alpha_{n+1} \right| |z|^{n+1} < \dfrac{|z|^4}{2} 
	\end{align*}
	and likewise
	\begin{align*}
		\left| F_2(z) - \Log\lp G_2^*(-q) \rp \right| \leq \sum_{n \geq 3} \left| \alpha^\prime_{n+1} \right| |z|^{n+1} < \dfrac{|z|^4}{2}.
	\end{align*}
	This completes the proof.
\end{proof}

We now use the bounds so far derived to give an estimate for $G(q)$ on arcs near $q = \pm 1$. For this final lemma, we require some new notation. For any complex-valued function $f(z)$ and any real-valued function $g(z)$, we shall say that $f(z) = O_{\leq}\lp g(z) \rp$ if $\left| f(z) \right| \leq g(z)$ for all $z$ in a specified region (which will always be clear from context).

\begin{lemma} \label{Major Error Bound}
	Let $q = e^{-z}$, $z = x + iy$, $0 < x < \frac{\pi}{480}$ and $0 \leq |y| < 15x$.
	\begin{enumerate}
		\item We have $\Log\lp G(q) \rp = \Log\lp G_1^*(q) \rp + E_+(q)$ where $E_+(q) = O_{\leq}\lp 4033 |z|^4 \rp$.
		\item We have $\Log\lp G(-q) \rp = \Log\lp G_2^*(-q) \rp + E_-(q)$ where $E_-(q) = O_{\leq}\lp 2689 |z|^4 \rp$.
	\end{enumerate}
\end{lemma}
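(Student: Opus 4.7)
The plan is to combine the three main technical ingredients of this section. Proposition \ref{Major Arc Bound} supplies effective estimates for the logarithm of each individual $q$-Pochhammer factor in terms of the functions $F_a^{r,t}$, with error controlled by $E^{r,t}$; Lemma \ref{F-Bounds} tells us that a specific linear combination of the $F_a^{r,t}$'s reproduces $\Log(G_1^*(q))$ (respectively $\Log(G_2^*(-q))$) up to $O_\leq(|z|^4/2)$; and Lemma \ref{E-Bounds} bounds the residual error functions $E^{1,4}$ and $E^{3,4}$ as cubic power laws. Assembling these via the triangle inequality will yield the two claims.

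For part (1), I would begin by factoring
\begin{align*}
\Log(G(q)) = \Log\lp (q;q^4)_\infty^{-1} \rp + \Log\lp (-q^3;q^4)_\infty^{-1} \rp
\end{align*}
and applying Proposition \ref{Major Arc Bound}(1) to the first factor with $(r,t)=(1,4)$ and Proposition \ref{Major Arc Bound}(2) to the second with $(r,t)=(3,4)$. The triangle inequality then yields
\begin{align*}
\left| \Log(G(q)) - \lp 4zF_1^{1,4}(4z) + 4zF_1^{3,4}(8z) - 4zF_{1/2}^{3,4}(8z) \rp \right| \leq 4|z| E^{1,4}(4z) + 8|z| E^{3,4}(8z).
\end{align*}
Invoking Lemma \ref{F-Bounds}(1) to replace the bracketed combination by $\Log(G_1^*(q))$ at the cost of an additional $|z|^4/2$, and then applying Lemma \ref{E-Bounds} with arguments $4z$ and $8z$ (both of which lie in the disk $|w|<1$ under the major-arc hypothesis $0<x<\pi/480$, $|y|<15x$), produces an explicit cubic bound on the residual. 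Collecting constants via $|4z|^3 = 64|z|^3$ and $|8z|^3 = 512|z|^3$ and combining with the $|z|^4/2$ term yields the numerical estimate for $E_+(q)$.

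Part (2) follows by the same template, after the key observation $G(-q) = (-q;q^4)_\infty^{-1} \cdot (q^3;q^4)_\infty^{-1}$, which swaps which factor is the signed vs.\ unsigned one relative to Proposition \ref{Major Arc Bound}. One applies Proposition \ref{Major Arc Bound}(1) to $(q^3;q^4)_\infty^{-1}$ (producing an $E^{3,4}(4z)$ error) and Proposition \ref{Major Arc Bound}(2) to $(-q;q^4)_\infty^{-1}$ (producing a $2E^{1,4}(8z)$ error), then uses Lemma \ref{F-Bounds}(2) to relate the resulting combination to $\Log(G_2^*(-q))$. Because the smaller constant 28 of the $E^{1,4}$-bound now gets paired with the larger scaling factor $|8z|^3$, and vice versa for $E^{3,4}$, the overall numerical constant is smaller than in part (1), which explains why $2689 < 4033$.

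The main obstacle is purely a careful bookkeeping of constants: the naive combination of the stated bounds tends to overshoot the stated $4033$ and $2689$, so one needs sharper control. In particular, the bounds of Lemma \ref{E-Bounds} were derived for the full disk $|z|<1$, whereas here $|z| < \sqrt{226}\pi/480$ is substantially smaller, so the contributions of the Laurent-tail sum $\sum_{k \geq 3}$ in the definition of $E^{r,t}$ can be absorbed with room to spare, and one may also refine the integrand estimates on $J_{g_{r,4},4}(z)$ along the specific rays of the major arc. This refined arithmetic is the technical heart of the argument.
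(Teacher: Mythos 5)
Your factoring of $\Log(G(q))$ and the appeal to Proposition \ref{Major Arc Bound}, Lemma \ref{F-Bounds}, and Lemma \ref{E-Bounds} is exactly the paper's argument, and the paper does no more than the direct substitution you describe: it asserts $4|z| E^{1,4}(4z) + 8|z| E^{3,4}(8z) < 4032|z|^4$ and then adds the $\tfrac{1}{2} |z|^4$ from Lemma \ref{F-Bounds} to reach $4033|z|^4$. There is no hidden refined arithmetic; the speculation at the end of your proposal that sharper ray estimates form the ``technical heart'' is not how the paper proceeds.

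On the other hand, your observation that the naive constants overshoot is accurate, and it points at a real inconsistency rather than a mere bookkeeping nuisance. Substituting Lemma \ref{E-Bounds} with arguments $4z$ and $8z$ gives $4|z|\cdot 28|4z|^3 + 8|z|\cdot 56|8z|^3 = (4\cdot 28\cdot 64 + 8\cdot 56\cdot 512)|z|^4 = 236544\,|z|^4$, which exceeds the paper's $4032|z|^4$ by a factor of roughly $59$. Observe that $4032 = 4^2\cdot 28 + 8^2\cdot 56$, which is what one gets by treating the rescaling of $E^{r,t}$ as linear rather than cubic in its argument, and the analogous identity $2688 = 4^2\cdot 56 + 8^2\cdot 28$ matches part (2) of the lemma; so the paper appears to contain an arithmetic slip at this step. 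Your proposed remedy---tightening the Laurent-tail sum of $E^{r,t}$ using the smaller disk---cannot close a gap of this size, because the tail is already negligible (contributing at most $|z|^3/100$) and the bulk of $E^{r,4}(z)$ is the term $\frac{J_{g_{r,4},4}(z)}{720}|z|^3$. Since $J_{g_{r,4},4}(z)$ is an integral over the entire ray through $z$ it depends only on $\arg(z)$, not $|z|$, so it does not shrink as the disk shrinks. The decisive numerical step therefore lies elsewhere than you indicate, and as written the paper itself does not appear to carry it out correctly.
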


\begin{proof}
	Let $F_1(z), F_2(z)$ be defined as in the proof of Lemma \ref{F-Bounds}. Then by Proposition \ref{Major Arc Bound} and Lemma \ref{E-Bounds},  we have
	\begin{align*}
		\left| \Log\lp G(q) \rp - F_1(z) \right| < 4|z| E^{1,4}(4z) + 8|z| E^{3,4}(8z) < 4032 |z|^4
	\end{align*}
	and similarly
	\begin{align*}
		\left| \Log\lp G(-q) \rp - F_2(z) \right| < 2688 |z|^4
	\end{align*}
	in the relevant domain. We therefore obtain by Lemma \ref{F-Bounds} (1) and (2) that
	\begin{align*}
		\left| \Log\lp G(q) \rp - \Log\lp G_1^*(q) \rp \right| < 4033 |z|^4
	\end{align*}
	and
	\begin{align*}
		\left| \Log\lp G(-q) \rp - \Log\lp G_2^*(-q) \rp \right| < 2689 |z|^4.
	\end{align*}
	This proves the result.
\end{proof}

\section{Proof of Theorem \ref{C4 a(n) Asymptotics}}

We now proceed to the proof of Theorem \ref{C4 Conjecutre Proof} (Theorem \ref{C4 a(n) Asymptotics} will be proven along the way), which relies on a variation of Wright's circle method. We set $q = e^{-z}$ with $\mathrm{Re}\lp z \rp = x$ and $\mathrm{Im}\lp z \rp = y$. Since $G(q)$ has no poles inside the unit disk, we have by Cauchy's theorem that
\begin{align*}
	a(n) = \dfrac{1}{2\pi i} \int_C \dfrac{G(q)}{q^{n+1}} dq,
\end{align*}
where $C$ is the circle oriented counterclockwise centered at 0 with radius $|q| = e^{-x}$. We choose $C$ so that $x = \frac{\pi}{\sqrt{48n}}$. Impose the constraint $0 < x < \frac{\pi}{480}$ throughout, which by algebraic manipulations is equivalent to the assumption that $n > 4800$. Define the three arcs 
\begin{center}
	\begin{align*}
		C_1 &:= \{ q = e^{-z} \in C : 0 \leq |y| < 15 x \}, \\ C_2 &:= \{ q = e^{-z} \in C : \pi - 15x \leq |y| < \pi \},
	\end{align*}
\end{center}
and
\begin{align*}
	\widetilde{C} := \{ q = e^{-z} \in C : 15x \leq |y| \leq \pi - 15x \}.
\end{align*}
We may decompose $a(n)$ as
\begin{align*}
	a(n) =  J^*_1(n) + J^*_2(n) + J^{\text{maj}}_1(n) + J^{\text{maj}}_2(n) + J^{\text{min}}(n),
\end{align*}
where for $k = 1, 2$ we define
define
\begin{align*}
	\widetilde{G}_1(q) &:= \exp\lp \dfrac{\pi^2}{48z} - \dfrac{1}{4} \Log\lp z \rp + \beta_{1,4} - \dfrac{\log(2)}{4} \rp, \\ \widetilde{G}_2(-q) &:= \exp\lp \dfrac{\pi^2}{48z} + \dfrac{1}{4} \Log\lp z \rp + \beta_{3,4} + \dfrac{\log(2)}{4} \rp,
\end{align*}
and
\begin{align*}
	J^*_k(n) &:= \dfrac{1}{2\pi i} \int_{C_k} \dfrac{\widetilde{G}_k(q)}{q^{n+1}} dq, \\ J^{\text{maj}}_k(n) &:= \dfrac{1}{2\pi i} \int_{C_k} \dfrac{G(q) - \widetilde{G}_k(q)}{q^{n+1}} dq, \\ J^{\text{min}}(n) &:= \dfrac{1}{2\pi i} \int_{\widetilde C} \dfrac{G(q)}{q^{n+1}} dq.
\end{align*}
$J_1^*(n)$ and $J_2^*(n)$ are the dominant terms, and so we begin with an analysis of the error terms.

\subsection{Error Bound for $J^{\text{min}}(n)$}

By Proposition \ref{Minor Arc Bounds}, we have on all $\tilde C$ that $\left| G(q) \right| < \exp\lp \frac{1}{5x} \rp$. Since the length of $\widetilde C$ is less than $2\pi$ and $\left| \int_{\widetilde{C}} q^{-1} dq \right| < 2\pi |q|^{-1} = 2 \pi e^{\frac{\pi}{480}} < \frac{21\pi}{10}$, it follows that
\begin{align*}
	\left| J^{\text{min}}(n) \right| = \left| \dfrac{1}{2\pi i} \int_{\widetilde C} \dfrac{G(q)}{q^{n+1}} dq \right| < \frac{21\pi}{10} \exp\lp nx + \dfrac{1}{5x} \rp = \frac{21\pi}{10} \exp\lp \lp \dfrac{\pi}{4\sqrt{3}} + \dfrac{4\sqrt{3}}{5\pi} \rp \sqrt{n} \rp.
\end{align*}

\subsection{Error Bounds for $J_1^{\text{maj}}(n)$ and $J_2^{\text{maj}}(n)$}

We now consider $J^{\text{maj}}_1(n)$ and $J^{\text{maj}}_2(n)$. For $J_1^{\text{maj}}(n)$, we may assume now that $0 \leq |y| < 15x$. Since we have $x \leq |z| \leq \sqrt{226} x$,
\begin{align*}
	\left| \widetilde{G}_1(q) \right| = \dfrac{\Gamma\lp \frac 14 \rp}{2^{\frac 34} \pi^{\frac 12}} |z|^{- \frac 14} \exp\lp \dfrac{\pi^2x}{48|z|^2} \rp \leq \dfrac{\Gamma\lp \frac 14 \rp}{2^{\frac 34} \pi^{\frac 12}} x^{- \frac 14} \exp\lp \dfrac{\pi^2}{48x} \rp
\end{align*}
and
\begin{align*}
	\left| \widetilde{G}_2(-q) \right| = \dfrac{\Gamma\lp \frac 34 \rp}{2^{\frac 14} \pi^{\frac 12}} |z|^{\frac 14} \exp\lp \dfrac{\pi^2x}{48|z|^2} \rp \leq \dfrac{226^{\frac 18} \Gamma\lp \frac 34 \rp}{2^{\frac 14} \pi^{\frac 12}} x^{\frac 14} \exp\lp \dfrac{\pi^2}{48x} \rp.
\end{align*}
Similarly, we have
\begin{align*}
	\left|G_1^*(q)\right| = \left|\widetilde{G}_1(q)\right| \cdot \left|\exp\lp - \dfrac{z}{24} \rp \right| = \left|\widetilde{G}_1(q)\right| \cdot \exp\lp - \dfrac{x}{24} \rp < \left| \widetilde{G}_1(q) \right|
\end{align*}
and $\left|G_2^*(-q)\right| < \left|\widetilde{G}_2(-q)\right|$. By Lemma \ref{Major Error Bound}, we have
\begin{align*}
	\Log\lp G(q) \rp - \Log\lp G_1^*(q) \rp = E_+(q) = O_{\leq}\lp 4033 |z|^4 \rp,
\end{align*}
and therefore by exponentiation $G(q) = G_1^*(q) \exp\lp E_+(q) \rp$. Now, since $|z| < \frac{\sqrt{226} \pi}{480}$ on $C_1$, we have $\left| E_+(q) \right| < 4033 |z|^4 < 0.38$. Because $\exp\lp t \rp = 1 + O_{\leq}\lp 2 t \rp$ for $0 < t < 0.76$, we have
\begin{align*}
	\left| \exp\lp E_+(q) \rp - 1 \right| < \dfrac{3}{2} \left| E_+(q) \right| < 6050 |z|^4.
\end{align*}
In particular, this implies
\begin{align*}
	\left| G(q) - G_1^*(q) \right| = \left| G_1^*(q) \right| \cdot \left| \exp\lp E_+(q) \rp - 1 \right| < \dfrac{9833929}{n^{\frac{15}{8}}} \exp\lp \dfrac{\pi}{4} \sqrt{\dfrac{n}{3}} \rp.
\end{align*}
We now make a similar estimate for $G_1^*(q) - \widetilde{G}_1(q)$. It is clear from definitions that 
\begin{align*}
	\Log\lp G_1^*(q) \rp - \Log\lp \widetilde{G}_1(q) \rp = - \frac{z}{24} = O_{\leq}\lp \frac{1}{24} |z| \rp,
\end{align*}
and so reasoning as earlier, we may write $G_1^*(q) = \widetilde{G}_1(q) \exp\lp -\frac{z}{24} \rp$. We have $\exp\lp t \rp = 1 + O_{\leq}\lp \frac{12}{11} t \rp$ for $0 < t < 0.005$, so since $\frac{|z|}{24} < 0.005$ we have $\exp\lp -\frac{z}{24} \rp = 1 + O_{\leq}\lp \frac{1}{22} |z| \rp$. Therefore, we find that
\begin{align*}
	\left|G_1^*(q) - \widetilde{G}_1(q)\right| \leq \dfrac{|z|\Gamma\lp \frac 14 \rp}{22 \cdot 2^{\frac 34} \pi^{\frac 12}} x^{- \frac 14} \exp\lp \dfrac{\pi^2}{48x} \rp < \dfrac{1}{2 n^{\frac 38}} \exp\lp \dfrac{\pi}{4} \sqrt{\dfrac{n}{3}} \rp.
\end{align*}
Thus, on $C_1$ we have
\begin{align*}
	\left|G(q) -\widetilde{G}_1(q)\right| \leq \lp \dfrac{1}{2 n^{\frac 38}} + \dfrac{9833929}{n^{\frac{15}{8}}} \rp \exp\lp \dfrac{\pi}{4} \sqrt{\dfrac{n}{3}} \rp.
\end{align*}
Now, let $D_0 = \{ z\in \CC : \mathrm{Re}\lp z \rp = x, \left| \mathrm{Im}\lp z \rp \right| \leq 15 x \}$, which is the image of $C_1$ under the change of variables $q \mapsto z$. Since $D_0$ has length $30x$, we have
\begin{align*}
	\left|J_1^{\text{maj}}(n)\right| \leq \dfrac{1}{2\pi} \int_{C_1} \dfrac{\left| G(q) - \widetilde{G}_1(q) \right|}{|q|^{n+1}} dq &\leq \dfrac{1}{2\pi} \int_{D_0} \left| G(q) -  \widetilde{G}_1(q) \right| \left| \exp\lp nz \rp \right| |dz| \\ &\leq \dfrac{30x}{2\pi} \cdot \lp \dfrac{1}{2 n^{\frac 38}} + \dfrac{9833929}{n^{\frac{15}{8}}} \rp \exp\lp \dfrac{\pi}{4} \sqrt{\dfrac{n}{3}} + nx \rp \\ &< \lp \dfrac{2}{n^{\frac{7}{8}}} + \dfrac{21291081}{n^{\frac{19}{8}}} \rp \exp\lp \dfrac{\pi}{2} \sqrt{\dfrac{n}{3}} \rp.
\end{align*}
We may similarly analyze the case of $G(q) - G_2^*(q)$. Note to begin that we may shift $C_2$ to $C_1$ by the substitution $q \mapsto -q$, and so
\begin{align*}
	\left|J^{\text{maj}}_2(n)\right| \leq \left| \dfrac{1}{2\pi i} \int_{C_1} \dfrac{G(-q) - \widetilde{G}_2(-q)}{q^{n+1}} dq \right|.
\end{align*}
We have by Lemma \ref{Major Error Bound} that $\Log\lp G(-q) \rp - \Log\lp G_2^*(-q) \rp = E_-(q) = O_{\leq}\lp 2689 |z|^4 \rp$. Thus $\left|E_-(q)\right| < \frac{3}{10}$ and as before we have $\exp\lp t \rp = 1 + O_{\leq} \lp \frac{3}{2}t \rp$. Thus, $\exp\lp E_-(q) \rp = 1 + O_{\leq}\lp 4034 |z|^4 \rp$, and by the same reasoning as in the first case we obtain
\begin{align*}
	\left| G(-q) - G_2^*(-q) \right| < \dfrac{8183085}{n^{\frac{17}{8}}} \exp\lp \dfrac{\pi}{4} \sqrt{\dfrac{n}{3}} \rp.
\end{align*}
As in the previous case, we have $G_2^*(-q) - \widetilde{G}_2(-q) = \widetilde{G}_2(-q) \times O_{\leq}\lp \frac{1}{22} |z| \rp$, and therefore
\begin{align*}
	\left|G_2^*(-q) - \widetilde{G}_2(-q)\right| \leq \dfrac{|z|}{22} \cdot \dfrac{226^{\frac 18} \Gamma\lp \frac 34 \rp}{2^{\frac 14} \pi^{\frac 12}} x^{\frac 14} \exp\lp \dfrac{\pi^2}{48x} \rp \leq \dfrac{3}{10 n^{\frac 58}} \exp\lp \dfrac{\pi}{4} \sqrt{\dfrac{n}{3}} \rp.
\end{align*}
Combining the two cases,
\begin{align*}
	\left|G(-q) - \widetilde{G}_2(-q)\right| \leq \lp \dfrac{3}{10 n^{\frac 58}} + \dfrac{8183085}{n^{\frac{17}{8}}} \rp \exp\lp \dfrac{\pi}{4} \sqrt{\dfrac{n}{3}} \rp
\end{align*}
and therefore
\begin{align*}
	\left|J^{\text{maj}}_2(n)\right| &\leq \dfrac{30x}{2\pi} \cdot \lp \dfrac{3}{10 n^{\frac 58}} + \dfrac{8183085}{n^{\frac{17}{8}}} \rp \exp\lp \dfrac{\pi}{4} \sqrt{\dfrac{n}{3}} + nx \rp \\ &< \lp \dfrac{13}{20 n^{\frac 98}} + \dfrac{17716899}{n^{\frac{21}{8}}} \rp \exp\lp \dfrac{\pi}{2} \sqrt{\dfrac{n}{3}} \rp.
\end{align*}

\subsection{Estimates for $J_1^*(n)$ and $J_2^*(n)$}

Having bounded the explicit error terms, we now estimate the integrals $J^*_1(n), J_2^*(n)$ in terms of more familiar {\it $I$-Bessel functions}. Recall that for any real $s$, the function $I_s(x)$ may be defined by
\begin{align*}
	I_s(x) := \dfrac{1}{2\pi i} \int_{\tilde D} w^{-s-1} \exp\lp \dfrac{x}{2} \lp \dfrac{1}{w} + w \rp \rp dw,
\end{align*}
where $\tilde D$ is any contour that loops from $-\infty$ below $\RR_{<0}$ around zero counterclockwise and back to $-\infty$ above $\RR_{<0}$. Let $D_0 := \{ w \in \CC: \mathrm{Re}\lp w \rp = x, \left| \mathrm{Im}\lp w \rp \right| \leq 15 x \}$ as earlier, and let
\begin{align*}
	D_{\pm} := \{ w \in \CC : \mathrm{Re}\lp w \rp \leq x, \mathrm{Im}\lp w \rp = \pm 15 x \}.
\end{align*}
Define the (counterclockwise-oriented) path $D := D_- \cup D_0 \cup D_+$. Letting $\tilde D$ be the image of $D$ under the change of variables $z = \frac{\pi}{4\sqrt{3n}} w$, we can see that
\begin{align*}
	\dfrac{1}{2\pi i} \int_D z^{-\frac 14} \exp\lp \dfrac{\pi^2}{48z} + nz \rp dz = \dfrac{\pi^{\frac 34}}{2^{\frac 32} 3^{\frac 38} n^{\frac 38}} I_{-\frac 34} \lp \dfrac{\pi}{2} \sqrt{\dfrac{n}{3}} \rp,
\end{align*}
and similarly
\begin{align*}
	\dfrac{1}{2\pi i} \int_D z^{\frac 14} \exp\lp \dfrac{\pi^2}{48z} + nz \rp dz = \dfrac{\pi^{\frac 54}}{2^{\frac 52} 3^{\frac 58} n^{\frac 58}} I_{-\frac 54}\lp \dfrac{\pi}{2} \sqrt{\dfrac{n}{3}} \rp.
\end{align*}
By changing variables $q \mapsto z$, we have
\begin{align*}
	J_1^*(n) = \dfrac{\Gamma\lp \frac 14 \rp}{2^{\frac 34} \pi^{\frac 12}} \cdot \dfrac{1}{2\pi i} \int_{D_0} z^{-\frac 14} \exp\lp \dfrac{\pi^2}{48z} + nz \rp dz,
\end{align*}
and therefore
\begin{align*}
	\dfrac{\Gamma\lp \frac 14 \rp \pi^{\frac 14}}{2^{\frac 94} 3^{\frac 38} n^{\frac 38}} I_{-\frac 34} \lp \dfrac{\pi}{2} \sqrt{\dfrac{n}{3}} \rp - J_1^*(n) = \dfrac{\Gamma\lp \frac 14 \rp}{2^{\frac 34} \pi^{\frac 12}} \cdot \dfrac{1}{2\pi i} \int_{D_+ \cup D_-} z^{-\frac 14} \exp\lp \dfrac{\pi^2}{48z} + nz \rp dz.
\end{align*}
The same procedure applied to $J_2^*(n)$ yields
\begin{align*}
	(-1)^n \dfrac{\Gamma\lp \frac 34 \rp \pi^{\frac 34}}{2^{\frac{11}{4}} 3^{\frac 58} n^{\frac 58}} I_{-\frac 54}\lp \dfrac{\pi}{2} \sqrt{\dfrac{n}{3}} \rp - J_2^*(n) = \dfrac{\Gamma\lp \frac 34 \rp}{2^{\frac 14} \pi^{\frac 12}} \cdot \dfrac{1}{2\pi i} \int_{D_+ \cup D_-} z^{\frac 14} \exp\lp \dfrac{\pi^2}{48z} + nz \rp dz.
\end{align*}
For the remainder, we define
\begin{align*}
	M_1(n) := \dfrac{\Gamma\lp \frac 14 \rp \pi^{\frac 14}}{2^{\frac 94} 3^{\frac 38} n^{\frac 38}} I_{-\frac 34} \lp \dfrac{\pi}{2} \sqrt{\dfrac{n}{3}} \rp, \hspace{0.2in} M_2(n) := (-1)^n \dfrac{\Gamma\lp \frac 34 \rp \pi^{\frac 34}}{2^{\frac{11}{4}} 3^{\frac 58} n^{\frac 58}} I_{-\frac 54}\lp \dfrac{\pi}{2} \sqrt{\dfrac{n}{3}} \rp,
\end{align*}
which are the main terms of $J_1^*(n)$, $J_2^*(n)$. For $t \in D_-$, set $t = \lp x - u \rp - 15x i$ for $u \geq 0$. Since $\mathrm{Re}\lp \frac{\pi^2}{48t} \rp \leq \frac{\pi}{4} \sqrt{\frac{n}{3}}$ and $|t| \geq 15x$, we have
\begin{align*}
	\left| t^{- \frac 14} \exp\lp \dfrac{\pi^2}{48t} + nt \rp \right| \leq |t|^{- \frac 14} \exp\lp \dfrac{\pi}{4} \sqrt{\frac{n}{3}} + n\lp x - u \rp \rp \leq \dfrac{2^{\frac 12} 3^{\frac 18} n^{\frac 18}}{15^{\frac 14} \pi^{\frac 14}} \exp\lp \dfrac{\pi}{2} \sqrt{\frac{n}{3}} - nu \rp.
\end{align*}
This bound holds not only for $t \in D_-$, but also $t \in D_+$, and therefore since $\int_0^\infty \exp\lp -nu \rp du = \frac{1}{n}$, we have
\begin{align*}
	\left| \dfrac{1}{2\pi i} \int_{D_+ \cup D_-} z^{-\frac 14} \exp\lp \dfrac{\pi^2}{48z} + nz \rp dz \right| &\leq 2 \lp \dfrac{2^{\frac 12} 3^{\frac 18} n^{\frac 18}}{15^{\frac 14} \pi^{\frac 14}} \exp\lp \dfrac{\pi}{2} \sqrt{\frac{n}{3}} \rp \rp \int_0^\infty e^{-nu} du \\ &= \dfrac{2^{\frac 32} 3^{\frac 18}}{15^{\frac 14} \pi^{\frac 14} n^{\frac 78}} \exp\lp \dfrac{\pi}{2} \sqrt{\frac{n}{3}} \rp.
\end{align*}
It therefore follows that
\begin{align*}
	\left| M_1(n) - J_1^*(n) \right| \leq \dfrac{2^{\frac 34} \Gamma\lp \frac 14 \rp}{3^{\frac 18} 5^{\frac 14} \pi^{\frac 34} n^{\frac 78}} \exp\lp \dfrac{\pi}{2} \sqrt{\frac{n}{3}} \rp < \dfrac{8}{5n^{\frac 78}} \exp\lp \dfrac{\pi}{2} \sqrt{\frac{n}{3}} \rp.
\end{align*}
Similarly, for $t \in D_\pm$, set $t = \lp x - u \rp \pm 15 \eta i$ for $u \geq 0$. Then $\left| t \right|^2 \leq 226 x^2 + u^2$. Since we have assumed $n > 4800$, it is clear that $226 x^2 + u^2 \leq 1 + u^2$, and so
\begin{align*}
	\left| t^{\frac 14} \exp\lp \dfrac{\pi^2}{48t} + nt \rp \right| \leq |t|^{\frac 14} \exp\lp \dfrac{\pi}{4} \sqrt{\frac{n}{3}} + n\lp x - u \rp \rp \leq \lp 1 + u^2 \rp^{\frac 18} \exp\lp \dfrac{\pi}{2} \sqrt{\dfrac{n}{3}} - nu \rp,
\end{align*}
and therefore
\begin{align*}
	\left| \dfrac{1}{2\pi i} \int_{D_+ \cup D_-} z^{\frac 14} \exp\lp \dfrac{\pi^2}{48z} + nz \rp dz \right| \leq 2 \exp\lp \dfrac{\pi}{2} \sqrt{\dfrac{n}{3}} \rp \int_0^\infty \lp 1 + u^2 \rp^{\frac 18} \exp\lp -nu \rp.
\end{align*}
Since $\lp 1 + u^2 \rp^{\frac 18} \leq 1 + u^{\frac 14}$ for $u > 0$, we have for $n > 1$ that
\begin{align*}
	\int_0^\infty \lp 1 + u^2 \rp^{\frac 18} \exp\lp -nu \rp \leq \int_0^\infty \exp\lp -nu \rp du + \int_0^\infty u^{1/4} \exp\lp -nu \rp du < \dfrac{2}{n}.
\end{align*}
and so
\begin{align*}
	\left| \dfrac{1}{2\pi i} \int_{D_+ \cup D_-} z^{\frac 14} \exp\lp \dfrac{\pi^2}{48z} + nz \rp dz \right| < \dfrac{4}{n} \exp\lp \dfrac{\pi}{2} \sqrt{\dfrac{n}{3}} \rp.
\end{align*}
As a consequence, we have
\begin{align*}
	\left| M_2(n) - J_2^*(n) \right| \leq \dfrac{2^{\frac{11}{4}} \Gamma\lp \frac 34 \rp}{\pi^{\frac 12} n} \exp\lp \dfrac{\pi}{2} \sqrt{\dfrac{n}{3}} \rp < \dfrac{5}{n} \exp\lp \dfrac{\pi}{2} \sqrt{\dfrac{n}{3}} \rp.
\end{align*}
Combining all the estimates made thus far, we may conclude that
\begin{align} \label{Effective Asymptotic}
	\left|a(n) - M_1(n) - M_2(n) \right| \leq E(n),
\end{align}
where
\begin{align} \label{Error Definition}
	E(n) := \dfrac{21\pi}{10} & \exp\lp \lp \dfrac{\pi}{4} + \dfrac{12}{5\pi} \rp \sqrt{\dfrac{n}{3}} \rp \notag \\ &+ \left[ \dfrac{4}{n^{\frac 78}} + \dfrac{5}{n} + \dfrac{13}{20 n^{\frac 98}} + \dfrac{21291081}{n^{\frac{19}{8}}} + \dfrac{17716899}{n^{\frac{21}{8}}} \right] \exp\lp \dfrac{\pi}{2} \sqrt{\dfrac{n}{3}} \rp.
\end{align}
Taken together, \eqref{Effective Asymptotic} and \eqref{Error Definition} now imply Theorem \ref{C4 a(n) Asymptotics}.

\section{Proof of Theorem \ref{C4 Conjecutre Proof}}

In this section, we prove that $a(n) \geq 0$ for all $n \geq 0$. Note that in order to prove $a(n) \geq 0$ for a particular value of $n$, it would suffice to show that $a(n) \geq M_1(n) + M_2(n) - E(n)$. The majority of this proof consists in simplifying this sufficient condition on $n$ until an explicitly lower bound is achieved. 

For simplicity, it is easiest to remove the $(-1)^n$ from $M_2(n)$ by leveraging $M_2(n) \leq \left|M_2(n)\right|$. Thus, to prove $a(n) \geq 0$ it would suffice to prove that $M_1(n) - \left|M_2(n)\right| - E(n) \geq 0$, that is,
\begin{align} \label{First Reduced Form}
	M_1(n) \geq \left|M_2(n)\right| + E(n).
\end{align}
Note that to prove \eqref{First Reduced Form}, it would suffice to prove $M_1(n) \geq 2 \left|M_2(n)\right|$ and $M_1(n) \geq 2 E(n)$. We now prove these inequalities one at a time. Taking the definitions of $M_1(n)$ and $M_2(n)$, the inequality $M_1(n) \geq 2\left|M_2(n)\right|$ may be rearranged to the form
\begin{align*}
	\dfrac{I_{-\frac 34}\lp \dfrac{\pi}{2} \sqrt{\dfrac{n}{3}} \rp}{I_{-\frac 54}\lp \dfrac{\pi}{2} \sqrt{\dfrac{n}{3}} \rp} > \dfrac{2 \Gamma\lp \frac 34 \rp}{\Gamma\lp \frac 14 \rp} \sqrt{\dfrac{\pi}{6n}}
\end{align*}
Now, the $I$-Bessel function has the power series expansion
\begin{align*}
	I_s(t) = \lp \dfrac{t}{2} \rp^s \sum_{k\geq 0} \dfrac{t^{2k}}{4^k k! \Gamma\lp s + k + 1 \rp},
\end{align*}
from which one may clearly see that $I_{-\frac 34}(t) > I_{-\frac 54}(t)$ for all $t > 1$. In particular, it is clear that for all $n > 4800$ that $M_1(n) > 2\left|M_2(n)\right|$. 

For a fixed $n > 4800$, in order to prove $a(n) \geq 0$ we have shown that it will suffice to prove $M_1(n) > 2 E(n)$. We prove this result by first bounding $M_1(n)$ from below. By \cite[Exercise 13.2, pg. 269]{Olv97}, we have for $t>0$ real that
\begin{align*}
	I_{-\frac 34}(t) = \dfrac{e^t}{\sqrt{2\pi t}}\lp 1 + \delta_1(t) \rp - i e^{-\frac 34 \pi i} \dfrac{e^{-t}}{\sqrt{2\pi t}} \lp 1 + \gamma_1(t) \rp,
\end{align*}
where $\delta_1(t), \gamma_1(t)$ satisfy the bounds
\begin{align*}
	\left| \gamma_1(t) \right| < \dfrac{5}{16t} \exp\lp \dfrac{5}{16t} \rp \hspace{0.1in} \text{and} \hspace{0.1in} \left| \delta_1(t) \right| < \dfrac{5\pi}{16t} \exp\lp \dfrac{5\pi}{16t} \rp.
\end{align*}
Therefore, we have
\begin{align*}
	\left| I_{-\frac 34}(t) - \dfrac{e^t}{\sqrt{2\pi t}} \right| < \dfrac{5\pi}{16 \sqrt{2\pi} t^{\frac 32}} \exp\lp t + \dfrac{5\pi}{16t} \rp + \dfrac{e^{-t}}{\sqrt{2\pi t}}\lp 1 + \dfrac{5}{16t} \exp\lp \dfrac{5}{16t} \rp \rp,
\end{align*}
from which it follows that
\begin{align*}
	I_{-\frac 34}(t) > \dfrac{e^t}{\sqrt{2\pi t}} - \left[ \dfrac{5\pi}{16 \sqrt{2\pi} t^{\frac 32}} \exp\lp t + \dfrac{5\pi}{16t} \rp + \dfrac{e^{-t}}{\sqrt{2\pi t}}\lp 1 + \dfrac{5}{16t} \exp\lp \dfrac{5}{16t} \rp \rp \right].
\end{align*}
We wish now to show $I_{-\frac 34}(t) > \frac{99 e^t}{100 \sqrt{2\pi t}}$ for suitably large $t$, for which it will suffice to consider their ratio (since both are positive). We have from the above inequality that $I_{-\frac 34}(t) \lp \frac{99e^t}{10\sqrt{2\pi t}} \rp^{-1}$ is plainly an increasing function of $t$, and so we can see that if we set $t = \frac{\pi}{2} \sqrt{\frac{n}{3}}$, the inequality holds for all $n > 4800$. Thus, to prove \eqref{First Reduced Form} for any given $n > 4800$ it will suffice to show that
\begin{align*} 
	\dfrac{99}{100} \cdot \dfrac{\Gamma\lp \frac 14 \rp}{2^{\frac 54} 3^{\frac 18} \pi^{\frac 34} n^{\frac 58}} &\exp\lp \dfrac{\pi}{2} \sqrt{\dfrac{n}{3}} \rp \geq \dfrac{21\pi}{5} \exp\lp \lp \dfrac{\pi}{4\sqrt{3}} + \dfrac{4\sqrt{3}}{5\pi} \rp \sqrt{n} \rp \\ &+ \left[ \dfrac{8}{n^{\frac 78}} + \dfrac{10}{n} + \dfrac{13}{10 n^{\frac 98}} + \dfrac{42582162}{n^{\frac{19}{8}}} + \dfrac{35433798}{n^{\frac{21}{8}}} \right] \exp\lp \dfrac{\pi}{2} \sqrt{\dfrac{n}{3}} \rp,
\end{align*}
which on dividing through by $\frac{1}{n^{5/8}} \exp\lp \frac{\pi}{2} \sqrt{\frac{n}{3}} \rp$ and making a convenient numerical estimate, it will suffice to show
\begin{align} \label{Final Reduced Form}
	\dfrac{21\pi n^{\frac 58}}{5} \exp\lp \lp \dfrac{12}{5\pi} - \dfrac{\pi}{4} \rp \sqrt{\dfrac{n}{3}} \rp + \left[ \dfrac{8}{n^{\frac 14}} + \dfrac{10}{n^{\frac 38}} + \dfrac{13}{10 n^{\frac 12}} + \dfrac{42582162}{n^{\frac{7}{4}}} + \dfrac{35433798}{n^2} \right] < \dfrac{11}{20}.
\end{align}
It is clear that for $n \geq 350000$ (in fact, much smaller $n$ will do) the left-hand side is a decreasing function of $n$. It can also be checked with a direct calculation that \eqref{Final Reduced Form} is true for $n = 350000$. Our method only assumed $n > 4800$, so we have now proven that $a(n) \geq 0$ for all $n \geq 350000$. The author has checked the values of $a(n)$ for $1 \leq n \leq 350000$ using his personal computer and found all to be non-negative. Therefore, Theorem \ref{C4 Conjecutre Proof} follows.

\sglsp

\chapter{Distribution of $t$-hook parity} \label{C5}
\thispagestyle{myheadings}

\dblsp
\vspace*{-.65cm}

The purpose of this chapter is to prove Theorems \ref{C5 Even and Odd T Behavior} and \ref{C5 Distribution property}. This is joint work with Anna Pun.

\section{The Nekrasov-Okounkov formula}

Generating functions connected to hook numbers are central in Chapters \ref{C5} and \ref{C6}. The most important formula in this direction is the Nekrasov-Okounkov formula \cite{Han10,NO06} which states that
\begin{align*}
	\sum_{\lambda \in \mathcal P} x^{|\lambda|} \prod_{h \in \mathcal H(\lambda)} \lp 1 - \dfrac{z}{h^2} \rp = \prod_{n=1}^\infty \lp 1 - x^n \rp^{z-1}.
\end{align*}
This result is fundamental in its close relationship to Dedekind's eta-function and many partition-theoretic identities. Using the famous work of Garvan, Kim, and Stanton on $t$-cores \cite{GKS90}, Han reproved and generalized the formula of Nekrasov-Okounkov in various ways which enabled the calculation of many kinds of generating functions connected to counting hook numbers in partitions. In particular, we shall be concerned with the multisets
\begin{align*}
	\mathcal H_t(\lambda) := \{ h \in \mathcal H(\lambda) : t | h \},
\end{align*}
i.e. the multiset of all hook numbers in $\lambda$ which are divisible by $t$. Of particular interest to us is the following theorem of Han.

\begin{theorem}[{\cite[Theorem 1.3]{Han10}}] \label{Han}
	Let $t$ be a positive integer. For any complex numbers $y$ and $z$ we have
	\begin{align*}
		\sum_{\lambda \in \mathcal P} x^{|\lambda|} \prod_{h \in \mathcal H_t(\lambda)} \lp y - \dfrac{tyz}{h^2} \rp = \prod_{k \geq 1} \dfrac{\lp 1 - x^{tk} \rp^t}{(1 - (yx^t)^k)^{t-z} \lp 1 - x^k \rp}.
	\end{align*}
\end{theorem}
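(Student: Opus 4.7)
The plan is to derive Han's generalization directly from the classical Nekrasov--Okounkov formula \eqref{Nekrasov-Okounkov} via the well-known $t$-core/$t$-quotient bijection (due to Littlewood, and central in the work of Garvan--Kim--Stanton \cite{GKS90}). That bijection identifies each partition $\lambda$ with a unique pair $(\mu, (\lambda^{(0)}, \ldots, \lambda^{(t-1)}))$, where $\mu$ is a $t$-core and the $\lambda^{(i)}$ are arbitrary partitions, subject to $|\lambda| = |\mu| + t \sum_{i=0}^{t-1} |\lambda^{(i)}|$. The critical structural fact I would invoke is that this bijection identifies the multiset $\mathcal H_t(\lambda)$ of $t$-hooks of $\lambda$ with $t$ times the disjoint union of hook multisets of the quotient components: $\mathcal H_t(\lambda) = \bigsqcup_{i=0}^{t-1}\{\,t h : h \in \mathcal H(\lambda^{(i)})\}$.

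First I would apply this bijection to rewrite the left-hand side of Theorem \ref{Han}. Since $h = t h'$ with $h' \in \mathcal H(\lambda^{(i)})$ gives $y - \frac{tyz}{h^2} = y\bigl(1 - \frac{z/t}{(h')^2}\bigr)$, the product over $t$-hooks factors as
\begin{align*}
\prod_{h \in \mathcal H_t(\lambda)} \lp y - \frac{tyz}{h^2}\rp = \prod_{i=0}^{t-1} \prod_{h' \in \mathcal H(\lambda^{(i)})} y\lp 1 - \frac{z/t}{(h')^2}\rp,
\end{align*}
and pairing the factors of $y$ against the $x^{t|\lambda^{(i)}|}$ part of $x^{|\lambda|}$ gives
\begin{align*}
\mathrm{LHS} = \Bigl(\sum_{\mu\ t\text{-core}} x^{|\mu|}\Bigr) \prod_{i=0}^{t-1}\left[\sum_{\nu \in \mathcal P} (yx^t)^{|\nu|} \prod_{h \in \mathcal H(\nu)} \lp 1 - \frac{z/t}{h^2}\rp\right].
\end{align*}

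Next I would apply the classical Nekrasov--Okounkov formula \eqref{Nekrasov-Okounkov} with $x \mapsto yx^t$ and $z \mapsto z/t$ to each of the $t$ identical inner sums, producing $\prod_{n\geq 1}(1-(yx^t)^n)^{z/t - 1}$; raising to the $t$-th power yields $\prod_{n\geq 1}(1-(yx^t)^n)^{z-t}$. The last ingredient is the classical $t$-core generating function
\begin{align*}
\sum_{\mu\ t\text{-core}} x^{|\mu|} = \prod_{k\geq 1} \frac{(1-x^{tk})^t}{1-x^k},
\end{align*}
which can be deduced from the Jacobi triple product or taken from \cite{GKS90}. Combining these two evaluations and interpreting the denominator exponent $t-z$ as $-(z-t)$ yields the right-hand side of Theorem \ref{Han}.

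The routine parts are the substitutions and the invocation of \eqref{Nekrasov-Okounkov}; the one place where real care is needed is the bijection step, specifically the identification $\mathcal H_t(\lambda) \leftrightarrow \bigsqcup_i \mathcal H(\lambda^{(i)})$ (after scaling by $t$). This is the hook-length compatibility property of the $t$-quotient, and although it is classical, the entire argument rests on it; I would simply cite \cite{GKS90} (or Han \cite{Han10}) rather than reprove it. Everything else is a matter of matching generating function identities.
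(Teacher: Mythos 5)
The paper does not prove this theorem; it simply cites \cite[Theorem 1.3]{Han10}, so there is no in-thesis argument to compare against. Your derivation is correct and follows the standard route Han himself takes: the Littlewood $t$-core/$t$-quotient bijection, the hook-multiset compatibility stating that $\mathcal H_t(\lambda)$ consists of $t$ times the (disjoint) union of the hook multisets of the quotient components, the classical $t$-core generating function, and an application of \eqref{Nekrasov-Okounkov} with $x \mapsto yx^t$, $z \mapsto z/t$ to each quotient component. One step worth making explicit in a careful write-up is the count $\#\mathcal H(\lambda^{(i)}) = |\lambda^{(i)}|$ (a partition of $n$ has exactly $n$ hook numbers), which is what lets the stray factors of $y$ in your product combine with $x^{t|\lambda^{(i)}|}$ to produce $(yx^t)^{|\lambda^{(i)}|}$; you use it tacitly. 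Incidentally, the thesis's display \eqref{Nekrasov-Okounkov} contains a typographical slip, writing $x$ on the left-hand side and $q$ on the right-hand side of the same identity; this does not affect your argument, but you should take the variable to be a single one throughout.
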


\section{Generating functions and statement of results}

Since $p^e_t(n) + p^o_t(n) = p(n)$, $A_t(n) := p^e_t(n) - p_t^o(n)$ can serve a useful auxiliary role in our study. The utility of the function $A_t(n)$ comes from the generating function 
\begin{equation} \label{A_t(n) Generating Function}
	G_t(x): = \sum\limits_{n \geq 0} A_t(n) x^n = \prod\limits_{k \geq 1} \dfrac{(1 - x^{4tk})^t(1 - x^{tk})^{2t}}{(1 - x^{2tk})^{3t}(1-x^k)},
\end{equation}
proven in Corollary 5.2 of \cite{Han10}, which comes from specializing the values of $y, z$ in Theorem \ref{Han} by specializing to $y = -1$ and $z = 0$, along with relatively simple manipulations with infinite products. 

The driving force which brings to bear the applicationo of Rademacher's circle method is the fact that $G_t(x)$ may readily be written in terms of a modular infinite product via the Dedekind eta-function (see \eqref{G_t in eta}). Using the generating function \eqref{A_t(n) Generating Function}, we prove the following exact formula for $A_t(n)$, given as a Rademacher-type infinite series expansion.

\begin{theorem} \label{Exact Formula}
	If $n, t$ are positive integers with $t > 1$, then
	\begin{alignat*}{2}
		A_t(n) = \dfrac{2^{t/2}}{(24n-1)^{3/4}} \sum_{\substack{k \geq 1 \\ \gcd(k,2t) = 1}} \dfrac{\pi}{k} \sum_{\substack{0 \leq h < k \\ \gcd(h,k) = 1}} e^{\frac{-2\pi i n h}{k}} w(t,h,k) \sum_{m = 0}^{U_{t,k}} e^{\frac{2\pi i (4t)^* H m}{k}} c_1(t,h,k;m) &\\ \cdot \left( \dfrac{t - 24m}{t} \right)^{3/4} I_{\frac 32} \left( \dfrac{\pi}{12k} \sqrt{\dfrac{(t - 24m)(24n-1)}{t}} \right)& \\
		+ \dfrac{2^{t/2}}{(24n-1)^{3/4}} \sum_{\substack{k \geq 1 \\ 2 || k_0}} \dfrac{2\pi}{k} \sum_{\substack{0 \leq h < k \\ \gcd(h,k) = 1}} e^{\frac{-2\pi i n h}{k}} w(t,h,k) \sum_{m = 0}^{U_{t,k}} e^{\frac{2\pi i (2^\dagger t_0^*) H m}{k}} c_2(t,h,k;m) &\\ \cdot \left( \dfrac{t_0(1 + 3\gcd(k,t)^2) - 12m}{t_0} \right)^{3/4} I_{\frac 32} \left( \dfrac{\pi}{6k} \sqrt{\dfrac{t_0(1+3\gcd(k,t)^2) - 12m)(24n-1)}{t_0}} \right) &\\
		+ \dfrac{1}{(24n-1)^{3/4}} \sum_{\substack{k \geq 1 \\ 4 | k_0}} \dfrac{2\pi}{k} \sum_{\substack{0 \leq h < k \\ \gcd(h,k) = 1}} e^{\frac{-2\pi i n h}{k}} w(t,h,k) \sum_{m = 0}^{U_{t,k}} e^{\frac{2\pi i (t_0^* H) m}{k}} c_3(t,h,k;m) &\\ \cdot \left(\dfrac{t_0 - 24m}{t_0} \right)^{3/4} I_{\frac 32} \left( \dfrac{\pi}{6k} \sqrt{\dfrac{(t_0 - 24m)(24n-1)}{t_0}} \right),&
	\end{alignat*}
	where $k_0 := \dfrac{k}{\gcd(k,t)}$, $t_0 := \dfrac{t}{\gcd(k,t)}$, $H$ satisfies $hH \equiv -1 \pmod{k}$, $h^*$ (resp. $h^\dagger$) denotes the inverse of $h$ modulo $k_0$ (resp. $k_0/2$), $U_{t,k}$ is defined by
	\begin{align*}
		U_{t,k} := \begin{cases}
			\left\lfloor \dfrac{t}{24} \right\rfloor & \textnormal{if } 2 \centernot | k_0, \\[0.2in]
			\left\lfloor \dfrac{t_0(1 + 3\gcd(k,t)^2)}{12} \right\rfloor & \textnormal{if } 2 || k_0, \\[0.2in]
			\left\lfloor \dfrac{t_0}{24} \right\rfloor & \textnormal{if } 4 | k_0,
		\end{cases}
	\end{align*}
	$w(t,h,k)$ is defined by \eqref{w-def}, $c_j(t,h,k;m)$ are defined by \eqref{c-def}, and $I_{\frac 32}(z)$ is the classical modified $I$-Bessel function.
\end{theorem}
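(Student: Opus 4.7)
The plan is to apply Rademacher's circle method directly to the generating function $G_t(x)$ in \eqref{A_t(n) Generating Function}. First I would rewrite $G_t(x)$ as an eta-quotient: since $\eta(\tau) = q^{1/24}\prod_{n \geq 1}(1-q^n)$ with $q = e^{2\pi i \tau}$, the infinite product defining $G_t$ becomes
\begin{align*}
    G_t(x) = q^{-\alpha_t}\,\frac{\eta(4t\tau)^t\,\eta(t\tau)^{2t}}{\eta(2t\tau)^{3t}\,\eta(\tau)}
\end{align*}
for an explicit rational $\alpha_t$ computed from the exponents. This places $G_t(x)$ in the realm of modular objects so that the Dedekind eta transformation law controls the behavior near each root of unity. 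Cauchy's theorem then gives
\begin{align*}
    A_t(n) = \frac{1}{2\pi i}\int_C \frac{G_t(x)}{x^{n+1}}\,dx
\end{align*}
for $C$ a circle of radius just inside $1$, and I would dissect $C$ into Farey arcs following \cite{Rad37}, centering each on a fraction $h/k$ with $\gcd(h,k) = 1$ and $k \leq N$.

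On the arc through $h/k$, I would set $x = \exp(2\pi i h/k - 2\pi z/k^2)$ and apply the Mobius transformation $\tau = (iz + h)/k \mapsto H/k + i/z$, where $hH \equiv -1 \pmod k$, to each eta-factor. The key point is that the behavior of $\eta(t\tau)$, $\eta(2t\tau)$, $\eta(4t\tau)$ under this transformation depends on how $k$ interacts with $t$, $2t$, and $4t$ respectively. This is precisely what forces the trichotomy in the theorem: the three cases (namely $\gcd(k,2t) = 1$, $2 \| k_0$, and $4 \mid k_0$ where $k_0 = k/\gcd(k,t)$) correspond to whether the cusp $h/k$ is equivalent to $i\infty$ for one, two, or all three of the level-$t$, level-$2t$, level-$4t$ structures, and each produces a different dominant exponential factor near the cusp. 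The eta-multiplier system contributes the 24th-root weight $w(t,h,k)$ of \eqref{w-def}, and expanding the transformed eta-quotient as a $q$-series in the new variable produces the principal-part coefficients $c_j(t,h,k;m)$ of \eqref{c-def}, with $m$ running up to $U_{t,k}$ determined by the order of vanishing (or pole) of the principal part at the cusp.

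After this substitution, each arc contributes an integral of shape
\begin{align*}
    \int z^{-5/2}\exp\!\left(\frac{A_{t,k,m}}{z} + B_n z\right)\,dz
\end{align*}
for explicit constants $A_{t,k,m}$, $B_n$ depending on the case and on $m$. Following Rademacher's procedure, taking $N \to \infty$ collapses each of these arc-integrals (via a Hankel contour argument) into the modified Bessel function $I_{3/2}$, producing one Bessel term per $(k,h,m)$. Summing over all $h$ modulo $k$ gives the inner Kloosterman-type sums $\sum_h e^{-2\pi i n h/k} w(t,h,k) e^{2\pi i (\cdot)Hm/k}$, and summing over $k$ yields the three outer sums in the statement, one for each arithmetic case.

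The main obstacle will be the bookkeeping of the eta-multiplier systems $\eta(mt\tau) \mapsto \varepsilon(h,k,mt)\sqrt{\text{something}}\cdot \eta(\cdots)$ combined across the four distinct eta-factors in $G_t(x)$, and isolating the principal part correctly in each of the three cases so that the exponents $(4t)^*H$, $(2^\dagger t_0^*)H$, and $t_0^*H$ emerge with the right moduli. Once the transformation formulas are organized, the convergence of the resulting triple series follows from standard estimates on $I_{3/2}$ (exponentially small away from the leading Bessel term) and on the Kloosterman-type sums; the Farey-arc error estimates are identical in spirit to Rademacher's original argument for $p(n)$, so I do not expect new difficulty there. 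The delicate work is entirely in assembling the three cusp-structures into the single unified statement of Theorem \ref{Exact Formula}.
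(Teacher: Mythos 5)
Your proposal is essentially the same as the paper's proof: both express $G_t$ as an eta-quotient (equivalently, via the transformation law for $F(x)=(q;q)_\infty^{-1}$), dissect Cauchy's integral into Farey arcs, apply the Dedekind eta transformation to each of the four factors with the trichotomy governed by $k_0 = k/\gcd(k,t) \bmod 4$, expand the transformed quotient in the appropriate uniformizer to extract the principal part coefficients $c_j$, and reduce each arc integral to an $I_{3/2}$ Bessel function via a Hankel contour. Your framing of the trichotomy via ``cusp equivalence to $i\infty$ for the level-$t$, $2t$, $4t$ structures'' is a little loose — what actually drives the case split is the divisibility of $k_0$ by powers of $2$ as it enters the $\gcd$-arguments in the transformation of $F(x^{2^{j-1}t})$ — but the underlying computation and the resulting architecture coincide with the paper's Lemma 5.4 and Propositions 5.5--5.6.
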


\begin{example}
	We illustrate Theorem \ref{Exact Formula} using the numbers $A_t(d;n)$,
	which denote partial sums for $A_t(n)$ over $1 \leq k \leq d$. Theorem \ref{Exact Formula} is therefore the statement that $\lim\limits_{d \rightarrow \infty} A_t(d;n) = A_t(n)$. We offer some examples in the table below.
	
	\begin{table}[h]
		\begin{center}
		\begin{tabular}{|c|c|c|c|c|c|} \hline
			$d$ & $10$  & $100$ & $1000$ & $\cdots$ & $\infty$ \\ \hline
			$50$ &  $\approx 114580.084$& $\approx 114579.996$ & $\approx 114580.000$  & $\cdots$ & $114580$ \\ \hline
			$100$  & $\approx 81486201.594$ & $\approx 81486198.001$ & $\approx 81486198.000$ & $\cdots$ & $81486198$ \\ \hline
		\end{tabular}
		\vspace{0.5ex}
		\caption{Values of $A_3(d;n)$}
		\end{center}
	\end{table}
\end{example}

\vspace{-5ex}
This exact formula gives the following corollary.

\begin{corollary}\label{Dominating term of A_t}
	For $t>1$ a fixed positive integer, write $t = 2^s\ell$ for integers $s, \ell \geq 0$ such that $\ell$ is odd. Then as $n \rightarrow \infty$ we have
	\begin{equation*} \label{Dominant Term}
		A_t(n) \sim \displaystyle\dfrac{\pi}{2^{s+\frac{t}{2}}}\bigg(\dfrac{1+ 3\cdot 4^s}{24n -1}\bigg)^\frac{3}{4}I_{\frac{3}{2}}\bigg(\dfrac{\pi\sqrt{(1 + 3\cdot 4^s)(24n -1)}}{6\cdot 2^{s+1}}\bigg)\sum_{\substack{0< h < 2^{s+1}\\h \text{ odd}}}w_2(t,h,2^{s+1})e^{-\frac{\pi i nh}{2^s}}.
	\end{equation*}
	In particular, when $t$ is odd we have 
	\begin{equation*} \label{Dominant Term when t odd}
		A_t(n) \sim \displaystyle (-1)^n \dfrac{\pi\cdot 2^{(3-t)/2}}{(24n-1)^{3/4}}I_{\frac{3}{2}}\bigg(\dfrac{\pi\sqrt{24n -1}}{6}\bigg).
	\end{equation*}
\end{corollary}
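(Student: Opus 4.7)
The plan is to extract the dominant contribution from the Rademacher-type sum in Theorem \ref{Exact Formula} as $n \to \infty$. Since $I_{3/2}(z) \sim e^z/\sqrt{2\pi z}$ as $z \to \infty$, each summand grows like $\exp(\kappa\sqrt{24n-1})$ for a summand-dependent coefficient $\kappa$; only those with the strictly largest $\kappa$ contribute to the leading asymptotic, and the rest are exponentially smaller. In every one of the three families, $m = 0$ strictly maximizes $\kappa$ because increasing $m$ shrinks the Bessel argument, so the analysis reduces to identifying the dominant $k$ in each family and then comparing across families.

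For Type 1 the coefficient is $\kappa_1(k) = \pi/(12k)$, maximized at $k = 1$ with $\kappa_1^{*} = \pi/12$. For Type 3 the coefficient is $\kappa_3(k) = \pi/(6k)$, and a short $2$-adic analysis of $4 \mid k/\gcd(k,t)$ with $t = 2^s\ell$ forces the smallest admissible $k$ to be $2^{s+2}$, giving $\kappa_3^{*} = \pi/(6\cdot 2^{s+2})$. For Type 2 the coefficient is $\kappa_2(k) = (\pi/(6k))\sqrt{1+3\gcd(k,t)^2}$; writing $k = 2^{s+1} e j$ with $e \mid \ell$ odd and $j$ odd coprime to $\ell/e$ (as forced by $2\,\|\,k/\gcd(k,t)$ together with the $2$-adic matching of $\gcd(k,t)$ with $t$), one computes
\[
\kappa_2(k)^2 \;=\; \frac{\pi^2}{144\,j^2}\!\left(\frac{1}{4^s e^2} + 3\right),
\]
which is strictly maximized at $e = j = 1$, i.e. at $k = 2^{s+1}$, producing $\kappa_2^{*} = (\pi/(6\cdot 2^{s+1}))\sqrt{1+3\cdot 4^s}$. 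Cross-comparing, $\kappa_2^{*}/\kappa_3^{*} = 2\sqrt{1+3\cdot 4^s} > 1$ and $\kappa_2^{*}/\kappa_1^{*} = \sqrt{4^{-s}+3} > 1$ for every $s \geq 0$, so the Type 2 summand at $k = 2^{s+1}$, $m = 0$ is the unique dominant term.

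Finally, at $k = 2^{s+1}$ the condition $\gcd(h,k) = 1$ restricts $h$ to the odd residues $0 < h < 2^{s+1}$, and the phase $e^{-2\pi i n h/k}$ becomes $e^{-\pi i n h/2^s}$. Collecting the Type 2 prefactor $(2^{t/2}/(24n-1)^{3/4})(2\pi/2^{s+1})(1+3\cdot 4^s)^{3/4}$ and absorbing the factor $w(t,h,k)\,c_2(t,h,k;0)$ (together with the routine power of $2$) into $w_2(t,h,2^{s+1})$ yields the stated asymptotic. For $t$ odd (so $s=0$) the $h$-sum reduces to the single term $h = 1$, giving $e^{-\pi i n} = (-1)^n$, and $(1+3\cdot 4^0)^{3/4} = 2^{3/2}$ combines with the remaining powers of $2$ to produce the clean prefactor $\pi\cdot 2^{(3-t)/2}$.

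The main obstacle is the $2$-adic combinatorial step that singles out $k = 2^{s+1}$ as the unique global maximizer across all three families; one must carefully track how the three mutually exclusive congruence conditions on $k/\gcd(k,t)$ interact with the factorization $t = 2^s \ell$, and verify that the $h$-sum at the dominant $k$ does not accidentally vanish in the relevant multiplier data. Once this dominance is established, the remainder is bookkeeping of Bessel asymptotics and of the $w$, $c_j$, and $w_2$ prefactors at the single dominant term.
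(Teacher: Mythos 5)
Your proposal follows the same strategy as the paper's proof of Corollary~\ref{Dominating term of A_t}: apply $I_{3/2}(z)\sim e^z/\sqrt{2\pi z}$ to reduce to comparing the Bessel argument coefficients on $\sqrt{24n-1}$, note that $m=0$ maximizes each argument, and then show that the Type 2 contribution at $k=2^{s+1}$ strictly dominates both the other Type 2 terms and the Type 1 and Type 3 families, finishing with Lemma~\ref{Kloosterman sum nonzero} to ensure non-vanishing of the $h$-sum. Your treatment is actually somewhat more explicit than the paper's: where the paper asserts that the $2\|k_0$ case ``is clearly the largest'' and that ``it is also clear that $k_0=2$ is optimal,'' you supply the $2$-adic parametrization $k = 2^{s+1}ej$ (forced by $2\|k/\gcd(k,t)$ and $t=2^s\ell$), derive the closed form $\kappa_2(k)^2 = \frac{\pi^2}{144 j^2}\big(4^{-s}e^{-2}+3\big)$, and then compute the ratios $\kappa_2^*/\kappa_1^* = \sqrt{4^{-s}+3}$ and $\kappa_2^*/\kappa_3^* = 2\sqrt{1+3\cdot 4^s}$ directly. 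That is a genuine improvement in rigor over the paper's terse comparison, though it reaches the same conclusion. The final prefactor bookkeeping is left implicit in your write-up, but so it is in the paper — both proofs ultimately appeal to Theorem~\ref{Exact Formula} and the isolation of the single dominant summand rather than re-deriving the constant from scratch.
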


\begin{proof}
	For $z \in \mathbb{R}^+$, it is known that $I_{\frac 32}(z) \sim \dfrac{e^z}{\sqrt{2\pi z}}$. From this asymptotic relation, we can derive a condition for isolating the dominant term in Theorem \ref{Exact Formula}. In particular, let $\{ g_i(n) \}_{i=0}^\infty$ be a countable collection of functions such that $\lim\limits_{n \to \infty} \dfrac{g_0(n)}{g_i(n)} > 1$ for all $i \not = 0$ and let $\{ a_i(n) \}_{i=0}^\infty$ be complex numbers each of which grow at most polynomially in $n$. Then if the series $\sum\limits_{i=0}^\infty a_i(n) I_{3/2}(g_i(n))$ converges and $a_0(n)$ does not vanish, we have
	$$\sum\limits_{i=0}^\infty a_i(n) I_{\frac 32}(g_i(n)) \sim a_0(n) I_{3/2}(g_0(n))$$
	as $n \to \infty$. This reduces the proof to an analysis of the analogs of $g_i(n)$ and $a_0(n)$ in Theorem \ref{Exact Formula}.
	
	Each of the arguments of $I_{\frac 32}(z)$ is maximized when $m = 0$, so we are left with the task of finding the largest possible value of of coefficients on $\sqrt{24n-1}$, which are given by $\dfrac{\pi}{12k}$ if $2 \centernot | k_0$, $\dfrac{\pi}{6k} \sqrt{1 + 3\gcd(k,t)^2}$ if $2 || k_0$, and $\dfrac{\pi}{6k}$ if $4 | k_0$. Among these, it is clear that the case where $2 || k_0$ is the largest. Now, this expression can be rewritten as
	$$\dfrac{\pi}{6k} \sqrt{1 + 3\gcd(k,t)^2} = \dfrac{\pi}{6k} \sqrt{1 + \dfrac{3}{k_0^2} k^2}.$$
	When $k_0$ is held fixed, since $k_0 \geq 2$ this expression is strictly decreasing in $k$, and therefore the optimal choice of $k$ must be of the form $k = 2^s k_0$. It is also clear that $k_0 = 2$ is optimal, and therefore $k = 2^{s+1}$ has the dominant $I$-Bessel function. Since by Lemma \ref{Kloosterman sum nonzero} the associated Kloosterman sum does not vanish, this completes the proof.
\end{proof}

\section{Set-up and notation}

The approach that will be utilized in the proof of Theorem \ref{Exact Formula} is commonly referred to as the ``circle method". Initially developed by Hardy and Ramanujan and refined by Rademacher, the circle method has been employed with great success for the past century in additive number theory. The crowning achievement of the circle method lies in producing an exact formula for the partition function $p(n)$, and it has been utilized to produce similar exact formulas for variants of the partition function. A helpful and instructive sketch of the application of Rademacher's circle method to the partition function $p(n)$ is given in Chapter 5 of \cite{Apo90}. Here, we will provide a summary of the circle method, in order to clarify the key steps and the general flow of the argument.

The function $A_t(n)$ has as its generating function $G_t(x)$. Our objective is to use $G_t(x)$ to produce an exact formula for $A_t(n)$. Consider the Laurent expansion of $G_t(x)/x^{n+1}$ in the punctured unit disk. This function has a pole at $x = 0$ with residue $p(n)$ and no other poles. Therefore, by Cauchy's residue theorem we have
\begin{equation}
	A_t(n) = \dfrac{1}{2\pi i} \int_C \dfrac{G_t(x)}{x^{n+1}} dx,
\end{equation}
where $C$ is any simple closed curve in the unit disk that contains the origin in its interior. The task of the circle method is to choose a curve $C$ that allows us to evaluate this integral, and this is achieved by choosing $C$ to lie near the singularities of $G_t(x)$, which are the roots of unity. For every positive integer $N$ and every pair of coprime non-negative integers $0 \leq h < k \leq N$, we choose a special contour $C$ in the complex upper half-plane and divide this contour into arcs $C_{h,k}$ near the roots of unity $e^{2\pi i h/k}$. Integration along $C$ can then be expressed as a finite sum of integrals along the arcs $C_{h,k}$, and elementary functions $\psi_{h,k}$ are chosen with behavior similar to $G_t(x)$ near the singularity $e^{2\pi ih/k}$. The functions $\psi_{h,k}$ are found by using properties of $G_t(x)$ deduced from the functional equation of the Dedekind eta-function $\eta(\tau) := e^{\pi i \tau / 12} \prod\limits_{n \geq 1} (1 - e^{2\pi i n \tau})$ and the relation between $G_t(x)$ and $\eta(\tau)$ given by
\begin{equation}\label{G_t in eta}
	G_t(e^{2\pi i \tau}) = \dfrac{\eta(t\tau)^{2t} \eta(4t\tau)^t}{\eta(\tau)\eta(2t\tau)^{3t}}.
\end{equation}
The error created by replacing $G_t(x)$ by $\psi_{h,k}(x)$ can be estimated, and the integrals of the $\psi_{h,k}$ along $C_{h,k}$ evaluated. This procedure produces estimates that can be used to formulate a convergent series for $A_t(n)$. Our implementation of the circle method will follow along these same lines.

To preface the proof of Theorem \ref{Exact Formula}, we summarize notation which will be used prominently throughout the rest of the chapter. The values of $t, n, h$, and $k$ are always non-negative integers. Additionally, we assume $t > 1$ and that $h, k$ satisfy $0 \leq h < k$ and $\gcd(h,k) = 1$. Frequently, it is necessary to remove common factors between $k$ and $t$, and so we define $k_0 := \frac{k}{\gcd(k,t)}$ and $t_0 := \frac{t}{\gcd(k,t)}$. We will also make use of multiplicative inverses to a variety of moduli, and use distinct notations to distinguish these. We will always denote by $H$ an integer satisfying $hH \equiv -1 \pmod{k}$, and $h^*, h^\dagger$ will denote inverses of $h$ modulo $k_0$ and $k_0/2$ respectively. The complex numbers $x$ and $z$ are related by $x = \exp\left( \frac{2\pi i}{k} \left( h + iz \right) \right)$. Note that although $x$ depends on $h$ and $k$, the dependence is suppressed since these values will be clear in context. The notation $x'$ will always be used to denote a modular transformation of the variable $x$. The modular transformations also make use of the Dedekind sum $s(u,v)$, which for any integers $u,v$ is given by
\begin{align*}
	s(u,v) := \sum_{m = 1}^v \left(\left( \dfrac{m}{v} \right)\right) \left(\left( \dfrac{um}{v} \right)\right)
\end{align*}
where
\begin{align*}
	((m)) := \begin{cases}
		m - \lfloor m \rfloor - \dfrac{1}{2} & m \not \in \mathbb{Z}, \\
		0 & m \in \mathbb{Z}.
	\end{cases}
\end{align*}
These Dedekind sums will always arise in the context of certain roots of unity $e^{\pi i s(u,v)}$, and so it is convenient to adopt the notation $\omega_{u,v} := e^{\pi i s(u,v)}$.

\section{Modular transformation laws}

We first recall the transformation formula for the generating function of $p(n)$ (see, for example \cite{Hag71} or p. 96 in \cite{Apo90}).

\begin{theorem}\label{transformation formula for F}
	Let $k,t$ be positive integers with $t > 1$ and $0 \leq h < k$ an integer coprime to $k$ and $H$ an integer satisfying $hH \equiv -1 \pmod{k}$. Let $z$ be a complex number with $\textnormal{Re}(z) > 0$ and let $x, x'$ be defined by $x = \exp\bigg(\dfrac{2\pi i}{k} (h + iz) \bigg)$ and $x' = \exp\bigg(\dfrac{2\pi i}{k} \left(  H + \dfrac{i}{z} \right)\bigg)$. If $F(x)$ is defined by $F(x) := \prod\limits_{m=1}^\infty \dfrac{1}{1 - x^m}$, then \begin{equation*}
		F(x)  = \sqrt{z} \cdot \omega_{h,k} \exp \bigg(\dfrac{\pi(z^{-1} -z)}{12k}\bigg)F(x').
	\end{equation*}
\end{theorem}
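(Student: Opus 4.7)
The plan is to deduce this transformation from the modular transformation of the Dedekind eta function $\eta(\tau)=e^{\pi i \tau/12}\prod_{m\ge 1}(1-e^{2\pi i m\tau})$, which we already know from Section 2.2 of the excerpt. Setting $\tau=(h+iz)/k$ so that $x=e^{2\pi i\tau}$, the generating function factors as
\begin{equation*}
  F(x)=\frac{e^{\pi i\tau/12}}{\eta(\tau)},
\end{equation*}
and similarly $F(x')=e^{\pi i\tau'/12}/\eta(\tau')$ with $\tau'=(H+i/z)/k$. Thus the theorem will follow once we relate $\eta(\tau')$ to $\eta(\tau)$ and then clean up the exponential prefactors that appear.

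First I would exhibit an explicit matrix $\gamma\in\mathrm{SL}_2(\ZZ)$ sending $\tau$ to $\tau'$. Writing $\gamma=\begin{pmatrix}H & -\tfrac{hH+1}{k}\\ k & -h\end{pmatrix}$, the entry $b=-(hH+1)/k$ is an integer because $hH\equiv -1\pmod k$, and $\det\gamma = -Hh+(hH+1)=1$. A direct computation gives
\begin{equation*}
  c\tau+d = k\cdot\frac{h+iz}{k}-h = iz,\qquad \gamma\tau = \frac{H\tau+b}{iz} = \frac{1}{k}\!\left(H+\frac{i}{z}\right)=\tau'.
\end{equation*}
Thus the transformation $\tau\mapsto\tau'$ is realized by a single element of $\mathrm{SL}_2(\ZZ)$, which is exactly the setting where the eta transformation law applies.

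Next I would apply the classical transformation formula of $\eta$ under $\gamma$: for $\gamma=\begin{pmatrix}a&b\\c&d\end{pmatrix}\in\mathrm{SL}_2(\ZZ)$ with $c>0$,
\begin{equation*}
  \eta(\gamma\tau) = \varepsilon(a,b,c,d)\,(-i(c\tau+d))^{1/2}\,\eta(\tau),\qquad
  \varepsilon(a,b,c,d)=\exp\!\left(\pi i\!\left(\tfrac{a+d}{12c}-s(d,c)-\tfrac14\right)\right).
\end{equation*}
With $a=H$, $c=k$, $d=-h$, we have $c\tau+d=iz$, so $(-i(c\tau+d))^{1/2}=z^{1/2}$, and the root-of-unity factor becomes
\begin{equation*}
  \varepsilon(H,b,k,-h)=\exp\!\left(\pi i\!\left(\tfrac{H-h}{12k}-s(-h,k)-\tfrac14\right)\right).
\end{equation*}
Using $s(-h,k)=-s(h,k)$ in the definition of $\omega_{h,k}=e^{\pi i s(h,k)}$, and collecting the $-1/4$ phase together with $(-i)^{1/2}$ recovers a clean factor of $\sqrt{z}\cdot\omega_{h,k}$ in front of $\eta(\tau')^{-1}$.

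Finally, I would substitute back into $F(x)=e^{\pi i\tau/12}/\eta(\tau)$ and combine the exponential prefactors. The $e^{\pi i\tau/12}$ from $F(x)$, the $e^{-\pi i\tau'/12}$ coming from $1/F(x')$, and the $e^{\pi i(H-h)/(12k)}$ from $\varepsilon$ are all pure exponentials of rational linear combinations of $h$, $H$, and $\pm iz, \pm i/z$. The $h$ and $H$ contributions are designed to cancel (this is the whole point of having Dedekind sums and the $(a+d)/12c$ term in $\varepsilon$), while the $iz$ and $i/z$ contributions assemble to exactly $\exp\!\lp\frac{\pi(z^{-1}-z)}{12k}\rp$. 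The main obstacle here is purely bookkeeping: one must carefully verify that the Dedekind sum identity and the $\pm 1/4$ phases align so that no extraneous factors remain. Once this is carried out, we obtain
\begin{equation*}
  F(x)=\sqrt{z}\,\omega_{h,k}\exp\!\lp\tfrac{\pi(z^{-1}-z)}{12k}\rp F(x'),
\end{equation*}
as desired.
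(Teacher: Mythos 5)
Your overall strategy — write $F(x)=e^{\pi i\tau/12}/\eta(\tau)$, find the explicit $\gamma\in\mathrm{SL}_2(\ZZ)$ with $\gamma\tau=\tau'$, invoke the $\eta$-multiplier, and combine exponentials — is precisely the approach the paper intends (the paper gives no details, simply pointing to Apostol p.~96). The matrix $\gamma=\begin{pmatrix}H & -(hH+1)/k\\ k & -h\end{pmatrix}$ and the computations $c\tau+d=iz$ and $\gamma\tau=\tau'$ are correct, and the cancellation of the $h$- and $H$-phases against $\exp(\pi i(a+d)/(12c))$ does work out cleanly with $a=H$, $d=-h$.

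There is, however, one concrete bookkeeping error of exactly the type you flag as the ``main obstacle.'' You pair the prefactor $(-i(c\tau+d))^{1/2}$ with the multiplier
$\varepsilon=\exp\!\left(\pi i\!\left(\tfrac{a+d}{12c}-s(d,c)-\tfrac14\right)\right)$,
but the $-\tfrac14$ belongs only to the convention with prefactor $(c\tau+d)^{1/2}$. In Apostol's normalization $\eta(\gamma\tau)=\varepsilon\,\{-i(c\tau+d)\}^{1/2}\,\eta(\tau)$, the multiplier is
$\varepsilon=\exp\!\left(\pi i\!\left(\tfrac{a+d}{12c}+s(-d,c)\right)\right)=\exp\!\left(\pi i\!\left(\tfrac{a+d}{12c}-s(d,c)\right)\right)$,
with no $-\tfrac14$. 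Since $c\tau+d=iz$, you already have $(-i(c\tau+d))^{1/2}=z^{1/2}$ on the nose; there is no spare $(-i)^{1/2}$ lying around to absorb the $-\tfrac14$. Carrying your formula through as written produces an extraneous factor $e^{-\pi i/4}$ in the final identity. Dropping the $-\tfrac14$ (or, equivalently, switching consistently to the $(c\tau+d)^{1/2}$ convention) fixes it: with $a+d=H-h$ and $s(-h,k)=-s(h,k)$ one gets
$e^{\pi i\tau/12}\,e^{-\pi i\tau'/12}\,e^{\pi i(H-h)/(12k)}\,\omega_{h,k}\,\sqrt{z}\,F(x')
= \omega_{h,k}\sqrt{z}\,e^{\frac{\pi}{12k}(z^{-1}-z)}F(x')$,
exactly the claimed formula.
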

The proof of this theorem comes directly from the modular transformation properties of Dedekind's eta-function.

By (\ref{G_t in eta}), $G_t(x)$ can be expressed in terms of $F(x)$:
$$G_t(x)  = \dfrac{F(x)\big[F\big(x^{2t}\big)\big]^{3t}}{\big[F\big(x^{t}\big)\big]^{2t}\big[F\big(x^{4t}\big)\big]^{t}}.$$
We can therefore apply Theorem \hyperref[transformation formula for F]{\ref{transformation formula for F}} to find a similar transformation formula for $G_t(x)$.

\begin{lemma} \label{Transformation Laws}
	Define $x_1 := x^t$, $x_2 := x^{2t}$, and $x_3 := x^{4t}$. Then the following transformation formulas for $F(x_j)$ hold. \\
	
	\noindent (a) When $k_0$ is odd, for $1 \leq j \leq 3$ we have $$F(x_j) = \sqrt{2^{j-1} t_0 z} \cdot \omega_{2^{j-1} t_0 h, k_0} \exp \bigg[ \dfrac{\pi}{12k_0} \bigg( \dfrac{1}{2^{j-1} t_0 z} - 2^{j-1} t_0 z \bigg) \bigg] F(x_j')$$
	hold, where $x_j' = \exp\bigg[\dfrac{2\pi i}{k_0} \bigg( (2^{j-1} t_0)^* H + \dfrac{i}{2^{j-1} t_0 z}\bigg)\bigg]$. \\
	
	\noindent (b) Suppose $k_0 \equiv 2 \pmod{4}$. Then we have the transformation formulas
	$$F(x_1) = \sqrt{t_0 z} \cdot \omega_{t_0 h, k_0} \exp\Bigg[ \dfrac{\pi}{12k_0} \bigg( \dfrac{1}{t_0 z} - t_0 z \bigg) \Bigg] F(x_1')$$
	where $x_1' = \exp\bigg[\dfrac{2\pi i}{k_0} \bigg( t_0^* H + \dfrac{i}{t_0 z}\bigg)\bigg]$, and for $j = 2, 3$ we have
	$$F(x_j) = \sqrt{2^{j-2} t_0 z} \cdot \omega_{2^{j-2} t_0 h, k_0/2} \exp\Bigg[ \dfrac{\pi}{6k_0} \bigg( \dfrac{1}{2^{j-2} t_0 z} - 2^{j-2} t_0 z \bigg) \Bigg] F(x_j'),$$
	where $x_2' = \exp\bigg[\dfrac{2\pi i}{k_0 / 2} \bigg( t_0^* H + \dfrac{i}{t_0 z}\bigg)\bigg],$ and $x_3' = \exp\bigg[\dfrac{2\pi i}{k_0 / 2} \bigg( 2^\dagger t_0^* H + \dfrac{i}{t_0 z}\bigg)\bigg].$ \\
	
	\noindent (c) Suppose $4 | k_0$. Then we have the transformation formulas
	$$F(x_j) = \sqrt{t_0 z} \cdot \omega_{t_0 h, k_0 / 2^{j-1}} \exp\Bigg[ \dfrac{2^{3-j} \pi}{12k_0} \bigg( \dfrac{1}{t_0 z} - t_0 z \bigg) \Bigg] F(x_j'),$$
	where $x_j' = \exp\Bigg[ \dfrac{2\pi i}{k_0} \bigg( 2^{j-1} t_0^* H + \dfrac{i}{2^{1-j}t_0 z} \bigg) \Bigg]$.
\end{lemma}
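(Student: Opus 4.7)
The plan is to derive each of the three cases by a direct appeal to Theorem \ref{transformation formula for F}, after rewriting $x_j = x^{2^{j-1}t}$ in a ``canonical'' form
\[
x_j = \exp\!\left(\frac{2\pi i}{k'}\bigl(h'+iz'\bigr)\right)
\]
with $\gcd(h',k')=1$, at which point Theorem \ref{transformation formula for F} applies verbatim with data $(h',k',z')$ and returns the corresponding $H'$ inverse. The starting identity is
\[
x_j \;=\; \exp\!\left(\frac{2\pi i\cdot 2^{j-1}t\,(h+iz)}{k}\right)
     \;=\; \exp\!\left(\frac{2\pi i\cdot 2^{j-1}t_0\,(h+iz)}{k_0}\right),
\]
since $k/\gcd(k,t)=k_0$ and $t/\gcd(k,t)=t_0$ with $\gcd(k_0,t_0)=1$. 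So the whole problem reduces to determining how much of the factor $2^{j-1}$ can be cancelled against $k_0$.

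First I would handle case (a), where $k_0$ is odd. Here $\gcd(2^{j-1}t_0,k_0)=1$ (since $\gcd(t_0,k_0)=1$ and $k_0$ is odd) and $\gcd(h,k_0)=1$, so the fraction is already in lowest terms with denominator $k_0$. Setting $h'\equiv 2^{j-1}t_0 h\pmod{k_0}$, $z'=2^{j-1}t_0 z$, $k'=k_0$, Theorem \ref{transformation formula for F} gives the stated formula; the inverse $H'$ must satisfy $h'H'\equiv -1\pmod{k_0}$, and because $hH\equiv -1\pmod k$ implies $hH\equiv -1\pmod{k_0}$, the choice $H'=(2^{j-1}t_0)^*H$ works, which yields exactly $\omega_{2^{j-1}t_0 h,k_0}$ and $x_j'=\exp\bigl(\tfrac{2\pi i}{k_0}\bigl((2^{j-1}t_0)^*H+i/(2^{j-1}t_0 z)\bigr)\bigr)$.

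Next I would do case (b), $k_0\equiv 2\pmod 4$, where $k_0/2$ is odd. For $j=1$ the analysis is identical to case (a) because $2^{0}=1$. For $j=2,3$ we exploit the fact that one power of $2$ may be cancelled from $2^{j-1}t_0/k_0$: writing $k_0=2m$ with $m$ odd, we have
\[
\frac{2^{j-1}t_0\,(h+iz)}{k_0} \;=\; \frac{2^{j-2}t_0\,(h+iz)}{m},
\]
so the reduced data is $h'\equiv 2^{j-2}t_0 h\pmod{m}$, $z'=2^{j-2}t_0 z$, $k'=m=k_0/2$. The odd factor $m$ makes $\gcd(2^{j-2}t_0 h, m)=1$, so Theorem \ref{transformation formula for F} applies, producing a factor $\exp\bigl(\tfrac{\pi}{12(k_0/2)}((2^{j-2}t_0 z)^{-1}-2^{j-2}t_0 z)\bigr)=\exp\bigl(\tfrac{\pi}{6k_0}((2^{j-2}t_0 z)^{-1}-2^{j-2}t_0 z)\bigr)$ as claimed; the inverse $H'$ is read off from $(2^{j-2}t_0)H'\equiv -h^{-1}\pmod{m}$, which gives $H'=t_0^* H$ when $j=2$ and $H'=2^{\dagger}t_0^* H$ when $j=3$ (using that $2$ is invertible mod $m$). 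Case (c), where $4\mid k_0$, proceeds identically: the full $2^{j-1}$ now cancels against $k_0$, reducing denominator to $k_0/2^{j-1}$, and one repeats the same bookkeeping.

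The main obstacle is purely bookkeeping rather than conceptual: ensuring that in each case the reduced modulus, the reduced numerator (hence the subscripts of the Dedekind sum $\omega_{\cdot,\cdot}$), the rescaled $z$, and the inverse-of-$h$ symbol ($H$, $H^*$, or $H^\dagger$) agree precisely with the stated formula. In particular one must keep careful track of how factors of $2^{j-1}$ migrate between the numerator and the denominator of the transformed exponent, and must use throughout that $hH\equiv -1\pmod k$ forces $hH\equiv -1$ modulo every divisor of $k$. Once each of the nine sub-cases is lined up in this canonical form, the lemma follows from a single invocation of Theorem \ref{transformation formula for F}.
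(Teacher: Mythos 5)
Your approach is correct and is exactly the one the paper takes: rewrite $x_j = \exp\bigl(2\pi i\cdot 2^{j-1}t_0(h+iz)/k_0\bigr)$, cancel as many powers of $2$ as $k_0$ permits, and feed the resulting reduced triple $(h',k',z')$ into Theorem~\ref{transformation formula for F}; the paper only spells this out for $j=1$ of case (a) and relegates the rest to ``rearranging,'' while you address the key bookkeeping points (in particular the crucial fact that $hH\equiv -1\pmod k$ descends to every divisor of $k$, which is what makes $(2^{j-1}t_0)^*H$, $t_0^*H$, and $2^\dagger t_0^*H$ work as the appropriate $H'$).

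One thing worth flagging, though: if you actually carry out case (c) with your recipe, you get $k'=k_0/2^{j-1}$, $z'=t_0 z$, and hence the exponential factor $\exp\!\bigl(\tfrac{\pi}{12(k_0/2^{j-1})}(\tfrac{1}{t_0 z}-t_0 z)\bigr)=\exp\!\bigl(\tfrac{2^{j-1}\pi}{12k_0}(\tfrac{1}{t_0 z}-t_0 z)\bigr)$, whereas the lemma as printed has $2^{3-j}$ in place of $2^{j-1}$ (these differ for $j=1,3$). A direct check — e.g.\ $j=1$, $t=1$, $k=k_0=4$, where the theorem gives $\exp(\pi(z^{-1}-z)/48)$ but the printed formula gives $\exp(\pi(z^{-1}-z)/12)$ — confirms that $2^{j-1}$ is correct, and tracing the downstream computation of the transformation law for $G_t(x)$ when $4\mid k_0$ also requires $2^{j-1}$. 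So the printed $2^{3-j}$ is a typo, not an error in your reasoning, but since you describe case (c) only as ``repeating the same bookkeeping'' without reporting the resulting constant, you should carry it through explicitly and note the discrepancy; otherwise a reader cannot tell whether you verified that your derivation reproduces the stated formula.
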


\begin{proof}
	We first prove the case $j=1$ of (a) By definition, $x_1 = \exp \bigg( \dfrac{2\pi i}{k_0} (t_0 h + i t_0 z) \bigg)$. Since $\gcd(t_0 h, k_0) = 1$ and $t_0 h (t_0^* H) \equiv -1 \pmod{k_0}$, applying Theorem \hyperref[transformation formula for F]{\ref{transformation formula for F}} gives the result by the substitutions $h \mapsto t_0 h$, $k \mapsto k_0$, and $z \mapsto t_0 z$. Every other case of the result follows by rearranging terms in $x_j$ in a manner such that the terms playing the roles of $h$ and $k$ in Theorem \ref{transformation formula for F} are coprime.
\end{proof}

Using these identities for each case, we obtain the transformation law
\begin{align*}
	G_t(x) = \begin{cases}
		2^{t/2} \sqrt{z} \exp \bigg[ \dfrac{\pi}{12k} \bigg( \dfrac{4 - 3\gcd(k,t)^2}{4z} - z \bigg) \bigg] w_1(t,h,k) \dfrac{F(x') [F(x_2')]^{3t}}{[F(x_1')]^{2t}[F(x_3')]^t}& 2 \centernot | k_0, \\[+0.2in] 
		2^{t/2} \sqrt{z} \exp\bigg[ \dfrac{\pi}{12k}\bigg(\dfrac{1+3\gcd(k,t)^2}{z} - z\bigg)\bigg]w_2(t,h,k)\dfrac{F(x')\big[F(x_2')\big]^{3t}}{\big[F(x_1')\big]^{2t}\big[F(x_3')\big]^{t}} & 2 || k_0, \\[+0.2in] 
		\sqrt{z}\exp\bigg[\dfrac{\pi}{12k}\left(\dfrac{1}{z} - z\right)\bigg]w_3(t,h,k)\dfrac{F(x')\big[F(x_2')\big]^{3t}}{\big[F(x_1')\big]^{2t}\big[F(x_3')\big]^{t}}& 4 | k_0,
	\end{cases}
\end{align*}
\vspace{0.6ex}
where $w_1(t,h,k) := \omega_{h,k} \omega^{3t}_{2t_0 h, k_0} \omega^{-2t}_{t_0 h, k_0} \omega^{-t}_{4 t_0 h, k_0}$, $w_2(t,h,k) := \omega_{h,k} \omega^{3t}_{t_0 h, k_0/2} \omega^{-2t}_{t_0 h, k_0} \omega^{-t}_{2 t_0 h, k_0/2}$, and $w_3(t,h,k) := \omega_{h,k} \omega^{3t}_{t_0 h, k_0/4} \omega^{-2t}_{t_0 h, k_0} \omega^{-t}_{t_0 h, k_0/4}$. From the definition of $s(h,k)$ we can see that $s(dh, dk) = s(h,k)$ for every integer $d$, and therefore $\omega_{t_0 h, k_0/2} = \omega_{2t_0 h, k_0}$ when $2 | k_0$ and $\omega_{t_0 h, k_0/4} = \omega_{2t_0 h, k_0/2} = \omega_{4t_0 h, k_0}$ when $4 | k_0$. Therefore $w_j(t,h,k) = w(t,h,k)$ for all $j$, where
\begin{align} \label{w-def}
	w(t,h,k) := \dfrac{\omega_{h,k} \omega^{3t}_{2t_0 h, k_0} }{\omega^{2t}_{t_0 h, k_0} \omega^{t}_{4 t_0 h, k_0}}.
\end{align}
Therefore, the transformation law for $G_t(x)$ can be rewritten as
\begin{align} \label{Transformation Law for G_t}
	G_t(x) = \begin{cases}
		2^{t/2} \sqrt{z} \exp \bigg[ \dfrac{\pi}{12k} \bigg( \dfrac{4 - 3\gcd(k,t)^2}{4z} - z \bigg) \bigg] w(t,h,k) J_{t,h,k}(x') & 2 \centernot | k_0, \\ 
		2^{t/2} \sqrt{z} \exp\bigg[ \dfrac{\pi}{12k}\bigg(\dfrac{1+3\gcd(k,t)^2}{z} - z\bigg)\bigg]w(t,h,k) J_{t,h,k}(x') & 2 || k_0, \\ 
		\sqrt{z}\exp\bigg[\dfrac{\pi}{12k}\left(\dfrac{1}{z} - z\right)\bigg] w(t,h,k) J_{t,h,k}(x') & 4 | k_0,
	\end{cases}
\end{align}
where for shorthand we define $J_{t,h,k}(x') := \dfrac{F(x')\big[F(x_2')\big]^{3t}}{\big[F(x_1')\big]^{2t}\big[F(x_3')\big]^{t}}$.

\section{The Farey decomposition}

We follow closely to the notations and proofs in Chapter 5 of \cite{Apo90}. Let notation be as before, and let $N$ be any positive integer. Recall that
\begin{align*}
	G_t(x):= \sum\limits_{\lambda \in \mathcal P}x^{\vert \lambda \vert}(-1)^{\# \mathcal H_t(\lambda)} =\sum\limits_{n \geq 0}\sum\limits_{\lambda \vdash n}(-1)^{\# \mathcal H_t(\lambda)}x^n = \sum\limits_{n \geq 0}A_t(n)x^n.
\end{align*}
By Cauchy's residue theorem, we have
\begin{align*}
	A_t(n) = \frac{1}{2\pi i}\int_C \frac{G_t(x)}{x^{n + 1}}\,dx,
\end{align*}
where $C$ is any positively oriented simple closed curve in a unit disk that contains the origin in its interior. In our implementation of the circle method, we set $C = C_N$ for $C_N$ centered at zero with radius $e^{-2\pi N^{-2}}$.  Using the transformations $x = e^{2\pi i \tau}$ and $z = -ik^2\bigg(\tau -\dfrac{h}{k}\bigg)$ in succession, the circle $C_N$ is mapped onto the circle $\mathcal{K}$ with center $\frac{1}{2}$ and radius $\frac{1}{2}$. In the rest of this proof, $\mathcal{K}$ will denote this same circle. If we breakdown $C_N$ into Farey arcs, then this change of variables gives the formula
\begin{eqnarray*}
	A_t(n) &=& \displaystyle\sum\limits_{k=1}^N \Bigg[\dfrac{i}{k^2} \sum\limits_{\substack{0\leq h < k\\(h,k) =1}}e^{-\frac{2\pi i nh}{k}}\int_{z_1(h,k)}^{z_2(h,k)} G_t\left( e^{\frac{2\pi i}{k}\left( h + \frac{iz}{k} \right)} \right) e^{\frac{2\pi nz}{k^2}}\,dz\Bigg],
\end{eqnarray*}
where the integral runs along the arc of $\mathcal{K}$ between the points $z_1(h,k)$ and $z_2(h,k)$ defined by
$$z_1(h,k) = \dfrac{k^2}{k^2 + k_1^2} + i \dfrac{kk_1}{k^2 + k_1^2} \ \ \text{ and } \ \ z_2(h,k) = \dfrac{k^2}{k^2 + k_2^2} - i \dfrac{kk_2}{k^2 + k_2^2},$$
where $k_1, k, k_2$ are the denominators of consecutive terms of the Farey series of order $N$. Computing $A_t(n)$ therefore reduces to computing the integrals
\begin{align*}
	I(t,h,k,n) := \int_{z_1(h,k)}^{z_2(h,k)} G_t\left( e^{\frac{2\pi i}{k}\left( h + \frac{iz}{k} \right)} \right) e^{\frac{2\pi n z}{k^2}} dz.
\end{align*}

\section{Exact formula for $A_t(n)$}

The first step to evaluating these integrals is an application of the transformation law for $G_t(x)$. Because of the formulation of \eqref{Transformation Law for G_t}, the exact formula is naturally broken into three sums. One of these is given by
$$\sum_{\substack{k \geq 1 \\ k_0 \textnormal{ odd}}} \sum\limits_{\substack{0 \leq h < k \\ \gcd(h,k) = 1}} e^{-2\pi i n h / k} I(t,h,k,n)$$
and the other two are defined similarly with the modification that $k_0$ odd is replaced by either $2 || k_0$ or $4 | k_0$. Because of this natural breakdown, the evaluation of $I(t,h,k,n)$ also naturally breaks into three cases.

In order to estimate the integrals $I(t,h,k,n)$, we will use a series expansion for the factor $J_{t,h,k}(x') = \dfrac{F(x') [F(x_2')]^{3t}}{[F(x_1')]^{2t}[F(x_3')]^t}$ in the modular transformation law for $G_t(x)$. The variable we will use for this series expansion will depend on the value of $k_0$. In particular, define $y_j$ for $1 \leq j \leq 3$ by
$$y_1 := e^{\frac{2\pi i}{k} \left( 4^* t_0^* H + \frac{i}{4 t_0 z} \right)}, \ \ \ y_2 := e^{\frac{2\pi i}{k} \left( 2^\dagger t_0^* H + \frac{i}{2 t_0 z} \right)}, \ \ \ y_3 := e^{\frac{2\pi i}{k} \left( t_0^* H + \frac{i}{t_0 z} \right)}.$$
The utility of using $y_j$ is that it relates nicely to the variables $x'$, $x_1'$, $x_2'$, and $x_3'$ appearing in $J_{t,h,k}(x')$. From definitions, it follows that
\vspace{-5ex}
\begin{center}
	$$x_1' = y_1^{4\gcd(k,t)} = - y_2^{2\gcd(k,t)} = y_3^{\gcd(k,t)},$$
	$$x_2' = y_1^{2\gcd(k,t)} = y_2^{4\gcd(k,t)} = y_3^{2\gcd(k,t)},$$
	$$x_3' = y_1^{\gcd(k,t)} = y_2^{2\gcd(k,t)} = y_3^{4\gcd(k,t)},$$
\end{center}
and
$$x' = y_1^{4t_0} e^{\frac{-2\pi i \left( 4t_0(4t_0)^* - 1 \right) H}{k}} = y_2^{2t_0} e^{\frac{-2\pi i \left( 2 \cdot 2^\dagger t_0 t_0^* - 1 \right)H}{k}} = y_3^{t_0} e^{\frac{-2\pi i \left( t_0 t_0^* - 1 \right) H}{k}}.$$ 
Therefore, we have three series expansions for $J_{t,h,k}(x')$ given by
\begin{align} \label{c-def}
	J_{t,h,k}(x') =: \sum_{m \geq 0} c_j(t,h,k; m) y_j^m
\end{align}
for $1 \leq j \leq 3$. These series expansions, along with the transformation laws for $G_t(x)$, are used to aid in the evaluation of the integrals $I(t,h,k,n)$.

\subsection{Estimating $I(t,h,k,n)$}

The process of evaluating $I(t,h,k,n)$ breaks into three cases based on the value of $k_0$. Since the proofs in every case run along similar lines, we need only write out details in the case where $k_0$ is odd and to comment on which aspects of the proof need to be altered for the other two cases. When $k_0$ is odd, we use the series expansion for $J_{t,h,k}(x')$ in $y_1$. Applying the substitution $z \mapsto \dfrac{z}{k}$ in \eqref{Transformation Law for G_t}, we have
$$I(t,h,k,n) = \dfrac{2^{t/2} w(t,h,k)}{\sqrt{k}} \int_{z_1(h,k)}^{z_2(h,k)} \sum_{m \geq 0} e^{\frac{2\pi i (4t_0)^* H m}{k}} c_1(t,h,k;m) f_{k,t,m}(z) e^{\frac{2\pi n z}{k^2}} \, dz,$$
where
$$f_{k,t,m}(z) := \sqrt{z} \exp \bigg[ \dfrac{\pi}{12} \bigg( \dfrac{4 - 3\gcd(k,t)^2}{4z} - \dfrac{6 m}{t_0 z} - \dfrac{z}{k^2} \bigg) \bigg].$$
From the theory of Farey arcs (see Theorem 5.9 of \cite{Apo90}) we know that the path of integration has length less than $2\sqrt{2} k N^{-1}$ and that for any $z$ on the path of integration, $|z| < \sqrt{2} k N^{-1}$. Furthermore, any $z \in \mathcal{K} \backslash \{0\}$ satisfies $0 < \textnormal{Re}(z) \leq 1$ and $\textnormal{Re}(1/z) = 1$. From these facts, we can see that $m > M_{t,k} := \left\lfloor \dfrac{t_0 \left( 4 - 3\gcd(k,t)^2 \right)}{24} \right\rfloor$ if and only if
$$\left| e^{\frac{\pi}{12} \left( \frac{4 - 3\gcd(k,t)^2}{4z} - \frac{6 m}{t_0 z} - \frac{z}{k^2} \right)} \right| < 1,$$
and that therefore
$$\int_{z_1(h,k)}^{z_2(h,k)} \sum_{m > M_{t,k}} e^{\frac{2\pi i (4t_0)^* H m}{k}} c_1(t,h,k;m) f_{k,t,m}(z) e^{\frac{2\pi n z}{k^2}} \, dz = O\left( k^{3/2} N^{-3/2} \right).$$
Applying this estimate to $I(t,h,k,n)$, it follows that
\begin{align*}
	I(t,h,k,n) = \dfrac{2^{t/2} w(t,h,k)}{\sqrt{k}} \int_{z_1(h,k)}^{z_2(h,k)} \sum_{m = 0}^{M_{t,k}} e^{\frac{2\pi i (4t_0)^* H m}{k}} &c_1(t,h,k;m) f_{k,t,m}(z) e^{\frac{2\pi n z}{k^2}} \, dz \\ &+ O\left( k^{1/2} N^{-3/2} \right).
\end{align*}
Similar estimates apply in the other two cases. In particular, extend the definition of $M_{t,k}$ by
\begin{align*}
	M_{t,k} := \begin{cases}
		\left\lfloor \dfrac{t_0(4 - 3\gcd(k,t)^2)}{24} \right\rfloor & \textnormal{if } 2 \centernot | k_0, \\[+0.2in]
		\left\lfloor \dfrac{t_0(1 + 3\gcd(k,t)^2)}{12} \right\rfloor & \textnormal{if } 2 || k_0, \\[+0.2in]
		\left\lfloor \dfrac{t_0}{24} \right\rfloor & \textnormal{if } 4 | k_0,
	\end{cases}
\end{align*}
and in place of $y_1$ use $y_2$ when $2 || k_0$ or $y_3$ when $4 | k_0$. These modifications lead to the following proposition.

\begin{proposition} \label{Reduce to Finite Sum}
	Adopt all notation as above. Then if $k_0$ is odd, we have
	\begin{align*}
		I(t,h,&k,n) = \dfrac{2^{t/2} w(t,h,k)}{\sqrt{k}} \sum_{m = 0}^{M_{t,k}} e^{\frac{2\pi i (4t_0)^* H m}{k}} c_1(t,h,k;m) \\ &\cdot \int_{z_1(h,k)}^{z_2(h,k)} \sqrt{z} \exp \bigg[ \dfrac{\pi}{12} \left( \dfrac{4 - 3\gcd(k,t)^2}{4z} - \dfrac{6 m}{t_0 z} + \dfrac{(24n-1)z}{k^2} \right) \bigg] \, dz + O\left( k^{1/2} N^{-3/2} \right).
	\end{align*}
	If $2 || k_0$, then we have
	\begin{align*}
		I(t,h,&k,n) = \dfrac{2^{t/2} w(t,h,k)}{\sqrt{k}} \sum_{m = 0}^{M_{t,k}} e^{\frac{2\pi i (2^\dagger t_0^*) H m}{k}} c_2(t,h,k;m) \\ &\cdot \int_{z_1(h,k)}^{z_2(h,k)} \sqrt{z} \exp\bigg[ \dfrac{\pi}{12} \left( \dfrac{1+3\gcd(k,t)^2}{z} - \frac{12m}{t_0 z} + \dfrac{(24n-1)z}{k^2} \right) \bigg] \, dz + O\left( k^{1/2} N^{-3/2} \right).
	\end{align*}
	If $4 | k_0$, then we have
	\begin{align*}
		I(t,h,k,n) =& \dfrac{w(t,h,k)}{\sqrt{k}} \sum_{m = 0}^{M_{t,k}} e^{\frac{2\pi i (t_0^* H) m}{k}} c_3(t,h,k;m) \\ &\cdot \int_{z_1(h,k)}^{z_2(h,k)} \sqrt{z} \exp\left[ \dfrac{\pi}{12} \left( \dfrac{1}{z} - \dfrac{24 m}{t_0 z} + \dfrac{(24n-1)z}{k^2} \right) \right] \, dz + O\left( k^{1/2} N^{-3/2} \right).
	\end{align*}
\end{proposition}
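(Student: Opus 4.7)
The proof is a direct unpacking of the transformation law \eqref{Transformation Law for G_t} combined with the series expansion \eqref{c-def} and the standard Farey arc estimates. I plan to handle all three cases ($k_0$ odd, $2 \| k_0$, $4 \mid k_0$) in parallel, since each follows the same three-step template. I will write out the $k_0$ odd case in detail and indicate the (purely bookkeeping) adjustments needed for the other two cases.

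First, for each coprime pair $(h,k)$ I substitute $z \mapsto z/k$ inside the transformation law \eqref{Transformation Law for G_t}; this is precisely the change of variable induced on the transformation by the change of variable used to define $I(t,h,k,n)$. It converts the $\sqrt{z}$ into $\sqrt{z}/\sqrt{k}$ (producing the prefactor $2^{t/2} w(t,h,k)/\sqrt k$), rescales $\frac{\pi}{12k}\cdot\frac{4-3\gcd(k,t)^2}{4z}$ to $\frac{\pi}{12}\cdot\frac{4-3\gcd(k,t)^2}{4z}$, and converts the linear term $-\frac{\pi z}{12k}$ into $-\frac{\pi z}{12k^2}$. I then insert the power series expansion \eqref{c-def} of $J_{t,h,k}(x')$ in the variable $y_j$ appropriate to the case. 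After the change of variable, each $y_j^m$ splits as a root of unity (the $e^{2\pi i(\cdot)^* H m/k}$ that pulls out of the integral) times $e^{-\pi m/(c_j t_0 z)}$, and combining the latter with the pre-existing exponential and with $e^{2\pi n z/k^2}$ produces precisely the integrand displayed in the proposition.

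Second, I truncate the inner series $\sum_{m \geq 0}$ at $m = M_{t,k}$ and estimate the tail $\sum_{m > M_{t,k}}$. Three standard inputs from the theory of Farey arcs (see Theorem 5.9 of \cite{Apo90}) drive the estimate: on the arc from $z_1(h,k)$ to $z_2(h,k)$, the arc length is at most $2\sqrt{2}\, k N^{-1}$, every $z$ satisfies $|z| \leq \sqrt{2}\, k N^{-1}$, and every $z \in \mathcal{K} \setminus \{0\}$ satisfies $\mathrm{Re}(1/z) = 1$ with $0 < \mathrm{Re}(z) \leq 1$. The threshold $M_{t,k}$ is defined so that $m > M_{t,k}$ is exactly the condition that the real part of the exponent is negative: using $\mathrm{Re}(1/z) = 1$, one checks by arithmetic that $m > M_{t,k}$ is equivalent to
\[
\left| \exp\!\left[\tfrac{\pi}{12}\left(\tfrac{4-3\gcd(k,t)^2}{4z} - \tfrac{6m}{t_0 z} - \tfrac{z}{k^2}\right)\right] \right| < 1
\]
uniformly on the arc, and the analogous arithmetic handles the $2\|k_0$ and $4 \mid k_0$ cases. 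Moreover $|y_j|$ is a fixed constant strictly less than $1$ on $\mathcal{K}\setminus\{0\}$ (namely $e^{-\pi/(2t_0)}$, $e^{-\pi/t_0}$, $e^{-2\pi/t_0}$ in the three cases), so the tail of the series converges geometrically to an $O(1)$ quantity; multiplying by $|\sqrt z| = O((k/N)^{1/2})$ bounds the integrand tail by $O((k/N)^{1/2})$, and integrating over an arc of length $O(k/N)$ gives a tail integral of size $O(k^{3/2} N^{-3/2})$, matching the estimate in the sketch preceding the proposition.

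The main obstacle will be uniform bookkeeping across the three cases: the constants $c_j$ dictating $|y_j|$, the roots of unity $(4t_0)^*$, $2^\dagger t_0^*$, $t_0^*$, and the expressions for $M_{t,k}$ all differ, and one must verify in each case that $M_{t,k}$ genuinely records the sign threshold determined by $\mathrm{Re}(1/z) = 1$. Once this bookkeeping is done carefully, the three cases of the proposition read off directly from the three cases of \eqref{Transformation Law for G_t}, with no further analytic content.
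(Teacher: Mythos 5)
Your proof matches the paper's argument step for step: both substitute $z \mapsto z/k$ in the transformation law \eqref{Transformation Law for G_t}, expand $J_{t,h,k}(x')$ in the variable $y_j$ matched to the residue class of $k_0$ modulo $4$, use $\mathrm{Re}(1/z)=1$ on $\mathcal K\setminus\{0\}$ to identify the truncation threshold $M_{t,k}$, and bound the tail by the Farey-arc estimates on $|z|$ and arc length. Your observation that $|y_j|$ is bounded strictly away from $1$ (so the tail converges geometrically to an $O(1)$ quantity) makes explicit a point the paper leaves implicit, and is the right way to justify the tail bound.
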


In light of Proposition \ref{Reduce to Finite Sum}, the problem of evaluating $I(t,h,k,n)$ is reduced to evaluating integrals of the form
$$\int_{z_1(h,k)}^{z_2(h,k)} \sqrt{z} \exp\left[ \dfrac{\pi}{12} \left( \dfrac{A - Bm}{z} + \dfrac{(24n-1z)}{k^2} \right) \right] \, dz$$
for certain constants $A,B$. This evaluation has two main steps. Firstly, we show that extending the path of integration to the whole circle $\mathcal{K}$ introduces only a small error term. Secondly, we show how the integral along $\mathcal{K}$ is expressible by familiar functions from analysis. These steps are carried out together in the following proposition.

\begin{proposition} \label{General I-Bessel}
	Fix an integer $t > 1$, and let $A,B$ be constants independent of $z$ for which $A = O_{k}(1)$ as $N \to \infty$ and $B > 0$. Then we have
	\begin{align*}
		\dfrac{1}{2\pi i} \int_{z_1(h,k)}^{z_2(h,k)} \sqrt{z} e^{\frac{\pi}{12} \left( \frac{A - Bm}{z} + \frac{(24n-1)z}{k^2} \right)} \, dz =& \dfrac{k^{3/2} (A - Bm)^{3/4}}{(24n-1)^{3/4}} I_{\frac 32} \left( \dfrac{\pi}{6k} \sqrt{(A - Bm)(24n-1)} \right) \\ &+ O\left( k^{3/2} N^{-3/2} \right).
	\end{align*}
\end{proposition}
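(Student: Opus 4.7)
The plan is to execute the standard closing argument of the Rademacher circle method: convert the Farey arc integral into a Hankel contour integral and recognize the result as a Bessel function. Write $C := A - Bm$ and $D := 24n - 1$, and perform the linear change of variable $w = \frac{\pi D}{12 k^2} z$, which transforms the integrand into
\[
\sqrt{z}\, e^{\frac{\pi}{12}\left(\frac{C}{z} + \frac{Dz}{k^2}\right)} dz \;=\; \frac{12\sqrt{12}\, k^3}{(\pi D)^{3/2}} \; w^{1/2}\, e^{w + \alpha/w}\, dw, \qquad \alpha := \frac{\pi^2 CD}{144 k^2},
\]
so that the arc of $\mathcal{K}$ between $z_1(h,k)$ and $z_2(h,k)$ maps to an arc between $w_1$ and $w_2$ on the image circle (centered at $\frac{\pi D}{24 k^2}$ and passing through $0$).

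Next, I would deform this $w$-plane arc, via Cauchy's theorem, to a contour tracing most of the Hankel contour
\[
\Gamma : (-\infty - 0i) \longrightarrow \text{counterclockwise loop about } 0 \longrightarrow (-\infty + 0i),
\]
which avoids the branch cut for $w^{1/2}$ on the negative real axis. The deformed path differs from $\Gamma$ only by two short ``tail'' pieces $\gamma_\pm$ joining $w_1, w_2$ to the outer tails of $\Gamma$. Standard Farey arc estimates give $|z_j(h,k)| \leq \sqrt{2}\, k N^{-1}$, hence $|w_j| \leq \frac{\sqrt{2}\, \pi D}{12 k N}$, and on $\gamma_\pm$ the integrand decays exponentially as $\mathrm{Re}(w) \to -\infty$ (while $\mathrm{Re}(\alpha/w)$ stays bounded, since the hypothesis $A = O_k(1)$ keeps $\alpha$ bounded as $N \to \infty$). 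Multiplying by the prefactor $\frac{12\sqrt{12}\, k^3}{(\pi D)^{3/2}}$ and tracking constants yields the claimed $O(k^{3/2} N^{-3/2})$ error.

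With the error controlled, I would then identify the full Hankel integral as a Bessel function. The standard Hankel representation
\[
I_\nu(2\sqrt{\alpha}) \;=\; \frac{\alpha^{\nu/2}}{2\pi i} \int_\Gamma t^{-\nu-1}\, e^{t + \alpha/t}\, dt,
\]
together with the auxiliary substitution $t = \alpha/w$ (which reverses the orientation of $\Gamma$ and cancels the extra minus sign from $dw = -\alpha t^{-2}\, dt$), gives for $\nu = 3/2$ the identity
\[
\frac{1}{2\pi i}\int_\Gamma w^{1/2}\, e^{w+\alpha/w}\, dw \;=\; \alpha^{3/4}\, I_{3/2}(2\sqrt{\alpha}).
\]
Multiplying this by the prefactor above and substituting $2\sqrt{\alpha} = \frac{\pi\sqrt{CD}}{6k}$ together with $\alpha^{3/4} = \frac{\pi^{3/2}(CD)^{3/4}}{12^{3/2} k^{3/2}}$ collapses the constants exactly into $\frac{k^{3/2}\, C^{3/4}}{D^{3/4}}$, matching the proposition.

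The hard part of the argument is the uniform control of the tail errors: one must verify that, uniformly in $h$, $k$, and $m \leq M_{t,k}$, the contribution from $\gamma_\pm$ really is of order $k^{3/2} N^{-3/2}$, rather than something larger in $k$. This uses the Farey geometry (arc length $\leq 2\sqrt{2}\, k N^{-1}$ and $\mathrm{Re}(1/z) = 1$ on $\mathcal{K}\setminus\{0\}$) to pin down the exponential decay rate on $\gamma_\pm$ and absorb the prefactor $k^3 D^{-3/2}$. Care is also needed in choosing the branch of $\sqrt{z}$ consistently, so that the Hankel deformation stays inside the domain of analyticity of the integrand.
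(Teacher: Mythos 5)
Your route is a genuine alternative to the paper's, and both rest on the same starting observation (complete the Farey arc to a full contour, then recognize a Bessel integral), but they access two different standard integral representations of $I_{3/2}$. The paper adds the two short arcs of $\mathcal{K}$ from $0$ to $z_1(h,k)$ and from $z_2(h,k)$ to $0$; on $\mathcal{K}\setminus\{0\}$ one has $\mathrm{Re}(1/z)=1$ and $0<\mathrm{Re}(z)\leq 1$, so the integrand is bounded and (length)$\times$(max of $|f|$) $\leq \sqrt{2}\pi k N^{-1}\cdot(\sqrt{2}kN^{-1})^{1/2}\cdot e^{\pi(A+24n-1)/12}$ gives $O(k^{3/2}N^{-3/2})$ at once. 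The improper integral over all of $\mathcal{K}^-$ is then handled by $w=1/z$, which maps $\mathcal{K}$ to the vertical line $\mathrm{Re}(w)=1$, moves the essential singularity at $z=0$ to $w=\infty$ where $w^{-5/2}$ decays, and lands directly on Schl\"afli's vertical-line formula for $I_{3/2}$. That substitution never leaves the right half-plane, so there is no branch cut to negotiate for $\sqrt{z}$. Your version scales first and then deforms to a Hankel loop; this is equally valid but forces you to work along the negative real axis, and the two points that you flag as delicate really are. (i) \emph{Orientation:} the Farey arc is traversed clockwise as a piece of $\mathcal{K}^-$, which is opposite to the counterclockwise Hankel loop, so the deformation introduces a sign that must be tracked against the paper's $-$ sign from $dz=-w^{-2}dw$. (ii) \emph{Tails:} your stated reason for the tail bound — that $\mathrm{Re}(\alpha/w)$ stays bounded because $\alpha$ is bounded — is not correct: near $w_j$ one has $|w|\asymp D/(kN)$, so $|\alpha/w|$ can be as large as $\asymp N/k$. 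What actually makes the tail small is the geometry: $\mathrm{Re}(\alpha/w_j)=\frac{\pi(A-Bm)}{12}$ exactly (since $\mathrm{Re}(1/z_j)=1$), and as $\mathrm{Re}(w)$ drops through $0$ on the tail the quantity $\mathrm{Re}(\alpha/w)$ plunges to $-\Theta(N/k)$ over a horizontal span $\asymp\mathrm{Im}(w_j)\asymp D/(kN)$, confining the effective range of integration and producing the needed $O(k^{3/2}N^{-3/2})$ after the prefactor $12^{3/2}k^3/(\pi D)^{3/2}$. Without exploiting that rapid decay in $e^{\alpha/w}$ — that is, reading the tail as merely $\int|w|^{1/2}e^{\mathrm{Re}(w)}$ — one gets only $O(k^3/D^{3/2})$, which is far too weak. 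So the approach works, but the estimate that makes it work is different in kind from the paper's one-line length-times-sup bound, and needs to be spelled out.
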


\begin{proof}
	For $\mathcal{K}^-$ the negative orientation of the circle $\mathcal{K}$, we can break down integrals over $\mathcal{K}^-$ by
	$$\int_{\mathcal{K}^-} = \int_{z_1(h,k)}^{z_2(h,k)} + \int_0^{z_1(h,k)} + \int_{z_2(h,k)}^0.$$
	Define the function $f(z)$ by
	$$f(z) := \sqrt{z} \exp\left[ \dfrac{\pi}{12} \left( \dfrac{A - Bm}{z} + \dfrac{(24n-1)z}{k^2} \right) \right].$$
	Then by the theory of Farey arcs, the arc on $\mathcal{K}^-$ from $0$ to $z_1(h,k)$ has length less than $\pi |z_1(h,k)| < \sqrt{2} \pi k N^{-1}$ and therefore $|z| < \sqrt{2} k N^{-1}$ on the path of integration. Recalling that $\textnormal{Re}(1/z) = 1$ and $0 < \textnormal{Re}(z) \leq 1$ on $\mathcal{K} \backslash \{0\}$,
	$$\left| \int_0^{z_1(h,k)} f(z) \, dz \right| \leq \dfrac{2^{3/4} \pi k^{3/2}}{N^{3/2}} \exp\left[ \dfrac{\pi}{12} \left( A + 24n-1 \right) \right] = O\left( k^{3/2} N^{-3/2} \right).$$
	A similar estimate holds for integrals from $z_2(h,k)$ to $0$, and therefore we have
	$$\int_{z_1(h,k)}^{z_2(h,k)} f(z) \, dz = \int_{\mathcal{K}^-} f(z) \, dz + O\left( k^{3/2} N^{-3/2} \right).$$
	It suffices now to evaluate the integral
	$$I := \int_{\mathcal{K}^-} \sqrt{z} \exp\left[ \dfrac{\pi}{12} \left( \dfrac{A - Bm}{z} + \dfrac{(24n-1)z}{k^2} \right) \right] \, dz.$$
	The substitution $w = z^{-1}$, $dw = - z^{-2} dz$ implies
	$$I = - \int_{1-i\infty}^{1+i\infty} w^{-5/2} \exp\left( \dfrac{\pi(A - Bm)}{12} w + \dfrac{\pi(24n-1)}{12k^2} w^{-1} \right) dw.$$
	Furthermore, by the substitution $s = c w$ for $c := \dfrac{\pi(A - Bm)}{12}$ we have
	$$I = - \left( \dfrac{\pi(A - Bm)}{12} \right)^{3/2} \int_{c-i\infty}^{c + i\infty} s^{-5/2} \exp\left( s + \left( \dfrac{\pi^2(A - Bm)(24n-1)}{144k^2} \right) \dfrac{1}{s} \right) ds.$$
	Since the classical modified $I$-Bessel function $I_{\frac 32}(z)$ satisfies the identity 
	$$I_{\frac 32}(z) = \dfrac{(z/2)^{3/2}}{2\pi i} \int_{c-i\infty}^{c + i\infty} s^{-5/2} \exp\left( s + \dfrac{z^2}{4s} \right) \, ds,$$
	setting $\dfrac{z}{2} = \sqrt{\dfrac{\pi^2(A - Bm)(24n-1)}{144k^2}} = \dfrac{\pi}{12k} \sqrt{(A - Bm)(24n-1)}$ yields
	\begin{align*}
		I &= \dfrac{2\pi}{i} \cdot \dfrac{k^{3/2} (A - Bm)^{3/4}}{(24n-1)^{3/4}} I_{\frac 32} \left( \dfrac{\pi}{6k} \sqrt{(A - Bm)(24n-1)} \right).
	\end{align*}
	Combining the estimation and the evaluation of $I$ completes the proof.
\end{proof}

From Proposition \ref{General I-Bessel}, we may complete the proof of the exact formula. The idea is that the error term in the evaluation of $A_t(n)$ introduced by the error in $I(t,h,k,n)$ vanishes as $N \to \infty$, and the resulting series converges.

\subsection{Completing the proof of Theorem \ref{Exact Formula}}

We have shown that
\begin{align*}
	A_t(n) = \sum\limits_{k=1}^N \dfrac{i}{k^2} \sum\limits_{\substack{0\leq h < k\\(h,k) =1}} e^{-\frac{2\pi i nh}{k}} I(t,h,k,n).
\end{align*}
By Proposition \ref{Reduce to Finite Sum} and Proposition \ref{General I-Bessel}, we obtain for every pair $h, k$ estimates for $I(t,h,k,n)$ with error term $O(k^{1/2} N^{-3/2})$. These exact formulas yield an estimate for $A_t(n)$ with error term $O(N^{-1/2})$. Therefore, as $N \to \infty$ we may replace $I(t,h,k,n)$ with these estimates and retain equality. That is,
\begin{align*}
	A_t(n) &= \sum\limits_{k=1}^\infty \dfrac{i}{k^2} \sum\limits_{\substack{0\leq h < k\\(h,k) =1}} e^{-\frac{2\pi i nh}{k}} I(t,h,k,n).
\end{align*}
This exact formula naturally splits into three sums according to the value of $k_0 \pmod{4}$. When $k_0$ is odd, the formula derived from Propositions \ref{Reduce to Finite Sum} and \ref{General I-Bessel} give the contribution
\begin{align*}
	S_1 := 2^{t/2} \sum_{\substack{k \geq 1 \\ k_0 \textnormal{ odd}}} \dfrac{2\pi}{k} \sum_{\substack{0 \leq h < k \\ \gcd(h,k) = 1}} e^{\frac{-2\pi i n h}{k}} w(t,h,k) \sum_{m = 0}^{M_{t,k}}& e^{\frac{2\pi i (4t_0)^* H m}{k}} c_1(t,h,k;m) \dfrac{(A - Bm)^{3/4}}{(24n-1)^{3/4}} \\ &\cdot I_{\frac 32} \left( \dfrac{\pi}{6k} \sqrt{(A - Bm)(24n-1)} \right),
\end{align*}
where $A = 1 - \dfrac{3}{4} \gcd(k,t)^2$, $B = \dfrac{6}{t_0}$, and $M_{t,k} = \left\lfloor \dfrac{t_0 (4 - 3\gcd(k,t)^2)}{24} \right\rfloor$. Noting that the sum is only nonempty when $k$ is odd and $\gcd(k,t) = 1$, in which case $k_0 = k$, $t_0 = t$, $M_{t,k} = \left\lfloor \dfrac{t}{24} \right\rfloor$, $A = 1/4$ and $B = 6/t$ we have
\begin{align*}
	S_1 &= \dfrac{2^{t/2}}{(24n-1)^{3/4}} \sum_{\substack{k \geq 1 \\ \gcd(k,2t) = 1}} \dfrac{\pi}{k} \sum_{\substack{0 \leq h < k \\ \gcd(h,k) = 1}} e^{\frac{-2\pi i n h}{k}} w(t,h,k) \\ & \cdot \sum_{m = 0}^{\lfloor \frac{t}{24} \rfloor} e^{\frac{2\pi i (4t)^* H m}{k}} c_1(t,h,k;m) \left( \dfrac{t - 24m}{t} \right)^{3/4} I_{\frac 32} \left( \dfrac{\pi}{12k} \sqrt{\dfrac{(t - 24m)(24n-1)}{t}} \right).
\end{align*}
The sums $S_2$, $S_3$ simplify similarly to
\begin{align*}
	S_2 &= \dfrac{2^{t/2}}{(24n-1)^{3/4}} \sum_{\substack{k \geq 1 \\ 2 || k_0}} \dfrac{2\pi}{k} \sum_{\substack{0 \leq h < k \\ \gcd(h,k) = 1}} e^{\frac{-2\pi i n h}{k}} w(t,h,k) \\ &\cdot \sum_{m = 0}^{\lfloor \frac{t_0\alpha_{t,k}}{12} \rfloor} e^{\frac{2\pi i (2^\dagger t_0^*) H m}{k}} c_2(t,h,k;m) \left( \dfrac{t_0\alpha_{t,k} - 12m}{t_0} \right)^{3/4} I_{\frac 32} \left( \dfrac{\pi}{6k} \sqrt{\dfrac{t_0\alpha_{t,k} - 12m)(24n-1)}{t_0}} \right)
\end{align*}
where $\alpha_{t,k} := 1 + 3\gcd(k,t)^2$ and
\begin{align*}
	S_3 &= \dfrac{1}{(24n-1)^{3/4}} \sum_{\substack{k \geq 1 \\ 4 | k_0}} \dfrac{2\pi}{k} \sum_{\substack{0 \leq h < k \\ \gcd(h,k) = 1}} e^{\frac{-2\pi i n h}{k}} w(t,h,k) \\ &\cdot \sum_{m = 0}^{\lfloor \frac{t_0}{24} \rfloor} e^{\frac{2\pi i (t_0^* H) m}{k}} c_3(t,h,k;m) \left(\dfrac{t_0 - 24m}{t_0} \right)^{3/4} I_{\frac 32} \left( \dfrac{\pi}{6k} \sqrt{\dfrac{(t_0 - 24m)(24n-1)}{t_0}} \right).
\end{align*}
As $A_t(n) = S_1 + S_2 + S_3$, the proof is complete.

\section{Certain Kloosterman sums}

We start by proving that the Kloosterman sum $$\displaystyle\sum_{\substack{0\leq h < k\\(h,k) =1}} \exp\bigg[\pi i \bigg(s(h,k) -\dfrac{2nh}{k}\bigg)\bigg]$$ is nonzero when $k$ is a power of $2$. Note that this Kloosterman sum can also  be rewritten as a sum of solutions modulo $24k$ to a quadratic equation as defined in the lemma below. 

\begin{lemma}
	Let $S_k(n)$ be the Kloosterman sum defined by
	\begin{equation} \label{Kloosterman Sum Definition} 
		S_k(n) := \dfrac{1}{2}\sqrt{\dfrac{k}{12}} \sum\limits_{\substack{x \pmod{24k} \\ x^2 \equiv -24n + 1 \pmod{24k}}} \chi_{12}(x) e\bigg( \dfrac{x}{12k} \bigg),
	\end{equation}
	where $\chi_{12}(x) = \legendre{12}{x}$ is the Kronecker symbol and $e(x) := e^{2\pi ix}$. If $k$ is a power of $2$, then $S_k(n) \not = 0$ for all positive integers $n$.
\end{lemma}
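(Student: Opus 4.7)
The plan is to exploit the natural symmetries of the summation index in \eqref{Kloosterman Sum Definition} to reduce $S_k(n)$ to a combination of two cosines that cannot cancel. Write $k = 2^s$, so that the modulus is $24k = 3 \cdot 2^{s+3}$. I would first verify via the Chinese Remainder Theorem that $x^2 \equiv 1 - 24n \pmod{24k}$ has exactly eight solutions: two solutions modulo $3$ (since $1 - 24n \equiv 1 \pmod{3}$) and four solutions modulo $2^{s+3}$ (since $1 - 24n \equiv 1 \pmod{8}$ and $y^2 \equiv c \pmod{2^{s+3}}$ for $c \equiv 1 \pmod{8}$ has four solutions). Every such $x$ is automatically coprime to $24$.

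Next I would introduce the two commuting involutions $\sigma \colon x \mapsto -x$ and $\tau \colon x \mapsto x + 12k$ on this solution set. Both preserve the summand of \eqref{Kloosterman Sum Definition}: the explicit values of $\chi_{12}$ on units mod $12$ give $\chi_{12}(-x) = \chi_{12}(x)$, while $12 \mid 12k$ yields both $\chi_{12}(x + 12k) = \chi_{12}(x)$ and $e((x+12k)/(12k)) = e(x/(12k))$; pairing $\sigma$-partners converts exponentials into cosines via $e(x/(12k)) + e(-x/(12k)) = 2\cos(\pi x/(6k))$. The joint $(\ZZ/2)^2$-action is fixed-point-free, because $\gcd(x,24)=1$ rules out both $2x \equiv 0$ and $2x \equiv 12k \pmod{24k}$ (the latter would force $x \equiv 6k \pmod{12k}$, hence $x \equiv 0$ or $6 \pmod{12}$). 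Hence the eight solutions split into exactly two orbits of size four, giving
\begin{align*}
S_k(n) = 2\sqrt{\tfrac{k}{12}} \sum_{j=1}^{2} \chi_{12}(x_j) \cos\!\left( \tfrac{\pi x_j}{6k} \right),
\end{align*}
for any chosen orbit representatives $x_1$ and $x_2$.

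The crux is a residue analysis modulo $12$. Since $x^2 \equiv 1 \pmod{12}$, every solution satisfies $x \bmod 12 \in \{1,5,7,11\}$, and a direct CRT lift shows that all four residues occur. Because $12 \mid 12k$, the mod-$12$ image of any orbit is exactly $\{x \bmod 12,\, -x \bmod 12\}$; since $-1 \equiv 11$ and $-5 \equiv 7 \pmod{12}$, the only way to partition $\{1,5,7,11\}$ into two orbits of this shape is $\{1,11\} \cup \{5,7\}$, so one orbit projects onto $\{1,11\}$ and the other onto $\{5,7\}$. I would then pick representatives $x_1 \equiv 1 \pmod{12}$ from the first orbit and $x_2 \equiv 5 \pmod{12}$ from the second; each orbit of size $4$ covers each of its two mod-$12$ values exactly twice, so such representatives always exist. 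With these choices $\chi_{12}(x_1) = 1$ and $\chi_{12}(x_2) = -1$, yielding
\begin{align*}
S_k(n) = 2\sqrt{\tfrac{k}{12}} \left[ \cos\!\left( \tfrac{\pi x_1}{6k} \right) - \cos\!\left( \tfrac{\pi x_2}{6k} \right) \right].
\end{align*}

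This quantity vanishes precisely when $x_1 \equiv \pm x_2 \pmod{12k}$, which would already force $1 \equiv \pm 5 \pmod{12}$; but both $1 \equiv 5$ and $1 \equiv -5 \equiv 7$ fail modulo $12$, so $S_k(n) \neq 0$. The main obstacle I anticipate is the bookkeeping around the orbit structure—specifically, verifying that the two orbits necessarily split as $\{1,11\}$ and $\{5,7\}$ modulo $12$ rather than any degenerate configuration, and confirming that representatives with the desired mod-$12$ values lie inside each orbit; once that is in place, the remainder is routine Chinese Remainder Theorem and elementary trigonometry.
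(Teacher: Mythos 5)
Your proof is correct and takes a genuinely different route from the paper's, and the difference turns out to matter. Both arguments count eight solutions, use the same $(\ZZ/2)^2$-action $x \mapsto -x$, $x \mapsto x + 12k$ to split them into two orbits, and reduce $S_k(n)$ (up to the positive factor $\tfrac12\sqrt{k/12}$) to $4a + 4b$ with $a = \chi_{12}(x_1)\cos(\pi x_1/6k)$ and $b = \chi_{12}(x_2)\cos(\pi x_2/6k)$. The paper then aims for the stronger statement $|a| \neq |b|$: assuming equality forces $x_2 \equiv \pm x_1 \pmod{6k}$, so one takes $x_2 = 6k - x_1$, and requiring $x_2$ to be a solution yields $12k(1 + 3k) \equiv 0 \pmod{24k}$, which the paper declares a contradiction ``since $k = 2^s$.'' That is a contradiction only when $s \geq 1$; for $s = 0$ one has $1 + 3 = 4$ even, and in fact $|a| = |b|$ does hold for $k = 1$ (both equal $\sqrt{3}/2$). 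The lemma survives there only because $a = b$ rather than $a = -b$ --- something the paper's sufficient condition $|a|\neq|b|$ cannot detect. Your argument sidesteps this: by tracking the orbits through the CRT factorization $24k = 3 \cdot 2^{s+3}$ you pin down that the two orbits project onto the complementary pairs $\{1,11\}$ and $\{5,7\}$ modulo $12$, hence $\chi_{12}(x_1) = +1$ and $\chi_{12}(x_2) = -1$, so $a + b = \cos(\pi x_1/6k) - \cos(\pi x_2/6k)$, and this vanishes only if $x_1 \equiv \pm x_2 \pmod{12k}$, which already fails modulo $12$ since $1 \not\equiv \pm 5$. This is uniform in $s \geq 0$, so your route actually closes the gap. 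The one claim you flag as needing care --- that all four residue classes modulo $12$ are hit --- does check out: under CRT the two orbits project to $\{(1,\bar y),(-1,-\bar y)\}$ and $\{(-1,\bar y),(1,-\bar y)\}$ inside $(\ZZ/3)^\times \times (\ZZ/4)^\times$, where $\bar y$ is the image mod $4$ of a square root mod $2^{s+3}$, and these two pairs are disjoint precisely because $\bar y$ is odd.
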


\begin{proof}
	Let $n \geq 1$, and let $k = 2^s$ for an integer $s \geq 0$. To show that $S_k(n) \not = 0$, we need only show that the summation given in (\ref{Kloosterman Sum Definition}) is nonzero. To evaluate this sum, consider the condition on $x$ that $x^2 \equiv -24n + 1$ (mod $24k$). Since $-24n + 1 \equiv 1$ (mod 4), $x^2 \equiv -24n+1$ (mod $2^{s+3}$) has exactly 4 incongruent solutions, and so the congruence $x^2 \equiv -24n+1$ (mod $24k$) has exactly 8 incongruent solutions. For any given solution $x$, we can see that all of $12k-x$, $12k+x$, and $24k-x$ are also solutions and are pairwise distinct.
	
	Now, let $x,y$ (mod $24k$) be solutions to $x^2 \equiv -24n+1$ (mod $24k$) such that $y$ is not congruent to any of $x$, $12k-x$, $12k+x$, or $24k-x$, so that the summation in (\ref{Kloosterman Sum Definition}) runs over the set of eight values $\{ \pm x, \pm (12k+x)\} \cup \{ \pm y, \pm (12k+y) \}$. Taking real parts in the summation in (\ref{Kloosterman Sum Definition}) yields the value $4a + 4b$, where $a = \chi_{12}(x)\cos{(\pi x / 6k)}$ and $b = \chi_{12}(y)\cos{(\pi y / 6k)}$. The equivalences known about $x$ and $y$ imply that $\chi_{12}(x), \chi_{12}(y) \not = 0$, and so the proof reduces to demonstrating that $|a| \not = |b|$. If $|a| = |b|$, then $x \equiv y$ (mod $6k$) must hold, so we may fix $y = 6k - x$. Since $x$ is odd, $y^2 = x^2 - 12kx + 36k^2 \equiv -24n+1 + 12k + 36k^2 \pmod{24k},$ and the equivalence modulo $6k$ of $x$ and $y$ implies $12k + 36k^2 \equiv 12k(1+3k) \equiv 0$ (mod $24k$). This requires that $1 + 3k$ be even, which is a contradiction since $k = 2^s$. Therefore, $|a| \not = |b|$, and it then follows that $S_k(n) \not = 0$ for all $n$.
\end{proof}

\begin{lemma}\label{Kloosterman sum nonzero}
	For $t>1$ a fixed positive integer, write $t = 2^s\ell$  with integers $s, \ell \geq 0$ such that $\ell$ is odd. Then $$\sum_{\substack{0< h < 2^{s+1} \\ h \text{ odd}}} w(t,h,2^{s+1}) e^{-\frac{\pi i n h}{2^s}} = S_{2^s}(n) \neq 0.$$
\end{lemma}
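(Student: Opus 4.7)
The plan is to reduce the weighted sum directly to a classical Rademacher--Kloosterman sum for $p(n)$ and then invoke Selberg--Whiteman's evaluation of that sum as a character sum over solutions of a quadratic congruence, which is exactly the form of $S_{2^s}(n)$. The non-vanishing is then immediate from the preceding lemma.

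\medskip

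\noindent\textbf{Step 1 (Simplify $w(t,h,2^{s+1})$).} With $k=2^{s+1}$ and $t=2^s\ell$ for $\ell$ odd we have $\gcd(k,t)=2^s$, so $k_0=k/\gcd(k,t)=2$ and $t_0=t/\gcd(k,t)=\ell$. I will first observe that the Dedekind sum $s(u,2)=0$ for every integer $u$: the only nonzero term in $\sum_{m=1}^{v-1}((m/v))((um/v))$ at $v=2$ is $m=1$, and $((1/2))=0$. Hence $\omega_{u,2}=e^{\pi i\,s(u,2)}=1$ for all $u$, so the three auxiliary factors $\omega_{2t_0h,k_0}^{3t}$, $\omega_{t_0h,k_0}^{-2t}$, $\omega_{4t_0h,k_0}^{-t}$ appearing in \eqref{w-def} are all trivial. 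Consequently
\[
w(t,h,2^{s+1})=\omega_{h,2^{s+1}}=e^{\pi i\,s(h,2^{s+1})}.
\]

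\medskip

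\noindent\textbf{Step 2 (Identification with a classical Kloosterman sum).} Since an integer $h$ with $0<h<2^{s+1}$ is odd if and only if $\gcd(h,2^{s+1})=1$, and since $e^{-\pi i nh/2^s}=e^{-2\pi i nh/2^{s+1}}$, the sum in question equals
\[
\sum_{\substack{0\le h<2^{s+1}\\ \gcd(h,2^{s+1})=1}}\exp\!\left[\pi i\!\left(s(h,2^{s+1})-\tfrac{2nh}{2^{s+1}}\right)\right],
\]
which is precisely the Rademacher--Kloosterman sum $A_{2^{s+1}}(n)$ arising in Rademacher's exact formula for $p(n)$ (cf.\ \eqref{p(n) Kloosterman Sum}).

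\medskip

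\noindent\textbf{Step 3 (Apply Selberg--Whiteman).} I will then invoke the classical identity of Selberg and Whiteman (see, e.g., Chapter 5 of Apostol's \textit{Modular Functions and Dirichlet Series in Number Theory}) that rewrites $A_k(n)$ as a sum of $\chi_{12}$-twisted roots of unity over solutions of a quadratic congruence modulo $24k$. For the specific modulus $k=2^{s+1}$ this identity, after grouping $x$ with $-x$ using $\chi_{12}(-x)=\chi_{12}(x)$ and collecting the resulting constants, yields exactly the expression defining $S_{2^s}(n)$ in \eqref{Kloosterman Sum Definition}. The computation is essentially bookkeeping: tracking the factor $\sqrt{k/12}$ and verifying that the range $x\pmod{24\cdot 2^s}$ and the exponential $e(x/12\cdot 2^s)$ match those produced by Whiteman's identity at level $2^{s+1}$.

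\medskip

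\noindent\textbf{Step 4 (Non-vanishing).} Finally, $S_{2^s}(n)\neq 0$ is exactly the content of the preceding lemma, since $2^s$ is a power of two.

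\medskip

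The main obstacle will be Step 3: one has to verify that Selberg--Whiteman produces \emph{precisely} the sum $S_{2^s}(n)$ rather than a scalar multiple or a sum at a shifted modulus. This is a matter of carefully matching conventions (choice of $24k$ vs.\ $24k/2$, the normalization $\frac{1}{2}\sqrt{k/12}$, and the parity grouping $x\leftrightarrow-x$), but no deep input beyond the already-cited classical results is needed.
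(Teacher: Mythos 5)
Your proposal follows essentially the same route as the paper: reduce $w(t,h,2^{s+1})$ to $\omega_{h,2^{s+1}}$, recognize the resulting sum as the classical Rademacher--Kloosterman sum at level $2^{s+1}$, identify it with $S_{2^s}(n)$ via the Selberg--Whiteman representation, and then invoke the preceding lemma for non-vanishing. Your direct observation that $s(u,2)=0$ for all $u$ is in fact a slightly cleaner justification of Step~1 than the paper's appeal to $\omega_{dh,dk}=\omega_{h,k}$ alone (which by itself does not dispose of the factor $\omega_{\ell h,2}$ when $\gcd(\ell h,2)=1$), and both you and the paper invoke the Selberg--Whiteman identity without rederiving it, so your proposal is faithful to the intended argument.
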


\begin{proof}
	By making use of the fact that $\omega_{dh, dk} = \omega_{h,k}$ for any integer $d$, it follows that $w(t,h,2^{s+1}) = \omega_{h,k}$, and therefore
	$$\sum_{\substack{0< h < 2^{s+1} \\ h \text{ odd}}} w(t,h,2^{s+1}) e^{-\frac{\pi i n h}{2^s}} = \sum_{\substack{0 < h < 2^{s+1} \\ h \textnormal{ odd}}} e^{\pi i \left( s(h,k) - n h / 2^s \right)} = S_{2^s}(n),$$
	which is non-vanishing by Lemma \ref{Kloosterman sum nonzero}.
\end{proof}

\section{Proof of Theorem \ref{C5 Even and Odd T Behavior}}

We are now ready to prove the main theorems. 

\begin{proposition}\label{ratios of  A_t(n): p(n)}
	Let $n$ be positive integers. Then for $t$ fixed, as $n \to \infty$ we have
	$$\dfrac{A_t(n)}{p(n)} \sim \begin{cases} (-1)^n / 2^{(t-1)/2} & \text{ if } 2 \nmid t, \\ 0 & \text{ if } 2 \mid t. \end{cases}$$
	Furthermore, $\dfrac{A_t(n)}{p(n)} \sim 0$ as $n, t \to \infty$.
\end{proposition}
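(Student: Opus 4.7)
The plan is to compare the dominant terms in the asymptotic expansions of $A_t(n)$ and $p(n)$. The main term for $p(n)$ is the $k=1$ summand of Rademacher's exact formula \eqref{Rademacher Exact}, namely
\begin{align*}
  p(n) \sim \frac{2\pi}{(24n-1)^{3/4}} I_{\frac 32}\left( \frac{\pi\sqrt{24n-1}}{6}\right),
\end{align*}
while Corollary \ref{Dominating term of A_t} gives, for $t = 2^s\ell$ with $\ell$ odd,
\begin{align*}
  A_t(n) \sim \frac{\pi}{2^{s+t/2}}\left( \frac{1+3\cdot 4^s}{24n-1}\right)^{3/4} I_{\frac 32}\left( \frac{\pi\sqrt{(1+3\cdot 4^s)(24n-1)}}{6\cdot 2^{s+1}}\right) K_s(n),
\end{align*}
where $K_s(n) := \sum_{0<h<2^{s+1},\, h\text{ odd}} w(t,h,2^{s+1}) e^{-\pi inh/2^s}$ is the Kloosterman-type sum, which by Lemma \ref{Kloosterman sum nonzero} is nonzero and bounded in absolute value by $2^s$.

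For the case of odd $t$ (so $s=0$), the second half of Corollary \ref{Dominating term of A_t} directly gives $A_t(n) \sim (-1)^n \pi 2^{(3-t)/2}(24n-1)^{-3/4} I_{3/2}(\pi\sqrt{24n-1}/6)$. Dividing by the main term of $p(n)$ immediately yields $A_t(n)/p(n) \sim (-1)^n/2^{(t-1)/2}$. For even $t$, first I would set $\alpha_s := \sqrt{1+3\cdot 4^s}/2^{s+1}$ and observe by direct calculation that $\alpha_s^2 = (1+3\cdot 4^s)/4^{s+1} = 3/4 + 4^{-s-1} < 1$ for every $s \geq 1$. Using the standard asymptotic $I_{3/2}(x) \sim e^x/\sqrt{2\pi x}$, the ratio of the Bessel functions appearing in $A_t(n)$ and in the main term of $p(n)$ is of order $\exp\bigl(-(1-\alpha_s)\pi\sqrt{24n-1}/6\bigr)$ times a polynomial in $n$, which decays to $0$ as $n\to\infty$. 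Multiplying by the bounded factor $|K_s(n)|\leq 2^s$ and the algebraic prefactors does not change this, so $A_t(n)/p(n) \to 0$ for each fixed even $t$.

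For the final claim $A_t(n)/p(n)\to 0$ as $n,t\to\infty$ jointly, I would treat odd and even $t$ separately. On the odd-$t$ subsequence, the bound $|A_t(n)/p(n)| \leq C\cdot 2^{-(t-1)/2}$ is uniform in $n$ and tends to $0$ as $t\to\infty$. On the even-$t$ subsequence, the crucial observation is that $\alpha_s^2 = 3/4 + 4^{-s-1} \leq 7/8$ for every $s\geq 1$, so one has the uniform bound $1-\alpha_s \geq 1-\sqrt{7/8} =: \delta > 0$. Then the Bessel ratio is at most $C\exp(-\delta\pi\sqrt{24n-1}/6)$, and even after multiplying by the Kloosterman bound $2^s = t/\ell \leq t$ and the polynomial factors in $s$ from the prefactor, the exponential decay in $\sqrt{n}$ wins provided $\sqrt{n}$ grows faster than $\log t$; for the joint limit, one simply notes that the bound $C\cdot t\cdot n^{-3/4}\cdot e^{-\delta\pi\sqrt{24n-1}/6}$ tends to $0$ regardless of how $t$ and $n$ tend to infinity.

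The main obstacle is the verification that $\alpha_s < 1$ holds uniformly with a quantitative gap; this is what converts the pointwise-in-$t$ decay into a joint-limit statement. Everything else is bookkeeping with the explicit asymptotics already in hand from Corollary \ref{Dominating term of A_t} and Rademacher's formula.
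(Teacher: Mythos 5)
Your proof of the two fixed-$t$ cases is correct and follows exactly the paper's route: combine Rademacher's asymptotic for $p(n)$ with Corollary~\ref{Dominating term of A_t} and the exponential asymptotic of $I_{3/2}$, observing that the Bessel argument for $A_t(n)$ carries the factor $\alpha_s = \sqrt{1+3\cdot 4^s}/2^{s+1} < 1$ when $s \geq 1$.

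For the ``Furthermore'' clause the paper's proof is actually silent---it simply stops after the two fixed-$t$ cases---so your more careful treatment goes beyond the paper. But it has a genuine gap. The assertion that $|A_t(n)/p(n)| \leq C\cdot 2^{-(t-1)/2}$ holds uniformly in $n$ does not follow from the pointwise-in-$t$ asymptotic of Corollary~\ref{Dominating term of A_t}: the threshold in $n$ past which that asymptotic becomes accurate could a priori grow with $t$, and closing this would require controlling the tail of the exact formula in Theorem~\ref{Exact Formula} uniformly in $t$, which you do not do. Your final sentence also asserts the bound tends to $0$ ``regardless of how $t$ and $n$ tend to infinity,'' contradicting your own intermediate proviso that $\sqrt{n}$ must outrun $\log t$. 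In fact no unconditional joint limit is possible: when $t > n$, every hook length of every $\lambda \vdash n$ is strictly less than $t$, so $\#\mathcal{H}_t(\lambda) = 0$ for all $\lambda$, whence $A_t(n) = p(n)$ and the ratio equals $1$. The joint-limit statement is best read informally, as the paper treats it---the limiting constants $2^{-(t-1)/2}$ and $0$ from the fixed-$t$ cases tend to $0$ as $t\to\infty$---or else it requires an explicit growth restriction on $t$ relative to $n$ together with $t$-uniform error estimates.
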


\begin{proof}
	Recall that $p(n)$ satisfies $p(n) \sim \dfrac{2\pi}{(24n -1)^{3/4}}I_{\frac{3}{2}}\bigg(\dfrac{\pi\sqrt{24n -1}}{6}\bigg)$ as $n \to \infty$. Then by Corollary \hyperref[Dominating term of A_t]{\ref{Dominating term of A_t}}, as $n \rightarrow \infty$ we have  $$\dfrac{A_t(n)}{p(n)} \sim \dfrac{(1+ 3\cdot 4^s)^{3/4}}{2^{s+1+ \frac{t}{2}}}\cdot\dfrac{I_{\frac{3}{2}}\bigg(\dfrac{\pi}{6}\sqrt{\bigg(\dfrac{1}{4^{s+1}}+\dfrac{3}{4}\bigg)(24n -1)}\bigg)}{I_{\frac{3}{2}}\bigg(\dfrac{\pi\sqrt{24n-1}}{6}\bigg)}\sum_{\substack{0< h < 2^{s+1}\\h \textnormal{ odd}}} w(t,h,2^{s+1})e^{-\frac{\pi i nh}{2^s}}.$$
	When $s > 0$ and $n \rightarrow \infty$, the asymptotic behavior of $I_{3/2}$ implies that $$\dfrac{I_{\frac{3}{2}}\bigg(\dfrac{\pi}{6}\sqrt{\bigg(\dfrac{1}{4^{s+1}}+\dfrac{3}{4}\bigg)(24n -1)}\bigg)}{I_{\frac{3}{2}}\bigg(\dfrac{\pi\sqrt{24n-1}}{6}\bigg)} \sim 0.$$
	Therefore when $t$ is even, $\dfrac{A_t(n)}{p(n)} \sim 0$ as $n \rightarrow \infty$. When $s = 0$,  $\dfrac{A_t(n)}{p(n)} \sim (-1)^n 2^{\frac{-t+1}{2}}$ as $n \to \infty$.
\end{proof}

\begin{proof}[Proof of Theorem \ref{C5 Even and Odd T Behavior}]
	By Proposition \ref{ratios of  A_t(n): p(n)}, we see that
	\begin{align*}
		\delta_t^e(n) - \delta_t^o(n) \to \begin{cases}
			\dfrac{(-1)^n}{2^{(t-1)/2}} & \textnormal{if } t \textnormal{ odd}, \\
			0 & \textnormal{if } t \textnormal{ even}.
		\end{cases}
	\end{align*}
	Since $\delta_t^e(n) + \delta_t^o(n) = 1$ by definition, the result follows by solving for $\delta_t^e(n)$ and $\delta_t^e(n)$.
\end{proof}

\section{Proof of Theorem \ref{C5 Distribution property}}

By Corollary \hyperref[Dominating term of A_t]{\ref{Dominating term of A_t}}, we have $$A_t(n) \sim \displaystyle\dfrac{\pi}{2^{s+\frac{t}{2}}}\bigg(\dfrac{1+ 3\cdot 4^s}{24n -1}\bigg)^\frac{3}{4}I_{\frac{3}{2}}\bigg(\dfrac{\pi\sqrt{(1 + 3\cdot 4^s)(24n -1)}}{6\cdot 2^{s+1}}\bigg)\sum_{\substack{0< h < 2^{s+1}\\h \textnormal{ odd}}}w(t,h,2^{s+1})e^{-\frac{\pi i nh}{2^s}},$$ whose sign is determined by the summation over $h$, which on inspection is periodic in $n$ with period $2^{s+1}$.  In particular, the period is 2 when $s = 0$ which implies the $A_t(n)$ has alternating sign when $t$ is odd as $n \rightarrow \infty$.

\section{Reflections}

The surprising nature of this result justifies some reflection. Theorem \ref{C5 Even and Odd T Behavior} differs from the naive expectation of equidistribution in two ways. Not only does equidistribution frequently fail, but there are multiple limiting values when $t$ is odd. Since the distribution properties correspond to the size of $A_t(n)$ in relation to $p(n)$, the proof of Theorem \ref{Exact Formula} reveals on an analytic level the source of these discrepancies. Namely, the $I$-Bessel functions in Theorem \ref{Exact Formula} control whether equidistribution holds and when $t$ is odd the Kloosterman sums arising from $w(t,h,k)$ control the relationship between the parity of $n$ and the sign of $A_t(n)$. All of these details can be read directly off of Theorem \ref{Exact Formula}. However, the circle method does not provide insight into combinatorial explanations of this phenomena, and therefore we leave this question open.

The motivation behind this proof comes from the Nekrasov-Okounkov formula and the applications of this formula made by Han in \cite{Han10} which connect hook numbers to the expansions of various modular forms. In the context of this connection, the problem of the distribution in parity of $\# \mathcal{H}_t(\lambda)$ is translated into a question about asymptotic formulas for the coefficients of a certain modular form, or at least a $q$-series which is closely related to a modular form. This study has made use of only a microscopic portion of this world of connections, and therefore it is natural to study further problems about $t$-hooks through the lens of modular forms. In particular, in Chapter \ref{C6}, we will study the more difficult question about the distribution of $\# \mathcal{H}_t(\lambda)$ modulo odd primes.

\sglsp

\chapter{Distribution of $t$-hooks and Betti Numbers} \label{C6}
\thispagestyle{myheadings}

\dblsp
\vspace*{-.65cm}

The purpose of this chapter is to prove Theorems \ref{C6 t-hook Asymptotic}, \ref{C6 Vanishing}, \ref{C6 Betti Asymptotic} and Corollary \ref{C6 Betti Distribution}. This is joint work with Kathrin Bringmann, Joshua Males, and Ken Ono.

\section{Hook number generating functions}

Here we derive the generating functions for the modular $t$-hook functions
$p_t(a,b;n)$. To this end, we recall the following beautiful formula of Han that he derived in his work on extensions of the celebrated Nekrasov--Okounkov formula\footnote{This formula was also obtained by Westbury (see Proposition 6.1 and 6.2 of \cite{Wes06}).} (see (6.12) of \cite{NO06}) with $w \in \CC$:
$$
\sum_{\lambda \in \mathcal{P}} q^{|\lambda|} \prod_{h\in \mathcal{H}(\lambda)} \left(1-\frac{w}{h^2}\right)
=\prod_{n=1}^{\infty}\left(1-q^n\right)^{w-1}.
$$
Here $\mathcal{P}$ denotes the set of all integer partitions, including the empty partition, and $\mathcal{H}(\lambda)$ denotes the multiset of hook lengths for $\lambda.$
Han \cite{Han10} proved the following beautiful identity for the generating function for $t$-hooks in partitions
\begin{equation*}
	H_t(\xi;q):=\sum_{\lambda \in \mathcal{P}} \xi^{\# \mathcal{H}_t(\lambda)}q^{|\lambda|}.
\end{equation*}

\begin{theorem}{\text {\rm (Corollary 5.1 of \cite{Han10})}}\label{HanFunction}
	As formal power series, we have
	$$
	H_t(\xi;q)=\frac{1}{F_2(\xi;q^t)^t}\prod_{n=1}^{\infty}
	\frac{\left(1-q^{tn}\right)^t}{1-q^n}.
	$$
\end{theorem}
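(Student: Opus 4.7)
The plan is to derive Theorem \ref{HanFunction} as a direct specialization of Han's more general weighted-hook identity, which is already stated as Theorem \ref{Han} in Chapter \ref{C4}. First I would set $z = 0$ in that identity. On the left-hand side, each factor $y - tyz/h^2$ in the product over $t$-hooks collapses to $y$, so the whole product becomes $y^{\#\mathcal H_t(\lambda)}$ and the left-hand side becomes exactly $H_t(y;x)$. On the right-hand side, setting $z = 0$ changes the exponent $t - z$ to $t$, yielding
$$H_t(\xi;q) = \prod_{k\geq 1}\frac{(1-q^{tk})^t}{(1-\xi q^{tk})^t(1-q^k)}$$
after renaming $(y,x) \mapsto (\xi,q)$. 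Recognizing $\prod_{k\geq 1}(1-\xi q^{tk}) = F_2(\xi;q^t)$ then factors out $F_2(\xi;q^t)^{-t}$ and leaves precisely the claimed infinite product.

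A more self-contained alternative would be to prove the theorem directly from the classical $t$-core/$t$-quotient bijection. Under that bijection, every partition $\lambda$ decomposes uniquely as a pair consisting of a $t$-core $\mu$ and a $t$-tuple of arbitrary partitions $(\lambda^{(0)}, \ldots, \lambda^{(t-1)})$, satisfying $|\lambda| = |\mu| + t\sum_i |\lambda^{(i)}|$, and with the multiset $\mathcal H_t(\lambda)$ in bijection with $t \cdot \bigsqcup_i \mathcal H(\lambda^{(i)})$. Since every cell of a Young diagram contributes exactly one hook, $\#\mathcal H(\lambda^{(i)}) = |\lambda^{(i)}|$, and the hook exponent collapses to $\sum_i |\lambda^{(i)}|$. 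The generating function then factors as
$$H_t(\xi;q) = \left(\sum_{\mu \text{ a }t\text{-core}} q^{|\mu|}\right) \prod_{i=0}^{t-1}\left(\sum_{\nu \in \mathcal P}(\xi q^t)^{|\nu|}\right).$$
I would then invoke Euler's identity to evaluate each quotient factor as $\prod_{n\geq 1}(1-\xi q^{tn})^{-1} = F_2(\xi;q^t)^{-1}$, and the classical $t$-core generating function identity (itself a consequence of the bijection) to evaluate the core factor as $\prod_{n\geq 1}(1-q^{tn})^t/(1-q^n)$.

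The main obstacle is not the deduction itself but ensuring that the appropriate structural input is available: either Theorem \ref{Han}, or equivalently the $t$-core/$t$-quotient bijection together with the $t$-core generating function identity. Both are classical and are reviewed in Chapter \ref{C2}, so once either ingredient is granted the proof reduces to routine bookkeeping. Given that Theorem \ref{Han} is stated in the thesis, I would present the specialization route first, with a brief remark noting the bijective interpretation for intuition.
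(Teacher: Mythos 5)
Your primary route---setting $z=0$ in Theorem \ref{Han}---is exactly the intended derivation; the paper treats the result as a citation to Han's Corollary 5.1 and in Chapter \ref{C5} explicitly describes the analogous generating function $G_t(x)$ as the $(y,z)=(-1,0)$ specialization of the same identity. So you have landed on the right argument, and the $t$-core/$t$-quotient bijection you mention as an alternative is indeed the combinatorial engine behind Han's theorem.

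One bookkeeping slip in your first route is worth correcting, because it concerns precisely the distinction between $F_1$ and $F_2$ that the paper's notation is designed to encode. After setting $z=0$ in Theorem \ref{Han}, the denominator factor is $\bigl(1-(yx^t)^k\bigr)^t = \bigl(1-\xi^k q^{tk}\bigr)^t$, not $(1-\xi q^{tk})^t$ as you wrote. Correspondingly, the identification you invoked, $\prod_{k\geq1}(1-\xi q^{tk}) = F_2(\xi;q^t)$, is not right: that product is $F_1(\xi;q^t)$ by the paper's definitions, whereas
\begin{equation*}
F_2(\xi;q^t) \;=\; \prod_{n\geq1}\bigl(1-(\xi q^t)^n\bigr) \;=\; \prod_{n\geq1}\bigl(1-\xi^n q^{tn}\bigr).
\end{equation*}
Your two slips cancel each other, so the final formula comes out correctly, but a reader checking against the definitions of $F_1,F_2,F_3$ in Chapter \ref{C6} will find the intermediate step inconsistent. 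Also, the $t$-core/$t$-quotient machinery you appeal to in the second route is not actually developed in Chapter \ref{C2} of the thesis (it is only touched on in passing near Theorem \ref{C6 Vanishing}), so that route would require an external citation rather than an internal one.
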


As a corollary, we obtain the following generating function for $p_t(a,b;n).$
\begin{corollary}\label{ptabGenFunctions}
	If $t>1$ and $0\leq a<b$, then as formal power series we have
	\begin{equation*}\label{Orthogonality}
		H_t(a,b;q):=\sum_{n=0}^{\infty}p_t(a,b;n)q^n=\frac{1}{b} \sum_{r=0}^{b-1} \zeta_b^{-ar}H_t\left(\zeta_b^r;q\right),
	\end{equation*}
	where $\zeta_b:=e^{\frac{2\pi i}b}.$
\end{corollary}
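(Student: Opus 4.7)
The plan is to prove this corollary via the standard orthogonality relation for $b$th roots of unity, essentially replicating the argument used to establish Proposition \ref{OrthogonalityProp} in Chapter \ref{C2}. The key observation is that for any integer $m$ and $0 \leq a < b$, one has
\begin{equation*}
    \frac{1}{b}\sum_{r=0}^{b-1} \zeta_b^{(m-a)r} = \begin{cases} 1 & \text{if } m \equiv a \pmod{b}, \\ 0 & \text{otherwise}, \end{cases}
\end{equation*}
which provides a formal indicator function for the congruence condition $m \equiv a \pmod{b}$.

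First I would begin by substituting $\xi = \zeta_b^r$ into the definition $H_t(\xi;q) = \sum_{\lambda \in \mathcal{P}} \xi^{\#\mathcal{H}_t(\lambda)} q^{|\lambda|}$ and form the linear combination on the right-hand side of the claimed identity, yielding
\begin{equation*}
    \frac{1}{b}\sum_{r=0}^{b-1} \zeta_b^{-ar} H_t(\zeta_b^r;q) = \frac{1}{b}\sum_{r=0}^{b-1} \zeta_b^{-ar} \sum_{\lambda \in \mathcal{P}} \zeta_b^{r\#\mathcal{H}_t(\lambda)} q^{|\lambda|}.
\end{equation*}
Next I would interchange the finite and formal sums (which is justified at the level of formal power series since each coefficient of $q^n$ involves only finitely many partitions) and collect powers of $\zeta_b$ to obtain
\begin{equation*}
    \sum_{\lambda \in \mathcal{P}} q^{|\lambda|} \cdot \frac{1}{b}\sum_{r=0}^{b-1} \zeta_b^{r(\#\mathcal{H}_t(\lambda)-a)}.
\end{equation*}

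Applying the orthogonality relation to the inner sum picks out exactly those partitions $\lambda$ with $\#\mathcal{H}_t(\lambda) \equiv a \pmod{b}$, so the expression collapses to $\sum_{\lambda : \#\mathcal{H}_t(\lambda) \equiv a \pmod b} q^{|\lambda|}$, which by grouping partitions according to $|\lambda| = n$ is precisely $\sum_{n \geq 0} p_t(a,b;n) q^n = H_t(a,b;q)$.

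There is no real obstacle here; this is purely a bookkeeping argument using orthogonality, and it does not require Theorem \ref{HanFunction} beyond the mere definition of $H_t(\xi;q)$. The value of the corollary lies in combining this identity with Han's explicit product formula from Theorem \ref{HanFunction}: together they give an explicit product expression for $H_t(a,b;q)$ as a finite combination of infinite products evaluated at roots of unity, which is the form needed for the subsequent asymptotic analysis via Wright's circle method in later sections.
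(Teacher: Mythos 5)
Your proposal is correct and follows exactly the same route as the paper: substitute the definition of $H_t(\zeta_b^r;q)$, interchange the sums, and apply orthogonality of $b$th roots of unity to isolate the partitions with $\#\mathcal{H}_t(\lambda)\equiv a\pmod b$. The paper's proof is simply a more terse rendering of the same computation (and, as you note, it is the specialization of Proposition \ref{OrthogonalityProp} to the statistic $s(\lambda)=\#\mathcal H_t(\lambda)$).
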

\begin{proof}
	We have that
	\begin{displaymath}
		\begin{split}
			\frac{1}{b} \sum_{r=0}^{b-1} \zeta_b^{-ar} H_t(\zeta_b^r;q)&=
			\frac{1}{b} \sum_{\lambda \in \mathcal{P}}q^{|\lambda|} \sum_{r=0}^{b-1}\zeta_b^{\left(\#\mathcal{H}_t(\lambda)-a\right)r}=H_t(a,b;q).
		\end{split}
	\end{displaymath}
	This completes the proof.
\end{proof}

The dependence of $H_t(\xi;q)$ on $F_2(\xi;q^t)$ enables us to compute asymptotic behavior of $H_t(\xi;q)$ using Theorem \ref{Theorem1} (2) and, by Corollary \ref{ptabGenFunctions}, the asymptotic behavior of $H_t(a,b;q)$.

\section{Proof of Theorem \ref{C6 Vanishing}}

Here we prove Theorem~\ref{C6 Vanishing}. We first consider the case (1), where $\ell$ is an odd prime. We consider the generating function, using Corollary \ref{ptabGenFunctions}
$$
H_2(a_1,\ell ;q)=\sum_{n=0}^{\infty}p_2(a_1,\ell;n)q^n=\frac{1}{\ell} \sum_{r_1=0}^{\ell-1} \zeta_\ell^{-a_1 r_1}H_2\left(\zeta_\ell^{r_1};q\right).
$$
Applying again orthogonality of roots of unity, keeping only those terms $a_2\pmod \ell$,  where
$a_2\in \{0, 1,\dots, \ell-1\}$, we find that
$$
\sum_{n=0}^{\infty} p_2(a_1,\ell;\ell n+a_2)q^{\ell n +a_2}=
\frac{1}{\ell^2}\sum_{r_1, r_2\pmod \ell}\zeta_{\ell}^{-a_1 r_1 -a_2 r_2} H_2\left(\zeta_{\ell}^{r_1};\zeta_{\ell}^{r_2}q\right).
$$
Making use of the definition of $H_t(\xi;q)$, 
if we define $\mathcal{B}_2(q)$ and $\mathcal{C}_2(q)$ by
\begin{equation}\label{qidentities}
	\mathcal{B}_2(q)=\sum_{n=0}^{\infty}b_2(n)q^n:=\prod_{n=1}^{\infty}\frac{1}{\left(1-q^n\right)^2} \ \ \ \
	{\text {\rm and}}\ \ \ \ 
	\mathcal{C}_2(q):=\prod_{n=1}^{\infty}\frac{\left(1-q^{2n}\right)^2}{1-q^n},
\end{equation}
then we have 
$$
\sum_{\substack{n\geq 0 \\n\equiv a_2\pmod \ell}}p_2(a_1,\ell;n)q^n=
\frac{1}{\ell^2}\sum_{r_1, r_2\pmod \ell}\zeta_{\ell}^{-a_1 r_1 -a_2 r_2}
\mathcal{B}_2\left(\zeta_{\ell}^{r_1+2r_2}q^2\right) \mathcal{C}_2\left(\zeta_{\ell}^{r_2} q\right).
$$
Thanks to the classical identity of Jacobi
\begin{equation*}
	\mathcal{C}_2(q)=\sum_{k=0}^{\infty}q^{\frac{k(k+1)}{2}},
\end{equation*}
for $N\equiv a_2\pmod{\ell}$, we find that 
\begin{align}\label{key}
	p_2(a_1,\ell;N)&=\frac{1}{\ell^2}\sum_{r_1, r_2\pmod \ell}\zeta_{\ell}^{-a_1 r_1-a_2 r_2}
	\sum_{\substack{k,m\geq 0\\
			2m+\frac{k(k+1)}{2}=N}} b_2(m)\zeta_{\ell}^{(r_1+2r_2)m+r_2\frac{k(k+1)}{2}}
	\nonumber \\
	&=\sum_{\substack{m\equiv a_1\pmod \ell\\
			2m+\frac{k(k+1)}{2}=N}} b_2(m),
\end{align}
by making the linear change of variables $r_1\mapsto r_1-2r_2$ and again using orthogonality of roots of unity.
This then requires the solvability of the congruence $a_2-2a_1\equiv \frac{k(k+1)}{2}\pmod \ell.$
Completing the square produces the quadratic residue condition which prohibits this solvability, and hence
completes the proof of (1).

The proof of (2) follows similarly, with $\ell$ replaced by $\ell^2$ for primes $\ell\equiv 2\pmod 3.$ The functions in (\ref{qidentities})
are replaced with
$$
\mathcal{B}_3(q)=\sum_{n=0}^{\infty}b_3(n)q^n:=\prod_{n=1}^{\infty}\frac{1}{\left(1-q^n\right)^3} \ \ \ \ 
{\text {\rm and}}\ \ \ \ \mathcal{C}_3(q):=\prod_{n=1}^{\infty}\frac{\left(1-q^{3n}\right)^3}{1-q^n}.
$$
It is well-known that (for example, see Section 3 of \cite{GO96} or \cite[Lemma 2.5]{HO11}),
$$
\mathcal{C}_3(q)=:\sum_{n=0}^{\infty}c_3(n)q^n=\sum_{n=0}^{\infty}\sum_{d\mid (3n+1)}\legendre{d}{3}q^n.
$$
For primes $\ell\equiv 2\pmod 3$, this implies that  $c_3(\ell^2 n+a)=0$ for every positive integer $n$, whenever $\ord_{\ell}(3a+1)=1$.
For example, this means that $c_3(4n+3)=0$ if $\ell=2$.

Let $0\leq a_1, a_2<\ell^2$. In direct analog with (\ref{key}), a calculation reveals that non-vanishing for $N\equiv a_2\pmod {\ell^2}$ relies  on sums of the form
$$
\sum_{\substack{m\equiv a_1\pmod{\ell^2}\\
		3m+k=N}} b_3(m)c_3(k).
$$
If  $\ord_{\ell}(3a+1)=1$ and
$a_2-3a_1\equiv a\pmod{\ell^2}$, then $p_3(a_1,\ell^2; \ell^2+a)=0.$
This is claim (2).

\section{Asymptotic methods}

The Hardy--Ramanujan asymptotic formula given in \eqref{Hardy-Ramanujan Asymptotic} marked the birth of the Circle Method. Its proof relied  on the modular transformation properties of {\it Dedekind's eta-function} $\eta(\tau):= q^{\frac{1}{24}} \prod_{n=1}^{\infty}(1-q^n),$ where
$q:=e^{2\pi i \tau}$ (for example, see Chapter 1 of \cite{Ono04}). Their work has been thoroughly developed in the theory of modular forms and harmonic Maass forms (for example, see Chapter~15 of \cite{BFOR17}), and
has been generalized beyond this setting in papers by  Grosswald, Meinardus, Richmond, Roth, and Szekeres \cite{Gro58, Mei54, Ric76, RS54}, to name a few.

\subsection{Statement of the results}

Generalizing the infinite product which defines $\eta,$ we consider the ubiquitous $q$-infinite products
\begin{align*}
	F_1(\xi; q) &:=\prod_{n=1}^{\infty}\left(1-\xi q^n\right), \\ F_2(\xi; q) &:=\prod_{n=1}^{\infty}\left(1-(\xi q)^n\right),\\ F_3(\xi;q) &:=\prod_{n=1}^{\infty} \left(1-\xi^{-1}(\xi q)^n\right).
\end{align*}
These infinite products are common as factors of generating functions in combinatorics, number theory, and representation theory.
We  obtain the asymptotic properties for $F_1(\xi;q), F_2(\xi;q),$ and $F_3(\xi;q),$ where $\xi$ is a root of unity, which are generally required for implementing
the Circle Method to such generating functions. This result is of independent interest.

To make this precise, we recall \textit{Lerch's transcendent}
\begin{align*}
	\Phi(z,s,a):=\sum_{n=0}^\infty \frac{z^n}{(n+a)^s}.
\end{align*}
Moreover, for coprime $h,k\in\NN$ we define
\begin{align}\label{eqn: defn omega_h,k}
	\omega_{h,k}:=\exp(\pi i \cdot s(h,k)),
\end{align}
using the \emph{Dedekind sum}
\begin{align*}
	s(h,k):=\sum_{\mu \pmod k} \left(\left(\frac{\mu}{k}\right)\right)\left(\left(\frac{h\mu}{k}\right)\right).
\end{align*}
Here we use the standard notation
\begin{align*}
	((x)):=\begin{cases} x-\lfloor x \rfloor-\frac{1}{2} & \text{if} \ x\in \mathbb R \setminus \mathbb Z, \\ 0 & \text{if} \ x \in \mathbb Z. \end{cases}
\end{align*}
For arbitrary positive integers $m$ and $n$, we define $\omega_{m,n} := \omega_{\frac{m}{\gcd(m,n)}, \frac{n}{\gcd(m,n)}}$. Note that $s(h,k)$ only depends on $h\pmod{k}$ and that $s(0,1)=0$. Moreover, we let
\begin{align} \label{LambdaEQ} 
	\lambda_{t,a,b,h,k} := \gcd(k,t)  \begin{cases} 1 & \text{if } k=1 \text{ or } \lp k > 1 \text{ and } b \centernot | \frac{k}{\gcd(k,t)}\rp, \\ b & \text{if } b | \frac{k}{\gcd(k,t)} \text{ and } \frac{ht}{\gcd(k,t)} + a\frac{k}{b \gcd(k,t)} \not \equiv 0 \pmod{b}, \\ b^2 & \text{if } b | \frac{k}{\gcd(k,t)} \text{ and } \frac{ht}{\gcd(k,t)} +a \frac{k}{b \gcd(k,t)} \equiv 0 \pmod{b}. \end{cases}
\end{align}

For $0\leq \theta < \frac{\pi}{2}$, we define the domain 
\begin{align}\label{eqn: defn D_theta}
	D_{\theta} := \left\{ z=re^{i\alpha} \colon r \geq 0 \text{ and } |\alpha| \leq \theta \right\}.
\end{align}

\begin{theorem}\label{Theorem1} 
	Assume the notation above. For $b>0$, let $\xi$ be a primitive $b$-th root of unity, then the following are true. 
	\begin{enumerate}[leftmargin=*]
		\item[\rm (1)] As $z \to 0$ in $D_\theta$ we have 
		\begin{align*}
			F_{1}\left(\xi;e^{-z}\right)  =\frac{1}{\sqrt{1-\xi}} \, e^{-\frac{\xi\Phi(\xi,2,1)}{z}}\lp 1+O\left(|z|\right) \rp.
		\end{align*}
		
		\item[\rm (2)] Suppose that $b$ is an odd prime, and let $\xi = e^{\frac{2\pi i a}{b}}$, $q = e^{\frac{2\pi i}{k}(h + iz)}$ for $0 \leq h < k$ with $\gcd(h,k) = 1$, $t \in \NN$, and $z \in \CC$ with $\mathrm{Re}(z) > 0$. 
		Then as $z \to 0$ we have 
		$$F_2\left(\xi;q^t\right) \sim \omega_{\frac{hbt+ak}{\lambda_{t,a,b,h,k}}, \frac{kb}{\lambda_{t,a,b,h,k}}}^{-1} \left(\frac{\lambda_{t,a,b,h,k}}{tbz}\right)^{\frac 12} e^{-\frac{\pi \lambda_{t,a,b,h,k}^2}{12 b^2 ktz}}.$$
		
		\item[\rm (3)] As $z\to 0$ in $D_\theta$, we have
		\begin{align*}
			F_3\left(\xi;e^{-z}\right)= \frac{\sqrt{2\pi} \left(b^2z\right)^{\frac 12-\frac 1b}}{\Gamma\left(\frac{1}{b}\right)}
			\prod_{j=1}^{b-1}\frac{1}{(1-\xi^j)^{\frac jb}}
			e^{-\frac{\pi^2}{6b^2z}}\lp 1+  O\left(|z|\right) \rp.
		\end{align*} 
	\end{enumerate}
\end{theorem}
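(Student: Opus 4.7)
The plan is to prove all three parts by taking logarithms, converting each product into an infinite sum, and applying the effective Euler--Maclaurin framework of Chapter \ref{C3} (Propositions \ref{C3 Euler-Maclaurin Rapid Decay} and \ref{C3 Euler-Maclaurin Sufficient Decay}), supplemented in part (2) by the $\eta$-type transformation law of Theorem \ref{transformation formula for F}. The three parts differ mainly in how singularly the summand behaves at the origin.

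For part (1), apply Proposition \ref{C3 Euler-Maclaurin Rapid Decay} to $f(x) := \Log\!\lp 1 - \xi e^{-x} \rp$ with shift $a = 1$ and scale $z$. Since $\xi \neq 1$, $f$ is holomorphic at the origin with $f(0) = \Log(1-\xi)$ and decays rapidly at infinity. Expanding the integrand geometrically and integrating termwise yields $I_f = -\sum_{m\geq 1}\xi^m/m^2 = -\xi\Phi(\xi,2,1)$, which supplies the exponent $-\xi\Phi(\xi,2,1)/z$, while the $n=0$ boundary correction $-f(0)B_1(1) = -\tfrac 12 \Log(1-\xi)$ exponentiates to the prefactor $(1-\xi)^{-1/2}$; the next term of the Euler--Maclaurin expansion gives the $O(|z|)$ error.

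For part (2), write $F_2(\xi; q^t) = 1/F(Q)$ for $Q := \xi q^t$, where $F$ is the partition generating function from Theorem \ref{transformation formula for F}. The substitutions $q = e^{2\pi i (h+iz)/k}$ and $\xi = e^{2\pi i a/b}$ put $Q$ in the form $\exp(2\pi i (H + iZ)/K)$ with $H/K = (ak+bht)/(bk)$ in lowest terms and $Z = btz/\gcd(ak+bht,bk)$. Writing $k = \gcd(k,t)k_0$ and $t = \gcd(k,t)t_0$ with $\gcd(k_0,t_0) = 1$, a direct gcd analysis identifies $\gcd(ak+bht,bk) = \lambda_{t,a,b,h,k}$, with the three cases of (\ref{LambdaEQ}) corresponding to whether $b\mid k_0$ and whether the residue $ht_0 + ak_0/b \pmod b$ vanishes. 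Applying Theorem \ref{transformation formula for F} gives
\begin{align*}
    F(Q) = \sqrt{Z}\,\omega_{H,K}\,\exp\!\lp \tfrac{\pi(Z^{-1}-Z)}{12K}\rp F(Q'),
\end{align*}
and $F(Q') \to 1$ as $z \to 0^+$ since $|Q'| = e^{-2\pi/(KZ)} \to 0$. Inverting and substituting $KZ = b^2 ktz/\lambda^2$ and $Z = btz/\lambda$ produces the claimed formula, with the Dedekind-sum multiplier yielding the $\omega^{-1}$ prefactor.

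For part (3), split by residue class modulo $b$:
\begin{align*}
    \Log F_3\lp \xi;e^{-z}\rp = \sum_{j=0}^{b-1}\sum_{m \geq 0}\Log\!\lp 1 - \xi^j e^{-(bm+j+1)z} \rp.
\end{align*}
For each $j = 1,\dots,b-1$, apply Proposition \ref{C3 Euler-Maclaurin Rapid Decay} to $f_j(x) := \Log(1-\xi^j e^{-x})$ with shift $(j+1)/b$ and scale $bz$; this contributes $-\operatorname{Li}_2(\xi^j)/(bz) - \Log(1-\xi^j)\lp (j+1)/b - 1/2\rp + O(z)$. For $j=0$ the summand has a $\Log x$ singularity at the origin, so treat the resulting sum as the $q$-Pochhammer $(e^{-z};e^{-bz})_\infty$ and invoke its classical asymptotic
\begin{align*}
    \Log(e^{-z};e^{-bz})_\infty = -\frac{\pi^2}{6bz} + \lp \frac{1}{2}-\frac{1}{b}\rp\Log(bz) - \Log\Gamma\!\lp \frac{1}{b}\rp + \frac{1}{2}\Log(2\pi) + O(z),
\end{align*}
derivable by applying Proposition \ref{C3 Euler-Maclaurin Sufficient Decay} to $f_0'(x) = 1/(e^x-1)$ (whose Laurent principal part is $1/x$) and integrating. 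Summing the contributions and using the identities $\sum_{j=0}^{b-1}\operatorname{Li}_2(\xi^j) = \operatorname{Li}_2(1)/b = \pi^2/(6b)$ and $\prod_{j=1}^{b-1}(1-\xi^j) = b$ collapses the exponential part to $-\pi^2/(6b^2 z)$ and assembles the boundary constants together with the $j=0$ contribution into the stated prefactor $\sqrt{2\pi}(b^2 z)^{1/2-1/b}\Gamma(1/b)^{-1}\prod_{j=1}^{b-1}(1-\xi^j)^{-j/b}$. The principal obstacles will be the arithmetic bookkeeping in part (2)---identifying the three gcd cases of $\lambda_{t,a,b,h,k}$ and tracking the Dedekind-sum multiplier through the eta transformation---and, in part (3), verifying the $j=0$ asymptotic with the correct sign of $\Log\Gamma(1/b)$ so that the $\Gamma$, $\log z$, and $\log b$ pieces recombine with the $j\geq 1$ boundary terms to reproduce exactly $(b^2 z)^{1/2 - 1/b}/\Gamma(1/b)$.
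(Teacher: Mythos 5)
Your parts (1) and (2) are sound. For (1) you apply Proposition \ref{C3 Euler-Maclaurin Rapid Decay} directly to the holomorphic summand $\Log(1-\xi e^{-x})$, which is simpler than the paper's route: the paper instead Taylor-expands the logarithm, resums, and applies Proposition \ref{C3 Euler-Maclaurin Sufficient Decay} to the meromorphic function $\frac{e^{-w}}{w(1-e^{-w})}$ with a mod-$b$ split, then needs the cancellation $\sum_{j=1}^b \xi^j = 0$ to kill the $I^*$ and $\Log z$ terms. Your route buys a cleaner argument at the cost of forgoing the unified template the paper reuses in part (3). Your part (2) is essentially identical to the paper's: both identify $\gcd(hbt+ak,kb)=\lambda_{t,a,b,h,k}$ and push $\xi q^t$ through Theorem \ref{transformation formula for F}.

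Part (3) has a genuine gap in your $j=0$ term. Your decomposition (peel off the $j=1,\dots,b-1$ factors, which are holomorphic at the origin and succumb to Proposition \ref{C3 Euler-Maclaurin Rapid Decay}, and isolate $(e^{-z};e^{-bz})_\infty$) is a legitimate departure from the paper, which instead applies the meromorphic-$f_j$ framework uniformly in $j$. And your final assembly is correct: using $\sum_{j=0}^{b-1}\operatorname{Li}_2(\xi^j)=\pi^2/(6b)$ and $\prod_{j=1}^{b-1}(1-\xi^j)=b$ together with your stated $j=0$ asymptotic does reproduce the theorem. The problem is the stated derivation of that $j=0$ asymptotic. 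You propose applying Proposition \ref{C3 Euler-Maclaurin Sufficient Decay} to $f_0'(x)=1/(e^x-1)$ and ``integrating,'' but this is not quite well-founded. First, $\frac{d}{dw}\sum_{m\geq 0}\Log(1-e^{-(m+a)w}) = \sum_{m\geq 0}(m+a)\cdot\frac{1}{e^{(m+a)w}-1} = \frac{1}{w}\sum_{m\geq 0}\tilde g((m+a)w)$ with $\tilde g(x) := x/(e^x-1)$, so the relevant Euler--Maclaurin input is Proposition \ref{C3 Euler-Maclaurin Rapid Decay} applied to the \emph{holomorphic} $\tilde g$, not Proposition \ref{C3 Euler-Maclaurin Sufficient Decay} applied to $1/(e^x-1)$. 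Second, and more importantly, even the corrected differentiate-then-integrate scheme produces
\begin{align*}
\Log(e^{-z};e^{-bz})_\infty = -\frac{\pi^2}{6bz} + \lp\frac{1}{2}-\frac{1}{b}\rp\Log(bz) + C + O(z)
\end{align*}
with $C$ an undetermined constant of integration. That constant is precisely the $-\Log\Gamma(1/b)+\frac{1}{2}\Log(2\pi)$ contribution your assembly needs, and the differentiation destroys the information required to pin it down. The paper determines it by applying Proposition \ref{C3 Euler-Maclaurin Sufficient Decay} to $f_1(w)=\frac{e^{-w}}{w(1-e^{-bw})}$ (so the constant appears as the $I^*_{f_1,1}/(bz)$ term after multiplying by $-z$ and summing over $r$) and evaluating $I^*_{f_1,1}$ via Lemma \ref{lem:int}, which is Binet's first integral formula for $\Log\Gamma$. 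You would either need to import that evaluation explicitly, or cite the classical modular transformation law for the generalized Dedekind eta function $\eta_{1,0}(b;\tau)$ which produces the $\Gamma(1/b)$ prefactor directly; as written, the constant in your $j=0$ term is asserted rather than derived.
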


\begin{remark}
	If $\xi=1$ and $q=e^{2\pi i \tau}$, then we have $$F_1(1;q)=F_2(1;q)=F_3(1;q)=q^{-\frac{1}{24}}\eta(\tau).$$
	Asymptotic properties in this case are well-known consequences of the modularity of $\eta(\tau).$
\end{remark}

\subsection{An integral evaluation}

We require the following integral evaluation.
\begin{lemma}\label{lem:int}
	We have for $N\in\RR^+$
	\begin{multline*}
		\int_0^\infty\left(\frac{e^{-x}}{x\left(1-e^{Nx}\right)}-\frac{1}{Nx^2}+\left(\frac 1N-\frac 12\right)\frac{e^{-x}}{x} \right)dx
		\\=\log\left(\Gamma\left(\frac 1N\right) \right) +\left(\frac 12-\frac 1N\right) \log\left(\frac 1N\right)-\frac 12\log(2\pi). 
	\end{multline*}
\end{lemma}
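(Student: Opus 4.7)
The plan is to reduce the identity to Binet's first integral formula for $\log\Gamma$, namely
\[
\log\Gamma(z) = \left(z - \tfrac{1}{2}\right)\log z - z + \tfrac{1}{2}\log(2\pi) + \int_0^\infty\!\left(\frac{1}{e^t-1} - \frac{1}{t} + \frac{1}{2}\right)\frac{e^{-tz}}{t}\,dt,
\]
valid for $\mathrm{Re}(z)>0$. Setting $z = 1/N$ and substituting $t = Nu$ converts the Binet integral into
\[
B := \int_0^\infty\!\left(\frac{1}{e^{Nu}-1} - \frac{1}{Nu} + \frac{1}{2}\right)\frac{e^{-u}}{u}\,du = \log\Gamma\!\left(\tfrac{1}{N}\right) - \left(\tfrac{1}{N} - \tfrac{1}{2}\right)\log\!\left(\tfrac{1}{N}\right) + \tfrac{1}{N} - \tfrac{1}{2}\log(2\pi).
\]

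The next step is to connect the Binet integrand to the target integrand. The target contains the factor $1/(1-e^{-Nu})$ (which is what makes it integrable at the origin; I read the lemma with this convergent exponent), and the algebraic identity $\frac{1}{e^{Nu}-1} = \frac{1}{1-e^{-Nu}} - 1$ lets one rewrite $B$ as
\[
B = \int_0^\infty\!\left[\frac{e^{-u}}{u(1-e^{-Nu})} - \frac{e^{-u}}{Nu^2} - \frac{e^{-u}}{2u}\right]du.
\]
Letting $T$ denote the target integral and subtracting term by term yields
\[
T - B = \frac{1}{N}\int_0^\infty\!\left(\frac{e^{-u}-1}{u^2} + \frac{e^{-u}}{u}\right)du.
\]

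The remaining task, and the only subtle point, is to show this last integral equals $-1$. The two summands are individually non-integrable at $u=0$, so I would truncate to $[\varepsilon,\infty)$ and integrate $\int_\varepsilon^\infty(e^{-u}-1)u^{-2}\,du$ by parts with $f = e^{-u}-1$ and $dg = du/u^2$. The exponential-integral contribution produced by the parts formula cancels exactly with $\int_\varepsilon^\infty e^{-u}/u\,du$, leaving only the boundary term $(e^{-\varepsilon}-1)/\varepsilon$, which tends to $-1$ as $\varepsilon \to 0^+$. Therefore $T = B - 1/N$, the $+1/N$ in $B$ cancels, and the remaining terms collapse to $\log\Gamma(1/N) + (1/2 - 1/N)\log(1/N) - (1/2)\log(2\pi)$, which is the asserted identity.
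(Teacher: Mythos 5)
Your proof is correct and takes essentially the same route as the paper: both reduce to Binet's first integral formula (via rescaling by $N$ and the algebraic identity $\frac{1}{e^{t}-1}=\frac{1}{1-e^{-t}}-1$), and both evaluate the leftover elementary integral to $-\frac{1}{N}$. You are also right that the printed exponent should read $1-e^{-Nx}$; the paper's own proof after the substitution $x\mapsto x/N$, and the downstream use of the lemma in the proof of Theorem~\ref{Theorem1}(3), confirm that reading.
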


\begin{proof}
	Making the change of variables $x\mapsto\frac xN$, the left-hand side equals
	\begin{equation*}
		\int_0^\infty\left(\frac{e^{-\frac{x}{N}}}{x\left(1-e^{-x}\right)} - \frac{1}{x^2} +\left(\frac 1N-\frac 12\right)\frac{e^{-\frac {1}N}}{x}  \right) dx.
	\end{equation*}
	Now write
	\begin{equation*}
		\frac{1}{x \left(1-e^{-x} \right)}=\frac 1x +\frac{1}{x \left(e^x-1 \right)}.
	\end{equation*}
	Thus the integral becomes
	\begin{align*}
		\int_0^\infty\left(\frac{1}{e^x-1}+\frac 12-\frac 1x\right)&\frac{e^{-\frac{x}{N}}}{x}dx
		\\ &+ \int_0^\infty\left(\frac{e^{-\frac{x}{N}}}{x} - \frac{1}{x^2} +\left(\frac 1N-\frac 12\right)\frac{e^{-\frac {x}N}}{x}  -\frac{e^{-\frac{x}{N}}}{2x}+\frac{e^{-\frac{x}{N}}}{x^2}\right) dx.
	\end{align*}
	We evaluate the second integral as $-\frac{1}{N}$. The claim now follows, using Binet's first integral formula (see 12.31 of \cite{WW96}).
\end{proof}

\subsection{Proof of Theorem~\ref{Theorem1}}

We employ the generalized Euler--Maclaurin summation, i.e. Proposition \ref{C3 Euler-Maclaurin Sufficient Decay}, to prove Theorem~\ref{Theorem1} (1) and (3); for part (2) we use modularity.

\subsubsection{Proof of Theorem~\ref{Theorem1}~\normalfont{(1)}}

Let $|z|<1$. Taking logarithms, we have 
\begin{align*}
	G_{\xi}\left(e^{-z}\right)&:=\operatorname{Log} \left(F_{1}\left(\xi;e^{-z}\right) \right)
	=-z\sum_{j=1}^b \xi^j \sum_{m=0}^{\infty}f\left(\left(m+\frac j b\right)bz\right),
\end{align*}
where
\begin{equation*}
	f(z):=\frac{e^{-z}}{z\left(1-e^{-z}\right)}=\frac{1}{z^2}-\frac{1}{2z} +\sum_{n=0}^\infty \frac{B_{n+2}}{(n+2)!}z^n.
\end{equation*}
By Proposition \ref{C3 Euler-Maclaurin Sufficient Decay}, it follows that
\begin{align*}
	\sum_{m=0}^\infty f \left( \left( m+\frac{j}{b} \right)bz\right) =\frac{\zeta\left(2,\frac j b\right)}{b^2z^2}+\frac{I_{f,1}^*}{bz} +{\frac{1}{2bz}}\left(\Log \left( {bz}\right) +\psi \left(\frac{j}{b}\right)+\gamma \right)
	+O(1).
\end{align*}
Therefore, we find that
\begin{align*}
	G_{\xi} \left(e^{-z}\right) = -\frac{1}{b^2z} \sum_{j=1}^b \xi^j \zeta\left(2,\frac jb\right) &- \frac{I_{f,1}^*}{b} \sum_{j=1}^b \xi^{j} \\ &-\frac{1}{2b} \sum_{j=1}^b \xi^j \left(\Log\left(bz\right) +\psi\left(\frac jb\right)+\gamma\right) +O(|z|).
\end{align*}
Now note that $\sum_{j=1}^b \xi^{j}=0.$
Moreover, we require the identity \cite[p. 39]{Cam66}  (correcting a minus sign and erroneous $k$ on the right-hand side)
\begin{align}\label{digam}
	\sum_{j=1}^b \psi \left( \frac{j}{b} \right) \xi^{j}=b\operatorname{Log }\left( 1-\xi \right).
\end{align}
Combining these observations, we obtain
\begin{align*}
	G_{\xi}\left( e^{-z} \right)=-\frac{1}{b^2z} \sum_{j=1}^b \xi^{j} \zeta\left( 2,\frac{j}{b} \right)-\frac{1}{2} \operatorname{Log }(1-\xi) + O\left(|z|\right).
\end{align*}
After noting that
\begin{align*}
	\sum_{j=1}^b \xi^{j} \zeta \left(2,\frac{j}{b} \right)&=b^2 \xi\Phi(\xi,2,1),
\end{align*}
the claim follows by exponentiation. \qed

\subsubsection{Proof of Theorem~\ref{Theorem1}~\normalfont{(2)}}
Note that 
\[
F_2\left(\xi;q^t\right) = \left(\xi q^t; \xi q^t\right)_\infty,
\]
where $(a;q)_\infty := \prod_{j=1}^\infty (1 - a q^{j-1})$. The classical modular transformation law for the Dedekind $\eta$-function  (see 5.8.1 of \cite{CS17}) along with the identity $\eta(\tau) = q^{ \frac{1}{24}} (q;q)_\infty$ implies that
\begin{align}\label{Eqn: transform usual pochham}
	(q;q)_\infty = \omega_{h,k}^{-1} z^{-\frac 12} e^{\frac{\pi}{12k}\left( z - \frac{1}{z} \right)} (q_1;q_1)_\infty,
\end{align}
where $q_1 := e^{\frac{2\pi i}{k}( h' + \frac{i}{z})}$ where $0 \leq h' < k$ is defined by $h h' \equiv -1 \pmod{k}$ and $\omega_{h,k}$ is defined as in \eqref{eqn: defn omega_h,k}. In particular, this implies that
\begin{equation} \label{Eta asymptotic}
	(q;q)_\infty \sim \omega_{h,k}^{-1} z^{-\frac 12} e^{-\frac{\pi}{12kz}}
\end{equation}
as $z\rightarrow0$ with $\mathrm{Re}\lp z \rp>0$.
Now, by using the definitions of $\xi, q$ given in the statement of Theorem 2.1 (2) we have
\[
\xi q^t = e^{\frac{2\pi i}{kb}\left( hbt + ak + itbz\right)}.
\]
We claim that $\lambda_{t,a,b,h,k}$ as defined in \eqref{LambdaEQ} satisfies $\lambda_{t,a,b,h,k} = \gcd(kb, hbt + ak)$. If $k=1$, then the claim is clear, and so we assume that $k > 1$. Write $k = \gcd(k,t) k_1$ and $t = \gcd(k,t) t_1$. Then we have
\[
\gcd(kb, hbt + ak) = \gcd(k,t) \gcd(k_1 b, hbt_1 + ak_1).
\]
Noting that $\gcd(k_1,b)$ divides each of $k_1b, hbt_1$, and $ak_1$, it follows that
\[
\gcd(kb, hbt + ak) = \gcd(k,t) \gcd(k_1, b) \gcd\left( \frac{k_1 b}{\gcd(k_1, b)}, \frac{hbt_1}{\gcd(k_1, b)} + a\frac{k_1}{\gcd(k_1,b)} \right).
\]
Note that, since $b$ is prime, $\gcd(k_1, b) \in \{ 1, b \}$. If $\gcd(k_1,b) = 1$, then
\[
\gcd(k_1 b, hbt_1 + ak_1) = \gcd(k_1, hbt_1) \gcd(b, ak_1) = 1.
\]

If on the other hand $\gcd(k_1,b) = b$, then write $k_1 = b^\kappa k_2$ with $\gcd(k_2, b) = 1$. Then we have
\begin{align*}
	\gcd\left(k_1, ht_1 + a\frac{k_1}{b}\right) &= \gcd\left(b^\kappa k_2, ht_1 + a k_2 b^{\kappa-1}\right)
	= \gcd\left(b^\kappa, ht_1 + ak_2 b^{\kappa-1}\right) \gcd(k_2, ht_1)\\
	&= \gcd\left(b^\kappa, ht_1 + a k_2 b^{\kappa-1}\right).
\end{align*}
If $\kappa > 1$, then $\gcd(b^\kappa, ht_1 + ak_2 b^{\kappa-1}) = 1$ since $\gcd(b, ht_1) = 1$. If $\kappa = 1$, then we are left with $\gcd(b, ht_1 + ak_2)$. Therefore, we obtain
\[
\gcd(kb, hbt + ak) = \gcd(k,t) \begin{cases} 1 & \text{if } b \centernot | \frac{k}{\gcd(k,t)}, \\ b & \text{if } b | \frac{k}{\gcd(k,t)} \text{ and } \frac{ht}{\gcd(k,t)} + a\frac{k}{b \gcd(k,t)} \not \equiv 0 \pmod{b}, \\ b^2 & \text{if } b | \frac{k}{\gcd(k,t)} \text{ and } \frac{ht}{\gcd(k,t)} + a\frac{k}{b \gcd(k,t)} \equiv 0 \pmod{b}, \end{cases}
\]
which is equal to $\lambda_{t,a,b,h,k}$.

It follows that $\gcd(\frac{kb}{\lambda_{t,a,b,h,k}},\frac{hbt+ak}{\lambda_{t,a,b,h,k}}) = 1$. Therefore, by making the replacements $h \mapsto \frac{hbt+ak}{\lambda_{t,a,b,h,k}}$, $k \mapsto \frac{kb}{\lambda_{t,a,b,h,k}}$, and $z \mapsto \frac{tbz}{\lambda_{t,a,b,h,k}}$ in \eqref{Eta asymptotic}, the result follows.\qed

\subsubsection{Proof of Theorem \ref{Theorem1}~\normalfont{(3)}}
Again assume that $|z|<1$. Writing
\begin{equation*}
	F_3(\xi;q)=\prod_{j=1}^b\prod_{n=0}^{\infty}\left(1-\xi^{j-1}q^{bn+j}\right),
\end{equation*}
we compute
\begin{align*}
	\operatorname{Log}\left(F_3\left(\xi;e^{-z}\right)\right)=-z\sum_{1\leq j,r \leq b} \xi^{(j-1)r}\sum_{m=0}^\infty f_j\left(\left( m+\frac{r}{b}\right) bz\right),
\end{align*}
where $f_j(z):=\frac{e^{-jz}}{z(1-e^{-bz})}$.
By Proposition \ref{C3 Euler-Maclaurin Sufficient Decay}, we obtain 
\begin{equation*}\label{eqn:obtain}
	\sum_{m=0}^{\infty}f_j\left(\left(m+\frac r b\right)bz \right)
	\sim
	\frac{\zeta\left(2,\frac r b\right)}{b^3z^2}+
	\frac{I_{f_{j,1}}^*}{bz}+\frac{B_1\left(\frac{j}{b}\right)}{bz}\left(\Log\left({bz}\right)+\psi \left(\frac{r}{b}\right)+\gamma\right)+O(1)
\end{equation*}
The first term contributes $-\frac{\pi^2}{6b^2z}$.
By Lemma \ref{lem:int}, the second term contributes 
\begin{align*}
	-\frac 1b \sum_{j=1}^{b}I_{f_{j,1}}^*\sum_{r=1}^{b}\xi^{(j-1)r}& =-I_{f_{1,1}}^*
	=-\log\left(\Gamma\left(\frac{1}{b}\right)\right) - \left(\frac{1}{2}-\frac{1}{b}\right)\log\left(\frac{1}{b}\right)+\frac 12\log(2\pi) \\
	&= \log\left( \frac{b^{\frac{1}{2} -\frac{1}{b}} (2\pi)^{\frac{1}{2}} }{\Gamma\left(\frac{1}{b}\right)} \right).
\end{align*}
Next we evaluate
\begin{align*}
	-\frac{1}{b}\left(\Log\left({bz}\right)+\gamma\right)\sum_{1\leq j\leq b}B_1\left(\frac{j}{b}\right)  \sum_{1\leq r\leq b} \xi^{(j-1)r}=-B_1\left(\frac{1}{b}\right)\left(\Log\left({bz}\right)+\gamma\right).
\end{align*}
\indent Finally we are left to compute
\begin{align*}
	-\frac{1}{b}\sum_{1\leq j,r \leq b} \xi^{(j-1)r} \left(\frac{j}{b}-\frac{1}{2}\right)\psi\left(\frac{r}{b}\right)=-\frac{1}{b}\sum_{\substack{0\leq j\leq b-1 \\ 1\leq r \leq b}} \xi^{jr}\left(\frac{j}{b}+\frac{1}{b}-\frac{1}{2}\right)\psi\left(\frac{r}{b}\right).
\end{align*}
The $(\frac{1}{b}-\frac{1}{2})$-term yields $\gamma(\frac{1}{b}-\frac{1}{2})$. Thanks to \eqref{digam}, the $\frac{j}{b}$ term contributes
\begin{align*}
	-\frac{1}{b^2} \sum_{0\leq j\leq b-1} j \sum_{1\leq r \leq b} \psi \left(\frac{r}{b}\right) \xi^{jr}=-\frac{1}{b}\sum_{1\leq j\leq b-1}j\operatorname{Log }\left(1-\xi^j\right).
\end{align*}
Combining these observations yields that
\begin{align*}
	\operatorname{Log}\left(F_3\left(\xi;e^{-z}\right)\right) = \log\left( \frac{b^{\frac{1}{2} -\frac{1}{b}} (2\pi)^{\frac{1}{2}} }{\Gamma\left(\frac{1}{b}\right)} \right) &- \frac{\pi^2}{6b^2z} -B_1\left(\frac{1}{b}\right)\Log\left({bz}\right) \\ &- \sum_{1\leq j\leq b-1} \frac{j}{b} \operatorname{Log }\left(1-\xi^j\right) + O\left(|z|\right).
\end{align*}
Exponentiating gives the desired claim.  \qed

\section{Evaluation of Kloosterman sums}

The proof of Theorem \ref{C6 t-hook Asymptotic} relies on the arithmetic of the Kloosterman sums
\begin{equation*}\label{KloostermanSumDfn}
	K(a,b,t;n) := \sum_{h=1}^{b-1} \frac{\omega_{h,b}}{\omega_{th,b}^t} \zeta_b^{(at-n)h},
\end{equation*}
where $b$ is an odd prime, and $s \geq 1$, $t > 1$ are integers. We evaluate this sum if $t$ is coprime to $b$. We start by computing $\omega_{h,b} \omega_{th,b}^{-t}$.

\begin{proposition} \label{Dedekind Simplification}
	Let $b$ be an odd prime, $h$, $t$ integers coprime to $b$, and let $\omega_{h,k}$ be defined by \eqref{eqn: defn omega_h,k}. Then we have
	\begin{align*}
		\frac{\omega_{h,b}}{\omega_{th,b}^t} = \lp \frac{h}{b} \rp \lp \frac{th}{b} \rp^t e^{\pi i \frac{(1-t)(b-1)}{4}} e^{\frac{2\pi i}{b} \frac{1}{24} \left(1-t^2\right)\left(1-b^2\right)h}.
	\end{align*}
\end{proposition}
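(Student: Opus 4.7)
The plan is to recast the identity as a statement about Dedekind sums. Writing $\omega_{h,b} = e^{\pi i s(h,b)}$, the left-hand side equals $\exp(\pi i(s(h,b) - t\, s(th,b)))$, so the task reduces to verifying
\[
s(h,b) - t\, s(th,b) \equiv \delta(h,t,b) + \frac{(1-t)(b-1)}{4} + \frac{(1-t^2)(1-b^2)h}{12 b} \pmod{2},
\]
where $\delta(h,t,b)\in\{0,1\}$ is chosen so that $e^{\pi i\delta(h,t,b)} = \lp\frac{h}{b}\rp\lp\frac{th}{b}\rp^t$. First I would let $r\in\{1,\dots,b-1\}$ denote the representative of $th \bmod b$, so that $s(th,b)=s(r,b)$ by periodicity. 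Then I would apply Dedekind reciprocity
\[
s(d,b) + s(b,d) = \frac{1}{12}\!\left(\frac{d}{b} + \frac{b}{d} + \frac{1}{bd}\right) - \frac{1}{4}
\]
separately with $d=h$ and $d=r$ and form the weighted difference $s(h,b) - t\, s(r,b)$. Writing $r = th - b\ell$ for the unique non-negative integer $\ell$, the constant $-\tfrac14$ contributions from the two reciprocity applications collect into $\tfrac{t-1}{4}$, which should account for the first rational term on the right-hand side after combining with a $(b-1)$-factor produced during the Legendre reduction step below.

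The remaining rational piece $\tfrac{1}{12}\bigl(\tfrac{h}{b}+\tfrac{b}{h}+\tfrac{1}{bh}\bigr) - \tfrac{t}{12}\bigl(\tfrac{r}{b}+\tfrac{b}{r}+\tfrac{1}{br}\bigr)$ can be simplified using $h-tr = (1-t^2)h + bt\ell$ and $\tfrac{1}{h}-\tfrac{t}{r} = -\tfrac{b\ell}{hr}$. The contribution $\tfrac{(1-t^2)h}{12b}$ from the $\tfrac{h-tr}{12b}$ term combines with the $-\tfrac{(b^2+1)\ell}{12hr}$ term, via the congruence $hr \equiv th^2 \pmod{b}$, to yield exactly $\tfrac{(1-t^2)(1-b^2)h}{12b}$ modulo $2$, after absorbing the integer multiples of $\tfrac{t\ell}{12}$ into the residual Dedekind-sum terms discussed below. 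This accounts for the two explicit rational exponents appearing on the right-hand side.

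The main obstacle will be the remaining small-modulus Dedekind sums $-s(b,h) + t\, s(b,r)$ together with the leftover integer debris produced by the reduction $th \mapsto r$, which must collapse modulo $2$ to the Legendre-symbol exponent $\delta(h,t,b)$. To handle this I plan to invoke the classical mod-$24$ evaluations of Dedekind sums (in the spirit of Rademacher--Grosswald) which express $s(b,d)\bmod \tfrac14$ in terms of explicit congruences on $b$ and $d$, combined with quadratic reciprocity in the form $\lp\frac{b}{h}\rp\lp\frac{h}{b}\rp = (-1)^{\frac{b-1}{2}\frac{h-1}{2}}$ and the multiplicativity $\lp\frac{th}{b}\rp = \lp\frac{t}{b}\rp\lp\frac{h}{b}\rp$. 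The genuine difficulty lies in tracking half-integer signs consistently through these reductions; my plan is to split into cases by the residues of $t, h, b$ modulo $4$ (and mod $8$ if needed) and to sanity-check the resulting formula numerically for small $b \in \{3,5,7\}$ and $t \in \{2,3,4\}$ before declaring the identity established.
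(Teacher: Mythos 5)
Your plan takes a genuinely different route from the paper. The paper does not use Dedekind reciprocity at all: it invokes the explicit formula for the $\eta$-multiplier on $\mathrm{SL}_2(\ZZ)$ (Theorem 5.8.1 of Cohen--Stromberg, which expresses $\psi\binom{\alpha\ \beta}{\gamma\ \delta}$ directly as a Jacobi symbol times an explicit $24$th root of unity), then translates it to $\omega_{h,b}$ via (57b) of Grosswald--Rademacher, and finishes with a short algebraic simplification of the ratio. That route front-loads all of the arithmetic difficulty into one cited closed form and leaves only algebra.

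Your route has a genuine gap, not merely tedium. Carrying out the reciprocity bookkeeping carefully, one needs
\[
-s(b,h)+t\,s(b,r)+\frac{t\ell}{12}-\frac{(b^2+1)\ell}{12hr}
\;\equiv\;
\delta(h,t,b)+\frac{(1-t)b}{4}-\frac{(1-t^2)hb}{12}\pmod 2,
\]
where $r\equiv th\pmod b$, $0<r<b$, $\ell=(th-r)/b$, and $e^{\pi i\delta}=\lp\tfrac hb\rp\lp\tfrac{th}b\rp^t$. (The $\tfrac{t-1}{4}$ term does not match $\tfrac{(1-t)(b-1)}{4}$: the mismatch $\tfrac{(1-t)b}{4}$ is a nontrivial quarter-integer that must come from the residual terms, not from ``a $(b-1)$-factor'' that appears on its own.) This remaining congruence is exactly as hard as the original proposition: the Legendre symbols, the $\ell$-dependence, and the residual Dedekind sums $s(b,h),\,s(b,r)$ all live here, and your proposed combining via $hr\equiv th^2\pmod b$ cannot touch the term $\tfrac{(b^2+1)\ell}{12hr}$, because that congruence controls a residue mod $b$ while the term has $hr$ in the \emph{denominator}. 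Moreover, the casework plan ``split by residues of $t,h,b$ mod $4$ or $8$'' cannot close the argument: $\ell=\lfloor th/b\rfloor$ and the Euclidean reductions of $s(b,h)$, $s(b,r)$ are not functions of those residues. The only rigorous way to evaluate $-s(b,h)+t\,s(b,r)$ and the leftover rationals in the needed closed form is to invoke the Rademacher--Grosswald evaluation of $e^{\pi i s(h,k)}$ as a Jacobi symbol times an explicit exponential --- but that evaluation \emph{is} the $\eta$-multiplier formula the paper applies directly, so the reciprocity detour gains nothing and the casework/numerics do not substitute for it. If you want an honest alternative proof, skip reciprocity entirely, cite the closed form for $\omega_{h,b}$ in terms of $\lp\tfrac hb\rp$ (as the paper does), and reduce the resulting bracket modulo $\tfrac{24b}{\gcd(24,\,1-b^2)}$, handling $b=3$ separately.
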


\begin{proof}
	The proof of this proposition uses the $\eta$-multiplier, which we label $\psi$. Theorem 5.8.1 of \cite{CS17} yields that
	for $\lp \begin{smallmatrix} \alpha & \beta  \\ \gamma  & \delta  \end{smallmatrix} \rp \in \text{SL}_2(\ZZ)$  with $\gamma  > 0$ odd, we have $$\psi \begin{pmatrix} \alpha & \beta  \\ \gamma  & \delta  \end{pmatrix} = \left( \frac{\delta }{\gamma } \right) e^{\frac{\pi i}{12} \left( (\alpha +\delta )\gamma  - \beta \delta \left(\gamma ^2-1\right) - 3\gamma  \right)}.$$
	We also have from formula (57b) of \cite{GR72} that for $\lp\begin{smallmatrix} \alpha & \beta  \\ \gamma  & \delta  \end{smallmatrix}\rp \in \text{SL}_2(\ZZ)$
	$$
	\psi \begin{pmatrix} \alpha & \beta  \\ \gamma  & \delta  \end{pmatrix} = e^{\pi i \left( \frac{\alpha+\delta }{12\gamma } - \frac{1}{4} \right)}\omega_{\delta,\gamma}^{-1} .
	$$
	By letting $\delta  = h$, $\gamma  = b,$ we obtain
	\begin{align*}
		\omega_{h,b} = \lp \frac{h}{b} \rp e^{\pi i \lp \frac{1}{12b}(\alpha+h - \beta hb)\left(1-b^2\right) + \frac{b-1}{4} \rp},
	\end{align*}
	where $\alpha,\beta $ satisfy $\alpha h-\beta b=1$. We therefore may conclude that
	\begin{align*}
		\frac{\omega_{h,b}}{\omega_{th,b}^t} = \lp \frac{h}{b} \rp \lp \frac{th}{b} \rp^t e^{\pi i \frac{(1-t)(b-1)}{4}} e^{\frac{\pi i}{12b} \lp (\alpha - tA )\left(1-b^2\right) + h\left(1 - \beta b - t^2\left(1-B b\right)\right)\left(1-b^2\right) \rp},
	\end{align*}
	where $\alpha h - \beta b = A  th - B  b = 1$. A straightforward calculation then gives the claim.
\end{proof}

We now turn to evaluating the Kloosterman sum $K(a,b,t;n)$. 

\begin{proposition} \label{Kloosterman Sum Evaluation}
	Suppose that $b$ is an odd prime, $a, n$ are integers, and $t > 1$ is an integer coprime to $b$. Then we have
	\begin{align*}
		K(a,b,t;n) =
		\begin{cases}
			\mathbb{I}(a,b,t,n) (-1)^{\frac{(1-t)(b-1)}{4}} \left( \frac{t}{b} \right) & \text{ if } t \text{ is odd}, \vspace{5pt} \\  (-1)^{\frac{(1-t)(b-1)}{4}}\varepsilon_b \left( \frac{\frac{1}{24}\left(1-t^2\right)\left(1-b^2\right) + at - n}{b} \right)  \sqrt{b} & \text{ if } t \text{ is even,}
		\end{cases}
	\end{align*}
	where $\mathbb{I}(a,b,t,n)$ is defined
	\begin{equation*}
		\mathbb{I}(a,b,t,n):= \begin{cases} b-1 & \text{if } \frac{1}{24} \left(1-t^2\right)\left(1-b^2\right) + at - n \equiv 0 \pmod{b}, \\ -1 & \text{otherwise}. \end{cases}\\
	\end{equation*}
\end{proposition}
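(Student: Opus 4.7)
The plan is to substitute the evaluation of $\omega_{h,b}/\omega_{th,b}^t$ from Proposition \ref{Dedekind Simplification} directly into the definition of $K(a,b,t;n)$, pull the $h$-independent constants out of the sum, and then reduce what remains to a classical character sum. Writing $M := \frac{1}{24}(1-t^2)(1-b^2) + at - n$, the substitution gives
\begin{align*}
K(a,b,t;n) = e^{\pi i \frac{(1-t)(b-1)}{4}} \sum_{h=1}^{b-1} \lp \frac{h}{b} \rp \lp \frac{th}{b} \rp^{t} \zeta_b^{Mh}.
\end{align*}
Everything now turns on analyzing the character factor $\lp\frac{h}{b}\rp\lp\frac{th}{b}\rp^{t}$ according to the parity of $t$, so I would split the argument into two cases.

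For $t$ odd, the Legendre symbol $\lp\frac{th}{b}\rp^{t}$ equals $\lp\frac{th}{b}\rp$, and total multiplicativity together with $\lp\frac{h^2}{b}\rp = 1$ collapses the factor to the constant $\lp\frac{t}{b}\rp$. The sum then reduces to the geometric sum $\sum_{h=1}^{b-1} \zeta_b^{Mh}$, which equals $b-1$ if $b \mid M$ and $-1$ otherwise; this is exactly the definition of $\mathbb{I}(a,b,t,n)$. Combining these pieces yields the stated formula in the odd case.

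For $t$ even, $\lp\frac{th}{b}\rp^{t} = 1$, so the sum reduces to the twisted character sum $\sum_{h=1}^{b-1} \lp\frac{h}{b}\rp \zeta_b^{Mh}$. This is a standard quadratic Gauss sum: a change of variables $h \mapsto M^{-1}h$ (when $b \nmid M$) and the classical evaluation $\sum_{h=1}^{b-1} \lp\frac{h}{b}\rp \zeta_b^{h} = \varepsilon_b \sqrt{b}$ produce $\lp\frac{M}{b}\rp \varepsilon_b \sqrt{b}$, while if $b \mid M$ the sum collapses to $\sum_{h=1}^{b-1}\lp\frac{h}{b}\rp = 0$, consistently interpreted as $\lp\frac{M}{b}\rp \varepsilon_b \sqrt{b}$ with the convention $\lp\frac{0}{b}\rp = 0$.

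The main subtlety, and the step I expect to require the most care, is the phase factor $e^{\pi i (1-t)(b-1)/4}$, which is written as $(-1)^{(1-t)(b-1)/4}$ in the statement. When $t$ is odd this exponent is an integer and there is nothing to check, but when $t$ is even and $b \equiv 3 \pmod 4$ the exponent is a half-integer, so the factor is actually $\pm i$; it then conspires with $\varepsilon_b = i$ to produce a real number consistent with the real-valuedness of the left-hand side. Verifying that these phase cancellations are compatible with the sign conventions in $\varepsilon_b$ across the four residue classes of $b$ and $t \pmod 4$ is the bookkeeping step to be done carefully, but no new ideas are needed beyond the classical Gauss sum evaluation.
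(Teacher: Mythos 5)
Your proof is correct and follows essentially the same route as the paper: substitute the evaluation of $\omega_{h,b}\,\omega_{th,b}^{-t}$ from Proposition \ref{Dedekind Simplification}, observe that $\bigl(\tfrac{h}{b}\bigr)\bigl(\tfrac{th}{b}\bigr)^t$ reduces to $\bigl(\tfrac{t}{b}\bigr)$ when $t$ is odd and to $\bigl(\tfrac{h}{b}\bigr)$ when $t$ is even, and then evaluate the remaining sum as a geometric sum or a quadratic Gauss sum respectively. The one thing you flag that the paper silently glosses over is the interpretation of $(-1)^{(1-t)(b-1)/4}$ when $t$ is even and $b\equiv 3\pmod 4$, in which case the exponent is a half-integer: the correct reading, consistent with the paper's proof which works with $e^{\pi i (1-t)(b-1)/4}$, is that the statement's $(-1)^x$ is shorthand for $e^{\pi i x}$, and no further "verification of cancellation" is actually required — the formula simply has whatever complex phase it has. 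Your instinct to double-check this is sound hygiene, but it is a notational point rather than a genuine step missing from the argument.
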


\begin{proof}
	By Proposition \ref{Dedekind Simplification}, we have
	\begin{align*}
		K(a,b,t;n) &= e^{\frac{\pi i}{4}(1-t)(b-1)} \sum_{h=1}^{b-1} \left(\frac hb\right) \left(\frac{th}{b}\right)^t \zeta_b^{(at-n)h+\frac{1}{24}\left(1-t^2\right)\left(1-b^2\right)h}.
	\end{align*}
	
	The multiplicativity of the Legendre symbol implies
	\begin{align*}
		\lp \frac{h}{b} \rp \lp \frac{th}{b} \rp^t = \lp \frac{h}{b} \rp^{t+1} \lp \frac{t}{b} \rp^t =
		\begin{cases}
			\lp \frac{t}{b} \rp & \text{ if } t \text{ is odd}, \vspace{5pt} \\ \lp \frac{h}{b} \rp & \text{ if } t \text{ is even}.
		\end{cases}
	\end{align*}
	
	We proceed distinguishing on the parity of $t$.
	Suppose first that $t$ is odd. Then since $b$ is odd, $\frac14 (1-t)(b-1)$ is an integer and the claim directly follows.
	
	Suppose next that $t$ is even. Then we have
	\begin{align*}
		K(a,b,t;n) = e^{\pi i\frac{(1-t)(b-1)}{4}} \sum_{h=1}^{b-1} \left(\frac hb\right) \zeta_b^{h\left(\frac{1}{24}\left(1-t^2\right)\left(1-b^2\right)+at-n\right)}.
	\end{align*}
	Using the classical evaluation of the Gauss sum (see for example pages 12-13 of \cite{Dav80}), we obtain
	\[
	\sum_{h=1}^{b-1} \left(\frac hb\right) \zeta_b^{\left(\frac{1}{24}\left(1-t^2\right)\left(1-b^2\right)+at-n\right)h} = \left(\frac{\frac{1}{24}\left(1-t^2\right)\left(1-b^2\right)+at-n}{b}\right) \varepsilon_b \sqrt{b}. \qedhere
	\]
\end{proof}

\section{Zuckerman's exact formula}

Here we recall a result of Zuckerman \cite{Zuc39}, building on work of Rademacher \cite{Rad37}. Using the circle mthod, Zuckerman computed exact formulae for Fourier coefficients for weakly holomorphic modular forms of arbitrary non-positive weight on finite index subgroups of $\mathrm{SL}_2(\ZZ)$ in terms of the cusps of the underlying subgroup and the principal parts of the form at each cusp. 
Let $F$ be a weakly holomorphic modular form of weight $\kappa \leq 0$ with transformation law
$$F(\gamma \tau) = \chi(\gamma) (c \tau + d)^{\kappa} F(\tau),$$
for all $\gamma = \left( \begin{smallmatrix} a & b \\ c & d \end{smallmatrix} \right)$ in some finite index subgroup of $\mathrm{SL}_2(\ZZ)$. The transformation law can be viewed alternatively in terms of the cusp $\frac{h}{k} \in\QQ$. Let $h'$ be defined through the congruence $hh' \equiv -1 \pmod k$. Taking $\tau = \frac{h'}{k} + \frac{i}{kz}$ and $\gamma=\gamma_{h,k}  := \left( \begin{smallmatrix} h & \beta \\ k & -h' \end{smallmatrix} \right) \in \textnormal{SL}_2(\mathbb{Z})$, we obtain the equivalent transformation law
$$
F\lp \frac{h}{k}+\frac{iz}{k} \rp = \chi(\gamma_{h,k})(-iz)^{-\kappa}   F\lp \frac{h'}{k}+\frac{i}{kz} \rp.
$$
Let $F$ have the Fourier expansion at $i\infty$ given by
\[
F(\tau) = \sum_{n\gg-\infty} a(n)q^{n+\alpha}
\]
and Fourier expansions at each rational number $0 \leq \frac{h}{k} < 1$ given by
\begin{equation*}\label{2.5}
	F|_{\kappa}\gamma_{h,k}(\tau) = \sum_{n \gg -\infty} a_{h,k}(n) q^{\frac{n + \alpha_{h,k}}{c_{k}}}.
\end{equation*}
Furthermore, let $I_\alpha$ denote the usual $I$-Bessel function.
In this framework, the relevant theorem of Zuckerman \cite[Theorem 1]{Zuc39} may be stated as follows.
\begin{theorem}\label{Thm: Zuckerman}
	Assume the notation and hypotheses above. If $n + \alpha > 0,$ then we have
	\begin{align*}
		&a(n) =  2\pi (n+\alpha)^{\frac{\kappa-1}{2}} \sum_{k=1}^\infty \dfrac{1}{k} \sum_{\substack{0 \leq h < k \\ \gcd(h,k) = 1}}\chi(\gamma_{h,k}) e^{- \frac{2\pi i (n+\alpha) h}{k}} 
		\\ 
		&\ \times \sum_{m+\alpha_{h,k} \leq 0} a_{h,k}(m) e^{ \frac{2\pi i}{k c_{k}} (m + \alpha_{h,k}) h' } \left( \dfrac{\lvert m +\alpha_{h,k} \rvert }{c_{k}} \right)^{ \frac{1 - \kappa}{2}} I_{-\kappa+1}\left( \dfrac{4\pi}{k} \sqrt{\dfrac{(n + \alpha)\lvert m +\alpha_{h,k} \rvert}{c_{k}}} \right).
	\end{align*}
\end{theorem}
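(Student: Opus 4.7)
The plan is to execute Rademacher's refinement of the Hardy--Ramanujan circle method, following the same broad outline as the argument sketched in Section 2.3.1 for $p(n)$ but bookkeeping carefully for arbitrary non-positive weight $\kappa$, an arbitrary multiplier $\chi$, and the full principal part at every cusp. First, I would write
\begin{align*}
a(n) = \frac{1}{2\pi i} \int_{C} \frac{F(\tau)}{q^{n+\alpha+1}}\, dq
\end{align*}
by Cauchy's theorem, where $C$ is a small circle of radius $e^{-2\pi/N^2}$ around the origin in the $q$-disk. After the change of variables $q = e^{2\pi i \tau}$ and $z = -ik^2(\tau - h/k)$, the circle decomposes along the Farey dissection of order $N$ into arcs $C_{h,k}$ clustered around each cusp $h/k$, giving
\begin{align*}
a(n) = \sum_{k=1}^{N} \sum_{\substack{0 \leq h < k \\ \gcd(h,k)=1}} \frac{i}{k^2}\, e^{-2\pi i (n+\alpha) h/k} \int_{z_1(h,k)}^{z_2(h,k)} F\!\left(\tfrac{h}{k} + \tfrac{iz}{k}\right) e^{2\pi(n+\alpha)z/k^2}\, dz,
\end{align*}
with the endpoints $z_j(h,k)$ being the standard Farey mediants on the Ford circle $\mathcal{K}$.

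The second step is to apply the transformation law $F(h/k + iz/k) = \chi(\gamma_{h,k})(-iz)^{-\kappa} F(h'/k + i/(kz))$ and then replace $F$ at the cusp by its Fourier expansion in the local variable $q_{h,k} = e^{2\pi i (h' + i/z)/(kc_k) \cdot (\cdot + \alpha_{h,k})}$. Split this expansion into the principal part (finitely many terms with $m+\alpha_{h,k} \leq 0$) and the convergent holomorphic tail. On the Farey arc one has $\mathrm{Re}(z) \leq 1$ and $\mathrm{Re}(1/z) = 1$, so any term with $m + \alpha_{h,k} > 0$ decays exponentially and contributes only $O(k^{1/2}N^{-3/2})$, analogously to the reduction carried out in Chapter 5 (Proposition \ref{Reduce to Finite Sum}). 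Each principal-part term then produces an integral of the shape
\begin{align*}
\int_{z_1(h,k)}^{z_2(h,k)} z^{-\kappa} \exp\!\left(\frac{2\pi(n+\alpha)z}{k^2} + \frac{2\pi |m+\alpha_{h,k}|}{kc_k\, z}\right) dz.
\end{align*}

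Third, I would close each arc to the full Ford circle $\mathcal{K}$, showing that the extension from $z_j(h,k)$ back to the origin contributes at most $O(k^{3/2}N^{-3/2})$, exactly as in Proposition \ref{General I-Bessel}. Under the substitution $w = 1/z$ followed by $s = 2\pi|m+\alpha_{h,k}|w/(kc_k)$, the integral along $\mathcal{K}$ becomes Hankel's contour representation
\begin{align*}
I_{1-\kappa}(x) = \frac{(x/2)^{1-\kappa}}{2\pi i}\int_{c-i\infty}^{c+i\infty} s^{\kappa - 2} e^{s + x^2/(4s)}\, ds,
\end{align*}
which after identifying $x = (4\pi/k)\sqrt{(n+\alpha)|m+\alpha_{h,k}|/c_k}$ and simplifying the prefactors gives precisely the summand in the statement of the theorem.

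The final step is to let $N \to \infty$. Summing the $O(k^{1/2}N^{-3/2})$ and $O(k^{3/2}N^{-3/2})$ errors over $1 \leq k \leq N$ and the $\varphi(k) \leq k$ choices of $h$ yields a total error of size $O(N^{-1/2})$, and the main term converges absolutely because $I_{1-\kappa}(x) \ll e^x/\sqrt{x}$ together with the explicit shape of $x$ above forces the $k$-series to converge (the dominant exponential $e^{4\pi\sqrt{(n+\alpha)|m+\alpha_{h,k}|/c_k}/k}$ shrinks like $1 + O(1/k^2)$ for large $k$). The main obstacle is the uniform estimation on the minor-arc tail: because $\kappa \leq 0$ the factor $z^{-\kappa}$ grows rather than decays on $\mathcal{K}$, so one must control it against the Farey arc geometry $|z| < \sqrt{2}kN^{-1}$ and use $\mathrm{Re}(1/z) = 1$ to dominate everything by the (finite) principal part — exactly the place where the hypothesis $\kappa \leq 0$ is used, and where the careful multiplier bookkeeping with $\chi(\gamma_{h,k})$ and the local width $c_k$ must be threaded through without error.
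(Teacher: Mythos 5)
The paper does not prove this statement; it is quoted directly as \cite[Theorem 1]{Zuc39}. Your sketch is nonetheless a faithful reconstruction of the classical Rademacher--Zuckerman circle method and closely parallels the paper's own implementation of that machinery for $A_t(n)$ in Chapter 5 (compare Propositions \ref{Reduce to Finite Sum} and \ref{General I-Bessel}), so in that sense it is the same approach. One inaccuracy worth flagging is in your final convergence argument: you bound the $k$-tail using $I_{1-\kappa}(x)\ll e^{x}/\sqrt{x}$, but that is the large-argument asymptotic, whereas as $k\to\infty$ the Bessel argument $x=(4\pi/k)\sqrt{(n+\alpha)|m+\alpha_{h,k}|/c_k}$ tends to $0$, where the relevant estimate is $I_{1-\kappa}(x)\ll x^{1-\kappa}$. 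Feeding that through with the $\varphi(k)$ values of $h$ gives a $k$-term of size $k^{\kappa-1}$, which converges absolutely only for $\kappa<0$; at the boundary weight $\kappa=0$, which the hypothesis still permits, absolute convergence fails and one must exploit cancellation in the Kloosterman-type sums over $h$, a step your sketch does not supply. Aside from this, the reduction to the principal part, the extension of the arcs to the full Ford circle, and the Hankel-contour identification with $I_{1-\kappa}$ are all carried out correctly.
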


\section{Proofs of Theorem \ref{C6 t-hook Asymptotic} and Corollary \ref{C6 t-hook Distribution}}

We next provide proofs of both Theorem \ref{C6 t-hook Asymptotic} and Corollary \ref{C6 t-hook Distribution}. Our main tool is the powerful theorem of Zuckerman. For these proofs, we require the definition
\begin{align} \label{c_t def}
	c_t(a,b;n)&:= \frac 1b + \begin{cases} 0 & \text{ if } b | t, \\[+.1in] (-1)^{\frac{(1-t)(b-1)}{4}} \mathbb{I}(a,b,t,n) b^{-\frac{t+1}{2}} \left( \frac{t}{b} \right) & \text{ if } b \centernot | t \text{ and } t \text{ is odd,} \\[+.1in] i^{\frac{(1-t)(b-1)}{2}} \varepsilon_b b^{-\frac t2} \left(\frac{\frac{1}{24}\left(1-t^2\right)\left(1-b^2\right)+at-n}{b}\right)  & \text{ if } b \centernot | t \text{ and } t \text{ is even}, \end{cases}
\end{align}

\begin{proof}[Proof of Theorem \ref{C6 t-hook Asymptotic}]
	
	Using Corollary \ref{ptabGenFunctions} we have
	\begin{align}\label{eqn: split}
		H_t(a,b;q) = \frac{1}{b(q;q)_\infty} + \sum_{r=1}^{b-1}  \zeta_b^{-ar}H_t\left(\zeta_b^r;q\right).
	\end{align}
	From Theorem \ref{HanFunction} we conclude
	\begin{align*}
		H_t\left(\zeta_b^r;q \right) = \frac{\left(q^t;q^t\right)^t_\infty}{\left(\zeta_b^rq^t ; \zeta_b^rq^t\right)^t_\infty \left(q;q\right)_\infty}.
	\end{align*}
	
	To obtain the transformation formula for $H_t(\zeta_b^r;q)$ at the cusp $\frac hk$, we write 
	\begin{align*}
		q^t = e^{\frac{2\pi i t}{k} \left( h+iz \right)} = e^{\frac{2\pi i }{\frac{k}{\gcd(k,t)}} \left( h\frac{t}{\gcd(k,t)}+i\frac{t}{\gcd(k,t)}z \right)},
	\end{align*} 
	where we note that $\gcd(h\frac{t}{\gcd(k,t)}, \frac{k}{\gcd(k,t)}) = 1$. Thus we may use \eqref{Eqn: transform usual pochham} with $k \mapsto \frac{k}{\gcd(k,t)}, h \mapsto h \frac{t}{\gcd(k,t)}, z \mapsto \frac{t}{\gcd(k,t)}z$ to obtain
	\begin{multline}\label{Eqn: transformation of q^t}
		\left(q^t;q^t\right)_\infty = \omega_{h \frac{t}{\gcd(k,t)},\frac{k}{\gcd(k,t)}}^{-1} \left(\frac{t}{\gcd(k,t)}z\right)^{-\frac{1}{2}} e^{\frac{\pi \gcd(k,t)}{12k} \left( \frac{t}{\gcd(k,t)}z -\frac{\gcd(k,t)}{t z} \right)} \\
		\times \left(e^{\frac{2\pi i \gcd(k,t)}{k} \left(h_{k,t} +  i\frac{\gcd(k,t)}{tz}\right) } ; e^{\frac{2\pi i \gcd(k,t)}{k} \left(h_{k,t} +  i\frac{\gcd(k,t)}{tz}\right) }\right)_\infty,
	\end{multline}
	where $0 \leq h_{k,t} < \frac{k}{\gcd(k,t)}$ is defined by $ h \frac{t}{\gcd(k,t)} h_{k,t} \equiv -1 \pmod{\frac{k}{\gcd(k,t)}}$.
	
	Similarly, for $\left(\zeta_b^rq^t;\zeta_b^rq^t\right)_\infty$ the proof of Theorem \ref{Theorem1} (2) implies that we may use \eqref{Eqn: transform usual pochham} with  $h \mapsto \frac{hbt+rk}{\lambda_{t,r,b,h,k}}, k \mapsto \frac{kb}{\lambda_{t,r,b,h,k}}, z \mapsto \frac{tbz}{\lambda_{t,r,b,h,k}}$ and obtain
	\begin{multline}\label{Eqn: transformation of zeta q^t}
		\left(\zeta_b^rq^t;\zeta_b^rq^t\right)_\infty = \omega_{\frac{hbt+rk}{\lambda_{t,r,b,h,k}}, \frac{kb}{\lambda_{t,r,b,h,k}}}^{-1} \left(\frac{tbz}{\lambda_{t,r,b,h,k}}\right)^{-\frac{1}{2}} e^{\frac{\pi \lambda_{t,r,b,h,k}}{12kb} \left( \frac{tbz}{\lambda_{t,r,b,h,k}} - \frac{\lambda_{t,r,b,h,k}}{tbz} \right) } \\
		\times \left( e^{\frac{2\pi i \lambda_{t,r,b,h,k}}{kb} \left( h_{k,t,b,r} + i\frac{\lambda_{t,r,b,h,k}}{tbz} \right)} ; e^{\frac{2\pi i \lambda_{t,r,b,h,k}}{kb} \left( h_{k,t,b,r} + i\frac{\lambda_{t,r,b,h,k}}{tbz} \right)}  \right)_\infty,
	\end{multline}
	where $0 \leq h_{k,t,b,r} < \frac{kb}{\lambda_{t,r,b,h,k}}$ is defined by $ \frac{hbt+rk}{\lambda_{t,r,b,h,k}} h_{k,t,b,r} \equiv -1 \pmod{\frac{kb}{\lambda_{t,r,b,h,k}}}$.
	
	Combining \eqref{Eqn: transform usual pochham}, \eqref{Eqn: transformation of q^t}, and \eqref{Eqn: transformation of zeta q^t} yields
	\begin{multline}\label{eqn: transform}
		H_t\left(\zeta_b^r;q\right)
		= \Omega_{b,t}(r;h,k) \left( \frac{\gcd(k,t)b}{\lambda_{t,r,b,h,k}}\right)^{\frac{t}{2}} z^{\frac{1}{2}} e^{\frac{\pi}{12k} \left(-z + \left(1-\gcd(k,t)^2 + \frac{\lambda_{t,r,b,h,k}^2}{b^2} \right) \frac{1}{z}\right) } \\
		\times \frac{  \left(e^{\frac{2\pi i \gcd(k,t)}{k} \left(h_{k,t} +  i\frac{\gcd(k,t)}{tz}\right) } ; e^{\frac{2\pi i \gcd(k,t)}{k} \left(h_{k,t} +  i\frac{\gcd(k,t)}{tz}\right) }\right)_\infty^t  }{ \left( e^{\frac{2\pi i \lambda_{t,r,b,h,k}}{kb} \left( h_{k,t,b,r} + i\frac{\lambda_{t,r,b,h,k}}{tbz} \right)} ; e^{\frac{2\pi i \lambda_{t,r,b,h,k}}{kb} \left( h_{k,t,b,r} + i\frac{\lambda_{t,r,b,h,k}}{tbz} \right)}  \right)_\infty^t 
			\lp e^{\frac{2\pi i}{k}\lp h'+\frac iz \rp}; e^{\frac{2\pi i}{k}\lp h'+\frac{i}{z} \rp} \rp_\infty},
	\end{multline}
	where
	\begin{align*}
		\Omega_{b,t}(r;h,k) := \frac{\omega_{\frac{hbt+rk}{\lambda_{t,r,b,h,k}}, \frac{kb}{\lambda_{t,r,b,h,k}}}^{t} \omega_{h,k}}{\omega_{h \frac{t}{\gcd(k,t)},\frac{k}{\gcd(k,t)}}^{t}}.
	\end{align*}
	As usual, we define $P_t(q):=(q;q)_\infty^t=:\sum_{n=0}^\infty q_t(n)q^n$, and $P(q)^t=:\sum_{n=0}^\infty p_t(n)q^n$. Then we see that the principal part of \eqref{eqn: transform} is governed by the sum
	\begin{equation*}\label{eqn: principal part}
		\sum_{\substack{n_1,n_2,n_3 \geq 0 \\ r_{k,h,t,b}(n_1,n_2,n_3)\geq 0}} q_t(n_1) p_t(n_2)  p(n_3) \zeta_{kb}^{\gcd(k,t)b h_{k,t}  n_1 + \lambda_{t,r,b,h,k} h_{k,t,b,r} n_2 + bh'n_3}  e^{\frac{\pi}{12 kz} r_{k,h,t,b}(n_1,n_2,n_3) },
	\end{equation*}
	where
	\begin{align*}
		r_{k,h,t,b}(n_1,n_2,n_3) := 1-\gcd(k,t)^2 + \frac{\lambda_{t,r,b,h,k}^2}{b^2} - 24 \left( \frac{\gcd(k,t)^2}{t} n_1 + \frac{\lambda_{t,r,b,h,k}^2}{tb^2}n_2 + n_3 \right).
	\end{align*}
	We denote the Fourier coefficients of $H_t(\zeta_b^r;q)$ by $c_{t,b,r}(n)$. Using Theorem \ref{Thm: Zuckerman} we conclude that 
	\begin{multline}\label{Eqn: exact formula}
		c_{t,b,r}(n) = \frac{2\pi}{n^{\frac{3}{4}}} b^{\frac{t}{2}} \sum_{k = 1}^\infty \frac{\gcd(k,t)^{\frac{t}{2}}}{k} \sum_{\substack{0 \leq h < k \\ \gcd(h,k)=1}} \Omega_{b,t}(r; h,k) e^{- \frac{2\pi i n h}{k}} \lambda_{t,r,b,h,k}^{-\frac{t}{2}} \\ \times \sum_{\substack{n_1,n_2,n_3 \geq 0 \\ r_{k,h,t,b}(n_1,n_2,n_3) \geq 0}} q_t(n_1) p_t(n_2)  p(n_3) \zeta_{kb}^{\gcd(k,t)b h_{k,t}  n_1 + \lambda_{t,r,b,h,k} h_{k,t,b,r} n_2 + bh'n_3} \\ \times \left(\frac{r_{k,h,t,b}(n_1,n_2,n_3)}{24}\right)^{\frac34} I_{\frac{3}{2}} \left(\frac{\pi}{k}\sqrt{\frac{2nr_{k,h,t,b}(n_1,n_2,n_3)}{3}} \right).
	\end{multline}

	Since $x^\alpha I_{\alpha}(x)$ is monotonically increasing as $x \rightarrow \infty$ for any fixed $\alpha$, the terms which dominate asymptotically are those which have the largest possible value of $\frac1k \sqrt{r_{k,h,t,b}(n_1, n_2, n_3)}$. In particular for this we require $n_1 = n_2 = n_3 = 0$.
	Note that we have $q_t(0) = p_t(0) = p(0) = 1$.
	Since the expression in question is positive we can maximize its square, that is we maximize
	\begin{equation*}\label{max}
		\dfrac{r_{k,h,t,b}(0,0,0)}{k^2} = \dfrac{1}{k^2} \left( 1 - \gcd(k,t)^2 + \dfrac{\lambda_{t,r,b,h,k}^2}{b^2} \right).
	\end{equation*}

	We consider the three possible values of $\lambda_{t,r,b,h,k}$. If $\lambda_{t,r,b,h,k} = \gcd(k,t)$, then
	$$\dfrac{r_{k,h,t,b}(0,0,0)}{k^2} = \dfrac{1}{k^2} \left( 1 + \left( \dfrac{1}{b^2} - 1 \right) \gcd(k,t)^2 \right) \leq  \left(1+ \left(\frac{1}{9}-1\right)\right) < 1.$$
	If $\lambda_{t,r,b,h,k} = b \gcd(k,t)$, then (noting that in this case $k>1$)
	$$\dfrac{r_{k,h,t,b}(0,0,0)}{k^2} = \dfrac{1}{k^2} < 1.$$
	Finally, if $\lambda_{t,r,b,h,k} = b^2 \gcd(k,t)$, then we have
	$$\dfrac{r_{k,h,t,b}(0,0,0)}{k^2} = \frac{1}{k^2}\left(1 + \left(b^2 - 1\right) \gcd(k,t)^2\right).$$
	Since $b \mid\mid \dfrac{k}{\gcd(k,t)}$ in this case, we may write $\gcd(k,t) = b^\varrho d$ where $\gcd(b,d) = 1$, $b^\varrho \mid \mid t$, and $k = b^{\varrho + 1} d k_0$ for $\gcd(k_0, \frac{t}{\gcd(k,t)}) = \gcd(k_0,b) = 1$. Therefore, we have
	$$\dfrac{r_{k,h,t,b}(0,0,0)}{k^2} = \dfrac{1 + \left(b^2-1\right) b^{2\varrho} d^2}{b^{2\varrho + 2} d^2 k_0^2},$$
	which is maximized if $k_0 = 1$. In this case, we have $k = b \gcd(k,t)$ and therefore we may write
	$$\dfrac{r_{k,h,t,b}(0,0,0)}{k^2} = \dfrac{1 + \left(b^2 - 1\right) \gcd(k,t)^2}{b^2 \gcd(k,t)^2} = \dfrac{b^2-1}{b^2}+\dfrac{1}{b^2 \gcd(k,t)^2}.$$
	To maximize this, we need to minimize $\gcd(k,t)$, which is $\gcd(k,t) = 1$. Note that in this case
	\begin{align*}
		\dfrac{r_{k,h,t,b}(0,0,0)}{k^2} = 1 .
	\end{align*}
	
	Since $ht+r \equiv 0 \pmod b$, we have
	\begin{align*}
		\Omega_{b,t}(r;h,b) = \dfrac{\omega_{\frac{ht+r}{b}, 1}^{t} \omega_{h,b}}{\omega_{ht,b}^{t}} = \dfrac{\omega_{-r\bar{t},b}}{\omega_{-r,b}^{t}},
	\end{align*}
	where $\bar{t}$ denotes the inverse of $t \pmod b$. Then  by \eqref{Eqn: exact formula} we have
	\begin{align*}
		c_{t,b,r}(n) &\sim \dfrac{2\pi b^{\frac t2} \omega_{-r\bar{t},b} e^{\frac{2\pi i n r\bar{t}}{b}}}{(24n)^{\frac 34} \omega_{-r,b}^{t} b^{t+1}} I_{\frac 32} \left( \pi \sqrt{\frac{2n}{3}} \right)   \sim \dfrac{e^{\pi\sqrt{\frac{2n}{3}}}}{4\sqrt{3} n b^{\frac{t}{2}+1}} \dfrac{\omega_{-r\bar{t},b}}{ \omega_{-r,b}^{t}} e^{\frac{2\pi i n r\bar{t}}{b}} ,
	\end{align*}
	as $n \to \infty$, where we use that $I_\alpha(x) \sim \frac{e^x}{\sqrt{2\pi x}}$ as $x \rightarrow \infty$. 
	Using \eqref{Hardy-Ramanujan Asymptotic}, we obtain
	\begin{align*}
		\dfrac{c_{t,b,r}(n)}{p(n)} \sim \begin{cases}
			\dfrac{1}{b^{\frac{t}{2}+1}} \dfrac{\omega_{-r\bar{t},b}}{\omega_{-r,b}^{t}} e^{\frac{2\pi i n r\bar{t}}{b}} &\text{if } b \centernot | t, \\
			0 &\text{otherwise.}
		\end{cases}
	\end{align*}
	By \eqref{eqn: split}, we have
	\begin{align*}
		p_t(a,b;n) = \dfrac{1}{b}p(n) + \dfrac{1}{b} \sum_{r = 1}^{b-1} \zeta_b^{-ar} c_{t,b,r}(n),
	\end{align*}
	and so dividing through by $p(n)$ yields
	\begin{align*}
		\dfrac{p_t(a,b;n)}{p(n)} = \dfrac{1}{b} + \dfrac{1}{b} \sum_{r=1}^{b-1} \zeta_b^{-ar} \dfrac{c_{t,b,r}(n)}{p(n)} \sim \begin{cases}
			\dfrac{1}{b} + \dfrac{1}{b^{\frac{t}{2} + 2}} \sum\limits_{r=1}^{b-1}  \dfrac{\omega_{-r\bar{t},b}}{\omega_{-r,b}^{t}} \zeta_b^{\lp n \bar{t}-a \rp r} & \text{ if } b \centernot | t, \\[+0.2cm]
			\dfrac{1}{b} & \text{ otherwise}
		\end{cases}
	\end{align*}
	as $n \to \infty$. This completes the proof in the case where $b | t$. Otherwise, setting $h = -r\bar{t}$ shows
	\begin{align*}
		\dfrac{p_t(a,b;n)}{p(n)} \sim \dfrac{1}{b} + \dfrac{1}{b^{\frac{t}{2} + 2}} \sum_{h=1}^{b-1}  \dfrac{\omega_{h,b}}{\omega^t_{th,b}}\zeta_b^{(at-n)h} = \dfrac{1}{b} \left( 1 + \dfrac{K(a,b,t;n)}{b^{\frac{t}{2} + 1}} \right)
	\end{align*}
	as $n \to \infty$. The evaluation of $K(a,b,t;n)$ in Proposition \ref{Kloosterman Sum Evaluation} then completes the proof.
\end{proof}

\begin{proof}[Proof of Corollary \ref{C6 t-hook Distribution}]To derive Corollary~\ref{C6 t-hook Distribution}, it is enough to consider the leading constants in Theorem~\ref{C6 t-hook Asymptotic}. Namely, it suffices to show that for $a,b$ fixed, $c_t(a,b;n)$ depends only on $n \pmod{b}$, which is clear from the definition of \eqref{c_t def}. 
\end{proof}

\section{Examples of $t$-hook distributions}

This section includes examples of Theorem \ref{C6 t-hook Asymptotic} and Corollary \ref{C6 t-hook Distribution}. For convenience, we define the proportion functions
\begin{equation*}
	\Psi_t(a,b;n):=\frac{p_t(a,b;n)}{p(n)}.
\end{equation*}
\begin{example} In the case of $t=3$, we find that
	\begin{align*}
		H_3(\xi;q)=1+q+2q^2+3\xi q^3 &+ (2+3\xi)q^4 + (1+6\xi)q^5 +\left(2+9\xi^2\right)q^6 \\ &+\left(6\xi+9\xi^2\right)q^7
		+\left(1+3\xi+18\xi^2\right)q^8+\dots.
	\end{align*}
	and the three generating functions $H_3(a,3;q)$ begin with the terms
	\begin{displaymath}
		\begin{split}
			H_3(0,3;q)&=1+q+2q^2+2q^4+q^5+2q^6+q^8+\dots,\\
			H_3(1,3;q)&=3q^3+3q^4+6q^5+6q^7+3q^8+\dots,\\
			H_3(2,3;q)&=9q^6+9q^7+18q^8+\dots.
		\end{split}
	\end{displaymath}
	Theorem~\ref{C6 t-hook Asymptotic} implies (independently of $a$) that
	$$
	p_3(a,3;n)\sim \frac{1}{12\sqrt{3}n} \cdot e^{\pi \sqrt{\frac{2n}3}} \sim \frac{1}{3}\cdot p(n).
	$$
	The next table illustrates the conclusion of Corollary~\ref{C6 t-hook Distribution}, that the proportions $\Psi_3(a,b;n)
	\to \frac 13.$
	\medskip
	
	\begin{center}
		
		\begin{tabular}{|c|cc|cc|cc|}
			\hline \rule[-3mm]{0mm}{8mm}
			$n$       && $\Psi_3(0,3;n)$           && $\Psi_3(1,3;n)$  & $\Psi_3(2,3;n)$ & \\   \hline 
			$ 100$ &&  $\approx 0.4356$ && $\approx 0.1639$ & $\approx 0.4003$ & \\
			$\ \ \vdots \ \ $ && \vdots &&$\vdots$ &  $\vdots$ & \\
			$ 500$ &&  $\approx 0.3234$ && $\approx 0.3670$ & $\approx 0.3096$ & \\
			$ 600$ &&  $\approx 0.3318$ && $\approx 0.3114$ & $\approx 0.3567$ & \\
			$\ \ \vdots \ \ $ && \vdots &&$\vdots$ &  $\vdots$ & \\
			$ 2100$ &&  $\approx 0.3320$ && $\approx 0.3348$ & $\approx 0.3332$ & \\
			$ 2300$ &&  $\approx 0.3330$ && $\approx 0.3345$ & $\approx 0.3325$ & \\
			$ 2500$ &&  $\approx 0.3324$ && $\approx 0.3337$ & $\approx 0.3339$ & \\
			\hline
		\end{tabular}
	\end{center}
\end{example}
\medskip

\begin{example} We consider a typical case where the modular sums of $t$-hook functions are not equidistributed. 
	We consider $t=2$, where we have
	\begin{align*}
		H_2(\xi;q)=1+q+2\xi q^2+(1+2\xi)q^3+5\xi^2 q^4 +\left(2\xi+5\xi^2\right)q^5+\left(1+10\xi^3\right)q^6 \\+\left(5\xi^2+10\xi^3\right)q^7+\left(2\xi +20\xi^4\right)q^8+\dots.
	\end{align*}
	The three generating functions $H_2(a,3;q)$ begin with the terms
	\begin{displaymath}
		\begin{split}
			H_2(0,3;q)&=1+q+q^3+11q^6+10q^7+\dots,\\
			H_2(1,3;q)&=2q^2+2q^3+2q^5+22q^8+\dots,\\
			H_2(2,3;q)&=5q^4+5q^5+5q^7+\dots.
		\end{split}
	\end{displaymath}
	Theorem~\ref{C6 t-hook Asymptotic} implies that
	$$
	p_2(a,3;n)\sim \frac{A(a,n)}{12\sqrt{3}n}\cdot  e^{\pi \sqrt{\frac{2n}{3}}} \sim \frac{A(a,n)}{3} \cdot p(n),
	$$
	where $A(a,n)\in \{0, 1, 2\}$ satisfies the congruence $A(a,n)\equiv 2-a-n\pmod 3.$
	This explains the uneven distribution established by
	Corollary~\ref{C6 t-hook Distribution} in this case. 
	In particular, we have that
	$$
	\lim_{n\rightarrow \infty}\frac{p_t(a,3; 3n+2-a)}{p(n)}=0.
	$$
	Of course, this zero distribution is weaker than the vanishing obtained in Theorem~\ref{C6 Vanishing}.
	
	\noindent
	The next table illustrates the uneven asymptotics for $n\equiv 0\pmod 3.$
	\medskip
	
	\begin{center}
		
		\begin{tabular}{|c|cc|cc|cc|}
			\hline \rule[-3mm]{0mm}{8mm}
			$n$       && $\Psi_2(0,3;n)$           && $\Psi_2(1,3;n)$  & $\Psi_2(2,3;n)$ & \\   \hline 
			$ 300$ &&  $\approx 0.7347$ && $\approx 0.2653$ & $0$ & \\
			$\ \ \vdots \ \ $ && \vdots &&$\vdots$ &  $\vdots$ & \\
			$ 600$ &&  $\approx 0.6977$ && $\approx 0.3022$ & $0$ & \\
			$ 900$ &&  $\approx 0.6837$ && $\approx 0.3163$ & $0$ & \\
			$\ \ \vdots \ \ $ && \vdots &&$\vdots$ &  $\vdots$ & \\
			$ 4500$ &&  $\approx 0.6669$ && $\approx 0.3330$ & $0$ & \\
			$ 4800$ &&  $\approx 0.6669$ && $\approx 0.3330$ & $0$ & \\
			$ 5100$ &&  $\approx 0.6668$ && $\approx 0.3331$ & $0$ & \\
			\hline
		\end{tabular}
	\end{center}
\end{example}
\medskip

\begin{example} We consider another typical case where the modular sums of $t$-hook functions are not equidistributed. 
	We consider $t=4$, where we have
	\begin{align*}
		H_4(\xi;q)=1+q+2q^2+3q^3+(1+4\xi)q^4+(3+4\xi)q^5+(3+8\xi)q^6\\ +(3+12\xi)q^7+\left(4+4\xi+14\xi^2\right)q^8+\dots.
	\end{align*}
	The three generating functions $H_4(a,3;q)$ begin with the terms
	\begin{displaymath}
		\begin{split}
			H_4(0,3;q)&=1+q+2q^2+3q^3+q^4+3q^5+3q^6+3q^7+4q^8+\dots,\\
			H_4(1,3;q)&=4q^4+4q^5+8q^6+12q^7+4q^8+\dots,\\
			H_4(2,3;q)&=14q^8+\dots.
		\end{split}
	\end{displaymath}
	Theorem~\ref{C6 t-hook Asymptotic}, restricted to partitions of integers which are multiples of 12, gives 
	$$
	p_4(a,3;12n)\sim \begin{cases} \frac{4}{9}\cdot p(12n)\ \ \ \ \ &{\text {\rm if $a=0$,}}\\
		\frac{1}{3}\cdot p(12n) \ \ \ \ \ &{\text {\rm if $a=1,$}}\\
		\frac{2}{9}\cdot p(12n) \ \ \ \ \ &{\text {\rm if $a=2$.}}
	\end{cases}
	$$
	\noindent
	The next table illustrates these asymptotics.
	\medskip
	
	\begin{center}
		
		\begin{tabular}{|c|cc|cc|cc|}
			\hline \rule[-3mm]{0mm}{8mm}
			$n$       && $\Psi_4(0,3;12n)$           && $\Psi_4(1,3;12n)$  & $\Psi_4(2,3;12n)$ & \\   \hline 
			$ 10$ &&  $\approx 0.4804$ && $\approx 0.3373$ & $\approx 0.1823$ & \\
			$\ \ \vdots \ \ $ && \vdots &&$\vdots$ &  $\vdots$ & \\
			$ 50$ &&  $\approx 0.4500$ && $\approx 0.3381$ & $\approx 0.2119$ & \\
			$ 60$ &&  $\approx 0.4485$ && $\approx 0.3373$ & $\approx 0.2142$ & \\
			$\ \ \vdots \ \ $ && \vdots &&$\vdots$ &  $\vdots$ & \\
			$ 180$ &&  $\approx 0.4447$ && $\approx 0.3340$ & $\approx 0.2212$ & \\
			$ 190$ &&  $\approx 0.4447$ && $\approx 0.3339$ & $\approx 0.2214$ & \\
			$ 200$ &&  $\approx 0.4446$ && $\approx 0.3338$ & $\approx 0.2215$ & \\
			\hline
		\end{tabular}
	\end{center}
	
\end{example}

\section{Betti number generating functions}

For convenience, we let $P(X;T)$ be the usual {\it Poincar\'e polynomial}
\begin{equation*}
	P(X;T):=\sum_{j}b_j(X)T^j =\sum_{j} \dim \left( H_j(X,\QQ)\right) T^j,
\end{equation*}
which is the generating function for the Betti numbers of $X$.
For the various Hilbert schemes on $n$ points we consider, the work of G\"ottsche, Buryak, Feigin, and
Nakajima \cite{BF13, BFN15, Got94, Got02} offers the generating function
of these Poincar\'e polynomials as a formal power series in $q$. Namely, we have the following.

\begin{theorem}{\text {\rm (G\"ottsche)}}\label{HilbertGenFcn}
	We have that
	$$
	G(T;q):=\sum_{n=0}^{\infty} P\left(\left(\CC^2\right)^{[n]};T \right)q^n=\prod_{m=1}^{\infty}\frac{1}{1-T^{2m-2}q^m}=\frac{1}{ F_3(T^2; q)}.
	$$
\end{theorem}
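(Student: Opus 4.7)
The plan is to establish G\"ottsche's formula via the Bia\l{}ynicki-Birula method applied to the natural torus action on $\left(\CC^2\right)^{[n]}$, together with the Ellingsrud--Str\o{}mme cell decomposition. The key point is to identify a cell of complex dimension $n-\ell(\lambda)$ for each partition $\lambda$ of $n$, so that the Poincar\'e polynomial becomes
\begin{equation*}
P\left(\left(\CC^2\right)^{[n]};T\right)=\sum_{\lambda\vdash n}T^{2(n-\ell(\lambda))}.
\end{equation*}
Granting this identity, the theorem follows immediately by expanding
\begin{equation*}
\prod_{m=1}^{\infty}\frac{1}{1-T^{2m-2}q^m}=\sum_{\lambda}T^{2\sum_{m\geq 1}(m-1)a_m(\lambda)}q^{\sum_{m\geq 1}ma_m(\lambda)}=\sum_{\lambda}T^{2(|\lambda|-\ell(\lambda))}q^{|\lambda|},
\end{equation*}
where $a_m(\lambda)$ denotes the multiplicity of $m$ in $\lambda$, and then comparing coefficients of $q^n$.

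First I would set up the torus action. The diagonal torus $\mathbb{T}=(\CC^\times)^2$ acts on $\CC^2$ by scaling coordinates, and this action lifts to $\left(\CC^2\right)^{[n]}$ through its action on ideals of $\CC[x,y]$. A standard computation (going back to Ellingsrud--Str\o{}mme) identifies the $\mathbb{T}$-fixed points with the monomial ideals $I_\lambda:=(x^{\lambda_i}y^{i-1}:1\le i\le \ell(\lambda)+1)$ indexed by partitions $\lambda\vdash n$; here each fixed point is isolated because $\left(\CC^2\right)^{[n]}$ is smooth of dimension $2n$ by Fogarty's theorem.

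Next I would choose a generic one-parameter subgroup $\rho:\CC^\times\to\mathbb{T}$ (for instance $\rho(t)=(t,t^N)$ with $N\gg 0$) so that the $\rho$-fixed locus agrees with the $\mathbb{T}$-fixed locus, and then invoke the Bia\l{}ynicki-Birula theorem to obtain a decomposition of $\left(\CC^2\right)^{[n]}$ into affine cells $C_\lambda$ indexed by partitions of $n$, where $\dim_\CC C_\lambda$ equals the number of positive $\rho$-weights on the tangent space $T_{I_\lambda}\left(\CC^2\right)^{[n]}$. Using the Ellingsrud--Str\o{}mme/Haiman formula for the tangent weights,
\begin{equation*}
T_{I_\lambda}\left(\CC^2\right)^{[n]}=\bigoplus_{s\in\lambda}\left(\CC_{-\ell(s),a(s)+1}\oplus\CC_{\ell(s)+1,-a(s)}\right),
\end{equation*}
where $a(s)$ and $\ell(s)$ are the arm and leg lengths of the cell $s$, I would count weights that are positive under $\rho$. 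The hard step is the combinatorial verification that exactly $n-\ell(\lambda)$ of the $2n$ resulting weights are positive for the chosen $\rho$; this is the content of Ellingsrud--Str\o{}mme and can be carried out by pairing contributions from each row of $\lambda$.

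Finally, since the cells $C_\lambda$ are affine spaces, only even cohomology appears and each $C_\lambda$ contributes $T^{2\dim_\CC C_\lambda}=T^{2(n-\ell(\lambda))}$ to $P\left(\left(\CC^2\right)^{[n]};T\right)$. Summing over $\lambda\vdash n$ and then over $n\geq 0$ gives the displayed generating function identity, and the second equality in the theorem is just the definition of $F_3(T^2;q)$. I expect the main obstacle to be the weight-counting step, which is where all the combinatorics of partitions enters; everything else is either a general fact about smooth varieties with torus actions or a formal manipulation of power series.
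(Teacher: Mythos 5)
The paper states this result as a cited theorem (G\"ottsche, with references to \cite{Got94, Got02}) and gives no proof of its own, so there is no in-paper argument to compare against. Your proposal is a correct sketch of the \emph{standard} route for the special case of $\CC^2$: the Ellingsrud--Str\o{}mme cell decomposition obtained from a Bia\l{}ynicki--Birula stratification for a generic one-parameter subgroup of the natural $(\CC^\times)^2$-action, together with the combinatorial identity $\sum_{\lambda\vdash n}T^{2(n-\ell(\lambda))}=[q^n]\prod_{m\geq 1}(1-T^{2m-2}q^m)^{-1}$, which you verify correctly. Two points deserve emphasis, however. First, this is genuinely a different argument from G\"ottsche's original proof, which counted $\FF_q$-points of $S^{[n]}$ for an arbitrary smooth quasi-projective surface $S$ and invoked the Weil conjectures to recover Betti numbers; G\"ottsche's approach is more general (it produces a formula involving all Betti numbers of $S$), while the Bia\l{}ynicki--Birula approach is more elementary but specific to $\CC^2$. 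Second, the two steps you flag as ``standard'' are exactly where the care is needed: (a) the count of $\rho$-positive versus $\rho$-negative weights among the $2n$ tangent weights at $I_\lambda$ must be done in the direction that actually produces cells of dimension $n-\ell(\lambda)$ (with $\rho(t)=(t,t^N)$, $N\gg 0$, one finds $n+\ell(\lambda)$ positive and $n-\ell(\lambda)$ negative weights, so the \emph{descending} cells are the relevant ones), and (b) since $(\CC^2)^{[n]}$ is non-compact, one must justify that the affine paving computes ordinary cohomology rather than just Borel--Moore homology; this follows from the properness of the Hilbert--Chow morphism and the $\CC^\times$-retraction onto the punctual Hilbert scheme (or, alternatively, from purity of the mixed Hodge structure), and should be spelled out. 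With those two points filled in, your argument is a complete and correct proof of the $\CC^2$ case.
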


\begin{theorem}{\text {\rm (Buryak and Feigin)}}\label{QuasiHilbertGenFcn}
	If $\alpha, \beta\in \NN$ are relatively prime, then we have that
	$$
	G_{\alpha,\beta}(T;q):=\sum_{n=0}^{\infty} P\left(\left(\left(\CC^2\right)^{[n]}\right)^{T_{\alpha,\beta}};T\right)q^n=
	\frac{1}{F_1(T^2;q^{\alpha+\beta})} \prod_{m=1}^{\infty}\frac{1-q^{(\alpha+\beta)m}}{1-q^m}.
	$$
\end{theorem}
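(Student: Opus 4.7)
The plan is to compute $P(H_{n,\alpha,\beta};T)$, where $H_{n,\alpha,\beta} := ((\CC^2)^{[n]})^{T_{\alpha,\beta}}$, via a Bia\l{}ynicki--Birula cell decomposition induced by the residual torus action, then repackage the resulting dimension sum to recognize the stated product. The starting observation is that $T_{\alpha,\beta}$ sits inside the diagonal torus $T = (\CC^*)^2$ acting on $(\CC^2)^{[n]}$ by $(t_1,t_2)\cdot(x,y) = (t_1 x, t_2 y)$, so the quotient $T/T_{\alpha,\beta} \cong \CC^*$ acts on $H_{n,\alpha,\beta}$. Scheme-theoretically, $H_{n,\alpha,\beta}$ parametrizes colength-$n$ ideals $I \subset \CC[x,y]$ homogeneous for the weighted grading $\deg(x) = \alpha$, $\deg(y) = \beta$; since $\alpha,\beta > 0$, every such $I$ is supported at the origin, so $H_{n,\alpha,\beta}$ is contained in the projective punctual Hilbert scheme and is itself compact. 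The fixed points of the residual $\CC^*$ on $H_{n,\alpha,\beta}$ coincide with the $T$-fixed monomial ideals $I_\lambda$, indexed by partitions $\lambda \vdash n$.

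First I would apply Bia\l{}ynicki--Birula to the residual $\CC^*$-action on $H_{n,\alpha,\beta}$, obtaining a disjoint decomposition $H_{n,\alpha,\beta} = \bigsqcup_{\lambda \vdash n} C_\lambda$ into affine cells and hence
\begin{equation*}
P(H_{n,\alpha,\beta};T) = \sum_{\lambda \vdash n} T^{2 d(\lambda)},\qquad d(\lambda) := \dim_{\CC} C_\lambda.
\end{equation*}
Next I would compute $d(\lambda)$ from the Ellingsrud--Str\o mme description of the $T$-weights of $T_{I_\lambda}(\CC^2)^{[n]}$, which are the $2n$ weights $-\ell(s)\alpha + (a(s)+1)\beta$ and $(\ell(s)+1)\alpha - a(s)\beta$ for $s \in \lambda$, with $a(s)$ and $\ell(s)$ the arm and leg lengths. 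The tangent space $T_{I_\lambda}H_{n,\alpha,\beta}$ is the weight-zero subspace for $T_{\alpha,\beta}$, spanned by the weights satisfying $\ell(s)\alpha = (a(s)+1)\beta$ or $(\ell(s)+1)\alpha = a(s)\beta$; the hypothesis $\gcd(\alpha,\beta) = 1$ forces each such $s$ to have hook length divisible by $\alpha+\beta$. Then $d(\lambda)$ counts the surviving weights on which the residual $\CC^*$ acts with positive exponent.

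Finally I would evaluate $\sum_{\lambda \vdash n} T^{2 d(\lambda)} q^{|\lambda|}$ and identify it with the product on the right-hand side of the theorem. The factorization suggests invoking the $(\alpha+\beta)$-core/quotient bijection: every $\lambda \vdash n$ decomposes canonically into an $(\alpha+\beta)$-core $\mu$ (having no hook divisible by $\alpha+\beta$) and an $(\alpha+\beta)$-quotient, with precisely the boxes of $(\alpha+\beta)$-divisible hook getting absorbed into the quotient. Under this bijection, $|\lambda|$ splits additively into $|\mu|$ plus $(\alpha+\beta)$ times a quotient size, matching the $q$-degrees of the two product factors, and the generating function over $(\alpha+\beta)$-cores reduces to $\prod_{m \ge 1}(1-q^{(\alpha+\beta)m})/(1-q^m)$. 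The main obstacle lies here: one must verify that the geometric statistic $d(\lambda)$ restricts to a natural $T^2$-weighted count on the quotient data that assembles into the single Pochhammer factor $F_1(T^2;q^{\alpha+\beta})^{-1}$ rather than an $(\alpha+\beta)$-fold product, and this collapse is exactly where the coprimality of $\alpha$ and $\beta$ must be used — it forces the $(\alpha+\beta)$ nominal quotient components to merge under the residual $\CC^*$-action into a single partition-counting generating function.
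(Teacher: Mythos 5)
The paper does not prove this theorem: it is quoted as background and attributed to Buryak and Feigin \cite{BF13, BFN15}, so there is no internal argument to compare against. That said, your attempted reconstruction contains a genuine gap together with a factual error that derails the last stage of your outline.

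The error first. You write that the generating function over $(\alpha+\beta)$-cores reduces to $\prod_{m\geq 1}(1-q^{(\alpha+\beta)m})/(1-q^m)$. That is not the $t$-core generating function. For $t=\alpha+\beta$, the $t$-core generating function is $\prod_{m\geq 1}\frac{(1-q^{tm})^t}{1-q^m}$ (Garvan--Kim--Stanton), whereas $\prod_{m\geq 1}\frac{1-q^{tm}}{1-q^m}=\prod_{t\nmid m}\frac{1}{1-q^m}$ is the $t$-regular partition generating function. This is exactly the factor that appears in the theorem, which is a strong hint that the core/quotient bijection is the wrong combinatorial tool here.

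The gap is precisely the one you flag as ``the main obstacle,'' but it is deeper than a collapse of $(\alpha+\beta)$ quotient components into one. If you expand the right-hand side using the decomposition of an arbitrary partition $\lambda$ into the subpartition $\mu$ of parts not divisible by $t$ and the subpartition $\nu$ of parts divisible by $t$ (Glaisher), writing $\nu=t\cdot\nu'$, you get
\begin{align*}
\prod_{m\geq1}\frac{1-q^{tm}}{1-q^m}\cdot\prod_{m\geq1}\frac{1}{1-T^{2}q^{tm}} \;=\; \sum_{n\geq0}q^{n}\sum_{\lambda\vdash n}T^{2\#\{i\,:\,t\mid\lambda_i\}}.
\end{align*}
So the statistic the formula is encoding is the number of parts of $\lambda$ divisible by $\alpha+\beta$, not anything naturally indexed by $(\alpha+\beta)$-divisible hooks or quotient boxes. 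The hard geometric step your outline skips is showing that the Bia\l{}ynicki--Birula cell dimension $d(\lambda)$ at the monomial ideal $I_\lambda$ equals this part-counting statistic. Your Ellingsrud--Str\o mme weight analysis goes as far as identifying the zero-weight tangent directions for $T_{\alpha,\beta}$ with boxes whose hook is divisible by $\alpha+\beta$ (with the appropriate arm/leg congruences), but you do not count the positive-weight directions among them under the residual $\CC^*$-action, and that count is what must reduce to the number of parts of $\lambda$ divisible by $\alpha+\beta$. Until that identification is established, the passage from the cell decomposition to the product on the right-hand side is not justified, and replacing ``$(\alpha+\beta)$-core/quotient'' by the Glaisher-style ``regular part / multiples-of-$t$ part'' decomposition is the correct way to frame the bookkeeping you need.
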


\begin{remark}
	The Poincar\'e polynomials in these cases only have even degree terms.  The odd index
	Betti numbers are always zero. Moreover, letting $T=1$ in these generating functions give Euler's generating function for $p(n).$ Therefore, we directly see that
	$$
	p(n)=P\left(\left(\CC^2\right)^{[n]};1 \right)=P\left(\left(\left(\CC^2\right)^{[n]}\right)^{T_{\alpha,\beta}};1\right).
	$$
	Of course, the proofs of these theorems begin with partitions of size $n$.
\end{remark}

Arguing as in the proof of Corollary~\ref{ptabGenFunctions}, we obtain the following generating functions
for the modular sums of Betti numbers.

\begin{corollary}\label{HilbertModularGeneratingFunctions}
	For $0\leq a<b$, the following are true.
	
	\noindent
	\normalfont{(1)} We have that
	$$
	\sum_{n=0}^{\infty}B\left(a,b; \left(\CC^2\right)^{[n]}\right)q^n=\frac{1}{b}\sum_{r=0}^{b-1}\zeta_b^{-ar}G(\zeta_b^r;q).
	$$
	
	\noindent
	\normalfont{(2)} If $\alpha, \beta\in \NN$ are relatively prime, then we have
	$$
	\sum_{n=0}^{\infty}B\left(a,b; \left(\left(\CC^2\right)^{[n]}\right)^{T_{\alpha,\beta}}\right)q^n=\frac{1}{b}\sum_{r=0}^{b-1}\zeta_b^{-ar}G_{\alpha,\beta}(\zeta_b^r;q).
	$$
	
\end{corollary}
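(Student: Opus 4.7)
The plan is to mimic the proof of Corollary~\ref{ptabGenFunctions} verbatim, since the only ingredient needed is orthogonality of $b$-th roots of unity applied to the two-variable generating functions furnished by G\"ottsche (Theorem~\ref{HilbertGenFcn}) and by Buryak--Feigin (Theorem~\ref{QuasiHilbertGenFcn}). The key observation is that both $G(T;q)$ and $G_{\alpha,\beta}(T;q)$ already encode all the Betti numbers in a manner exactly parallel to how $H_t(\xi;q)$ encodes the $t$-hook counts: the exponent of $T$ records the homological degree $j$, and the exponent of $q$ records $n$.

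First, for part (1), I would expand
\begin{equation*}
G(\zeta_b^r; q) = \sum_{n=0}^\infty P\!\left(\left(\CC^2\right)^{[n]}; \zeta_b^r\right) q^n = \sum_{n=0}^\infty \sum_{j} b_j\!\left(\left(\CC^2\right)^{[n]}\right) \zeta_b^{rj} q^n,
\end{equation*}
then substitute into the right-hand side of the claimed identity and swap the order of summation to obtain
\begin{equation*}
\frac{1}{b}\sum_{r=0}^{b-1} \zeta_b^{-ar} G(\zeta_b^r; q) = \sum_{n=0}^\infty q^n \sum_{j} b_j\!\left(\left(\CC^2\right)^{[n]}\right) \cdot \frac{1}{b}\sum_{r=0}^{b-1} \zeta_b^{(j-a)r}.
\end{equation*}
The inner sum over $r$ is the standard orthogonality relation \eqref{Orthogonality}, namely $\frac{1}{b}\sum_{r=0}^{b-1}\zeta_b^{(j-a)r}$ equals $1$ if $j \equiv a \pmod b$ and $0$ otherwise. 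Hence the right-hand side collapses to $\sum_n q^n \sum_{j \equiv a \pmod b} b_j\!\left(\left(\CC^2\right)^{[n]}\right) = \sum_n B\!\left(a,b;\left(\CC^2\right)^{[n]}\right)q^n$, which is the desired identity.

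For part (2), the argument is identical with $G_{\alpha,\beta}$ in place of $G$: one applies Theorem~\ref{QuasiHilbertGenFcn} to write $G_{\alpha,\beta}(\zeta_b^r; q) = \sum_n \sum_j b_j\!\left(\left(\left(\CC^2\right)^{[n]}\right)^{T_{\alpha,\beta}}\right)\zeta_b^{rj}q^n$, then invokes the same orthogonality relation. There is no analytic or combinatorial obstacle here whatsoever---the infinite products in Theorems~\ref{HilbertGenFcn} and~\ref{QuasiHilbertGenFcn} converge as formal power series in $q$ with polynomial coefficients in $T$, so the substitution $T \mapsto \zeta_b^r$ is harmless and all manipulations are at the level of formal power series in $q$. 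In short, the corollary is a direct orthogonality corollary of the two generating function theorems, requiring nothing beyond the template already laid down in Corollary~\ref{ptabGenFunctions}.
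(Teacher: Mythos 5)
Your proposal is correct and matches the paper's approach exactly: the paper simply says ``Arguing as in the proof of Corollary~\ref{ptabGenFunctions}, we obtain the following generating functions,'' and your expansion of $G(\zeta_b^r;q)$ and $G_{\alpha,\beta}(\zeta_b^r;q)$ followed by the orthogonality relation \eqref{Orthogonality} is precisely that argument spelled out.
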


\section{A reformulation of Wright's circle method}

The classical circle method, as utilized by Hardy--Ramanujan and many others, derives asymptotic or exact formulas for the Fourier coefficients of $q$-series by leveraging modular properties of the generating functions. More recently, a variation of the circle method due to Wright has grown increasingly important in number theory. For the proof of Theorem \ref{C6 Betti Asymptotic} and Corollary \ref{C6 Betti Distribution}, we use Wright's variation, which obtains asymptotic formulas for generating functions carrying suitable analytic properties.

\begin{remark}
	Ngo and Rhoades \cite{NR17} proved a more restricted version\footnote{We note that hypothesis 4 in Proposition 1.8 of \cite{NR17} is stated differently than our hypothesis 2 in Proposition \ref{WrightCircleMethod2} below.} of the following proposition where the generating function $F$ splits as two functions. Our purposes do not require such a splitting, and so we state the proposition in terms of a single function $F$.
\end{remark}

\begin{proposition} \label{WrightCircleMethod2}
	Suppose that $F(q)$ is analytic for $q = e^{-z}$ where $z=x+iy \in \CC$ satisfies $x > 0$ and $|y| < \pi$, and suppose that $F(q)$ has an expansion $F(q) = \sum_{n=0}^\infty c(n) q^n$ near 1. Let $c,N,M>0$ be fixed constants. Consider the following hypotheses:
	
	\begin{enumerate}[leftmargin=*]
		\item[\rm(1)] As $z\to 0$ in the bounded cone $|y|\le Mx$ (major arc), we have
		\begin{align*}
			F(e^{-z}) = z^{B} e^{\frac{A}{z}} \left( \sum_{j=0}^{N-1} \alpha_j z^j + O_\delta\left(|z|^N\right) \right),
		\end{align*}
		where $\alpha_s \in \CC$, $A\in \RR^+$, and $B \in \RR$. 
		
		\item[\rm(2)] As $z\to0$ in the bounded cone $Mx\le|y| < \pi$ (minor arc), we have 
		\begin{align*}
			\lvert	F(e^{-z}) \rvert \ll_\delta e^{\frac{1}{\mathrm{Re}(z)}(A - \kappa)}.
		\end{align*}
		for some $\kappa\in \RR^+$.
	\end{enumerate}
	If  {\rm(1)} and {\rm(2)} hold, then as $n \to \infty$ we have for any $N\in \RR^+$ 
	\begin{align*}
		c(n) = n^{\frac{1}{4}(- 2B -3)}e^{2\sqrt{An}} \lp \sum\limits_{r=0}^{N-1} p_r n^{-\frac{r}{2}} + O\left(n^{-\frac N2}\right) \rp,
	\end{align*}
	where $p_r := \sum\limits_{j=0}^r \alpha_j c_{j,r-j}$ and $c_{j,r} := \dfrac{(-\frac{1}{4\sqrt{A}})^r \sqrt{A}^{j + B + \frac 12}}{2\sqrt{\pi}} \dfrac{\Gamma(j + B + \frac 32 + r)}{r! \Gamma(j + B + \frac 32 - r)}$. 
\end{proposition}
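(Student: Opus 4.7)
The plan is to apply Cauchy's integral formula and estimate the resulting contour integral by the saddle-point heuristic, splitting the circle of integration into a major arc, where hypothesis (1) gives an accurate approximation by a Bessel-type integrand, and a minor arc, where hypothesis (2) produces exponential savings. Starting from
$$c(n) = \frac{1}{2\pi i}\int_{|q|=e^{-\rho}} \frac{F(q)}{q^{n+1}}\,dq = \frac{1}{2\pi}\int_{-\pi}^{\pi} F(e^{-z})\, e^{nz}\,dy,$$
where $z = \rho + iy$, I would first determine the optimal radius. Substituting the leading behaviour $F(e^{-z}) \sim z^B e^{A/z}$ and differentiating the exponent $A/z + nz$ in $\rho$ gives the saddle point $\rho = \sqrt{A/n}$, which I adopt as the radius throughout. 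With the constant $M$ from the hypotheses, I split the integral into the major arc $|y| \le M\rho$ and the minor arc $M\rho < |y| \le \pi$.

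For the minor arc, hypothesis (2) together with the trivial estimate $|e^{nz}| = e^{n\rho}$ gives
$$\bigl|\text{minor arc contribution}\bigr| \ll \int_{M\rho}^\pi e^{(A-\kappa)/\rho + n\rho}\,dy \ll e^{2\sqrt{An} - \kappa\sqrt{n/A}},$$
which is exponentially smaller than the expected main term $e^{2\sqrt{An}}$ and is therefore absorbed into the claimed error $O(n^{-N/2})$ for any fixed $N$.

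For the major arc I would substitute the expansion from hypothesis (1), producing a main term plus an error integral bounded by $\int_{|y|\le M\rho} |z|^{B+N} e^{\mathrm{Re}(A/z + nz)}\,dy$, which a direct saddle-point estimate at $\rho = \sqrt{A/n}$ shows to be of the desired order $n^{-(2B+3)/4 - N/2} e^{2\sqrt{An}}$. The main analytic work is to evaluate, for each $0 \le j \le N-1$, the truncated line integral
$$J_j := \frac{1}{2\pi i}\int_{\rho - iM\rho}^{\rho + iM\rho} z^{B+j} e^{A/z + nz}\,dz.$$
I would close the vertical segment on the left into a Hankel contour $\mathcal{H}$ running from $-\infty$ below the negative real axis, around $0$, and back to $-\infty$ above it; because $\mathrm{Re}(A/z + nz)$ drops off rapidly once $|y|$ exceeds $M\rho$ or $\mathrm{Re}(z) < 0$, the connecting arcs contribute only an exponentially small error of the same type as the minor arc bound. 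On $\mathcal{H}$, the change of variables $t = nz$ together with the classical Hankel representation
$$I_\nu(x) = \frac{(x/2)^\nu}{2\pi i}\int_{\mathcal{H}} t^{-\nu - 1} e^{t + x^2/(4t)}\,dt$$
identifies $J_j$ (up to an explicit factor involving a power of $n$ and $A$) with $I_{-(B+j+1)}\bigl(2\sqrt{An}\bigr)$.

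To finish, I would invoke the standard asymptotic expansion
$$I_\nu(x) \sim \frac{e^x}{\sqrt{2\pi x}} \sum_{k=0}^\infty \frac{(-1)^k}{(2x)^k}\cdot \frac{\Gamma(\nu + k + 1/2)}{k!\,\Gamma(\nu - k + 1/2)},$$
valid as $x \to \infty$ in $|\arg x| < \pi/2$. Specialising to $\nu = -(B+j+1)$ and $x = 2\sqrt{An}$, each $J_j$ becomes $e^{2\sqrt{An}}$ times a series in $n^{-1/2}$; collecting like powers of $n^{-r/2}$ across the sum $\sum_j \alpha_j J_j$ and matching constants term by term is precisely the definition of the $c_{j,r}$ appearing in the statement, and the Cauchy product recovers the stated $p_r = \sum_{j=0}^r \alpha_j c_{j,r-j}$. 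Combining this evaluation with the $O(n^{-N/2})$ truncation error from hypothesis (1), the exponentially small minor arc error, and the exponentially small contribution from closing the contour gives the asymptotic expansion. The most delicate step will be the contour deformation together with the uniform bookkeeping of error terms: one must verify that on the connecting arcs of $\mathcal{H}$ and on the excluded tails of the vertical segment, the integrand is dominated by $e^{2\sqrt{An}}$ times an admissible negative power of $n$, uniformly in $j \le N-1$, so that both the truncation of hypothesis (1) and the non-Hankel parts of the deformed contour are safely absorbed into the $O(n^{-N/2})$ remainder.
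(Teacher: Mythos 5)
Your proposal is correct and follows the same overall architecture as the paper's proof: fix the saddle-point radius $\rho=\sqrt{A/n}$, decompose Cauchy's formula into a major-arc and minor-arc integral, use hypothesis (2) to absorb the minor-arc contribution into an exponentially small error, and use hypothesis (1) to replace the major-arc integrand by the truncated expansion up to a controlled remainder. The one place you go beyond the paper is the evaluation of the main-term integrals $J_j$ (which the paper calls $A_j(n)$): the paper simply invokes Lemma 3.7 of Ngo--Rhoades \cite{NR17} to evaluate those, whereas you spell out the Hankel-contour deformation, the identification with the modified Bessel function $I_{-(B+j+1)}(2\sqrt{An})$ via the substitution $t=nz$, and the use of the classical asymptotic series for $I_\nu$ to produce the coefficients $c_{j,r}$. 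That expanded content is consistent with how the cited lemma is proved, so the two arguments are substantively the same; yours is just self-contained where the paper delegates to a reference.
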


\begin{proof}
	By Cauchy's theorem, we have 
	$$
	c(n)=\frac{1}{2\pi i} \int_{\mathcal{C}} \frac{F(q)}{q^{n+1}} dq,
	$$
	where $\mathcal{C}$ is a circle centered at the origin inside the unit circle surrounding zero exactly once counterclockwise. We choose $|q|=e^{-\lambda}$  with $\lambda:=\sqrt{\frac{A}{n}}$. Set 
	$$
	A_j(n):= \frac{1}{2\pi i} \int_{\mathcal{C}_1} \frac{z^{B+j} e^{\frac Az}}{q^{n+1}}dq,
	$$
	where $\mathcal{C}_1$ is the major arc. We claim that
	\begin{equation}\label{main a}
		c(n)= \sum_{j=0}^{N-1} \alpha_j A_j(n) + O\lp n^{\frac12 (-B-N-1)} e^{2\sqrt{An}} \rp.
	\end{equation}
	For this write
	$$
	c(n)-\sum_{j=0}^{N-1} \alpha_j A_j(n)=\mathcal{E}_1(n) + \mathcal{E}_2(n),
	$$
	where
		$$\mathcal{E}_1(n) := \frac{1}{2\pi i} \int_{\mathcal{C}_2} \frac{F(q)}{q^{n+1}} dq,$$
		$$\mathcal{E}_2(n) := \frac{1}{2\pi i} \int_{\mathcal{C}_1} \lp F(q)z^{-B} e^{-\frac Az}  - \sum_{j=0}^{N-1} \alpha_jz^j\rp z^{B} e^{\frac Az} q^{-n-1}dq,$$
	where $\mathcal{C}_2$ is the minor arc.
	
	We next bound $\mathcal{E}_1(n)$ and $\mathcal{E}_2(n)$. For $\mathcal{E}_2(n)$ we have, by condition (1)
	$$
	\left| F\lp e^{-z}\rp z^{-B}e^{-\frac Az}-\sum_{j=0}^{N-1}\alpha_j z^j  \right| \ll_\delta |z|^N.
	$$
	Note that on $\mathcal{C}$, $x=\lambda$ and that
	$$
	\left| \exp \lp \frac Az+nz \rp\right| \leq \exp\lp 2 \sqrt{An}\rp.
	$$
	Since the length of $\mathcal{C}_1$ is $\approx \lambda$, we obtain
	$$
	\mathcal{E}_2(n) \ll \lambda |z|^{N+B} \exp\lp 2\sqrt{An} \rp.
	$$
	On $\mathcal{C}_1$, we have $y \ll \lambda$, implying $|z|\sim\frac{1}{\sqrt{n}}$. This gives $\mathcal{E}_1(n)$ satisfies the bound required in \eqref{main a}.
	
	On $\mathcal{C}_2$, we estimate
	$$
	|F(q)| \ll e^{\frac{1}{\lambda}(A - \kappa)}.
	$$ 
	Therefore, we have
	$$
	\mathcal{E}_1(n) \ll |F(q)| |q|^{-n} \ll e^{\frac{1}{\lambda}(A - \kappa) + n\lambda} \ll e^{(2 - \kappa) \sqrt{An}}.
	$$
	The required bound \eqref{main a} follows. Using Lemma 3.7 of \cite{NR17} to estimate the integrals $A_j(n)$ now gives the claim.
\end{proof}

\section{Proof of Theorem \ref{C6 Betti Asymptotic} and Corollary \ref{C6 Betti Distribution}}

We now apply the circle method to the generating functions
in Theorems~\ref{HilbertGenFcn} and \ref{QuasiHilbertGenFcn}.

\begin{proof}[Proof of Theorem \ref{C6 Betti Asymptotic}]
	Using first Corollary \ref{HilbertModularGeneratingFunctions} (1) and then Theorem \ref{HilbertGenFcn}, we obtain
	\[
	H_{a,b}(q) := \sum_{n=0}^\infty B\left(a,b;\left(\CC^2\right)^{[n]}\right)q^n = \frac1b\left(1+\delta_{2\mid b}\right) \frac{1}{(q;q)_\infty} + \frac1b \sum_{\substack{1\le r\le b-1\\r\ne\frac b2}} \zeta_b^{-ar} \frac{1}{F_3\left(\zeta_b^{2r};q\right)}.
	\]
	We want to apply Proposition \ref{WrightCircleMethod2}. For this we first show ($M>0$ arbitrary) that we have as $z\to0$ on the major arc $|y|\le Mx$
	\begin{equation}\label{major}
		H_{a,b}\left(e^{-z}\right) = \frac1b\left(1+\delta_{2\mid b}\right) \sqrt{\frac{z}{2\pi}} e^{\frac{\pi^2}{6z}} (1+O(|z|)).
	\end{equation}
	Recall that we have $P(q):=\sum_{n=0}^\infty p(n)q^n=(q;q)_\infty^{-1}$. First we note the well-known bound (for $|y|\le Mx$, as $z\to0$)
	\[
	P\left(e^{-z}\right) = \sqrt{\frac{z}{2\pi}} e^{\frac{\pi^2}{6z}} (1+O(|z|)).
	\]
	Next we consider $\frac{1}{F_3(\zeta_b^{2r};q)}$ for $\zeta_b^{2r}\ne1$ on the major arc. By Theorem \ref{Theorem1} (3)
	\[
	\frac{1}{F_3\left(\zeta_b^{2r};e^{-z}\right)} = \frac{\left(b^2z\right)^{\frac1b-\frac12}\Gamma\left(\frac1b\right)}{\sqrt{2\pi}} \prod_{j=1}^{b-1} \left(1-\zeta_b^{2rj}\right)^{\frac jb} e^{\frac{\pi^2}{6b^2z}}(1+O(|z|)) \ll |z|^{-N} e^\frac{\pi^2}{6z}
	\]
	for any $N\in\NN$. This gives \eqref{major}.
	
	Next we show that we have as $z\to0$ on the minor arc $|y|\ge Mx$
	\begin{equation}\label{minor}
		H_{a,b}\left(e^{-z}\right) \ll e^{\left(\frac{\pi^2}{6}-\kappa\right)\frac1x}.
	\end{equation}
	It is well-known (and follows by logarithmic differentiation) that for some $\mathcal{C}>0$
	\[
	\left|P\left(e^{-z}\right)\right| \le x^{\frac12} e^{\frac{\pi}{6x}-\frac{\mathcal{C}}{x}}.
	\]
	We are left to bound $\frac{1}{F_3(\zeta_b^{2r};q)}$ on the minor arc. For this we write
	\[
	\Log\left(\frac{1}{F_3\left(\zeta_b^{2r};q\right)}\right) = \sum_{m=1}^\infty \frac{q^m}{m\left(1-\zeta_b^{2rm}q^m\right)}.
	\]
	Noting that $|1-\zeta_b^{2rm}q^m|\ge1-|q|^m$, we obtain
	\[
	\left|\mathrm{Log}\left(\frac{1}{F_3\left(\zeta_b^{2r};q\right)}\right)\right| \le \left|\frac{q}{1-\zeta_b^{2r}q}\right|-\frac{|q|}{1-|q|}+\log(P|q|)
	\]
	so we are done once we show that
	\[
	\left|\frac{q}{1-\zeta_b^{2r}q}\right| - \frac{|q|}{1-|q|} < -\frac{\mathcal{C}}{x}
	\]
	for some $\mathcal{C}>0$. Note that
	\[
	\frac{1}{1-\zeta_b^{2r}q} = O_{b,r}(1),
	\]
	and thus
	\[
	\left|\frac{q}{1-\zeta_b^{2r}q}\right| - \frac{|q|}{1-|q|} = -\frac1x+O_{b,r}(1)
	\]
	giving \eqref{minor}. The claim of (1) now follows by Proposition \ref{WrightCircleMethod2}.\\
	(2) By Corollary \ref{HilbertModularGeneratingFunctions} (2) and Theorem \ref{QuasiHilbertGenFcn} we have
	\begin{align*}
		\mathcal{H}_{a,b,\alpha,\beta}(q) := \sum_{n=0}^\infty B\left(a,b;\left(\left(\CC^2\right)^{[n]}\right)^{T_{\alpha,\beta}}\right) q^n &= \frac1b (1+\delta_{2\mid b}) P(q) \\ &+ \frac1b \sum_{\substack{1\le r\le b-1\\r\ne\frac b2}} \zeta_b^{-ar} \frac{\left(q^{\alpha+\beta};q^{\alpha+\beta}\right)_\infty}{F_1\left(\zeta_b^{2r};q^{\alpha+\beta}\right)(q;q)_\infty}.
	\end{align*}
	We show the same bounds as in (1) with the only additional condition that
	\begin{align}\label{eqn: definition of M}
		M <  \frac{2\pi^2}{b^2} \min_{1\le r<\frac b2} \frac{r(b-2r)}{\left|\sum_{n=1}^\infty \frac{\sin\left(\frac{4\pi r}{b}\right)}{n^2}\right|}.
	\end{align}
	We only need to prove the bounds for
	\[
	\mathcal{H}_{\alpha,\beta}(q) := \frac{\left(q^{\alpha+\beta};q^{\alpha+\beta}\right)_\infty}{F_1\left(\zeta_b^{2r};q^{\alpha+\beta}\right)(q;q)_\infty}.
	\]
	for $\zeta_b^{2r}\ne1$. We may assume without loss of generality that $1\le2r<b$. We start by showing the major arc bound. By Theorem \ref{Theorem1} (1) and \eqref{Eta asymptotic}, we have, for $z$ on the major arc
	\[
	\mathcal{H}_{\alpha,\beta}(q) \ll \left|e^{\frac{\pi^2}{6z}-\frac{\pi^2}{6(\alpha+\beta)z}+\frac{\zeta_b^{2r}\phi\left(\zeta_b^{2r},2,1\right)}{(\alpha+\beta)z}}\right|.
	\]
	So to prove the major arc bound we need to show that for some $\varepsilon>0$
	\[
	\left(\frac{\pi^2}{6}-\varepsilon\right) \mathrm{Re}\left(\frac1z\right) - \mathrm{Re}\left(\frac{\zeta_b^{2r}\phi\left(\zeta_b^{2r},2,1\right)}{z}\right) > 0.
	\]
	We first rewrite
	\[
	\zeta_b^{2r} \phi\left(\zeta_b^{2r},2,1\right) = \sum_{n=1}^\infty \frac{\cos\left(\frac{4\pi rn}{b}\right)+i\sin\left(\frac{4\pi rn}{b}\right)}{n^2}.
	\]
	Now note the evaluation for $0\le\theta\le2\pi$ (see e.g. \cite{Zag06})
	\[
	\sum_{n=1}^\infty \frac{\cos(n\theta)}{n^2} = \frac{\pi^2}{6}-\frac{\theta(2\pi-\theta)}{4}.
	\]
	Thus we are left to show
	\[
	\frac{2\pi^2r}{b^2}(b-2r)x \ge \left|\sum_{n=1}^\infty \frac{\sin\left(\frac{4\pi rn}{b}\right)}{n^2}\right|y.
	\]
	This follows by the definition of $M$ given in \eqref{eqn: definition of M}.
\end{proof}

\begin{proof}[Proof of Corollary~\ref{C6 Betti Distribution}] This follows immediately from Theorem \ref{C6 Betti Asymptotic} and the definition of $d(a,b)$ in \eqref{C6 d(a,b) Definition}.
\end{proof}

\section{Examples of Theorem \ref{C6 Betti Asymptotic} and Corollary \ref{C6 Betti Distribution}}

Finally, we consider examples of the asymptotics and distributions in the setting of Hilbert schemes on $n$ points.

\smallskip
\begin{example} By G\"ottsche's Theorem (i.e., Theorem~\ref{HilbertGenFcn}), we have
	\begin{displaymath}
		\begin{split}
			G(T;q)&:=\sum_{n=0}^{\infty} P\left(\left(\CC^2\right)^{[n]};T \right)q^n=\prod_{m=1}^{\infty}\frac{1}{1-T^{2m-2}q^m}=\frac{1}{F_3(T^{-2}; T^2q)}\\
			&=1+q+\left(1+T^2\right)q^2+\left(1+T^2+T^4\right)q^3+\left(1+T^2+2T^4+T^6\right)q^4+\dots. \\
		\end{split}
	\end{displaymath}
	Theorem~\ref{C6 Betti Asymptotic} (1) implies that
	$$
	B\left(a,3; \left(\CC^2\right)^{[n]}\right) \sim \frac{1}{12\sqrt{3}n}\cdot  e^{\pi \sqrt{\frac{2n}{3}}},
	$$
	and so Corollary~\ref{C6 Betti Distribution} implies that $\delta(a,3;n)\to \frac 13$. The next table illustrates
	this phenomenon.
	\medskip
	
	\begin{center}
		
		\begin{tabular}{|c|cc|cc|cc|}
			\hline \rule[-3mm]{0mm}{8mm}
			$n$       && $\delta(0,3;n)$           && $\delta(1,3;n)$  & $\delta(2,3;n)$ & \\   \hline 
			$ 1$ &&  $1$ && $0$ & $0$ & \\
			$ 2$ &&  $0.5000$ && $0$ & $0.500$ &\\
			$\ \ \vdots \ \ $ && \vdots &&$\vdots$ &  $\vdots$ & \\
			$ 18$ &&  $\approx 0.3377$ && $\approx 0.3325$ & $\approx 0.3299$ & \\
			$ 19$ &&  $\approx 0.3367$ && $\approx 0.3306$ & $\approx 0.3327$ & \\
			$ 20$ &&  $\approx 0.3333$ && $\approx 0.3317$ & $\approx 0.3349$ & \\
			\hline
		\end{tabular}
	\end{center}
	\medskip

\end{example}

\begin{example} By Theorem~\ref{QuasiHilbertGenFcn}, for $\alpha=2$ and $\beta=3$ we have
	\begin{displaymath}
		\begin{split}
			G_{2,3}(T;q)&:=\sum_{n=0}^{\infty} P\left(\left(\left(\CC^2\right)^{[n]}\right)^{T_{2,3}};T\right)q^n=
			\frac{1}{F_1(T^2;q^{5})} \prod_{m=1}^{\infty}\frac{\left(1-q^{5m}\right)}{1-q^m}\\
			&=1+q+2q^2+\dots+\left(6+T^2\right)q^5+\left(10+T^2\right)q^6+\left(13+2T^2\right)q^7+\dots.
		\end{split}
	\end{displaymath}
	Theorem~\ref{C6 Betti Asymptotic} (2) implies that
	$$
	B\left(a,3;  \left(\left (\CC^2\right)^{[n]}\right)^{T_{\alpha,\beta}}\right)\sim \frac{1}{12\sqrt{3}n} \cdot e^{\pi \sqrt{\frac{2n}{3}}},
	$$
	and so Corollary~\ref{C6 Betti Distribution} yields that $\delta_{2,3}(a,3;n)\to \frac 13$. The next table illustrates
	this phenomenon.
	\medskip
	\begin{center}
		
		\begin{tabular}{|c|cc|cc|cc|}
			\hline \rule[-3mm]{0mm}{8mm}
			$n$       && $\delta_{2,3}(0,3;n)$           && $\delta_{2,3}(1,3;n)$  & $\delta_{2,3}(2,3;n)$ & \\   \hline 
			$ 1$ &&  $1$ && $0$ & $0$ & \\
			$ 2$ &&  $1$ && $0$ & $0$ &\\
			$\ \ \vdots \ \ $ && \vdots &&$\vdots$ &  $\vdots$ & \\
			$ 100$ &&  $\approx 0.3693$ && $\approx 0.2658$ & $\approx 0.3649$ & \\
			$ 200$ &&  $\approx 0.3343$ && $\approx 0.3176$ & $\approx 0.3481$ & \\
			$ 300$ &&  $\approx 0.3313$ && $\approx 0.3293$ & $\approx 0.3393$ & \\
			$ 400$ &&  $\approx 0.3318$ && $\approx 0.3324$ & $\approx 0.3358$ & \\
			$ 500$ &&  $\approx 0.3324$ && $\approx 0.3332$ & $\approx 0.3343$ & \\
			\hline
		\end{tabular}
	\end{center}
	
\end{example}

\sglsp

\chapter{Tur\'{a}n inequalities} \label{C7}
\thispagestyle{myheadings}

\dblsp
\vspace*{-.65cm}

The purpose of this chapter is to prove Theorem \ref{C7 Main Theorem} and Corollary \ref{C7 Corollary}. This is joint work with Anna Pun.

\section{Jensen polynomials and Tur\'an inequalities}

Given an arbitrary sequence $\alpha = (\alpha(0), \alpha(1), \alpha(2), \cdots)$ of real numbers, the associated \textit{Jensen polynomial $J_{\alpha}^{d,n}(X)$ of degree $d$ and shift $n$} is defined by

\begin{equation} \label{Jensen Poly}
	J_{\alpha}^{d,n}(X) := \sum\limits_{j=0}^d \binom{d}{j} \alpha(n+j)X^j.
\end{equation}

\noindent The Jensen polynomials also have a close relationship to the Riemann hypothesis. Indeed, P\'{o}lya \cite{Pol27} proved that the Riemann hypothesis is equivalent to the hyperbolicity of all of the Jensen polynomials $J^{d,n}_\gamma(X)$ associated to the Taylor coefficients $\{\gamma(j)\}_{j=0}^{\infty}$ of $\displaystyle\frac{1}{8}\Xi\bigg(\frac{i\sqrt{x}}{2}\bigg)$. Griffin, Ono, Rolen and Zagier \cite{GORZ19} proved that for each $d \geq 1$, all but finitely many $J^{d,n}_\gamma(X)$ are hyperbolic, which provides new evidence supporting the Riemann hypothesis.

There is a classical result by Hermite that generalizes the Tur\'{a}n  inequalities using Jensen polynomials. Let $$f(x) = x^n + a_{n-1}x^{n-1} + \cdots + a_1x + a_0$$ be a polynomial with real coefficients. Let $\beta_1, \beta_2, \cdots, \beta_n$ be the roots of $f$ and denote $S_0 = n$ and $$S_m = \beta_1^m + \beta_2^m + \cdots + \beta_n^m, \qquad m = 1, 2, 3, \cdots $$ their Newton sums. Let $M(f)$ be the Hankel matrix of $S_0, \cdots S_{2n-2}$, i.e. $$M(f) := \begin{pmatrix}
	S_0 &S_1&S_2&\cdots&S_{n-1}\\
	S_1 &S_2&S_3&\cdots&S_{n}\\
	S_2 &S_3&S_4&\cdots&S_{n+1}\\
	\vdots &\vdots& \vdots& \vdots&\vdots\\
	S_{n-1} &S_n&S_{n+1}&\cdots&S_{2n-2}\\
\end{pmatrix}.$$

\noindent Hermite's theorem \cite{Obr63} states that $f$ is hyperbolic if $M(f)$ is positive semi-definite. Recall that a polynomial with real coefficients is called hyperbolic if all of its roots are real. Each $S_m$ can be expressed in terms of the coefficients $a_0, \cdots, a_{n-1}$ of $f$ for $m \geq 1$, and a matrix is positive semi-definite if and only if all its principle minors are non-negative. Thus Hermite's theorem provides a set of inequality conditions on the coefficients of a polynomial $f$ to be hyperbolic: $$\Delta_1 = S_0 = n, \Delta_2 = \begin{vmatrix}S_0&S_1\\S_1&S_2\end{vmatrix} \geq 0, \cdots, \Delta_n = \begin{vmatrix}
	S_0 &S_1&S_2&\cdots&S_{n-1}\\
	S_1 &S_2&S_3&\cdots&S_{n}\\
	S_2 &S_3&S_4&\cdots&S_{n+1}\\
	\vdots &\vdots& \vdots& \vdots&\vdots\\
	S_{n-1} &S_n&S_{n+1}&\cdots&S_{2n-2}\\
\end{vmatrix}\geq 0.$$

\noindent For a given sequence $\alpha(n)$, when Hermite's theorem is applied to $J_{\alpha}^{d,n}(X)$ then the condition that all minors $\Delta_k$ of the Hankel matrix $M(J_{\alpha}^{d,n}(X))$ are non-negative gives a set of inequalities on the sequence $\alpha(n)$, and we call them the \textit{order $k$ Tur\'{a}n inequalities}. In other words, $J_{\alpha}^{d,n}(X)$ is hyperbolic if and only if the subsequence $\{\alpha(n + j)\}_{j=0}^{\infty}$ satisfies all the order $k$ Tur\'{a}n inequalities for all $1\leq k \leq d$. In particular, the result in \cite{GORZ19} shows that for any $d \geq 1$, the partition function $\{p(n)\}$ satisfies the order $d$ Tur\'{a}n inequality for sufficiently large $n$.

\subsection{Criterion of Griffin, Ono, Rolen, and Zagier}

Griffin, Ono, Rolen and Zagier \cite{GORZ19} produced the following criterion that is useful for proving that a sequence $\alpha(n)$ satisfies the Tur\'an inequalities for sufficiently large $n$.

\begin{theorem}[Theorem 3 \& Corollary 4, \cite{GORZ19}]\label{GORZ_1 Thm 3}
	Let $\{ \alpha(n) \}$, $\{ A(n) \}$, and $\{ \delta(n) \}$ be sequences of positive real numbers such that $\delta(n) \to 0$ as $n \to \infty$. Suppose further that for a fixed $d \geq 1$ and for all $0 \leq j \leq d$, we have
	\begin{equation*}
		\log\bigg( \dfrac{\alpha(n+j)}{\alpha(n)} \bigg) = A(n)j - \delta(n)^2j^2 + o\big( \delta(n)^d \big) \hspace{.3in} \text{ as } n \to \infty.
	\end{equation*}
	Then the renormalized Jensen polynomials  $\widehat{J}^{d,n}_{\alpha}(X) = \dfrac{\delta(n)^{-d}}{\alpha(n)} J^{d,n}_{\alpha}\bigg( \dfrac{\delta(n)X - 1}{\exp(A(n))}\bigg)$ satisfy $\lim\limits_{n \to \infty} \widehat{J}^{d,n}_{\alpha}(X) = H_d(X)$ uniformly for $X$ in any compact subset of $\mathbb{R}$. Furthermore, this implies that the polynomials $J^{d,n}_{\alpha}(X)$ are hyperbolic for all but finitely many values of $n$.
\end{theorem}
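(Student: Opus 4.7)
The plan is to prove the convergence statement by directly expanding the renormalized Jensen polynomial, substituting the asymptotic hypothesis, and collecting terms by powers of $\delta(n)$. The hyperbolicity of $J^{d,n}_\alpha(X)$ for $n \gg 0$ will then follow from the simplicity of the roots of the limiting Hermite polynomial together with continuity of roots under uniform convergence on compact sets. For brevity write $t := \delta(n)$ and $A := A(n)$.

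First I would unfold the definition \eqref{Jensen Poly} and pull the factor $\alpha(n)$ inside:
\begin{align*}
\widehat{J}^{d,n}_{\alpha}(X) &= t^{-d} \sum_{j=0}^{d} \binom{d}{j} \frac{\alpha(n+j)}{\alpha(n)} \left( \frac{tX - 1}{e^{A}} \right)^{j} \\
&= t^{-d} \sum_{j=0}^{d} \binom{d}{j} e^{A j - t^2 j^2 + o(t^d)} \, e^{-Aj} \, (tX - 1)^{j} \\
&= t^{-d} \sum_{j=0}^{d} \binom{d}{j} e^{-t^2 j^2} (tX - 1)^{j} + o(1),
\end{align*}
uniformly for $X$ in a compact set, where the error $o(1)$ absorbs the $o(t^d)$ factor after multiplication by the bounded quantity $t^{-d}\binom{d}{j}e^{-t^2j^2}(tX-1)^j$ (bounded uniformly in $j\le d$ since $t\to 0$ and $X$ is compactly bounded). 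Next I would expand $(tX-1)^j=\sum_k\binom{j}{k}(-1)^{j-k}(tX)^k$ and $e^{-t^2 j^2}=\sum_{m\ge 0}(-1)^m t^{2m} j^{2m}/m!$, obtaining the triple sum
\begin{equation*}
\widehat{J}^{d,n}_{\alpha}(X) = \sum_{k,m\ge 0} \frac{(-1)^m}{m!} X^k \, t^{\,k+2m-d} \cdot C_{k,m}^{(d)} + o(1),
\qquad C_{k,m}^{(d)} := \sum_{j=0}^{d}\binom{d}{j}\binom{j}{k}(-1)^{j-k} j^{2m}.
\end{equation*}

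The crux is now a finite-difference identity. Using $\binom{d}{j}\binom{j}{k}=\binom{d}{k}\binom{d-k}{j-k}$ and setting $i=j-k$,
\begin{equation*}
C_{k,m}^{(d)} = \binom{d}{k} \sum_{i=0}^{d-k} \binom{d-k}{i}(-1)^{i} (i+k)^{2m} = \binom{d}{k}(-1)^{d-k} \Delta^{d-k}\!\bigl[x^{2m}\bigr]_{x=k},
\end{equation*}
where $\Delta$ is the forward difference operator. Since $\Delta^{d-k}$ annihilates any polynomial of degree less than $d-k$, we conclude $C_{k,m}^{(d)}=0$ whenever $2m<d-k$, and $C_{k,m}^{(d)}=\binom{d}{k}(-1)^{d-k}(d-k)!$ when $2m=d-k$. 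Consequently every negative power of $t$ in the triple sum cancels, every strictly positive power vanishes as $n\to\infty$, and only the terms with $k+2m=d$ survive in the limit. A parity check shows these require $d-k$ even, so the limit is
\begin{equation*}
\lim_{n\to\infty} \widehat{J}^{d,n}_{\alpha}(X) = \sum_{\substack{0\le k\le d \\ d-k \text{ even}}} \frac{(-1)^{(d-k)/2}}{((d-k)/2)!}\binom{d}{k}(-1)^{d-k}(d-k)!\, X^{k},
\end{equation*}
which I would identify as $H_d(X)$ (in the normalization used in \cite{GORZ19}) by comparing with its explicit expansion, or more slickly by recognizing the generating identity $\sum_d H_d(X) s^d/d! = e^{2Xs-s^2}$ in the formal expansion of $\sum_j \binom{d}{j}e^{-t^2 j^2}(tX-1)^j$. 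Uniform convergence on compacta is automatic since all error terms are $o(1)$ uniformly in $X$ on a compact set.

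Finally, for the hyperbolicity conclusion I would invoke the classical fact that $H_d$ has $d$ distinct real roots. By uniform convergence on compact sets, the roots of $\widehat{J}^{d,n}_\alpha$ converge to those of $H_d$ (e.g.\ via Hurwitz's theorem, after enclosing each simple real root in a small disk and checking sign changes on its boundary for large $n$), so for $n\gg 0$ the polynomial $\widehat{J}^{d,n}_\alpha$ also has $d$ distinct real roots, hence is hyperbolic. Since the substitution $X\mapsto (tX-1)/e^{A}$ is an invertible real affine map, $J^{d,n}_\alpha(X)$ is hyperbolic as well. The main technical obstacle is the bookkeeping in the triple sum — specifically, verifying the vanishing of all terms with $k+2m<d$ via finite differences and confirming that the surviving coefficients reassemble into the claimed Hermite polynomial in the normalization of \cite{GORZ19}; everything else is routine.
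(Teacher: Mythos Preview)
The paper does not prove this statement: it is quoted verbatim as Theorem~3 and Corollary~4 of \cite{GORZ19} and used as a black box to deduce Theorem~\ref{C7 Main Theorem}. So there is no ``paper's own proof'' to compare against.

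That said, your proof sketch is essentially the argument given in \cite{GORZ19}. The substitution of the hypothesis to cancel the $e^{Aj}$ factor, the double Taylor expansion of $e^{-t^2j^2}(tX-1)^j$, and the use of the finite-difference identity $\sum_{i}\binom{d-k}{i}(-1)^i(i+k)^{2m}=(-1)^{d-k}\Delta^{d-k}[x^{2m}]_{x=k}$ to kill all terms with $2m<d-k$ are exactly the steps in the original. Your surviving sum $\sum_{m}\frac{(-1)^m d!}{m!(d-2m)!}X^{d-2m}$ is precisely the polynomial $H_d(X)$ as normalized in \cite{GORZ19} (their $H_d$ has generating function $e^{-s^2+Xs}$, not the physicist's $e^{-s^2+2Xs}$), so the identification goes through without rescaling. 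The hyperbolicity step via Hurwitz and the real affine change of variables is also standard and correct. The only place to be slightly more careful is the uniformity of the $o(1)$ error when you truncate the infinite $m$-sum: since $j\le d$ is bounded, the tail $\sum_{m>M}(t d)^{2m}/m!$ is controlled uniformly, so this is fine.
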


Because the conditions for this result are so general, the method can be utilized in a wide variety of circumstances. For instance, it is shown in Theorem 7 of \cite{GORZ19} that if $a_f(n)$ are the (real) Fourier coefficients of a modular form $f$ on $\text{SL}_2(\mathbb{Z})$ holomorphic apart from a pole at infinity, then there are sequences $A_f(n)$ and $\delta_f(n)$ such that $\alpha(n) = a_f(n)$ satisfies the required conditions. What we prove can then be regarded as a higher-level generalization of this result, since the sequences $p_k(n)$ are coefficients of weight zero weakly holomorphic modular forms on proper subgroups of $SL_2(\mathbb{Z})$. 

\section{A formula for $k$-regular partitions}

Recall that the $k$-regular partitions have generating function
\begin{align*}
	\sum_{n \geq 0} p_k(n) q^n = \prod_{n=1}^\infty \dfrac{\lp 1 - q^{kn} \rp}{\lp 1 - q^n \rp} = \dfrac{\lp q^k; q^k \rp_\infty}{\lp q; q \rp_\infty}.
\end{align*}
Because the Dedekind eta function $\eta(z) = q^{\frac{1}{24}} \lp q; q \rp_\infty$ is a modular form, we ca deduce that the generating function for $p_k(n)$ also has a modular transformation law. This fact leads via the method of Poincar\'e series, which is analogous to the Rademacher circle method of Chapter \ref{C5}, to an exact formula for $p_k(n)$.

This process was carried out by Hagis \cite{Hag71}. This result on $p_k(n)$ play a key role in the main theorem. The resulting formula expressible as a sum of modified Kloosterman sums times Bessel functions. Using facts about the asymptotics of Bessel functions and the explicit formulas, useful asymptotics for $p_k(n)$ may be derived. In particular, Hagis proves as a corollary (Corollary 4.1 in \cite{Hag71}) the asymptotic formula
\begin{equation} \label{Hagis Asymptotic}
	p_k(n) = 2\pi \sqrt{\dfrac{m_k}{k(n + k m_k)}} \cdot I_1\bigg(4\pi \sqrt{m_k(n + k m_k)} \bigg) (1 + O(\exp(-cn^{1/2}))),
\end{equation}
where $I_1$ is a modified Bessel function of

\section{Proofs of Theorem \ref{C7 Main Theorem} and Corollary \ref{C7 Corollary}}

Fix $d \geq 1$ and $k \geq 2$, and let the sequences $A_k(n)$, $\delta_k(n)$ be defined by
\begin{equation*}
	A_k(n) = 2\pi \sqrt{m_k/n} + \dfrac{3}{4}\sum\limits_{r=1}^{\lfloor 3d/4 \rfloor} \dfrac{(-1)^r}{rn^r} \ \ \text{and} \ \  \delta_k(n) = \bigg(- \sum\limits_{r=2}^\infty  \dfrac{4\pi \sqrt{m_k} \binom{1/2}{r}}{n^{r-1/2}} \bigg)^{1/2}.
\end{equation*}
Define the renormalized Jensen polynomials $\widehat{J}^{d,n}_{p_k}(X)$ by
\begin{equation} \label{J-Hat Def}
	\widehat{J}^{d,n}_{p_k}(X) := \dfrac{\delta_k(n)^{-d}}{p_k(n)} J^{d,n}_{p_k}\bigg( \dfrac{\delta_k(n)X - 1}{\exp(A_k(n))} \bigg).
\end{equation}
By application of the Jensen-P\'{o}lya method, it suffices to show that for any fixed $d$ and all $0 \leq j \leq d$,
\begin{equation} \label{Log-Quotient Eqn}
	\log\bigg( \dfrac{p_{k}(n+j)}{p_k(n)} \bigg) = A_k(n)j - \delta_k(n)^2j^2 + o\big( \delta_k(n)^d \big) \hspace{.3in} \text{ as } n \to \infty.
\end{equation}
Using (\ref{Hagis Asymptotic}), we have
\begin{equation*}
	p_k(n) = b_k (n + k m_k)^{-1/2} I_1(4\pi \sqrt{nm_k}) + O(n^{d_k} e^{-c_k \sqrt{n}}),
\end{equation*}
as $n \to \infty$, where $b_k, c_k > 0$, and $d_k$ are constants which depend at most on $k$. In light of the expansion of the Bessel functions of the first kind at infinity, this implies that $p_k(n)$ has asymptotic expansion to all orders of $1/n$ in the form
\begin{align*}
	p_k(n) \sim e^{4\pi \sqrt{nm_k}} n^{-3/4} \exp\bigg( \sum_{r=0}^\infty \frac{a_r}{n^r}\bigg),
\end{align*}
where $a_0, a_1, \cdots$ are constants depending only on $k$. Furthermore, when the exponential terms are considered asymptotically, the terms $\dfrac{a_r}{n^r}$ in the sum vanish compared with the term $4\pi \sqrt{nm_k}$ for $r \geq 1$, and so we have
\begin{align*}
	p_k(n) \sim e^{a_0 + 4\pi \sqrt{nm_k}} n^{-3/4}.
\end{align*}
It follows that for fixed $0 \leq j \leq d$, we have
\begin{eqnarray*}
	&&\log\bigg( \dfrac{p_k(n+j)}{p_k(n)} \bigg)\\&\sim& 4\pi \sqrt{m_k} \sum\limits_{r=1}^\infty \binom{1/2}{r} \dfrac{j^r}{n^{r-1/2}} - \dfrac{3}{4} \sum\limits_{r=1}^\infty \dfrac{(-1)^{r-1}j^r}{rn^r}\\
	&=& 2\pi \sqrt{m_k}\dfrac{j}{\sqrt{n}}+ 4\pi \sqrt{m_k}\sum\limits_{r=2}^\infty \binom{1/2}{r} \dfrac{j^r}{n^{r-1/2}} + \dfrac{3}{4} \sum\limits_{r=1}^\infty \dfrac{(-1)^{r}j^r}{rn^r}\\
	&=& 2\pi \sqrt{\dfrac{m_k}{n}}j  + \dfrac{3}{4} \sum\limits_{r=1}^{\lfloor 3d/4\rfloor} \dfrac{(-1)^{r}j^r}{rn^r}+4\pi \sqrt{m_k}\sum\limits_{r=2}^\infty \binom{1/2}{r} \dfrac{j^r}{n^{r-1/2}}+ \dfrac{3}{4} \sum\limits_{r=\lfloor 3d/4\rfloor + 1}^\infty \dfrac{(-1)^{r}j^r}{rn^r}\\
	&=& \Bigg(2\pi \sqrt{\dfrac{m_k}{n}}+ \dfrac{3}{4} \sum\limits_{r=1}^{\lfloor 3d/4\rfloor} \dfrac{(-1)^{r}j^{r-1}}{rn^r}\Bigg)j+\Bigg(\sum\limits_{r=2}^\infty  \dfrac{4\pi \sqrt{m_k}\binom{1/2}{r}j^{r-2}}{n^{r-1/2}}\Bigg)j^2 \\ && \hspace{7.3cm} + \hspace{0.1cm} \dfrac{3}{4} \sum\limits_{r=\lfloor 3d/4 \rfloor + 1}^\infty \dfrac{(-1)^{r}j^r}{rn^r}.\\
\end{eqnarray*}
Then the required result follows since 
\begin{align*}
	\log\bigg( \dfrac{p_{k}(n+j)}{p_k(n)} \bigg) - A_k(n)j + \delta_k(n)^2 j^2 = O(n^{- \lfloor 3d/4 \rfloor - 1}) = o(\delta_k(n)^d).
\end{align*}
\sglsp

\chapter{Variations of Lehmer's Conjecture} \label{C8}
\thispagestyle{myheadings}

\dblsp
\vspace*{-.65cm}

The purpose of this chapter is to prove the theorems of the introduction in the broader context of newforms with trivial mod 2 Galois representation. In particular, these results include Theorems \ref{PrimeDivisorsGeneral}, \ref{LehmerGeneral}, \ref{Power}, and \ref{LehmerVariantGeneral}. This section is joint work with Jennifer Balakrishnan, Ken Ono, and Wei-Lun Tsai.

\section{Lucas Sequences}

\subsection{Classical facts}

Suppose that $\alpha$ and $\beta$ are algebraic integers for which $\alpha+\beta$ and $\alpha \beta$
are relatively prime non-zero integers, where $\alpha/\beta$ is not a root of unity.
Their {\it Lucas numbers} $\{u_n(\alpha,\beta)\}=\{u_1=1, u_2=\alpha+\beta,\dots\}$ are the integers
\begin{equation}
	u_n(\alpha,\beta):=\frac{\alpha^n-\beta^n}{\alpha-\beta}.
\end{equation}
A prime  $\ell$ {\it primitive prime divisor of $u_n(\alpha,\beta)$} if $\ell \nmid (\alpha-\beta)^2 u_1(\alpha,\beta)\cdots u_{n-1}(\alpha, \beta)$ and $\ell \mid u_n(\alpha, \beta)$. We require several classical facts about Lucas numbers.

\begin{proposition}[Proposition 2.1 (ii) of \cite{BHV01}]\label{PropA}  If $d\mid n$, then $u_d(\alpha, \beta) | u_n(\alpha,\beta).$
\end{proposition}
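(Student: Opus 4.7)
The plan is to prove this divisibility by exhibiting an explicit factorization of $u_n(\alpha,\beta)$ coming from the standard factorization of $x^k - y^k$. Writing $n = dk$ for some positive integer $k$ and setting $x = \alpha^d$, $y = \beta^d$, the identity
\begin{align*}
    \alpha^n - \beta^n = (\alpha^d)^k - (\beta^d)^k = (\alpha^d - \beta^d) \sum_{i=0}^{k-1} \alpha^{di} \beta^{d(k-1-i)}
\end{align*}
divides $(\alpha - \beta)$ into both sides and immediately gives
\begin{align*}
    u_n(\alpha,\beta) = u_d(\alpha,\beta) \cdot S_{d,k}(\alpha,\beta),
\end{align*}
where $S_{d,k}(\alpha,\beta) := \sum_{i=0}^{k-1} \alpha^{di}\beta^{d(k-1-i)}$. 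This is the whole skeleton of the argument.

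The one substantive thing to check is that $S_{d,k}(\alpha,\beta)$ is a rational integer, so that the equation $u_n = u_d \cdot S_{d,k}$ is a genuine divisibility in $\ZZ$ rather than just in the ring of algebraic integers generated by $\alpha,\beta$. For this, I would observe two facts. First, $S_{d,k}(\alpha,\beta)$ is symmetric in $\alpha$ and $\beta$, so by the fundamental theorem of symmetric polynomials it is a polynomial with integer coefficients in the elementary symmetric functions $\alpha + \beta$ and $\alpha\beta$; by the hypothesis on $\alpha,\beta$ these are rational integers, so $S_{d,k}(\alpha,\beta) \in \ZZ$. (Alternatively, one may note that $S_{d,k}(\alpha,\beta)$ is an algebraic integer that is fixed by the Galois action swapping $\alpha$ and $\beta$, hence lies in $\QQ$, and so in $\ZZ$.)

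The main obstacle is essentially cosmetic: one must be careful that the factor $1/(\alpha-\beta)$ defining $u_n$ is absorbed cleanly by the first factor $\alpha^d - \beta^d$ and not by $S_{d,k}$, which is why the factorization is set up with $x = \alpha^d$ and $y = \beta^d$ at the outset rather than in a different grouping. Once this book-keeping is done, the conclusion $u_d(\alpha,\beta) \mid u_n(\alpha,\beta)$ follows at once from the displayed identity. There is no real analytic or arithmetic difficulty here; the result is a purely algebraic consequence of the definition of $u_n(\alpha,\beta)$ together with the integrality hypotheses on $\alpha + \beta$ and $\alpha\beta$.
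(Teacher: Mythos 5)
Your proof is correct, and the argument is clean. Note that the paper does not give its own proof of this statement: it is quoted verbatim as Proposition 2.1 (ii) of Bilu--Hanrot--Voutier \cite{BHV01}, and is invoked as a black box. Your argument — writing $n = dk$, factoring $\alpha^n - \beta^n = (\alpha^d - \beta^d)\sum_{i=0}^{k-1}\alpha^{di}\beta^{d(k-1-i)}$, dividing through by $\alpha - \beta$ to get $u_n = u_d \cdot S_{d,k}$, and then verifying that $S_{d,k}$ is a rational integer via symmetry in $\alpha,\beta$ — is the standard self-contained proof of this divisibility for Lucas sequences, and both of your justifications for the integrality (fundamental theorem of symmetric polynomials, or Galois invariance of an algebraic integer) are valid. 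There is nothing to flag.
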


To keep track of the first occurrence of prime divisors, we let $m_{\ell}(\alpha,\beta)$ be the smallest $n\geq 2$
for which $\ell \mid u_n(\alpha,\beta)$. We note that $m_{\ell}(\alpha,\beta)=2$ if and only if
$\alpha +\beta\equiv 0\pmod {\ell}.$

\begin{proposition}[Corollary 2.2\footnote{This corollary is stated for Lehmer numbers. The conclusions hold for Lucas numbers because $\ell \nmid (\alpha+\beta)$.} of \cite{BHV01}]\label{PropB} If $\ell\nmid \alpha \beta$ is an odd prime with
	$m_{\ell}(\alpha,\beta)>2$, then the following are true.
	\begin{enumerate}
		\item If $\ell \mid (\alpha-\beta)^2$, then $m_{\ell}(\alpha,\beta)=\ell.$
		\item If $\ell \nmid (\alpha-\beta)^2$, then $m_{\ell}(\alpha,\beta) \mid (\ell-1)$ or $m_{\ell}(\alpha,\beta)\mid (\ell+1).$
	\end{enumerate}
\end{proposition}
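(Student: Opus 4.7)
The plan is to work inside the ring $\mathcal{O} := \mathbb{Z}[\alpha,\beta]$, which is an order in the (at most) quadratic field $K := \mathbb{Q}(\alpha,\beta)$ since $\alpha,\beta$ are roots of $x^2 - (\alpha+\beta)x + \alpha\beta \in \mathbb{Z}[x]$. Fix a prime ideal $\mathfrak{l} \subset \mathcal{O}$ lying over $\ell$, and throughout note that the hypothesis $\ell \nmid \alpha\beta$ guarantees $\alpha,\beta \not\equiv 0 \pmod{\mathfrak{l}}$. The key observation that drives both cases is the factorization
\[
u_n(\alpha,\beta) \;=\; \alpha^{n-1} + \alpha^{n-2}\beta + \cdots + \beta^{n-1},
\]
from which $\ell \mid u_n(\alpha,\beta)$ can be converted into a congruence for $\alpha/\beta$ modulo $\mathfrak{l}$.

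For part (1), I would exploit the hypothesis $\ell \mid (\alpha-\beta)^2$ to conclude $\alpha \equiv \beta \pmod{\mathfrak{l}}$. Plugging this into the displayed expansion of $u_n$ gives
\[
u_n(\alpha,\beta) \;\equiv\; n\,\alpha^{n-1} \pmod{\mathfrak{l}},
\]
and since $\alpha$ is a unit modulo $\mathfrak{l}$, the divisibility $\ell \mid u_n(\alpha,\beta)$ is equivalent to $\ell \mid n$. The smallest $n \geq 2$ with this property is $n = \ell$, establishing $m_{\ell}(\alpha,\beta) = \ell$.

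For part (2), the assumption $\ell \nmid (\alpha-\beta)^2$ makes $\alpha/\beta$ a well-defined unit in the residue field $\mathbb{F} := \mathcal{O}/\mathfrak{l}$, and the factorization above gives $\ell \mid u_n(\alpha,\beta)$ iff $(\alpha/\beta)^n \equiv 1 \pmod{\mathfrak{l}}$; hence $m_{\ell}(\alpha,\beta)$ equals the multiplicative order of $\alpha/\beta$ in $\mathbb{F}^{\times}$. If $\ell$ splits in $\mathcal{O}$ (including the case $K = \mathbb{Q}$), then $\mathbb{F} = \mathbb{F}_{\ell}$, so this order divides $\ell-1$. If $\ell$ is inert, then $\mathbb{F} = \mathbb{F}_{\ell^2}$, and the Frobenius $x \mapsto x^{\ell}$ realizes the non-trivial element of $\mathrm{Gal}(K/\mathbb{Q})$ on residues, so it swaps $\alpha$ and $\beta$; using that the norm $\alpha\beta/(\alpha\beta) = 1$ forces $\beta/\alpha = (\alpha/\beta)^{-1}$, I compute
\[
(\alpha/\beta)^{\ell+1} \;=\; (\alpha/\beta)^{\ell}\cdot(\alpha/\beta) \;\equiv\; (\beta/\alpha)\cdot(\alpha/\beta) \;\equiv\; 1 \pmod{\mathfrak{l}},
\]
so the order divides $\ell+1$.

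The only delicate point I anticipate is ensuring that $\mathfrak{l}$ is chosen so these arguments are sensible in all degeneracies — for instance, when $\alpha,\beta \in \mathbb{Z}$ the "quadratic field" collapses and one must just take $\mathcal{O} = \mathbb{Z}$ with $\mathfrak{l} = (\ell)$, which forces the split case and still runs the argument; likewise, when $\ell$ ramifies in $\mathcal{O}$, one must verify that $\alpha\equiv\beta \pmod{\mathfrak{l}}$ and hence ramification is automatic under the hypothesis of part (1). These are bookkeeping checks rather than genuine obstacles, but they need to be handled cleanly so the residue-field dichotomy covers every possibility.
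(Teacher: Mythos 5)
The paper offers no proof of this proposition; it is imported verbatim as Corollary 2.2 of Bilu--Hanrot--Voutier \cite{BHV01} (with a footnote converting from the Lehmer to the Lucas case). So your proposal cannot be compared against a proof in the paper; it must stand on its own, and it does. Your argument is the classical one for the rank of apparition of a Lucas sequence. Identifying $\ell \mid u_n$ with $n \equiv 0 \pmod{\ell}$ when $\alpha \equiv \beta \pmod{\mathfrak{l}}$ (via $u_n \equiv n\alpha^{n-1}$), and with $(\alpha/\beta)^n \equiv 1$ in $(\mathcal{O}/\mathfrak{l})^\times$ otherwise, is exactly right, and the split/inert dichotomy on the residue field $\mathbb{F}_\ell$ versus $\mathbb{F}_{\ell^2}$ together with the Frobenius swap $\bar{\alpha}^\ell = \bar{\beta}$ in the inert case delivers the two divisibilities $m_\ell \mid (\ell-1)$ and $m_\ell \mid (\ell+1)$. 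You also correctly flag the degenerate cases: when $\alpha,\beta \in \ZZ$ you fall into the ``split'' branch with $\mathcal{O}=\ZZ$, and when $\ell$ ramifies in $K$ (or more generally when $\ell \mid (\alpha-\beta)^2$, even if only through the conductor of $\ZZ[\alpha]$) you are in part (1) since the discriminant vanishes mod $\ell$ and forces $\alpha \equiv \beta \pmod{\mathfrak{l}}$. Two small remarks that don't affect correctness: the hypothesis $m_\ell(\alpha,\beta)>2$ is not actually needed anywhere in your argument (it is used in \cite{BHV01} to transfer from Lehmer to Lucas sequences, as the paper's footnote indicates, but a direct Lucas-sequence proof like yours is self-contained without it); and in the inert case the identity $\beta/\alpha=(\alpha/\beta)^{-1}$ is a tautology, so you don't need to invoke norms to justify the final step. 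Your approach gains self-containment over the paper's citation; the paper gains brevity by deferring to \cite{BHV01}.
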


\begin{remark}
	If $\ell \mid \alpha \beta$, then either $\ell \mid u_n(\alpha,\beta)$ for all $n$, 
	or $\ell \nmid u_n(\alpha,\beta)$ for all $n$.
\end{remark}

\subsection{The work of Bilu-Hanrot-Voutier}

Bilu, Hanrot, and Voutier \cite{BHV01} proved the following definitive theorem. 

\begin{theorem} \label{Bilu}
	Every Lucas number $u_n(\alpha,\beta)$, with $n>30,$
	has a primitive prime divisor.
\end{theorem}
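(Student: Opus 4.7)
The plan is to reduce the question to a cyclotomic factorization of the Lucas numbers, bound the large-$n$ case by growth estimates, and handle small $n$ by an exhaustive Diophantine analysis. First, I would introduce the homogeneous cyclotomic factors
\[
\Phi_n^*(\alpha,\beta) := \prod_{\substack{1 \leq k \leq n \\ \gcd(k,n)=1}} \left( \alpha - \zeta_n^k \beta \right),
\]
and verify the factorization $u_n(\alpha,\beta) = \prod_{d \mid n,\, d \geq 2} \Phi_d^*(\alpha,\beta)$ by direct computation from $u_n = (\alpha^n - \beta^n)/(\alpha-\beta)$ together with the classical $x^n - 1 = \prod_{d \mid n} \Phi_d(x)$. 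Standard manipulations, aided by Propositions~\ref{PropA} and~\ref{PropB}, then show that a prime $\ell$ is a primitive divisor of $u_n(\alpha,\beta)$ precisely when $\ell \mid \Phi_n^*(\alpha,\beta)$ and $\ell$ avoids a small explicit set of exceptional primes (essentially the divisors of $n(\alpha-\beta)^2$).

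Second, for large $n$ I would bound $|\Phi_n^*(\alpha,\beta)|$ from below. Using that $\alpha/\beta$ is not a root of unity, Baker-type lower bounds for linear forms in logarithms give a polynomial lower bound for the distance between $\alpha/\beta$ and $n$-th roots of unity, which translates into a bound of the form $|\Phi_n^*(\alpha,\beta)| \geq |\alpha\beta|^{\varphi(n)/2} \cdot n^{-O(1)}$. A theorem of Stewart (refining work of Schinzel) then shows that the contribution to $\Phi_n^*(\alpha,\beta)$ coming from exceptional primes is bounded by a polynomial in $n$. Combining these two bounds forces $\Phi_n^*(\alpha,\beta)$ to contain a non-exceptional prime factor for all $n$ above an explicit threshold, yielding a primitive prime divisor of $u_n(\alpha,\beta)$ in that range.

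Third, and this is the main obstacle, the remaining range of $n \leq N_0$ for the threshold $N_0$ coming out of step two must be treated by a complete enumeration of the \emph{defective Lucas pairs} $(\alpha,\beta)$, i.e.\ those pairs for which $u_n(\alpha,\beta)$ fails to have a primitive prime divisor. For each such $n$, the condition that a primitive divisor is absent translates, after clearing the exceptional primes, into a Diophantine equation in the symmetric functions $s = \alpha+\beta$ and $p = \alpha\beta$; typically this cuts out an affine curve whose integral points must be completely enumerated. For moderate $n$ these curves have genus zero or one and can be dispatched by Thue-equation solvers or Chabauty-type techniques, but for a handful of intermediate $n$ one genuinely needs hypergeometric methods of Thue--Siegel--Baker type together with substantial computer search. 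Once all defective pairs are tabulated, one checks by inspection that every defective pair has $n \leq 30$, completing the proof. The hard part is step three: producing a provably complete list of defective pairs is the technical core of \cite{BHV01} and is where essentially all of the length and delicacy of their argument resides.
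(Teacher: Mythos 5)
The paper does not prove this theorem; it is quoted verbatim from Bilu--Hanrot--Voutier \cite{BHV01} and used as a black box throughout Chapter~\ref{C8}. There is therefore no ``paper's proof'' to compare against, and the appropriate expectation here is to identify and cite the result rather than re-derive it.

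That said, your sketch is a faithful high-level summary of the strategy actually carried out in \cite{BHV01} and its predecessors. The cyclotomic decomposition $u_n = \prod_{d \mid n,\, d \geq 2}\Phi_d^*(\alpha,\beta)$ is correct, and the characterization of a primitive prime divisor of $u_n$ as a prime dividing $\Phi_n^*(\alpha,\beta)$ but not $n(\alpha-\beta)^2$ is the right reformulation. Your second step is also accurate in spirit: lower bounds for $|\Phi_n^*(\alpha,\beta)|$ coming from linear forms in logarithms, together with the Schinzel--Stewart upper bound on the ``exceptional'' part of $\Phi_n^*$, reduce the question to an explicit finite range of $n$ (in Voutier's refinement, $n \leq 30030$). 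And you are right, and candid, that step three --- the complete classification of defective Lucas pairs in the remaining range, by a combination of hypergeometric methods, Thue-equation solving, and extensive machine computation --- is where essentially all of the content and difficulty of the theorem resides. Since you do not attempt to execute that enumeration, what you have written is a correct road map rather than a proof; but as the paper itself treats the theorem as an external input, the road map is the appropriate level of engagement, and I see no logical error in it.

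One minor caution: the bound you quote, $|\Phi_n^*(\alpha,\beta)| \geq |\alpha\beta|^{\varphi(n)/2} n^{-O(1)}$, is the right shape but requires the hypothesis that $\alpha/\beta$ is not a root of unity to get any nontrivial estimate at all, and making the implied constants explicit is itself delicate --- this is why the effective threshold coming out of step two is enormous and why step three cannot be avoided.
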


This theorem is sharp; there are sequences for which $u_{30}(\alpha,\beta)$
does not have a primitive prime divisor. 
We call a Lucas number $u_n(\alpha,\beta)$, with $n>2,$ {\it defective}\footnote{We do not consider the absence of
	a primitive prime divisor for $u_2(\alpha,\beta)=\alpha+\beta$ to be   a defect.}  if $u_{n}(\alpha,\beta)$ does 
not have a primitive prime divisor. Bilu, Hanrot and Voutier essentially complete the theory; they basically
characterized all of the defective Lucas numbers.
Their work, combined with a subsequent paper\footnote{This paper included a few cases which were omitted in \cite{BHV01}.} 
by Abouzaid \cite{Abo06},   gives the {\it complete classification} of
defective Lucas numbers.
Tables 1-4 in Section 1 of \cite{BHV01} and Theorem 4.1 of \cite{Abo06} offer this
classification. Every defective Lucas number either belongs to a  finite list of sporadic examples or
a finite list of parameterized infinite families.

We consider Lucas sequences  arising from those quadratic integral polynomials
\begin{equation}\label{Modularity}
	F(X)=X^2-AX+B=(X-\alpha)(X-\beta),
\end{equation}
where $B=\alpha \beta =p^{2k-1}$ is an odd power of a prime $p$, and $|A|=|\alpha+\beta|\leq 2\sqrt{B}=2p^{\frac{2k-1}{2}}.$ 
A straightforward analysis of these tables of defective Lucas numbers reveals a list of sporadic examples, and several potentially infinite families of examples.  A straightforward case-by-case analysis using elementary congruences, divisibilities, and the truth of Catalan's conjecture \cite{Mih04},
that $2^3$ and $3^2$ are the only consecutive perfect powers,
yields the following characterization.

\begin{theorem}\label{AwesomeList}
	Tables \ref{table1} and \ref{table2} in the Appendix list the defective $u_n(\alpha,\beta)$
	satisfying (\ref{Modularity}).
\end{theorem}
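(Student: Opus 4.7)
The plan is to reduce this statement to the complete Bilu--Hanrot--Voutier/Abouzaid classification of defective Lucas numbers, and then sieve that classification by the two arithmetic constraints imposed by (\ref{Modularity}): namely, $B=\alpha\beta=p^{2k-1}$ is an odd prime power, and $|A|=|\alpha+\beta|\leq 2\sqrt{B}$. Since Theorem~\ref{Bilu} guarantees that defects only occur for $n\leq 30$, the problem becomes, in principle, a finite (though delicate) check once one has the explicit parameterizations of the defective cases.

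First I would handle the sporadic entries appearing in Tables 1--4 of \cite{BHV01} and in Theorem 4.1 of \cite{Abo06}. For each such $(\alpha,\beta,n)$, one reads off $A=\alpha+\beta$ and $B=\alpha\beta$, tests whether $B$ is an odd power of a single prime (easy factorization), and then checks $|A|\leq 2\sqrt{B}$. Each sporadic case either passes and lands in Table~\ref{table1} or Table~\ref{table2}, or fails and is discarded. This is tedious but entirely mechanical.

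The substantive step is the analysis of the parameterized infinite families of defective Lucas numbers in the BHV/Abouzaid classification. In each such family, $\alpha+\beta$ and $\alpha\beta$ are given as explicit polynomial or exponential expressions in one or two integer parameters. To survive our sieve, the expression for $\alpha\beta$ must equal $p^{2k-1}$ for some odd prime $p$ and integer $k\geq 1$. I would first use elementary divisibility and congruence conditions (e.g.\ $\gcd$ relations forced by the parameterization, parity of $\alpha+\beta$, $2$-adic valuations) to collapse most families immediately: typically the parametric form of $\alpha\beta$ is manifestly composite, or has more than one prime factor, for all but a very sparse set of parameter values. For the remaining families, the requirement that $\alpha\beta$ be a prime power degenerates into a Diophantine equation of the shape $a^m\pm b^n=p^\ell$ with coprime $a,b$, and here Mihailescu's theorem (Catalan's conjecture), which asserts that $2^3$ and $3^2$ are the only consecutive perfect powers, pins down the parameters. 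Finally, the inequality $|A|\leq 2\sqrt{B}$ trims the surviving possibilities once more.

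The main obstacle is bookkeeping in this parametric step: the BHV tables contain several families indexed by small parameters together with sign choices, and one must be careful not to miss a branch or double-count an instance that appears in two families. The key non-elementary input is Mihailescu's theorem, without which ruling out high-power coincidences in the exponential families would be infeasible. Once all cases are assembled, the result is exactly the list displayed in Tables \ref{table1} and \ref{table2}.
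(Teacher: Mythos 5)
Your proposal follows exactly the same route as the paper: start from the complete BHV/Abouzaid classification of defective Lucas numbers (finite sporadic list plus parameterized families), and then sieve by the constraint $B=\alpha\beta=p^{2k-1}$ with $|A|\leq 2\sqrt{B}$, using elementary congruences and divisibilities for most families and invoking Mih\u{a}ilescu's theorem (Catalan) to close out the remaining prime-power coincidences. The only tiny caution: in the paper's setup the exponent $2k-1$ is odd but $p$ itself need not be odd (Tables \ref{table1} and \ref{table2} include entries with $B=2$ and $B=2^3$), so ``odd power of a prime'' rather than ``power of an odd prime'' is the correct reading of the sieve condition.
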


To identify the cases where $|u_n(\alpha,\beta)| = 1$ and $|u_n(\alpha,\beta)|=\ell$ is prime, we require the curves
\begin{equation}
	B_{1, k}^{r, \pm} : Y^2 = X^{2k-1} \pm 3^r,\quad \mathrm{and}\quad B_{2, k} : Y^2 = 2X^{2k-1} -1.
\end{equation}

\begin{lemma}\label{Part1DefectivePrimality}
	Suppose that $u_n(\alpha,\beta)$ is a defective Lucas number from Table \ref{table1} or Table \ref{table2}.
	\begin{enumerate}
		\item We have that $|u_n(\alpha,\beta)|=1$ if and only if 
		$$(A,B,n) \in \big\{ (\pm 1, 2, 5), (\pm 1, 2, 13), (\pm 1, 3, 5), (\pm 1, 5, 7), (\pm 2, 3, 3), (\pm 3, 2^3, 3) \big\},$$
		or $(A,B,n) = (\pm m, p, 3),$ where $p = m^2+1$ is prime with $m>1$.
		
		\item If $|u_n(\alpha,\beta)|=\ell$ is prime, 
		$(A,B,\ell, n) \in \big\{ (\pm 1, 2,7, 7), (\pm 1, 2,3, 8), (\pm 2, 11,5, 5) \big\},
		$
		or $(A,B,\ell, n) = (\pm m, p^{2k-1},3, 3),$ where $(p, \pm m)\in B^{1, \pm}_{1,k}$ and $3\nmid m$,  or $(A,B,\ell, n) = (\pm m, p^{2k-1},m, 4),$ where $(p, \pm m)\in B_{2,k}$.
	\end{enumerate}
\end{lemma}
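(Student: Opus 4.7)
The plan is to work case-by-case through the entries of Tables \ref{table1} and \ref{table2}. For each defective Lucas number $u_n(\alpha,\beta)$ appearing in those tables, we compute $u_n(\alpha,\beta)$ as an integer polynomial in $A$ and $B$, using the recurrence $u_{m+1}=Au_m-Bu_{m-1}$ (with $u_1=1, u_2=A$), and then impose the conditions $B=p^{2k-1}$ ($p$ prime), $|A|\leq 2\sqrt{B}$, and either $|u_n(\alpha,\beta)|=1$ (for (1)) or $|u_n(\alpha,\beta)|=\ell$ prime (for (2)). In each case this reduces to an elementary Diophantine problem in $A$ and $p$; the resulting equations will either admit only finitely many sporadic solutions or give rise to one of the infinite families stated in the lemma.

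For part (1), the key computations are: $u_3=A^2-B$, $u_5=A^4-3A^2B+B^2$, $u_7$, $u_8=u_4(A^4-4A^2B+2B^2)$, $u_{13}$, etc. Setting $|u_3|=1$ gives $B=A^2\pm 1$, and combining with $B=p^{2k-1}$ produces (via a two-line factorization argument) the sporadic case $(\pm 2,3,3)$ from $B=A^2-1$, and the infinite family $(\pm m, p, 3)$ with $p=m^2+1$ prime from $B=A^2+1$ (with $k\geq 2$ ruled out by Catalan's conjecture on $A^2+1=p^{2k-1}$, $k\geq 2$). For each remaining defective $n\in\{5,7,8,13\}$ in the tables, the equation $|u_n|=1$ together with $B=p^{2k-1}$ and the shape of the parameterized entries in the tables admits (again using Catalan and small-exponent elementary arguments) only the finite list of sporadic triples written in (1).

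For part (2), we use that whenever $d\mid n$, Proposition \ref{PropA} gives $u_d(\alpha,\beta)\mid u_n(\alpha,\beta)$; if $|u_n|=\ell$ is prime then each $|u_d|$ must be either $1$ or $\ell$, which pins down the possible $(A,B)$ very tightly. For $n=3$, $u_3=A^2-B=\pm\ell$; taking $\ell=3$ and $B=p^{2k-1}$ rewrites this as $A^2=p^{2k-1}\pm 3$, i.e.\ $(p,\pm A)\in B_{1,k}^{1,\pm}$, yielding the family $(\pm m, p^{2k-1},3,3)$ in (2), with the side condition $3\nmid m$ forcing genuine primitivity. For any odd prime $\ell\neq 3$, a short case analysis using Catalan-type arguments on $A^2\mp\ell=p^{2k-1}$ leaves only the sporadic example $(\pm 2,11,5,5)$. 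For $n=4$, $u_4=A(A^2-2B)$, so $|u_4|=\ell$ forces either $|A|=1,\ |A^2-2B|=\ell$ (giving only the sporadic case $(\pm 1,2,3,8)$ after applying Catalan) or $|A|=\ell,\ A^2-2B=\pm 1$; the latter, with $B=p^{2k-1}$, is the curve $B_{2,k}: Y^2=2X^{2k-1}-1$ and produces the family $(\pm m, p^{2k-1},m,4)$ with $(p,\pm m)\in B_{2,k}$. For $n=7$, the remaining parameterized entry of the tables gives $|u_7|=\ell$ precisely for $(\pm 1,2,7,7)$, again by Catalan.

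The main obstacle is the systematic disposal of the parameterized infinite families in Tables \ref{table1} and \ref{table2}: each such family encodes a one-parameter Diophantine condition on $A,B$, and one must show that imposing in addition $B=p^{2k-1}$ and $|u_n|\in\{1,\ell\}$ leaves either the listed sporadic solutions or a solution corresponding to an integer point on one of the curves $B_{1,k}^{1,\pm}$ or $B_{2,k}$. In every such reduction the genuinely non-trivial input is Mihailescu's theorem (the resolution of Catalan's conjecture), which we apply repeatedly to exclude solutions of equations of the form $X^a-Y^b=\pm 1, \pm 2, \pm 3$ with $a,b\geq 2$; once this is done the remaining bookkeeping is routine.
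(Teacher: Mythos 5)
The paper's own proof is essentially the one-liner ``a simple (and tedious) case-by-case analysis,'' and your overall strategy of walking through the entries of Tables \ref{table1} and \ref{table2} and deciding which yield $|u_n|=1$ or $|u_n|=\ell$ prime is exactly that. However, several of your details are off and should be corrected before the plan could be executed.

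First, you assert that the $n=4$ factorization $u_4 = A(A^2-2B)$, in the subcase $|A|=1$, ``gives only the sporadic case $(\pm 1, 2, 3, 8)$ after applying Catalan.'' That triple has $n=8$, not $n=4$: it comes directly from the Table \ref{table1} row $(\pm 1, 2^1)$ with sporadic entry $u_8 = \mp 3$, and requires neither the $u_4$ factorization nor Catalan. In fact Table \ref{table1} contains no defective $u_4$ at all; the only defective $u_4$'s are the Table \ref{table2} families $u_4 = \mp m$ (from $B_{2,k}$) and $u_4 = \pm 2\varepsilon m$ (from $B_{3,k}^{\pm}$), and the latter is even so never an odd prime. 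Second, your claim that $(\pm 1, 2, 7, 7)$ is ``the remaining parameterized entry'' requiring Catalan is wrong on both counts: $(\pm 1, 2^1)$ is a sporadic Table \ref{table1} row, and $u_7=7$ is read off directly, with no Diophantine input. Third, the repeated invocation of Mih\u{a}ilescu's theorem inside this lemma is structurally redundant: Catalan has already been spent once, in the proof of Theorem \ref{AwesomeList}, to assemble Tables \ref{table1} and \ref{table2}. Given those tables, the lemma's proof reduces to evaluating each $u_n$ entry (a fixed integer for the sporadic rows, or a polynomial in $m$ for the parameterized rows) and asking whether it can be $\pm 1$ or an odd prime; that is pure arithmetic bookkeeping and needs no further Diophantine tool. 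Your plan re-derives parts of the tables rather than reading them, which is not fatal but duplicates effort and blurs the intended division of labor between Theorem \ref{AwesomeList} and Lemma \ref{Part1DefectivePrimality}.
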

\begin{proof} The proof of both (1) and (2) follow by a simple (and tedious) case-by-case analysis. 
\end{proof}

\section{$\Delta(z)$ and other eigenforms}

Throughout this paper we suppose that
\begin{equation}\label{qexpansion}
	f(z)=q+\sum_{n=2}^{\infty}a_f(n)q^n\in S_{2k}(\Gamma_0(N)) \cap \ZZ[[q]]
\end{equation}
is an even weight $2k$ newform.
Let $S_f$ be the finite (generally empty) set of primes $p$ for which $(A,B)=(a_f(p),p^{2k-1})$ appears in Tables \ref{table1} or \ref{table2}. 
For primes $p\not \in S_f$ and $m\geq 1$, we let
\begin{equation}
	\widehat{\sigma}(p;m):=\sigma_0(m+1)-1,
\end{equation}
while for $p\in S_f$ we define
$\widehat{\sigma}(p;m)$  in Table \ref{table3} in the Appendix.
We have the following theorem.

\begin{theorem}\label{PrimeDivisorsGeneral} Assume the notation and hypotheses above.
	If $n>1$ an integer, then
	$$
	\Omega(a_f(n))\geq \sum_{p\mid N} (k-1)\ord_p(n) +\sum_{\substack{p\nmid N \\ \ord_p(n)\geq 2}} \widehat{\sigma}(p;\ord_p(n)).
	$$
\end{theorem}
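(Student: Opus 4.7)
The plan is to reduce to a prime-power analysis via multiplicativity and then to invoke the theory of primitive prime divisors of Lucas sequences. Since $f$ is a normalized Hecke eigenform, the Fourier coefficients are multiplicative, so
\[
\Omega(a_f(n)) = \sum_{p \mid n} \Omega\!\left( a_f\!\left( p^{\ord_p(n)} \right) \right),
\]
and it suffices to obtain, for each prime $p \mid n$, the appropriate local bound for $\Omega(a_f(p^{\ord_p(n)}))$.

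For primes $p \mid N$, I would appeal to the Atkin--Lehner structure of newforms (which I plan to package as Theorem \ref{Newforms} later): for $p \| N$ one has $a_f(p) = \pm p^{k-1}$, while for $p^2 \mid N$ one has $a_f(p) = 0$. In either case $a_f(p^m) = a_f(p)^m$ by the Hecke recursion at ramified primes, which is either zero (making the desired inequality vacuous in a trivial way after one adopts the convention $\Omega(0) = \infty$) or has exactly $m(k-1)$ prime factors counted with multiplicity. This gives precisely the first sum in the claimed lower bound.

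For primes $p \nmid N$, I would recognize the two-term Hecke recursion $a_f(p^{m+1}) = a_f(p)\, a_f(p^m) - p^{2k-1}\, a_f(p^{m-1})$ as the defining recursion of the Lucas sequence $u_n(\alpha, \beta)$ attached to the roots $\alpha, \beta$ of $F(X) = X^2 - a_f(p) X + p^{2k-1}$, so that $a_f(p^m) = u_{m+1}(\alpha, \beta)$. The Ramanujan--Petersson bound $|a_f(p)| \leq 2 p^{(2k-1)/2}$ combined with the coprimality $\gcd(a_f(p), p^{2k-1}) = 1$ ensures that $(\alpha,\beta)$ satisfies the hypotheses of the Bilu--Hanrot--Voutier framework, so Proposition \ref{PropA} and Theorem \ref{Bilu} (together with Theorem \ref{AwesomeList}) apply. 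By Proposition \ref{PropA}, for every divisor $d$ of $m+1$ we have $u_d \mid u_{m+1}$, and for each $d \geq 2$ the existence of a primitive prime divisor of $u_d$ would contribute a prime to $\Omega(u_{m+1}(\alpha, \beta))$ not contributed by any $u_{d'}$ with $d' < d$. Thus, when no $u_d$ with $2 \leq d \leq m+1$ is defective, one gets at least $\sigma_0(m+1) - 1$ distinct primes, which is exactly the generic value $\widehat{\sigma}(p;m) = \sigma_0(m+1) - 1$.

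The main obstacle, and the reason the constants $\widehat{\sigma}(p;m)$ are tabulated rather than uniform, is the bookkeeping for the exceptional primes $p \in S_f$, where some $u_d(\alpha,\beta)$ with $d \mid (m+1)$ is defective in the sense of Theorem \ref{AwesomeList}. The hard part will be to go through Tables \ref{table1} and \ref{table2} case by case, determine for each defective pair $(A, B) = (a_f(p), p^{2k-1})$ exactly which indices $d$ lose their primitive prime divisor, and then verify that the weaker count recorded in Table \ref{table3} still provides a valid lower bound for $\Omega(u_{m+1}(\alpha,\beta))$. In practice this reduces to checking, for each entry of Table \ref{table1} and Table \ref{table2}, the multiset of prime divisors of the explicit Lucas value, which is a finite (though tedious) verification. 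Summing the local contributions over all primes $p \mid n$ then yields the stated global inequality.
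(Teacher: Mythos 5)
Your proposal takes essentially the same route as the paper: reduce to prime powers via Hecke multiplicativity, handle $p\mid N$ via the Atkin--Lehner description of coefficients at ramified primes, and handle $p\nmid N$ by identifying $a_f(p^m)=u_{m+1}(\alpha_p,\beta_p)$ as a Lucas number, then count primitive prime divisors along the divisors of $m+1$ with corrections from the defective cases in Tables \ref{table1}--\ref{table3}. This is precisely the argument the paper compresses into ``Theorem~\ref{Newforms}~(1) and Proposition~\ref{PrimePower}.''

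One caveat worth flagging: you assert ``the coprimality $\gcd(a_f(p),p^{2k-1})=1$'' as if it were automatic, but it is not. For primes $p\nmid N$ with $p\mid a_f(p)$ (including the supersingular case $a_f(p)=0$, which occurs e.g.\ for CM forms), the hypothesis $\gcd(\alpha+\beta,\alpha\beta)=1$ underlying the Lucas/Bilu--Hanrot--Voutier machinery fails, so one cannot invoke Theorem~\ref{Bilu} directly. This case must be split off: if $p\mid a_f(p)$ then the Hecke recursion gives $p^m\mid a_f(p^m)$, hence $\Omega(a_f(p^m))\geq m\geq \sigma_0(m+1)-1\geq \widehat{\sigma}(p;m)$ and the bound holds trivially (with $\Omega(0)$ interpreted as $\infty$ if $a_f(p)=0$ and $m$ is odd). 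Once that is said explicitly, the rest of your argument is sound and matches the paper's.
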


\begin{remark} 
	Theorem~\ref{PrimeDivisorsGeneral} does not take into account those primes $p\nmid N$ which
	exactly divide $n$ because it can happen that $|a_f(p)|=1$. 
	However,  if the mod 2 residual Galois representation
	is trivial, then $a_f(p)$ is even for every prime $p\nmid 2N$. In such cases, we get
	$$
	\Omega(a_f(n))\geq \sum_{p\mid N} (k-1)\ord_p(n) +\sum_{p\nmid 2N} \widehat{\sigma}(p;\ord_p(n)).
	$$
	This  applies to $\Delta(z)$, by the congruence
	$\Delta(z)\equiv \sum_{n=0}^{\infty}q^{(2n+1)^2}\pmod 2.$
	Since $(A,B)=(\tau(p),p^{11})$ does not appear in Lemma~\ref{Part1DefectivePrimality} (1), the proof of Theorem~\ref{PrimeDivisorsGeneral} gives
	Theorem~\ref{PrimeDivisors}.
\end{remark}

\subsection{Proof of Theorem~\ref{PrimeDivisorsGeneral}}

We recall some basic facts about {\it Atkin-Lehner newforms} 
(see \cite{AL70}), along with the deep theorem of Deligne  \cite{Del74, Del80} that bounds their Fourier coefficients. 

\begin{theorem}\label{Newforms} Suppose that $f(z)=q+\sum_{n=2}^{\infty}a_f(n)q^n\in S_{2k}(\Gamma_0(N))$ is a newform with integer coefficients.
	Then the following are true:
	\begin{enumerate}
		\item If $\gcd(n_1,n_2)=1,$ then $a_f(n_1 n_2)=a_f(n_1)a_f(n_2).$
		\item If $p\nmid N$ is prime and $m\geq 2$, then
		$$
		a_f(p^m)=a_f(p)a_f(p^{m-1}) -p^{2k-1}a_f(p^{m-2}).
		$$
		\item If $p\nmid N$ is prime and $\alpha_p$ and $\beta_p$ are roots of $F_p(x):=x^2-a_f(p)x+p^{2k-1},$ then
		$$
		a_f(p^m)=u_{m+1}(\alpha_p,\beta_p)=\frac{\alpha_p^{m+1}-\beta_p^{m+1}}{\alpha_p-\beta_p}.
		$$   
		Moreover, we have $|a_f(p)|\leq 2p^{\frac{2k-1}{2}}$, and $\alpha_p$ and $\beta_p$ are complex conjugates.
		
		\item If $p\mid N$ is prime, then 
		$f | U(p) := \sum_{n=1}^{\infty} a_f(np)q^n = a_f(p) f(\tau).$
		Moreover, we have 
		$$
		a_f(p^m)=\begin{cases} (\pm 1)^m p^{(k-1)m} \ \ \ \ \  &{\text {\rm if}}\ \ord_p(N)=1,\\
			0 \ \ \ \ \ &{\text {\rm if}}\ \ord_p(N)\geq 2.
		\end{cases}
		$$
	\end{enumerate}
\end{theorem}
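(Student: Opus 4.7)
The plan is to treat Theorem~\ref{Newforms} as an assembled statement of classical facts and to organize the proof around three toolkits: (i) the standard Hecke algebra relations on $S_{2k}(\Gamma_0(N))$, (ii) Deligne's bound from the Weil conjectures, and (iii) Atkin--Lehner theory for primes dividing the level. The key observation that ties everything together is that a newform, normalized so that $a_f(1)=1$, is a simultaneous eigenform for the full Hecke algebra, so each Fourier coefficient $a_f(n)$ is literally the eigenvalue of $T(n)$ (or $U(p)$ when $p\mid N$) acting on $f$. Extracting the $q^1$ coefficient of $T(n)f = a_f(n)f$ will convert Hecke operator identities directly into identities among Fourier coefficients.

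First I would recall the standard Hecke operator identities derived from coset decompositions: $T(m)T(n)=T(mn)$ for $\gcd(m,n)=1$, and for $p\nmid N$ the recursion $T(p^{m+1})=T(p)T(p^m)-p^{2k-1}T(p^{m-1})$. Applying both sides to $f$ and using the eigenvalue property gives parts (1) and (2) at once. For (3), the recurrence in (2) is a linear recurrence with characteristic polynomial $F_p(x)=x^2-a_f(p)x+p^{2k-1}$, whose roots are $\alpha_p,\beta_p$; with the initial data $a_f(1)=1$ and $a_f(p)=\alpha_p+\beta_p$, standard linear algebra yields the closed form $a_f(p^m)=u_{m+1}(\alpha_p,\beta_p)$. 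The bound $|a_f(p)|\le 2p^{(2k-1)/2}$ is Deligne's theorem; I would cite \cite{Del74,Del80} rather than reprove it. Given this bound, the discriminant $a_f(p)^2-4p^{2k-1}$ is non-positive, and since $\alpha_p\beta_p=p^{2k-1}>0$, the roots $\alpha_p,\beta_p$ are either complex conjugates or a (real) double root, and in either case they are complex conjugates of one another.

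For (4), I would invoke Atkin--Lehner \cite{AL70}. Since $f$ is a newform, it is an eigenform for $U(p)$ for every $p\mid N$; writing out what $f|U(p)=\lambda f$ means on $q$-expansions and comparing the $q^1$-coefficient forces $\lambda=a_f(p)$. Iterating $U(p)$ then gives $a_f(p^m)=a_f(p)^m$. The Atkin--Lehner eigenvalue computation then provides the two cases: when $\ord_p(N)=1$, the $U(p)$-eigenvalue is $\pm p^{k-1}$ (this is the classical evaluation using the Atkin--Lehner involution $W_p$ and the functional equation of the local $L$-factor); when $\ord_p(N)\ge 2$, the local representation at $p$ is supercuspidal or a ramified principal series whose new vector is annihilated by $U(p)$, giving $a_f(p)=0$.

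The main obstacle to a fully self-contained proof is that two of these inputs---Deligne's bound in (3) and the Atkin--Lehner eigenvalue computations at primes dividing the level in (4)---are deep, and any honest writeup has to cite them rather than reprove them. Accordingly my proposed proof is really an expository reduction: Hecke's original multiplicativity and recursion arguments handle (1), (2), and the Lucas closed form in (3), while the bound in (3) and the entirety of (4) are invoked from \cite{Del74,Del80,AL70}. Since this theorem is used in the paper only as a black box for the subsequent arithmetic of $a_f(p^m)$ via Lucas sequences, this level of exposition is both sufficient and standard.
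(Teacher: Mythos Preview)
Your proposal is correct and matches the paper's approach exactly: the paper does not prove Theorem~\ref{Newforms} at all, but simply introduces it with ``We recall some basic facts about Atkin--Lehner newforms (see \cite{AL70}), along with the deep theorem of Deligne \cite{Del74, Del80}\ldots'' and states the theorem without proof. Your expository reduction---Hecke operator relations for (1)--(2), the linear recurrence solution for the Lucas form in (3), Deligne for the bound, and Atkin--Lehner for (4)---is precisely the content being cited, so there is nothing to compare beyond noting that you have written out what the paper leaves implicit.
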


Theorem~\ref{Newforms} leads to lower bounds for
the number of prime divisors (counted with multiplicity) of the coefficients in the sequence $\{a_f(p^2),a_f(p^3),\dots\}$, where $p$ is prime.

\begin{proposition}\label{PrimePower} Assuming the notation in Theorem~\ref{Newforms},  the following are true for $m\geq 2$.
	\begin{enumerate}
		\item If $p\mid N$ is prime, then $\ord_p(a_f(p^m))\geq (k-1)m.$
		\item If $p\nmid N$ is prime and $(A,B)=(a_f(p),p^{2k-1})$ does not appear in Tables \ref{table1} or \ref{table2}, then
		$$
		\Omega(a_f(p^m))\geq \sigma_0(m+1)-1.
		$$
		\item If $p\nmid N$ is prime and $(A,B)=(a_f(p),p^{2k-1})$ appears in Tables \ref{table1} or \ref{table2}, then Table \ref{table3} of the Appendix contains a lower bound
		for $\Omega(a_f(p^m))$.
	\end{enumerate}
\end{proposition}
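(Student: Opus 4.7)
Part (1) is an immediate corollary of Theorem~\ref{Newforms}~(4): when $\ord_p(N) = 1$ we have $a_f(p^m) = (\pm 1)^m p^{(k-1)m}$, giving $\ord_p(a_f(p^m)) = (k-1)m$, and when $\ord_p(N) \geq 2$ the coefficient vanishes and the inequality is vacuous. No further input is required.

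For Part (2), Theorem~\ref{Newforms}~(3) realizes $a_f(p^m)$ as the Lucas number $u_{m+1}(\alpha_p, \beta_p)$, and I would work through the cyclotomic factorization $u_{m+1} = \prod_{d \mid m+1,\, d > 1} \Phi_d^*(\alpha_p, \beta_p)$ coming from $X^n - 1 = \prod_{d \mid n} \Phi_d(X)$; by Proposition~\ref{PropA} each $\Phi_d^*$ divides $u_{m+1}$. The hypothesis that $(a_f(p), p^{2k-1})$ does not appear in Tables~\ref{table1} or~\ref{table2}, combined with Theorem~\ref{Bilu} (the Bilu--Hanrot--Voutier theorem), supplies a primitive prime divisor $\ell_d$ of $u_d$ for each $d > 2$ with $d \mid m+1$; primitivity ensures these $\ell_d$ are pairwise distinct and disjoint from any prime dividing $u_2 = a_f(p)$. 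Combined with the characterization in Lemma~\ref{Part1DefectivePrimality} of when $|u_n| = 1$, which certifies the excluded cases, this furnishes at least $\sigma_0(m+1) - 1$ distinct prime divisors of $a_f(p^m)$.

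Part (3) is a case-by-case analysis over the rows of Tables~\ref{table1} and~\ref{table2}. For each defective family I would repeat the primitive-prime counting argument of Part (2), subtracting the contribution lost at the defective index; the resulting adjusted lower bound is precisely the value $\widehat\sigma(p; m)$ recorded in Table~\ref{table3}. Lemma~\ref{Part1DefectivePrimality} supplies the precise numerical data--when $|u_n|$ equals $1$ or a prime--needed to quantify the deficit.

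The hard part will be the bookkeeping in Part (2), in particular guaranteeing that the count of distinct prime factors reaches exactly $\sigma_0(m+1) - 1$ (rather than $\sigma_0(m+1) - 2$) when $m+1$ is even, which requires extracting one further prime from the $d = 2$ contribution $u_2 = a_f(p)$. This forces a careful verification that the non-defective hypothesis precludes the problematic values $|a_f(p)| \leq 1$ outside the cases catalogued by Theorem~\ref{AwesomeList}. Part (3) is then a lengthy but mechanical tabulation matching each row of Tables~\ref{table1}--\ref{table2} against the corresponding entry in Table~\ref{table3}.
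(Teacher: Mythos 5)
Your high-level plan matches the paper's. Part (1) is exactly Theorem~\ref{Newforms} (4). Part (2) is the paper's argument: realize $a_f(p^m)=u_{m+1}(\alpha_p,\beta_p)$ by Theorem~\ref{Newforms} (3), use Proposition~\ref{PropA} for the divisibilities $u_d \mid u_{m+1}$ over $d\mid m+1$, and invoke Bilu--Hanrot--Voutier (Theorem~\ref{Bilu}) to supply a fresh primitive prime at each divisor level; Part (3) is the same count minus the deficits recorded in Tables~\ref{table1}--\ref{table2}. Your cyclotomic factorization $u_{m+1}=\prod_{d\mid m+1,\, d>1}\Phi_d^*(\alpha_p,\beta_p)$ is a standard and equivalent way to organize the primitive-divisor bookkeeping; the paper expresses the same idea by saying that every divisor of $m+1$ other than $u_1=1$ contributes.

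The one place where your plan goes astray is the way you propose to extract a prime from $u_2=a_f(p)$ when $m+1$ is even. You assert that the ``non-defective'' hypothesis precludes $|a_f(p)|\leq 1$ outside the cases catalogued by Theorem~\ref{AwesomeList}, but that is not so: the classification of defective Lucas numbers, and hence Tables~\ref{table1}--\ref{table2}, explicitly excludes $n=2$ (the footnote to the definition says the absence of a primitive prime divisor for $u_2=\alpha+\beta$ is not a defect). Thus the hypothesis that $(a_f(p),p^{2k-1})$ does not appear in the tables says nothing about $|a_f(p)|$. The paper's terse proof likewise takes the $d=2$ contribution for granted; what rescues all of its downstream applications is the additional standing hypothesis of a trivial mod $2$ residual Galois representation, which makes $a_f(p)$ even (hence $|a_f(p)|\geq 2$) for every prime $p\nmid 2N$ and so guarantees the $u_2$ contribution. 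If you want your write-up of Part (2) to stand on its own, you should either add that parity hypothesis (which is in force in every application here) or state the conclusion as $\Omega(a_f(p^m))\geq \sigma_0(m+1)-2$ in the possibly vacuous case $m+1$ even with $|a_f(p)|=1$.
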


\begin{proof}[Proof of Proposition~\ref{PrimePower}]
	The first claim follows from Theorem~\ref{Newforms} (4). The second claim follows from Theorem~\ref{Newforms} (3), 
	Proposition~\ref{PropA} 
	and Theorem~\ref{Bilu} in a case-by-case analysis. The point is that at least one new prime divisor is accumulated with each subsequent step in a Lucas sequence. In other words,  the relative divisibility of Lucas numbers and the presence of primitive prime divisors guarantees the lower bound. The only divisor of $m+1$ which does not contribute is $u_1=1$.
	The third claim follows similarly by taking into account the  defective Lucas numbers
	that appear in Tables \ref{table1} and \ref{table2}.
\end{proof}

\begin{proof}[Proof of Theorem~\ref{PrimeDivisorsGeneral}]
	The theorem follows from Theorem~\ref{Newforms} (1)  and Proposition~\ref{PrimePower}.
\end{proof}

\section{Statement of general results}

This section discusses the fully detailed generalizations of the main results stated in the introduction. We investigate questions about the prime divisors of Fourier coefficients and equations of the form $a_f(n) = \alpha$ for even weight newforms with integer coefficients and trivial mod 2 residual Galois representation (i.e. even Hecke eigenvalues for $T(p)$ for primes $p\nmid 2N$, where $N$ is the level).
We obtain a general theorem (see Theorem~\ref{LehmerVariantGeneral}) that theoretically locates those coefficients that are odd prime powers in absolute value for such newforms. For $\tau(n)$, this theorem gives the following criterion, which restricts arguments to explicit finite sets.

\begin{theorem}\label{LehmerVariation}
	If $\ell$ is an odd prime for which $\tau(n)=\pm\ell^m$, with $m\in \ZZ^{+},$ then $n=p^{d-1},$ where $p$ and $d\mid \ell( \ell^2-1)$
	are odd primes. Furthermore,  $\tau(n)=\pm \ell^m$ for at most finitely many $n$.
\end{theorem}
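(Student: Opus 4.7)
The plan is to combine three ingredients: the mod $2$ congruence for $\Delta(z)$, the BHV primitive prime divisor theorem (Theorem~\ref{Bilu}) applied to the Lucas sequences $u_d(\alpha_p, \beta_p)$ arising from Theorem~\ref{Newforms}, and Siegel's theorem on integer points on curves of positive genus. First I would reduce to the prime-power setting using the parity congruence $\Delta(z) \equiv \sum_{n \geq 0} q^{(2n+1)^2} \pmod{2}$, which makes $\tau(n)$ odd exactly when $n$ is an odd square. Hence $\tau(n) = \pm \ell^m$ with $\ell$ odd forces $n = \prod_i p_i^{d_i - 1}$ with each $p_i$ an odd prime and each $d_i \geq 3$ odd. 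Multiplicativity of $\tau$ (Theorem~\ref{Newforms}(1)) together with $\tau(p^{d-1}) = u_d(\alpha_p, \beta_p)$ (Theorem~\ref{Newforms}(3)) then gives $u_{d_i}(\alpha_{p_i}, \beta_{p_i}) = \pm \ell^{k_i}$ for each $i$, with $\sum_i k_i = m$ and each $k_i \geq 1$ by the Murty--Murty--Shorey fact that $|\tau(N)| \ne 1$ for $N > 1$.

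Next I would pin down the indices $d_i$. For any $d_i > 30$, Theorem~\ref{Bilu} supplies a primitive prime divisor of $u_{d_i}(\alpha_{p_i}, \beta_{p_i})$ which, since this Lucas value is $\pm \ell^{k_i}$, must be $\ell$. Thus $m_\ell(\alpha_{p_i}, \beta_{p_i}) = d_i$, and Proposition~\ref{PropB} (noting $\ell \ne p_i$) forces $d_i \mid \ell(\ell^2 - 1)$. Any proper divisor $d' \mid d_i$ with $d' \geq 3$ would also force $u_{d'}(\alpha_{p_i}, \beta_{p_i}) = \pm 1$ by Proposition~\ref{PropA} and the primitivity of $\ell$; but Lemma~\ref{Part1DefectivePrimality}(1), applied with $B = p_i^{11}$, rules this out for all $d' \geq 3$ and odd $p_i$, so $d_i$ must be an odd prime. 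The residual range $d_i \leq 30$ is dispatched by direct inspection of Tables~\ref{table1} and~\ref{table2} using the same Lemma. The same rigidity, applied to two independent Lucas sequences that would have to share the single primitive prime $\ell$, is then used to rule out $\omega(n) \geq 2$, leaving $n = p^{d-1}$.

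For the finiteness statement, let $U_d(A, B) \in \ZZ[A, B]$ be the integer polynomial characterized by $U_d(\alpha + \beta, \alpha \beta) = u_d(\alpha, \beta)$. With $d$ now confined to the finite set of odd primes dividing $\ell(\ell^2-1)$, any solution $\tau(p^{d-1}) = \pm \ell^m$ yields an integer point on the affine curve
\begin{equation*}
	U_d\bigl( A, \, x^{11} \bigr) \;=\; \pm \ell^m, \qquad (A, x) \in \ZZ \times \ZZ.
\end{equation*}
For each such $d \geq 3$ this curve has positive geometric genus; for example $d = 3$ gives $U_3(A,B) = A^2 - B$, so the equation becomes the Mordell-type curve $A^2 = x^{11} \pm \ell^m$ of genus $5$. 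Siegel's theorem then produces only finitely many integer points, and summing over the finite set of admissible triples $(d, m', \pm)$ completes the finiteness claim.

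The main obstacle I expect will be verifying, uniformly in $d$, that each curve $U_d(A, x^{11}) = \pm \ell^m$ is irreducible of positive geometric genus and has no $\QQ$-rational linear component that could contribute an infinite family of integer solutions. A Newton polygon or singularity analysis of the homogenization of $U_d(A, B) - c$ followed by the substitution $B = x^{11}$ should handle this for all $d \geq 3$, but the low-index primes $d \in \{3, 5, 7\}$, along with the separate verification that none of the parameterized defective families in Tables~\ref{table1} and~\ref{table2} admits $B = p^{11}$ for an odd prime $p$, will demand the most care.
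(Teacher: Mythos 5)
Your proposal follows essentially the same strategy as the paper's proof: use the parity congruence and multiplicativity (Theorem~\ref{Newforms}(1)) to reduce to prime powers, identify $\tau(p^{d-1})$ with the Lucas number $u_d(\alpha_p,\beta_p)$, invoke Theorem~\ref{Bilu}, Propositions~\ref{PropA} and \ref{PropB}, and the defective tables to force $d$ to be an odd prime dividing $\ell(\ell^2-1)$, and finish via Siegel's theorem. The local analysis of the Lucas sequence — proper divisors $d'\mid d$ would force $|u_{d'}|=1$, ruled out by Lemma~\ref{Part1DefectivePrimality}(1) with $B=p^{11}$ — matches the paper.

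However, there is a genuine gap. Your ``rigidity'' dismissal of $\omega(n)\geq 2$ is not an argument. Two independent Lucas sequences attached to distinct odd primes $p_1\neq p_2$ can both have $\ell$ as the primitive prime divisor at some admissible index $d_i\mid\ell(\ell^2-1)$: the two sequences do not share any quantity that could be forced into contradiction. For $m=1$ the reduction is immediate and clean — if $\omega(n)\geq 2$, then $\tau(n)=\prod\tau(p_i^{a_i})$ with every $|\tau(p_i^{a_i})|>1$, so $\tau(n)$ carries at least two factors of $\ell$ counted with multiplicity, contradicting $|\tau(n)|=\ell$. For $m\geq 2$ your argument simply does not rule out $n$ being a product of two or more admissible prime powers; you need a separate argument or an honest acknowledgment that the claimed conclusion $n=p^{d-1}$ is only forced in the $m=1$ case. (Fairness compels noting that the paper's own proof, which passes via Proposition~\ref{One} and multiplicativity directly to ``when $|a_f(p^{d-1})|=\ell$'' without elaboration, glosses over the same point, and its subsequent applications all take $m=1$.)

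Two smaller remarks. First, you cite Murty--Murty--Shorey for $|\tau(N)|\neq 1$ when $N>1$; the paper derives this directly as Proposition~\ref{One} from the classification of defective Lucas numbers with $|u_n|=1$, keeping the argument self-contained. Second, your Siegel step is correct once $m$ is fixed: for each of the finitely many admissible odd primes $d\mid\ell(\ell^2-1)$, the equation $U_d(A,x^{11})=\pm\ell^m$ is a single affine curve of positive genus (the paper packages this via Lemma~\ref{DiophantineCriterion}), so the total number of integer points — and hence of solutions $n$ — is finite. Your closing concern about irreducibility and genus for general $d$ is legitimate but is exactly what the paper's reformulation in terms of the curves $C^{\pm}_{k,\ell}$, $H^{\pm}_{2k-1,\ell}$, and the Thue forms $F_{d-1}$ is designed to sidestep.
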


Theorem~\ref{LehmerVariation} offers a method for determining whether $|\tau(n)|=\ell^m$ has any solutions, which reduces the
problem to the determination of certain integer points on finitely many algebraic curves.  For $\ell \in \{3, 5, 7\},$ 
examples of these curves include
\begin{equation}\label{HyperEquations}
	Y^2-X^{11}=\pm 3^m,\ \ \ \ \ 
	Y^2-5X^{22}=\pm 4\cdot 5^m \ \ \ \ {\text {\rm and}}\ \ \ \ Y^{3}-5XY^2+6X^2Y-X^3=\pm 7^m.
\end{equation}
By classifying such points when $m=1$, we obtain 
the following theorem.\footnote{The {\it Journal of Number Theory} published the proceedings of the conference ``Modular forms and Drinfeld Modules'' held in 2018 in Pisa, Italy. Paper \cite{BCO22}  is an exposition of the third author's  lecture at the conference, and pertains to some of the cases  of Theorem~\ref{Lehmer135} (1). All of the other results in the present paper have not appeared elsewhere. This article is the main reference for the authors' work on variants of Lehmer's speculation.}

\begin{theorem}\label{Lehmer135}
	For every $n>1$, the following are true. 
	
	\noindent
	(1) We have that
	$$\tau(n)\not \in \{\pm 1, \pm 3, \pm 5, \pm 7, \pm 13, \pm 17, -19, \pm 23,  \pm 37, \pm 691\}.$$
	
	\noindent
	(2)  Assuming the Generalized Riemann Hypothesis, we have that
	\begin{align*}
		\tau(n) &\not\in 
		\left \{ \pm  \ell\ : \ 41\leq  \ell\leq 97  \  {\text {\rm with}}\ \legendre{\ell}{5}=-1\right\} \\ &\cup
		\left \{ -11, -29, -31, -41, -59, -61, -71, -79, -89\right\}.
	\end{align*}
	$$
	$$
\end{theorem}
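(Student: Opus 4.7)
The plan is to reduce each equation $\tau(n)=\alpha$ for $\alpha$ in the listed set to a finite search for integer points on explicit algebraic curves, after which effective techniques from Diophantine geometry finish the argument. First I would dispose of the case $\alpha=\pm 1$: by multiplicativity of $\tau$, it suffices to show $\tau(p^{d-1})=\pm 1$ has no solutions with $p$ an odd prime and $d\geq 2$. By Theorem \ref{Newforms} (3), this amounts to $|u_d(\alpha_p,\beta_p)|=1$ for the Lucas sequence attached to $F(X)=X^2-\tau(p)X+p^{11}$. Running through Lemma \ref{Part1DefectivePrimality} (1), every triple $(A,B,n)$ yielding $|u_n|=1$ has $B\in\{2,3,5,8\}$ or $B=p$, none of which are of the form $p^{11}$ for a prime $p$. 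This leaves no possibility, so $\tau(n)\neq\pm 1$ for $n>1$.

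For each odd prime $\ell\in\{3,5,7,13,17,19,23,37\}$, I would apply the generalization of Theorem \ref{LehmerVariation}. By multiplicativity and the fact that $\pm\ell$ has only divisors $\pm 1,\pm\ell$, any solution to $\tau(n)=\pm\ell$ forces (after eliminating the $\pm 1$ factors from the first step) $n=p^{d-1}$ for odd primes $p$ and an integer $d\mid \ell(\ell^2-1)$. For each such $d$, the identity $\tau(p^{d-1})=u_d(\alpha_p,\beta_p)$ combined with the Lucas recurrence $u_{m+1}=\tau(p)u_m-p^{11}u_{m-1}$ produces a polynomial equation $\Phi_d(x,y)=\pm\ell$ with $x=\tau(p)$, $y=p$, cutting out an explicit algebraic curve (the prototypical examples being those displayed in \eqref{HyperEquations}). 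Finding all integer points $(x,y)$ on these curves with $|x|\leq 2y^{11/2}$ (the Deligne bound) and $y$ prime, then checking whether each such point actually arises from a genuine pair $(\tau(p),p)$, completes the case.

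The key technical obstacle is the integer-point search on curves of large genus: for small $d$ (say $d\in\{3,4\}$) the resulting curves are hyperelliptic of moderate genus and can be attacked by Chabauty--Coleman combined with the Mordell--Weil sieve, but for larger divisors $d$ of $\ell(\ell^2-1)$ the genus grows quickly, and I would need a combination of elliptic Chabauty, descent, and linear-forms-in-logarithms bounds; this is where most of the effort will be concentrated. For each $\ell$ in the list, a careful bookkeeping of all allowable $d$ and an individual treatment of each associated curve will show that no integer point satisfies the additional primality/arithmetic constraints, yielding $\tau(n)\neq\pm\ell$.

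The value $\alpha=\pm 691$ is handled separately via Ramanujan's congruence $\tau(n)\equiv\sigma_{11}(n)\pmod{691}$: the equation $\tau(n)=\pm 691$ forces $\sigma_{11}(n)\equiv 0\pmod{691}$, which together with the Lucas-sequence reduction (now with $\ell=691$, so $d\mid 691\cdot 690\cdot 692$) sharply restricts $n$, and a finite computation rules out all candidates. Finally, for part (2), under GRH one obtains effective Chebotarev-type bounds that control the primes $p$ for which $\tau(p)$ lies in specified residue classes modulo small primes; combining these with the curve-point method of part (1) enlarges the list of excluded values. The quadratic residue condition $\legendre{\ell}{5}=-1$ in the theorem reflects the GRH-dependent restriction on $\tau(p)\pmod 5$ arising from the mod $5$ Galois representation attached to $\Delta$, and the remaining sporadic primes are dispatched by similar GRH-conditional analyses of the associated curves.
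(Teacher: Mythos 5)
Your macro-level reduction is correct and matches the paper's: multiplicativity and the Lucas-sequence interpretation of $\tau(p^{d-1})$ via Theorem \ref{Newforms}, the Bilu--Hanrot--Voutier primitive-divisor theorem to force $d$ to be an odd prime dividing $\ell(\ell^2-1)$, and the translation of each $d$ into an explicit Diophantine problem (the Thue equations $F_{d-1}(X,Y)=\pm\ell$ and the hyperelliptic curves $C^{\pm}_{k,\ell}$, $H^{\pm}_{2k-1,\ell}$ from Lemma \ref{DiophantineCriterion}). Your dispatch of $\pm 1$ via Lemma \ref{Part1DefectivePrimality}(1) and the observation that $B=p^{11}$ never occurs there is also right. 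The paper actually funnels $\pm 13,\pm 17,-19,\pm 23,\pm 37$ and all of part (2) through Theorem \ref{LehmerGeneral} (the general even-weight newform statement, specialized to $\Delta$ with $2k-1=11$), whereas it gives direct arguments for $\ell\in\{3,5,7,691\}$ using the Ramanujan congruences mod $9,5,7,691$ to pin down $m_\ell(p)$ exactly; your writeup gestures at the bookkeeping but misses that the congruences are what shrink the set of admissible $d$ to a single value (or to $\{2,4,22,690\}$ for $\ell=691$), which is the step that keeps the curve computation tractable.

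Two parts of your proposal misattribute the mechanisms. First, the role of GRH in part (2) is purely computational: it is used inside \texttt{Magma}/\texttt{PARI} to make rank computations of Jacobians and class-group computations feasible (see Lemma \ref{Plus691}(3) and Lemmas \ref{thueGRHtable}, \ref{Htable}); it is not an effective Chebotarev bound on $\tau(p)$ in residue classes, which plays no role in this proof. Second, the condition $\legendre{\ell}{5}=-1$ has nothing to do with the mod $5$ Galois representation attached to $\Delta$. It is an elementary divisibility constraint: if $\legendre{\ell}{5}=-1$ then $\ell\equiv\pm 2\pmod 5$, so $5\nmid\ell^2-1$, hence $d=5$ is excluded from the divisors of $\ell(\ell^2-1)$. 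This removes the need to resolve the hyperelliptic curves $H^{\pm}_{11,\ell}$ (i.e.\ the $a_f(p^4)=\pm\ell$ case), which is exactly the family where the computations become infeasible even under GRH. Without spotting this, the sign and Legendre-symbol pattern of the excluded set in part (2) cannot be explained, and the remaining cases of $\ell$ with $\legendre{\ell}{5}=+1$ are only partially resolved (they appear in Lemma \ref{Hyper11_19} with gaps marked ``?'' in Table \ref{Htable}).
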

There are infinite families of newforms with even level for which these methods apply. The next theorem
offers unconditional results for $3\leq \ell \leq 37,$ when $2k \in \{4, 6, 8, 10\}$ or
$\gcd(3 \cdot 5\cdot 7, 2k-1)\neq 1$. It also
gives further  results conditional on the Generalized Riemann Hypothesis (GRH).

\begin{theorem}\label{LehmerGeneral} 
	If  $f(z)=q+\sum_{n=2}^{\infty}a_f(n)q^n\in S_{2k}(\Gamma_0(2N))\cap \ZZ[[q]]$ is an even weight $2k\geq 4$ newform with trivial mod 2 residual Galois representation, then the following are true.
	\begin{enumerate}
		\item For every $n>1$ we have $a_f(n)\not \in \{\pm 1\}.$
		\item If $2k=4$, then for every $n$ we have
		$$
		a_f(n)\not \in \left\{\pm \ell \ : \ 3\leq \ell \leq 37\ {\text {\rm prime}} \right\}\setminus\left 
		\{ \pm11, -13,17,\pm19,-23,37\right\}.
		$$
		Assuming GRH, for every $n$ we have
		$$
		a_f(n) \not \in \{\pm \ell \ : \ 41 \leq \ell \leq 97 \ {\text {\rm prime}}\}\setminus\{-41,-53,-61,-67,\pm71,73,-89\}.
		$$
		\item If $2k=6$, then for every $n$ we have
		$$
		a_f(n)\not \in \left\{\pm \ell \ : \ 3\leq \ell \leq 37\  {\text {\rm prime}}\right\}\setminus\left \{ 11,13\right\}.
		$$
		Assuming GRH, for every $n$ we have
		$$
		a_f(n) \not \in \{\pm \ell \ : \ 41 \leq \ell \leq 97 \ {\text {\rm prime}}\}\setminus\{-47\}.
		$$
		\item If $2k=8$, then for every $n$ we have
		$$
		a_f(n)\not \in \left\{\pm \ell \ : \ 3\leq \ell \leq 37\  {\text {\rm prime}}\right\}.
		$$
		Assuming GRH, for every $n$ we have
		$$
		a_f(n) \not \in \{\pm \ell \ : \ 41\leq \ell \leq 97 \ {\text {\rm prime}}\}\setminus\{-71\}.
		$$
		\item If $2k=10,$ then for every $n$ we have
		$$
		a_f(n)\not \in \left\{\pm \ell \ : \ 3\leq \ell \leq 37\ {\text {\rm prime}} \right\}.
		$$
		Assuming GRH, for every $n$ we have
		$$
		a_f(n) \not \in \{\pm \ell \ : \ 41\leq \ell \leq 97\ {\text {\rm prime}}\}\setminus\{-83\}.
		$$
		\item  If $\gcd(3 \cdot 5\cdot 7\cdot 11\cdot 13, 2k-1)\neq 1$ and $2k\geq 12$, then for every $n$ we have 
		$$
		a_f(n)\not \in  \left \{ \pm \ell \ : \ 3\leq \ell <37 \ {\text {\rm prime with}}\ \legendre{\ell}{5}=-1\right\}
		\cup \{-37\}.
		$$
		Moreover, if $2k\neq 16,$ then $a_f(n)\neq 37.$
		Assuming GRH, for every $n$ we have
		$$
		a_f(n)\not \in  \left \{ \pm \ell \ : \ 41\leq \ell \leq 97 \ {\text {\rm prime with}}\ \legendre{\ell}{5}=-1\right\}.
		$$
		\item If $\gcd(3\cdot 5, 2k-1)\neq 1$ and $2k\geq 12$, then for every $n$ we have
		$$a_f(n) \not \in \left \{\pm \ell \ : \  11\leq \ell \leq 31 \  \text{ {\rm prime with }} \legendre{\ell}{5}=1\right\}.
		$$
		Assuming GRH, the range of this set can be expanded to include $\ell \leq 89.$
		\item If $7\mid (2k-1)$ and $2k\geq 12$, then for every $n$ we have
		$$a_f(n) \not \in \left \{\pm \ell \ : \  11\leq \ell \leq 31 \  \text{ {\rm prime with }} \legendre{\ell}{5}=1\right\}.
		$$
		Assuming GRH, for every $n$ we have
		$$
		a_f(n)\not \in \{\pm 41, \pm 59, \pm 61, -71, \pm 79, \pm 89\}.
		$$
		\item If $11\mid (2k-1),$ then for every $n$ we have $a_f(n)\neq -19$, and assuming GRH
		we have
		\begin{displaymath}
			a_f(n)\not \in 
			\left \{ -11, -29, -31, -41, -59, -61, -71, -79, -89\right\}.
		\end{displaymath}
		
		\item If $13\mid (2k-1),$ then for every $n$ we have $a_f(n)\neq -11$, and assuming GRH we have
		\begin{displaymath}
			a_f(n)\not \in \left \{-19, -29, -31, -41, -59, -61, -71, -79\right\}.
		\end{displaymath}
	\end{enumerate}
\end{theorem}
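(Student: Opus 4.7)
The plan is to deduce Theorem \ref{LehmerGeneral} as a concrete case-by-case application of the general machinery advertised as Theorem \ref{LehmerVariantGeneral}, together with explicit integer-point computations on algebraic curves. First, for any newform $f\in S_{2k}(\Gamma_0(2N))\cap\ZZ[[q]]$ with trivial mod $2$ residual Galois representation and any odd prime $\ell$, the general theorem forces any solution of $a_f(n)=\pm\ell^m$ to take the form $n=p^{d-1}$ for an odd prime $p\nmid 2N\ell$ and a positive integer $d$ from an explicit finite divisor set attached to $\ell$. Thus to rule out $a_f(n)\in\{\pm\ell\}$ it suffices to solve $|a_f(p^{d-1})|=\ell$ for each admissible $d$.

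The key reduction is to invoke Theorem \ref{Newforms} (3), which identifies $a_f(p^{d-1})$ with the Lucas number $u_d(\alpha_p,\beta_p)$, together with Lemma \ref{Part1DefectivePrimality} (2): outside the explicit defective families, $|u_d(\alpha,\beta)|=\ell$ forces $(A,B,\ell,d)$ to lie on one of the finitely many parametric curves $B_{1,k}^{r,\pm}$ or $B_{2,k}$. For each weight $2k\in\{4,6,8,10\}$ and each $\ell$ in the prescribed ranges, I will write down explicitly the curves produced by this reduction (examples of which appear in \eqref{HyperEquations}); these are typically hyperelliptic curves of the form $Y^2=X^{2k-1}\pm 4\ell^s$ or their twists, possibly together with sporadic low-degree equations coming from Tables \ref{table1}--\ref{table2}. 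Part (1) of the theorem, namely $a_f(n)\neq\pm 1$, follows more cheaply from Lemma \ref{Part1DefectivePrimality} (1), since every tuple listed there is excluded by the trivial mod $2$ hypothesis together with the weight constraint $2k\geq 4$ (which rules out $m^2+1$ equal to a coefficient of absolute value $1$ at a prime).

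The heart of the proof is then the explicit determination of all integer points on the resulting curves. I plan to use a layered approach: for genus $1$ reductions, carry out a descent and compute the Mordell--Weil group directly, then integrate; for genus $2$ and beyond, apply Chabauty--Coleman when the Mordell--Weil rank is strictly less than the genus, and otherwise use the Mordell--Weil sieve or quadratic Chabauty. The higher-weight cases (6)--(10), where the divisibility conditions $\gcd(3\cdot 5\cdot 7\cdot 11\cdot 13,2k-1)\neq 1$ are imposed, correspond precisely to cases where certain small-$d$ reductions do not occur at all (because $d\mid \ell(\ell^2-1)$ is incompatible with the weight), allowing entire ranges of primes to be excluded by the Lucas structure alone, without any curve computation. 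The GRH-conditional improvements come from using effective class number bounds and effective Chebotarev to extend the ranges over which the rank and torsion of the Jacobians can be rigorously determined, and to handle the two-cover descent steps that otherwise only give bounds.

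The main obstacle will be the genus-2 and genus-3 curves arising when $2k\in\{4,6\}$ and $\ell$ is moderately large, together with the sporadic exceptions that must be checked by hand: these are the entries appearing in the excluded sets $\{\pm 11,-13,17,\pm 19,-23,37\}$ in part (2), $\{11,13\}$ in part (3), and the analogous GRH-conditional exceptions. For these exceptions, the underlying curve either has a rational point that lifts to a legitimate $(a_f(p),p)$ with $|a_f(p)|\leq 2p^{(2k-1)/2}$, or the Chabauty/Mordell--Weil sieve techniques do not currently cut out the integer locus sharply enough; in those cases the statement simply omits them. I expect the bulk of the computational work to be the quadratic Chabauty analysis of the curves $Y^2=X^{11}\pm 4\ell$ and $Y^2=X^{11}\pm \ell^m$ for $2k=12$, since these exhibit the richest Jacobian structure among the cases considered, and to be the decisive step in completing part (2). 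Finally, parts (9) and (10) follow from the same outline specialized to weights where $11$ or $13$ divides $2k-1$, exploiting that certain small values of $d$ are then forced, which collapses the relevant curves to elliptic curves whose integer points are computable unconditionally.
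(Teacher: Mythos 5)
Your proposal has the right skeleton: apply Theorem~\ref{LehmerVariantGeneral} to force $n=p^{d-1}$ with $d\mid\ell(\ell^2-1)$ an odd prime, translate into integral points on curves via Lemma~\ref{DiophantineCriterion}, and then compute those integer points. But several of the weight-dependent details are wrong in ways that would cause the argument to fail if carried out as stated.

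First, the divisibility hypotheses on $2k-1$ in parts (6)--(10) do \emph{not} eliminate values of $d$ or let you bypass curve computations. Their purpose is that an integer point $(p,a_f(p))$ on $C^{\pm}_{k,\ell}: Y^2=X^{2k-1}\pm\ell$ (the $d=3$ case) gives, via $p^{2k-1}=\bigl(p^{(2k-1)/d'}\bigr)^{d'}$ whenever $d'\mid 2k-1$, an integer point on the fixed-degree curve $C^{\pm}_{(d'+1)/2,\ell}$, and only these finitely many reductions ($d'\in\{3,5,7,11,13\}$) are tabulated in Lemmas~\ref{HYPER}, \ref{AnnalsCorollary}, \ref{Hyper11_19}; similarly for the $H$-curves from $d=5$. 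The Thue equations $F_{d-1}(X,Y)=\pm\ell$ for $d\geq 7$ are already weight-free. Without this factorization step the computational burden is unbounded as $2k\to\infty$, which your proposal does not address.

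Second, parts (9) and (10) do not collapse to elliptic curves: when $11\mid(2k-1)$ one lands on $C_{6,\ell}^{\pm}: Y^2=X^{11}\pm\ell$ (genus 5), and when $13\mid(2k-1)$ on $C_{7,\ell}^{\pm}: Y^2=X^{13}\pm\ell$ (genus 6). The paper handles these through the tables assembled from Barros, Cohn, and Bugeaud--Mignotte--Siksek, and via Chabauty--Coleman only for a few stubborn cases. Third, part (2) is the weight $2k=4$ case, so the relevant curves are $Y^2=X^3\pm\ell$ and $Y^2=5X^6\pm 4\ell$ together with the Thue equations, not $Y^2=X^{11}\pm 4\ell$; and the computational engine in the paper is the Thue solver (Bilu--Hanrot algorithm) and the Lebesgue--Ramanujan--Nagell literature, with Chabauty--Coleman used sparingly — quadratic Chabauty never appears.

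Finally, you have misdescribed the role of Lemma~\ref{Part1DefectivePrimality}(2). That lemma does not produce the curves of the main reduction; it identifies the defective Lucas terms that must be eliminated \emph{first}, by checking that the family curves $B_{1,k}^{1,\pm}$ (which coincide with $C_{k,3}^{\pm}$, and hence, under the $2k-1$ divisibility hypotheses, reduce to the tabulated $C_{d,3}^{\pm}$ for $d\in\{2,3,4,6,7\}$) have no suitable integer points. Only after establishing that the sequences $\{1,a_f(p),a_f(p^2),\ldots\}$ are free of defective terms can one invoke the primitive-prime-divisor machinery to force $d$ to be an odd prime dividing $\ell(\ell^2-1)$, and then pass to Lemma~\ref{DiophantineCriterion}. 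Conflating these two stages leaves a gap in the logic.
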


\begin{remark} \ \ \ \newline 
	\noindent
	(i) Theorem~\ref{LehmerGeneral} applies to all newforms \cite{OT05} with integer coefficients with level $2^aN$, where $a\geq 0$ and
	$N\in \{1, 3, 5, 15, 17\}$. Moreover,
	the result holds  for all odd levels when $a_f(2)$ is even. 
	
	\smallskip
	\noindent
	(ii) These results follow from Theorem~\ref{LehmerVariantGeneral}, which  constrains
	coefficients that are odd prime powers in absolute value. This method extends to arbitrary odd integers by Hecke multiplicativity, thereby giving an algorithm for determining whether a given odd integer is a newform coefficient. 
	
	\smallskip
	\noindent
	(iii) The proof of Theorem~\ref{LehmerGeneral} (2-6)  locates values $\pm \ell$ that are possible coefficients.
	For example, Theorem~\ref{LehmerGeneral} (2) allows weight 4 coefficients to be in the set
	$\{\pm 11, -13, 17, \pm 19, -23, 37\}.$
	The proof shows that these values can only occur as one of the following coefficients:
	\begin{displaymath}
		\begin{split}
			&a_f(3^2)=37,\  \  a_f(3^2)=-11,\  \ a_f(3^2)=-23, \ \  a_f(3^4)=19,\ \ a_f(5^2)=19, \\
			&a_f(7^2)=-19,\ \ a_f(7^4)=11,\ \   a_f(17^2)=-13, \ \ a_f(43^2)=17.
		\end{split}
	\end{displaymath}
	Similarly, Theorem~\ref{LehmerGeneral} (6) allows a coefficient  of 37 for weight $16,$
	which must be $a_f(3^2)=37.$ 
	
	\smallskip
	\noindent
	(iv) The assumption that $2k\geq 4$ guarantees that certain algebraic curves have positive genus,
	and so have finitely many integer points by Siegel's theorem.  Moreover, we do not believe that conclusions analogous to those obtained in Theorem~\ref{LehmerGeneral} hold for weight 2 newforms.
	
	\smallskip
	\noindent
	(v) Some of the results in Theorem~\ref{LehmerGeneral} rely on the GRH.  These cases pertain to situations where GRH was required to  reduce the running time of certain computational number theoretic algorithms. The unconditional bounds lead to infeasible computer calculations.
	
\end{remark}

\begin{example} By Theorem~\ref{LehmerGeneral},  the coefficients of the Hecke eigenform $E_4(z)\Delta(z)$ 
	never belong to
	$$\{-1\} \cup
	\{\pm \ell \ : \  3\leq \ell \leq 37\ {\text {\rm prime}}\}.
	$$
	Moreover, under GRH the range of the second set can be extended to the odd primes $\ell \leq 97.$
\end{example}

Theorems~\ref{Lehmer135} and \ref{LehmerGeneral} offer variants of Lehmer's speculation for individual newforms.
It is natural to consider an aspect of these questions where the newforms $f$ vary. 
Namely, can a fixed odd $\alpha$ be a Fourier coefficient of
newforms with arbitrarily large weight? We effectively show that this is generically not the case. To ease notation, if $\ell$ is an odd prime, then let
$\mathbb{S}_{\ell}$ denote the set of even weight newforms with integer coefficients,
trivial residual mod 2 Galois representation, and even level that is coprime to $\ell$.

\begin{theorem}\label{Power}
	If $\ell$ is an odd prime and $m\in \ZZ^{+},$ then there are effectively computable constants
	$M^{\pm}(\ell,m)=O_{\ell}(m)$ for which
	$\pm \ell^m$ is not a  coefficient of any
	$f\in \mathbb{S}_{\ell}$ with weight $2k>M^{\pm}(\ell,m).$
	In particular,\footnote{We offer these values to indicate that one can easily work out explicit constants.} 
	for $\ell \in \{3, 5\},$ we have
	$$
	M^{\pm}(\ell,m):= \begin{cases}
		2m+10^{23}\sqrt{m} \ \ \ \ \ &{\text {\rm if $\varepsilon=+, m$ odd, and $\ell=3$}},\\
		2m+10^{13}\sqrt{m} \ \ \ \ \ &{\text {\rm if $\varepsilon=+, m$ even, and $\ell=3$}},\\
		2m+10^{32}\sqrt{m} \ \ \ \ \ &{\text {\rm if $\varepsilon=-$ and $\ell=3$}},\\
		3m+10^{24}\sqrt{m} \ \ \ \ \ &{\text {\rm if $\varepsilon=\pm, m$ odd, and $\ell=5$}},\\
		3m+10^{13}\sqrt{m} \ \ \ \ \ &{\text {\rm if $\varepsilon=+, m$ even, and $\ell=5$}},\\
		3m+10^{30}\sqrt{m} \ \ \ \ \ &{\text {\rm if $\varepsilon=-, m$ even, and $\ell=5$}}.\\
	\end{cases}
	$$                                  
\end{theorem}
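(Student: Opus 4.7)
\medskip
\noindent\textbf{Proof proposal.}
The plan is to use Hecke multiplicativity plus the trivial mod~2 Galois representation to reduce $a_f(n)=\pm\ell^m$ to a Lucas number identity $u_d(\alpha_p,\beta_p)=\pm\ell^m$, then to bound the index $d$ in terms of $\ell$ via Bilu--Hanrot--Voutier, and finally to extract an effective linear upper bound on the weight $2k$ from the resulting Diophantine equations.

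First I would reduce to prime-power arguments. Because $f$ has trivial mod~2 residual Galois representation, $a_f(q)$ is even for every prime $q\nmid 2N$, so by Theorem~\ref{Newforms}(1) the identity $a_f(n)=\pm\ell^m$ (odd!) forces $n=p^{d-1}$ for a single odd prime $p$ coprime to $2N\ell$. The parity of the Lucas recursion $u_{d+1}=a_f(p)u_d-p^{2k-1}u_{d-1}$, with $a_f(p)$ even and $p^{2k-1}$ odd, shows $u_d$ is odd precisely when $d$ is odd, so $d$ is odd. Then by Theorem~\ref{Newforms}(3), $u_d(\alpha_p,\beta_p)=\pm\ell^m$. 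If $d>30$, Theorem~\ref{Bilu} produces a primitive prime divisor of $u_d$, which must be $\ell$ itself; Proposition~\ref{PropB} then forces $m_\ell(\alpha_p,\beta_p)=d$ to lie in $\{2,\ell\}$ or to divide $\ell\pm 1$, whence $d\le \max(30,\ell+1)$. Thus the problem is reduced to a list of admissible odd values of $d$ depending only on $\ell$.

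For each admissible $d$ I would analyze $u_d(A,p^{2k-1})=\pm\ell^m$ under the Deligne bound $|A|\le 2p^{(2k-1)/2}$ to obtain $2k\le C_\ell m+C'_\ell$. The prototype $d=3$ gives $A^2-p^{2k-1}=\pm\ell^m$: when the sign is $+$ and $m$ is even, the factorization $(A-\ell^{m/2})(A+\ell^{m/2})=p^{2k-1}$ over $\ZZ$ combined with $\gcd(p,2\ell)=1$ forces $A-\ell^{m/2}=1$, hence
\begin{align*}
p^{2k-1}=1+2\ell^{m/2},
\end{align*}
yielding $(2k-1)\log p\le (m/2)\log\ell+\log 3$. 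The remaining sign/parity branches factor in $\mathcal O_{\QQ(\sqrt{\pm\ell})}$; taking norms converts them to $S$-unit or Pillai equations whose solutions obey the same linear bound via sharp lower bounds for linear forms in two logarithms (Laurent, Mignotte--Waldschmidt). For general admissible $d\ge 5$, the identity $u_d/B^{(d-1)/2}=U_{d-1}(A/(2\sqrt{B}))$, with $U_{d-1}$ the Chebyshev polynomial of the second kind, recasts the equation as
\begin{align*}
U_{d-1}\!\left(\frac{A}{2\sqrt{B}}\right)=\pm\frac{\ell^m}{B^{(d-1)/2}},
\end{align*}
forcing $A/(2\sqrt{B})$ to approximate one of the algebraic numbers $\cos(j\pi/d)$, and effective Liouville/Baker bounds inside the cyclotomic field $\QQ(\zeta_{2d})$ translate this into the desired linear bound on $2k$.

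The hard part will be step three: producing a genuinely linear (not polynomial or exponential) bound $2k=O_\ell(m)$ for every admissible $d$. For $d=3$ the elementary factorization argument suffices, but for $d\ge 5$ the bound must come from Baker's method, and the constants must be tracked carefully through all $d\le \max(30,\ell+1)$. The explicit numerical bounds $10^{13},\ldots,10^{32}$ quoted for $\ell\in\{3,5\}$ reflect the present state of the art for explicit linear forms in two logarithms, and would be obtained by combining Laurent's sharp form with a case analysis for small primes $p\le p_0(\ell)$.
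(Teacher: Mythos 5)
Your proposal is essentially correct and follows the same strategy as the paper: reduce to $n=p^{d-1}$ via Hecke multiplicativity and the trivial mod~2 representation, identify $a_f(p^{d-1})$ with a Lucas number $u_d(\alpha_p,\beta_p)$, bound the index $d$ (in terms of $\ell$ alone) using Bilu--Hanrot--Voutier together with Proposition~\ref{PropB}, and then extract a linear-in-$m$ bound on $2k$ from the resulting Diophantine equations via Baker's method. Where you differ is in the handling of the Diophantine step. The paper isolates a standalone statement (Theorem~\ref{Explicit35}) about $X^2+\varepsilon\ell^m=Y^n$ and $X^2+\varepsilon 4\cdot 5^m=Y^n$ and applies Baker--W\"ustholz uniformly across every sign and parity branch. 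You instead observe that in the sub-case $d=3$, $a_f(p^2)=+\ell^m$, $m$ even, the equation $A^2-\ell^m=p^{2k-1}$ factors over $\ZZ$ as $(A-\ell^{m/2})(A+\ell^{m/2})=p^{2k-1}$, whose coprime factors force $p^{2k-1}=1+2\ell^{m/2}$ and hence a linear bound without any transcendence input; the paper runs Baker--W\"ustholz in that branch too, and the noticeably smaller constant $10^{13}$ it records there (versus $10^{23}$ and $10^{32}$ elsewhere) reflects exactly the degeneration you exploit — the ground field collapses to $\QQ$ and the form reduces to a single real logarithm. Your Chebyshev reformulation $u_d=B^{(d-1)/2}U_{d-1}\bigl(A/(2\sqrt{B})\bigr)$ for $d\ge 5$ is equivalent to the paper's Thue equations $F_{d-1}(X,Y)=\pm\ell^m$, whose linear factors are $Y-4X\cos^2(\pi k/d)$; the Tzanakis--de Weger method the paper invokes for these is precisely the Baker-in-cyclotomic-fields argument you sketch. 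Both routes give effective constants with $2k=O_\ell(m)$.
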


\begin{remark}\ \ \ \  \newline 
	\noindent
	(i) The condition that the level of $f$  is even  is not crucial  for the proof of Theorem~\ref{Power}.
	If the level is odd, then  the proof implies that
	$a_f(2n+1)\neq \pm \ell^m$ for all $n$ provided that $f$ has  large weight. Furthermore, if $a_f(2)$ is even, then  the stronger claim that $\pm \ell^m$
	is not a Fourier coefficient holds.
	
	\smallskip
	\noindent
	(ii) The condition that the level of $f$ is coprime to $\ell$  also is not crucial. If $\ell$ exactly divides the level, then there is at most one counterexample, and it will be
	a Fourier coefficient of the form $a_f(\ell^r)$ (see Theorem~\ref{Newforms} (4)).
	Otherwise, the stronger claim holds.
	
	\smallskip
	\noindent
	(iii) Using the methods in this paper, one can obtain a generalization of Theorem~\ref{Power} for all odd $\alpha$, as well as analogous results for odd weights and forms with real Nebentypus. 
\end{remark}

These results are related to lower
bounds for the number of prime divisors
of coefficients of newforms. We obtain a general
theorem (see Theorem~\ref{PrimeDivisorsGeneral}) which implies the following lower bound for $\Omega(\tau(n)),$ the number of prime divisors (counted with multiplicity) of $\tau(n)$. As usual,
we let $\omega(n)$ denote the number of distinct prime divisors of $n,$ and we let $\ord_p(n)$ denote the power of $p$ dividing $n.$

\begin{theorem}\label{PrimeDivisors}
	If $n>1$ is an integer, then
	$$
	\Omega(\tau(n))\geq \sum_{\substack{p\mid n\\ prime}}\left ( \sigma_0(\ord_p(n)+1)-1\right)\geq \omega(n).
	$$
\end{theorem}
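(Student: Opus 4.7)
The plan is to deduce Theorem~\ref{PrimeDivisors} from Theorem~\ref{PrimeDivisorsGeneral} applied to $f = \Delta(z)$ (level $N=1$, weight $2k = 12$), together with Hecke multiplicativity and the Bilu--Hanrot--Voutier analysis of defective Lucas numbers. First, I will establish the trivial mod $2$ Galois representation of $\Delta$ via the congruence
\[
\Delta(z) \equiv \sum_{n \geq 0} q^{(2n+1)^2} \pmod{2},
\]
which follows from $(1-x)^2 \equiv 1 - x^2 \pmod{2}$ applied to $(q;q)_\infty^{24}$ together with Jacobi's identity $\prod_{n \geq 1}(1 - q^{8n})^3 = \sum_{n \geq 0}(-1)^n(2n+1) q^{4n(n+1)}$. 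Since no prime is an odd square, this forces $\tau(p)$ to be even (and nonzero), hence $|\tau(p)| \geq 2$, for every prime $p$ (including $p=2$, where one directly has $\tau(2) = -24$).

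Next, Hecke multiplicativity (Theorem~\ref{Newforms}(1)) gives, for $n = \prod_i p_i^{m_i}$,
\[
\Omega(\tau(n)) = \sum_{p \mid n} \Omega\!\left(\tau\!\left(p^{\ord_p(n)}\right)\right),
\]
so it suffices to prove the local bound $\Omega(\tau(p^m)) \geq \sigma_0(m+1) - 1$ for every prime $p$ and every $m \geq 1$. Theorem~\ref{Newforms}(3) identifies $\tau(p^m) = u_{m+1}(\alpha_p, \beta_p)$, where $\alpha_p, \beta_p$ are the roots of $X^2 - \tau(p) X + p^{11}$; Proposition~\ref{PropA} then delivers $u_d(\alpha_p, \beta_p) \mid u_{m+1}(\alpha_p, \beta_p) = \tau(p^m)$ for every divisor $d$ of $m+1$. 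The Bilu--Hanrot--Voutier theorem (Theorem~\ref{Bilu}), refined by Theorem~\ref{AwesomeList}, supplies a primitive prime divisor of each $u_d$ with $d \geq 2$ unless the pair $(A,B) = (\tau(p), p^{11})$ matches an entry of Tables~\ref{table1} or \ref{table2}. Distinct primitive primes across distinct $d$ produce distinct prime factors of $u_{m+1}$, so the $\sigma_0(m+1) - 1$ divisors $d \geq 2$ of $m+1$ yield
\[
\Omega(\tau(p^m)) \geq \omega(\tau(p^m)) \geq \sigma_0(m+1) - 1,
\]
matching Proposition~\ref{PrimePower}(2). The terminal bound $\sum_{p \mid n}(\sigma_0(\ord_p(n)+1) - 1) \geq \omega(n)$ is immediate from $\sigma_0(m+1) \geq 2$ for $m \geq 1$.

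The main obstacle is the finite case-check that $(A,B) = (\tau(p), p^{11})$ never appears in Tables~\ref{table1} or \ref{table2}. Deligne's bound $|\tau(p)| \leq 2p^{11/2}$ places the pair inside the admissible range, so the verification is not vacuous. I will lean on two reductions: (i) the evenness of $\tau(p)$ eliminates every table entry with odd $A$, ruling out the bulk of the sporadic defective examples (the paper's remark following Theorem~\ref{PrimeDivisorsGeneral} already uses this to observe that $(A,B) = (\tau(p), p^{11})$ does not appear in Lemma~\ref{Part1DefectivePrimality}(1)); and (ii) the parameterized families in Tables~\ref{table1}--\ref{table2} impose constraints such as $B = p^{2k-1}$ with small $k$ or explicit congruences on $A$, which can be dispensed with using the weight-$12$ condition $2k-1 = 11$ together with the evenness of $\tau(p)$. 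Once this finite check is in hand, Proposition~\ref{PrimePower}(2) delivers the prime-power bound uniformly in $p$, and the theorem follows.
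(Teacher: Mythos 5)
Your proposal reproduces the paper's route, which proves Theorem~\ref{PrimeDivisors} in the Remark following Theorem~\ref{PrimeDivisorsGeneral}: specialize the Hecke/Lucas machinery and Bilu--Hanrot--Voutier to $\Delta$, invoke the mod-$2$ congruence to make $\tau(p)$ even (so $u_2 = \tau(p)$ contributes at least one prime and $|\tau(p)| \neq 1$), and reduce everything to verifying that $(\tau(p), p^{11})$ lies in neither of the defective tables. All the moving parts you name are the correct ones and in the correct order.

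The weak point is your proposed reductions (i) and (ii) for the table check, which do not do what you claim. For Table~\ref{table1}, parity of $A$ is not the decisive constraint: the rows $(\pm 2, 3)$, $(\pm 2, 7)$, $(\pm 2, 11)$, $(\pm 4, 5)$ all have even $A$. What eliminates the entire sporadic table is that every $B$-value there lies in $\{2,3,5,7,11,2^3\}$, never equal to $p^{11}$. For Table~\ref{table2}, the parameter $k$ is free, so weight $12$ only selects $2k-1=11$, and parity of $\tau(p)$ rules out only those families whose constraint forces $m$ odd, namely $B_{2,k}$, $B_{4,k}^r$, and $B_{6,k}^{r,\pm}$. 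The families $B_{1,6}^{r,\pm}$, $B_{5,6}^{\pm}$, and in particular $B_{3,6}^{\pm}$ (whose Table~\ref{table2} row explicitly \emph{requires} $m$ even) survive both weight and parity, and ruling them out is genuine Diophantine work: the paper handles $B_{1,6}^{1,\pm}=C_{6,3}^{\pm}$ via Lemma~\ref{HYPER}, and one can kill $B_{1,6}^{r,\pm}$ for $r\geq 2$ with Ramanujan's congruence $\tau(n)\equiv n^2\sigma_1(n)\pmod 9$ (which forbids $9\mid\tau(p^2)$ once $3\nmid\tau(p)$), but the remaining curves require a separate argument. This is not a bookkeeping step: if any $(\tau(p), p^{11})$ were defective, Table~\ref{table3} would only certify $\sigma_0(m+1)-4$ and the inequality of Theorem~\ref{PrimeDivisors} would fail at $p$. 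To be fair, the paper's own Remark is equally terse (citing only Lemma~\ref{Part1DefectivePrimality}(1), which concerns the unit case rather than the full defective classification); the actual verification is embedded in the proof of Theorem~\ref{Lehmer135}, and your sketch should either invoke that proof or supply the congruence and curve arguments it depends on rather than asserting that weight and parity alone dispense with them.
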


\begin{remark}Theorem~\ref{PrimeDivisors} is sharp, as the
	prime in (\ref{LehmerPrime})
	satisfies $\Omega(\tau(251^2))=\sigma_0(3)-1=1.$
\end{remark}

\section{Proof of Theorem \ref{LehmerGeneral}} \label{SectionLehmer}

Regarding coefficients of newforms satisfying (\ref{qexpansion}), we classify those $n$ for which
$|a_f(n)|=\ell$ is an odd prime. For the remainder of the paper, we assume that all newforms have weight $2k\geq 4$.
We first determine when $|a_f(n)|=1$. Define the set
\begin{equation}
	\mathcal{U}_f:=\begin{cases} \{1, 4\}  \  \ \ \ \ &{\text {\rm if}}\ a_f(2)=\pm 3,\ 2k=4, {\text {\rm and}}\ N\ {\text {\rm odd}}\},\\
		\{1\} \ \ \ \ &{\text {\rm otherwise.}}
	\end{cases}
\end{equation}
\begin{proposition}\label{One}
	Suppose that the mod 2 residual Galois representation for $f(z)$ is trivial. Then we have
	$|a_f(n)|=1$ if and only if $n\in \mathcal{U}_f.$
\end{proposition}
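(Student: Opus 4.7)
The plan is to reduce the identification of $|a_f(n)|=1$ to a classification of Lucas numbers of absolute value $1$ via the Hecke structure of $f$, and then invoke the Bilu--Hanrot--Voutier classification in the form of Lemma~\ref{Part1DefectivePrimality}~(1). By multiplicativity (Theorem~\ref{Newforms}~(1)), $|a_f(n)|=1$ iff $|a_f(p^{\ord_p(n)})|=1$ for every prime $p$ dividing $n$, so it suffices to determine the prime powers $p^m$ with $m \geq 1$ and $|a_f(p^m)|=1$.

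For $p \mid N$, Theorem~\ref{Newforms}~(4) yields $|a_f(p^m)| \in \{0, p^{(k-1)m}\}$, which is never $1$ under the standing assumption $k \geq 2$; such prime powers contribute nothing to $\mathcal{U}_f$. For $p \nmid N$, Theorem~\ref{Newforms}~(3) gives $a_f(p^m) = u_{m+1}(\alpha_p,\beta_p)$ with $\alpha_p \beta_p = p^{2k-1}$. The case $m=1$ reduces to $|a_f(p)|=1$, but the triviality of the mod $2$ residual Galois representation (captured by the parity pattern exemplified in $\Delta(z) \equiv \sum q^{(2n+1)^2} \pmod 2$) forces $a_f(p)$ to be even for every prime $p$, so $|a_f(p)|\ne 1$. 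For $m \geq 2$, we need $|u_{m+1}|=1$, and Theorem~\ref{Bilu} restricts us to $3 \leq m+1 \leq 30$.

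The crux is then a sweep through Lemma~\ref{Part1DefectivePrimality}~(1), which lists every defective triple $(A,B,n)$ with $|u_n|=1$. Since $B = p^{2k-1}$ must be an odd prime power with exponent $2k-1 \geq 3$, every entry with $B \in \{2,3,5\}$ and the parametric family $(\pm m, p, 3)$ with $B=p$ prime are discarded (each requires $k=1$). The only surviving tuple is $(\pm 3, 2^3, 3)$, corresponding to $p=2$, $k=2$ (weight $4$), $m=2$, $a_f(2)=\pm 3$, and $N$ odd (since $p \nmid N$). This matches the exceptional branch of $\mathcal{U}_f$ exactly, and the Hecke recursion from Theorem~\ref{Newforms}~(2) gives $a_f(4) = a_f(2)^2 - 2^{2k-1} = 9 - 8 = 1$, confirming $|a_f(4)|=1$ on the nose.

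The main obstacle is the finite but intricate case-check through Lemma~\ref{Part1DefectivePrimality}~(1) --- one must verify carefully that every defective triple with $|u_n|=1$ other than $(\pm 3, 2^3, 3)$ fails to have $B$ of the form $p^{2k-1}$ with $k \geq 2$. The conversion to Lucas sequences, the parity input from $\bar\rho_f$, and the forward verification at $n=4$ are short; the heavy lifting is supplied entirely by the Bilu--Hanrot--Voutier classification.
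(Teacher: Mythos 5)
Your overall strategy matches the paper's: reduce to prime powers via Theorem~\ref{Newforms}~(1), eliminate $p \mid N$ using Theorem~\ref{Newforms}~(4), convert $a_f(p^m)$ to Lucas numbers for $p \nmid N$, and invoke Bilu--Hanrot--Voutier through Lemma~\ref{Part1DefectivePrimality}~(1) to isolate the lone surviving defective tuple $(\pm 3, 2^3, 3)$, which you correctly match with the weight-$4$, $p=2$, $N$-odd branch of $\mathcal{U}_f$ and verify via $a_f(4)=a_f(2)^2-2^3=1$.

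The flaw is in your handling of $m=1$. You assert that triviality of the mod~$2$ residual Galois representation ``forces $a_f(p)$ to be even for every prime $p$,'' and use this to dismiss $|a_f(p)|=1$. That claim is too strong: the correct consequence is that $a_f(p)$ is even for $p\nmid 2N$ (this is exactly how the Remark following Theorem~\ref{PrimeDivisorsGeneral} states it); at $p=2$ the representation is ramified, so when $N$ is odd the parity of $a_f(2)$ is not controlled this way. Your own next paragraph exhibits $a_f(2)=\pm 3$, an odd value, directly contradicting the universal evenness you just invoked --- if that evenness held, the exceptional branch of $\mathcal{U}_f$ would be empty, which is not what the proposition asserts. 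As written, the argument is internally inconsistent, and the case $m=1$, $p=2$, $N$ odd (i.e., the possibility $|a_f(2)|=1$) is not actually ruled out by anything you wrote. The paper's proof restricts the evenness claim to $p\nmid 2N$ and treats only $m\geq 2$ in the Lucas step; if you want to close the $m=1$ corner cleanly you need a separate reason that $a_f(2)\neq\pm 1$ under the standing hypotheses, not an evenness claim that fails precisely at $p=2$.
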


\begin{proof}
	By multiplicativity (i.e. Theorem~\ref{Newforms} (1)), it suffices to
	determine when $|a_f(p^m)|=1$, where $p$ is prime.
	By Proposition~\ref{PrimePower} (1), we have $p\nmid N.$ By Theorem~\ref{Newforms}~(3), it suffices to determine when
	the $|u_{m+1}(\alpha_p,\beta_p)|=1,$ where $m\geq 2.$ Indeed, $a_f(p)=u_2(\alpha_p,\beta_p)$ is even for  $p\nmid 2N$.
	By Theorem~\ref{Bilu}, this reduces to
	Lemma~\ref{Part1DefectivePrimality} (1).
	The defective cases $(A,B,n)=(\pm 3, 2^3,3)$  correspond to potential weight 4 newforms, while the remaining possibilities
	are for weight 2. In the weight 4 cases we have $a_f(2)=\pm 3$, which  gives $a_f(4)=a_f(2)^2-2^3=1.$
\end{proof}

\begin{theorem}\label{LehmerVariantGeneral}
	Suppose that the mod 2 residual Galois representation for $f(z)$ is trivial. If $|a_f(n)|=\ell^m,$  with $m\in \ZZ^{+}$ and $\ell$ is an odd prime, then 
	$n= m_0p^{d-1}$, where $m_0\in \mathcal{U}_f$, $p\nmid N$ is prime, and $d \mid \ell (\ell^2-1)$
	is an odd prime.
	Moreover, $|a_f(n)|=\ell^m$ for finitely many (if any) $n$. 
\end{theorem}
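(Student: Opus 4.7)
The plan is to combine Hecke multiplicativity with the Lucas-sequence machinery from the earlier sections. First, I would write $n = \prod_i p_i^{e_i}$ and apply Theorem~\ref{Newforms}(1) to obtain $a_f(n) = \prod_i a_f(p_i^{e_i})$, so that each factor must equal $\pm 1$ or $\pm \ell^{j_i}$ for some $j_i \geq 1$. The triviality of the mod~$2$ residual Galois representation forces $a_f(p) \equiv 0 \pmod 2$ for every $p \nmid 2N$, and the Hecke recursion in Theorem~\ref{Newforms}(2) then shows $a_f(p^e)$ is odd precisely when $e$ is even. Ramified primes $p_i \mid N$ with $\ord_{p_i}(N) \geq 2$ are excluded by Theorem~\ref{Newforms}(4) since they contribute $0$; the remaining ramified contributions $(\pm 1)^{e_i} p_i^{(k-1) e_i}$ are handled in a short side analysis. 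Proposition~\ref{One} then identifies the $\pm 1$ factors as exactly those $p_i^{e_i}$ lying in $\mathcal{U}_f$, which accounts for the $m_0$ in the statement.

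Next, for an unramified factor with $|a_f(p^e)| = \ell^j$ and $j \geq 1$, Theorem~\ref{Newforms}(3) identifies this as a Lucas number $u_{e+1}(\alpha_p, \beta_p)$. Setting $d := e + 1$ (odd, since $e$ is even), I plan to show $d$ is an odd prime dividing $\ell(\ell^2 - 1)$. For $d > 30$, Theorem~\ref{Bilu} provides a primitive prime divisor of $u_d$, which is forced to equal $\ell$; Proposition~\ref{PropB} then yields $d = m_\ell(\alpha_p, \beta_p) \mid \ell(\ell^2 - 1)$. For $d \leq 30$ I will fall back on the explicit classification in Theorem~\ref{AwesomeList} and Lemma~\ref{Part1DefectivePrimality}(2). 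Primality of $d$ follows by a Lucas-divisibility argument: any proper divisor $d_1$ of $d$ yields $u_{d_1} \mid u_d = \pm \ell^j$ via Proposition~\ref{PropA}; the possibility $u_{d_1} = \pm \ell^{j'}$ would contradict the minimality $m_\ell = d$, while $u_{d_1} = \pm 1$ puts $p^{d_1 - 1} \in \mathcal{U}_f$, which is tightly restricted by Proposition~\ref{One}.

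The hard part, I expect, will be the uniqueness claim that $n = m_0 p^{d-1}$ involves at most one nontrivial unramified prime $p$; a priori a product $a_f(p_1^{e_1}) \cdot a_f(p_2^{e_2})$ of two Lucas-number factors could multiply to a single power of $\ell$. My plan is to exploit the strong constraints already derived---each index $d_i = e_i + 1$ is an odd prime dividing $\ell(\ell^2 - 1)$, each $|a_f(p_i^{d_i - 1})|$ is a power of $\ell$, and each pair $(a_f(p_i), p_i^{2k-1})$ corresponds to an integer point on the curve $u_{d_i}(A, B) = \pm \ell^{j_i}$---and then combine these with the finite list of defective Lucas numbers in Tables~\ref{table1} and \ref{table2} to rule out the simultaneous occurrence of two such factors.

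Finally, the finiteness statement will follow from Siegel's theorem applied to the curves $u_d(A, B) = \pm \ell^m$. For each of the finitely many odd primes $d \mid \ell(\ell^2 - 1)$, imposing $B = p^{2k-1}$ with $2k - 1 \geq 3$ yields an algebraic curve of positive genus in the integer variables $A$ and $p$, so Siegel's theorem produces only finitely many integer points. Summing over the finite admissible set of $d$ then yields finitely many $n$ with $|a_f(n)| = \ell^m$, completing the proof.
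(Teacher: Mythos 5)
Your proposal uses the same toolkit as the paper's proof: Hecke multiplicativity (Theorem~\ref{Newforms}~(1)), the Hecke recursion to determine parities, the Lucas-number identification $a_f(p^{d-1})=u_d(\alpha_p,\beta_p)$ from Theorem~\ref{Newforms}~(3), Proposition~\ref{One} to classify the $\pm 1$ factors, Propositions~\ref{PropA} and \ref{PropB} together with Theorem~\ref{Bilu} and the defective classification in Lemma~\ref{Part1DefectivePrimality} to force $d$ to be an odd prime dividing $\ell(\ell^2-1)$, and Siegel's theorem via Lemma~\ref{DiophantineCriterion} for finiteness. Your primality argument for $d$ through proper divisors and Proposition~\ref{PropA} matches the paper's appeal to Bilu, Proposition~\ref{PropA}, and Lemma~\ref{Part1DefectivePrimality}~(2).

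The one place where your plan will likely break down is the resolution of what you call ``the hard part'': that $n/m_0$ has at most one nontrivial unramified prime factor. You propose to rule out two simultaneous Lucas-number factors by consulting the finite tables of defective Lucas numbers, but those tables only classify Lucas numbers \emph{lacking a primitive prime divisor}; they say nothing that prevents two distinct unramified primes $p_1,p_2$ from each yielding a perfectly non-defective Lucas number that happens to be a power of $\ell$. The relevant observation is simpler and does not involve defectiveness at all: for $m=1$ (the case actually invoked in the applications, e.g.\ in the proof of Theorem~\ref{LehmerGeneral}), uniqueness is automatic, since $\ell=\prod_i|a_f(p_i^{e_i})|$ with each factor a positive integer forces all but one factor to equal $1$, and Proposition~\ref{One} then places those factors in $\mathcal U_f$. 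For $m\geq 2$ the single-prime conclusion is not forced by multiplicativity alone; the paper's own proof is also terse on this point, reducing directly to a single prime power after citing Proposition~\ref{One} and Theorem~\ref{Newforms}~(1),(4). Aside from this step, your argument and the paper's coincide.
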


\begin{proof}[Proof of Theorem~\ref{LehmerVariation} and \ref{LehmerVariantGeneral}]
	By Proposition~\ref{One} and Theorem~\ref{Newforms} (1) and (4), it suffices to determine when
	$|a_f(p^{d-1})|=|u_{d}(\alpha_p,\beta_p)|=\ell$, where $p\nmid N$ is prime. Since  $2k\geq 4,$
	$\ell$ is odd, and $A=a_f(p)$ is even, Lemma~\ref{Part1DefectivePrimality} (2) leaves the defective possibilities
	$(A,B,\ell,n)=(\pm m,p^{2k-1},3,3)$, which by Theorem~\ref{Newforms} (2), implies that $(p,a_f(p))$ is an integer point on
	$Y^2=X^{2k-1}\pm 3.$
	This means that $u_3(\alpha_p,\beta_p)=a_f(p^2)=\pm 3$, which 
	is the claimed conclusion with $d=\ell=3$.
	
	Now we consider whether a prime power can be
	a nondefective Lucas number $u_{d}(\alpha_p,\beta_p)=a_f(p^{d-1})$, for primes $p\nmid 2N$.
	Since $a_f(p)$ is even, we may assume that $\ell \nmid \alpha_p \beta_p$ and $m_{\ell}(\alpha_p,\beta_p)>2$. Moreover, Theorem~\ref{Newforms} (2) implies
	that $a_f(p^b)$ is odd if and only if $b$ is even, and so we may assume that $d$ is odd.
	Proposition~\ref{PropB} implies that $m_{\ell}(\alpha_p,\beta_p)=\ell$ or $m_{\ell}(\alpha_p,\beta_p) | (\ell-1)$ or
	$m_{\ell}(\alpha_p,\beta_p) | (\ell+1)$.
	
	Due to the generic presence of primitive prime divisors, a Lucas number that is a prime power $\ell^m$ in absolute
	value is the first multiple of $\ell$ in the sequence.
	By Theorem~\ref{Bilu}, Proposition~\ref{PropA}, and Lemma~\ref{Part1DefectivePrimality} (2), this
	holds for every sequence satisfying (\ref{Modularity}) for weights $2k\geq 4$.
	In particular, $d$ is an odd prime. 
	The finiteness of the number of $p$ for which $|a_f(p^{d-1})|=\ell$, follows from Siegel's Theorem, that positive genus curves  have at most finitely many integer points. These curves are easily assembled using Theorem~\ref{Newforms} (2) (see Lemma~\ref{DiophantineCriterion}).
\end{proof}

\section{Integral Points on some curves}\label{IntegerPoints}
To prove Theorems~\ref{Lehmer135} and \ref{LehmerGeneral}, we require knowledge of the integer points on certain curves.

\subsection{Some Thue equations}
An equation of the form
$F(X,Y)=D,
$
where $F(X,Y)\in \ZZ[X,Y]$ is homogeneous and $D$ is a non-zero integer, is known as a {\it Thue equation}.
We require such equations that arise
from the generating function
\begin{equation}\label{genfunction}
	\frac{1}{1-\sqrt{Y}T+XT^2}=\sum_{m=0}^{\infty}F_m(X,Y)\cdot T^{m}=1+\sqrt{Y}\cdot T+(Y-X)T^2+\cdots.
\end{equation}
The first few homogenous polynomials $F_{2m}(X,Y)$ are as follows:
\begin{displaymath}
	\begin{split}
		F_2(X,Y)&=Y-X,\\
		F_4(X,Y)&=Y^2-3XY+X^2\\
		F_6(X,Y)&=Y^3-5XY^2+6X^2Y-X^3.\\
		F_{10}(X,Y)&=Y^5-9XY^4+28X^2Y^3-35X^3Y^2+15X^4Y-X^5.
	\end{split}
\end{displaymath}
For every positive integer $m$, we consider the degree $m$ Thue equations of the form
\begin{equation}\label{ThueEqn}
	F_{2m}(X,Y)=\prod_{k=1}^m \left(Y-4X \cos^2\left(\frac{\pi k}{2m+1}\right)\right)=D.
\end{equation}

The next lemma gives integer points on several Thue equations that 
we shall require.

\begin{lemma}\label{Monster} The following are true.
	\begin{enumerate}
		\item
		Table \ref{thuetable} in the Appendix lists all of the integer solutions to
		$$F_{d-1}(X,Y)=\pm\ell
		$$
		for every pair of odd primes $(d,\ell)$ for which $7\leq d\mid \ell(\ell^2-1)$ and $\ell \in \{ 7\leq \ell \leq 37\}$.
		\item Conditional on GRH, Table \ref{thueGRHtable} in the Appendix lists all of the integer solutions to
		$$
		F_{d-1}(X,Y)=\pm\ell
		$$
		for every pair of odd primes $(d,\ell)$ for which $7\leq d\mid \ell(\ell^2-1)$ and $41\leq \ell \leq 97.$
		\item There are no integer solutions to
		$F_{22}(X,Y)=\pm 691.$
		\item The points $(\pm 1, \pm 4)$ are the only integer solutions to
		$F_{690}(X,Y)=\pm 691.$
	\end{enumerate}
\end{lemma}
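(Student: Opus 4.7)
All four parts ask for the complete list of integer solutions to Thue equations of the form $F_{d-1}(X,Y) = \pm\ell$, and I plan to treat the moderate-degree cases (1), (2), (3) by computer algebra and reserve a structural argument for part (4). For every admissible pair of odd primes $(d,\ell)$, the form $F_{d-1}(X,Y)$ factors completely over the real cyclotomic field $K_d := \QQ(\zeta_d)^+$ of degree $(d-1)/2 \leq 48$. A solution is then an element of norm $\pm\ell$ inside the rank-$2$ sublattice $\ZZ \oplus \ZZ \cdot 4\cos^2(\pi/d)$ of $\mathcal{O}_{K_d}$, and the standard Tzanakis--de Weger procedure---compute the class group and fundamental units of $K_d$, pass to a linear-forms-in-logarithms bound, and LLL-reduce to a finite enumeration---produces the tables in the appendix. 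For $41\leq\ell\leq 97$ the underlying class-group and regulator certificates become expensive and are validated under GRH, which is why part (2) is conditional. Part (3) is the single pair $(d,\ell)=(23,691)$, where $K_{23}$ has degree $11$, class number one, and small regulator, so the algorithm terminates almost immediately and confirms there are no solutions.

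The hard part is (4), where degree $345$ is far outside the reach of routine Thue solvers. The plan is to exploit the exceptional ramification of $691$ in $K := \QQ(\zeta_{691})^+$. Writing $\alpha := \zeta_{691}+\zeta_{691}^{-1}$ and $\pi := 2 - \alpha$, the identity $4\cos^2(\pi k/691) = 2 + \zeta_{691}^k + \zeta_{691}^{-k}$ gives
\begin{equation*}
	F_{690}(X,Y) = N_{K/\QQ}\bigl((Y-2X) - X\alpha\bigr).
\end{equation*}
Since $691$ is totally ramified with $(691)=(\pi)^{345}$ and $\mathfrak{p}=(\pi)$ is principal, any integer solution forces $\theta := (Y-2X) - X\alpha = u\pi$ for some unit $u \in \mathcal{O}_K^\times$. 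The condition that $\theta$ lies in the rank-$2$ sublattice $L := \ZZ \oplus \ZZ\alpha$ of the rank-$345$ module $\mathcal{O}_K$ confines $u$ to the discrete finite set $\pi^{-1}L \cap \mathcal{O}_K^\times$, and the choices $u=\pm 1$ recover exactly the four claimed solutions $(X,Y)=(\pm 1,\pm 4)$ via $\pm\pi = \pm 2 \mp \alpha$.

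The main obstacle is ruling out any further units in $\pi^{-1}L$. The approach will be an archimedean pigeonhole: for a candidate $u\in \mathcal{O}_K^\times$ with $u\pi\in L$, the $345$ real embeddings force $u\pi$ to have a two-dimensional coordinate profile in the basis $(1,\alpha)$, while each conjugate is pinned by $|\pi^{(j)}|=|2-2\cos(2\pi j/691)|$; combined with the norm identity $\prod_j |u^{(j)}| = 1$ this should squeeze every conjugate of $u$ into a small box, leaving only the roots of unity $\pm 1$ in the totally real field $K$. Making this rigorous against all $345$ embeddings simultaneously is the delicate step, since the conjugates of $\alpha$ are dense in $[-2,2]$ and $L$ has extremely small covolume inside $\mathcal{O}_K \otimes \RR$. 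Should the direct estimate fail to be sharp enough, the fallback is a one-shot rigorous LLL reduction using high-precision numerical conjugates of $\alpha$, which is computationally heavy but effective in principle.
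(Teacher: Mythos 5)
Your treatment of parts (1)--(3) is essentially the same as the paper's: the paper simply invokes the Thue solver in \texttt{PARI/GP}, and what you describe (factor over $\QQ(\zeta_d)^+$, compute units and class group, Baker bound, LLL-reduce) is more or less what \texttt{thueinit}/\texttt{thue} does internally, so there is no meaningful difference of route there.

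For part (4), however, there is a genuine gap. Your setup is correct --- writing $F_{690}(X,Y)=N_{K/\QQ}\bigl((Y-2X)-X\alpha\bigr)$ with $\alpha=\zeta_{691}+\zeta_{691}^{-1}$ and $\pi=2-\alpha$ a generator of the unique prime over $691$ is fine, and a solution does force $\theta=(Y-2X)-X\alpha=u\pi$ for a unit $u$. But the statement that this ``confines $u$ to the discrete finite set $\pi^{-1}L\cap\mathcal{O}_K^\times$'' does no work: that set is \emph{defined} to be the solution set of the Thue equation (units $u$ with $u\pi\in\ZZ\oplus\ZZ\alpha$ are exactly the $\theta$ of norm $\pm 691$ lying in the sublattice), and its finiteness is the conclusion of Thue's theorem, not a tool for enumerating it. You acknowledge that the ``archimedean pigeonhole'' is the delicate step and may not close; since $\mathcal{O}_K^\times$ has rank $344$ and the conjugates of $\alpha$ are dense in $[-2,2]$, there is no elementary archimedean squeeze that rules out units other than $\pm 1$ --- this is precisely the obstruction Baker's theorem exists to overcome. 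And the LLL fallback you offer is not ``computationally heavy but effective in principle'' in any practical sense for a degree-$345$ totally real field with unit rank $344$; nobody has come close to computing a fundamental system of units there.

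What the paper actually does for (4) is use a decisive external input that your plan is missing: after the change of variables $F_{p-1}(X,Y)=\widehat{F}_p(X,Y-2X)$ with $\widehat{F}_p(X,Y)=\prod_{k=1}^{(p-1)/2}\bigl(Y-2X\cos(2\pi k/p)\bigr)$, Corollary 6.6 of Bilu--Hanrot--Voutier \cite{BHV01} gives an \emph{explicit} bound $|X|\leq e^8$ on integer solutions to $\widehat{F}_p(X,Y)=\pm p$ for all primes $31\leq p\leq 787$. This bound is the result of careful estimates tailored to this exact family of Thue equations, made in the course of BHV's primitive-divisor theorem. With $|X|\leq e^8$ in hand, the paper uses the classical continued-fraction criterion (good approximations of $2\cos(2\pi k/691)$) to dispose of the midsize range and then checks the residual $|X|\leq 4$ directly. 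Without invoking BHV's bound or an equally strong substitute, your argument cannot reach the finiteness claim effectively, and the proof of (4) does not go through.
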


\begin{proof} Claims (1), (2) and (3) are easily obtained using the Thue solver in \texttt{PARI/GP} \cite{pari}
	(see \cite{github} for all of the code required for this paper).
	
	The proof of (4) is more formidable, as $F_{690}(X,Y)$ has degree 345. However, for odd primes $p$, the Thue equations $F_{p-1}(X,Y)=\pm p$
	are equivalent to the well-studied equations
	\begin{equation}\label{ModifiedThue}
		\widehat{F}_p(X,Y)=\prod_{k=1}^{\frac{p-1}{2}}\left(Y-2X\cos\left(\frac{2\pi k}{p}\right)\right)=\pm p
	\end{equation}
	that were prominent in the work of Bilu, Hanrot, and Voutier on primitive prime divisors of Lucas sequences. 
	Indeed, we have  $F_{p-1}(X,Y)=\widehat{F}_p(X,Y-2X).$
	They prove the important  fact
	(see Cor. 6.6 of \cite{BHV01}) that there are no integer solutions to (\ref{ModifiedThue})
	with $|X|>e^8$  when $31\leq p\leq 787.$ By a well-known criterion (for example, see Lemma~1.1 of \cite{TW89} and Proposition 2.2.1 of \cite{BH96})), midsize solutions of $\widehat{F}_{691}(X,Y)=\pm 691$  correspond to convergents of the continued fraction expansion of some 
	$2\cos(2\pi k/691).$  A short calculation rules this out, possibly leaving some small solutions, those  with
	$|X|\leq 4$.  For these $X$, we find $(\pm 1, \pm 2)$, which implies that
	$(\pm 1, \pm 4)$ are the only integral solutions to $F_{690}(X,Y)=\pm 691.$
\end{proof}

\subsection{The elliptic and hyperelliptic curves $Y^2=X^{2d-1}\pm \ell$}

For  $d\in \{2, 3, 4 ,6, 7\}$ and odd primes $\ell  \leq 97$, we list all of the integer points on
\begin{equation}
	C_{d,\ell}^{\pm}: Y^2=X^{2d-1}\pm \ell.
\end{equation}

\begin{lemma}\label{HYPER} If $3\leq \ell \leq 97$ is prime and $d\in \{2, 3, 4, 6, 7\}$, then the following are true:
	\begin{enumerate}
		\item Table \ref{Cplustable} in the Appendix lists the integer points on $C_{d,\ell}^{+}.$
		\item Table \ref{Cminustable} in the Appendix lists the integer points on $C_{d,\ell}^{-}.$
	\end{enumerate}
\end{lemma}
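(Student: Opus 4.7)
The plan is to carry out a direct case-by-case computer algebra verification for each of the roughly $250$ curves $C_{d,\ell}^{\pm}$ indexed by $d\in\{2,3,4,6,7\}$ and odd primes $3\le\ell\le 97$. Siegel's theorem guarantees that each integer point set is finite, and our goal is an explicit enumeration. I would organize the argument in two blocks according to the genus of $C_{d,\ell}^{\pm}$: the elliptic case $d=2$ (genus $1$) and the hyperelliptic cases $d\in\{3,4,6,7\}$ (genera $2,3,5,6$ respectively).

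For $d=2$, each curve $Y^2=X^3\pm\ell$ is a Mordell curve, and integer points can be determined using standard integer-point algorithms built on elliptic logarithms and LLL reduction, as implemented in \texttt{PARI/GP}, \texttt{Magma}, or \texttt{SageMath}. The procedure is: compute a Mordell--Weil basis via $2$-descent (possibly combined with $4$-descent or Heegner point constructions to close any rank gap), then apply an elliptic logarithm sieve to bound and enumerate all integer solutions. For the narrow prime range $\ell\le 97$, all required ranks and generators are either already tabulated or computable unconditionally in seconds, so this block is straightforward and does not require GRH.

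For $d\in\{3,4,6,7\}$, the curve $Y^2=X^{2d-1}\pm\ell$ is hyperelliptic of genus $g=d-1\ge 2$. Here the strategy is to first compute the Mordell--Weil rank $r$ of the Jacobian over $\QQ$ by $2$-descent; whenever $r<g$ one may apply the Chabauty--Coleman method (for example, via the \texttt{Magma} or \texttt{SageMath} implementations) to produce a finite $p$-adic set containing all rational points, and then filter for integer points by the usual archimedean and congruence constraints. When $r\ge g$, one falls back on the Mordell--Weil sieve, combining information at several primes of good reduction with the explicit known points on the curve to eliminate all other residue classes. Since each curve has a rational $2$-torsion-free structure and only a handful of obvious small integral points, these sieves terminate quickly for the modest parameter range in question.

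The main obstacle I anticipate is the set of hyperelliptic cases where the Jacobian rank meets or exceeds the genus, since Chabauty--Coleman does not apply directly. For these exceptional curves one must either pass to an unramified cover and apply elliptic Chabauty (descending to an elliptic curve where the rank inequality holds), or combine a partial Chabauty argument with a Mordell--Weil sieve at several auxiliary primes; in the worst cases one may need to invoke the explicit bounds of Stoll or Bugeaud--Mignotte--Siksek. Once the computation terminates, the verification step is trivial: substitute each candidate $(X,Y)$ into $Y^2=X^{2d-1}\pm\ell$ and confirm equality. The resulting exhaustive lists of integer solutions are recorded in Tables \ref{Cplustable} and \ref{Cminustable}, proving parts (1) and (2) of the lemma.
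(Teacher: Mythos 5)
Your proposal is a plausible computational route, but it is not the route the paper takes, and it underestimates the difficulty of one block. The paper's proof of this lemma is a single citation: the equations $Y^2=X^{2d-1}\pm\ell$ are generalized Lebesgue--Ramanujan--Nagell equations whose integer solutions for small coefficients have already been completely determined in the literature, and the tables are assembled directly from the appendices of Barros's thesis (for the $+$ sign) and the Bugeaud--Mignotte--Siksek paper (for the $-$ sign), together with Cohn's work. Nothing new is computed; the lemma is a lookup. Your proposal instead redoes the entire computation from scratch across roughly $250$ curves, which is logically sound but strategically wasteful.

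More substantively, your fallback plan for the cases where the Jacobian rank meets or exceeds the genus is optimistic. For the higher-genus curves ($d=6,7$ give genus $5$ and $6$), even computing an unconditional rank by $2$-descent can be expensive, and the Mordell--Weil sieve requires known rational points and explicit generators, neither of which is guaranteed to be readily available. More importantly, the Bugeaud--Mignotte--Siksek resolution of these equations does \emph{not} rely on Chabauty or Mordell--Weil sieves; it combines modular methods (Frey curves and level lowering) with classical linear forms in logarithms and reduction to Thue equations. If you insist on re-deriving the tables yourself, the honest statement is that you would need to replicate that hybrid machinery in the hard cases rather than falling back on sieving or Stoll-type bounds, and some of those cases will not ``terminate quickly.'' The cleaner move is simply to cite the published solutions of these Lebesgue--Ramanujan--Nagell equations, as the paper does.
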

\begin{proof}
	Work by Barros \cite{Bar10}, Cohn \cite{Coh93} and Bugeaud, Mignotte and Siksek \cite{BMS06b}
	establish these claims. Table \ref{Cplustable} is assembled from the Appendix of \cite{Bar10}, and Table \ref{Cminustable} is assembled from the Appendix of \cite{BMS06b}.
\end{proof}

\subsection{The hyperelliptic curves $Y^2=5X^{2d}\pm 4\ell$}

For $d\geq 2,$ we define the hyperelliptic curves
\begin{equation}
	H^{\pm}_{d,\ell}:  Y^2 =5 X^{2d}\pm 4\ell.
\end{equation}
The following satisfying lemma classifies the integer points on $H_{d,5}^{\pm}.$

\begin{lemma}\label{AnnalsCorollary}
	If $\ell=5$, then the following are true.
	\begin{enumerate}
		\item If $d=2$ and $\ell=5$, then the only integer points on $H^{+}_{2,5}$ are $(\pm 1, \pm 5)$ and $(\pm 2, \pm 10)$.
		\item If $d>2,$ then the only integer points on $H^{+}_{d,5}$ are $(\pm 1, \pm 5).$
		\item If $d\geq 2,$ then $H^{-}_{d,5}$ has no integer points.
	\end{enumerate}
\end{lemma}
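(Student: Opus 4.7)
The plan is to transform each equation into a Pell-type equation, apply the classical parametrization of its integer solutions via Lucas and Fibonacci numbers, and then invoke the deep theorem of Bugeaud, Mignotte, and Siksek on perfect powers in the Lucas sequence. Assuming that theorem, the remainder of the proof is essentially Pell theory plus bookkeeping.

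First I would observe that in each of $Y^2 = 5X^{2d} \pm 20$ the congruence $Y^2 \equiv 0 \pmod 5$ forces $5 \mid Y$. Writing $Y = 5Y_1$ and dividing through by $5$ yields $5Y_1^2 = X^{2d} \pm 4$, which upon setting $Z := X^d$ becomes the norm equation
\begin{align*}
Z^2 - 5Y_1^2 = \mp 4
\end{align*}
in the ring $\ZZ[\phi]$, where $\phi := (1+\sqrt{5})/2$. The $-4$ case corresponds to $H^+_{d,5}$, and the $+4$ case to $H^-_{d,5}$.

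Next I would use the classical parametrization of the integer solutions. From the identity $L_n^2 - 5F_n^2 = 4(-1)^n$, together with the fact that every $\alpha \in \ZZ[\phi]$ of norm $\pm 4$ has the form $\pm 2\phi^n$ (since $\alpha/2$ must then be a unit in $\ZZ[\phi]$, and the units of $\ZZ[\phi]$ are precisely $\pm \phi^n$), one deduces that all integer solutions $(Z, Y_1)$ of $Z^2 - 5Y_1^2 = -4$ are $(\pm L_{2k+1}, \pm F_{2k+1})$ for $k \geq 0$, and those of $Z^2 - 5Y_1^2 = 4$ are $(\pm L_{2k}, \pm F_{2k})$ for $k \geq 0$. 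Hence solving the original equations reduces to asking when $|X|^d$ can equal a Lucas number of the appropriate parity-indexed subsequence.

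The decisive input, and the only genuinely hard step, is the celebrated theorem of Bugeaud, Mignotte, and Siksek stating that the only perfect powers with exponent $\geq 2$ in $\{L_n\}_{n \geq 0}$ are $L_1 = 1$ and $L_3 = 4 = 2^2$. Since both occur at odd index and $L_0 = 2$ is not a perfect power, part (3) follows immediately: no $|X|^d$ with $d \geq 2$ can equal an even-indexed Lucas number, so $H^-_{d,5}$ has no integer points. For parts (1) and (2) we require $|X|^d$ to lie in $\{1\} \cup \{4 \text{ if } d = 2\}$. When $d = 2$ this gives $X \in \{\pm 1, \pm 2\}$, producing the listed points $(\pm 1, \pm 5)$ and $(\pm 2, \pm 10)$; when $d > 2$ only $|X|^d = 1$ survives, yielding $X = \pm 1$ and $Y^2 = 25$, hence the points $(\pm 1, \pm 5)$. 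The sign bookkeeping, together with ruling out the degenerate solution $(\pm 2, 0)$ to $Z^2 - 5Y_1^2 = 4$ (which would require $|X|^d = 2$, impossible for $d \geq 2$), is routine.
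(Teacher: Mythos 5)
Your proof is correct and follows essentially the same strategy as the paper's: reduce to the Pell equation $Z^2-5Y_1^2=\mp 4$, parametrize its solutions by Lucas and Fibonacci numbers, and conclude via the Bugeaud--Mignotte--Siksek classification of perfect-power Lucas numbers. Your write-up spells out the factor-of-$5$ extraction and the $\ZZ[\phi]$ unit argument that the paper compresses into the phrase ``by the theory of Pell's equations,'' but the substance is identical.
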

\begin{proof}
	We recall the classical Lucas sequence $$\{L_n\}=\{2, 1, 3, 4, 7, 11, 18, 29, 47,76, 123, 199, 322, 521, 843,\dots\},$$ defined by
	$L_0:=2$ and $L_1:=1$ and the recurrence $L_{n+2}:=L_{n+1}+L_n$ for $n\geq 0$.
	A  theorem of Bugeaud, Mignotte, and Siksek \cite{BMS06a} asserts that $L_1=1$ and $L_3=4$
	are the only perfect power Lucas numbers.
	By the theory of Pell's equations, the positive integer $X$-coordinate solutions to
	$H^+_{1,5}$ and $H^{-}_{1,5},$
	namely $\{L_1=1, L_3= 4,L_5=11,\dots\}$ and
	$\{L_0=2, L_2=3, L_4=7,\dots\}$ respectively, split the Lucas numbers. The three claims follow immediately.
\end{proof}

For  primes $\ell \in \{691\}\cup\left \{ 11\leq \ell\leq 89 \ : \  {\text {\rm prime with }} \legendre{\ell}{5}=1
\right\}$, we have the following lemma.

\begin{lemma}\label{Hyper11_19} The following are true.
	\begin{enumerate}
		\item
		For most\footnote{We were unable to obtain results for $H_{7,71}^{+},$
			$H_{13,89}^{-},$ and any $H^{+}_{11,\ell}$ and $H^{+}_{13,\ell}.$}
		$d\in \{3, 5, 7, 11, 13\}$ and primes $\ell \in \left\{ 11\leq \ell \leq 89\ : \ \legendre{\ell}{5}=1\right\}$,
		Table \ref{Htable} in the Appendix lists (some cases conditional on GRH) the integer points on $H^{\pm}_{d,\ell}.$
		\item There are no integer points on $C^{-}_{6,691}.$
		\item There are no integer points on $H^{-}_{11,691}.$
	\end{enumerate}
\end{lemma}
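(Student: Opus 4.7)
The plan is to treat each curve as a hyperelliptic curve of genus $g = d-1$ (for the $H$-curves) or $g = 5$ (for $C^{-}_{6,691}$) and determine its integer points by combining explicit descent with Chabauty--Coleman methods, exactly as was done for the curves underlying Lemmas~\ref{HYPER} and \ref{AnnalsCorollary}. In every case, the first step will be to put the defining equation in standard hyperelliptic form $Y^2 = f(X)$, verify that $f$ is squarefree (so the curve is smooth away from infinity), and then pass to the Jacobian $J$ to compute a generating set for $J(\QQ)$ modulo torsion. For the high-genus curves $H_{d,\ell}^{\pm}$ this will be done via a $2$-descent on $J$, implemented in \texttt{Magma}, producing an upper bound on the Mordell--Weil rank $r(J)$; the GRH hypothesis recorded in the lemma will be invoked precisely at the places where \texttt{Magma}'s class-group and regulator routines need it to run in feasible time (in particular, for $\ell$ approaching $89$).

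Once $r(J)$ is bounded, the second step is Chabauty--Coleman: for each curve we will exhibit a prime $p$ of good reduction with $r(J) < g$ and carry out Coleman integration against a basis of annihilating $1$-forms, following the framework of Balakrishnan--Bradshaw--Kedlaya and its implementation in \texttt{SageMath}. This cuts out a finite superset of the rational points, which we then sieve by the Mordell--Weil sieve (combining reductions at several auxiliary primes) until only the known rational points survive. The integer points are then read off by imposing $X \in \ZZ$ on this finite list. The exceptional tuples $(d, \ell)$ that must be excluded (namely $H_{7,71}^{+}$, $H_{13,89}^{-}$, all $H_{11,\ell}^{+}$, and all $H_{13,\ell}^{+}$) are precisely the cases in which either $r(J) \geq g$ (so Chabauty fails in its simplest form) or the $2$-Selmer computation exceeds the memory budget of our machines, and this is why Table~\ref{Htable} omits them.

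For part~(2), the curve $C^{-}_{6,691}\colon Y^2 = X^{11} - 691$ has genus $5$; here the Jacobian factors up to isogeny according to the action of cyclotomic automorphisms (from the form $Y^2 = X^{11}+c$), and one can often reduce to an \emph{elliptic} Chabauty computation on a quotient curve of the form $Y^2 = X^{d'} - 691$ for $d' \mid 11$. Since $11$ is prime the only nontrivial quotient route will be through a cover by a curve over $\QQ(\zeta_{11})$; alternatively, a direct $2$-descent shows that $J(\QQ)$ is finite (the rank being~$0$), after which the torsion points on $J$ are enumerated and the images of $C^{-}_{6,691}(\QQ)$ inside them are extracted by the Abel--Jacobi embedding. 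The same strategy handles part~(3): the curve $H^{-}_{11,691}\colon Y^2 = 5X^{22} - 4\cdot 691$ has genus $10$, but the substitution $U = X^{2}$ exhibits it as a double cover of the genus-$5$ curve $V^2 = 5U^{11} - 4\cdot 691$, and integer points on the top curve pull back from integer points $(U,V)$ on the bottom curve with $U$ a perfect square. A rank-$0$ $2$-descent on the bottom Jacobian (again conditional on GRH if required for the class-group step) together with torsion enumeration will show that the bottom curve has no rational points at all, and hence neither does the top.

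The main obstacle is computational rather than conceptual: for the largest primes $\ell$ in the target range, the $2$-Selmer group of the Jacobian lives over number fields of high degree, and both the class-group/unit-group determinations and the Coleman integrations become expensive. This is precisely why GRH is used to accelerate the class-group bottleneck, and why the exceptional cases noted above are left open --- in those cases either the rank bound from $2$-descent equals the genus (killing classical Chabauty) or the intermediate number fields are too large for a reliable computation on present hardware. All successful cases will be certified by exhibiting the finite list of Chabauty--Mordell--Weil sieve survivors and checking by direct substitution that each one is indeed an integer point, matching Table~\ref{Htable}.
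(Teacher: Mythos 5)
Your proposal takes a genuinely different route from the paper, and it has a serious feasibility gap. The paper does not prove Lemma~\ref{Hyper11_19} by computing Mordell--Weil ranks and running Chabauty--Coleman on the curves $H^{\pm}_{d,\ell}$ themselves. Instead, it observes that an integer point on $H^{\pm}_{d,\ell}$ (or $C^{-}_{6,691}$) yields an integer solution of a generalized Lebesgue--Ramanujan--Nagell equation $x^2 + D = Cy^n$, which one factors in the ring of integers of $\QQ(\sqrt{-D})$; the algorithm of Barros \cite{Bar10} (see also \cite{BMS06b}) then converts any such solution into an integer point on one of finitely many explicit degree-$n$ Thue equations over $\QQ$, built from the class group and unit group of $\QQ(\sqrt{-D})$. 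Those Thue equations are dispatched by the Bilu--Hanrot solver in \texttt{PARI/GP}. GRH enters only to speed up the class-group computations in $\QQ(\sqrt{-D})$ for some $D$. No Jacobians, no descent, no Coleman integration appear anywhere in the proof of this lemma.

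The gap in your approach is computational: you propose a $2$-descent on $J(\QQ)$ for hyperelliptic curves of genus $d-1$, which for $d \in \{11, 13\}$ means genus $10$ or $12$. No existing implementation can carry out a full $2$-descent on the Jacobian of a general genus-$10$ or genus-$12$ hyperelliptic curve over $\QQ$; the $2$-Selmer computation lives over the (typically huge) \'etale algebra $\QQ[X]/(f)$, and even producing rigorous class-group data there is out of reach for degree-$22$ or degree-$26$ fields, GRH or not. The paper sidesteps this entirely because the Thue reduction works over the fixed quadratic field $\QQ(\sqrt{-D})$ regardless of $d$; only the degree of the resulting Thue form grows, and the Bilu--Hanrot machinery handles that. (Chabauty--Coleman \emph{is} used in this paper, but only in Lemma~\ref{Plus691}, for three specific curves of genus $3$ and $5$ where the Thue equations would have been prohibitively large --- precisely the regime where your method is the better choice. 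Your strategy for parts (2) and (3), including the substitution $U = X^2$ reducing $H^{-}_{11,691}$ to a genus-$5$ double cover, is in fact the kind of trick the paper exploits for $H^{+}_{11,691}$ in Lemma~\ref{Plus691}.) Your proposed explanation of why the exceptional cases were omitted (rank $\geq$ genus or Selmer computation too large) is also not what actually happened; they were omitted because the Thue equations in those cases were infeasible to solve.

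In short, your framework is sound and is the one the authors reach for when the Thue machinery bogs down, but as a proof of the full Lemma~\ref{Hyper11_19} it would founder at the $2$-descent step for the high-genus curves. You should instead factor in $\QQ(\sqrt{-D})$, produce the finite list of Thue equations via the Barros / Bugeaud--Mignotte--Siksek algorithm, and invoke the Bilu--Hanrot solver.
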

\begin{proof}
	Generalized Lebesgue--Ramanujan--Nagell equations are  equations of the form
	\begin{equation}\label{GLRN}
		x^2+D=Cy^n,
	\end{equation}
	where $D$ and $C$ are non-zero integers. An integer point on (\ref{GLRN}) can be studied in the ring of integers
	of $\QQ(\sqrt{-D})$
	using the factorization
	$$(x+\sqrt{-D})(x-\sqrt{-D}) =Cy^n.
	$$
	
	This observation is a standard tool in the study of Thue equations. In particular, 
	Theorem~2.1 of  \cite{Bar10} (also see Proposition~3.1 of \cite{BMS06b}) gives a step-by-step algorithm that takes alleged solutions of (\ref{GLRN}) and produces integer points on one
	of finitely many Thue equations constructed from $C, D$ and $n$ via the algebraic number theory of $\QQ(\sqrt{-D})$.
	These equations are assembled from the knowledge of the group of units and the
	ideal class group.
	
	To prove all three parts of the lemma (apart from $H_{7,89}^{+}$), we implemented this algorithm in \texttt{SageMath} (see \cite{github} for all \texttt{SageMath} code required for this paper). 
	Some cases required GRH as a simplifying assumption. As the curves in (2) and (3) are the most complicated, we offer brief details in these two cases.
	
	To prove (2), we consider the  hyperelliptic curve $C^{-}_{6,691},$ which  corresponds to (\ref{GLRN}) for the class number 5 imaginary quadratic field
	$\QQ(\sqrt{-691})$, where $x=Y, y=X, C=1, D=691,$ and $n=11.$  In this case the algorithm gives exactly one Thue equation, which after clearing denominators can be rewritten as
	{\small
		\begin{align*}
			2\times5^{55}&=(991077174272090396)x^{11} + (119700018439220789119)x^{10}y
			\\ &- (8831599221002836172345)x^9y^2
			-(337116345512786456280840)x^8y^3 
			\\ &+ (8492967300375371034332430)x^7y^4
			+ (175189311986919278870504298)x^6y^5 \\
			&- (1881807368163995585644810248)x^5y^6
			- (22992541672786450593030038430)x^4y^7 \\
			&+ (104772541553739359102253613965)x^3y^8
			+ (697875798749922445133117312720)x^2y^9 \\
			&- (1068801486169809452619368218519)xy^{10}
			- (2292300374810647823111384294421)y^{11}.
		\end{align*}
	}

	\normalsize
	The Thue equation solver in \texttt{PARI/GP}, which implements the Bilu--Hanrot algorithm, establishes that there are no integer solutions, and so $C^{-}_{6,691}$ has no integer points.
	
	Claim (3) is about the hyperelliptic curve $H^{-}_{11,691}.$  Its integer points $(X,Y)$  satisfy
	$$
	(Y+2\sqrt{-691})(Y-2\sqrt{-691})=5X^{22}.
	$$
	Therefore, we again employ the imaginary quadratic field $\QQ(\sqrt{-691})$. In particular, we have (\ref{GLRN}), where $x=Y, y=X, C=5, D=4\cdot 691$ and $n=22$.
	The algorithm again gives one Thue equation, which after clearing denominators can be rewritten as
	
	\small 
	\begin{displaymath}
		\begin{split}
			2^2\times5^{110}&=
			-(20587212586465949627980680671826599752) x^{22} \\
			&\ \  \ \   + (1133274396835827658613802749227310922394) x^{21} y\\ 
			&\ \ \ \ +\cdots\\
			&\ \ \ \  -(79670423145107301772779399379735976309907264511718034789276856) x y^{21}\\
			&\ \ \ \  +(71809437208138431262783549625248617351731199323326115439324273)y^{22}.
		\end{split}
	\end{displaymath}
	\normalsize
	The Thue solver in \texttt{PARI/GP} establishes that there are no integer solutions, and so $H^{-}_{11,691}$ has no integer points.
\end{proof}

We use the Chabauty--Coleman method\footnote{We could have (in theory) used the Thue method as in the proof of Lemma~\ref{Hyper11_19}. We chose this method as it did not require substantial computer resources.}, which employs $p$-adic integration to determine the rational points on suitable curves of genus $g\geq 2,$  to determine  the integer points on  $C_{6,691}^{+}$, $H_{7,89}^{+}$, and
$H_{11,691}^{+}.$

\begin{lemma}\label{Plus691} The following are true.
	\begin{enumerate}
		\item There are no integer points on $C^{+}_{6,691}.$
		\item There are no integer points on  $H^{+}_{11,691}.$
		\item Assuming GRH, the only integer points on $H^{+}_{7,89}$ have $(|X|,|Y|)=(1,19).$
	\end{enumerate}
\end{lemma}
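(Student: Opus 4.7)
The plan is to apply the Chabauty--Coleman method to each of the three curves, reducing via the natural symmetry $X \mapsto X^2$ in cases (2) and (3) so that the genus (and thus the Mordell--Weil rank one needs to bound) is kept as small as possible. Specifically, writing $Z=X^2$, the integer points on $H^{+}_{11,691}$ and $H^{+}_{7,89}$ inject into the sets of rational points with $Z\in\ZZ_{\geq 0}$ a perfect square on the two hyperelliptic curves
\begin{equation*}
C_1 : Y^2 = 5Z^{11} + 2764, \qquad C_2 : Y^2 = 5Z^7 + 356,
\end{equation*}
of genus $5$ and $3$ respectively, while for $C^{+}_{6,691}$ I work directly with the genus $5$ curve $Y^2=X^{11}+691$.

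First I would compute the Mordell--Weil rank $r$ of the Jacobian of each curve and verify the Chabauty bound $r<g$. For the two genus $5$ Jacobians one can attempt a $2$-descent via \texttt{Magma}'s \texttt{RankBound} routines; the GRH assumption in (3) will enter here precisely because one needs class and unit group computations in the number fields defined by $5z^{11}+2764$ (or $5z^7+356$) to run efficiently at conditional bounds. Assuming one obtains $r\leq g-1$ in each case, the next step is to pick a prime $p$ of good reduction for each curve (for instance, small primes coprime to $691$, $89$, $5$, and $2$), and compute a basis of regular $1$-forms $\omega_0=dX/Y,\ \omega_1=X\,dX/Y,\dots,\omega_{g-1}=X^{g-1}\,dX/Y$. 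Using Coleman integration (implemented in \texttt{Sage} by Balakrishnan and collaborators for odd-degree hyperelliptic models, and extensible to even-degree models by an affine patch), I would produce a nonzero annihilating differential $\eta=\sum c_i\omega_i$ such that $\int_{\infty}^P\eta=0$ for every known rational point $P$, then bound the zeros of this Coleman integral on each residue disc using Newton polygon arguments and the local expansion of $\eta$.

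The counting then proceeds residue disc by residue disc. For $C^{+}_{6,691}$, one checks that no residue disc mod $p$ contains an unaccounted-for zero of the Coleman integral beyond any known rational points, and hence the only rational points are those at infinity; in particular there are no integer points. For $C_1$, the Chabauty--Coleman bound combined with a Mordell--Weil sieve (using reductions mod several auxiliary primes) should cut the set of rational points down to the two points at infinity plus possibly one or two affine points with $Z\leq 0$; since no such $Z$ is a non-negative perfect square, this gives (2). For $C_2$, the same procedure should recover the rational points $(Z,Y)=(1,\pm 19)$ (corresponding to $X=\pm 1$, $Y=\pm 19$ on $H^{+}_{7,89}$) along with points at infinity, and rule out every other residue disc, yielding (3) under GRH.

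The main obstacle is the computation of Mordell--Weil ranks in genus $5$ for (1) and (2): a direct $2$-descent over $\QQ$ on the Jacobian requires work in the class groups and unit groups of the relevant \'etale algebras $\QQ[T]/(T^{11}+691)$ and $\QQ[T]/(5T^{11}+2764)$, whose degrees make even conditional class group computations delicate. If a full rank computation proves infeasible, a fallback is to use the work of Poonen--Stoll or Siksek's partial-descent methods to show that $r\leq 4$ suffices, since that is all Chabauty--Coleman requires; alternatively, one can apply the quadratic Chabauty framework of Balakrishnan--Dogra to push through when $r=g$. A secondary technical point is the correct handling of points at infinity and the even-degree models for $C_1$ and $C_2$, where one must work on two affine patches and track the action of the hyperelliptic involution so that the Coleman integrals extend across infinity; once that is done, the zero-counting step is routine.
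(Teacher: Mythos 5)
Your proposal follows essentially the same approach as the paper: Chabauty--Coleman on $Y^2 = X^{11}+691$ directly, and on the odd-degree descents $Y^2 = 5Z^{11}+4\cdot 691$ and $Y^2 = 5Z^7+4\cdot 89$ obtained by the substitution $Z=X^2$, with integral points on the original $H^{+}_{d,\ell}$ recovered by requiring $Z$ to be a perfect square. The paper uses $p=3$ as the prime of good reduction, exactly the sort of choice you suggest.

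One small correction and two unnecessary hedges. The correction: after the substitution $Z=X^2$, the curves $Y^2 = 5Z^{11}+2764$ and $Y^2 = 5Z^7+356$ are \emph{odd-degree} hyperelliptic (degrees $11$ and $7$ in $Z$), so your concern about even-degree models, two affine patches, and the behavior of Coleman integrals across two points at infinity does not arise; the paper simply rescales to a monic odd-degree model $Y^2 = Z^{11}+4\cdot 5^{10}\cdot 691$ (resp.\ $Y^2 = Z^7+4\cdot 5^6\cdot 89$) to feed directly into the standard odd-degree Coleman integration code. The hedges: the ranks turn out to be $0$, $0$, and $2$ (the last under GRH), so your fallback plans via Poonen--Stoll/Siksek partial descent, quadratic Chabauty, or a Mordell--Weil sieve are never needed; in the two rank-$0$ cases the full $g$-dimensional space of regular $1$-forms annihilates, and zero-counting residue disc by residue disc finishes the argument immediately, while in the genus-$3$, rank-$2$ case one computes an explicit annihilating differential from Coleman integrals against two independent Mordell--Weil generators, as you outline. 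Your instinct that the GRH hypothesis enters through the rank computation (via class group/unit group work in \texttt{Magma}) is exactly right.
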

\begin{proof}
	We employ the Chabauty--Coleman method \cite{Col85} to determine the integral points on these curves. 
	
	We first prove (1). The genus 5 curve $C^{+}_{6,691}$ has Jacobian with Mordell-Weil rank 0. This can be determined using the implementation of 2-descent in \texttt{Magma} \cite{magma}. Since the rank is less than the genus,  the Chabauty--Coleman method applies, which, in this case, gives a 5-dimensional space of regular 1-forms vanishing on rational points. We take as our basis for the space of annihilating differentials the set $\{\omega_i := X^i \frac{dX}{2Y}\}_{i = 0, 1, \ldots, 4}.$ The prime $p = 3$ is a prime of good reduction for $C^{+}_{6,691}$, and taking the point at infinity $\infty$ as our basepoint, we compute the set of points $$\left\{z \in C^{+}_{6,691}(\ZZ_3): \int_{\infty}^z \omega_i = 0\;\textrm{for all}\;{i = 0, 1, \ldots, 4}\right\},$$ where the integrals are Coleman integrals computed using \texttt{SageMath} \cite{sage}. By construction,  this set contains the integral points on the working affine model of $C^{+}_{6,691}$. 
	
	The computation gives three points: two points with $X$-coordinate 0 and a third point with $Y$-coordinate 0 in the residue disk corresponding to $(2,0) \in C^{+}_{6,691}(\FF_3)$. (Indeed, the power series corresponding to the expansion of the integral of $\omega_0$ has each of these points occurring as simple zeros.)  Hence, there are no integral points on $C^{+}_{6,691}$.
	
	Turning to $H^{+}_{11,691}$, we consider the integral points on the curve $Y^2 = 5X^{11} + 4 \cdot 691$ and then pull back any points found using the map $(X,Y) \rightarrow (X^2,Y)$. Using \texttt{Magma}, we find that the rank of the Jacobian of this genus 5 curve is 0. We rescale variables to work with the monic model $Y^2 = X^{11}+4\cdot 5^{10}\cdot 691$ and we apply the Chabauty--Coleman method using $p = 3$. As before, the computation gives three points with coordinates in $\ZZ_3$: two points with $X$-coordinate 0 and a third point with $Y$-coordinate 0 in the residue disk corresponding to $(2,0)$. The power series corresponding to the expansion of the integral of $\omega_0$ has each of these points occurring as simple zeros. None of these points are rational. Therefore, $H^{+}_{11,691}$ has no integral points. This proves (2).
	
	Now we turn to (3). To compute integral points on $H^{+}_{7,89}$, we work with the genus 3 curve $Y^2 = 5X^{7} + 4 \cdot 89$ and then pull back any integral points found using the map $(X,Y) \rightarrow (X^2,Y)$. Using \texttt{Magma}, we find that the rank of the Jacobian of this genus 3 curve is 2, under the assumption of GRH\footnote{The \texttt{Magma} procedure that computes ranks requires GRH in this case to be computationally feasible.}. We work with the monic model $$H_m: Y^2 = X^{7}+4\cdot 5^{6}\cdot 89$$ and run the Chabauty--Coleman method using $p = 3$. 
	
	The points 
	$$P = [x^3 + 14x^2 - 800, 9x^2 + 200x - 4050] \qquad\textrm{and}\qquad Q = [x-5, 19 \cdot 5^3]$$
	(given in Mumford representation) are independent in the Jacobian of $H_m$. To simplify the Chabauty--Coleman computation---in particular, so that we carry out all of our computations over $\QQ_3$---we replace $P$ with $P'$, a small $\ZZ$-linear combination of $P$ and $Q$ that is linearly independent from $Q$, with the property that the first coordinate of the Mumford representation of $P'$ splits over $\QQ_3$. 
	
	We take $P' := 2P - 5Q$, with Mumford representation of $P'$ given by $[f(x),g(x)]$ where
	\tiny{
		\begin{align*}
			f(x) &= x^3 - \frac{57819608106819190393450758001494220029312032281}{243432625872206959773347921129373894485149809}x^2 +\frac{301022057022978383553067428985393708004188803800}{81144208624068986591115973709791298161716603}x -\\
			&\qquad   
			\frac{4935244227803215636634926465657011220846146763100}{243432625872206959773347921129373894485149809}, \\
			g(x)&= \frac{13467788979408324218581419111573847035681150845619031139253274307312471}{3798115572194618764136691476777323149900556269646219373513689210377}x^2 -\\  
			&\qquad            \frac{73837091689655128840131596065726589815272462202819205672839132728899500}{1266038524064872921378897158925774383300185423215406457837896403459}x +\\
			&\qquad  \frac{1249983247105360333943070938652709476597593148217064351317870016169354850}{3798115572194618764136691476777323149900556269646219373513689210377}.
		\end{align*}
	}
	\normalsize
	To compute an annihilating differential, we compute the $3\times 2$ matrix of Coleman integrals $(\int_{P'} \omega_i, \int_Q \omega_i)_{i = 0, 1, 2}$, where $\omega_i = X^i \frac{dX}{2Y}$, in \texttt{Sage}:\tiny{
		$$\left(\begin{array}{rr}
			2 \cdot 3 + 2 \cdot 3^{2} + 3^{4} + 2 \cdot 3^{6} + 3^{8} + 2 \cdot 3^{9} + O(3^{10}) & 3^{3} + 2 \cdot 3^{4} + 3^{7} + 2 \cdot 3^{8} + 3^{9} + O(3^{10}) \\
			2 \cdot 3 + 3^{2} + 3^{3} + 2 \cdot 3^{5} + 2 \cdot 3^{6} + 2 \cdot 3^{7} + O(3^{10}) & 2 \cdot 3 + 3^{2} + 3^{3} + 2 \cdot 3^{7} + 2 \cdot 3^{8} + 3^{9} + O(3^{10}) \\
			3 + 3^{2} + 2 \cdot 3^{3} + 2 \cdot 3^{4} + 2 \cdot 3^{5} + 3^{6} + 3^{7} + 2 \cdot 3^{9} + O(3^{10}) & 2 \cdot 3 + 3^{2} + 3^{3} + 2 \cdot 3^{4} + 3^{5} + 3^{7} + 2 \cdot 3^{8} + 2 \cdot 3^{9} + O(3^{10})\end{array}\right).$$}
	
	\normalsize
	
	We then compute a basis of the kernel of this matrix, which gives us our annihilating differential 
	\begin{align*}\omega &= \omega_0 + (1 + 2 \cdot 3^{2} + 2 \cdot 3^{4} + 3^{5} + 3^{6} + 2 \cdot 3^{7} + 2 \cdot 3^{8} + 2 \cdot 3^{9} +  O(3^{10})) \omega_1 \\
		&\qquad\; + (2 + 2 \cdot 3 + 3^{2} + 3^{3} + 2 \cdot 3^{4} + 3^{5} + 2 \cdot 3^{6} + 3^{9} + O(3^{10}))\omega_2.
	\end{align*}
	
	Finally, we have three residue disks to consider, corresponding to $(1,0)$ and $(2, \pm 1) \in H_{m}(\FF_3)$. We compute the set of points $z \in H_m(\ZZ_3)$ in these residue disks such that $\int_{\infty}^z \omega = 0$. This produces three points, each occurring as simple zeros of the corresponding $3$-adic power series: a Weierstrass point and the points $(5, \pm 2375).$
	The Weierstrass point is not rational, while the points $(5, \pm 2375)$ correspond to the points $( \pm 1, \pm 19)$ on $H^{+}_{7,89}$.
\end{proof}

\section{Proof of Theorem \ref{LehmerVariantGeneral}}

We combine  results from the previous section with
Theorem~\ref{LehmerVariantGeneral} to prove Theorems~\ref{Lehmer135} and \ref{LehmerGeneral}.
The following lemma, which relates Fourier coefficients to special integer points on algebraic curves, is a straightforward consequence of Theorem~\ref{Newforms} (2) and (3).

\begin{lemma}\label{DiophantineCriterion}
	Assuming the notation in Theorem~\ref{Newforms}, if $p\nmid N$ is prime, then
	we have the following:
	\begin{enumerate}
		\item If $a_f(p^2)=\alpha$, then $(p,a_f(p))$ is an integer point on
		$$
		Y^2=X^{2k-1}+\alpha.
		$$
		\item If $a_f(p^4)=\alpha$, then $(p,2a_f(p)^2-3p^{2k-1})$ is an integer point on
		$$
		Y^2=5X^{2(2k-1)}+4\alpha.
		$$
		\item For every positive integer $m$ we have that
		$F_{2m}(p^{2k-1},a_f(p)^2)=a_f(p^{2m}).$
	\end{enumerate}
\end{lemma}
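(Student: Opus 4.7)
The plan is to prove all three parts by a direct computation using the Hecke recurrence from Theorem \ref{Newforms} (2), namely
\[
a_f(p^m) = a_f(p)\,a_f(p^{m-1}) - p^{2k-1}\,a_f(p^{m-2}) \qquad (m \geq 2),
\]
together with the Lucas-number description in Theorem \ref{Newforms} (3). None of these steps is deep; the lemma is really a bookkeeping statement, and the only subtle point is the correct handling of $\sqrt{Y}$ in the generating function defining $F_m(X,Y)$.

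\smallskip

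For part (1), the recurrence at $m=2$ gives $a_f(p^2) = a_f(p)^2 - p^{2k-1}$, since $a_f(1)=1$ and $a_f(p^0)=1$. Setting this equal to $\alpha$ and rearranging yields $a_f(p)^2 = p^{2k-1} + \alpha$, which is precisely the statement that $(p, a_f(p))$ lies on $Y^2 = X^{2k-1}+\alpha$.

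\smallskip

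For part (2), I would iterate the recurrence to write $a_f(p^4)$ as a polynomial in $a_f(p)$ and $p^{2k-1}$. Explicitly, $a_f(p^3) = a_f(p)\bigl(a_f(p)^2 - 2 p^{2k-1}\bigr)$, and then
\[
a_f(p^4) = a_f(p)\,a_f(p^3) - p^{2k-1}\,a_f(p^2) = a_f(p)^4 - 3\,p^{2k-1}\,a_f(p)^2 + p^{2(2k-1)}.
\]
Setting $a_f(p^4)=\alpha$ and completing the square on the right, one checks
\[
\bigl(2 a_f(p)^2 - 3 p^{2k-1}\bigr)^2 = 4\bigl(a_f(p)^4 - 3p^{2k-1} a_f(p)^2\bigr) + 9\, p^{2(2k-1)} = 4\alpha + 5\, p^{2(2k-1)},
\]
which is exactly the assertion that $\bigl(p,\, 2 a_f(p)^2 - 3 p^{2k-1}\bigr)$ is an integer point on $Y^2 = 5 X^{2(2k-1)} + 4\alpha$.

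\smallskip

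For part (3), I would combine Theorem \ref{Newforms} (3) with the generating function \eqref{genfunction}. The factorization $1 - a_f(p)T + p^{2k-1}T^2 = (1-\alpha_p T)(1-\beta_p T)$ from Theorem \ref{Newforms} (3) together with partial fractions yields the familiar local Euler factor
\[
\sum_{m\geq 0} a_f(p^m)\, T^m = \frac{1}{1 - a_f(p)\,T + p^{2k-1}\, T^2}.
\]
Comparing with \eqref{genfunction} under the substitution $X = p^{2k-1}$ and choosing the branch $\sqrt{Y} = a_f(p)$ (so $Y = a_f(p)^2$), one reads off $a_f(p^m) = F_m\bigl(p^{2k-1}, a_f(p)^2\bigr)$ for all $m$. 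The only delicate point — which I view as the main (albeit minor) obstacle — is that $F_m(X,Y)$ contains a factor of $\sqrt{Y}$ precisely when $m$ is odd, so the identity only gives a polynomial relation in $X$ and $Y$ for even indices. Restricting to $m \mapsto 2m$ eliminates the $\sqrt{Y}$ factor and yields the claimed identity $F_{2m}(p^{2k-1}, a_f(p)^2) = a_f(p^{2m})$, completing the proof.
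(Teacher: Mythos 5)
Your proof is correct, and it follows exactly the route the paper indicates (which states the lemma without proof as a "straightforward consequence of Theorem~\ref{Newforms} (2) and (3)"): parts (1) and (2) by iterating the Hecke recurrence and completing the square, and part (3) by matching the local Euler factor against the generating function \eqref{genfunction} with $X = p^{2k-1}$, $\sqrt{Y} = a_f(p)$, restricted to even indices so that $F_{2m}$ is a genuine polynomial in $(X,Y)$. The computations check out.
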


\begin{proof}[Proof of Theorem~\ref{Lehmer135}]
	It is well-known that $\tau(n)$ is odd if and only if $n$ is an odd square. 
	To see this, we employ 
	the Jacobi Triple Product identity to obtain the congruence
	\begin{displaymath}
		\begin{split}
			\sum_{n=1}^{\infty}\tau(n)q^n:&=q\prod_{n=1}^{\infty}(1-q^n)^{24}
			\equiv q\prod_{n=1}^{\infty}(1-q^{8n})^3
			=\sum_{k=0}^{\infty} (-1)^k(2k+1)q^{(2k+1)^2}\pmod 2.
		\end{split}
	\end{displaymath}

	We consider the possibility that $\pm 1$ appear in sequences of the form
	\begin{equation}\label{primepowers}
		\{\tau(p),\tau(p^2), \tau(p^3),\dots\}.
	\end{equation}
	By Theorem~\ref{Newforms} (2), if $p$ is prime and  $p\mid \tau(p)$, then $p^m\mid \tau(p^m)$ for every $m\geq 1$, and
	so $|\tau(p^m)|\neq 1.$  Moreover, $|\tau(p)|\neq p$, where $p$ is an odd prime, because $\tau(p)$ is even.
	Therefore, such sequences may be completely ignored for the remainder of the proof.
	
	For primes
	$p\nmid \tau(p),$ Theorem~\ref{Newforms} (3) gives a Lucas sequence 
	with $A=\tau(p)$ and $B=p^{11}.$
	Lemma~\ref{Part1DefectivePrimality} shows that there are no defective terms with
	$u_{m+1}(\alpha_p,\beta_p)=\tau(p^m)\neq \pm 1$ or $\pm \ell$, where $\ell$ is an odd prime.
	To see this, we note that $A=\tau(p)$ is even.  Lemma~\ref{Part1DefectivePrimality} (2) does not allow
	for $A$ to be even with one exception, the possibility that  $(A,B,\ell,n )=(\pm m,p^{11}, 3,3)$, where $(p,\pm m)\in B_{1,6}^{1,\pm}.$
	However, these curves are the same as $C_{6,3}^{\pm},$ and Lemma~\ref{HYPER}  shows that there are no such points.
	Therefore, we may assume that all of 
	the values in (\ref{primepowers}) have a primitive prime divisor, and  never have  absolute value 1.
	
	We now turn to the primality of absolute values of $\tau(n)$.
	Thanks to Hecke multiplicativity (i.e. Theorem~\ref{Newforms} (1)) and the discussion above, if $\ell$ is an odd prime and $|\tau(n)|=\ell$, then $n=p^d$, where $p$ is an odd prime for which
	$p\nmid \tau(p).$ The fact that $\tau(p^d)=u_{d+1}(\alpha_p,\beta_p)$ leads to a further constraint on $d$ 
	(i.e. refining 
	the fact that $d$ is even).
	By Proposition~\ref{PropA}, which guarantees relative divisibility between Lucas numbers, and 
	Lemma~\ref{Modularity}, which guarantees the absence of defective terms in (\ref{primepowers}), it follows that 
	$d+1$ must be an odd prime, and $\tau(p^d)$ is the very first term that is divisble by $\ell$.
	
	To make use of this observation, for odd primes 
	$p$ and $\ell$ we define
	\begin{equation}
		m_{\ell}(p):=\min\{ n\geq 1\ : \ \tau(p^n)\equiv 0\!\!\!\!\pmod{\ell}\}.
	\end{equation}
	For $|\tau(p^d)|=\ell$, we have $m_{\ell}(p)=d,$ where $d+1$ is also an odd prime.
	The Ramanujan congruences \cite{BO99, Ram16, Ser68}
	\begin{displaymath}
		\tau(n)\equiv \begin{cases} &n^2\sigma_1(n)\pmod 9,\\
			&n\sigma_1(n)\pmod 5,\\
			&n\sigma_3(n)\pmod 7,\\
			&\sigma_{11}(n)\pmod{691},
		\end{cases}
	\end{displaymath}
	where $\sigma_{\nu}(n):=\sum_{1\leq d\mid n}d^{\nu}$, make it simple to compute $m_{\ell}(p)$
	for the primes $\ell \in \{3, 5, 7, 691\}.$
	
	Thanks to the mod 9 congruence, we find that
	$$
	m_3(p)=\begin{cases} 1 \ \ \ \ \ &{\text {\rm if }} p\equiv 0, 2\!\!\!\!\pmod 3,\\
		2\ \ \ \ \ &{\text {\rm if }} p\equiv 1\!\!\!\!\pmod 3.
	\end{cases}
	$$
	Therefore, $d=2$ is the only possibility.
	If $\tau(p^2)=\pm 3$, then Lemma~\ref{DiophantineCriterion} (1) implies that
	$(p,\tau(p))$ is a point on $C^{\pm}_{6,3},$ which were considered immediately above.
	Again,  Lemma~\ref{HYPER} (1) implies that there are no such integer points.
	
	Thanks to the mod 5 congruence, we find that
	$$
	m_5(p)=\begin{cases} 1 \ \ \ \ \ &{\text {\rm if }} p\equiv 0, 4\!\!\!\!\pmod{5},\\
		3 \ \ \ \ \ &{\text {\rm if }} p\equiv 2, 3\!\!\!\!\pmod{5},\\
		4 \ \ \ \ \ &{\text {\rm if }} p\equiv 1\!\!\!\!\pmod{5}.
	\end{cases}
	$$
	Therefore, $d=4$ is the only possibility.
	If $\tau(p^4)=\pm 5$, then Lemma~\ref{DiophantineCriterion} (2) implies that
	$(p, 2\tau(p)^2-3p^{11})$ is an integer point on $H^{\pm}_{11,5}.$
	Lemma~\ref{AnnalsCorollary} shows that no such points exist on these hyperelliptic curves.
	
	Thanks to the mod 7 congruence, we find that
	$$
	m_7(p)=\begin{cases}
		1 \ \ \ \ \ &{\text {\rm if }} p\equiv 0, 3, 5, 6\!\!\!\!\pmod{7},\\
		6 \ \ \ \ \ &{\text {\rm if }} p\equiv 1, 2, 4\!\!\!\!\pmod{7}.
	\end{cases}
	$$
	Hence, $d=6$ is the only possibility, and so we must rule out the possibility that $\tau(p^6)=\pm 7$.
	If there are such primes $p$, then Lemma~\ref{DiophantineCriterion} (3) implies
	that
	$F_{6}(p^{11},\tau(p)^2)=\pm 7.$
	Lemma~\ref{Monster} (1) shows that there are no such solutions to $F_6(X,Y)=\pm 7.$

	Thanks to the mod 691 congruence, we find that
	the only cases where $m_{691}(p)=d$ where $d+1$ is an odd prime are $d=2, 4, 22,$ and $690$.
	For the cases where $d=2$ and $4$ respectively, Lemma~\ref{DiophantineCriterion} (1-2) implies that
	$(p,\tau(p))$ would be an integral point on $C^{\pm}_{6,691},$ and that $(p, 2\tau(p)^2-3p^{11})$ would be an integral point on
	$H^{\pm}_{11,691}.$ Lemma~\ref{Hyper11_19} (2-3) and Lemma~\ref{Plus691} show that no such points exist.
	By Lemma~\ref{DiophantineCriterion} (3), the remaining cases (i.e. $d=22$ and $690$)  correspond to the Thue equations
	$F_{22}(p^{11},\tau(p)^2)=\pm 691$ and 
	$F_{690}(p^{11},\tau(p)^2)=\pm 691.$
	Lemma~\ref{Monster} (3) and (4) show that there are no such integer solutions.
	
	The arguments above show that $\tau(n)\not \in \{\pm1 , \pm 3, \pm 5, \pm 7, \pm 691\}.$
	The remaining cases are special cases of Theorem~\ref{LehmerGeneral} (6) and (9) and are  proved below.
\end{proof}

\begin{proof}[Proof of Theorem~\ref{LehmerGeneral}]  By hypothesis, for primes $p\nmid 2N$ we have that
	$a_f(p)$ is even. For such primes, Theorem~\ref{Newforms} (2) implies that
	$a_f(p^{m})$ is odd if and only if $m$ is even.
	Suppose that $p$ is a prime for which $p\mid a_f(p),$ which includes those primes $p\mid 2N$ by Theorem~\ref{Newforms} (4).  Theorem~\ref{Newforms} (2) and (4) imply that
	$p^m\mid a_f(p^m)$. Therefore, we do not need to consider these coefficients in the remainder of the proof.
	
	It suffices to consider the Lucas sequences corresponding to $A=a_f(p)$ and $B=p^{2k-1}$, when $p\nmid a_f(p)$.
	By applying Lemma~\ref{Part1DefectivePrimality} (2) (as above in the proof of Theorem~\ref{Lehmer135}), we may assume that
	$\{1, a_f(p), a_f(p^2),\dots\}$ is a Lucas sequence
	without any defective terms. To establish this, we must show that
	$B_{1,k}^{1,\pm},$ which are the same as $C_{k,3}^{\pm},$  have no suitable integer points.
	Since we only consider weights for which $\gcd(3\cdot 5\cdot 7\cdot 11\cdot 13, 2k-1)\neq 1$, it suffices
	to show that $C_{d,3}^{\pm}$ has no such points for $d\in \{2, 3, 4, 6, 7\}$. Lemma~\ref{HYPER} confirms this requirement for
	these ten curves.
	
	The first claim of the theorem now follows from Proposition~\ref{One}. To prove the remaining claims we apply Theorem~\ref{LehmerVariantGeneral}.
	Namely, if $|a_f(n)|=\ell,$ then $n=p^{d-1}$, where
	$d\mid \ell (\ell^2-1)$ is an odd prime. The existence of such coefficients can be ruled out
	with Lemma~\ref{DiophantineCriterion}, which reduces the proof to a case-by-case search for suitable integral points on hyperelliptic curves and solutions to Thue equations which were considered in the previous section.
	If $a_f(p^2)=\pm \ell$, then  $(p,a_f(p))\in C_{k,\ell}^{\pm}$.
	If $a_f(p^4)=\pm \ell$, then  $(p, 2_f(p)^2-3p^{2k-1})\in
	H_{2k-1,\ell}^{\pm}$. 
	Obviously, it suffices to study curves $C_{d,\ell}^{\pm}$  (resp. $H_{2d-1,\ell}^{\pm}$) with $d\mid (2k-1)$.
	Finally, if $a_f(p^{d-1})=\pm \ell$  with  $d\geq 7,$ then $(p^{2k-1}, a_f(p)^2)$ is a solution to
	$F_{d-1}(X,Y)=\pm \ell.$
	By Lemmas~\ref{Monster}, \ref{HYPER}, \ref{AnnalsCorollary}, and \ref{Hyper11_19} (i.e. inspecting the tables in the Appendix), 
	there are no such integral points (sometimes under GRH) in the cases claimed by the theorem.
\end{proof}

\section{Baker's linear forms in logarithms}

To prove Theorem~\ref{Explicit35}, we make use of the following classical result
of Baker and W\"ustholz \cite{BW93} on linear forms in logarithms.


\begin{theorem}[p. 20 of \cite{BW93}]\label{BW}
	Let $\alpha_1,\ldots,\alpha_r$ be algebraic numbers and $b_1,\ldots,b_r$ be rational integers. If $\Lambda:=b_1\log\alpha_1+\cdots+b_r\log\alpha_r$ (note. where the logarithms have their principal values such that $-\pi<\mathrm{Im}(\log \alpha)\leq\pi$) is nonzero,
	then we have
	\begin{align*}
		\log|\Lambda|>-C(r,d)\log(\mathrm{max}\left\{e,B\right\})\prod_{i=1}^{r}h'(\alpha_i),
	\end{align*}
	where $d:=[\mathbb{Q}(\alpha_1,\ldots,\alpha_r):\mathbb{Q}]$, $B:=\mathrm{max}\left\{|b_1|,\ldots,|b_r|\right\}$,
	\begin{align*}
		C(r,d):=18(r+1)!~r^{r+1}(32d)^{r+2}\log(2rd),
	\end{align*}
	and $h'(\alpha):=\mathrm{max}\left\{h(\alpha)/d,|\log\alpha|/d,1/d\right\}$, where $h(\alpha)$ is the logarithmic Weil height of $\alpha$.
\end{theorem}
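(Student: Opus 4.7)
The plan is to follow the classical Baker--Wüstholz method from transcendence theory, proceeding by contradiction: assume that $\Lambda$ is nonzero but smaller than the asserted lower bound, and derive a contradiction by producing an auxiliary function that cannot simultaneously vanish as much as the construction forces. First I would normalize the problem: set $\lambda_i := \log \alpha_i$ and $H_i := h'(\alpha_i)$, and fix parameters $L, T, S$ (to be chosen as explicit functions of $r$, $d$, $B$, and the $H_i$) governing the degree, order of vanishing, and extrapolation range of the auxiliary construction. The goal is to build a polynomial
\begin{equation*}
P(z_1, \ldots, z_r) = \sum_{\tau_1 = 0}^{L} \cdots \sum_{\tau_r = 0}^{L} p(\tau_1, \ldots, \tau_r)\, z_1^{\tau_1} \cdots z_r^{\tau_r}
\end{equation*}
with rational integer coefficients $p(\tau_1, \ldots, \tau_r)$ of controlled height, such that the associated analytic function
\begin{equation*}
\Phi(z) := P\!\left(\alpha_1^z, \ldots, \alpha_{r-1}^z, e^{z\lambda_r}\right)
\end{equation*}
vanishes to order at least $T$ at each of the integer points $z = 0, 1, \ldots, S-1$.

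The construction of the coefficients $p(\tau_1, \ldots, \tau_r)$ proceeds via Siegel's lemma over the number field $K := \mathbb{Q}(\alpha_1, \ldots, \alpha_r)$: the vanishing conditions amount to a linear system whose entries are algebraic numbers of bounded height, and a counting argument produces a nontrivial integral solution provided the dimension of the solution space exceeds the number of equations; this forces a quantitative relation among $L$, $T$, $S$, and the $H_i$. Next comes the \emph{extrapolation} step, which is the technical heart of the argument: exploiting the assumption $|\Lambda| < \exp(-C(r,d) \log(\max\{e,B\}) \prod H_i)$, one shows that $\Phi(z)$ is also very small (not merely that its derivatives up to order $T$ vanish) at points $z$ beyond the initial range, since replacing $\lambda_r$ by $-(b_1/b_r)\lambda_1 - \cdots - (b_{r-1}/b_r)\lambda_{r-1} + \Lambda/b_r$ only perturbs $\Phi$ by a factor controlled by $|\Lambda|$. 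Combining a Schwarz-type analytic estimate with an arithmetic lower bound (a nonzero algebraic number of bounded degree and height cannot be too small) then forces the derivatives of $\Phi$ to vanish on a larger set than the construction a priori guarantees.

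Iterating the extrapolation $K$ times multiplies the effective zero set, and after sufficiently many iterations one concludes that $P$ itself must vanish identically, contradicting the nontriviality of the Siegel solution --- unless $\Lambda$ satisfies the asserted lower bound. The main obstacle, and the reason Wüstholz's refinement of Baker is needed to get the explicit constant $C(r,d) = 18(r+1)! \, r^{r+1}(32d)^{r+2}\log(2rd)$, is the \emph{zero estimate}: one needs a sharp bound on the multiplicity with which a nonzero polynomial of given degree can vanish on a finitely generated subgroup of $\mathbb{G}_m^r(\overline{\mathbb{Q}})$. This is where Wüstholz's analytic subgroup theorem enters, replacing the cruder multiplicity estimates of Baker and van der Poorten by a structural statement about algebraic subgroups of commutative algebraic groups; it is this input that both powers the contradiction and determines the precise shape of $C(r,d)$. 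The remaining steps --- optimizing the parameters $L, T, S, K$ and tracking heights through the Weil machinery to get the product $\prod h'(\alpha_i)$ rather than a maximum --- are essentially bookkeeping once the zero estimate is in place.
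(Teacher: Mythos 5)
The paper does not prove Theorem~\ref{BW}; it is cited verbatim from p.~20 of \cite{BW93} and used as a black box throughout Chapter~\ref{C8}. There is therefore no ``paper's proof'' to compare against.

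As a description of how Baker and W\"ustholz actually argue, your sketch is broadly faithful: the proof by contradiction under the assumption that $|\Lambda|$ is too small, the construction of an auxiliary polynomial via Siegel's lemma over $K$, the extrapolation step exploiting that $\Lambda$ small means $\Phi$ is small at new points, the Liouville-type arithmetic lower bound forcing actual vanishing, and, crucially, W\"ustholz's multiplicity (zero) estimate coming from the analytic subgroup theorem are all correctly identified as the pillars. However, two cautions are warranted. First, what you call ``essentially bookkeeping'' --- tracking heights through the Siegel and extrapolation steps to land on the product $\prod_i h'(\alpha_i)$ and the explicit constant $C(r,d)=18(r+1)!\,r^{r+1}(32d)^{r+2}\log(2rd)$ --- is in fact the substantive content of \cite{BW93}: producing a fully explicit, reasonably small constant, rather than an unspecified $C(r,d)$, is the main point of that paper and occupies most of its length. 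Second, your sketch omits the interpolation-determinant / iterated-extrapolation parameter choices and the treatment of the ``degenerate'' cases (where the $\log\alpha_i$ are multiplicatively dependent) that the zero estimate must rule out; without specifying $L,T,S,K$ and verifying the numerical inequalities they must satisfy, the argument remains a plan rather than a proof. For the purposes of this thesis, none of that detail is needed, since the result is invoked only as a cited theorem, but a reader should not come away thinking the proof is routine once the strategy is laid out.
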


This deep theorem can be applied to the  Diophantine equations in (\ref{ell3}) and (\ref{ell5}).
We shall now assume that $n$ is fixed for the remainder of this discussion.
Namely, we view potential integer points as factorizations, in the ring of integers
of  the quadratic fields $K=\QQ(\sqrt{-\varepsilon \ell^m}),$
given by
\begin{align*}
	(X+\sqrt{-\varepsilon \ell^m})(X-\sqrt{-\varepsilon \ell^m}) =Y^n \ \  \ {\text {\rm and}}\ \  \
	(X+2\sqrt{-\varepsilon \ell^m})(X-2\sqrt{-\varepsilon \ell^m}) =Y^n.
\end{align*}
Namely, if $[K:\QQ]=2$ and $h_K=1$, then we have $\beta\in\mathcal{O}_K$ such that $N_{K/\mathbb{Q}}(\beta)=Y$ and 
\begin{align*}
	(X+\sqrt{-\varepsilon \ell^m})=\beta^n ~(\mathrm{mod}~ \mathcal{O}_{K}^{\times})
	\ \ \ {\text {\rm and}}\ \ \ 
	(X+2\sqrt{-\varepsilon \ell^m})=\beta^n ~(\mathrm{mod}~ \mathcal{O}_{K}^{\times}).
\end{align*}
If $K$ does not have class number one, then we may pick $\beta\in\mathcal{O}_K$ such that $  N_{K/\mathbb{Q}}(\beta)=Y^{h_K}$ and consider $\beta^{{n}/{h_K}}$ instead.
This only applies when $\varepsilon=1, \ell=5$ and $m$ is odd, in which case  $h_{\QQ(\sqrt{-5})}=2$. 
In these cases we let $\overline{\beta}$ denote the Galois conjugate of $\beta$. Finally, if $K=\QQ$, then we may pick $\beta,\overline{\beta}\in\ZZ$  (abusing notation) such that $\beta\overline{\beta}=Y$ and $|{\beta}|\leq \sqrt{|Y|}.$
In each case, the algebraic integer $\beta$ is uniquely determined up to unit.

Given such a $\beta$, we construct a corresponding linear form in logarithms arising from $\beta/\overline{\beta}.$ 
For convenience, we denote the relevant fundamental units by
$w_{3}:=2+\sqrt{3}$ and $w_{5}:=1/2+\sqrt{5}/2$, and we denote the 6th root of unity by $w_{-3}:=1/2+\sqrt{-3}/2.$
By taking logarithms,  we obtain a triple of integers $0\leq j_4\leq 3, 0\leq j_6\leq 5,$ and $0\leq j_n<n-1,$ for which one of the
corresponding forms (depending on $\varepsilon, \ell$ and the parity of $m$), say
$\Lambda_{T^{\varepsilon}(\ell,m)}$ and  $\Lambda_{U^{\varepsilon}(m)},$ is given by
\begin{equation}
	\Lambda_{T^{\varepsilon}(\ell,m)}:= \begin{cases} j_6\log({\overline{w}_{-3}}/ w_{-3})-n\log({\overline{\beta}}/\beta)+ki\pi \ \ \ \ \ &{\text {\rm if $\varepsilon=+, m$ odd, and $\ell=3$}},\\
		j_4\log({\overline{i}}/ i)-n\log({\overline{\beta}}/\beta)+ki\pi \ \ \ \ \ &{\text {\rm if $\varepsilon=+, m$ even, and $\ell=3$}},\\
		-(n/2)\log({\overline{\beta}}/\beta)+ki\pi \ \ \ \ \ &{\text {\rm if $\varepsilon=+, m$ odd, and $\ell=5$}},\\
		j_4\log({\overline{i}}/ i)-n\log({\overline{\beta}}/\beta)+ki\pi \ \ \ \ \ &{\text {\rm if $\varepsilon=+, m$ even, and $\ell=5$}},\\
		j_n\log(\overline{w}_{3}/w_{3})-n\log({\overline{\beta}}/\beta) \ \ \ \ \ &{\text {\rm if $\varepsilon=-, m$ odd, and $\ell=3$}},\\
		-n\log(\overline{\beta}/\beta) \ \ \ \ \ &{\text {\rm if $\varepsilon=-, m$ even, and $\ell=3$}},\\
		j_n\log(\overline{w}_{5}/w_{5})-n\log({\overline{\beta}}/\beta)  \ \ \ \ \ &{\text {\rm if $\varepsilon=-, m$ odd, and $\ell=5$}},\\
		-n\log(\overline{\beta}/\beta) \ \ \ \ \ &{\text {\rm if $\varepsilon=-, m$ even, and $\ell=5$}},\\
	\end{cases}
\end{equation}     
and
\begin{equation}
	\Lambda_{U^{\varepsilon}(m)}:= \begin{cases}  -(n/2)\log({\overline{\beta}}/\beta)+ki\pi \ \ \ \ \ &{\text {\rm if $\varepsilon=+$ and $m$ odd}},\\
		j_4\log({\overline{i}}/ i)-n\log({\overline{\beta}}/\beta)+ki\pi \ \ \ \ \ &{\text {\rm if $\varepsilon=+$ and $m$ even}},\\
		j_n\log(\overline{w}_{5}/w_{5})-n\log({\overline{\beta}}/\beta) \ \ \ \ \ &{\text {\rm if $\varepsilon=-$ and $m$ odd}},\\
		-n\log(\overline{\beta}/\beta) \ \ \ \ \ &{\text {\rm if $\varepsilon=-$ and $m$ even}},
	\end{cases}
\end{equation}          
where $k\in\mathbb{Z}$ with $
|\Lambda_{T^{+}(\ell,m)}|,~|\Lambda_{U^{+}(m)}|<\pi$. 
The next lemma bounds these quantities.

\begin{lemma}\label{lambdabound} Assuming the notation and hypotheses above, the following are true.
	
	\noindent
	(1)  If $n>2\log(4\sqrt{\ell^m})/\log |Y|$ and $(X,Y)$ is an integer point on (\ref{ell3}),  with $Y\not \in \{0,\pm 1\}$, then
	\begin{align*}
		|\Lambda_{T^{\varepsilon}(\ell,m)}|\leq 2.78\cdot \frac{\sqrt{\ell^{m}}}{|Y|^{\frac{n}{2}}}.
	\end{align*}
	
	\noindent
	(2) If $n>2\log(8\sqrt{5^m})/\log |Y|$, and $(X,Y)$ is an integer point on (\ref{ell5}), with $Y\neq 0$, then 
	\begin{align*}
		|\Lambda_{U^{\varepsilon}(m)}|\leq 5.56\cdot \frac{\sqrt{5^{m}}}{|Y|^{\frac{n}{2}}}.
	\end{align*}
\end{lemma}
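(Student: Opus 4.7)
The plan is to identify each linear form $\Lambda$ with the principal logarithm of an explicit ratio of conjugate algebraic integers, and then to bound that ratio's distance from $1$ using only the Diophantine equation cutting out the integer points $(X,Y)$.

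First, I would divide the factorization $X+\sqrt{-\varepsilon\ell^m}=u\beta^n$ (or its class-number-two analogue when $(\varepsilon,\ell)=(+,5)$ and $m$ is odd) by its Galois conjugate to get
$$\rho:=\frac{X+\sqrt{-\varepsilon\ell^m}}{X-\sqrt{-\varepsilon\ell^m}}=\frac{u}{\bar u}\left(\frac{\beta}{\bar\beta}\right)^{\!n}.$$
Taking the principal branch, $\log(u/\bar u)$ unwinds case-by-case to an integer multiple of one of $\log(\bar w_{-3}/w_{-3})$, $\log(\bar\imath/\imath)$, $\log(\bar w_3/w_3)$, or $\log(\bar w_5/w_5)$, as dictated by $K$ and the parity of $m$, while the integer $k$ in the definition of $\Lambda_{T^\varepsilon(\ell,m)}$ absorbs the $2\pi i$-correction needed to stay on the principal branch. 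This identifies $\Lambda_{T^\varepsilon(\ell,m)}$ with $\log\rho$ up to a sign (the reciprocal $1/\rho$ is more convenient in the real-quadratic cases); the identical argument with $2\sqrt{-\varepsilon\ell^m}$ in place of $\sqrt{-\varepsilon\ell^m}$ handles $\Lambda_{U^\varepsilon(m)}$.

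Second, I would bound $|\rho-1|$ directly. In the imaginary case $\varepsilon=+$, one has $|X-\sqrt{-\ell^m}|=\sqrt{X^2+\ell^m}=|Y|^{n/2}$, so
$$|\rho-1|=\frac{2\sqrt{\ell^m}}{|X-\sqrt{-\ell^m}|}=\frac{2\sqrt{\ell^m}}{|Y|^{n/2}}.$$
In the real case $\varepsilon=-$, the identity $|X-\sqrt{\ell^m}|\cdot|X+\sqrt{\ell^m}|=|Y|^n$ forces one of the two factors to be at least $|Y|^{n/2}$; choosing $\Lambda$ to come from the reciprocal ratio when necessary, I again get $|\rho-1|\le 2\sqrt{\ell^m}/|Y|^{n/2}$. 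For part (2) the perturbation is $4\sqrt{\ell^m}$ instead of $2\sqrt{\ell^m}$.

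Third, the hypothesis of (1) is equivalent to $|Y|^{n/2}>4\sqrt{\ell^m}$, so $|\rho-1|\le\tfrac12$. I would then apply the elementary inequality
$$|\log(1+z)|\le -\log(1-|z|)\le (2\log 2)\,|z|\quad\text{for }|z|\le\tfrac12,$$
in which the second step follows from monotonicity of $-\log(1-x)/x$ on $(0,1)$. This gives $|\Lambda_{T^\varepsilon(\ell,m)}|\le (4\log 2)\sqrt{\ell^m}/|Y|^{n/2}<2.78\,\sqrt{\ell^m}/|Y|^{n/2}$, and the analogous argument in part (2), with threshold $|Y|^{n/2}>8\sqrt{5^m}$ and perturbation $4\sqrt{5^m}$, yields $|\Lambda_{U^\varepsilon(m)}|\le (8\log 2)\sqrt{5^m}/|Y|^{n/2}<5.56\,\sqrt{5^m}/|Y|^{n/2}$. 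The main obstacle is the bookkeeping in the first step: for each of the eight combinations of $\varepsilon$, $\ell\in\{3,5\}$, and the parity of $m$, one has to verify that $u/\bar u$ really is a power of the specific root of unity or fundamental unit listed in the definition of $\Lambda_{T^\varepsilon(\ell,m)}$, and that the stated ranges $0\le j_4\le 3$, $0\le j_6\le 5$, $0\le j_n<n-1$ suffice; once that dictionary is in place, steps two and three are essentially an identity plus the calculus estimate $-\log(1-x)\le (2\log 2)\,x$ on $[0,1/2]$.
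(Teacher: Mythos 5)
Your proposal is correct and follows essentially the same route as the paper: relate $|\Lambda|$ to $|e^{\Lambda}-1|=|\rho-1|$, bound $|\rho-1|$ by $2\sqrt{\ell^m}/|Y|^{n/2}$ (resp.\ $4\sqrt{5^m}/|Y|^{n/2}$) using the factorization cut out by the Diophantine equation, check that the hypothesis on $n$ forces this to be $<1/2$, and finish with a linear bound on $|\log(1+z)|$. The paper simply invokes $|\log(1+z)|\le 1.39|z|$ for $|z|<1/2$, whereas you derive the sharper constant $2\log 2\approx 1.386$ from $|\log(1+z)|\le -\log(1-|z|)$ and the monotonicity of $-\log(1-x)/x$; both give $4\log 2<2.78$ and $8\log 2<5.56$. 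You are also slightly more careful than the paper in the real-quadratic case $\varepsilon=-$: there the factors $X\pm\sqrt{\ell^m}$ need not have the same absolute value, only a product $|Y|^n$, and your observation that one may pass to the reciprocal ratio (which leaves $|\Lambda|$ unchanged) is exactly the point the paper leaves implicit when it writes the estimate as an equality $|e^{\Lambda}-1|=|\rho-1|$. One small bookkeeping item worth adding: in part (2) the hypothesis is only $Y\neq 0$, so you should note (as the paper does) that $Y=\pm1$ produces no integer point on \eqref{ell5}, which makes the threshold hypothesis $|Y|^{n/2}>8\sqrt{5^m}$ meaningful.
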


\begin{proof}
	By the definition of $\Lambda_{T^{\varepsilon}(\ell,m)}$, we directly find that
	\begin{align}\label{lambda1}
		|e^{\Lambda_{T^{\varepsilon}(\ell,m)}}-1|=
		\left|\frac{X+\sqrt{\pm\ell^m}}{X-\sqrt{\pm\ell^m}}-1\right|\leq\frac{2\sqrt{\ell^m}}{{|Y|^{\frac{n}{2}}}}.
	\end{align}
	For $|z|<1/2$, we note that
	$|\log(1+z)|\leq 1.39\cdot |z|.$
	Also, we note that the hypothesis on $n$ gives $|e^{\Lambda_{T^{\varepsilon}(\ell,m)}}-1|<1/2$.
	Hence, we obtain (1), the claimed inequality
	\begin{align*}
		|\Lambda_{T^{\varepsilon}(\ell,m)}|\leq 1.39\cdot |e^{\Lambda_{T^{\varepsilon}(\ell,m)}}-1|=2.78\cdot \frac{\sqrt{\ell^m}}{{|Y|^{\frac{n}{2}}}}.
	\end{align*}
	The same method gives (2), after noting that $Y=\pm 1$  has no integer point on (\ref{ell5}). 
\end{proof}

\section{More Diophantine equations}

Here we prove some Diophantine results concerning families of Lebesgue--Ramanujan--Nagell type equations which are
of independent interest.
To make them precise,
for $\ell\in \{3, 5\}, \varepsilon\in \{\pm\},$ and $m\in \ZZ^{+}$, we define
\begin{equation}
	T^{\varepsilon}(\ell,m):= \begin{cases} 2m+10^{32}\sqrt{m} \ \ \ \ \ &{\text {\rm if $\varepsilon=+$ and $\ell=3$}},\\
		2m+10^{23}\sqrt{m} \ \ \ \ \ &{\text {\rm if $\varepsilon=-, m$ odd, and $\ell=3$}},\\
		2m+10^{13}\sqrt{m} \ \ \ \ \ &{\text {\rm if $\varepsilon=-, m$ even, and $\ell=3$}},\\
		
		3m+10^{24}\sqrt{m} \ \ \ \ \ &{\text {\rm if $\varepsilon=\pm , m$ odd, and $\ell=5$}},\\
		3m+10^{30}\sqrt{m} \ \ \ \ \ &{\text {\rm if $\varepsilon=+, m$ even, and $\ell=5$}},\\
		3m+10^{13}\sqrt{m} \ \ \ \ \ &{\text {\rm if $\varepsilon=-, m$ even, and $\ell=5$}}.\\                            \end{cases}
\end{equation}           
Furthermore, we define  $U^{\varepsilon}(m)$ by
\begin{equation}
	U^{\varepsilon}(m):= \begin{cases}  3m+10 ^{24}\sqrt{m}\ \ \ \ \ &{\text {\rm if $\varepsilon=\pm $ and $m$ odd}},\\
		3m+10^{30}\sqrt{m} \ \ \ \ \ &{\text {\rm if $\varepsilon=+$ and $m$ even}},\\
		3m+10^{13}\sqrt{m} \ \ \ \ \ &{\text {\rm if $\varepsilon=-$ and $m$ even}}.
	\end{cases}
\end{equation}

\begin{theorem}\label{Explicit35}
	If $\ell \in \{3, 5\},$ $\varepsilon\in \{\pm \}$, and $m\in \ZZ^{+}$, then the following are true.
	\newline
	(1)  If $n>T^{\varepsilon}(\ell,m)=O_{\ell}(m),$ then there are no integer points\footnote{We switch $X$ and $Y$ here to be consistent with the literature on Lebesgue--Ramanujan--Nagell equations.} $(X,Y),$ with $Y\not \in \{0,\pm 1\}$, on
	\begin{align}\label{ell3}
		X^2 +\varepsilon \ell^m=Y^n.
	\end{align}
	\newline
	\noindent
	(2) If $n>U^{\varepsilon}(m)=O_{\ell}(m),$ then there are no integer points $(X,Y),$ with $Y\neq 0$, on
	\begin{align}\label{ell5}
		X^2+\varepsilon 4\cdot 5^m = Y^n.
	\end{align}
\end{theorem}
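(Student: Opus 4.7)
The plan is to apply the Baker--W\"ustholz theorem (Theorem \ref{BW}) to the linear forms $\Lambda_{T^{\varepsilon}(\ell,m)}$ and $\Lambda_{U^{\varepsilon}(m)}$ that were already constructed in the excerpt. Assume for contradiction that $(X,Y)$ is an integer point on \eqref{ell3} with $Y \notin \{0, \pm 1\}$ (resp. on \eqref{ell5} with $Y \neq 0$) and that $n$ exceeds the claimed threshold. Since the bound $2\log(4\sqrt{\ell^m})/\log|Y|$ in Lemma \ref{lambdabound} is $O(m)$, while each of $T^{\varepsilon}(\ell,m)$ and $U^{\varepsilon}(m)$ has leading term $2m$ or $3m$, the hypothesis on $n$ already guarantees that the upper bound
\begin{align*}
|\Lambda| \;\leq\; C_1 \,\frac{\sqrt{\ell^{m}}}{|Y|^{n/2}}
\end{align*}
of Lemma \ref{lambdabound} is in force, where $C_1 \in \{2.78, 5.56\}$.

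Next, I would observe that $\Lambda \neq 0$: in each of the eight cases the expression $\beta/\overline{\beta}$ is not a root of unity (as $Y \notin \{0,\pm 1\}$ forces $\beta$ not to lie on the unit circle), which combined with the independence of the logarithms of the units $w_{-3}, i, w_3, w_5$ prevents vanishing. Then applying Theorem \ref{BW} with $d = 2$ and $r \in \{1,2,3\}$ (the rank depending on whether a nontrivial unit and/or an $i\pi$-term enters the form) yields
\begin{align*}
\log|\Lambda| \;\geq\; -\,C(r,2)\,\log(\max\{e, n\}) \,\prod_{j=1}^{r} h'(\alpha_j).
\end{align*}
The heights of the fixed algebraic numbers $i, w_{-3}, w_3, w_5$ are explicit small constants, while $h'(\beta/\overline{\beta}) \leq \log|Y|$ (respectively $\leq 2\log|Y|$ in the class-number-two case $\ell=5$, $m$ odd, $\varepsilon = +$), since $|\beta|\cdot|\overline{\beta}| = |Y|^{h_K}$ with $h_K \in \{1,2\}$.

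Combining the two bounds and taking logarithms, one obtains an inequality of the shape
\begin{align*}
\tfrac{n}{2}\log|Y| \;\leq\; \tfrac{m}{2}\log\ell + \log C_1 + C_2\,\log(n)\,\log|Y|,
\end{align*}
where $C_2$ depends only on $r$, the heights of the fixed units, and the class-number factor. Dividing by $\log|Y| \geq \log 2$ and applying a standard iteration (the ``$n \leq A + B\log n$ implies $n \leq A + 2B\log A$'' trick) yields a bound $n \leq c_1 m + c_2 \sqrt{m}$, with $c_1 \in \{2,3\}$ matching the leading term of $T^{\varepsilon}(\ell,m)$ or $U^{\varepsilon}(m)$, depending on which of the cases one is in.

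The main obstacle will be bookkeeping the constants carefully enough to recover the specific coefficients $10^{13}, 10^{23}, 10^{24}, 10^{30}, 10^{32}$. These vary between the eight subcases because (i) the rank $r$ of the linear form differs (rank $1$ when $\varepsilon=-$ and $m$ is even, rank $2$ when a single unit or an $i\pi$ term appears, rank $3$ when both do); (ii) the constant $C(r,2) = 18(r+1)!\, r^{r+1}\,64^{r+2}\log(4r)$ grows rapidly with $r$; and (iii) the class number two case $\ell=5$, $m$ odd, $\varepsilon=+$ doubles the height bound on $\beta/\overline{\beta}$. Matching each subcase to its constant is thus a careful but routine enumeration, and the threshold functions $T^{\varepsilon}(\ell,m)$ and $U^{\varepsilon}(m)$ absorb all of this case analysis into a uniform linear-plus-square-root bound.
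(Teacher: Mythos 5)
Your proposal is correct and follows essentially the same path as the paper: factor in the (real or imaginary) quadratic field, form the linear form $\Lambda$ in logarithms, bound it above via Lemma \ref{lambdabound} and below via Baker--W\"ustholz, then combine and iterate to get $n \leq c_1 m + c_2\sqrt{m}$. The paper's proof (placed in the section headed ``Proof of Theorem \ref{Power}'') works out the single case $\ell = 3$, $\varepsilon = -$, $m$ odd in detail, using $C(3,2)$ and $h'(\overline{w}_{-3}/w_{-3}) \leq \pi/3$, $h'(\overline{\beta}/\beta)\leq\max\{\log|Y|,\pi\}$, $h'(-1)\leq\pi/2$, and observes the rest are similar --- exactly the bookkeeping you describe.
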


\section{Proof of Theorem \ref{Power}}

For brevity, we only consider  when $\ell=3$ and $\varepsilon=-$,  as  the same method applies to all of the cases.
Suppose that there is an integer point $(X,Y)$ on $X^2+3^m=Y^n$.
Therefore, there is an integer $0\leq j_6\leq 5$ and an algebraic integer $\beta \in \QQ(\sqrt{-3})$ for which
$N_{K/\QQ}(\beta)=Y$ and
\begin{align*}
	(X+\sqrt{-3^m})=\frac{\beta^n}{w_{-3}^{j_6}}.
\end{align*}
In particular, if $m$ is odd, then we have
\begin{align*}
	\Lambda_{T^{\varepsilon}(\ell,m)}=j_6\log({\overline{w}_{-3}}/ w_{-3})-n\log({\overline{\beta}}/\beta)+ki\pi
	=j_6\log({\overline{w}_{-3}}/ w_{-3})-n\log({\overline{\beta}}/\beta)+k\log(-1).
\end{align*}
Since $\Lambda_{T^{\varepsilon}(\ell,m)}\neq 0,$ Theorem \ref{BW} implies that
\begin{align*}
	\log|\Lambda_{T^{\varepsilon}(\ell,m)}|>-C(3,2)h'(\overline{w}_{-3}/w_{-3})h'(\overline{\beta}/\beta)h'(-1)\log(\mathrm{max}\left\{e,j_6,n,|k|\right\}.
\end{align*}
Furthermore, by a short calculation, we get
\begin{displaymath}
	\begin{split}
		&h'(\overline{w}_{-3}/w_{-3})\leq\frac{\pi}{3},\\
		&h'(\overline{\beta}/\beta)\leq\mathrm{max}\left\{\log |Y|, \pi\right\}\\
		&h'(-1)\leq\frac{\pi}{2},~\mathrm{max}\left\{e,j_6,n,|k|\right\})\leq n+5.
	\end{split}
\end{displaymath}
Therefore, Theorem~\ref{BW} implies that
\begin{align*}
	\log|\Lambda_{T^{\varepsilon}(\ell,m)}|&>-\frac{\pi^2}{6}C(3,2)\mathrm{max}\left\{\log |Y|, \pi\right\}\log(n+5).
\end{align*}
However, Lemma \ref{lambdabound} (1) gives
\begin{align*}
	\log(2.78\cdot\sqrt{3^m})-\frac{n}{2}\cdot {\color{black}\log |Y|} >\log|\Lambda_{T^{\varepsilon}(\ell,m)}|&>-\frac{\pi^3}{6}C(3,2)\log(n+5)\cdot {\color{black}\log |Y|},
\end{align*}
which in turn implies that
$$
\log(2.78\cdot\sqrt{3^m})-\frac{n}{2}\log 2  >-\frac{\pi^3}{6}C(3,2)\sqrt{n+4}.
$$
Since we have $C(3,2)=18(4)!~3^4(6
4)^5\log(12),$ a direct calculation shows that we must have
\begin{align*}
	n\leq 1.6m+(60\sqrt{m}+5.9)\cdot 10^{30},
\end{align*}
which gives a constant that is smaller than the claimed $M^{-}(3,m).$ Taking into account even $m$,
a similar calculation gives $n< 1.6m +(9.4\sqrt{m}+1.4)\cdot 10^{31}.$ The claimed $M^{-}(3,m)$ is
a ``rounded up'' version of the maximum of these two constants.

\begin{proof}[Proof of Theorem~\ref{Power}]
Suppose that $\ell^m$ is a power of an odd prime. Thanks to Theorem~\ref{LehmerVariantGeneral}, if $a_f(n)=\pm \ell^m,$
then $n=p^{d-1}$, where $p$ and $d\mid \ell(\ell^2-1)$ are odd primes. 
For each $d$, Lemma~\ref{DiophantineCriterion} gives an integer
point on an elliptic or hyperelliptic curve, or gives an integer solution to a Thue equation.

If $\ell =3$ (resp. $\ell=5$), then we find that the only possibility is $d=3$ (resp. $d=3, 5$).  This leads to the equations 
in Theorem~\ref{Explicit35}, which in turn gives the claimed bounds in these cases.
Turning to $\ell \geq 7$, we note for $d=3$ (resp. $5$) that one
can argue again as in the proof of Theorem~\ref{Explicit35} to conclude that $a_f(p^2)\neq \pm \ell^m$ 
(resp. $a_f(p^4)\neq \pm \ell^m$) for $f$ with (effectively) sufficiently large  weight $2k$.
For any $d\geq 7$, Lemma~\ref{DiophantineCriterion} (3)  gives the
integer solution $(X,Y)=(p^{2k-1}, a_f(p^2))$ to the Thue equation
$$
F_{d-1}(X,Y)=\pm \ell^m.
$$
As an implementation of Baker's theory of linear forms in logarithms, 
a well-known paper of Tzanakis and de Weger (see p. 103 of \cite{TW89})  on Thue equations gives a method for effectively determining an
upper bound\footnote{The reader should switch the roles of $X$ and $Y$ when applying the discussion in \cite{TW89}.}  for $|X|$ of any integer point satisfying $F_{d-1}(X,Y)=\pm \ell^m$, which in turn leads to an upper bound for the weight $2k$.  The linearity of these constants in $m$ aspect follows from the formal
taking of a logarithm in these Diophantine equations.
\end{proof}

\section{Appendix: Tables}

\begingroup
\setlength{\tabcolsep}{3pt} 
\renewcommand{\arraystretch}{0.7}
\begin{table}[!ht]
	\begin{center} 
		\begin{tabular}{|c|c|}
			\multicolumn{2}{c}{} \\ \hline
			$(A,B)$ & Defective $u_n(\alpha, \beta)$ \\ \hline \hline
			\multirow{2}{3.5em}{$(\pm 1,2^1)$} & $u_5 = -1$, $u_7 = 7$, $u_8 = \mp 3$, $u_{12} = \pm 45$, \\ & $u_{13} = -1$, $u_{18} = \pm 85$, $u_{30} = \mp 24475$ \\ \hline
			$(\pm 1,3^1)$ & $u_5 = 1$, $u_{12} = \pm 160$ \\ \hline
			$(\pm 1, 5^1)$ & $u_7 = 1$, $u_{12} = \mp 3024$ \\ \hline
			$(\pm 2, 3^1)$ & $u_3 = 1$, $u_{10} = \mp 22$ \\ \hline
			$(\pm 2,7^1)$ & $u_8 = \mp 40$ \\ \hline
			$(\pm 2, 11^1)$ & $u_5 = 5$ \\ \hline
			$(\pm 4, 5^1)$ & $u_6=\pm 44$\\ \hline
			$(\pm 5, 7^1)$ & $u_{10} = \mp 3725$ \\ \hline
			$(\pm 3, 2^3)$ & $u_3 = 1$ \\ \hline
			$(\pm 5, 2^3)$ & $u_6 = \pm 85$ \\ \hline
		\end{tabular}
		\medskip
		\captionof{table}{\textit{Sporadic examples of defective $u_n(\alpha, \beta)$ satisfying (\ref{Modularity})}} 
		\label{table1}
	\end{center}
\end{table}

\endgroup

\noindent
The families of defective Lucas numbers satisfying (\ref{Modularity}) are given
by the following curves.
\begin{equation}\label{Table2Curves}
	\begin{split}
		\ \ \ \ \ \ &B_{1, k}^{r, \pm} : Y^2 = X^{2k-1} \pm 3^r, \ \ \ \   B_{2,k} : Y^2 = 2X^{2k-1} - 1, \ \ \ B_{3, k}^\pm : Y^2 = 2X^{2k-1} \pm 2, \\ \ \ B_{4,k}^r &: Y^2 =  3X^{2k-1} + (-2)^{r+2}, \ \ \  B_{5,k}^{\pm} : Y^2=3X^{2k-1} \pm 3, \ \ \  B_{6,k}^{r,\pm} : Y^2 = 3X^{2k-1} \pm 3 \cdot 2^r.
	\end{split}
\end{equation}

\begingroup
\setlength{\tabcolsep}{3pt} 
\renewcommand{\arraystretch}{0.7}
\begin{table}[!ht]
	\begin{center} 
		\begin{small}
			\begin{tabular}{|c|c|c|}
				\hline
				$(A,B)$ & Defective $u_n(\alpha, \beta)$ & Constraints on parameters \\ \hline \hline
				$(\pm m, p)$ & $u_3 = -1$ & $m>1$ and $p = m^2+1$ \\ \hline
				\multirow{2}{*}{}
				$(\pm m, p^{2k-1})$ &
				$u_3 = \varepsilon 3^r$ &
				$\begin{aligned} &\ \ \ \ \ \ (p, \pm m)\in B_{1, k}^{r, \varepsilon} \text{ with } 3\nmid m,\\ 
					&(\varepsilon,r,m)\neq (1,1,2),
					\text{ and } m^2 \geq 4\varepsilon 3^{r-1} \end{aligned}$\\ \hline
				{\color{black}$(\pm m, p^{2k-1})$} & {\color{black}$u_4 = \mp m$} & {\color{black}$(p,\pm m) \in B_{2,k}$ with $m > 1$ odd} \\ \hline
				$(\pm m, p^{2k-1})$ & $u_4 = \pm 2{\color{black}\varepsilon}m$ & $\begin{aligned}(p,\pm m)\in B_{3,k}^\varepsilon &\text{ with } {\color{black}(\varepsilon, m) \not = (1,2)}\\ &\text{  and } m > 2
					\text{  even}
				\end{aligned}$ \\ \hline
				$(\pm m, p^{2k-1})$ & 
				$u_6 = {\color{black}\pm (-2)^rm(2m^2+(-2)^r)/3}$ &
				$\begin{aligned}(p, &\pm m)\in B_{4,k}^r \text{ with } \gcd(m,6) = 1, \\
					&{\color{black}(r,m) \not = (1,1)}, \text{ and } {\color{black}m^2 \geq (-2)^{r+2}}\end{aligned}$ \\ \hline
				$(\pm m, p^{2k-1})$ & ${\color{black}u_6=\pm \varepsilon m(2m^2+3\varepsilon)}$ & $(p,\pm m)\in B_{5,k}^\varepsilon$ with $3\mid m$ and $m>3$\\ \hline
				$(\pm m, p^{2k-1})$ & $u_6 = \pm 2^{r+1}{\color{black}\varepsilon} m(m^2 + 3 {\color{black}\varepsilon} \cdot 2^{r-1}) $ &$\begin{aligned} (p, \pm m)\in B_{6,k}^{r,\varepsilon}
					\text{ with } m \equiv 3 \bmod{6} \\  \text{and } m^2 \geq 3 {\color{black}\varepsilon} \cdot 2^{r+2}\end{aligned}$ \\ \hline
			\end{tabular}
		\end{small}
		\medskip
		\captionof{table}{Parameterized families of defective $u_n(\alpha, \beta)$ satisfying (\ref{Modularity})
			\label{table2}
			\textit{\newline Notation: $m, k, r\in \ZZ^{+}$, $\varepsilon = \pm 1$, $p$ is a prime number.}}
	\end{center}
\end{table}

\endgroup

\vskip.65in

\begingroup
\setlength{\tabcolsep}{5pt}
\renewcommand{\arraystretch}{0.7}
\begin{table}[!ht]
	\begin{center}
		\begin{tabular}{|c|l|} \hline
			$(a_f(p),p^{2k-1})$ & \multicolumn{1}{c|}{$\widehat{\sigma}(p,m)$} \\[1.5ex] \hline \hline
			
			\multirow{2}{*}{}
			$(\pm 3, 2^3)$ & $\begin{aligned} 
				\sigma_0(m+1) - 2 & \ \text{ when } 3|(m+1), \\
				\sigma_0(m+1) - 1 & \ \text{ otherwise.} \\
			\end{aligned}$ \\[1.5ex] \hline
			
			\multirow{2}{*}{}
			$(\pm 5, 2^3)$ & $\begin{aligned}
				\sigma_0(m+1) - 2 & \ \text{ if } 6|(m+1), \\
				\sigma_0(m+1) - 1 & \ \text{ otherwise}.
			\end{aligned}$ \\[1.5ex] \hline
			
			\multirow{2}{*}{}
			$(\pm m, p^{2k-1})$ & $\begin{aligned}
				\sigma_0(m+1) - 4 & \ \text{ if $(p, \pm m) \in S$,} \\
				\sigma_0(m+1) - 1 & \ \text{ otherwise.}
			\end{aligned}$ \\[1.5ex] \hline
		\end{tabular}
		\medskip
		\captionof{table}{\textit{Lower bounds on $\Omega(a_f(p^m))$ in defective cases for weights $2k \geq 4$.}
			\label{table3}
			\textit{\newline Notation: $S$ is the collection of all points on any of $B_{1,k}^{r,\pm}, B_{2,k}, B_{3,k}^r, B_{4,k}, B_{5,k}^r$.}}
	\end{center}
\end{table}
\endgroup


\bigskip

\begingroup
\setlength{\tabcolsep}{5pt} 
\renewcommand{\arraystretch}{0.7}
\begin{table}[!ht]
	\begin{center} 
		\begin{tabular}{|c|c|}
			\multicolumn{2}{c}{} \\ \hline
			$(d, D)$ & Integer Solutions to $F_{d-1}(X,Y)=D$  \\ \hline \hline
			$(7, \pm 7)$ & $(\pm 1, \pm 4), ( \pm 2, \pm 1), (\mp 3, \mp 5)$\\ \hline
			\multirow{2}{3.5em}{$(7,  \pm 13)$} & $( \pm 3,  \pm 10), (\pm 2,  \pm 7), ( \pm 3, \pm 4), (\pm 4, \pm 1),$ \\ & $( \pm 3,  \pm 1), (\mp 1, \pm 1), (\mp 2, \mp 5), (\mp 5,\mp 8), (\mp 7, \mp 11)$ \\ \hline
			$(7, \pm 29)$ & $\begin{aligned}(\mp 6, \mp 1), (\mp 5, &\mp 16), (\mp 4, \mp 7), (\pm 1, \pm 5), \\&(\pm 3, \pm 2), (\pm 11, \pm 17)\end{aligned}$\\ \hline
			$(11, \pm 11), (19, \pm 19),$  &\multirow{2}{3.5em}{$( \pm 1,\pm 4)$}\\ 
			$(23, \pm 23), (31, \pm 31)$ &\\ \hline
			$(11, \pm 23)$ & $( \pm 3, \pm 2), ( \pm 2, \pm 1), (\mp 2,\mp 3)$\\ \hline
			$(13, 13), (17, 17), (29, 29), (37,37)$ & $(-1, -4), (1,4)$\\ \hline
			$(13, -13),  (17, -17),$  & \multirow{2}{3.5em}{$\varnothing$}\\ 
			$(29, -29), (37,-37)$ &\\ \hline
			$(19, \pm 37)$ & $(\mp 2, \mp 5)$ \\ \hline
		\end{tabular}
		\smallskip
		\captionof{table}{\textit{Solutions for the Thue equations where $D=\pm \ell$ and $7\leq \ell \leq 37$} }
		\label{thuetable}
	\end{center}
\end{table}

\endgroup


\begingroup
\setlength{\tabcolsep}{5pt} 
\renewcommand{\arraystretch}{0.7}
\begin{table}[!ht]
	\begin{center} 
		\begin{tabular}{|c|c|}
			\multicolumn{2}{c}{} \\ \hline
			$(d, D)$ & Integer Solutions to $F_{d-1}(X,Y)=D$  \\ \hline \hline
			$(7, \pm 41) $ & $(\mp 3, \mp 7), (\mp 1, \pm 2), (\pm 4, \pm 5)  $\\ \hline
			$\begin{aligned}&(41, 41), (53, 53), (61,61), \\
				&(73, 73), (89,89), (97, 97)\end{aligned}$ & $(-1, -4), (1,4)$ \\ \hline
			$(41, -41), (23, \pm 47), (13, 53), (53,-53), (29, \pm 59),$ &  \multirow{3}{3.5em}{$\varnothing$} \\ 
			$(31, \pm 61), (61, -61), (17,-67), (37, \pm 73), (73,-73),$ & \\
			$(13,-79), (41, \pm 83), (89,-89), (97,-97)$ & \\   \hline
			$(7,  \pm 43)$ & $(\mp 3, \mp 8), (\mp 2, \pm 1), (\pm 5, \pm 7) $\\ \hline
			$(11, \pm 43)$ & $(\mp 3, \mp 5), (\pm 2, \pm 5)$\\ \hline
			$(43, \pm 43), (47, \pm 47), (59, \pm 59), (67,\pm 67),$ &  \multirow{2}{3.5em}{$( \pm 1,\pm 4)$}\\
			$(71, \pm 71), (79, \pm 79), (83, \pm 83)$ &  \\ \hline
			$(13, -53), (17,67)$ & $ (-2,-3), (2, 3)$\\ \hline
			$(11, \pm 67)$ & $(\mp 7, \mp 12), (\mp 3, \mp 11), (\mp 2, \mp 7)$ \\ \hline
			
			\multirow{2}{3.5em}{$(7,\pm 71)$} & $(\mp 16, \mp 25), (\mp 5, \mp 9), (\pm 1, \pm 6),$\\ & $(\pm 4, \pm 3), (\pm 7, \pm 23), (\pm 9, \pm 2)$\\ \hline
			$(13, 79)$ & $(-2,-5),(2,5)$\\ \hline
			
			\multirow{2}{3.5em}{$(7,\pm 83)$} & $(\mp 8, \mp 13), (\mp 7, \mp 1), (\mp 6, \mp 19),$\\
			&  $(\pm 3, \pm 11), (\pm 5, \pm 2), (\pm 13, \pm 20)$\\ \hline
			$(11, \pm 89)$ & $(\mp 1, \pm 1) $\\ \hline
			$(7,\pm 97)$ & $(\mp 4, \mp 11), (\mp 3, \pm 1), (\pm 7, \pm 10)$\\ \hline
			
		\end{tabular}
		\medskip
		\captionof{table}{\textit{Solutions (with GRH) to the Thue equations where $D=\pm \ell$ and $41\leq \ell \leq 97$} }
		\label{thueGRHtable}
	\end{center}
\end{table}
\endgroup

\bigskip

\begingroup
\setlength{\tabcolsep}{2pt} 
\renewcommand{\arraystretch}{0.5}
\begin{table}
	\begin{small}
	\begin{center}
		\begin{tabular}{|c|c|c|c|c|c|cl}
			\hline
			$\ell$ & {\text {\rm $C_{2,\ell}^{+}$}} & {\text {\rm $C_{3,\ell}^{+}$}} & {\text {\rm $C_{4,\ell}^{+}$}}  & {\text {\rm $C_{6,\ell}^{+}$}} & {\text {\rm $C_{7,\ell}^{+}$}}\\  \hline \hline
			$3$ & $(1,\pm 2)$ & $(1,\pm 2)$ & $(1,\pm 2)$  & $(1, \pm 2)$ & $(1,\pm 2)$  \\ \hline
			$5$ & $(-1,\pm 2)$ & $(-1, \pm 2)$ & $(-1,\pm 2)$  &$(-1, \pm 2)$ & $(-1.\pm 2)$ \\ \hline
			$\begin{aligned}7, &23, 29, 47,
				53,\\  &59, 61, 67, 83
			\end{aligned} $ & $\varnothing$ & $\varnothing$ & $\varnothing$  & $\varnothing$ & $\varnothing$ \\ \hline
			$11$ & $\varnothing$ & $(5,\pm 56)$ & $\varnothing$  & $\varnothing$ & $\varnothing$ \\ \hline
			$13$ & $\varnothing$ & $(3,\pm 16)$ & $\varnothing$  & $\varnothing$  & $\varnothing$ \\ \hline
			$17$ & $\begin{aligned} &(-2,\pm 3), (-1,\pm 4), (2,\pm 5), &\\ &(4,\pm 9), (8, \pm 23) 
				(43, \pm 282), \\ &(52, \pm 375), (5234, \pm 378661)\end{aligned}$ & $(-1,\pm 4)$ & $(-1,\pm 4)$  & $(-1,\pm 4)$ & $(-1,\pm 4)$ \\ \hline
			$19$ & $(5, \pm 12)$ & $\varnothing$ & $\varnothing$  & $\varnothing$ & $\varnothing$ \\ \hline
			$31$ & $(-3,\pm 2)$  & $\varnothing$ & $\varnothing$ & $\varnothing$ & $\varnothing$\\ \hline
			$37$ & $\begin{aligned} (-1,\pm 6), (3, \pm 8),\\ (243,\pm 3788) \end{aligned}$  & $(-1,\pm 6), (27, \pm 3788)$ & $(-1,\pm 6)$ & $(-1,\pm 6)$ & $(-1,\pm 6)$\\ \hline
			$41$ & $(2,\pm 7)$   & $(-2,\pm 3)$ & $(2,\pm 13)$ & $\varnothing$ & $\varnothing$\\ \hline
			$43$ & $(-3,\pm 4) $  & $\varnothing$ & $\varnothing$ & $\varnothing$ & $\varnothing$\\ \hline
			$71$ & $(5, \pm 14)$  & $\varnothing$ & $\varnothing$ & $\varnothing$ & $\varnothing$\\ \hline
			$73$ & $\begin{aligned}(-4,\pm 3), (2,\pm 9), \\ (3,\pm 10),
				(6,\pm 17),\\  (72,\pm 611), (356, \pm 6717)\end{aligned}$  & $\varnothing$ & $\varnothing$ & $\varnothing$ & $\varnothing$\\ \hline
			$79$ & $(45,\pm 302) $  & $\varnothing$ & $\varnothing$ & $\varnothing$ & $\varnothing$\\ \hline
			$89$ & $\begin{aligned}(-4, \pm 5), (-2, \pm 9), \\
				(10,\pm 33), (55,\pm 408)\end{aligned}$  & $(2,\pm 11)$ & $\varnothing$ & $\varnothing$ & $\varnothing$\\ \hline
			$97$ & $\varnothing $  & $\varnothing$ & $(2,\pm 15)$ & $\varnothing$ & $\varnothing$\\ \hline
		\end{tabular}
		\medskip
		\captionof{table}{\textit{Integer points on $C_{d,\ell}^{+}$}}
		\label{Cplustable}
	\end{center}
	\end{small}
\end{table}
\endgroup

\begingroup
\setlength{\tabcolsep}{3pt} 
\renewcommand{\arraystretch}{0.7}
\begin{table}
	\begin{center}
		\begin{tabular}{|c|c|c|c|c|c|}
			\hline
			$\ell$ & {\text {\rm $C_{2,\ell}^{-}$}} & {\text {\rm $C_{3,\ell}^{-}$}} & {\text {\rm $C_{4,\ell}^{-}$}}  & {\text {\rm $C_{6,\ell}^{-}$}} & {\text {\rm $C_{7,\ell}^{-}$}}\\  \hline \hline
			$\begin{aligned} &\ \ 3, 5, 17, 29, 37,\\  &41, 43,  59, 73, 97
			\end{aligned} $&  $\varnothing$ & $\varnothing$ & $\varnothing$  & $\varnothing$  & $\varnothing$\\ \hline
			$7$ & $(2,\pm 1), \ (32,\pm 181)$ & $(2,\pm 5), \ (8,\pm 181)$ & $(2, \pm 11)$ &  $\varnothing$ & $\varnothing$ \\ \hline
			$11$ & $(3,\pm 4), \ (15, \pm 58)$ & $\varnothing$ & $\varnothing$  & $\varnothing$ & $\varnothing$ \\ \hline
			$13$ & $(17, \pm 70)$ & $\varnothing$ & $\varnothing$  & $\varnothing$ & $\varnothing$ \\ \hline
			$19$ & $(7, \pm 18)$ & $(55, \pm 22434)$ & $\varnothing$  & $\varnothing$  & $\varnothing$ \\ \hline
			$23$& $(3,\pm 2)$ & $(2,\pm 3)$ & $\varnothing$  & $(2,\pm 45)$ & $\varnothing$\\ \hline
			$31$ & $\varnothing$  & $(2,\pm 1)$  & $\varnothing$ & $\varnothing$ & $\varnothing$ \\ \hline
			$47$ & $(6, \pm 13), (12, \pm 41), (63, \pm 500) $  & $(3, \pm 14)$ & $(2,\pm 9)$ & $\varnothing$ & $\varnothing$\\ \hline
			$53$ & $(9,\pm 26), (29, \pm 156)$  & $\varnothing$ & $\varnothing$ & $\varnothing$ & $\varnothing$\\ \hline
			$61$ & $(5,\pm 8)$   & $\varnothing$ & $\varnothing$ & $\varnothing$ & $\varnothing$\\ \hline
			$67$ & $(23, \pm 110)$  & $\varnothing$ & $\varnothing$ & $\varnothing$ & $\varnothing$\\ \hline
			$71$ & $(8,\pm 21)$  & $\varnothing$ & $(3,\pm 46)$ & $\varnothing$ & $\varnothing$\\ \hline
			$79$ & $(20, \pm 89)$  & $\varnothing$ & $(2,\pm 7)$ & $\varnothing$ & $\varnothing$\\ \hline
			$83$ & $(27, \pm 140)$  & $\varnothing$ & $\varnothing$ & $\varnothing$ & $\varnothing$\\ \hline
			$89$ & $(5,\pm 6) $  & $\varnothing$ & $\varnothing$ & $\varnothing$ & $\varnothing$\\ \hline
		\end{tabular}
		\medskip
		\captionof{table}{\textit{Integer points on $C_{d, \ell}^{-}$}}
		\label{Cminustable}
	\end{center}
\end{table}
\endgroup

\bigskip

\begingroup
\setlength{\tabcolsep}{5pt} 
\renewcommand{\arraystretch}{0.7}
\begin{table}
	\begin{center}
		\begin{tabular}{|c|c|c|c|c|c|c|c|c|c|}
			\hline
			$\ell$ & {\text {\rm $H_{3,\ell}^{-}$}} & {\text {\rm $H_{3,\ell}^{+}$}} & {\text {\rm $H_{5,\ell}^{-}$}}  & {\text {\rm $H_{5,\ell}^{+}$}}  & {\text {\rm $H_{7,\ell}^{-}$}}  & {\text {\rm $H_{7,\ell}^{+}$}}    & {\text {\rm $H_{11,\ell}^{-}$}}   & {\text {\rm $H_{13,\ell}^{-}$}}  \\  \hline \hline
			$11$ & $\varnothing$ & $(1,7), (7, 767)$ & $\varnothing$  & $(1,7)$ & $\varnothing$ & $(1,7)$ & $\varnothing_*$   &$\varnothing$  \\ \hline
			$19$ & $\varnothing$ & $(1,9), (3,61)$ & $\varnothing$  & $(1,9)$ & $\varnothing$ & $(1,9)$ & $\varnothing$ &$\varnothing_*$   \\ \hline
			$29$ & $\varnothing$ & $(1,11)$ & $\varnothing$  & $(1,11)$ & $\varnothing$ & $(1,11)$ & $\varnothing_*$ & $\varnothing_*$   \\ \hline
			$31$ & $(2,14)$ & $\varnothing$ & $\varnothing$  & $\varnothing$ & $(2,286)$ & $\varnothing$ & $\varnothing_*$   &$\varnothing_*$  \\ \hline
			$41$ & $(3,59)$ & $(1,13), (2,22)$ & $\varnothing$  & $(1,13)$ & $\varnothing$ & $(1,13)_*$ & $\varnothing_*$  & $\varnothing_*$  \\ \hline
			$59$ & $\varnothing$ & $\varnothing$ & $\varnothing$ &$\varnothing_*$ & $\varnothing$ & $\varnothing_*$ & $\varnothing_*$  &$\varnothing_*$  \\ \hline
			$61$ & $\varnothing$ & $\varnothing$ & $\varnothing$  & $\varnothing$ & $\varnothing$ & $\varnothing_*$ & $\varnothing_*$  &$\varnothing_*$  \\ \hline
			$71$ & $(2,6), (5, 279)$ & $(1,17)$ & $\varnothing$ & $(1,17)$ & $\varnothing$ & ${\text {\rm {\bf ?}}}$ & $\varnothing_*$   &$\varnothing_*$  \\ \hline
			$79$ & $(2,2), (4,142)$ & $\varnothing$ & $\varnothing$  & $\varnothing$ & $\varnothing$ & $\varnothing_*$ & $\varnothing_*$   &$\varnothing_*$  \\ \hline
			$89$ & $\varnothing$ & $(1,19), (2,26)$ & $\varnothing$  & $(1,19)_*$, $(2,74)_*$ & $\varnothing$ & $(1,19)_*$ & $\varnothing_*$  & ${\text {\rm {\bf ?}}}$  \\ \hline
		\end{tabular}
		\medskip
		\captionof{table}{\textit{$(|X|, |Y|)$ for integer points on $H_{d,\ell}^{\pm}$ with $\legendre{\ell}{5}=1$. (note. GRH assumption indicated by $_*$.)}}
		\label{Htable}
	\end{center}
\end{table}
\endgroup

\sglsp


\appendix

\printindex

\end{document}